\documentclass[12pt]{report}
\usepackage{graphicx}
\usepackage{amsmath,amssymb,xy,array}
\usepackage[T1]{fontenc}
\usepackage{amsfonts}
\usepackage{amsmath}
\usepackage{amssymb}
\usepackage{amsthm}
\usepackage[utf8]{inputenc}
\usepackage{hyperref}
\usepackage{fancyhdr}
\usepackage{tikz}
\usepackage{tikz-cd}
\usepackage{longtable}
\usepackage{geometry}
\usepackage{textcomp}
\usetikzlibrary{trees}
\usepackage{multirow}
\usepackage{youngtab}

\newcommand{\arXiv}[1]{\href{http://arxiv.org/abs/#1}{arXiv:#1}}
\def\bibaut#1{{\sc #1}}

\setcounter{MaxMatrixCols}{30}
\providecommand{\U}[1]{\protect\rule{.1in}{.1in}}
\providecommand{\U}[1]{\protect\rule{.1in}{.1in}}
\providecommand{\U}[1]{\protect\rule{.1in}{.1in}}
\providecommand{\U}[1]{\protect\rule{.1in}{.1in}}
\providecommand{\U}[1]{\protect\rule{.1in}{.1in}}
\input{xy}
\xyoption{all}
\setlength{\textheight}{210mm} 
\setlength{\topmargin}{0.46cm}
\setlength{\textwidth}{152mm} 
\setlength{\evensidemargin}{0.60cm}
\setlength{\oddsidemargin}{0.60cm}

\usepackage{amsfonts}
\usepackage{amssymb,amsthm}
\usepackage{verbatim}
\usepackage{hyperref} 
\usepackage[T1]{fontenc}
\usepackage{amsmath}
\usepackage{enumerate}
\usepackage{tikz}
\usepackage{color}

\newcommand{\C}{\mathbb C}
\newcommand{\G}{\mathbb G}
\renewcommand{\P}{\mathbb P}
\newcommand{\R}{\mathbb R}
\newcommand{\Q}{\mathbb Q}
\newcommand{\Z}{\mathbb Z}
\DeclareMathOperator{\codim}{codim}
\setcounter{MaxMatrixCols}{50}
\newcommand{\rddots}{\reflectbox{$\ddots$}}
\newcommand{\Spec}{\operatorname{Spec}}
\newcommand{\Aut}{\operatorname{Aut}}
\DeclareMathOperator{\Bl}{Bl}

\DeclareMathOperator{\mult}{mult}

\DeclareMathOperator{\Exc}{Exc}

\DeclareMathOperator{\Sing}{Sing}

\DeclareMathOperator{\Supp}{Supp}

\DeclareMathOperator{\Sec}{Sec}
\DeclareMathOperator{\Stab}{Stab}

\DeclareMathOperator{\Eff}{Eff}
\DeclareMathOperator{\Nef}{Nef}
\DeclareMathOperator{\Mov}{Mov}
\DeclareMathOperator{\Pic}{Pic}

\renewcommand{\sec}{\mathbb{S}ec}
\DeclareMathOperator{\expdim}{expdim}
\DeclareMathOperator{\Sym}{Sym}
\DeclareMathOperator{\Cox}{Cox}
\DeclareMathOperator{\cone}{cone}
\DeclareMathOperator{\Amp}{Amp}
\DeclareMathOperator{\mov}{mov}
\newcommand{\binomm}[2]{\scriptstyle \binom{#1}{#2} \displaystyle}

\newcommand{\QED}{\ifhmode\unskip\nobreak\fi\quad {\rm Q.E.D.}} 

\newcommand\map{\dasharrow}

\newcommand{\f}{\varphi}

\newcommand{\N}{\mathbb{N}}
 
\renewcommand{\P}{\mathbb{P}}

\DeclareMathOperator{\sign}{sign}

\newtheorem{Theorem}{Theorem}[section]

\newtheorem{Conjecture}{Conjecture}

\newtheorem{Lemma}[Theorem]{Lemma}
\newtheorem{Proposition}[Theorem]{Proposition}
\newtheorem{Problem}{Problem}
\newtheorem{Corollary}[Theorem]{Corollary}

\theoremstyle{definition}

\newtheorem{Definition}[Theorem]{Definition}
\newtheorem{Remark}[Theorem]{Remark}

\newtheorem{Example}[Theorem]{Example}
\newtheorem{Notation}[Theorem]{Notation}

\hypersetup{pdfpagemode=UseNone}
\hypersetup{pdfstartview=FitH}

\begin{document}

\title{
Projective and birational geometry of Grassmannians and other special varieties
}
\author{Rick Antonio Rischter}
\date{\today}

%
%
%
%
%
%
%
%
%
%
%
%


    \begin{center}
    	\thispagestyle{empty}
    
        \Large
        Instituto Nacional de Matem\'atica Pura e Aplicada
        \vspace*{2.5cm}
        
        \Huge
        \textbf{Projective and birational geometry\\ of Grassmannians and\\ other special varieties}
        
        \Large
        
        \vspace{1.5cm}
        
        \textbf{Rick Rischter}\\

        \vspace{2.5cm}

        Advisor: Carolina Araujo\\
        Co-advisor: Alex Massarenti
        
        \vspace*{2.5cm}
        
        \large
Thesis presented to the Post-graduate Program in 
Mathematics at\\
Instituto Nacional de Matem\'atica Pura e Aplicada as
partial fulfillment \\of
the requirements for the degree of Doctor in Mathematics.\\

        \vfill
                
        \vspace{2cm}

        \large
        Rio de Janeiro\\
        \today
        
    \end{center}

\chapter*{Abstract}
    	\thispagestyle{empty}
The subject of this thesis is complex algebraic geometry.

In the first part of the thesis, we study a classical invariant of projective varieties, the ``secant defectivity''. Given a projective variety $X$, the secant variety of $X$ is defined as the union of all secant lines through two points of $X$. More generally, the $h-$secant variety of $X$ is the union of all linear spaces of dimension $h-1$ intersecting $X$ in at least $h$ points. The expected dimension of
the $h-$secant variety of $X$ depends only on the dimension of $X$, and the actual dimension
coincides with the expected one in most cases. Varieties whose $h-$secant variety has dimension
less than expected are special. They are called ``secant defective varieties''. Their classification is a classical and generally difficult problem in algebraic geoemetry. In this thesis we provide a new method to approach this problem using osculating spaces. We then apply this method to produce new results about secant defectivity of Grassmannians and Segre-Veronese varieties.

The second part is devoted to modern algebraic geometry. We study the birational
geometry of blow-ups of Grassmannians at points. To describe the birational geometry of a
variety, that is, to describe all the maps it admits, is a difficult problem in general. For a special class of varieties, called Mori dream spaces (MDS), the birational geometry is well behaved and can be codified in a finite combinatorial data. In this thesis we investigate when the blow-up of Grassmannians at general points is MDS, and describe their birational geometry in
some special cases.

\vspace{1cm}
\textbf{Keywords: } Grassmannians, Segre-Veronese varieties, osculating spaces, secant varieties,  secant defect, degenerations of rational maps, birational geometry, Mori dream spaces, Fano varieties, weak Fano varieties, spherical varieties, rational curves, blow-up, Mori chamber decomposition.

\chapter*{Resumo}
    	\thispagestyle{empty}
O tópico desta tese \'{e} geometria alg\'{e}brica complexa.

Na primeira parte desta tese, estudamos um invariante cl\'{a}ssico de variedades projetivas, a ``defeituosidade secante''. Dada uma variedade projetiva $X$, a variedade secante de $X$ \'{e} definida como a união de todas retas secantes por dois pontos de $X$. Mais geralmente, a variedade $h-$secante de $X$ \'{e} a união de todos os espaços lineares de dimens\~{a}o $h-1$ que intersectam $X$ em pelo menos $h$ pontos. A dimens\~{a}o esperada da variedade $h-$secante de $X$ depende apenas da dimens\~{a}o de $X$, e a sua dimens\~{a}o de fato coincide com a esperada na maioria dos casos. Variedades cuja variedade $h-$secante tem dimens\~{a}o menor que a esperada s\~{a}o especiais, elas s\~{a}o chamadas ``variedades secante defeituosas''. A sua classifica\c{c}\~{a}o \'{e} um problema cl\'{a}ssico e em geral dif\'{i}cil em geometria alg\'{e}brica. Nesta tese fornecemos um novo m\'{e}todo para abordar este problema usando espaços osculadores. Depois aplicamos este m\'{e}todo para produzir novos resultados a respeito de defeituosidade secante de Grassmannianas e variedades de Segre-Veronese.

A segunda parte \'{e} dedicada a geometria alg\'{e}brica moderna. Estudamos a geoemtria birracional de explosões de Grassmannianas em pontos. Descrever a geometria birracional de uma variedade, isto \'{e}, descrever todos os morfismos que ela admite, \'{e} um problema dif\'{i}cil em geral. Para uma classe especial de variedades, chamados Mori dream spaces (MDS), a geometria birracional \'{e} bem comportada e pode ser decodificada em uma informa\c{c}\~{a}o combinatória finita. Nesta tese investigamos quando explosões de Grassmannianas em pontos gerais s\~{a}o MDS, e descrevemos sua geometria birracional em casos especiais.

\vspace{1cm}
\textbf{Palavras-chave: } Grassmannianas,
variedades de Segre-Veronese, espaços osculadores, variedades secantes, defeitos secantes, degenerações de aplicações racionais,
geometria biracional, Mori dream spaces, variedades de Fano, variedades weak Fano, variedades esféricas, curvas racionais, explosões, decomposição em câmaras de Mori.    	
\newpage
\normalsize
\vspace*{5cm}
\begin{flushright}
    	\thispagestyle{empty}
Dedicated to my beloved wife Talita.
\end{flushright}
\newpage

\newgeometry{inner=3cm,outer=2.8cm,bottom=1cm,
top=1.7cm}
\chapter*{Agradecimentos}
\thispagestyle{empty}
\vspace{-0.6cm}
Antes de mais nada eu gostaria de agradecer muito à minha orientadora Carolina Araujo que me ajudou desde antes de eu começar oficialmente o doutorado. 
Agradeço também pelos cursos de geometria algébrica.
Sua orientação sempre me fez ver mais claramente os problemas mais difíceis, foi um privilégio trabalhar com ela. Além disso, agradeço a Carolina por ter me dado o interessante problema que gerou toda esta tese.

Agradeço também ao meu coorientador Alex Massarenti, especialmente por ter me ajudado muito no último ano de doutorado. Também sou agradecido por ter me mostrado como pensar na geometria algébrica da maneira italiana.

Sem a ajuda dos meus dois orientadores esta tese não seria possível.

Agradeço à minha amada esposa e companheira Talita, que me apoiou em toda essa longa estrada do doutorado, sem seu apoio incondicional eu não chegaria são ao fim do doutorado.

Agradeço à minha mãe Rosinete Rischter e ao meu irmão Fred Rischter.

Agradeço aos professores da UFES por terem alimentado meu interesse pela álgebra e matemática em geral, especialmente Alan, Florencio, Gilvan e Ricardo. Também agradeço aos meus colegas de graduação da UFES, especialmente ao Karlo. O convívio com eles me ajudou a ser a pessoa que sou hoje.

Agradeço aos professores do Instituto de Matemática e Computação da UNIFEI por apoiarem meu afastamento temporário das atividades didáticas da UNIFEI. Sem isto o doutorado seria muito mais difícil.

Agradeço aos meus colegas de mestrado pela excelente turma e grupo de estudos que tínhamos, especialmente Flaviano 
(in memoriam), Renan  e Sergio.

Agradeço aos amigos do doutorado do IMPA, especialmente ao Wállace que me ajudou mais de uma vez com a parte algébrica da geometria algébrica.

Agradeço muito ao Grupo do Café Especial pelo ambiente amigável e relaxante. Cada reunião do grupo gerava um assunto mais interessante e engraçado que a anterior.
Agradeço em especial a Caio Cesar por ser um ótimo companheiro de estudo e por ter me ajudado a estudar para o exame de qualificação, além de revisar versões preliminares desta tese.

Agradaço Enrique Arrondo, Cinzia Casagrande, Ana Maria Castravet,  Ciro Ciliberto,  Izzet Coskun, Olivier Debarre, Igor Dolgachev,  Letterio Gatto, Cesar Huerta e Damiano Testa  por produtivas conversas matemáticas.

Agradeço aos professores do IMPA pelas disciplinas e pelas ajudas extras, especialmente Eduardo Esteves, Elon Lages, Oliver Lorscheid, Reimundo Heluani e Karl St\"ohr.

Agradeço \`a banca de defesa, Alessio Corti, Letterio Gatto, Jorge Pereira e Karl St\"ohr, pela leitura cuidadosa da tese.

Agradeço aos funcionários do IMPA pelo excelente ambiente de trabalho proporcionado por eles, e ao CNPq pelo apoio financeiro.
\newpage
\restoregeometry

\vspace*{5cm}
    	\thispagestyle{empty}
As long as algebra and geometry have been separated, their progress have been slow and their uses limited, but when these two sciences have been united, they have lent each mutual forces, and have marched together towards perfection.
\begin{flushright}
\textit{Joseph-Louis Lagrange}
\end{flushright}
\newpage

\tableofcontents

\chapter*{Introduction}
\addcontentsline{toc}{chapter}{Introduction}

\hspace{0.5cm}

Algebraic geometry is the study of varieties given by zeros of homogeneous polynomials in projective spaces. These varieties appear naturally in mathematics and in other sciences. For this reason algebraic geometry plays a central role in mathematics.

During the nineteenth century, algebraic geometry experienced great progress with the Italian school. Among the most studied varieties were the Grassmannians and Segre-Veronese varieties.

Grassmannians parametrize linear spaces inside the projective space. They are among the most studied varieties in algebraic geometry, as they appear frequently not only in algebraic geometry but also in other areas of mathematics, in theoretical physics and in classical mechanics.

Veronese varieties are embeddings  of projective spaces of arbitrary degrees. A generalization of these are the Segre-Veronese varieties, which are embeddings of products of projective spaces of arbitrary multi-degrees. 

A problem that dates back to the Italian school is that of ``secant defectivity''. Given a projective variety $X$, the secant variety of $X$ is defined as the union of all secant lines through two points of $X$. More generally, the $h-$secant variety of $X$ is the union of all linear spaces of dimension $h-1$ intersecting $X$ in at least $h$ points. The expected dimension of
the $h-$secant variety of $X$ depends only on the dimension of $X$, and the actual dimension
coincides with the expected one in most cases. Varieties whose $h-$secant variety has dimension
less than expected are special. They are called ``secant defective varieties''. Their classification is a classical and generally difficult problem in algebraic geoemetry.
Defective surfaces and $3$-folds are already classified but for bigger dimension only special results are known.

Grassmannian and Segre-Veronese varieties can be used to parametrize decomposable tensors, and its secant varieties parametrize naturally sums of rank one tensors. Therefore, by studying secant varieties of Grassmannian and Segre-Veronese varieties one can understand better when a general tensor can be written as a sum of a fixed given number of decomposable tensors.

Defectivity of Veronese varieties was fully understood only in 1995 with the work of Alexander and Hirshowitz. Defectivity of Grassmannians and Segre-Veronese varieties is  understood only in some special cases so far.

The first part of this thesis is deveoted to provide a new method to approach the problem of secant defectivity using osculating spaces, see Theorem  \ref{lemmadefectsviaosculating}. For a smooth point $x\in X\subset\mathbb{P}^N$, the \textit{$k$-osculating space} $T_x^{k}X$ of $X$ at $x$ is  the smallest linear subspace 
where $X$ can be locally approximated up to order $k$ at $x$. We then apply this method to produce new results about secant defectivity of Grassmannians, see Chapter \ref{cap3}, and Segre-Veronese varieties, see Chapter \ref{cap4}.

Most of the content of this first part appears in the pre-prints:
\begin{itemize}
\item 
\uppercase{A. Massarenti, R. Rischter}, \textit{Non-secant defectivity via osculating projections}, 2016, \arXiv{1610.09332v1}.
\item
\uppercase{C. Araujo, A. Massarenti, R. Rischter}, \textit{On non-secant defectivity of Segre-Veronese varieties}, 2016, \arXiv{1611.01674}.
\end{itemize}

\vspace{1cm}
Modern algebraic geometry is more interested in intrinsic invariants of a variety, rather than  those depending on a fixed embbeding. For instance one usually wants to describe the birational geometry of a
variety, that is, to describe all the maps it admits. This is a difficult problem in general. For a special class of varieties, called Mori dream spaces (MDS), the birational geometry is well behaved and can be codified in a finite combinatorial data, namely its cone of effective divisors and a chamber decomposition on it, see Section \ref{sec51}.

Some examples of MDS are projective spaces, products of projective spaces, Grassmannians, and more generally spherical varieties.  A variety with an action of a reductive group is called spherical when it has a dense orbit by the action of a Borel subgroup. An important problem is to find new classes of MDS. For example, it is natural to ask when the blow-up of a MDS is a MDS. The blow-up is a fundamental operation in birational geometry.

It was completely determined in the work of Castravet and Tevelev and in the work of Mukai, when blow-ups $\Bl_{p_1,\dots,p_k}\P^n$ of the projective space $\P^n$ at $k$ general points $p_1,\dots,p_k$ are MDS. 
They showed that $\Bl_{p_1,\dots,p_k}\P^n, n\geq 5,$ is a MDS if and only if $n\leq k+3.$ They also determined completely the combinatorial data which describes the birational geometry of $\Bl_{p_1,\dots,p_k}\P^n$ when it is a MDS.

The second part of this thesis is devoted to investigate when the blow-up $\G(r,n)_k$ of a Grassmannian at $k$ general points is MDS, and describe its birational geometry in some special cases. 
We classify when $\G(r,n)_k$ is a spherical variety, see Theorem \ref{G(r,n)kspherical}.
We also classify when $\G(r,n)_k$ is a weak Fano variety, see Proposition \ref{G(r,n)_k weak Fano}. In this way we obtain new examples of MDS, see Theorem \ref{G(r,n)k MDS weak}.
In the special case $\G(1,n)_1,$ we are able to completely describe the birational geometry, see Theorem \ref{MCD G(1,n)}. 

\newpage

Throughout this thesis a variety is an integral, separated scheme of finite type over an algebraically closed field, and we work over the field of complex numbers. By a point we mean a closed point. We say that a closed point of a variety is a general point if it lies in the complement of a proper closed subscheme of it. The notion of general point depends upon the choice of
the proper closed subschemes to be avoided and should be clear from the context.

In Part \ref{part1} of this thesis the result that give us the main tool is Theorem \ref{lemmadefectsviaosculating}. In order to prove it we used Propositions \ref{cc} and \ref{p2}, the former need that the base field is the complex numbers and the latter need at least a infinite base field. Therefore we can not avoid use complex numbers.

In Part \ref{part2} we used Contraction Lemma,
\cite[Theorem 7.39]{De01}, which needs characteristic zero. Therefore we choose to fix throughout the thesis the complex numbers as base field.

\part{Secant defectivity via osculating projections}\label{part1}
\chapter{Overview}\label{intro1}
Secant varieties are classical objects in algebraic geometry.
The \textit{$h$-secant variety} $\sec_{h}(X)$ of a non-degenerate $n$-dimensional variety $X\subset\mathbb{P}^N$ is 
$$
\sec_h(X)=\overline{\bigcup_{p_1,\dots,p_h\in X}
\left\langle
p_1,\dots,p_h
\right\rangle,
}
$$
the Zariski closure 
of the union of all linear spaces spanned by collections of $h$ points of $X$. 
Secant varieties are central objects in both classical algebraic geometry \cite{CC01}, \cite{Za}, and applied mathematics \cite{La12}, \cite{LM}, \cite{LO}, \cite{MR}.
The \textit{expected dimension} of $\sec_{h}(X)$ is
$$
\expdim(\sec_{h}(X)):= \min\{nh+h-1,N\}.
$$
The actual dimension of $\sec_{h}(X)$ may be smaller than the expected one. This happens when there are infinitely many $(h-1)$-planes 
$h$-secant to $X$ passing trough a general point of  $\sec_{h}(X)$. 
Following \cite{Za}, we say that $X$ is \textit{$h$-defective} if 
$$
\dim(\sec_{h}(X)) < \expdim(\sec_{h}(X)).
$$

Most projective varieties are not defective. For instance, curves and hypersurfaces are not defective, on the other hand defective surfaces and threefolds have been classified \cite{Te21},\cite{CC03}.

Grassmannians together with Veronese and Segre varieties form the triad of varieties parametrizing rank one tensors. Hence, a general point of their $h$-secant variety corresponds to a tensor of a given rank depending on $h$. For this reason, secant varieties of Grassmannians, Veroneses and Segres are particularly interesting in problems of tensor decomposition \cite{CM}, \cite{CGLM}, \cite{La12}, \cite{Me06}, \cite{Me09}, \cite{GM16}.

Furthermore, secant varieties have been widely used to construct and study moduli spaces for all  possible additive decompositions of a general tensor into a given number of rank one tensors \cite{Do04}, \cite{DK93}, \cite{Ma16}, \cite{MM13}, \cite{RS00}, \cite{TZ11}, \cite{BGI11}.

The problem of determining the actual dimension of secant varieties, and its relation with the dimension of certain linear systems of hypersurfaces with double points, have a very long history in algebraic geometry, and can be traced back to the Italian school \cite{Ca37}, \cite{Sc08}, \cite{Se01}, \cite{Te11}.

Since then the geometry of secant varieties has been studied and used by many authors in various contexts \cite{CC01},\cite{CR06},\cite{IR08},  and the problem of secant defectivity has been widely studied for Veroneses, Segres and Grassmannians \cite{AH95}, \cite{AB13}, \cite{AOP09a}, \cite{AOP09b}, \cite{Bo13}, \cite{CGG03},\cite{CGG05}, \cite{CGG11}, \cite{LP13}, \cite{BBC12}, \cite{BCC11}.

In Chapter \ref{cap2} we present the following theorem, which is our main tool to study secant defectivity.

\begin{Theorem}
Let $X\subset \P^N$ be a projective variety 
having $m$-osculating regularity and strong $2$-osculating regularity.
Let $k_1,\dots,k_l\geq 1$ be integers such that the general osculating projection 
$\Pi_{T^{k_1,\dots,k_l}_{p_1,\dots,p_l}}$ is generically finite.
Then $X$ is not $h$-defective for $h\leq \displaystyle\sum_{j=1}^{l}h_m(k_j)+1,$ where
$h_m$ is as in Definiton \ref{defhowmanytangent}.
\end{Theorem}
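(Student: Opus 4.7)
The plan is to combine Terracini's Lemma with a controlled degeneration that brings the tangent spaces appearing in Terracini's formula into the osculating spaces provided by the hypotheses. By Terracini, for general points $q_1,\dots,q_h\in X$ the Zariski tangent space to $\sec_h(X)$ at a general point is $\langle T_{q_1}X,\dots,T_{q_h}X\rangle$; hence non-$h$-defectivity is equivalent to this linear span having the expected dimension $\min\{h(n+1)-1,N\}$. Since the dimension of a flat family of linear subspaces of $\P^N$ is constant, this dimension coincides with the dimension of any flat limit of the span, and it suffices to exhibit one specialization of the configuration whose flat limit has dimension at least the expected value.

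The specialization I would arrange is the one suggested by the bound $h\leq \sum_{j=1}^l h_m(k_j)+1$: partition the $h$ points into $l+1$ clusters so that, for each $j=1,\dots,l$, $h_m(k_j)$ of them collide into a general point $p_j\in X$, and the remaining point $q$ stays general. By the definition of $h_m(k)$ (Definition \ref{defhowmanytangent}) together with the $m$-osculating regularity of $X$, the flat limit of the span of the $h_m(k_j)$ tangent spaces at points colliding to $p_j$ contains the osculating space $T^{k_j}_{p_j}X$. The strong $2$-osculating regularity hypothesis is what ensures that the limits taken simultaneously at the distinct centers $p_1,\dots,p_l$ do not interfere with one another, so that the flat limit of the full Terracini span contains $\langle T^{k_1}_{p_1}X,\dots,T^{k_l}_{p_l}X,T_qX\rangle=\langle \Lambda,T_qX\rangle$, where $\Lambda:=T^{k_1,\dots,k_l}_{p_1,\dots,p_l}$.

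The last step extracts dimensions. The hypothesis that the osculating projection $\Pi_\Lambda$ is generically finite means that its differential at a general $q\in X$ is injective, which translates into $T_qX\cap \Lambda=\emptyset$; this yields $\dim\langle \Lambda,T_qX\rangle=\dim \Lambda+n+1$. Combined with the dimensional estimate $\dim \Lambda\geq \sum_j h_m(k_j)(n+1)-1=(h-1)(n+1)-1$ coming from applying the $m$-osculating regularity at each center and combining via strong $2$-osculating regularity, this gives $\dim \sec_h(X)\geq h(n+1)-1$, hence equality with the expected dimension (the cap at $N$ is handled by the same inequality). I expect the main obstacle to be the flat-limit statement of the middle paragraph: one must establish, in a one-parameter family of configurations, that the osculating spaces $T^{k_j}_{p_j}X$ are genuinely contained in the flat limit of the tangent span (not merely the naive set-theoretic limit), and that the independent limits at distinct $p_j$ do not destroy one another. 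This is exactly what the two regularity hypotheses are designed to achieve, and making them interact cleanly with Terracini's setup will be the technical heart of the proof.
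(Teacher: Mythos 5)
There is a genuine gap at the heart of your argument: the flat-limit containment you need goes in the opposite direction from the one the hypotheses provide. The $m$-osculating regularity of Definition \ref{moscularity} asserts that the flat limit $T_0$ of the span of tangent (or osculating) spaces at colliding points is \emph{contained in} a higher osculating space, $T_0\subseteq T^{2k+1}_{p_1}$; it says nothing about $T_0$ \emph{containing} $T^{k_j}_{p_j}$. Your middle step requires exactly the reverse inclusion, and it is in general false: the flat limit lives in $\G(\dim T_t,N)$, so $\dim T_0=\dim T_t\leq h_m(k_j)(\dim X+1)-1$, whereas $\dim T^{k_j}_{p_j}$ is typically far larger. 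For instance, on $\G(4,n)$ with $k_j=3$ one collides $h_\alpha(3)=\alpha\approx n/5$ points, so the span of their tangent spaces has dimension $O(n^2)$, while $\dim T^3_p=\sum_{l=1}^{3}\binom{5}{l}\binom{n-4}{l}$ grows like $n^3$; the flat limit cannot contain $T^3_p$. For the same reason your estimate $\dim\Lambda\geq\sum_j h_m(k_j)(n+1)-1$ does not follow from the hypotheses: osculating spaces may be degenerate, and nothing in the assumptions lower-bounds their dimension. A secondary issue is that the $h_m(k_j)$ points are not collided to $p_j$ in a single step; $h_m$ encodes an iteration of the $m$-osculating regularity along the binary expansion of $k_j+1$ (degenerating tangent spaces into $T^3$, then $T^3$'s into $T^7$, and so on), glued across the binary digits by the strong $2$-osculating regularity, and a one-step collision would not produce these combinatorics.

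The containment $T_0\subseteq T^{2k+1}$ is, however, exactly what is needed if one works with linear \emph{projections} rather than with the spans themselves: projecting from the larger center $T^{2k+1}_{p_1}$ factors through the projection from the specialized center $T_0$, so generic finiteness of the osculating projection forces generic finiteness of the specialized tangential projection, and semicontinuity of the fiber dimension in a family of projections (Propositions \ref{p1} and \ref{p2}) transfers this to the general tangential projection. One then concludes not via Terracini directly but via the Chiantini--Ciliberto criterion (Proposition \ref{cc}): a generically finite general $h$-tangential projection implies non-$(h+1)$-defectivity. This is the route the paper takes, and I do not see how to repair your span-dimension argument without rewriting it in these terms.
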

 
Now we explain the strategy of its proof.

Given a non-degenerate $n$-dimensional variety $X\subset\mathbb{P}^N$, and general points  $x_1,\dots,$ $x_h\in X$, consider the 
linear projection with center $\left\langle T_{x_1}X,\dots,T_{x_h}X\right\rangle$,
$$
\tau_{X,h}:X\subseteq\mathbb{P}^N\dasharrow\mathbb{P}^{N_h}.
$$
By \cite[Proposition 3.5]{CC01}, if $\tau_{X,h}$ is generically finite then $X$ is not $(h+1)$-defective.
In general, however, it is hard to control the dimension of the fibers of the tangential projections $\tau_{X,h}$ as $h$ gets larger. 
We develop a new strategy, based on the more general  \emph{osculating projections} instead of just tangential projections.
For a smooth point $x\in X\subset\mathbb{P}^N$, the \textit{$k$-osculating space} $T_x^{k}X$ of $X$ at $x$ is roughly the smaller linear subspace 
where $X$ can be locally approximated up to order $k$ at $x$ (see Section \ref{osculating}).
Given $x_1,\dots, x_l\in X$ general points, we consider the linear projection with center $\left\langle T_{x_1}^{k_1}X,\dots, T_{x_l}^{k_l}X\right\rangle$,
$$
\Pi_{T^{k_1,\dots,k_l}_{x_1,\dots,x_l}}:X\subset\mathbb{P}^N\dasharrow\mathbb{P}^{N_{k_1,\dots,k_l}},
$$
and call it a \textit{$(k_1,\dots,k_l)$-osculating projection}.
Under suitable conditions, one can degenerate the linear span of several tangent spaces $T_{x_i}X$ into a subspace contained in a
single osculating space $T_x^{k}X$. 
We formalize this in Section \ref{degsection}.
So the tangential projections $\tau_{X,h}$ degenerates to a linear projection with center contained in the linear span of osculating spaces,
$\left\langle T_{p_1}^{k_1}X,\dots, T_{p_l}^{k_l}X\right\rangle$.
If $\Pi_{T^{k_1,\dots,k_l}_{p_1,\dots,p_l}}$ is generically finite, then $\tau_{X,h}$ is also generically finite, and one concludes that $X$ is not $(h+1)$-defective.
The advantage of this approach is that one has to consider osculating spaces at much less points than $h$, 
allowing to control the dimension of the fibers of the projection.

We would like to mention that, as remarked by  Ciliberto and Russo in \cite{CR06}, the idea that the behavior of osculating projections reflects the geometry of the variety itself was already present in the work of Castelnuovo \cite[Pages 186-188]{Ca37}.

Finally, we would like to stress that the machinery introduced in this thesis could be used to produce bounds, for the non secant defectivity of an arbitrary irreducible projective variety, once we know how its osculating spaces behave in families and when the projections from them are generically finite. 

In Chapter \ref{cap3} we investigate secant defectivity for Grassmannian varieties. 

It is well-known that the secant variety $\sec_h(\G(1,n))$, which is the locus of skew-symmetric matrices of rank at most $2h$, is almost always defective. Therefore, throughout the paper we assume $r\geq 2$. Only four defective cases are known
then, and we have the following conjecture proposed by Baur, Draisma, and de Graaf.

\begin{Conjecture}\cite[Conjecture 4.1]{BDdG07}
If $r\geq 2$ then $\G(r,n)$ is not $h$-defective with the following exceptions:
$$(r,n,h)\in\{(2,7,3),(3,8,3),(3,8,4),(2,9,4)\}.$$
\end{Conjecture}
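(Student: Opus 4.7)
The plan is to apply Theorem \ref{lemmadefectsviaosculating} to the Grassmannian $\G(r,n)$ in its Plücker embedding $\G(r,n)\subset\P^{N}$ with $N=\binom{n+1}{r+1}-1$. This reduces the conjecture to three tasks: (i) establish $m$-osculating regularity of $\G(r,n)$ for a suitable $m$; (ii) establish strong $2$-osculating regularity; and (iii) for each target value of $h$, exhibit positive integers $k_1,\dots,k_l$ and general points $p_1,\dots,p_l\in\G(r,n)$ such that the osculating projection $\Pi_{T^{k_1,\dots,k_l}_{p_1,\dots,p_l}}$ is generically finite while $\sum_{j=1}^{l}h_m(k_j)+1\geq h$.

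First I would give an explicit description of the higher osculating spaces $T^k_p\G(r,n)$. Since $\G(r,n)$ is homogeneous under $\operatorname{PGL}_{n+1}$, one may place $p$ at the distinguished point represented by $\mathrm{span}(e_0,\dots,e_r)$ and describe $T^k_p\G(r,n)$, in the standard affine chart isomorphic to $\operatorname{Hom}(\C^{r+1},\C^{n-r})$, as the span of Plücker monomials of weight at most $k$. This yields an explicit formula for $\dim T^k_p\G(r,n)$ and a concrete combinatorial handle on the span of several osculating spaces at a configuration of general points.

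Next I would verify the two regularity properties by exploiting the abundant supply of one-parameter subgroups of $\operatorname{PGL}_{n+1}$ acting on $\G(r,n)$: to check $m$-osculating regularity one constructs an explicit curve in $\G(r,n)^m$ joining $m$ general points whose limit collapses several tangent directions into a single osculating space of higher order, and transitivity reduces the whole check to a finite-dimensional representation-theoretic computation at the chosen base point. With regularity in hand, one then optimizes the choice of $(k_1,\dots,k_l)$: for each $h$ one wishes to minimize $l$ while keeping $\sum h_m(k_j)+1\geq h$ and while preserving generic finiteness of the corresponding osculating projection. Generic finiteness amounts to controlling the dimension of the span $\langle T^{k_1}_{p_1}\G(r,n),\dots,T^{k_l}_{p_l}\G(r,n)\rangle\subset \P^N$, which can be done by a Terracini-style calculation using the explicit Plücker description.

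The main obstacle is ensuring that the resulting lower bound on non-defective $h$ is sharp enough to cover \emph{every} triple $(r,n,h)$ outside the listed four. For $r\geq 2$ fixed and $n$ large, the osculating-projection method is expected to yield (essentially) optimal bounds because the relevant osculating spaces are in sufficiently general position in $\P^N$. The delicate range is when $n$ is close to $2r+1$, where the four exceptional triples live: here the osculating spaces at general points are forced to overlap in unexpected ways, and the pure degeneration argument must be supplemented by direct verifications. Concretely I would expect to prove the conjecture for all $(r,n)$ outside an explicit finite range, and then close the remaining cases through a case-by-case analysis combining Terracini's lemma with computer algebra; the four exceptional defects would be recovered as the unique obstructions coming from this residual analysis, matching the classical defectivities of \cite{BDdG07}.
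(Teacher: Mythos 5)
The statement you are trying to prove is not a theorem of this paper: it is Conjecture 4.1 of Baur--Draisma--de Graaf, which the paper only cites and leaves open. The paper's own contribution (Theorem \ref{maingrass}, proved via Theorem \ref{lemmadefectsviaosculating} together with Propositions \ref{limitosculatingspacesgrass}, \ref{limitosculatingspacesgrassII} and Corollary \ref{oscprojbirationalII}) is exactly the machinery you outline in your first three paragraphs, so up to that point you are reconstructing the paper's method correctly. But that method does not, and cannot, yield the full conjecture, and your final paragraph papers over the gap.

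The concrete obstruction is quantitative. The osculating-projection argument has two hard ceilings: by Proposition \ref{oscprojbirational} the projection from a single osculating space $T^s_p\G(r,n)$ stops being generically finite at $s=r$ (it becomes a fibration with fibers $\G(r,2r+1)$), so each $k_j$ is capped at $r-1$; and the number $l$ of points one can use is capped at $\alpha=\lfloor(n+1)/(r+1)\rfloor$, since the centers must correspond to independent $(r+1)$-planes. Consequently the best $h$ reachable by Theorem \ref{lemmadefectsviaosculating} is of order $\alpha\, h_\alpha(r-1)\sim\left(\frac{n+1}{r+1}\right)^{\lfloor\log_2 r\rfloor}$, whereas the conjecture asserts non-defectivity for \emph{all} $h$ up to roughly $\frac{1}{(r+1)(n-r)+1}\binom{n+1}{r+1}$, which grows incomparably faster in $n$ for fixed $r\ge 2$. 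So the ``residual range'' you propose to close by case-by-case computer algebra is not a finite list of triples: for every $(r,n)$ there is an unbounded (in $n$) interval of values of $h$ that the degeneration argument cannot reach. Your assertion that the method is ``expected to yield (essentially) optimal bounds'' for large $n$ is therefore false as stated, and no amount of optimizing the choice of $(k_1,\dots,k_l)$ within this framework repairs it. What your outline actually proves is the paper's Theorem \ref{maingrass}; proving the conjecture itself would require a genuinely different input (e.g.\ the inductive/monomial techniques of Abo--Ottaviani--Peterson pushed much further, or a new idea).
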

In \cite{CGG05} Catalisano, Geramita, and  Gimigliano gave explicit bounds on $(r,n,h)$ for $\G(r,n)$ not to be $h$-defective. Later, in \cite{AOP09b} Abo, Ottaviani, and  Peterson, improved these bounds, and showed that the conjecture is true for $h\leq 6$. Finally, in \cite{Bo13} Boralevi further improved this result by proving the conjecture for $h\leq 12$.

To the best of our knowledge, the best asymptotic bound for $\sec_h(\G(r,n))$ to have expected dimension was obtained by Abo, Ottaviani, and  Peterson using monomial techniques.
\begin{Theorem}\cite[Theorem 3.3]{AOP09b}
If $r\geq 2$ and 
$$h\leq \frac{n-r}{3}+1$$
then $\sec_h(\G(r,n))$ has the expected dimension.
\end{Theorem}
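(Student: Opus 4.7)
My approach would be via Terracini's Lemma and specialization to torus-fixed points of the Grassmannian, in the spirit of the monomial technique of Abo, Ottaviani and Peterson. By Terracini's Lemma, for general points $p_1,\ldots,p_h\in \G(r,n)$ one has
$$
\dim \sec_h(\G(r,n))=\dim\langle T_{p_1}\G(r,n),\ldots,T_{p_h}\G(r,n)\rangle,
$$
where the tangent spaces sit inside the Plücker ambient space $\P(\wedge^{r+1}\C^{n+1})$. It therefore suffices to show that the affine tangent spaces $\hat T_{p_i}\G(r,n)\subset \wedge^{r+1}\C^{n+1}$ span a vector subspace of the expected dimension $\min\{h((r+1)(n-r)+1),\binom{n+1}{r+1}\}$. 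Since this dimension is lower-semicontinuous in $(p_1,\ldots,p_h)$, it is enough to exhibit one configuration attaining the maximum, and I would specialize the $p_i$ to torus-fixed points of $\G(r,n)$ under the natural $(\C^*)^{n+1}$-action, i.e., to the coordinate points $[\Lambda_I]$ indexed by $(r+1)$-subsets $I\subset \{0,1,\ldots,n\}$ with $\Lambda_I=\langle e_i\colon i\in I\rangle$.

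The advantage of torus-fixed points is that the tangent spaces become coordinate subspaces in the Plücker basis: a direct calculation shows $\hat T_{[\Lambda_I]}\G(r,n)$ is spanned by the Plücker monomials $e_J$ with $|I\Delta J|\in\{0,2\}$, i.e., by $e_I$ together with all $e_J$ obtained from $e_I$ by swapping one index in $I$ for an index outside $I$; this recovers the correct dimension $(r+1)(n-r)+1$. The sum of $h$ such tangent spaces is then a sum of coordinate subspaces of $\wedge^{r+1}\C^{n+1}$, whose dimension attains the maximum $h((r+1)(n-r)+1)$ precisely when the monomial neighborhoods $N[I_k]:=\{J:|I_k\Delta J|\leq 2\}$ are pairwise disjoint. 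The triangle inequality for the symmetric difference shows that this disjointness is equivalent to the purely combinatorial condition $|I_k\Delta I_{k'}|\geq 6$ for all $k\neq k'$.

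The theorem is thus reduced to a packing question: find $h$ subsets of $\{0,1,\ldots,n\}$ of size $r+1$ with pairwise symmetric differences at least $6$. I would use the shifted windows $I_k:=\{3(k-1),3(k-1)+1,\ldots,3(k-1)+r\}$ for $k=1,\ldots,h$. A short calculation using $r\geq 2$ yields $|I_k\Delta I_{k+1}|=6$ for consecutive indices and $|I_k\Delta I_{k'}|\geq 2\min(r+1,3|k-k'|)\geq 6$ in general. Since the largest index used is $3(h-1)+r$, this configuration fits inside $\{0,\ldots,n\}$ exactly when $h\leq (n-r)/3+1$, which is the hypothesis of the theorem. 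The main obstacle I expect is the careful verification that ``pairwise disjoint monomial supports'' genuinely translates into linear independence of the full affine tangent spaces at the chosen torus-fixed points, together with checking that in the range $h\leq (n-r)/3+1$ the minimum $\min\{h((r+1)(n-r)+1),\binom{n+1}{r+1}\}$ is realized by the first term, so that the monomial packing count actually coincides with the expected dimension; both are essentially bookkeeping once the specialization is in place.
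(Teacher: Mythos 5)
The thesis never proves this statement: it is quoted from \cite{AOP09b} only as a benchmark against which Theorem \ref{maingrass} is compared, so there is no internal proof to hold you to. That said, your reconstruction is correct, and it is exactly the ``monomial technique'' that the thesis attributes to Abo--Ottaviani--Peterson. The two points you flag as residual bookkeeping are both immediate: the affine tangent space of $\G(r,n)$ at a coordinate point $e_{I}$ is a \emph{coordinate} subspace of $\bigwedge^{r+1}\C^{n+1}$, spanned by the $e_J$ with $d(I,J)\le 1$ (this is the $s=1$ case of Proposition \ref{oscgrass}), so pairwise disjointness of the index sets $N[I_k]$ makes the sum of the $h$ tangent spaces a direct sum with no further argument; and the mere existence of $h$ pairwise disjoint neighborhoods of size $(r+1)(n-r)+1$ inside a set of cardinality $\binom{n+1}{r+1}$ forces $h\big((r+1)(n-r)+1\big)\le\binom{n+1}{r+1}$, so the expected dimension is automatically given by the first term of the minimum. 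Your windows $I_k=\{3(k-1),\dots,3(k-1)+r\}$ satisfy $|I_k\,\Delta\, I_{k'}|=2\min\big(r+1,\,3|k-k'|\big)\ge 6$ precisely because $r\ge 2$, and the constraint $3(h-1)+r\le n$ is exactly $h\le (n-r)/3+1$; semicontinuity of the rank of the span together with Terracini's Lemma (Theorem \ref{terracini}) then closes the argument. It is worth observing that your shift-by-three packing is the combinatorial shadow of the inductive specialization $n\mapsto n-3$ that \cite{AOP09b} uses; your direct, non-inductive phrasing is a clean equivalent, and it also makes transparent why the hypothesis $r\ge 2$ is needed (for $r=1$ consecutive windows would only achieve symmetric difference $4$, and indeed $\G(1,n)$ is defective).
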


As a direct consequence of our main results in Theorem \ref{maingrass} we get the following.

\begin{Theorem}
Assume that $r\geq 2$, set 
$$\alpha:=\left\lfloor \dfrac{n+1}{r+1} \right\rfloor$$
and write $r = 2^{\lambda_1}+\dots+2^{\lambda_s}+\varepsilon$, with $\lambda_1 > \lambda_2 >\dots >\lambda_s\geq 1$, $\varepsilon\in\{0,1\}$. If either
\begin{itemize}
\item[-] $h\leq (\alpha-1)(\alpha^{\lambda_1-1}+\dots+\alpha^{\lambda_s-1})+1$ or
\item[-] $n\geq r^2+3r+1$ and $h\leq \alpha^{\lambda_1}+\dots+\alpha^{\lambda_s}+1$
\end{itemize}
then $\G(r,n)$ is not $h$-defective.
\end{Theorem}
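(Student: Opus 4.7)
The plan is to deduce this statement as a direct corollary of Theorem \ref{maingrass} combined with Theorem \ref{lemmadefectsviaosculating}. The latter reduces non-$h$-defectivity of a projective variety with suitable osculating regularity to a numerical inequality $h \leq \sum_j h_m(k_j) + 1$, provided that the osculating projection $\Pi_{T^{k_1,\dots,k_l}_{p_1,\dots,p_l}}$ is generically finite. The strategy accordingly splits into three ingredients: first, verify that $\G(r,n)$ has $m$-osculating regularity and strong $2$-osculating regularity for $m = \alpha$; second, exhibit a configuration of general points and osculating orders at which the osculating projection is generically finite; third, perform the arithmetic identifying $\sum_j h_\alpha(k_j) + 1$ with the two bounds in the statement.

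For the first two ingredients I would appeal directly to Theorem \ref{maingrass}, which supplies these for the Grassmannian. The natural choice of configuration is dictated by the binary expansion of $r$: take $l = s$ general points $p_1, \dots, p_s \in \G(r,n)$ and set $k_j = \lambda_j$. The combinatorial role of $\alpha = \lfloor (n+1)/(r+1) \rfloor$ is that it counts the maximum number of pairwise transverse $(r+1)$-dimensional subspaces of $\C^{n+1}$; via the standard flag description of osculating spaces of a Grassmannian, this is precisely the parameter governing both the osculating regularity and the degenerations needed to collide tangent directions into a single higher-order osculating space.

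For the arithmetic step, reading off from Definition \ref{defhowmanytangent} that $h_\alpha(k) = (\alpha - 1)\alpha^{k - 1}$ in the generic regime gives
$\sum_{j=1}^s h_\alpha(\lambda_j) = (\alpha - 1)\sum_{j=1}^s \alpha^{\lambda_j - 1}$,
matching the first bound exactly. The second, stronger bound $\sum_j \alpha^{\lambda_j}$ appears when the hypothesis $n \geq r^2 + 3r + 1$ provides sufficient room in Pl\"ucker space to upgrade each $h_\alpha(\lambda_j) = (\alpha - 1)\alpha^{\lambda_j - 1}$ to $\alpha^{\lambda_j}$; this extra slack corresponds to an additional tangent contribution that Theorem \ref{maingrass} permits only once the osculating spans are far enough from filling the ambient projective space.

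The main obstacle I expect is the second ingredient: proving the generic finiteness of $\Pi_{T^{\lambda_1,\dots,\lambda_s}_{p_1,\dots,p_s}}$ on $\G(r,n)$. This requires a careful dimension computation for the span $\langle T^{\lambda_1}_{p_1}\G(r,n), \dots, T^{\lambda_s}_{p_s}\G(r,n) \rangle$ at general points, together with a transversality argument forcing the fibers of the projection to be zero-dimensional. The dichotomy in the hypotheses on $n$ reflects precisely the two regimes in which this transversality can be established, and it is here that the bulk of the work behind Theorem \ref{maingrass} lies; once it is in place, Theorem \ref{lemmadefectsviaosculating} yields the claimed non-$h$-defectivity immediately.
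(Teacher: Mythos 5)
Your overall framing is right: the statement is indeed a direct consequence of Theorem \ref{maingrass}, and the paper derives it exactly that way, by a short arithmetic remark following that theorem. But your reconstruction of the arithmetic and of the underlying configuration contains genuine errors. First, the formula $h_\alpha(k)=(\alpha-1)\alpha^{k-1}$ is not what Definition \ref{defhowmanytangent} says: there one writes $k+1=2^{\mu_1}+\dots+2^{\mu_l}+\varepsilon$ in binary and sets $h_\alpha(k)=\alpha^{\mu_1-1}+\dots+\alpha^{\mu_l-1}$; for instance $h_\alpha(3)=\alpha$, not $(\alpha-1)\alpha^{2}$. Second, the binary expansion of $r$ does not dictate the choice of points and osculating orders. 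The configuration actually used (Corollary \ref{oscprojbirationalII}) is $\alpha$ points (resp. $\alpha-1$ points when $n<r^2+3r+1$), each with osculating order $r-1$; the expansion $r=2^{\lambda_1}+\dots+2^{\lambda_s}+\varepsilon$ enters only because $(r-1)+1=r$, so that $h_\alpha(r-1)=\alpha^{\lambda_1-1}+\dots+\alpha^{\lambda_s-1}$, and then $\alpha\, h_\alpha(r-1)=\alpha^{\lambda_1}+\dots+\alpha^{\lambda_s}$ while $(\alpha-1)h_\alpha(r-1)$ gives the other bound. Your proposal of $l=s$ points with orders $k_j=\lambda_j$ would, with the correct $h_\alpha$, yield $\sum_j h_\alpha(\lambda_j)+1$, which is far smaller than the claimed bounds (e.g.\ $h_\alpha(1)=h_\alpha(2)=1$), so Theorem \ref{lemmadefectsviaosculating} applied to that configuration does not prove the statement. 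Your totals only match because the two errors cancel.

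Third, the dichotomy $n\geq r^2+3r+1$ is not about ``upgrading each term'' of the sum; it is the numerical condition under which the osculating projection from $\alpha$ points of order $r-1$ (rather than only $\alpha-1$ of them) is still birational, coming from the constraint $l\leq n-r-1-\sum_i s_i$ in Proposition \ref{oscprojbir}: one checks $\frac{n+1}{r+1}\leq\frac{n-r-1}{r}$ if and only if $n\geq r^2+3r+1$. So the extra slack buys one additional point in the projection center, contributing one additional summand $h_\alpha(r-1)$ to the bound. To repair your argument, replace the configuration by $\alpha$ (or $\alpha-1$) general points with $k_j=r-1$, invoke Propositions \ref{limitosculatingspacesgrass} and \ref{limitosculatingspacesgrassII} for the osculating regularities and Corollary \ref{oscprojbirationalII} for birationality of the corresponding projection, and then compute $h_\alpha(r-1)$ from the definition as above.
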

Note that the bounds in our main result gives that asymptotically the Grassmannian $\G(r,n)$ is not $(\frac{n+1}{r+1})^{\lfloor\log_2(r)\rfloor}$-defective, while \cite[Theorem 3.3]{AOP09b} yields that $\G(r,n)$ is not $\frac{n}{3}$-defective. In Subsection \ref{uglymath} we show that Theorem \ref{maingrass} improves \cite[Theorem 3.3]{AOP09b} for any $r\geq 4$. However, Abo,  Ottaviani, and Peterson in \cite{AOP09b} gave a much better bound, going with $n^2$, in the case $r=2$. 

In order to prove our results we describe osculating spaces of Grassmannians in Section \ref{osculating spaces grass}.
In Section \ref{projoscgrass} we give conditions ensuring that osculating projections of the Grassmannian are birational. In Section \ref{degtanosc} we show how osculating spaces of Grassmannians degenerate in families.

In Chapter \ref{cap4} we push forward our techniques to investigate secant defectivity of Segre-Veronese varieties. 

The problem of secant defectivity for  Veronese varieties was completely solved in \cite{AH95}. 
In that paper, Alexander and  Hirshowitz showed that, except for the degree $2$ Veronese embedding, which is almost always defective, 
the degree $d$ Veronese embedding of $\mathbb{P}^n$ is not $h$-defective except in the following cases: 
$$
(d,n,h)\in\{(4,2,5),(4,3,9),(3,4,7),(4,4,14)\}.
$$

For Segre varieties, very little is known. 
Segre products of two factors $\mathbb{P}^{n_1}\times\mathbb{P}^{n_2}\subset \mathbb{P}^{^{n_1n_2+n_1+n_2}}$ are almost always defective.
For Segre products  $\mathbb{P}^{1}\times\dots\times\mathbb{P}^{1}\subset\mathbb{P}^N$, the problem was completely 
settled in \cite{CGG11}.
In general, $h$-defectivity of Segre products $\mathbb{P}^{n_1}\times\dots\times\mathbb{P}^{n_r}\subset\mathbb{P}^N$ were classified only for $h\leq 6.$
(\cite{AOP09a}).

Now, let us consider Segre-Veronese varieties. These are products $\mathbb{P}^{n_1}\times\dots\times\mathbb{P}^{n_r}$ embedded by the complete 
linear system  $\big|\mathcal{O}_{\mathbb{P}^{n_1}\times\dots\times\mathbb{P}^{n_r}}(d_1,\dots, d_r)\big|$, $d_i>0$.
The problem  of secant defectivity  for  Segre-Veronese varieties has been solved in some very special cases, mostly for products of few factors 
\cite{CGG03}, \cite{AB09}, \cite{Ab10}, \cite{BCC11}, \cite{AB12}, \cite{BBC12}, \cite{AB13}.
Secant defective Segre-Veronese products  $\mathbb{P}^{1}\times\dots\times\mathbb{P}^{1}$, with arbitrary number of factor and degrees, 
were classified in \cite{LP13}. 
It is known that Segre-Veronese varieties are not h-defective for small values of $h$ (\cite[Proposition 3.2]{CGG03}):
except for the Segre product $\mathbb{P}^{1}\times\mathbb{P}^{1}\subset\mathbb{P}^3$, Segre-Veronese varieties 
$\mathbb{P}^{n_1}\times\dots\times\mathbb{P}^{n_r}$ are never $h$-defective for $h\leq \min\{n_i\}+1$.
In this thesis we improve this bound by taking into account degrees $d_1,\dots, d_r$ of the embedding.
We show that, asymptotically, Segre-Veronese varieties are not $h$-defective for $h\leq (\min\{n_i\})^{\lfloor \log_2(d-1)\rfloor}$,
where $d=d_1+\dots+ d_r$.
More precisely, our main result in Theorem~\ref{mainsv} can be rephrased as follows.

\begin{Theorem}\label{main_intro}
Let $\pmb{n}=(n_1,\dots,n_r)$ and $\pmb{d} = (d_1,\dots,d_r)$ be two $r$-tuples of positive integers, with $n_1\leq \dots \leq n_r$ and $d=d_1+\dots+d_r\geq 3$. 
Let $SV^{\pmb n}_{\pmb d}\subset\mathbb{P}^N$ be the product  $\mathbb{P}^{n_1}\times\dots\times\mathbb{P}^{n_r}$ embedded by the complete 
linear system  $\big|\mathcal{O}_{\mathbb{P}^{n_1}\times\dots\times\mathbb{P}^{n_r}}(d_1,\dots, d_r)\big|$. 
Write 
$$
d-1 = 2^{\lambda_1}+\dots+2^{\lambda_s}+\epsilon,
$$ 
with $\lambda_1 > \lambda_2 >\dots >\lambda_s\geq 1$, $\epsilon\in\{0,1\}$. 
Then $SV^{\pmb n}_{\pmb d}$ is not $(h+1)$-defective for 
$$
h\leq n_1((n_1+1)^{\lambda_1-1}+\dots + (n_1+1)^{\lambda_s-1})+1.
$$
\end{Theorem}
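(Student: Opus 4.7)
The strategy is to apply Theorem~\ref{lemmadefectsviaosculating} with $X = SV^{\pmb n}_{\pmb d}$, $m = n_1+1$, and carefully chosen points $p_1,\dots,p_l$ together with osculating orders $k_1,\dots,k_l$ dictated by the binary expansion $d-1 = 2^{\lambda_1}+\dots+2^{\lambda_s}+\epsilon$. Concretely, I would take $l=s$ general points of $SV^{\pmb n}_{\pmb d}$ and set $k_j := 2^{\lambda_j}$, so that each summand $h_{n_1+1}(k_j)$ matches the term $n_1(n_1+1)^{\lambda_j-1}$ appearing in the target bound. Provided the two hypotheses of Theorem~\ref{lemmadefectsviaosculating}, namely $(n_1+1)$-osculating regularity and strong $2$-osculating regularity, are verified for $SV^{\pmb n}_{\pmb d}$, and provided the general osculating projection $\Pi_{T^{k_1,\dots,k_s}_{p_1,\dots,p_s}}$ turns out to be generically finite, the conclusion will be exactly the non-defectivity statement we want.

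\textbf{Step 1: osculating regularity.} I would first describe the osculating spaces $T^k_p SV^{\pmb n}_{\pmb d}$ explicitly in affine coordinates: in a standard toric chart centered at a point $p$, the space $T^k_p SV^{\pmb n}_{\pmb d}$ is spanned by the image of the monomials of multi-degree at most $k$ in the local coordinates of the factors, capped at degree $d_i$ in the $i$-th factor. The action of $(\mathrm{PGL}_{n_1+1}\times\dots\times\mathrm{PGL}_{n_r+1})$ by automorphisms, combined with the torus action in each factor, should let one produce the one-parameter families of points and osculating spaces required to verify both $(n_1+1)$-osculating regularity (degenerating $n_1+1$ collinear tangent spaces into a single $2$-osculating space along a line in the first factor) and strong $2$-osculating regularity (controlling how the span of two osculating spaces degenerates). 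The $n_1+1$ is precisely the length of the generic projective line inside the factor $\mathbb{P}^{n_1}$ of smallest dimension, which is what constrains the regularity parameter $m$.

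\textbf{Step 2: generic finiteness of the osculating projection.} For the chosen $(k_1,\dots,k_s)$, I would show that, when $p_1,\dots,p_s$ are in general position, the center $\langle T^{k_1}_{p_1}X,\dots,T^{k_s}_{p_s}X\rangle$ does not swallow any positive-dimensional fiber of the restriction to $X$. The natural way is a multi-homogeneous/multi-polar calculation: dualizing, the projection is generically finite iff the linear system of multi-homogeneous forms of multi-degree $(d_1,\dots,d_r)$ singular to order $k_j+1$ at each $p_j$ separates points of $SV^{\pmb n}_{\pmb d}$ off a proper closed subset. Since $\sum k_j = d-1-\epsilon \leq d-1$, truncating at each point still leaves room for separation; the combinatorics of the binary decomposition is precisely what makes the total order consumed at $s$ points stay below $d$. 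A careful toric/monomial count at a chosen configuration of coordinate points (the same kind of argument used in Chapter~\ref{cap3} for Grassmannians) should suffice to produce at least one configuration where the projection is finite, and then the generic configuration works by semicontinuity.

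\textbf{Main obstacle.} The delicate point is Step~2: controlling the kernel of the osculating projection when several $k_j$ are comparatively large. As $\lambda_1$ grows the osculating space $T^{k_1}_{p_1}X$ occupies a large portion of the ambient $\mathbb{P}^N$, and one must argue that what remains, after also removing $T^{k_j}_{p_j}X$ for $j\geq 2$, still has enough directions transverse to $X$ so that no positive-dimensional subvariety of $X$ is contracted. Equivalently, one must exhibit, at a general point $q\in X$, enough multi-derivatives of the Segre-Veronese parametrization that are not absorbed by the span of the osculating spaces at the $p_j$. I expect this to reduce to a combinatorial statement about monomials in the standard parametrization of $SV^{\pmb n}_{\pmb d}$ — the kind of monomial non-degeneracy statements that can be checked by degenerating to a toric configuration and invoking a semicontinuity argument, and which will require the hypothesis $d\geq 3$ in an essential way.
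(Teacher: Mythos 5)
There is a genuine gap, and it is located at the very first step: your choice of parameters in Theorem~\ref{lemmadefectsviaosculating} does not produce the claimed bound. You take $l=s$ points with $k_j=2^{\lambda_j}$ and assert that each summand $h_{n_1+1}(k_j)$ equals $n_1(n_1+1)^{\lambda_j-1}$. By Definition~\ref{defhowmanytangent}, since $k_j+1=2^{\lambda_j}+1$ has the single dyadic term $2^{\lambda_j}$ (with $\varepsilon=1$), one gets $h_{n_1+1}(2^{\lambda_j})=(n_1+1)^{\lambda_j-1}$, \emph{without} the factor $n_1$. Your configuration therefore yields non-defectivity only up to $h\leq \sum_j (n_1+1)^{\lambda_j-1}+1$, which is strictly weaker than the statement whenever $n_1\geq 2$. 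The binary expansion of $d-1$ is not meant to be distributed over distinct points; it is already built into the single value $h_{n_1+1}(d-2)$. The correct application takes $l=n_1$ general points, each with osculating order $k_j=d-2$, so that $\sum_{j=1}^{n_1}h_{n_1+1}(d-2)+1=n_1\bigl((n_1+1)^{\lambda_1-1}+\dots+(n_1+1)^{\lambda_s-1}\bigr)+1$, which is exactly the target. This is what the paper does, feeding Proposition~\ref{projoscseveralpoints} (birationality of $\Pi_{T^{d-2,\dots,d-2}_{e_{I_0},\dots,e_{I_{n_1-1}}}}$) into Theorem~\ref{lemmadefectsviaosculating} together with Propositions~\ref{limitosculatingspacessegver} and~\ref{limitosculatingspacessegverII}.

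This error propagates into your Step~2. Your heuristic that generic finiteness should hold because $\sum k_j=d-1-\epsilon\leq d-1$ ``leaves room for separation'' is tailored to your (wrong) configuration; for the configuration actually needed, $\sum k_j=n_1(d-2)$, which typically far exceeds $d$, so the projection cannot be analyzed by a naive degree count on multi-homogeneous forms. The paper instead factors the osculating projection through auxiliary maps $\Sigma_l$ whose restrictions to each factor are either Veronese osculating projections (handled by an explicit Cremona/coordinate computation, Lemma~\ref{projoscveroneseseveralpoints} and Corollary~\ref{projoscveroneseseveralpointscor}, where the operative constraint is $s\leq n(d-1)-2$ rather than $s<d$) or ordinary linear projections, and then concludes by intersecting the fibers of these auxiliary maps. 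Your Step~1, by contrast, is essentially in the right spirit: the two regularities are indeed verified by explicit degenerations along degree-$d$ rational normal curves through coordinate points, and the parameter $n_1+1$ does come from the smallest factor $\mathbb{P}^{n_1}$.
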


In order to prove our result we proceed as in the case of Grassmannians. In Section \ref{oscspaces} we describe osculating spaces of Segre-Veronese varieties. Then, in Section \ref{oscproj} we give conditions ensuring that osculating projections are birational. In Section \ref{degtanoscsv} we show how osculating spaces of Segre-Veronese varieties degenerate in families.

Most of the content of this first part appears in the pre-prints:
\begin{itemize}
\item 
\uppercase{Alex Massarenti, Rick Rischter}, \textit{Non-secant defectivity via osculating projections}, 2016, \arXiv{1610.09332v1}.
\item
\uppercase{Carolina Araujo, Alex Massarenti, Rick Rischter}, \textit{On non-secant defectivity of Segre-Veronese varieties}, 2016, \arXiv{1611.01674}.
\end{itemize}
\chapter{Secant defectivity via osculating projections}
\label{cap2}
In the present chapter we develop our method to study secant defectivity using osculating spaces.

In Section \ref{mainsection} we prove our main theorem concerning secant defects, Theorem \ref{lemmadefectsviaosculating}. In order to do this we make the proper definitions and preliminary results in the first three sections.
In Section \ref{secant} we recall the notions of secant varieties, secant defectivity and secant defect.
In Section \ref{osculating} we define osculating spaces and osculating projections.
In section \ref{degsection} we treat degenerations of osculating projections.

\section{Secant varieties}\label{secant}
We refer to \cite{Ru03} for a comprehensive survey on the subject of secant varieties.

Let $X\subseteq\P^N$ be a non-degenerate variety of dimension $n$ and let
$$\Gamma_h(X)\subseteq X\times \dots \times X\times\G(h-1,N)$$
be the closure of the graph of the rational map
$$\alpha: X\times  \dots \times X \dasharrow \G(h-1,N),$$
taking $h \leq N-n+1$ general points to their linear span $\langle x_1, \dots , x_{h}\rangle$.\\ Let $\pi_2:\Gamma_h(X)\to\G(h-1,N)$ be the natural projection. We denote
$$\mathcal{S}_h(X):=\pi_2(\Gamma_h(X))\subseteq\G(h-1,N).$$
Observe that $\Gamma_h(X)$ and $\mathcal{S}_h(X)$ are both irreducible of dimension $hn$. Finally, let
$$\mathcal{I}_h=\{(x,\Lambda) \: | \: x\in \Lambda\} \subseteq\P^N\times\G(h-1,N)$$
with natural projections $\pi_h$ and $\psi_h$ onto the factors. Furthermore, observe that $\psi_h:\mathcal{I}_h\to\G(h-1,N)$ is a $\P^{h-1}$-bundle on $\G(h-1,N)$.

\begin{Definition} Let $X\subseteq\P^N$ be a non-degenerate variety. The {\it abstract $h$-secant variety} is the irreducible variety
$$\Sec_{h}(X):=(\psi_h)^{-1}(\mathcal{S}_h(X))\subseteq \mathcal{I}_h.$$
The {\it $h$-secant variety} is
$$\sec_{h}(X):=\pi_h(\Sec_{h}(X))\subseteq\P^N.$$
It immediately follows that $\Sec_{h}(X)$ is a $(hn+h-1)$-dimensional variety with a $\P^{h-1}$-bundle structure over $\mathcal{S}_h(X)$. We say that $X$ is \textit{$h$-defective} if $$\dim\sec_{h}(X)<\min\{\dim\Sec_{h}(X),N\}.$$ 
The number 
$$\delta_h(X) = \min\{\dim\Sec_{h}(X),N\}-\dim\sec_{h}(X)$$ 
is called the \textit{$h$-defect} of $X$. When $X$ is $h-$defective and $(h+1)n+h<N$, then $X$ is $(h+1)-$defective. We say that  $X$ is defective if it is $h-$defective for some $h.$
\end{Definition}

\begin{Example}
If $X\subseteq \P^N$  is a non-degenerate hypersurface then $\sec_{2}(X)=\P^N.$
If $X\subseteq \P^N$  is a curve then $\dim \sec_{h}(X)=\min\{2h-1,N\}$ for any $h.$
In both cases $X$ is not defective.

If $X\subseteq \P^N, N\geq 3,$ is a non-degenerate surface, then $3\leq \dim \sec_{2}(X)\leq 5.$
It can be shown that $\dim \sec_{2}(X)=3$ if and only if $N=3,$ that is, $X$ is a hypersurface. Most of the surfaces have $\sec_{2}(X)$ with the expected dimension $5.$ The surfaces such that
$\dim \sec_{2}(X)=4$ have been classified a century ago by Terracini \cite{Te21}.
\end{Example}

\begin{Example}
Let $\nu_2^2:\mathbb{P}^2\rightarrow\mathbb{P}^{5}$ be the $2$-Veronese embedding of $\mathbb{P}^2$ and $X = V^2_2\subseteq\mathbb{P}^{5}$ the corresponding Veronese variety. Interpreting this as 
\begin{align*}
\nu_2^2:\mathbb{C}[x,y,z]_1\to&\mathbb{\C}[x,y,z]_2\\
[L]\mapsto &[L^2]
\end{align*}
we can see elements of $X$ as symmetric $3\times 3$ matrices of rank one. Then $\sec_{2}(X)$ can be seen as symmetric $3\times 3$ matrices of rank at most two, therefore it is a hypersurface of degree three. Thus $X$ is $2-$defective.
\end{Example}

To describe precisely $\sec_h X$ for a given projective variety $X$ is in general quite difficult, in order to study secant defectivity it is better to use Terracini's Lemma:

\begin{Theorem}\cite{Te11}\label{terracini}
Let $X\subseteq\mathbb{P}^N$ be a non-degenerate variety over a field of characteristic zero. Let $p\in\sec_h(X)$ be a general point lying in the linear span of $p_1,...,p_h\in X$. Then
$$T_p\sec_h(X) = \left\langle {T}_{p_1}X,...,{T}_{p_h}X\right\rangle.$$
\end{Theorem}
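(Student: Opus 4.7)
The plan is to realize $\sec_h(X)$ as the closure of the image of an explicit morphism and then apply generic smoothness. Let $\widehat{Y}\subseteq \mathbb{A}^{N+1}$ denote the affine cone over a projective variety $Y\subseteq\mathbb{P}^N$, and consider the summation morphism
\[
\sigma\colon \widehat{X}\times\cdots\times\widehat{X}\longrightarrow \mathbb{A}^{N+1},\qquad (v_1,\ldots,v_h)\longmapsto v_1+\cdots+v_h,
\]
whose image has Zariski closure equal to $\widehat{\sec_h(X)}$: a general element of $\langle p_1,\ldots,p_h\rangle$ lifts to a sum $\lambda_1 v_1+\cdots+\lambda_h v_h$, and each $\lambda_i v_i$ still lies in $\widehat{X}$.

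First, I would lift the $h$ general points $p_i\in X$ to smooth points $v_i\in \widehat{X}$ and lift $p$ to a smooth point $v=v_1+\cdots+v_h$ of $\widehat{\sec_h(X)}$; such lifts exist by the genericity of $p$ and of the $p_i$. The differential of $\sigma$ at $(v_1,\ldots,v_h)$ is linear and splits as the sum map
\[
d\sigma\colon T_{v_1}\widehat{X}\oplus\cdots\oplus T_{v_h}\widehat{X}\longrightarrow \mathbb{A}^{N+1},\qquad (w_1,\ldots,w_h)\longmapsto w_1+\cdots+w_h,
\]
so its image is exactly the linear subspace $T_{v_1}\widehat{X}+\cdots+T_{v_h}\widehat{X}$ of $\mathbb{A}^{N+1}$.

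Next, I would invoke generic smoothness: over a field of characteristic zero, a dominant morphism between varieties is smooth on a nonempty open subset of the source, so its differential surjects onto the tangent space of the target at a general image point. Applying this to $\sigma$ viewed as dominant onto the irreducible closed subvariety $\widehat{\sec_h(X)}\subseteq\mathbb{A}^{N+1}$ and using the generality of $(v_1,\ldots,v_h)$, one obtains
\[
T_v\widehat{\sec_h(X)} \;=\; T_{v_1}\widehat{X}+\cdots+T_{v_h}\widehat{X}.
\]

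Finally, I would projectivize: for a smooth point $q\in \widehat{Y}$ lying over $[q]\in Y$, the embedded projective tangent space $T_{[q]}Y$ is the projectivization of $T_q\widehat{Y}$, since affine cones are stable under scalar multiplication. Applying this to each affine tangent space above yields the required equality
\[
T_p\sec_h(X) \;=\; \langle T_{p_1}X,\ldots,T_{p_h}X\rangle.
\]
The main obstacle is the appeal to generic smoothness, which is exactly where the characteristic zero hypothesis is essential: in positive characteristic $\sigma$ could be inseparable and the image of its differential would only be contained in $T_v\widehat{\sec_h(X)}$ rather than equal to it, giving only the inclusion $\langle T_{p_1}X,\ldots,T_{p_h}X\rangle\subseteq T_p\sec_h(X)$.
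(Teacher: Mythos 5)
Your argument is correct: it is the standard modern proof of Terracini's Lemma via the summation map $\sigma$ on affine cones, the identification of $\operatorname{im}(d\sigma)$ with the sum of the affine tangent spaces, and generic smoothness in characteristic zero. The paper does not prove this statement at all --- it is quoted as a classical result with a citation to Terracini's 1911 paper --- so there is nothing to compare against, but your write-up matches the classical argument (as in Zak's or Russo's treatments), including the correct observation that in positive characteristic only the inclusion $\left\langle T_{p_1}X,\dots,T_{p_h}X\right\rangle\subseteq T_p\sec_h(X)$ survives.
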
 

\begin{Example}Consider again the Veronese surface $X\subseteq \P^5.$ We will show that given two general points $p_1,p_2\in X$ we have that $T_{p_1} X\cap T_{p_2} X$ is not empty, and therefore $X$ is $2-$defective by Terracini's Lemma \ref{terracini}.

Let $p_1,p_2\in X$ be general points. Consider the preimages 
$$q_1=(\nu_2^2)^{-1}(p_1),q_2=(\nu_2^2)^{-1}(p_2)\in \P^2$$ of $p_1,p_2,$ and 
the line $$l=\overline{q_1 q_2}\subseteq \P^2$$ connecting them. The image of this line by $\nu_2^2$ is a conic $\nu_2^2(l)=c\subseteq X.$ Call $v_1=T_{p_1}c,v_2=T_{p_2}c$ the tangent lines of $c$ at $p_1$ and $p_2.$
Since $c$ is a plane curve $v_1\cap v_2=\{p\}$ is not empty. But then
$$T_{p_1} X\cap T_{p_2} X\supset T_{p_1} c\cap T_{p_2} c=\{p\}\neq \varnothing.$$
\end{Example}

There is another method to approach this problem, developed by Chiantini and Ciliberto in \cite{CC01}.
Let $p_1,\dots,p_h\in X\subseteq\mathbb{P}^N$ be general points with tangent spaces $T_{p_i}X .$
We call the linear projection
$$\tau_{X,h}:X\subseteq\mathbb{P}^N\dasharrow\mathbb{P}^{N_h}$$
with center $\left\langle T_{p_1}X,\dots,T_{p_h}X\right\rangle$ a \textit{general $h$-tangential projection} of $X$. Set $X_h \!=\! \tau_{X,h}(X)$. 

\begin{Proposition}\cite[Proposition 3.5]{CC01}\label{cc}
Let $X\subset\P^N$ be a non-degenerate projective variety of dimension $n$, and $x_1,\dots,x_h\in X$  general points. Assume that 
$$
N-\dim(\left\langle T_{x_1}X,\dots,T_{x_h}X\right\rangle)-1\geq n.
$$
Then the general $h$-tangential projection $\tau_{X,h}:X\dasharrow X_h$ is generically finite if and only if 
$X$ is not $(h+1)$-defective.
\end{Proposition}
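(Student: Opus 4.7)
The plan is to combine Terracini's Lemma (Theorem \ref{terracini}) with a kernel analysis of the linear projection $\tau_{X,h}$.

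First I would set $L:=\langle T_{x_1}X,\dots,T_{x_h}X\rangle$, so that $\tau_{X,h}=\pi_L|_X$. By Terracini applied at a general $q\in\sec_h(X)$ one has $T_q\sec_h(X)=L$, hence $\dim\sec_h(X)=\dim L$. Pick one further general point $x_{h+1}\in X$ and apply Terracini to $\sec_{h+1}(X)$, obtaining
$$
\dim\sec_{h+1}(X)\;=\;\dim\langle L,T_{x_{h+1}}X\rangle\;=\;\dim L+n-\dim\bigl(L\cap T_{x_{h+1}}X\bigr),
$$
the last equality being the projective Grassmann formula with the convention $\dim\emptyset=-1$.

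Next I would pin down the generic rank of $\tau_{X,h}=\pi_L|_X$. The fibers of the linear projection $\pi_L:\P^N\dasharrow\P^{N_h}$ are the affine spaces $\langle L,q\rangle\setminus L$, and a short affine-chart computation shows that at a general smooth point $x_{h+1}\in X\setminus L$ the differential $d\pi_L|_{T_{x_{h+1}}X}$ is injective if and only if $L\cap T_{x_{h+1}}X=\emptyset$. Thus $\tau_{X,h}$ is generically finite iff $L\cap T_{x_{h+1}}X=\emptyset$ for general $x_{h+1}$, and plugging this into the Terracini formula above yields the key equivalence
$$
\tau_{X,h}\text{ is generically finite}\iff \dim\sec_{h+1}(X)=\dim L+n+1.
$$

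Finally I would match this equivalence with the notion of $(h+1)$-defectivity. The hypothesis $N-\dim L-1\geq n$ forces $\dim L+n+1\leq N$, while $\dim L\leq hn+h-1$ gives $\dim L+n+1\leq(h+1)n+h=\dim\Sec_{h+1}(X)$. Arguing by induction on $h$, one may assume $X$ is not $h$-defective, so $\dim L=hn+h-1$ and the quantity $\dim L+n+1=(h+1)n+h=\min\{\dim\Sec_{h+1}(X),N\}$ is exactly the expected dimension. The key equivalence then reads: $\tau_{X,h}$ is generically finite iff $\dim\sec_{h+1}(X)$ achieves the expected dimension, iff $X$ is not $(h+1)$-defective. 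The main obstacle is the injectivity criterion for $d\pi_L|_{T_{x_{h+1}}X}$ in the middle step: one has to carefully compare the embedded projective tangent space $T_{x_{h+1}}X$ (which contains $x_{h+1}$) with the center $L$ (which does not), so that the bookkeeping is done via the projective Grassmann formula rather than a naive vector-space intersection.
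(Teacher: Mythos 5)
The paper does not actually prove this statement: it is imported verbatim from Chiantini--Ciliberto \cite[Proposition 3.5]{CC01} with no internal proof, so there is nothing in the thesis to compare your argument against line by line. That said, your framework is the standard one and most of it is sound: Terracini's Lemma gives $\dim\sec_h(X)=\dim L$ and $\dim\sec_{h+1}(X)=\dim L+n-\dim\bigl(L\cap T_{x_{h+1}}X\bigr)$, and your criterion ``$\tau_{X,h}$ generically finite $\iff L\cap T_{x_{h+1}}X=\emptyset$'' is correct (a point of $T_{x_{h+1}}X\cap\langle L,x_{h+1}\rangle$ other than $x_{h+1}$ produces, via the line it spans with $x_{h+1}$, a point of $T_{x_{h+1}}X\cap L$). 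Note also that the forward implication needs no extra input at all: if $X$ is not $(h+1)$-defective, then $\min\{(h+1)n+h,N\}=\dim\sec_{h+1}(X)\leq\dim L+n+1\leq\min\{(h+1)n+h,N\}$, the last inequality using $\dim L\leq hn+h-1$ and the hypothesis $\dim L+n+1\leq N$; equality throughout forces $L\cap T_{x_{h+1}}X=\emptyset$.

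The genuine gap is the sentence ``arguing by induction on $h$, one may assume $X$ is not $h$-defective.'' There is no induction available: the statement for smaller $h$ is an equivalence about tangential projections and does not deliver non-$h$-defectivity of $X$; you are adding a hypothesis that is not in the proposition. Where this bites is the converse. Your key equivalence reduces it to showing that $\dim\sec_{h+1}(X)=\dim L+n+1$ implies $\dim L+n+1=\min\{(h+1)n+h,N\}$, and this fails exactly when $X$ is $h$-defective with $\dim L<hn+h-1$ and $\dim L+n+1<N$ --- a scenario your argument does not exclude. To close it you must prove that $h$-defectivity already destroys generic finiteness. One way: for general points, $\dim\bigl(T_{x_j}X\cap\langle T_{x_1}X,\dots,T_{x_{j-1}}X\rangle\bigr)=n+\dim\sec_{j-1}(X)-\dim\sec_j(X)$, so summing over $j$ shows that $\dim\sec_h(X)<hn+h-1$ forces some $T_{x_j}X$ to meet the span of the preceding tangent spaces; since $x_{h+1},x_1,\dots,x_{j-1}$ is an equally general $j$-tuple, relabelling gives $T_{x_{h+1}}X\cap L\supseteq T_{x_{h+1}}X\cap\langle T_{x_1}X,\dots,T_{x_{j-1}}X\rangle\neq\emptyset$, so $\tau_{X,h}$ is not generically finite. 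With that lemma (or by quoting the monotonicity of the secant defects $\delta_h$ from \cite{CC01}) in place of the ``induction,'' your proof is complete.
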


\begin{Example}Let $\nu_2^n:\mathbb{P}^n\rightarrow\mathbb{P}^{N_n}$ be the $2$-Veronese embedding of $\mathbb{P}^n$, with $N_n = \frac{1}{2}(n+2)(n+1)-1$, $X = V^n_2\subseteq\mathbb{P}^{N_n}$ the corresponding Veronese variety, and $x_1,\dots, x_{h}\in V^n_2$ general points, with $h\leq n-1.$ The linear system of hyperplanes in $\mathbb{P}^{N_n}$ containing $\left\langle T_{x_1}V^n_2,\dots, T_{x_{h}}V^n_2\right\rangle$ corresponds to the linear system of quadrics in $\mathbb{P}^n$ whose vertex contains $\Lambda=\left\langle\nu_2^{-1}(x_1),\dots, \nu_2^{-1}(x_{h})\right\rangle$. Therefore, we have the following commutative diagram
\[
  \begin{tikzpicture}[xscale=2.7,yscale=-1.2]
    \node (A0_0) at (0, 0) {$\mathbb{P}^n$};
    \node (A0_1) at (1, 0) {$V_2^n\subseteq\mathbb{P}^{N_n}$};
    \node (A1_0) at (0, 1) {$\mathbb{P}^{n-h}$};
    \node (A1_1) at (1, 1) {$V_2^{n-h}\subseteq\mathbb{P}^{N_{n-h}}$};
    \path (A0_0) edge [->]node [auto] {$\scriptstyle{\nu_2^n}$} (A0_1);
    \path (A0_0) edge [->,swap,dashed]node [auto] {$\scriptstyle{\pi_\Lambda}$} (A1_0);
    \path (A0_1) edge [->,dashed]node [auto] {$\scriptstyle{\tau_{X,h}}$} (A1_1);
    \path (A1_0) edge [->]node [auto] {$\scriptstyle{\nu_2^{n-h}}$} (A1_1);
  \end{tikzpicture}
  \]
where $\pi_{\Lambda}:\mathbb{P}^n\dasharrow\mathbb{P}^{n-h}$ is the projection from $\Lambda$. Hence $\tau_{X,h}$ has positive relative dimension, and Proposition \ref{cc} yields, as it is well-known, that $V_2^n$ is $h$-defective for any $h\leq n$.
\end{Example}

Using Proposition \ref{cc} Chiantini and Ciliberto were able to classify defective threefolds in \cite{CC03}.

Terracini's Lemma \ref{terracini} and Proposition \ref{cc} became more difficult to use in practice as soon as $h$ increases, that is, when we are dealing with more points. 
The new strategy developed in this thesis is to build upon Proposition \ref{cc} and give a new technique in which one could guarantee that 
$\sec_h X$ has the expected dimension but working with much less than $h$ points.

The idea is to consider an osculating space of $X$ at $p$ or order $s,$ see Section \ref{osculating}, and degenerate the span of several tangent spaces, say $h$ of them, inside this single one osculating space.
Then we consider the linear projection from this osculating space, and if it is generically finite then the projection from the span of the tangent spaces is generically finite as well, as we will see in Section \ref{mainsection}. And by Proposition \ref{cc} we get that $X$ is not $h+1$-defective.

The advantage here is that we only have to consider the linear projection from a single natural space instead of consider the projection from the span of several tangent spaces. The former in the practice may be much simpler.
In Chapters \ref{cap3} and \ref{cap4} we apply this new technique to two well studied varieties, the Grassmannian and the Segre-Veronese, and are able to find new bounds for theirs non-defectivity.

\section{Osculating spaces and osculating projections}\label{osculating}

Let $X\subset \P^N$ be a projective variety of dimension $n$, and $p\in X$ a smooth point.
Choose a local parametrization of $X$ at $p$:
$$
\begin{array}{cccc}
\phi: &\mathcal{U}\subseteq\mathbb{C}^n& \longrightarrow & \mathbb{C}^{N}\\
      & (t_1,\dots,t_n) & \longmapsto & \phi(t_1,\dots,t_n) \\
      & 0 & \longmapsto & p 
\end{array}
$$
For a multi-index $I = (i_1,\dots,i_n)$, set 
\begin{equation}\label{osceq}
\phi_I = \frac{\partial^{|I|}\phi}{\partial t_1^{i_1}\dots\partial t_n^{i_n}}.
\end{equation}
For any $m\geq 0$, let $O^m_pX$ be the affine subspace of $\mathbb{C}^{N}$ centered at $p$ and spanned by the vectors $\phi_I(0)$ with  $|I|\leq m$.

The $m$-\textit{osculating space} $T_p^m X$ of $X$ at $p$ is the projective closure  of  $O^m_pX$ in $\mathbb{P}^N$.
Note that $T_p^0 X=\{p\}$, and $T_p^1 X$ is the usual tangent space of $X$ at $p$.
When no confusion arises we will write $T_p^m$ instead of $T_p^mX$.

\begin{Example}\label{ratcurve}
Let $C_n\subseteq \P^n$ be a rational normal curve of degree $n$ and $p\in C_n.$ We may assume that 
$p=(1:0:\dots:0)\in (x_0=1)=\C^n\subseteq \P^n.$
Then 
$$
\begin{array}{cccc}
\phi: &\mathbb{C}& \longrightarrow & \mathbb{C}^{n}\\
      & t & \longmapsto & (t,t^2,\dots,t^n)
\end{array}
$$
is a local parametrization of $C_n$ in a neighorhood of $\phi(0)=p.$
Thus
$$
\begin{array}{cccc}
\dfrac{\partial^j \phi}{\partial t^j}(0)=e_j, \ j=1,\dots,n
\end{array}
$$
and
$$O^m_p X=\left\langle e_1,\dots, e_m \right\rangle\subseteq \C^n, m=1,\dots,n.$$
Therefore the osculating spaces of $C_n$ are
$$T^m_p X=\left\langle e_0,e_1,\dots, e_m \right\rangle\subseteq \P^n, m=1,\dots,n.$$
\end{Example}

Osculating spaces can be defined intrinsically. Let $\mathcal{L}$ be an invertible sheaf on $X$, $V = H^0(X,\mathcal{L})$, and $\Delta\subseteq X\times X$ the diagonal. The rank $\binom{n+m}{m}$ locally free sheaf
$$J_m(\mathcal{L}) = \pi_{1*}(\pi_{2}^{*}(\mathcal{L})\otimes \mathcal{O}_{X\times X}/\mathcal{I}_{\Delta}^{m+1})$$
is called the \textit{$m$-jet bundle} of $\mathcal{L}$. Note that the fiber of $J_m(\mathcal{L})$ at $p\in X$ is 
$$J_m(\mathcal{L})_p\cong H^0(X,\mathcal{L}\otimes\mathcal{O}_X/\mathfrak{m}_p^{m+1})$$
and the quotient map 
$$j_{m,p}:V\rightarrow H^0(X,\mathcal{L}\otimes\mathcal{O}_X/\mathfrak{m}_p^{m+1})$$
is nothing but the evaluation of the global sections and their derivatives of order at most $m$ at the point $p\in X$. Let 
$$j_m:V\otimes\mathcal{O}_X\rightarrow J_m(\mathcal{L})$$
be the corresponding vector bundle map. Then, there exists an open subset $U_m\subseteq X$ where $j_m$ is of maximal rank $r_m\leq \binom{n+m}{m}$.

The linear space $\mathbb{P}(j_{m,p}(V)) = T_p^m X\subseteq\mathbb{P}(V)$ is the $m$-\textit{osculating space} of $X$ at $p\in X$. The integer $r_m$ is called the \textit{general $m$-osculating dimension} of $\mathcal{L}$ on $X$.

Note that while the dimension of the tangent space at a smooth point is always equal to the dimension of the variety, higher order osculating spaces can be strictly smaller than expected even at a general point. In general, we have that the
\textit{$m$-osculating dimension} of $X$ at $p$ is
\begin{equation}\label{dimosc}
\dim(T_p^m X) = \min\left\{\binom{n+m}{n}-1-\delta_{m,p},N\right\}
\end{equation}
where $\delta_{m,p},$ is the number of independent differential equations of order $\leq m$ satisfied by $X$ at $p.$

Projective varieties having general $m$-osculating dimension smaller than expected were introduced and studied in \cite{Seg07}, \cite{Te12}, \cite{Bom19}, \cite{To29}, \cite{To46}, and more recently in \cite{PT90}, \cite{BPT92}, \cite{BF04}, \cite{MMRO13}, \cite{DiRJL15}.

In particular, these works highlight how algebraic surfaces with defective higher order osculating spaces contain many lines, such as rational normal scrolls, and developable surfaces, that is cones or tangent developables of curves. As an example, which will be useful later on this chapter, we consider tangent developables of rational normal curves.

\begin{Proposition}\label{tdrnc}
Let $C_n\subseteq\mathbb{P}^n$ be a rational normal curve of degree $n$ in $\mathbb{P}^n$, and let $Y_n\subseteq\mathbb{P}^n$ be its tangent developable. Then
$$\dim(T^m_pY_n) = \min\{m+1,n\}$$
for $p\in Y_n$ general, and $m\geq 1$.
\end{Proposition}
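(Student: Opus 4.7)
The plan is to carry out an explicit computation using a local parametrization of $Y_n$ that exploits its developable structure. Keeping the notation of Example \ref{ratcurve}, set $\gamma(t) = (1, t, t^2, \ldots, t^n)$, and parametrize $Y_n$ in an affine chart by
\[
\Phi(t, s) = \gamma(t) + s\, \gamma'(t).
\]
A general point of $Y_n$ then has the form $p = \Phi(0, s_0)$ for some $s_0 \neq 0$, and the $m$-osculating space $T_p^m Y_n$ is, by definition, the projective closure of the affine span of the partial derivatives $\partial_t^a \partial_s^b \Phi(0,s_0)$ with $a + b \leq m$.

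The decisive observation is that $\Phi$ is affine-linear in $s$. Consequently $\partial_t^a \partial_s^b \Phi \equiv 0$ as soon as $b \geq 2$, so the only partials of total order at most $m$ that can contribute are
\[
\Phi_{t^a}(t,s) = \gamma^{(a)}(t) + s\, \gamma^{(a+1)}(t), \quad 0 \leq a \leq m,
\]
together with
\[
\Phi_{t^a s}(t,s) = \gamma^{(a+1)}(t), \quad 0 \leq a \leq m-1.
\]
Using $\gamma^{(k)}(0) = k!\, e_k$ for $0 \leq k \leq n$ and $\gamma^{(k)}(0) = 0$ otherwise (where $e_0, \ldots, e_n$ denotes the standard basis of $\mathbb{C}^{n+1}$), evaluation at $(0, s_0)$ produces vectors supported on $\{e_0, e_1, \ldots, e_{\min\{m+1,n\}}\}$. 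This immediately yields the upper bound $\dim T_p^m Y_n \leq \min\{m+1, n\}$.

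For the matching lower bound I would run a short elimination using $s_0 \neq 0$: the vectors $\Phi_{t^a s}(0,s_0) = (a+1)!\, e_{a+1}$ directly recover $e_1, \ldots, e_{\min\{m, n\}}$; the derivative $\Phi_{t^m}(0,s_0) = m!\, e_m + s_0 (m+1)!\, e_{m+1}$ then contributes $e_{m+1}$ whenever $m+1 \leq n$; and finally $\Phi(0,s_0) = e_0 + s_0 e_1$ yields $e_0$. Hence the affine span contains the full set $\{e_0, e_1, \ldots, e_{\min\{m+1, n\}}\}$, so the projective dimension of $T_p^m Y_n$ equals $\min\{m+1, n\}$, as claimed.

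There is no significant obstacle: the argument is a transparent linear-algebra calculation made possible by the fact that developability forces $\Phi$ to be affine-linear in $s$, annihilating the mixed partials $\partial_s^b$ with $b \geq 2$ that would otherwise contribute independent directions. The only situation meriting a brief separate remark is $m \geq n$, where the basis vectors $e_{m+1}, e_{m+2}, \ldots$ no longer exist and the osculating space saturates by filling the ambient $\mathbb{P}^n$; this accounts for the sharp defect between the actual osculating dimension $\min\{m+1,n\}$ and the expected value $\binom{m+2}{2} - 1$ for a surface.
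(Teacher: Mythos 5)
Your proof is correct and follows essentially the same route as the paper: both parametrize $Y_n$ as $\gamma(t)+s\,\gamma'(t)$, use the affine-linearity in $s$ to kill all partials with $\partial_s^b$, $b\geq 2$, and observe that each order contributes only one new direction $\gamma^{(m+1)}$. The only cosmetic difference is that you evaluate explicitly at $t=0$ and exhibit the basis $e_0,\dots,e_{\min\{m+1,n\}}$, whereas the paper records the identity $\partial_t^m\phi-\partial_t^{m-1}\partial_u\phi=u\,\partial_t^m\partial_u\phi$ and concludes via the count of Laplace equations $\delta_{m,p}=\tfrac{m(m+1)}{2}-1$ in formula \eqref{dimosc}.
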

\begin{proof}
We may work on an affine chart. Then $Y_n$ is the surface parametrized by
$$
\begin{array}{cccc}
\phi: & \mathbb{A}^2 & \longrightarrow & \mathbb{A}^n\\ 
 & (t,u) & \mapsto & (t+u,t^2+2tu,\dots ,t^n+nt^{n-1}u)
\end{array} 
$$
Note that 
$$\frac{\partial^m \phi}{\partial t^{m-k} \partial u^k}=0$$
for any $k\geq 2$. Furthermore, we have
$$\frac{\partial^m\phi}{\partial t^m}-\frac{\partial^m\phi}{\partial t^{m-1}\partial u} = u\frac{\partial^{m+1}\phi}{\partial t^m\partial u}$$
for any $n\geq m\geq 1$.

Therefore, for any $n \geq m\geq 1$ we get just two non-zero partial derivatives of order $m$, and one partial derivative is given in terms of smaller order partial derivatives. Furthermore, in the notation of (\ref{dimosc}) we have $\delta_{m,p} = \frac{m(m+1)}{2}-1$ for any $1\leq m\leq n-1$, where $p\in Y_n$ is a general point.
\end{proof}

Let $p_1,\dots,p_l\in X\subseteq\mathbb{P}^N$ be general points and $k_1,\dots,k_l\geq 0$ integers.
We will call the linear projection
\begin{equation}
\Pi_{T^{k_1,\dots,k_l}_{p_1,\dots,p_l}}:X\subseteq\mathbb{P}^N\dasharrow\mathbb{P}^{N_{k_1,\dots,k_l}}
\end{equation}
with center 
$$\left\langle T^{k_1}_{p_1}X,\dots,T^{k_l}_{p_l}X\right\rangle$$ a \textit{general $(k_1,\dots,k_l)$-osculating projection} of $X$.

\begin{Example}
Recall the Example \ref{ratcurve} and let $p,q\in C_n$ be general points. We may suppose that $p=e_0,q=e_n,$ and then 
$$T^m_p X=\left\langle e_0,e_1,\dots, e_m \right\rangle,
T^m_q X=\left\langle e_n,e_{n-1},\dots, e_{n-m} \right\rangle\subseteq \P^n, m=1,\dots,n.$$
Therefore, given non-negative integers $a,b$ such that 
$a+b\leq n-2$ we can write a general $(a,b)-$osculating projection from $C_n$
$$
\begin{array}{rl}
\Pi_{T^{a,b}_{p,q}}:C_n\subseteq\mathbb{P}^n&\dasharrow\mathbb{P}^{n-a-b-2}\\
(x_0:\dots:x_n)&\mapsto
(x_{a+1}:\dots:x_{n-b-1})\end{array}.
$$
Composing with 
$$
\begin{array}{rl}
\gamma:\P^1&\to C_n\subseteq \P^n\\
(t:s)&\mapsto
(s^n:s^{n-1}t:\dots :t^n)\end{array}
$$
we get
$$
\begin{array}{rl}
\Pi_{T^{a,b}_{p,q}}\circ\gamma:\P^1&\dasharrow\mathbb{P}^{n-a-b-2}\\
(t:s)&\mapsto
(s^{n-a-1}t^{a+1}:\dots:s^{b+1}t^{n-b-1})=
(s^{n-a-b-2}:s^{n-a-b-3}t:\dots:t^{n-a-b-2})
\end{array}
$$
We conclude that $\Pi_{T^{a,b}_{p,q}}$ is the constant map when $a+b=n-2$ and is birational otherwise.
\end{Example}

For our strategy to work we have to be able to control the dimension of the fibers of these general osculating projections. 
In Chapters \ref{cap3} and \ref{cap4} we describe explicitly the osculating spaces and give sufficient conditions for the osculation projections to be generically finite for the Grassmannian and the Segre-Veronese varieties. 

\section{Degenerating projections and osculating spaces}
\label{degsection}

In order to study the fibers of general tangential projections via osculating projections we need to understand how the fibers of rational maps behave under specialization. We refer to 
\cite[Section 20]{GD64} for the general theory of rational maps relative to a base scheme.

\begin{Proposition}\label{p1}
Let $C$ be a smooth and irreducible curve, $X\rightarrow C$ an integral scheme flat over $C$, and $\phi:X\dasharrow \mathbb{P}^n_{C}$ be a rational map of schemes over $C$. Let $d_0 = \dim(\overline{\phi_{|X_{t_0}}(X_{t_0})})$ with $t_0\in C$. Then for $t\in C$ general we have $\dim(\overline{\phi_{|X_{t}}(X_{t})})\geq d_0$.

In particular, if there exists $t_0\in C$ such that $\phi_{|X_{t_0}}:X_{t_0}\dasharrow\mathbb{P}^n$ is generically finite, then for a general $t\in C$ the rational map $\phi_{|X_{t}}:X_{t}\dasharrow\mathbb{P}^n$ is generically finite as well.
\end{Proposition}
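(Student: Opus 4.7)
The plan is to pass from the rational map to an honest family of projective varieties over $C$, and then use that $C$ is a smooth curve. Let $\Gamma\subseteq X\times_C\mathbb{P}^n_C$ be the scheme-theoretic closure of the graph of $\phi$, and let $Y\subseteq\mathbb{P}^n_C$ be the scheme-theoretic image of $\Gamma$ under the second projection. Since $X$ is integral, so are $\Gamma$ and $Y$, and since $Y$ is closed in $\mathbb{P}^n_C$, the structural morphism $\psi\colon Y\to C$ is proper. Moreover $\psi$ is dominant, because $X\to C$ is dominant (an integral scheme with a flat map to $C$ surjects onto $C$). The strategy is to compare $\overline{\phi|_{X_t}(X_t)}$ with the fiber $Y_t$ for each $t\in C$.

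Next I would use that $C$ is a smooth (hence Dedekind) curve. A dominant morphism from an integral scheme to a Dedekind scheme is automatically flat, so $\psi$ is flat, proper, and surjective. Consequently every fiber $Y_t$ has the same dimension $d:=\dim Y-1$. By construction $\overline{\phi|_{X_t}(X_t)}\subseteq Y_t$ whenever $X_t\cap\operatorname{dom}(\phi)$ is dense in $X_t$; in particular this inclusion holds at $t_0$, giving $d_0\leq d$.

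The remaining point is to show that $\overline{\phi|_{X_t}(X_t)}=Y_t$ for general $t\in C$, which will yield $\dim\overline{\phi|_{X_t}(X_t)}=d\geq d_0$. Over the generic point $\eta$ of $C$, the map $\Gamma_\eta\to Y_\eta$ is dominant by definition of $Y$. Applying generic flatness to $\pi_2\colon\Gamma\to Y$, this dominance spreads out: there is a dense open $U\subseteq C$ such that for every $t\in U$ the image of $\Gamma_t$ under $\pi_2$ contains a dense open of $Y_t$, so its closure is all of $Y_t$. Since $\pi_2(\Gamma_t)$ is precisely $\phi|_{X_t}(X_t\cap\operatorname{dom}(\phi))$, we conclude $\overline{\phi|_{X_t}(X_t)}=Y_t$ for $t\in U$, proving the first assertion. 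For the second assertion, if $\phi|_{X_{t_0}}$ is generically finite then $d_0=\dim X_{t_0}=\dim X_t$, and combined with $d\leq \dim X_t$ one forces $\dim\overline{\phi|_{X_t}(X_t)}=\dim X_t$, making $\phi|_{X_t}$ generically finite for general $t$.

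The main obstacle I expect is the fact that scheme-theoretic image does not commute with passage to fibers in general, so one must be careful in equating $\pi_2(\Gamma_t)$ with $(\pi_2(\Gamma))_t=Y_t$. The properness of $\psi$, the flatness of $\psi$ coming from the one-dimensional smooth base, and generic flatness of $\pi_2$ are the three ingredients that together make this identification hold on a dense open of $C$; everything else is essentially a dimension count.
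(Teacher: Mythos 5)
Your proof is correct and follows essentially the same route as the paper: both take the closure $Y$ of the image in $\mathbb{P}^n_C$, use that an integral scheme dominating a smooth curve is flat so the fibers $Y_t$ have constant dimension $d$, and then compare $\overline{\phi|_{X_{t_0}}(X_{t_0})}\subseteq Y_{t_0}$ with the fact that $\phi|_{X_t}(X_t)$ is dense in $Y_t$ for general $t$. Your detour through the graph $\Gamma$ and the spreading-out of dominance from the generic fiber is only a cosmetic variation (and in fact supplies slightly more justification for the density claim than the paper does).
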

\begin{proof}
Let us consider the closure $Y = \overline{\phi(X)}\subseteq \mathbb{P}^n_C$ of the image of $X$ through $\phi$. By taking the restriction $\pi_{|Y}:Y\rightarrow C$ of the projection $\pi:\mathbb{P}^n_C\rightarrow C$ we see that $Y$ is a scheme over $C$.

Note that since $Y$ is an irreducible and reduced scheme over the curve $C$ we have that $Y$ is flat over $C$. In particular, the dimension of the fibers $\pi_{|Y}^{-1}(t)=Y_t$ is a constant $d = \dim(Y_t)$ for any $t\in C$.
  
For $t\in C$ general the fiber $\pi_{|Y}^{-1}(t)=Y_t$ contains $\phi_{|X_t}(X_t)$ as a dense subset. Therefore, we have $d = \dim(\phi_{|X_t}(X_t))\leq \dim(X_t)$ for $t\in C$ general.

Then, since $\phi_{|X_{t_0}}(X_{t_0})\subseteq Y_{t_0}$ we have $\dim(\overline{\phi_{|X_{t_0}}(X_{t_0})})\leq d = \dim(\overline{\phi_{|X_t}(X_t)})$ for $t\in C$ general.
Now, assume that $\dim(X_{t_0}) = \dim(\phi_{|X_{t_0}}(X_{t_0}))\leq d$. Therefore, we get 
$$\dim(X_{t_0})\leq d\leq \dim(X_t) = \dim(X_{t_0})$$ 
that yields $d = \dim(X_{t_0}) = \dim(X_t)$ for any $t\in C$. Hence, for a general $t\in C$ we have 
$$\dim(X_t) = \dim(\overline{\phi_{|X_t}(X_t)})$$
that is $\phi_{|X_{t}}:X_{t}\dasharrow \overline{\phi_{|X_t}(X_t)}\subseteq\mathbb{P}^n$ is generically finite.
\end{proof}

Now, let $C$ be a smooth and irreducible curve, $X\subset\mathbb{P}^N$ an irreducible and reduced projective variety, and $f:\Lambda\rightarrow C$ a family of $k$-dimensional linear subspaces of $\mathbb{P}^n$ parametrized by $C$. 

Let us consider the invertible sheaf $\mathcal{O}_{\mathbb{P}^n\times C}(1),$ and the sublinear system $|\mathcal{H}_{\Lambda}|\subseteq |\mathcal{O}_{\mathbb{P}^n\times C}(1)|$ given by the sections of $\mathcal{O}_{\mathbb{P}^n\times C}(1)$ vanishing on $\Lambda\subset\mathbb{P}^n\times C$. We denote by $\pi_{\Lambda|X\times C}$
the restriction of the rational map $\pi_{\Lambda}:\mathbb{P}^n\times C\dasharrow \mathbb{P}^{n-k-1}\times C$ of schemes over $C$ induced by $|\mathcal{H}_{\Lambda}|$.

Furthermore, for any $t\in C$ we denote by $\Lambda_t\cong\mathbb{P}^k$ the fiber $f^{-1}(t)$, and by $\pi_{\Lambda_t|X}$ the restriction to $X$ of the linear projection $\pi_{\Lambda_t}:\mathbb{P}^n\dasharrow\mathbb{P}^{n-k-1}$ with center $\Lambda_t$. 

\begin{Proposition}\label{p2}
Let $d_0 = \dim(\overline{\pi_{\Lambda_{t_0}|X}(X)})$ for $t_0\in C$. Then 
$$\dim(\overline{\pi_{\Lambda_{t}|X}(X)})\geq d_0$$
for $t\in C$ general.

Furthermore, if there exists $t_0\in C$ such that $\pi_{\Lambda_{t_0}|X}:X\dasharrow\mathbb{P}^{n-k-1}$ is generically finite then $\pi_{\Lambda_{t}|X}:X\dasharrow\mathbb{P}^{n-k-1}$ is generically finite for $t\in C$ general.
\end{Proposition}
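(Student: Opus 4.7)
The plan is to reduce this proposition directly to Proposition \ref{p1} by packaging the family of linear projections into a single rational map of schemes over $C$. The point is that the base $X$ of every projection is the same, but the center $\Lambda_t$ varies in a family, and this family is encoded precisely by the sub-linear system $|\mathcal{H}_\Lambda|\subseteq|\mathcal{O}_{\mathbb{P}^n\times C}(1)|$ already introduced in the paragraph preceding the statement.

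First I would take the product $X\times C\to C$. This is automatically flat over $C$ (a product projection), and its fiber over every $t\in C$ is $X$ itself. Then I would use $|\mathcal{H}_\Lambda|$ to define the rational $C$-map $\pi_\Lambda:\mathbb{P}^n\times C\dasharrow \mathbb{P}^{n-k-1}\times C$ and restrict it to $X\times C$, obtaining a rational map of schemes over $C$
$$\phi := \pi_{\Lambda|X\times C}: X\times C \dasharrow \mathbb{P}^{n-k-1}\times C.$$
The only thing to check at this point is that the restriction of $\phi$ to the fiber over $t$ coincides with $\pi_{\Lambda_t|X}:X\dasharrow\mathbb{P}^{n-k-1}$. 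This is immediate from the construction: restricting $|\mathcal{H}_\Lambda|$ to $\mathbb{P}^n\times\{t\}$ yields exactly the linear system of hyperplanes in $\mathbb{P}^n$ vanishing on $\Lambda_t$, and this linear system defines the linear projection with center $\Lambda_t$. Since $\Lambda_t\subsetneq\mathbb{P}^n$ is a proper linear subspace for every $t\in C$, the base locus of $|\mathcal{H}_\Lambda|$ on each fiber is precisely $\Lambda_t$, so the fiberwise restriction is well-defined as a rational map.

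With this setup in place, both statements follow from Proposition \ref{p1} applied to $\phi:X\times C\dasharrow\mathbb{P}^{n-k-1}\times C$. The hypothesis $\dim(\overline{\pi_{\Lambda_{t_0}|X}(X)})=d_0$ translates to $\dim(\overline{\phi_{|(X\times C)_{t_0}}((X\times C)_{t_0})})=d_0$, and the first conclusion of Proposition \ref{p1} gives $\dim(\overline{\pi_{\Lambda_t|X}(X)})\geq d_0$ for general $t\in C$. The second conclusion of Proposition \ref{p1} yields generic finiteness of $\pi_{\Lambda_t|X}$ for general $t$ whenever $\pi_{\Lambda_{t_0}|X}$ is generically finite for some $t_0$. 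Honestly I expect no real obstacle here beyond the bookkeeping of the relative construction, since the content of the statement is a family version of the same semicontinuity principle already proved in Proposition \ref{p1}; the only subtlety is verifying cleanly that the $C$-rational map $\phi$ really does restrict on fibers to the maps $\pi_{\Lambda_t|X}$, which is the step I would emphasize in the written proof.
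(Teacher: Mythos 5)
Your proposal is correct and follows exactly the paper's own argument: form the product $X\times C$, restrict the relative projection $\pi_\Lambda$ defined by $|\mathcal{H}_\Lambda|$, identify the fiberwise restriction with $\pi_{\Lambda_t|X}$, and apply Proposition \ref{p1}. Your extra remark that $X\times C\to C$ is flat is a useful explicit check of the hypothesis of Proposition \ref{p1} that the paper leaves implicit.
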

\begin{proof}
The rational map $\pi_{\Lambda|X\times C}:X\times C\dasharrow \mathbb{P}^{n-k-1}\times C$ of schemes over $C$ is just the restriction of the relative linear projection $\pi_{\Lambda}:\mathbb{P}^n\times C\dasharrow \mathbb{P}^{n-k-1}\times C$ with center $\Lambda$. 

Therefore, the restriction of $\pi_{\Lambda|X\times C}$ to the fiber $X_t\cong X$ of $X\times C$ over $t\in C$ induces the linear projection from the linear subspace $\Lambda_t$, that is
$\pi_{\Lambda|X_t} = \pi_{\Lambda_t|X}$
for any $t\in C$.  Now, to conclude it is enough to apply Proposition \ref{p1} with $\phi = \pi_{\Lambda|X\times C}$.  
\end{proof}

Essentially, Propositions \ref{p1} and \ref{p2} say that the dimension of the general fiber of the special map is greater or equal than the dimension of the general fiber of the general map. Therefore, when the special map is generically finite the general one is generically finite as well. We would like to stress that in this case, under suitable assumptions, \cite[Lemma 5.4]{AGMO16} says that the degree of the map can only decrease under specialization.

Next we formalize the idea of degenerating osculating spaces. We start with a simple example of this phenomenon.

\begin{Example}\label{ratcurveII}
Consider again $p,q\in C_n\subset \P^n$ of Example \ref{ratcurve}, and $a,b\geq 0$ integers such that $a+b\leq n-2.$ We will consider the open set 
$$(x_n=1)=\C^n\subset \P^n,$$
and the map
$$
\begin{array}{rl}
\gamma:\C&\to \C^n\\
t&\mapsto (t,t^2,\dots,t^n)\end{array}.
$$
Now we consider the family of linear spaces
$$T_t=\left\langle T^a_p, T^b_{\gamma(t)}
\right\rangle, t\in \C\backslash 0,$$
parametrized by $\C\backslash 0.$
Such family has a flat limit $T_0\in \G(\dim T_t,n).$
Note that
$$\begin{array}{ll}
T^a_p=\left\langle e_0,\dots,e_a
\right\rangle;\\
T^b_{\gamma(t)}=\left\langle e_n(t),e_{n-1}(t),\dots,e_{n-b}(t)
\right\rangle, t\in \C\backslash 0,
\end{array}$$
where
$$\begin{array}{ll}
e_n(t)=(1:t:t^2:\dots:t^n)\\
e_{n-1}(t)=(1:1:2t:\dots:nt^{n-1})\\
\vdots\\
e_{n-b}(t)=(1:0:\dots:0:b!:(b+1)!t:\dots:
\frac{(n-1)!}{(n-b-1)!}t^{n-b-1}:
\frac{n!}{(n-b)!}t^{n-b}).
\end{array}$$
To avoid knotty computations we do only the case $b=1.$ We have then
\begin{align*}
T_t&=\left\langle e_0,\dots,e_a,
(1:t:t^2:\dots:t^n),(1:1:2t:\dots:nt^{n-1})
\right\rangle\\
&=(F_n=F_{n-1}=\dots=F_{n-a-2}=0)\subset \P^n
\end{align*}
where $F_j=x_j-2x_{j-1}t+x_{j-2}t^2,j=n-a-2,\dots,n.$
Therefore the flat limit $T_0$ is 
$$T_0=(x_{n-a-2}=\dots=x_{n-1}=x_{n}=0)=
\left\langle e_0,\dots,e_{n-a-3}
\right\rangle=T^{a+2}_p=T^{a+b+1}_p.$$
\end{Example}

In Chapters \ref{cap3} and \ref{cap4} we will make computations similar to Example \ref{ratcurveII} for Grassmannian and Segre-Veronese varieties. Note that in Example \ref{ratcurveII} we can degenerate $\left\langle T^a_p, T^b_q \right\rangle$ inside $T^{a+b+1}_p$ but not inside $T^{a+b}_p.$
In general we may be able to degenerate $\left\langle T^a_p, T^b_q \right\rangle$
inside $T^{a+b+1}_p,$ but can not be expected that the flat limit be equal to a given osculating space.
The definitions ahead formalize what we need.

\begin{Definition}\label{moscularity}
Let $X\subset\mathbb{P}^N$ be a projective variety.
 
We say that $X$ has \textit{$m$-osculating regularity} if the following property holds. 
Given general points $p_1,\dots,p_{m}\in X$  and  integer $k\geq 0$, 
there exists a smooth curve $C$ and morphisms $\gamma_j:C\to X$, $j=2,\dots,m$, 
such that  $\gamma_j(t_0)=p_1$, $\gamma_j(t_\infty)=p_j$, and the flat limit $T_0$ in $\G(dim(T_t),N)$ of the family of linear spaces 
$$
T_t=\left\langle T^{k}_{p_1},T^{k}_{\gamma_2(t)},\dots,T^{k}_{\gamma_{m}(t)}\right\rangle,\: t\in C\backslash \{t_0\}
$$
is contained in $T^{2k+1}_{p_1}$.

We say that $X$ has \textit{strong $2$-osculating regularity} if the following property holds. 
Given general points $p,q\in X$ and  integers $k_1,k_2\geq 0$,
there exists a smooth curve $\gamma:C\to X$ such that  $\gamma(t_0)=p$, $\gamma(t_\infty)=q$ 
and the flat limit $T_0$ in $\G(dim(T_t),N)$ of the family of linear spaces 
$$
T_t=\left\langle T^{k_1}_p,T^{k_2}_{\gamma(t)}\right\rangle,\: t\in C\backslash \{t_0\}
$$
is contained in $T^{k_1+k_2+1}_p$.
\end{Definition}

Example \ref{ratcurveII} shows that the rational normal curve $C_n$ has $2$-strong osculating regularity and we will see in the next chapters that   Grassmannians and Segre-Veronese have as well.
Next, we give a example of a variety that does not have $2$-osculating regularity.

\begin{Example}\label{developableoscdef}
Let us consider the tangent developable $Y_n\subseteq\mathbb{P}^n$ of a degree $n$ rational normal curve $C_n\subseteq\mathbb{P}^n$ as in Proposition \ref{tdrnc}.

Note that two general points $p = \phi(t_1,u_1)$, $q = \phi(t_2,u_2)$ in $Y_n$ can be joined by a smooth rational curve. Indeed, we may consider the curve 
$$\xi(t) = (t_1+t(t_2-t_1)+u_1+t(u_2-u_1),\dots, (t_1+t(t_2-t_1))^n+n(t_1+t(t_2-t_1))^{n-1}(u_1+t(u_2-u_1)))$$ 
Now, let $\gamma:C\rightarrow Y_n$ be a smooth curve with $\gamma(t_0)=p$ and $\gamma(t_\infty)=q$, and let $T_{t_0}$ be the flat limit of the family of linear spaces 
$$T_t=\left\langle T_{p},T_{\gamma(t)}\right\rangle,\: t\in C\backslash \{t_0\}$$
Now, one can prove that if $n\geq 5$ then $T_{p}Y_n\cap T_{q}Y_n = \varnothing$ by a straightforward computation, or alternatively by noticing that by \cite{Ba05} $Y_n$ is not $2$-secant defective, and then by Terracini's lemma \cite[Theorem 1.3.1]{Ru03} $T_{p}Y_n\cap T_{q}Y_n = \varnothing$. Now, $T_{p}Y_n\cap T_{q}Y_n = \varnothing$ implies that $\dim(T_t) = 5$ for any $t\in C$. On the other hand, by Proposition \ref{tdrnc} we have $\dim(T^3_pY_n) = 4$. Hence, $T_{t_0}\nsubseteq T^3_pY_n$ as soon as $n\geq 5$.
\end{Example}

Given a projective variety having $m$-osculating regularity and strong $2$-osculating regularity we introduce a function $h_m:\N_{\geq 0}\to \N_{\geq 0}$ counting how many tangent spaces we can degenerate to a higher order osculating space. 

\begin{Definition}\label{defhowmanytangent}
Given an integer $m\geq 2$ we define a function
\begin{align*}
h_m:\N_{\geq 0}\to \N_{\geq 0}
\end{align*}
as follows: $h_m(0)=0$. For any $k\geq 1$ write
$$k+1=2^{\lambda_1}+2^{\lambda_2}+\dots+2^{\lambda_l}+\varepsilon$$
where $\lambda_1>\lambda_2>\dots>\lambda_l \geq 1$, $\varepsilon\in \{0,1\}$, and define
$$h_m(k):=m^{\lambda_1-1}+m^{\lambda_2-1}+\dots+m^{\lambda_l-1}.$$
In particular $h_m(2k)=h_m(2k-1)$ and $h_2(k)=\left\lfloor \dfrac{k+1}{2}\right\rfloor$.
\end{Definition}

\begin{Example}
For instance 
$$h_m(1)=h_m(2)=1,h_m(3)=m,h_m(5)=m+1,h_m(7)=m^2,h(9)=m^2+1$$
and since $23=16+4+2+1=2^4+2^2+2^1+1$ we have $h_m(22)=m^3+m+1$. In particular, if $X$ has $m$-osculating regularity and $2$-strong osculating regularity, then we can degenerate 
$\left\langle T^{1}_{p_1}X,\dots,T^{1}_{p_{m^2+1}}X\right\rangle$
inside $T^9_p X$ because $h_m(9)=m^2+1.$
\end{Example}

Let $X\subset\mathbb{P}^N$ be a rational variety of dimension $n$, $p_1,\dots,p_m\in X$ general points. We reinterpret the notion of $m$-osculating regularity in Definition \ref{moscularity} in terms of limit linear systems and collisions of fat points. 

Let $\mathcal{H}\subseteq |\mathcal{O}_{\mathbb{P}^n}(d)|$ be the sublinear system of $|\mathcal{O}_{\mathbb{P}^n}(d)|$ inducing the birational map $i_{\mathcal{H}}:\mathbb{P}^n\dasharrow X\subset\mathbb{P}^N$, and $q_i = i_{\mathcal{H}}^{-1}(p_i)$. 

Then $X$ has $m$-osculating regularity if and only if there exists smooth curves $\gamma_i:C\rightarrow \mathbb{P}^n$, $i = 2,\dots,m$, with $\gamma_i(t_0) = q_1$, $\gamma_i(t_{\infty}) = q_i$ for $i = 1,\dots,m$, such that the limit linear system $\mathcal{H}_{t_0}$ of the family of linear systems $\mathcal{H}_t$ given by the hypersurfaces in $\mathcal{H}$ having at least multiplicity $s+1$ at $q_1,\gamma_2(t),\dots,\gamma_m(t)$ contains the linear system $\mathcal{H}^{2s+2}_{q_1}$ of degree $d$ hypersurfaces with multiplicity at least $2s+2$ at $q_1$.

Indeed, if $p_i = i_{\mathcal{H}}(q_i)$ for $i = 1,\dots,m$ then the linear system of hyperplanes in $\mathbb{P}^N$ containing 
$$T_t = \left\langle T_{p_1}^s,T_{i_{\mathcal{H}}(\gamma_2(t))}^s,\dots, T_{i_{\mathcal{H}}(\gamma_m(t))}^s\right\rangle$$
corresponds to the linear system $\mathcal{H}_t$. Similarly, the linear system of hyperplanes in $\mathbb{P}^N$ containing $T_{p_1}^{2s+1}$ corresponds to the linear system $\mathcal{H}^{2s+2}_{q_1}$.

Therefore, the problem of computing the $m$-osculating regularity of a rational variety can be translated in terms of limit linear systems in $\mathbb{P}^n$ given by colliding a number of fat points. This is a very hard and widely studied subject \cite{CM98}, \cite{CM00}, \cite{CM05}, \cite{Ne09}.

\section{Secant defectivity via osculating projections}
\label{mainsection}

In this section we use the notions developed in previous sections to study the dimension of secant varieties. We do this reinterpreting Proposition \ref{cc} in terms of osculating projections.

The method goes as follows. 
If $X\subset\mathbb{P}^N$ has $m$-osculating regularity, one degenerates a general $m$-tangential projection 
into a linear projection with center contained in $T^{3}_p$X. 
Then one further degenerates a general osculating projection $T^{(3,\dots,3)}_{p_1, \dots, p_m}$
into a linear projection with center contained in $T^{7}_qX$.
By proceeding recursively, one degenerates a general $h$-tangential projection 
into a linear projection with center contained in a suitable 
linear span of osculating spaces, and then check whether this projection is generically finite.

\begin{Theorem}\label{lemmadefectsviaosculating}
Let $X\subset \P^N$ be a projective variety 
having $m$-osculating regularity and strong $2$-osculating regularity.
Let $k_1,\dots,k_l\geq 1$ be integers such that the general osculating projection 
$\Pi_{T^{k_1,\dots,k_l}_{p_1,\dots,p_l}}$ is generically finite.
Then $X$ is not $h$-defective for $h\leq \displaystyle\sum_{j=1}^{l}h_m(k_j)+1,$ where
$h_m$ is as in Definiton \ref{defhowmanytangent}.
\end{Theorem}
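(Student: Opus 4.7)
The plan is to combine Proposition \ref{cc} (Chiantini--Ciliberto's tangential criterion), Proposition \ref{p2} (behaviour of generic finiteness under specialization of the center of a linear projection), and an iterated use of the two osculating-regularity hypotheses. Set $h := \sum_{j=1}^{l} h_m(k_j)$. By Proposition \ref{cc} it suffices to show that the general $h$-tangential projection $\tau_{X,h}$ is generically finite. By Proposition \ref{p2} this follows from exhibiting a single flat family $\Lambda_t$ of linear spans $\langle T_{q_1(t)}X,\dots,T_{q_h(t)}X\rangle$, $q_i(t)\in X$ mutually distinct general points for $t\neq t_0$, whose flat limit $\Lambda_0$ is contained in $\langle T^{k_1}_{p_1},\dots,T^{k_l}_{p_l}\rangle$: indeed, a linear projection from a subspace factors through the projection from any larger subspace, so the assumed generic finiteness of $\Pi_{T^{k_1,\dots,k_l}_{p_1,\dots,p_l}}$ descends to the projection from $\Lambda_0$.

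After fixing $l$ distinct general base points $p_1,\dots,p_l$ and assembling independent families at each of them on disjoint collections of auxiliary points, the construction of $\Lambda_t$ reduces to the following \emph{Key Claim}: for every general point $p\in X$ and every integer $k\geq 1$ there exists a one-parameter family $\Lambda^{(k)}_t$ of tangent spans involving exactly $h_m(k)$ tangent spaces, all of whose base points specialize to $p$, and whose flat limit is contained in $T^k_p X$.

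I would prove the Key Claim by induction tailored to the binary decomposition $k+1 = 2^{\lambda_1}+\dots+2^{\lambda_s}+\varepsilon$ used to define $h_m(k)$. For each $\lambda_i$ I first build a tower of height $\lambda_i-1$: starting with a single tangent space, which sits trivially in $T^1$, at each step I take $m$ copies of the current family at distinct general base points and then collide them using $m$-osculating regularity; after $j$ iterations I obtain $m^{j}$ tangent spaces whose flat limit is contained in $T^{2^{j+1}-1}$ at a new general point. Applying this $\lambda_i-1$ times yields $m^{\lambda_i-1}$ tangent spaces degenerating into some $T^{2^{\lambda_i}-1}_{p^{(i)}}$. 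I then combine the $s$ towers at a single point $p$ by iterating strong $2$-osculating regularity: it first collides $T^{2^{\lambda_1}-1}_p$ with $T^{2^{\lambda_2}-1}_{p^{(2)}}$ into $T^{2^{\lambda_1}+2^{\lambda_2}-1}_p$, and proceeding recursively produces a flat limit contained in $T^{2^{\lambda_1}+\dots+2^{\lambda_s}-1}_p$ using a total of $\sum_{i} m^{\lambda_i-1} = h_m(k)$ tangent spaces. When $\varepsilon=1$, one final application of strong $2$-osculating regularity, this time with $T^0_q=\{q\}$ at an auxiliary general point $q$, pushes the limit from $T^{k-1}_p$ up into $T^k_p$ without adding any new tangent space; this matches the identity $h_m(k)=h_m(k-1)$ in that case.

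The most delicate point I expect is to realize the iterated collisions as a \emph{single} flat family over one smooth base curve, so that Proposition \ref{p2} applies once rather than through a chain of specializations whose composition would have to be controlled separately. This should be handled by taking suitable iterated fibered products of the smooth curves furnished by the $m$-osculating and strong $2$-osculating regularity hypotheses (applying semistable reduction to repair non-smoothness introduced by the products when needed), and invoking the existence of flat limits in the appropriate Grassmannian. Once the family is in place, the chain of reductions described in the first paragraph immediately yields that $X$ is not $(h+1)$-defective, finishing the proof.
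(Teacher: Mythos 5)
Your overall architecture --- reduce to generic finiteness of the general $h$-tangential projection via Proposition \ref{cc}, transfer generic finiteness from special to general centers via Proposition \ref{p2}, and organize the collisions according to the binary expansion $k+1=2^{\lambda_1}+\dots+2^{\lambda_s}+\varepsilon$, building towers with $m$-osculating regularity and gluing them with strong $2$-osculating regularity --- is exactly the paper's. The place where your write-up does not close is the one you flag yourself: you insist on packaging the entire iterated collision into a \emph{single} flat family over one smooth curve so that Proposition \ref{p2} is invoked only once, and you propose to build that family by fibered products of the curves furnished by the regularity hypotheses together with semistable reduction. This is a genuine gap, not a technicality: the regularity hypotheses only control the flat limit of \emph{one} collision step, and the flat limit of a multi-parameter family along a curve in its base is path-dependent, so nothing guarantees that the limit of your composite family lands inside $\langle T^{k_1}_{p_1},\dots,T^{k_l}_{p_l}\rangle$. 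Your Key Claim, as stated, therefore does not follow from the hypotheses as given.

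The paper avoids this entirely by never composing the degenerations. It runs a chain of \emph{separate} one-parameter specializations and applies Proposition \ref{p2} once per step, linked by the elementary remark that generic finiteness of a linear projection passes to any projection with smaller center. Concretely: the hypothesis gives generic finiteness of $\Pi_{T^{k_1}_{p}}$, hence of the projection from the limit center $U\subseteq T^{k_1}_{p}$ of the last collision; Proposition \ref{p2} then gives generic finiteness of the projection from the general member of that last family, which is the span $V$ of the osculating spaces one level up; since the limit center of the previous collision is contained in $V$, one repeats, and after finitely many steps reaches the general tangential projection $\Pi_T$. If you replace your single-family construction by this ``specialize, enlarge the limit center to the ambient osculating span, specialize again'' recursion, every ingredient you already have suffices and the proof closes. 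Two smaller remarks: when $\varepsilon=1$ no extra collision with a point $T^0_q$ is needed, since $T^{k-1}_p\subset T^{k}_p$ and generic finiteness of $\Pi_{T^{k}_p}$ already implies it for any center contained in $T^{k-1}_p$; and the factorization goes the other way from how you phrase it --- the projection from the \emph{larger} center factors through the one from the smaller --- although your conclusion that generic finiteness descends to $\Lambda_0$ is correct.
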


\begin{proof}
Let us consider a general tangential projection $\Pi_{T}$ where 
$$T = \left\langle T^1_{p_1^1},\dots, T^1_{p_1^{h_m(k_1)}},\dots, T^1_{p_l^1},\dots,T^1_{p_l^{h_m(k_l)}}\right\rangle$$
and $p_1^1 = p_1,\dots, p_l^1 = p_l$. Our argument consists in specializing the projection $\Pi_{T}$ several times in order to reach a generically finite projection. For seek of notational simplicity along the proof we will assume $l = 1$. For the general case it is enough to apply the same argument $l$ times.

Let us begin with the case $k_1+1 = 2^{\lambda}$. Then $h_{m}(k_1) = m^{\lambda-1}$. Since $X$ has $m$-osculating regularity we can degenerate $\Pi_{T}$, in a family parametrized by a smooth curve, to a projection $\Pi_{U_1}$ whose center $U_1$ is contained in 
$$V_1 = \left\langle T^{3}_{p_1^1}, T^3_{p_1^{m+1}},\dots, T^3_{p_1^{m^{\lambda-1}-m+1}}\right\rangle.$$ 
Again, since $X$ has $m$-osculating regularity we may specialize, in a family parametrized by a smooth curve, the projection $\Pi_{V_1}$ to a projection $\Pi_{U_2}$ whose center $U_2$ is contained in
$$V_2 = \left\langle T^{7}_{p_1^1}, T^7_{p_1^{m^2+1}},\dots, T^7_{p_1^{m^{\lambda-1}-m^2+1}}\right\rangle.$$
Proceeding recursively in this way in last step we get a projection $\Pi_{U_{\lambda-1}}$ whose center $U_{\lambda -1}$ is contained in 
$$V_{\lambda-1} = T^{2^{\lambda}-1}_{p_1^1}.$$
When $k_1+1 = 2^{\lambda}$ our hypothesis means that $\Pi_{T^{k_1}_{p_1^1}}$ is generically finite. Therefore, $\Pi_{U_{\lambda-1}}$ is generically finite, and applying Proposition \ref{p2} recursively to the specializations in between $\Pi_{T}$ and $\Pi_{U_{\lambda-1}}$ we conclude that $\Pi_{T}$ is generically finite as well.

Now, more generally, let us assume that 
$$k_1+1 = 2^{\lambda_1}+\dots + 2^{\lambda_s}+\varepsilon$$
with $\varepsilon\in\{0,1\}$, and $\lambda_1 > \lambda_2 > \dots > \lambda_s\geq 1$. Then
$$h_m(k_1) = m^{\lambda_1-1}+\dots + m^{\lambda_s-1}.$$
By applying $s$ times the argument for $k_1+1 = 2^{\lambda}$ in the first part of the proof we may specialize $\Pi_{T}$ to a projection $\Pi_{U}$ whose center $U$ is contained in 
$$V = \left\langle T^{2^{\lambda_1}-1}_{p_1^1}, T^{2^{\lambda_2}-1}_{p_1^{m^{\lambda_1-1}+1}},\dots, T^{2^{\lambda_s}-1}_{p_1^{m^{\lambda_1-1}+\dots+m^{\lambda_{s-1}-1}+1}}\right\rangle.$$ 
Finally, we use the strong $2$-osculating regularity $s-1$ times to specialize $\Pi_V$ to a projection $\Pi_{U^{'}}$ whose center $U^{'}$ is contained in 
$$V^{'} = T_{p_1^1}^{2^{\lambda_1}+\dots +2^{\lambda_s}-1}.$$
Note that $T_{p_1^1}^{2^{\lambda_1}+\dots +2^{\lambda_s}-1} = T^{k_1}_{p_1^1}$ if $\varepsilon = 0$, and $T_{p_1^1}^{2^{\lambda_1}+\dots +2^{\lambda_s}-1} = T^{k_1-1}_{p_1^1}\subset T^{k_1}_{p_1^1}$ if $\varepsilon = 1$. In any case, since by hypothesis $\Pi_{T^{k_1}_{p_1^1}}$ is generically finite, again by applying Proposition \ref{p2} recursively to the specializations in between $\Pi_{T}$ and $\Pi_{U^{'}}$ we conclude that $\Pi_{T}$ is generically finite. Therefore, by Proposition \ref{cc} we get that $X$ is not $(\sum_{j=1}^{l}h_m(k_j)+1)$-defective.
\end{proof}

As a corollary of the proof we get the following result if the variety has only $m$-osculating regularity but not necessarily strong $2$-osculating regularity.

\begin{Theorem}
Let $X\subset \P^N$ be a projective variety having $m$-osculating regularity. Let $k_1,\dots,k_l\geq 1$ be integers such that the general osculating projection 
$\Pi_{T^{k_1,\dots,k_l}_{p_1,\dots,p_l}}$ is generically finite.
Then $X$ is not $h$-defective for $h\leq \displaystyle \left(\sum_{j=1}^{l}m^{\lfloor \log_2(k_j+1)\rfloor -1}\right)+1$.
\end{Theorem}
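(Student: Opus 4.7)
The plan is to revisit the proof of Theorem~\ref{lemmadefectsviaosculating} and simply omit the final step, in which the strong $2$-osculating regularity was invoked. With only $m$-osculating regularity at our disposal, the single available move is: given general points $p_1,\dots,p_m\in X$ and an integer $k\geq 0$, collapse the family $\langle T^k_{p_1},T^k_{\gamma_2(t)},\dots,T^k_{\gamma_m(t)}\rangle$ into a subspace of $T^{2k+1}_{p_1}$ via a smooth $1$-parameter degeneration. I would iterate this move starting from tangent spaces.

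First, by induction on $\lambda\geq 0$, I would show that iterating the move $\lambda$ times specializes a linear span of $m^{\lambda}$ general tangent spaces into a subspace of a single order-$(2^{\lambda+1}-1)$ osculating space at one fixed point. The base case $\lambda=0$ is trivial; for the inductive step I would group the $m^{\lambda}=m\cdot m^{\lambda-1}$ tangent spaces into $m$ bunches, apply the inductive hypothesis inside each bunch to collapse them into an order-$(2^{\lambda}-1)$ osculating space based at a distinct general point, and then apply the move once more with $k=2^{\lambda}-1$ to collapse the $m$ resulting osculating spaces into a subspace of the order-$(2^{\lambda+1}-1)$ osculating space at a single basepoint.

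Second, for each $k_j\geq 1$ I would pick $\lambda_j$ to be the largest integer with $2^{\lambda_j+1}-1\leq k_j$. A short arithmetic check, handled separately in the cases where $k_j+1$ is or is not a power of $2$, yields $\lambda_j=\lfloor\log_2(k_j+1)\rfloor-1$ in both situations. Performing the iterated collapse around $l$ general basepoints $p_1,\dots,p_l$ in parallel and taking the join of the centers specializes the general $h$-tangential projection $\Pi_T$, with $h=\sum_{j=1}^{l}m^{\lambda_j}$, into a projection $\Pi_U$ whose center $U$ is contained in $\langle T^{k_1}_{p_1},\dots,T^{k_l}_{p_l}\rangle$.

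To conclude, the hypothesis that $\Pi_{T^{k_1,\dots,k_l}_{p_1,\dots,p_l}}$ is generically finite implies that $\Pi_U$ is generically finite; applying Proposition~\ref{p2} recursively along the finite chain of specializations propagates generic finiteness back to $\Pi_T$, and Proposition~\ref{cc} then gives that $X$ is not $(h+1)$-defective, which is exactly the asserted bound. The only delicate point I anticipate is bookkeeping the parallel collapses so that each bunch of $m^{\lambda_j}$ tangent spaces ends up concentrated at a common basepoint $p_j$; since at every stage the $m$-osculating regularity allows free choice of the $m-1$ auxiliary general points through the connecting curves, this is routine and does not alter the final count.
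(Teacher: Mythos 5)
Your proposal is correct and follows essentially the same route as the paper: the paper obtains this statement as a corollary of the proof of Theorem \ref{lemmadefectsviaosculating} by running only the power-of-two degeneration step (which uses $m$-osculating regularity alone) and discarding the final merging step that required strong $2$-osculating regularity. Your explicit induction on $\lambda$, the identification $\lambda_j=\lfloor\log_2(k_j+1)\rfloor-1$, and the backward propagation of generic finiteness via Propositions \ref{p2} and \ref{cc} all match the paper's argument.
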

 
In Chapters \ref{cap3} and \ref{cap4} we will apply Theorem \ref{lemmadefectsviaosculating} for Grassmannian and Segre-Veronese varieties and obtain bounds on non-defectivity of them.

\chapter{Secant defectivity of Grassmannians}\label{cap3}
In this chapter we apply the technique developed in the previous chapter to study defectivity of Grassmannians, obtaining the following theorem:

\begin{Theorem}
Assume that $r\geq 2$, set 
$$\alpha:=\left\lfloor \dfrac{n+1}{r+1} \right\rfloor$$
and let $h_\alpha$ be as in Definition \ref{defhowmanytangent}. If either
\begin{itemize}
	\item[-] $n\geq r^2+3r+1$ and $h\leq\alpha h_{\alpha}(r-1)$ or
	\item[-] $n< r^2+3r+1$, $r$ is even, and 
	$h\leq (\alpha-1) h_{\alpha}(r-1)+
	h_\alpha(n-2-\alpha r)$ 	or
	\item[-] $n< r^2+3r+1$, $r$ is odd, and 
	$h\leq (\alpha-1) h_{\alpha}(r-2)+h_\alpha(\min\{n-3-\alpha(r-1),r-2\})$
\end{itemize}
then $\G(r,n)$ is not $(h+1)$-defective.
\end{Theorem}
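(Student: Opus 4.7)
The plan is to apply Theorem \ref{lemmadefectsviaosculating} to $X = \G(r,n)$ with $m = \alpha$. To do so, three ingredients must be established for the Grassmannian: an explicit description of its osculating spaces, the fact that it satisfies both $\alpha$-osculating regularity and strong $2$-osculating regularity, and the generic finiteness of a well-chosen family of osculating projections. The three cases in the statement will correspond to three different choices of the integers $k_1,\dots,k_l$ to feed into Theorem \ref{lemmadefectsviaosculating}.

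First, in Section \ref{osculating spaces grass} I will give a concrete description of $T^k_p\G(r,n)$. Fixing the standard affine chart around $p\in\G(r,n)$, a point is parametrized by an $(r+1)\times(n-r)$ matrix of affine coordinates, and composing with the Pl\"ucker embedding gives a local parametrization of $\G(r,n)\subset\P^N$. The partial derivatives of this parametrization at $p$ of order at most $k$ span $T^k_p\G(r,n)$, and can be read off directly as a collection of Pl\"ucker coordinate vectors indexed by subsets of a specific combinatorial type. In particular, linear spans and intersections of osculating spaces at distinct points reduce to linear algebra on these index sets.

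Second, in Section \ref{degtanosc} I will verify the two osculating regularity conditions. Given general points $p_1,\dots,p_m\in\G(r,n)$ (with $m=2$ for the strong version and $m=\alpha$ otherwise), I will construct explicit smooth curves $\gamma_j$ connecting $p_1$ to $p_j$ through a one-parameter family of row and column operations on the matrix representatives. Writing Pl\"ucker coordinates of $T^k_{\gamma_j(t)}$ as polynomials in $t$ and extracting leading terms as $t\to 0$, one identifies the flat limit of the family $\langle T^{k}_{p_1}, T^{k}_{\gamma_2(t)},\dots, T^{k}_{\gamma_m(t)}\rangle$ inside $\G(\dim T_t, N)$ and checks that it lies inside $T^{2k+1}_{p_1}$; the strong $2$-osculating regularity statement is analogous but involves two distinct orders of osculation $k_1,k_2$, producing a limit inside $T^{k_1+k_2+1}_{p_1}$. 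This is the direct generalization of the rational normal curve computation of Example \ref{ratcurveII}.

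Third, in Section \ref{projoscgrass} I will choose the integers $k_1,\dots,k_l$ to match each of the three cases and verify generic finiteness of the corresponding osculating projection. In the large-$n$ regime $n\geq r^2+3r+1$, take $l=\alpha$ and $k_1=\dots=k_\alpha=r-1$, so that
$$\sum_{j=1}^{\alpha} h_\alpha(k_j) = \alpha\, h_\alpha(r-1).$$
When $n< r^2+3r+1$ and $r$ is even, respectively odd, take $l=\alpha$ with $k_1=\dots=k_{\alpha-1}=r-1$ and $k_\alpha=n-2-\alpha r$, respectively $k_1=\dots=k_{\alpha-1}=r-2$ and $k_\alpha=\min\{n-3-\alpha(r-1),\,r-2\}$; the slight drop in the $k_j$ compared to the large-$n$ case is forced by the fact that the osculating spaces at $\alpha$ points with $k_j=r-1$ would otherwise overlap too much. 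In each case Theorem \ref{lemmadefectsviaosculating} yields that $\G(r,n)$ is not $\bigl(\sum_{j} h_\alpha(k_j)+1\bigr)$-defective, which matches the claimed bound on $h+1$. The main obstacle is the generic finiteness verification of the osculating projection in each regime: by the description in step one, this reduces to showing that an explicit matrix whose entries are Pl\"ucker coordinates attains the expected rank, and the thresholds on $n$ relative to $r$ in the statement (as well as the drop in the $k_j$'s in the small-$n$ cases) are precisely the combinatorial constraints guaranteeing that the chosen index subsets fit in the available coordinates without unintended coincidences.
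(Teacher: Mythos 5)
Your proposal follows essentially the same route as the paper: establish the explicit description of osculating spaces (Proposition \ref{oscgrass}), prove strong $2$-osculating regularity and $\alpha$-osculating regularity (Propositions \ref{limitosculatingspacesgrass} and \ref{limitosculatingspacesgrassII}), verify birationality of the osculating projections for exactly the same choices of $(k_1,\dots,k_l)$ in each of the three regimes (Corollary \ref{oscprojbirationalII}), and then feed everything into Theorem \ref{lemmadefectsviaosculating}. The only cosmetic difference is that the paper checks generic finiteness by factoring the osculating projection through coordinate projections and intersecting their fibers rather than by a rank computation on a Pl\"ucker-coordinate matrix, but the overall argument is the one you describe.
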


In the first section of the present chapter we fix notation for the Grassmannian to be used throughout this thesis, and recall some results regarding Schubert varieties to be used in the next section and in some places in Part \ref{part2}.
In Section \ref{osculating spaces grass} we describe osculating spaces to the Grassmannian and give some interesting characterizations of some special Schubert varieties. In Section \ref{projoscgrass} we give sufficient conditions to osculating projections from Grassmannians to be birational.
In Section \ref{degtanosc} we show that $\G(r,n)$ has strong $2$-osculating regularity and $\lfloor \frac{n+1}{r+1}\rfloor$-osculating regularity.
Finally, in Section \ref{grassnodef} we use the results in the previous sections together with Theorem \ref{lemmadefectsviaosculating} to prove our main result concerning Grassmannian secant defects.

\section{ Grassmannaians and Schubert varieties}
\label{Schubert varieties}
Throughout this thesis we always view the Grassmannian $\G(r,n)$ of $r$-spaces inside $\P^n$ as a projective variety in its Pl\"ucker embedding, that is the morphism induced by the determinant of the universal quotient bundle $\mathcal{Q}_{\G(r,n)}$ on $\G(r,n)$:
$$
\begin{array}{cccc}
\f_{r,n}: &\G(r,n)& \longrightarrow & \P^N:=\P(\bigwedge^{r+1}\C^{n+1})\\
      & \left\langle v_0,\dots,v_r\right\rangle & \longmapsto & [v_0\wedge \dots\wedge v_r]
\end{array}
$$
where $N = \binom{n+1}{r+1}-1$.

When $n<2r+1$ there is a natural isomorphism 
$\G(r,n)\cong \G(n-r-1,n)$ and $n> 2(n-r-1)+1,$
therefore from now on we assume that $n \geq 2r+1.$ We will denote by $e_0,\dots,e_n\in \C^{n+1}$ both the vectors of the canonical basis of $\mathbb{C}^{n+1}$ and the corresponding points in $\P^n=\P(\C^{n+1})$.
We will denote by $p_I$ the Pl\"ucker coordinates on $\mathbb{P}^N$.

Now we give some simple properties of Schubert varieties and its singularities. We follow Billey and Lakshmibai's book \cite{Lak00}.

Fix a complete flag 
$$\mathcal{F}_{\bullet}:\quad \{0\}=V_0\subset V_1\subset \cdots \subset V_{n+1}=\C^{n+1},$$
and let $\lambda=(\lambda_1,\dots,\lambda_{r+1})$ be 
a partition of $|\lambda|=\sum\lambda_j$ such that 
$$n-r\geq \lambda_1\geq \dots \geq 
\lambda_{r+1}\geq 0.$$
The \textit{Schubert variety} associated to 
$\mathcal{F}_{\bullet}$ and $\lambda$ is defined by
$$\Sigma_{\lambda}(\mathcal{F}_{\bullet})=\{ [U]\in \G(r,n);\ dim (U \cap V_{n-r+i-\lambda_i})\geq i \mbox{ for each }  1\leq i\leq r+1\}.$$
We will omit the flag from the notation when no confusion can arise. We also omit the zeros and use powers to denote repeated indexes in the partition $\lambda.$ For instance $(3,3,2,2,2,2,1,0)=(3^2,2^4,1).$
It is easy to see that
$$\begin{cases}
\Sigma_{(0,0,\dots,0)}=\Sigma_0=\G(r,n);\\
\Sigma_{(n-r,n-r,\dots,n-r)}=\Sigma_{(n-r)^{r+1}}=
[V_{r+1}] \mbox{ is a point};\\
\Sigma_{((n-r-i)^{r+1-i},0^{i})}=\{[U]\in \G(r,n);
\dim (U \cap V_{r+1})\geq r+1-i\}=:R_i', i=1,\dots,r;\\
\Sigma_{(1,0,\dots,0)}=\Sigma_1=
\{[U]\in \G(r,n);\dim (U \cap V_{n-r})\geq 1\}=D
\mbox{ is a divisor},
\end{cases}$$

It is well known that $\codim(\Sigma_{\lambda})=|\lambda|$
and that 
$$\Sigma_{\lambda}\subset \Sigma_{\mu}
\Leftrightarrow \lambda \geq \mu$$
where 
$$\lambda=(\lambda_1,\dots,\lambda_{r+1})\geq 
(\mu_1,\dots,\mu_{r+1}) =\mu \iff \lambda_i\geq \mu_i,i=1,\dots,r+1.$$
 In order to describe the singular locus of 
$\Sigma_\lambda,$ for each partition we introduce an associated complementary partition 
$\widetilde \lambda=(\widetilde\lambda_1,\dots
\widetilde\lambda_{r+1})$ defined by $\widetilde{\lambda}_j=n-r-\lambda_j.$

Then $\dim(\Sigma_\lambda)=|\widetilde\lambda|$
and $\Sigma_{\lambda}\subset \Sigma_{\mu}
\Leftrightarrow
\widetilde\lambda \leq \widetilde\mu.$
Each partition $\lambda$ can be written in a unique way as 
$$\lambda=(p_1^{q_1},\dots,p_d^{q_d})=
(\underbrace{p_1,\dots,p_1}_{\mbox{$q_1$ times}},\dots,
\underbrace{p_d,\dots,p_d}_{\mbox{$q_d$ times}}),$$
where $p_d\neq 0,$ and we say that $\lambda$ has $d$ rectangles. This language is justified by the Ferrers diagram.
We say that $\Sigma_\lambda$ has $d$ rectangles when $\widetilde\lambda$ has $d$ rectangles.

\begin{Example}
The non-trivial Schubert varieties of $\G(1,4)$ are of eight types:
$$ \Sigma_1,\Sigma_2=R_1',\Sigma_3, \Sigma_{(1,1)},
\Sigma_{(2,1)},\Sigma_{(3,1)},\Sigma_{(2,2)},\Sigma_{(3,2)}.$$
The Ferrers diagrams, corresponding to the complementary partitions, are
$$\yng(2,3),\quad\yng(1,3),\quad\yng(3),
\quad\yng(2,2),\quad\yng(1,2),\quad\yng(2),
\quad\yng(1,1),\quad\yng(1).$$
From the Ferres diagrams we can see whether $\Sigma_\lambda \subset \Sigma_\mu,$
for instance 
$\Sigma_1\supset\Sigma_2\supset\Sigma_3\not\supset \Sigma_{(1,1)},$ but 
$\Sigma_1\supset \Sigma_{(1,1)}.$
Moreover, $\Sigma_3,\Sigma_{(1,1)},
\Sigma_{(3,1)},\Sigma_{(2,2)},$ and
$\Sigma_{(3,2)}$ have one rectangle, while the other three have two rectangles.
\end{Example}

Now, we illustrate the concept of hook of a diagram.

\begin{Example}\label{ExampleSchubert}
Consider $X=\Sigma_{(2^3,1)}\subset \G(4,9).$
Then the complementary partition is 
$\widetilde\lambda=(3^3,4,5),$ $X$ has three rectangles and its Ferrers diagram is
$$\yng(3,3,3,4,5)$$
There are two hooks to remove from this Ferrers diagram:
$$\young(\ \ *,\ \ *,\ \ *,\ \ **,\ \ \ \ \ )
\quad
\young(\ \ \ ,\ \ \ ,\ \ \ ,\ \ \ *,\ \ \ **)$$
and removing them we get two new Ferrers diagrams
$$\yng(2,2,2,2,5)\quad\quad\yng(3,3,3,3,3)$$
corresponding to $\widetilde\lambda_1=(2^4,5)$ and
$\widetilde\lambda_2=(3^5).$
The diagram on the right has no hook to remove, and the diagram on the left has exactly one hook to remove:
$$\young(\ *,\ *,\ *,\ *,\ ****)
\quad\to\quad
\yng(1,1,1,1,1)$$
and the resulting Ferrers diagram corresponds to $\widetilde{\lambda_3}=(1^5).$
\end{Example}

We have then the following characterization of singularities of Schubert varieties.

\begin{Theorem}[Theorem 9.3.1 \cite{Lak00}]\label{singSchubert}
The singular locus of the Schubert variety $\Sigma_\lambda$ is the union of the Schubert varieties 
$\Sigma_\mu$ indexed by the set of all partitions $\mu$ where $\widetilde\mu$ is obtained by removing a hook from $\widetilde\lambda.$
\end{Theorem}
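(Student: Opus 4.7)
The plan is to exploit the action of the Borel subgroup $B \subset GL_{n+1}(\C)$ stabilizing the reference flag $\mathcal{F}_\bullet$. Since $\Sigma_\lambda$ is $B$-invariant, so is $\Sing(\Sigma_\lambda)$; and since the Schubert cells $\Sigma_\mu^\circ := \Sigma_\mu \setminus \bigcup_{\mu' > \mu} \Sigma_{\mu'}$ are precisely the $B$-orbits on $\G(r,n)$, the singular locus must be a union of Schubert varieties $\Sigma_\mu$ with $\mu > \lambda$. Moreover, because $B$ acts by linear automorphisms of the ambient projective space preserving $\Sigma_\lambda$, the tangent space dimension $\dim T_x \Sigma_\lambda$ is constant on each $B$-orbit. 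Hence $\Sigma_\mu \subset \Sing(\Sigma_\lambda)$ if and only if $\Sigma_\lambda$ is singular at the distinguished $T$-fixed point $e_\mu \in \Sigma_\mu^\circ$, where $T$ is the standard maximal torus and $e_\mu$ denotes the coordinate $(r+1)$-plane naturally indexed by $\mu$. The problem is thereby reduced to a local computation at the finitely many $T$-fixed points $e_\mu$ with $\mu \geq \lambda$.

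The second step is the explicit tangent space computation. In the standard affine chart $U_\mu \cong \mathbb{A}^{(r+1)(n-r)}$ around $e_\mu$, a point of $\G(r,n)$ is represented by a matrix in reduced row-echelon form whose entries in the $(r+1)(n-r)$ non-pivot positions serve as coordinates, naturally arranged in an $(r+1)\times(n-r)$ rectangle. In these coordinates $\Sigma_\lambda$ is cut out by the vanishing of certain Pl\"ucker minors; extracting the linear parts of these equations at the origin $e_\mu$ yields a family of linear relations indexed by boxes in a specific skew subregion of the rectangle determined by $\lambda$ and $\mu$. One then shows that
\[
\dim T_{e_\mu}\Sigma_\lambda \;=\; (r+1)(n-r) - \#\{\text{independent linear relations}\},
\]
and compares this with $\dim \Sigma_\lambda = |\widetilde\lambda|$: smoothness at $e_\mu$ holds iff equality is achieved. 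Equivalently, decomposing $T_{e_\mu}\G(r,n)$ into $T$-weight spaces, one identifies $T_{e_\mu}\Sigma_\lambda$ as the sum of a combinatorially described subset of weight spaces, and the question reduces to counting boxes in a Young diagram.

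The final and most delicate step is the combinatorial translation into the hook-removal criterion. A careful analysis of the skew shape appearing above shows that the tangent space at $e_\mu$ acquires exactly the expected dimension $|\widetilde\lambda|$ precisely when the difference between the diagrams $\widetilde\mu \subset \widetilde\lambda$ consists of more than a single rim hook; conversely, when $\widetilde\mu = \widetilde\lambda \setminus H$ for a single hook $H$, an "excess" of tangent directions appears, forcing singularity at $e_\mu$. Taking the union of the $\Sigma_\mu$ for which this hook-removal condition holds (and automatically including all smaller Schubert varieties contained in them) recovers the entire singular locus. I expect the principal obstacle to lie in this last step: making the bijection between linear relations at $e_\mu$ and boxes in the skew diagram sufficiently precise to read off the characterization of smoothness directly in terms of removing a single hook from $\widetilde\lambda$, and verifying that multi-hook differences always yield smooth points while single-hook differences never do.
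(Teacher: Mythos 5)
The paper does not prove this statement at all: it is quoted verbatim from Billey--Lakshmibai \cite{Lak00} (Theorem 9.3.1) and used as a black box, so there is no in-paper argument to compare yours against. Judged on its own terms, your first two steps are the standard and correct route: $B$-equivariance makes $\dim T_x\Sigma_\lambda$ constant on Schubert cells, the singular locus is closed and $B$-stable, hence a union of Schubert varieties, and everything reduces to deciding singularity at the $T$-fixed points $e_\mu$ with $e_\mu\in\Sigma_\lambda$, where the tangent space splits into $T$-weight spaces and its dimension becomes a box-counting problem. That part is fine.

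The genuine gap is in your third step, where the combinatorial criterion you announce is false and in fact inconsistent with your own first step. You claim $e_\mu$ is smooth precisely when $\widetilde\lambda/\widetilde\mu$ is ``more than a single rim hook'' and singular precisely when it is a single hook. But the singular locus is closed: if $\widetilde\mu=\widetilde\lambda\setminus H$ for a hook $H$ and $\widetilde\nu\lneq\widetilde\mu$, then $\Sigma_\nu\subset\Sigma_\mu\subset\Sing(\Sigma_\lambda)$, so $e_\nu$ is a singular point even though $\widetilde\lambda/\widetilde\nu$ is not a single hook. Your parenthetical ``automatically including all smaller Schubert varieties'' patches the final set-theoretic conclusion but flatly contradicts the pointwise dichotomy you just asserted; if that dichotomy held, the union $\bigcup_H\Sigma_{\widetilde\lambda\setminus H}$ would contain smooth points of $\Sigma_\lambda$, which is absurd. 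The correct statement is that $e_\nu$ is singular if and only if $\widetilde\nu\leq\widetilde\lambda\setminus H$ for \emph{some} removable hook $H$; the single-hook removals index only the \emph{maximal} singular Schubert varieties, i.e.\ the irreducible components of $\Sing(\Sigma_\lambda)$, not the full set of singular $T$-fixed points. Relatedly, the actual mathematical content --- the closed formula for $\dim T_{e_\nu}\Sigma_\lambda$ (as a count of weights, equivalently of certain hooks or reflections) from which one reads off exactly this containment condition --- is deferred in your sketch to ``a careful analysis,'' so the step that carries the whole theorem is both unproved and mis-stated.
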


\begin{Example}\label{ExampleSchubert2}
Considering Example \ref{ExampleSchubert},
Theorem \ref{singSchubert} implies that
$$\Sing(\Sigma_{(2^3,1)})=\Sigma_{(3^4,2)}\bigcup 
\Sigma_{2^5}.$$
Moreover, from Theorem \ref{singSchubert} follows that $\Sigma_{2^5}$ is smooth and $\Sing(\Sigma_{(3^4,2)})=\Sigma_{4^5},$ which is smooth as well.
\end{Example}

We can also determine the multiplicity of a Schubert variety along another Schubert variety.

\begin{Theorem}[Theorem 9.4.49 \cite{Lak00}]\label{schubertmult}
Set $\lambda=(\lambda_1,\dots,\lambda_{r+1})\leq
\mu=(\mu_1,\dots,\mu_{r+1}),$
$t=(t_1,\dots,t_{r+1}),s=(s_1,\dots,s_{r+1}),$
where $s_i=\# \{j;\mu_j-j<\lambda_i-i\},$ and $t_i=n-r+i-\lambda_i, i=1,\dots,r+1.$
Then $mult_{\Sigma_{\mu}} \Sigma_{\lambda}$ is the absolute value of
$$\det \begin{pmatrix}
\binom{t_1}{-s_1}&\dots &\binom{t_{r+1}}{-s_{r+1}}\\
\binom{t_1}{1-s_1}&\dots 
&\binom{t_{r+1}}{1-s_{r+1}}\\
\vdots&&\vdots\\
\binom{t_1}{r-s_1}&\dots 
&\binom{t_{r+1}}{r-s_{r+1}}
\end{pmatrix}.$$
\end{Theorem}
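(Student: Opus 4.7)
The plan is to reduce $\mult_{\Sigma_\mu}\Sigma_\lambda$ to a local computation at a single torus-fixed point, and then convert that local computation into a count of non-intersecting lattice paths which the Lindström--Gessel--Viennot (LGV) lemma evaluates as the desired binomial determinant.

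First, the maximal torus $T\subset GL_{n+1}(\C)$ acts on $\G(r,n)$ preserving both $\Sigma_\lambda$ and $\Sigma_\mu$, and the dense Schubert cell of $\Sigma_\mu$ contains the $T$-fixed point $e_\mu$ indexed by the partition $\mu$. Hence $\mult_{\Sigma_\mu}\Sigma_\lambda = \mult_{e_\mu}\Sigma_\lambda$. I would then work in the opposite affine chart $U_\mu^-\cong \mathbb{A}^{(r+1)(n-r)}$ centered at $e_\mu$, where $\Sigma_\lambda\cap U_\mu^-$ is cut out by the vanishing of minors on certain ``northwest'' submatrices of a generic $(r+1)\times(n-r)$ coordinate matrix $X$. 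The ranks and shapes of these submatrices are governed precisely by the integers $t_i = n-r+i-\lambda_i$ appearing in the statement.

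Next, compute $\mult_{e_\mu}\Sigma_\lambda$ via the Hilbert--Samuel formula, using the Lakshmibai--Seshadri standard monomial basis of the local ring $\mathcal{O}_{\Sigma_\lambda,e_\mu}$. Standard monomials are indexed by chains in the Bruhat interval $[\mu,\lambda]$, and the classical bijection between such chains and $(r+1)$-tuples of monotone non-intersecting lattice paths converts the leading-term count into an LGV problem: the source/sink data are exactly $(s_i,t_i)$, where $s_i = \#\{j : \mu_j - j < \lambda_i - i\}$ records the horizontal shift introduced by passing from $\mu$ to $\lambda$. In the resulting rectangular grid, the number of individual paths from the $k$-th source to the $j$-th sink is the binomial coefficient $\binom{t_k}{j-s_k}$, and LGV evaluates the number of non-intersecting $(r+1)$-tuples as the absolute value of the determinant of these binomials, which is exactly the formula in the statement.

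The main obstacle is the second step: producing the standard monomial basis for the completed local ring at $e_\mu$ and matching its combinatorial parameters cleanly to the LGV data $(s_i,t_i)$, so that the binomial determinant falls out without spurious factors. A viable alternative that sidesteps Lakshmibai--Seshadri theory is the Kodiyalam--Raghavan strategy: choose a monomial order on the coordinate ring of $U_\mu^-$ adapted to $\mu$, compute a Gr\"obner degeneration of the ideal $I_\lambda$ of $\Sigma_\lambda\cap U_\mu^-$, and evaluate the Hilbert--Samuel multiplicity of the combinatorially transparent initial ideal directly; this leads to the same non-intersecting path count and hence the same LGV determinant. Either approach terminates in the evaluation already stated.
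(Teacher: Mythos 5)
This statement is quoted from Billey--Lakshmibai (Theorem 9.4.49 of \cite{Lak00}) and the paper supplies no proof of its own, so there is no in-paper argument to compare against; the only honest comparison is with the proofs in the literature. Your outline follows exactly the standard route: the reduction $\mult_{\Sigma_\mu}\Sigma_\lambda=\mult_{e_\mu}\Sigma_\lambda$ via the Borel orbit through $e_\mu$ is correct, the description of $\Sigma_\lambda$ in the opposite cell at $e_\mu$ by northwest minor conditions is the right local model, and the passage from the Hilbert--Samuel multiplicity to a count of non-intersecting lattice paths evaluated by Lindstr\"om--Gessel--Viennot is precisely how this determinant arises (standard monomial theory in the book's own proof, Gr\"obner degeneration in the Kodiyalam--Raghavan variant you mention). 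The one substantive caveat is the one you flag yourself: the entire content of the theorem sits in the combinatorial matching between chains in the Bruhat interval (or the initial ideal of the minor ideal) and the LGV source/sink data $(s_i,t_i)$, and your sketch asserts rather than establishes that the path count from the $k$-th source to the $j$-th sink is $\binom{t_k}{j-1-s_k}$ (note the shift by one relative to what you wrote, to match the rows $\binom{t_i}{k-1-s_i}$, $k=1,\dots,r+1$, of the stated matrix). As a blind reconstruction of a cited classical result the plan is sound and would terminate in the stated formula once that bookkeeping is carried out.
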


\begin{Example}\label{exampleSchubertmult2}
Lets compute $m:=\mult_{\Sigma_{(3^4,2)}} \Sigma_{(2^3,1)},$ see Example \ref{ExampleSchubert2}.
We have $r=4,n=9,$ and
$$\begin{cases}
\lambda&=(2,2,2,1,0)\\
\mu&=(3,3,3,3,2)\\
(\lambda_i-i)_i&=(1,0,-1,-3,-5)\\
(\mu_j-j)_j&=(2,1,0,-1,-3)\\
t&=(4,5,6,8,10)\\
s&=(3,2,1,0,0)
\end{cases}$$
Then, by Theorem \ref{schubertmult}
$$m=\left| \det \begin{pmatrix}
\binom{4}{-3}&\binom{5}{-2}&\binom{6}{-1}
&\binom{8}{0}&\binom{10}{0}\\
\binom{4}{-2}&\binom{5}{-1}&\binom{6}{0}
&\binom{8}{1}&\binom{10}{1}\\
\binom{4}{-1}&\binom{5}{0}&\binom{6}{1}
&\binom{8}{2}&\binom{10}{2}\\
\binom{4}{0}&\binom{5}{1}&\binom{6}{2}
&\binom{8}{3}&\binom{10}{3}\\
\binom{4}{1}&\binom{5}{2}&\binom{6}{3}
&\binom{8}{4}&\binom{10}{4}
\end{pmatrix}
\right|
=\det \left|\begin{pmatrix}
0&0&0
&1&1\\
0&0&1
&8&10\\
0&1&6
&\binom{8}{2}&\binom{10}{2}\\
1&5&\binom{6}{2}
&\binom{8}{3}&\binom{10}{3}\\
4&\binom{5}{2}&\binom{6}{3}
&\binom{8}{4}&\binom{10}{4}
\end{pmatrix}\right|
=14.$$
\end{Example}

\begin{Example}\label{exampleSchubertmult}
Consider 
$$\Sigma_{(n-2)^3}=R_0'\subset
\Sigma_{((n-3)^2,0)}=R_1' \subset
\Sigma_{(n-4,0,0)}=R_2'\subset 
\Sigma_{(1,0,0)}=D\subset \G(2,n).$$
Here $\lambda=(1,0,0)\leq \mu^j, j=0,1,2$
where $R_j'=\Sigma_{\mu^j}.$ We have
$ t=(n-1,n+1,n+2),$ and $s^0=(0,0,0),s^1=(1,1,0),s^2=(2,1,0).$
Therefore, by Theorem \ref{schubertmult}
\begin{align*}
\mult_{R_0'}D&=\left| \det \begin{pmatrix}
\binom{n-1}{0}&\binom{n+1}{0}&\binom{n+2}{0}\\
\binom{n-1}{1}&\binom{n+1}{1}&\binom{n+2}{1}\\
\binom{n-1}{2}&\binom{n+1}{2}&\binom{n+2}{2}
\end{pmatrix}
\right|=3,\\
\mult_{R_1'}D&=\left| \det \begin{pmatrix}
\binom{n-1}{-1}&\binom{n+1}{-1}&\binom{n+2}{0}\\
\binom{n-1}{0}&\binom{n+1}{0}&\binom{n+2}{1}\\
\binom{n-1}{1}&\binom{n+1}{1}&\binom{n+2}{2}
\end{pmatrix}
\right|=\left| \det \begin{pmatrix}
0&0&1\\
\binom{n-1}{0}&\binom{n+1}{0}&\binom{n+2}{1}\\
\binom{n-1}{1}&\binom{n+1}{1}&\binom{n+2}{2}
\end{pmatrix}
\right|=2,\\
\mult_{R_2'}D&=\left| \det \begin{pmatrix}
\binom{n-1}{-2}&\binom{n+1}{-1}&\binom{n+2}{0}\\
\binom{n-1}{-1}&\binom{n+1}{0}&\binom{n+2}{1}\\
\binom{n-1}{0}&\binom{n+1}{1}&\binom{n+2}{2}
\end{pmatrix}
\right|=\left| \det \begin{pmatrix}
0&0&1\\
0&1&\binom{n+2}{1}\\
\binom{n-1}{0}&\binom{n+1}{1}&\binom{n+2}{2}
\end{pmatrix}
\right|=1.\\
\end{align*}

\end{Example}

In the next section we will generalize this last example.
\section{Osculating spaces}\label{osculating spaces grass}

Set
$$\Lambda:=\left\{ I\subset \{0,\dots,n\}, |I|=r+1 \right\}.$$

For each $I=\{i_0,\dots,i_r\}\in \Lambda$ let $e_I\in\G(r,n)$ be the point corresponding to $e_{i_0}\wedge\dots\wedge e_{i_r}\in\bigwedge^{r+1} \C^{n+1}$.
Furthermore, we define a distance on $\Lambda$ as 
$$d(I,J)=|I|-|I\cap J|=|J|-|I\cap J|$$ 
for each $I,J\in\Lambda$. Note that, with respect to this distance, the diameter of $\Lambda$ is $r+1$.

In the following we give an explicit description of osculating spaces of Grassmannians at fundamental points.

\begin{Proposition}\label{oscgrass}
For any $s\geq 0$ we have 
$$T^s_{e_I}(\G(r,n))= \left\langle e_J \: | \: d(I,J)\leq s\right\rangle = \{p_J=0 \: | \: d(I,J)>s\}\subseteq\P^N.$$
In particular, $T^s_{e_I}(\G(r,n))=\P^N$ for any $s\geq r+1$.
\end{Proposition}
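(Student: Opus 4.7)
My plan is to reduce to the fundamental chart and then exploit the fact that, in that chart, the Plücker coordinates themselves are homogeneous of degree equal to the distance $d(I,J)$.

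\textbf{Step 1: Reduction by symmetry.} The natural action of $GL_{n+1}$ on $\P^n$ induces a transitive action on pairs $(e_I, \G(r,n))$ permuting fundamental points and preserving both the embedding and the distance function $d$ on $\Lambda$. So I may assume $I = \{0,1,\dots,r\}$.

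\textbf{Step 2: Explicit parametrization.} Work in the standard affine chart $U_I = \{p_I \neq 0\}$, which is parametrized by
$$
\phi\colon \mathbb{A}^{(r+1)(n-r)} \longrightarrow U_I \subset \P^N, \qquad T = (t_{ij})_{\substack{0\le i\le r \\ r+1\le j\le n}} \longmapsto \big[\,p_J(T)\,\big]_{J\in\Lambda},
$$
where the $(r+1)\times(n+1)$ matrix representing the point is $(\mathrm{Id}_{r+1} \mid T)$ and $p_J(T)$ is its maximal minor on column set $J$. Note $\phi(0) = e_I$.

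\textbf{Step 3: Degree of each Plücker coordinate.} For $J\in\Lambda$, set $L = J\setminus I$ and $L' = I\setminus J$, so $|L| = |L'| = d(I,J) =: d$. By expanding the maximal minor of $(\mathrm{Id}_{r+1}\mid T)$ along the columns indexed by $I\cap J$, one sees $p_J(T) = \pm \det\bigl(T_{L',L}\bigr)$, the $d\times d$ minor of $T$ on rows $L'$ and columns $L$. In particular $p_J(T)$ is homogeneous of degree exactly $d(I,J)$ in the entries $t_{ij}$; moreover the monomial $\prod_{\ell\in L'} t_{\ell,\sigma(\ell)}$ (for any bijection $\sigma\colon L'\to L$) appears with nonzero coefficient $\pm 1$.

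\textbf{Step 4: Identifying the osculating space.} By definition, $T^s_{e_I}\G(r,n)$ is the projective span of $\phi(0)$ together with all partial derivatives $\phi_\alpha(0) = \partial^{|\alpha|}\phi/\partial t^\alpha\,(0)$ with $1\le|\alpha|\le s$. Looking at the $J$-th coordinate:
\begin{itemize}
\item If $d(I,J) > s$, then $p_J$ has every monomial of degree $> s$, so $(\phi_\alpha(0))_J = 0$ for all $|\alpha|\le s$. Hence the osculating space is contained in $\{p_J=0 : d(I,J) > s\} = \langle e_J : d(I,J)\le s\rangle$.
\item If $d(I,J)\le s$, choose $\alpha$ to be the multi-index recording one monomial $\prod_{\ell\in L'} t_{\ell,\sigma(\ell)}$ of $p_J$. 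Then $|\alpha| = d(I,J)\le s$, and by Step 3 the partial derivative $\phi_\alpha(0)$ has nonzero $J$-component; its components at any $J'$ with $d(I,J') > d(I,J)$ vanish, so by decreasing induction on $d(I,J)$ each $e_J$ with $d(I,J)\le s$ lies in the osculating space.
\end{itemize}
This gives both claimed equalities. Finally, since the diameter of $\Lambda$ is $r+1$, for $s\ge r+1$ every $e_J$ appears, and the osculating space fills $\P^N$.

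The only mildly delicate point is the cofactor expansion in Step 3 that gives $p_J(T) = \pm\det T_{L',L}$; after that everything reduces to the observation that each $p_J$ vanishes to exact order $d(I,J)$ at the origin, which is forced by homogeneity.
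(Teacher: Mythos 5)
Your proof is correct and follows essentially the same route as the paper's: both reduce to $I=\{0,\dots,r\}$, parametrize by the matrix $(\mathrm{Id}\mid T)$, and determine which partial derivatives of the maximal minors survive at the origin; your packaging of this as ``$p_J=\pm\det(T_{L',L})$ is homogeneous of degree exactly $d(I,J)$'' is just a cleaner form of the paper's direct differentiation of $\det(A_J)$. One small remark: the ``decreasing induction'' at the end is both unnecessary and, as literally stated (using only the vanishing of components at strictly \emph{larger} distances), not quite sufficient on its own; but your Step 3 already shows that the monomial $\prod_{\ell\in L'}t_{\ell,\sigma(\ell)}$ occurs in $p_{J'}$ only when $J'=J$, and derivatives of order $d$ kill every $p_{J'}$ with $d(I,J')\neq d$ by homogeneity, so $\phi_\alpha(0)=\pm e_J$ exactly and no induction is needed.
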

\begin{proof}
We may assume that $I\!=\!\{0,\dots,r\}$ and consider the usual parametrization of $\G(r,n):$ 
$$\phi:\C^{(r+1)(n-r)}\rightarrow \G(r,n)$$
given by 
$$
A=(a_{ij})=\begin{pmatrix}
1& \dots & 0 &a_{0,r+1}& \dots & a_{0n}\\
\vdots & \ddots & \vdots & \vdots &   \ddots &\vdots\\
0 & \dots & 1 & a_{r,r+1} & \dots & a_{rn}\\
\end{pmatrix}
\mapsto (\det (A_J))_{J\in \Lambda}
$$
where $A_J$ is the $(r+1)\times(r+1)$ matrix obtained from $A$ considering just the columns indexed by $J$.

Note that each variable appears in degree at most one in the coordinates of $\phi$. Therefore, differentiating two times with respect to the same variable always yields zero.

Thus, in order to describe the osculating spaces we may take into account just partial derivatives
with respect to different variables. Moreover, since the degree of $\det (A_J)$ with respect to $a_{i,j}$ is at most $r+1$ all partial derivatives of order greater or equal than $r+2$ are zero. Hence, it is enough to prove the proposition for $s\leq r+1$.

Given $J=\{j_0,\dots,j_r\}\subset \{0,\dots,n\}$, $k\in \{0,\dots,r\}$, and $k'\in \{r+1,\dots,n\}$ we have
$$ \dfrac{\partial \det (A_J)}{\partial a_{k,k'}}=\begin{cases}
0 &\mbox{ if } k'\notin J\\
(-1)^{l+1+k'} \det (A_{J,k,k'}) &\mbox{ if } k'=j_l
\end{cases}$$
where $A_{J,k,k'}$ denotes the submatrix of $A_{J}$ obtained deleting the line indexed by $k$ and the column indexed by $k'$. More generally, for any $m\geq 1$ and for any
\begin{align*}
J&=\{j_0,\dots,j_r\}\subset \{0,\dots,n\},\\
K'&=\{k_1',\dots,k_m'\}\subset \{r+1,\dots,n\}, \\
K&=\{k_1,\dots,k_m\}\subset \{0,\dots,r\}
\end{align*}
we have
$$ \dfrac{\partial^m \det (A_J)}{\partial a_{k_1,k_1'}\dots\partial a_{k_m,k_m'}}=
\begin{cases}
(\pm 1) \det (A_{J,(k_1,k_1'),\dots,(k_m,k_m')} )
&\mbox{ if }K'\!\subset\! J \mbox{ and } |K|\!=\!|K'|\!=\!m\leq d\\
0 &\mbox{ otherwise }
\end{cases}$$
where $d=d(J,\{0,\dots,r\})=\deg(\det(A_J))$. Therefore
$$ \dfrac{\partial^m \det (A_J)}{\partial a_{k_1,k_1'}\dots\partial a_{k_m,k_m'}}(0)=\begin{cases}
\pm 1 &\mbox{ if } J=K'\bigcup \left(\{ 0,\dots, r\}\backslash K\right)\\
0 &\mbox{ otherwise }
\end{cases}$$
and
$$\dfrac{\partial^m \phi}{\partial a_{k_1,k_1'}\dots\partial a_{k_m,k_m'}}(0)=
\pm  e_{K'\cup \left(\{ 0,\dots, r\}\setminus K\right)}.$$
Note that $d\left(K'\cup \left(\{ 0,\dots, r\}\backslash K\right),\{ 0,\dots, r\}\right)=m$, and that any $J$ with $d(J,\{0,\dots,r\})=m$ may be written in the form $K'\cup \left(\{ 0,\dots, r\}\backslash K\right)$.

Finally, we get that
$$\left\langle \dfrac{\partial^{|I|}\phi}{\partial^I a_{i,j}}(0)\: \big| \: |I|= m\right\rangle
=\left\langle e_J\: | \: d(J,\{0,\dots r\})=m\right\rangle,$$
which proves the statement.
\end{proof}

Notice that using Proposition \ref{oscgrass} one can describe the osculating space of any given point of the Grassmannian.

Now, it is easy to compute the dimension of the osculating spaces of $\G(r,n)$.

\begin{Corollary}
For any point $p\in \G(r,n)$ we have
$$\dim T^s_p \G(r,n)=\sum_{l=1}^s \binom{r+1}{l}\binom{n-r}{l}$$
for any $0\leq s\leq r$, while $T^s_p \G(r,n)=\P^N$ for any $s\geq r+1$.
\end{Corollary}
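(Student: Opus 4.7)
The plan is to reduce immediately to a fundamental point and then count Plücker basis vectors. Since the Grassmannian $\G(r,n)$ is homogeneous under the natural $PGL(n+1)$-action and this action preserves the structure of osculating spaces, the dimension $\dim T_p^s\G(r,n)$ is independent of $p$. So I would fix $p = e_I$ for some $I \in \Lambda$ and apply Proposition \ref{oscgrass}, which gives
$$T^s_{e_I}\G(r,n) = \langle e_J \mid d(I,J) \leq s\rangle.$$
Since the vectors $\{e_J\}_{J \in \Lambda}$ form a basis of $\bigwedge^{r+1}\mathbb{C}^{n+1}$, the $e_J$ with $d(I,J)\leq s$ are linearly independent, and the projective dimension of their span equals $\#\{J \in \Lambda : d(I,J)\leq s\} - 1$.

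Next I would compute this cardinality by stratifying according to the exact distance $l = d(I,J)$. Writing $J$ as $(I \setminus K)\cup K'$ for a subset $K\subseteq I$ of size $l$ removed and a subset $K'\subseteq \{0,\dots,n\}\setminus I$ of size $l$ added, we get that the number of $J$ at distance exactly $l$ from $I$ is $\binom{r+1}{l}\binom{n-r}{l}$, since $|I|=r+1$ and $|\{0,\dots,n\}\setminus I|=n-r$. Summing and subtracting one for the projective dimension yields
$$\dim T_p^s\G(r,n) = \sum_{l=0}^{s}\binom{r+1}{l}\binom{n-r}{l}-1 = \sum_{l=1}^{s}\binom{r+1}{l}\binom{n-r}{l},$$
using $\binom{r+1}{0}\binom{n-r}{0}=1$.

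Finally, for $s\geq r+1$, the observation (already recorded in Proposition \ref{oscgrass}) that the diameter of $\Lambda$ under the distance $d$ is exactly $r+1$ implies that every $J\in \Lambda$ satisfies $d(I,J)\leq r+1\leq s$, so the span of the $e_J$ with $d(I,J)\leq s$ is all of $\mathbb{P}^N$. There is no serious obstacle in this argument; the only thing worth double-checking is the combinatorial count of distance strata, but this is straightforward from the definition of $d$.
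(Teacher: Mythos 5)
Your proof is correct and follows essentially the same route as the paper: reduce to a coordinate point $e_I$ using homogeneity (the relevant automorphism extends to a projective-linear automorphism of $\mathbb{P}^N$, hence carries osculating spaces to osculating spaces), then apply Proposition \ref{oscgrass} and count the basis vectors $e_J$ with $d(I,J)\leq s$. The only difference is that you spell out the stratified count $\#\{J: d(I,J)=l\}=\binom{r+1}{l}\binom{n-r}{l}$, which the paper dismisses as a standard combinatorial argument; your count is accurate.
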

\begin{proof}
Since $\G(r,n)\subset\P^{N}$ is homogeneous under the action the algebraic subgroup 
$$\Stab(\G(r,n))\subset PGL(N+1)$$
stabilizing it, there exists an automorphism $\alpha\in PGL(N+1)$ inducing an automorphism of $\G(r,n)$ such that $\alpha(p) = e_I$. Moreover, since $\alpha\in PGL(N+1)$ we have that it induces an isomorphism between $T^s_p \G(r,n)$ and $T^s_{e_I} \G(r,n)$. Now, the computation of $\dim \G(r,n)$ follows, by standard combinatorial arguments, from Proposition \ref{oscgrass}.
\end{proof}

We end this section by describing the loci
$\G(r,n)\cap T^i_p\G(r,n)$ in two different ways.
These characterizations will not be used in Part \ref{part1}
to prove non-secant defectivity results
but may be interesting on their own.
%
%
 
\begin{Notation}\label{notation1}
Let $p\in \G(r,n)$ be a point and for any non negative integer $i\leq r+1$ consider the $i$-th osculating space $T^i_p \G(r,n)\subset \P^N$ of $\G(r,n)$ at $p.$ We denote by $R_i=R_i(p)$ the subvariety of the Grassmannian defined by
$$R_i:=\G(r,n)\bigcap T^i_p\G(r,n).$$
\end{Notation}

In particular $R_0=\{p\}.$ The variety
$$R_1=R=\G(r,n)\cap T_p\G(r,n)$$
will be used in the construction of the Mori chamber decomposition of $\G(1,n)_1,$ the blow-up of $\G(1,n)$ at one point,  in Chapter \ref{cap5}. However, we will not use the results of this section in Chapter \ref{cap5}, we rather make Chapter \ref{cap5} self contained giving simplified proofs.

The $R_i$ can be characterized in two more ways.

\begin{Lemma}\label{Rslemma}
Choose a complete Flag $\{0\}=V_0\subset V_1\subset \cdots \subset V_{n+1}=\C^{n+1}$ of linear spaces in $\C^{n+1}$,
with $V_{r+1}$ corresponding to the point $p\in \G(r,n).$ Define the following irreducible subvarieties of $\G(r,n):$ 
$$R_i'(p)=R_i':=\{ [U]\in \G(r,n); dim (U \cap V_{r+1})\geq r+1-i \}$$
for $i=0,1,\dots, r+1.$ 
Moreover, for any $1\leq i\leq r+1$ define
$$R_i''(p)=R_i'':=\!\!\!\!\!\!
\bigcup_{\substack{c \mbox{ \tiny{rational curve} }\\ p\in c\subset \G(r,n) \\ deg(c)=i}}\!\!\! \!\!\! c$$
as the locus of the degree $i$ rational curves contained in $\G(r,n)$ and passing through $p.$
Then $R_i=R_i'=R_i''$ for any $1\leq i\leq r+1.$
\end{Lemma}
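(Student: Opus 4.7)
The plan is to show separately that $R_i = R_i'$ and $R_i = R_i''$, so that combined they yield the three-way equality. Since $\G(r,n) \subset \P^N$ is homogeneous under the action of its stabilizer in $PGL(N+1)$, I may assume without loss of generality that $p = e_I$ for $I = \{0,\dots,r\}$, so that $V_{r+1} = \langle e_0,\dots,e_r\rangle$.

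For the equality $R_i = R_i'$, Proposition \ref{oscgrass} gives $T_p^i\G(r,n) = \{p_J = 0 \,:\, d(I,J) > i\}$. In the standard affine chart where $[U]$ is represented by a matrix $(I_{r+1}\mid B)$ with $B$ of size $(r+1)\times(n-r)$, I decompose each $J$ of size $r+1$ as $J = J_0 \sqcup J_1$ with $J_0 := J\cap I$ and $J_1 := J\setminus I$. A Laplace expansion along the columns in $J_1$ leaves a single nonvanishing term: namely, each column of the $J_0$-block is a standard basis vector, so the expansion survives only when the rows assigned to $J_1$ are exactly $I\setminus J_0$. Hence $p_J$ equals, up to sign, the $|J_1|\times|J_1|$ minor of $B$ with rows $I\setminus J_0$ and columns $J_1$. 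Therefore $[U]\in R_i$ iff every minor of $B$ of size greater than $i$ vanishes, iff $\mathrm{rank}(B)\leq i$, iff $\dim(U\cap V_{r+1}) = (r+1)-\mathrm{rank}(B) \geq r+1-i$, which is the defining condition of $R_i'$. Both $R_i$ and $R_i'$ are Zariski closed and agree on this dense open chart, so they coincide.

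For the inclusion $R_i'' \subseteq R_i$, I use the observation that any irreducible rational curve $C\subset \P^N$ of degree $i$ can be parametrized in a fixed affine chart by polynomials of degree at most $i$; Taylor expansion then identifies the linear span of $C$ with the $i$-th osculating space $T_p^i C$ at any smooth $p\in C$. Hence $C \subset T_p^i C \subset T_p^i\G(r,n)$, i.e.\ $C \subset R_i$.

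Finally, for $R_i' \subseteq R_i''$, fix $q=[U]\in R_i'$, set $j := r+1-\dim(U\cap V_{r+1}) \leq i$, and let $L := U\cap V_{r+1}$ with basis $a_1,\dots,a_{r+1-j}$. Choose bases $w_1,\dots,w_j$ and $w_1',\dots,w_j'$ of complements of $L$ in $V_{r+1}$ and in $U$ respectively, together with generic auxiliary vectors $b_1,\dots,b_{i-j}\in \C^{n+1}$. Define $\phi\colon \P^1 \to \G(r,n)$ by
\[
\phi([s:t]) = \big\langle L,\ v_1(s,t),\ sw_2+tw_2',\ \dots,\ sw_j+tw_j'\big\rangle,
\]
where $v_1(s,t) = s^{i-j+1}w_1 + t^{i-j+1}w_1' + \sum_{k=1}^{i-j} s^{i-j+1-k}t^k b_k$. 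Then $\phi([1{:}0]) = p$, $\phi([0{:}1]) = q$, and the Plücker coordinates of $\phi$ are polynomials in $s,t$ of degree $(i-j+1)+(j-1)=i$, producing a rational curve of degree $i$ through $p$ and $q$. The main technical obstacle lies precisely in this last step: one must check that $\phi$ is a morphism (the spanning vectors remain linearly independent for every $[s:t]$), that the image is irreducible, and that the Plücker coordinates have no common factor so that the degree is exactly $i$. All three hold for generic choices of the $b_k$ by a standard dimension count, and in the open stratum $j = i$ reduce to the transparent pencil-type parametrization $[s:t] \mapsto \langle L,\ sw_1+tw_1',\ \dots,\ sw_j+tw_j'\rangle$.
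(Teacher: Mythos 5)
Your overall strategy is sound and genuinely different from the paper's. The paper proves $R_{r+1}''=\G(r,n)$ by connecting two points through an explicit rational normal scroll, deduces $R_i'\subseteq R_i''$ by passing to a sub-Grassmannian $\G(r,r+i)\cong\G(i-1,r+i)$ containing both planes, gets $R_i''\subseteq R_i'$ from the theory of varieties of minimal degree (the union of the planes parametrized by a degree-$i$ rational curve is a scroll spanning only a $\P^{r+i}$), and then quotes Proposition \ref{oscgrass} for $R_i=R_i'$. You instead prove $R_i''\subseteq R_i$ directly by observing that a degree-$i$ rational curve lies in its own $i$-th osculating space at $p$ (this is arguably cleaner and bypasses minimal-degree varieties entirely, though you should not restrict to ``smooth $p\in C$'': the curve may be singular at $p$, and the Taylor argument must then be run on the normalization), and you prove $R_i'\subseteq R_i''$ by writing down an explicit interpolating family of subspaces rather than reducing to a sub-Grassmannian. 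Your route is more computational but more self-contained; the paper's is shorter but leans on two external geometric facts.

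There are two gaps you should close. First, in the identification $R_i=R_i'$, the inference ``both are closed and agree on a dense open chart, so they coincide'' is not valid for closed sets in general: it gives $R_i'=\overline{R_i'\cap\Omega}\subseteq R_i$ because $R_i'$ is an irreducible Schubert variety meeting the chart, but the reverse inclusion $R_i\subseteq R_i'$ requires ruling out components of $R_i$ inside $\{p_{0\cdots r}=0\}$, which is part of what is being proved. The fix is to run the minor computation globally: choose a basis of $U$ extending a basis of $U\cap V_{r+1}$ and check directly that $p_J(U)=0$ whenever $|J\cap I|<\dim(U\cap V_{r+1})$, and conversely that if $\operatorname{rank}$ of $U$ modulo $V_{r+1}$ exceeds $i$ then some $p_J$ with $d(I,J)>i$ survives a block Laplace expansion. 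Second, the verification you defer for the curve $\phi$ is the actual content of the inclusion $R_i'\subseteq R_i''$ and ``a standard dimension count'' hides a genuine point: the genericity argument for the $b_k$ requires the quotient $\C^{n+1}/(V_{r+1}+U)$, of dimension $n-r-j$, to be large enough that the vector-valued form $\sum_k s^{i-j-k}t^{k-1}\bar b_k$ has no zero on $\P^1$. This works, but only because $i-j\geq 2$ forces $j\leq r-1$ and hence $n-r-j\geq n-2r+1\geq 2$ under the standing hypothesis $n\geq 2r+1$; the boundary case $i-j=1$ with $n-r-j=1$ must be checked separately (it is fine since $st\bar b_1$ vanishes only at $st=0$, where independence is checked by hand). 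The claims that the Plücker forms have no common factor (this follows from pointwise independence) and that $\phi$ is birational onto its image (needed so the image has degree exactly $i$ and not a proper divisor of $i$) also deserve a sentence each; without degree exactly $i$ you would additionally need the non-obvious monotonicity $R_j''\subseteq R_i''$ for $j<i$.
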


\begin{proof}
First we prove that $R_{r+1}''=\G(r,n).$

We have to prove that any two points of $\G(r,n)$ can be connected by a degree $r+1$ rational curve.
Let $p,q$ be points in $\G(r,n)$ corresponding to linear $r$-spaces $V_p$ and $V_q$ in $\P^n.$
Take lines $L_0,\dots,L_r$ on $\P^n$ that intersects $V_p$ and $V_q$ and such that they generate a linear $(2r+1)$-space. 
Call $a=L_0\cap V_p$ and $b=L_0\cap V_q.$
Choose isomorphisms $\phi_i:L_0\to L_i$ for $i=1,\dots,r$ such that
$\phi_i(a)=L_i \cap V_p$ and $\phi_i(b)=L_i \cap V_q.$
Define the degree $r$ rational normal scroll 
$$S=\bigcup_{x\in L_0}\left\langle x,
\phi_1(x),\dots,\phi_r(x)
\right\rangle.$$
Then $c=\{[V]\in \G(r,n);V\subset S\}$ is rational normal curve on $\G(r,n)$
of degree $r=deg(S)$ and passing through $p$ and $q$ by the way we chose the $\phi_i$.

Next we prove that $R_i'=R_i''$ for $1\leq i\leq r+1.$ 

Let $q\in R_i'=R_i'(p).$
Then by the definition of $R_i'$ the spaces $V_p$ and $V_q$ have intersection with dimension at least $r-i.$ Therefore, there is a space $L\cong \P^{r+i}$ containing both. Thus, there is some subGrassmannian $\G(r,r+i)\cong \G(i-1,r+i)$ of $\G(r,n)$ such that
$p,q\in \G(i-1,r+i).$
Then, by the first part there is a rational normal curve on $\G(i-1,r+i)\subset \G(r,n)$
of degree $i$ passing through $p$ and $q,$ that is, $q\in R_i''.$ This proves $R_i'\subset R_i''.$

Conversely, let $q\in R_i'',$ then there is a rational normal curve $c$ of degree $i$
passing through $p$ and $q.$ Set $X=\bigcup_{[V]\in c} V\subset \P^n$.
Then $X$ is a rational normal scroll of degree $i$ and dimension $r+1.$ Let $L=\P^s$ be the span of $X$ in $\P^n.$
Therefore $X$ is a non degenerate subvariety of $L$ of minimal degree, more precisely
$deg(X)=dim(L)-dim(X)+1,$ that is, $i=s-(r+1)+1,\ s=r+i.$ For details 
about rational normal scrolls and varieties of minimal degree see \cite[Chapter 19]{Harris92}.
This means $V_p$ and $V_q$ are contained in $L\cong \P^{r+i},$
that is, $dim(V_p\cap V_q)\geq r- i,$ that is, $q\in R_i'.$ This proves that $R_i''\subset R_i'.$

Now it is enough to show that $R_i=R_i'.$ This follows from the description of $T^i_p$ in Lemma \ref{oscgrass}.
\end{proof}

These subvarieties $R_i$ have good properties.

\begin{Lemma} 
With the above notations we have
\begin{enumerate}
	\item $\{p\}=R_0\subsetneq R_1\subsetneq \cdots \subsetneq R_r \subsetneq R_{r+1}=\G(r,n).$
	\item $dim(R_i)=i(n+1-i)$ for $i=0,\dots, r+1.$ In particular, $R_i$ is a divisor of $\G(r,n)$ if and
	only if $i=r$ and $n=2r+1.$
	\item $Sing(R_i)=R_{i-1}$ for $i=1,\dots, r.$
\end{enumerate}
\end{Lemma}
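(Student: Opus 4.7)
The plan is to exploit the identification $R_i = R_i' = \Sigma_\lambda$ with $\lambda = ((n-r-i)^{r+1-i}, 0^i)$ provided by Lemma \ref{Rslemma}. For part (1), the containment $R_i \subseteq R_{i+1}$ is immediate from the defining condition $\dim(U \cap V_{r+1}) \geq r+1-i$, which only weakens as $i$ grows, and the endpoints $R_0 = \{p\}$ and $R_{r+1} = \G(r,n)$ are direct consequences. Strictness of the inclusions will follow a posteriori from part (2), since $f(i) = i(n+1-i)$ is strictly increasing on $0 \leq i \leq r+1$ whenever $n \geq 2r+1$.

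For part (2), I will invoke the standard codimension formula for Schubert varieties, obtaining $\codim R_i = |\lambda| = (r+1-i)(n-r-i)$. A direct expansion then gives $\dim R_i = (r+1)(n-r) - (r+1-i)(n-r-i) = i(n+1-i)$. The divisor condition $\codim R_i = 1$ forces $(r+1-i)(n-r-i) = 1$; since both factors are non-negative integers, each must equal $1$, giving $i = r$ and $n = 2r+1$.

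For part (3), my plan is to reduce to the singular locus of a generic determinantal variety. Choose a complement $W$ of $V := V_{r+1}$ in $\C^{n+1}$ and consider the standard affine chart
\[
\mathcal{U}_W = \{[U] \in \G(r,n) : U \oplus W = \C^{n+1}\} \cong \Hom(V, W),
\]
where $A \in \Hom(V, W)$ parametrizes $U_A = \{v + Av : v \in V\}$. The key computation is $U_A \cap V = \ker A$, so in this chart
\[
R_i \cap \mathcal{U}_W = \{A \in \Hom(V, W) : \rk(A) \leq i\}
\]
is exactly the generic determinantal variety of matrices of bounded rank, whose singular locus is classically the lower-rank stratum $\{A : \rk(A) \leq i-1\} = R_{i-1} \cap \mathcal{U}_W$. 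To obtain the global statement, I will observe that every point $[U] \in R_i$ lies in some such chart: a generic $(n-r)$-plane $W$ is complementary both to $V$ and to $U$ (two nonempty open conditions, whose intersection is nonempty by irreducibility of the Grassmannian of complements), so the local conclusion propagates to $\Sing(R_i) = R_{i-1}$.

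The step I expect to require the most care is this final global propagation. It relies on two ingredients: the classical singular-locus theorem for generic determinantal varieties, and the chart-covering argument. Both are routine but should be set up cleanly; an alternative would be to derive the same conclusion from Theorem \ref{singSchubert} by identifying the unique removable hook of the two-rectangle Ferrers diagram of $\widetilde\lambda(R_i) = (i^{r+1-i},(n-r)^i)$, which yields precisely $\widetilde\lambda(R_{i-1})$.
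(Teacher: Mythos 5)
Your proof is correct. Parts (1) and (2) follow the same route as the paper: the paper declares (1) clear and derives (2) from the Schubert dimension formula $\dim(\Sigma_\lambda)=|\widetilde\lambda|$, which is exactly your codimension computation $(r+1)(n-r)-(r+1-i)(n-r-i)=i(n+1-i)$; your added observation that strictness in (1) follows from the strict monotonicity of $i\mapsto i(n+1-i)$ is a nice touch, and it does use the standing hypothesis $n\geq 2r+1$, since $f(i+1)-f(i)=n-2i$. Where you genuinely diverge is part (3): the paper simply cites Theorem \ref{singSchubert} (hook removal on the Ferrers diagram of $\widetilde\lambda$), whereas your primary argument localizes $R_i$ in the affine chart $\mathcal{U}_W\cong\Hom(V,W)$, identifies $R_i\cap\mathcal{U}_W$ with the generic determinantal variety $\{\rk A\leq i\}$ via $U_A\cap V=\ker A$, and invokes the classical fact that its singular locus is the rank-$(i-1)$ stratum; the chart-covering step is sound because the two complementarity conditions on $W$ are nonempty open conditions on an irreducible Grassmannian, and the determinantal fact applies since $i\leq r<\min(r+1,n-r)$. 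Your approach is more self-contained and elementary --- it avoids the Billey--Lakshmibai machinery entirely --- at the cost of the chart bookkeeping; the paper's approach is a one-line citation given its Schubert-variety setup. Your closing remark that the alternative hook computation recovers the same answer is also correct: the unique removable hook of $(i^{r+1-i},(n-r)^i)$ yields $((i-1)^{r+2-i},(n-r)^{i-1})=\widetilde\lambda(R_{i-1})$, which is precisely what the paper's citation amounts to.
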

\begin{proof}
Item $(1)$ is clear, for the proof of items $(2)$ and $(3)$ see Lemma \ref{keylemma}.
\end{proof}

Lemma \ref{keylemma} below generalizes the computation of Example \ref{exampleSchubertmult}.

\begin{Lemma}\label{keylemma}
With the above notations we have
\begin{enumerate}
\item $\dim(R_i')=i(n+1-i),i=0,\dots,r+1.$\\
\item $\Sing(R_i')=R_{i-1}', i=1,\dots,r.$\\
\item $\mult_{R_i'} D=r+1-i,i=0,\dots,r.$\\
\item If $W\subset \C^{r+1}$ is a $(r+1)$-dimensional vector space, then 
$$\mult _{[W]}D=\dim(W\cap V_{r+1}).$$
\end{enumerate}
\end{Lemma}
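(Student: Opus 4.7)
The plan is to identify $R_i'$ with the Schubert variety $\Sigma_\lambda$ for $\lambda=((n-r-i)^{r+1-i},0^i)$, whose complementary partition is $\widetilde\lambda=(i^{r+1-i},(n-r)^i)$, and feed this into Theorems \ref{singSchubert} and \ref{schubertmult}. Part (1) will be immediate from the general identity $\dim\Sigma_\lambda=|\widetilde\lambda|$: here one gets $\dim R_i'=i(r+1-i)+(n-r)i=i(n+1-i)$. For (2), because $n\geq 2r+1$ forces $i<n-r$ for every $1\leq i\leq r$, the Ferrers diagram of $\widetilde\lambda$ consists of exactly two rectangles, so (by the pattern of Example \ref{ExampleSchubert2}, where the number of removable hooks equals the number of rectangles minus one) Theorem \ref{singSchubert} supplies a single removable hook; a direct inspection of the rim identifies this hook with the cells $(j,i)$ for $j=1,\dots,r+2-i$ together with $(r+2-i,l)$ for $l=i+1,\dots,n-r$, and its removal yields $((i-1)^{r+2-i},(n-r)^{i-1})$, which is precisely the complementary partition of $R_{i-1}'$.

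For (3), I will feed $\lambda=(1,0,\dots,0)$ and $\mu=((n-r-i)^{r+1-i},0^i)$ into the determinantal formula of Theorem \ref{schubertmult}. A short calculation gives $t=(n-r,n-r+2,n-r+3,\dots,n+1)$ and $s=(i,\dots,i,i-1,i-2,\dots,0)$, with $r+1-i$ initial entries equal to $i$. The resulting $(r+1)\times(r+1)$ binomial matrix splits as the block matrix
\[
M=\begin{pmatrix} 0 & M_B^{\mathrm{top}} \\ M_A & M_B^{\mathrm{bot}} \end{pmatrix},
\]
in which the top-left $i\times(r+1-i)$ block vanishes (since $\binom{t_j}{k-i}=0$ for $k<i$ and $j\leq r+1-i$), the $i\times i$ block $M_B^{\mathrm{top}}$ is anti-triangular with ones on the anti-diagonal (hence $|\det M_B^{\mathrm{top}}|=1$), and the $(r+1-i)\times(r+1-i)$ block $M_A$ is a Pascal--Vandermonde matrix with entries $\binom{y_j}{k}$ for $y_1=n-r$ and $y_j=n-r+j$ when $j\geq 2$. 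Using the standard evaluation $\det(\binom{y_j}{k})_{k,j}=\prod_{j<l}(y_l-y_j)/\prod_k k!$ and the fact that the $y_j$ form an almost-arithmetic progression with a single extra gap in the first coordinate, $\det M_A$ simplifies to $(r+1-i)!/(r-i)!=r+1-i$. Combining yields $\mult_{R_i'}D=r+1-i$. This block determinant evaluation is where the main technical work lies: the block structure itself is transparent, but keeping track of indices and handling the non-uniform first column of $M_A$ requires care.

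For (4), my approach is a direct local computation. Choose a basis $f_1,\dots,f_{n+1}$ of $\mathbb{C}^{n+1}$ adapted simultaneously to $W$ and to the flag subspace defining $D$, so that $V_{n-r}=\langle f_1,\dots,f_{n-r}\rangle$ and $W=\langle f_1,\dots,f_k,f_{n-r+1},\dots,f_{n-k+1}\rangle$ with $k=\dim(W\cap V_{n-r})$. In the standard affine chart around $[W]$ parametrised by $a_{ij}$, the equation of $D$ is the determinant of the bottom $(r+1)\times(r+1)$ block of the parametrising matrix; after a column swap this block takes the form $\begin{pmatrix} 0 & I_{r+1-k} \\ A & B \end{pmatrix}$ where $A=(a_{i,n-r+l})$ is a $k\times k$ matrix of distinct affine coordinates, so the local equation is $\pm\det A$, which vanishes to order exactly $k$ at the origin. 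Thus $\mult_{[W]}D=k$, matching the generic value obtained from (3). Equivalently, invariance of $D$ under the stabiliser of $V_{n-r}$, which acts transitively on the stratum $\{[W]:\dim(W\cap V_{n-r})=k\}$, reduces (4) to the generic case already handled by (3).
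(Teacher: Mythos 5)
Your treatment of items (1)--(3) is correct and follows essentially the same route as the paper: (1) from $\dim\Sigma_\lambda=|\widetilde\lambda|$; (2) from Theorem \ref{singSchubert}, where you supply the explicit hook that the paper leaves implicit; and (3) from the determinantal formula of Theorem \ref{schubertmult} with exactly the same data $t=(n-r,n-r+2,\dots,n+1)$ and $s=(i,\dots,i,i-1,\dots,1,0)$, the same block decomposition into a zero block, an anti-triangular block of ones, and a $(r+1-i)\times(r+1-i)$ Vandermonde-type block whose determinant collapses to $(r+1-i)!/(r-i)!=r+1-i$.

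For (4) you diverge in two respects, both worth noting. First, you prove the statement with $\dim(W\cap V_{n-r})$, whereas the lemma as printed says $\dim(W\cap V_{r+1})$. Since $D=\Sigma_1$ depends only on $V_{n-r}$ and its stabiliser acts transitively on each stratum $\{\,[U]:\dim(U\cap V_{n-r})=k\,\}$, the multiplicity can only be a function of $\dim(W\cap V_{n-r})$; the two formulas agree at a general point of each $R_i'$ (and agree identically when $n=2r+1$), and it is the $V_{n-r}$ version that is actually invoked later in Lemma \ref{divinG(r,n)}, so you have proved the intended statement. But you should say explicitly that you are reinterpreting the printed one: for special $W$ with $W\cap V_{r+1}=0$ yet $W\cap V_{n-r}\neq 0$ (possible once $n>2r+1$) the literal formula would assert $[W]\notin D$, which is false. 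Second, your direct local computation (the local equation of $D$ reduces to $\det A$ for a $k\times k$ block of independent affine coordinates, which vanishes to order exactly $k$) is more self-contained than the paper's one-line deduction of (4) from (3), and it handles arbitrary, not just general, points $[W]$ --- a genericity point the paper glosses over. It also re-proves (3) for free, so in principle you could have dispensed with the Schubert multiplicity formula altogether.
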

\begin{proof}
Item $(1)$ follows from the formula $\dim(\Sigma_\lambda)=|\widetilde\lambda|$ of the dimension of a Schubert variety.
Item $(2)$ follows directly from Theorem \ref{singSchubert}.
Item $(4)$ follows from item $(3)$ and the fact that  $[W]$ is a general point of $R_{r+1-\dim(W\cap V_{r+1})}'.$ We only need to prove item $(3).$

Let $0\leq i\leq r.$ 
The integer vectors of Theorem \ref{singSchubert} in our case are
$$\begin{cases}
\lambda&=(1,0^r)\\
(\lambda_i-i)_i&=(0,-2,-3,\dots,-r-1)\\
\mu&=((n-r-i)^{r+1-i},0^i)\\
(\mu_j-j)_j&=(n-r-i-1,\dots,
\underbrace{n-2r-1}_{\mbox{entry $r+1-i$} },\underbrace{-r-2+i}_{\mbox{entry $r+2-i$} },
\dots,-r-1)\\
t&=(n-r,n-r+2,n-r+3,\dots,n+1)\\
s&=(i,\dots,i,
\underbrace{i-1,\dots,1,0}_
{\mbox{$i$ entries total}})
\end{cases}$$
Now we only have to compute the determinant of 
$$M=\binom{t_l}{k-1-s_l}_{1\leq k,l\leq r+1}.$$
In order to do this we write $M$ in blocks
$$M=\begin{pmatrix}
M_1&M_2\\ M_3&M_4
\end{pmatrix},$$
where $M_1$ is of size $i\times(r+1-i),$ and the other three have the adequate size.
Note that $M_1=0$ because 
$k-1-s_l=k-1-i<0$ for $k=1,\dots,i,l=1,\dots,r+1-i.$
Moreover, $M_2$ is zero above its secondary diagonal 
(entries $(k,l)$ such that $k+l=r+2$) and have ones on it because
$$k-1-s_l=k-1-(r+1-l)=k+l-r-2.$$
Therefore $|\det(M)|=|\det(M_3)|.$ Next, note that
$$M_3=\begin{pmatrix}
\binom{n-r}{0}&\binom{n-r+2}{0}&
\dots&\binom{n+1-i}{0}\\
\binom{n-r}{1}&\binom{n-r+2}{1}&
\dots&\binom{n+1-i}{1}\\
&&&\\
\binom{n-r}{r-i}&\binom{n-r+2}{r-i}&
\dots&\binom{n+1-i}{r-i}\\
\end{pmatrix}$$
is a $r+1-i$ size Vandermonde square matrix.
Thus,
\begin{align*}
&\det(M_3)=
\dfrac{\prod_{1\leq q<p\leq r+1-i}(t_p-t_q)}
{1!2!\dots(r-i)!}\\
&=\frac{\prod_{p>q>1} (n-r+p-(n-r+q))\prod_{p>1} (n-r+p-(n-r))}{1!2!\cdots (r-i)!}\\
&=\frac{\prod_{p>q>1} (p-q)\prod_{p>1} p}{1!2!\cdots (r-i)!}=\frac{(r-1-i)!(r-2-i)!\cdots 1! (r+1-i)!}{1!2!\cdots (r-i)!}\\
&=\frac{1!2!\cdots(r-1-i)!\widehat{(r-i)!}(r+1-i)!}{1!2!\cdots (r-1-i)! (r-i)!}=r+1-i.
\end{align*}
\end{proof}

\section{Osculating projections}\label{projoscgrass}
In this section we study linear projections of Grassmannians from their osculating spaces. In order to help the reader get acquainted with the ideas of the proofs, we start by studying in detail projections from a single osculating space.

Let $0\leq s\leq r$ be an integer, and $I\in \Lambda$. By Proposition \ref{oscgrass} the projection of $\G(r,n)$ from $T_{e_I}^s$ is given by
\begin{align*}
\Pi_{T_{e_I}^s}:\G(r,n)&\dasharrow \P^{N_{s}}\\
(p_I)_{I\in \Lambda}
&\mapsto (p_J)_{J\in \Lambda \: |\: d(I,J)>s}
\end{align*}
Moreover, given $I'\!=\!\{i_0',\dots,i_s'\}\subset I$ with $|I'|\!=\! s+1$
we can consider the linear projection
\begin{align*}
\pi_{I'}:\P^n&\dasharrow \P^{n-s-1}\\
(x_i)
&\mapsto (x_i)_{i\in \{0,\dots,n\}\setminus I'}
\end{align*}
which in turn induces the linear projection
\begin{align*}
\Pi_{I'}:\G(r,n)&\dasharrow \G(r,n-s-1)\\
[V]&\mapsto [\pi_{I'}(V)]\\
(p_I)_{I\in \Lambda}
&\mapsto(p_J)_{J\in \Lambda \: | \: J\cap I'=\emptyset}
\end{align*}

Note that the fibers of $\Pi_{I'}$ are isomorphic to $\G(r,r+s+1)$. More precisely, let 
$y\in \G(r,n-s-1)$ be a point, and consider a general point $x\in \overline{\Pi_{I'}^{-1}(y)}\subset \G(r,n)$ 
corresponding to an $r$-plane $V_x\subset \P^n$. Then we have  
$$\overline{\Pi_{I'}^{-1}(y)}=\G\left(r,\left\langle  V_x, e_{i_0'},\dots,e_{i_s'}\right\rangle\right)\subset \G(r,n).$$

On the other hand, a priori it is not at all clear what are the fibers of $\Pi_{T_{e_I}^s}$. In general the image of $\Pi_{T_{e_I}^s}$ is very singular, and its fibers may not be connected. In what follows we study the general fiber of the map $\Pi_{T_{e_I}^s}$ by factoring it through several projections of type $\Pi_{I'}$. 

\begin{Lemma}\label{oscfactors}
If $s=0,\dots, r$ and $I'\subset I$ with $|I'|=s+1$, then the rational map $\Pi_{I'}$ factors through $\Pi_{T_{e_I}^s}$. Moreover, $\Pi_{T_{e_I}^r}=\Pi_I$.
\end{Lemma}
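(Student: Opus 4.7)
The plan is to compare the two projections via their Plücker-coordinate descriptions and reduce the statement to a simple combinatorial containment. By Proposition \ref{oscgrass}, the center of $\Pi_{T_{e_I}^s}$ is cut out by $\{p_J = 0 : d(I,J) > s\}$, so in coordinates
$$
\Pi_{T_{e_I}^s}\colon (p_J)_{J\in\Lambda}\;\longmapsto\;(p_J)_{d(I,J)>s},
$$
while by construction
$$
\Pi_{I'}\colon (p_J)_{J\in\Lambda}\;\longmapsto\;(p_J)_{J\cap I'=\emptyset}.
$$
So the entire statement will follow as soon as I show that the index set $\{J\in\Lambda:J\cap I'=\emptyset\}$ is contained in $\{J\in\Lambda:d(I,J)>s\}$.

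The key combinatorial step is straightforward. Assume $J\cap I'=\emptyset$. Since $I'\subset I$ and $|I'|=s+1$, we have $J\cap I\subset I\setminus I'$, whence
$$
|J\cap I|\;\leq\;|I\setminus I'|\;=\;(r+1)-(s+1)\;=\;r-s,
$$
and therefore $d(I,J)=(r+1)-|I\cap J|\geq s+1>s$. This gives the desired inclusion of index sets, and it immediately implies that $\Pi_{I'}$ factors as $\Pi_{I'}=\pi\circ\Pi_{T_{e_I}^s}$, where $\pi$ is the further linear projection that forgets the coordinates $p_J$ with $d(I,J)>s$ but $J\cap I'\neq\emptyset$.

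For the last assertion I would just plug in $s=r$. Then $|I'|=r+1=|I|$ forces $I'=I$, and the condition $d(I,J)>r$ becomes $|I\cap J|<1$, i.e. $I\cap J=\emptyset$, which is precisely the condition $J\cap I=\emptyset$ defining $\Pi_I$. Hence the two projections use the same collection of Plücker coordinates and coincide. I expect no genuine obstacle here; the entire proof is the combinatorial bound on $|J\cap I|$ above, and everything else is bookkeeping between the two Plücker-coordinate descriptions.
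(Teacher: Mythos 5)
Your proof is correct and follows essentially the same route as the paper: the paper's one-line argument is exactly the implication $J\cap I'=\emptyset\Rightarrow d(I,J)>s$ (i.e.\ the center of $\Pi_{T_{e_I}^s}$ is contained in that of $\Pi_{I'}$), together with the equivalence $J\cap I=\emptyset\Leftrightarrow d(I,J)>r$ for the case $s=r$. You have merely spelled out the combinatorial bound $|J\cap I|\leq r-s$ that the paper leaves implicit.
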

\begin{proof}
Since $J\cap I'=\emptyset\Rightarrow d(I,J)>s$ the center of $\Pi_{T_{e_I}^s}$ in contained in the center of $\Pi_{I'}$. Furthermore, if $s=r$ then $J\cap I=\emptyset\Leftrightarrow d(I,J)>r.$
\end{proof}

Now, we are ready to describe the fibers of $\Pi_{T_{e_I}^s}$ for $0\leq s\leq r$.

\begin{Proposition}\label{oscprojbirational}
The rational map $\Pi_{T_{e_I}^s}$ is birational for every $0\leq s\leq r-1$, and 
$$\Pi_{T_{e_I}^r}:\G(r,n)\dasharrow \G(r,n-r-1)$$ 
is a fibration with fibers isomorphic to $\G(r,2r+1)$.
\end{Proposition}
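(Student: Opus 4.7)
The case $s=r$ will be immediate from Lemma \ref{oscfactors}: it states $\Pi_{T_{e_I}^r}=\Pi_I$, and by the description of $\Pi_{I'}$ given just before that lemma (applied with $I'=I$, so $s+1=r+1$), the map $\Pi_I$ is a fibration onto $\G(r,n-r-1)$ whose general fiber over $\Pi_I(V_x)$ is $\G(r,\langle V_x,e_{i_0},\dots,e_{i_r}\rangle)\cong\G(r,2r+1)$.

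For the case $0\le s\le r-1$ my plan is to bound the generic fiber of $\Pi_{T_{e_I}^s}$ by intersecting the fibers of all the auxiliary projections $\Pi_{I'}$, one for each $(s+1)$-subset $I'\subset I$. Since by Lemma \ref{oscfactors} each such $\Pi_{I'}$ factors through $\Pi_{T_{e_I}^s}$, the fiber of $\Pi_{T_{e_I}^s}$ through a general $V_x$ is contained in the fiber of $\Pi_{I'}$ through $V_x$, which by the description recalled in the text is the sub-Grassmannian $\G(r,W_{I'})$ with $W_{I'}=V_x+U_{I'}$ and $U_{I'}=\langle e_i\mid i\in I'\rangle\subset\C^{n+1}$.

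The heart of the argument, and the step most requiring care, will be to show that
$$
\bigcap_{\substack{I'\subset I\\ |I'|=s+1}} W_{I'}\;=\;V_x
$$
for generic $V_x$. Since $n\ge 2r+1$, a generic $V_x$ intersects $U_I=\langle e_i\mid i\in I\rangle$ trivially. For any two subsets $I'_1,I'_2$, a vector $w\in W_{I'_1}\cap W_{I'_2}$ can be written as $w=v_j+u_j$ with $v_j\in V_x$ and $u_j\in U_{I'_j}$; subtracting gives $v_1-v_2\in V_x\cap U_I=0$, whence $u_1=u_2\in U_{I'_1}\cap U_{I'_2}=U_{I'_1\cap I'_2}$. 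Thus $W_{I'_1}\cap W_{I'_2}=V_x+U_{I'_1\cap I'_2}$, and iterating this identity gives $\bigcap_{I'}W_{I'}=V_x+U_{\bigcap I'}$. Since $|I'|=s+1<r+1=|I|$, every index of $I$ is omitted by some $I'$, so $\bigcap_{|I'|=s+1}I'=\emptyset$ and hence $\bigcap_{I'}W_{I'}=V_x$.

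Putting the pieces together, the generic fiber of $\Pi_{T_{e_I}^s}$ is contained in $\bigcap_{I'}\G(r,W_{I'})=\{V_x\}$, a single reduced point; since we work in characteristic zero this suffices to conclude that $\Pi_{T_{e_I}^s}$ is birational. The only obstacle will be the linear-algebra calculation of $\bigcap W_{I'}$, which is routine once the genericity of $V_x$ relative to the basis $\{e_i\mid i\in I\}$ is invoked.
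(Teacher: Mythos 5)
Your proposal is correct and follows essentially the same route as the paper: both bound the generic fiber of $\Pi_{T_{e_I}^s}$ by the intersection of the fibers $\G(r,\langle V_x, e_i \mid i\in I'\rangle)$ of the coordinate projections $\Pi_{I'}$ furnished by Lemma \ref{oscfactors}, and then check that this intersection reduces to $\{V_x\}$. The only (harmless) differences are that the paper first reduces to the single case $s=r-1$ and uses only the subsets $I_j=I\setminus\{i_j\}$, whereas you treat all $s$ at once with all $(s+1)$-subsets and spell out the linear algebra $\bigcap_{I'}(V_x+U_{I'})=V_x+U_{\bigcap I'}$ that the paper leaves implicit.
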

\begin{proof}
For the second part of the statement it is enough to observe that $\Pi_{T_{e_I}^r}=\Pi_I$. Now, let us consider the first claim. Since $\Pi_{T_{e_I}^s}$ factors through $\Pi_{T_{e_I}^{s-1}}$ it is enough to prove that $\Pi_{T_{e_I}^{r-1}}$ is birational. By Lemma \ref{oscfactors} for any $I_j = I\setminus\{i_j\}$, there exists a rational map $\tau_j$ such that the following diagram is commutative
\[
  \begin{tikzpicture}[xscale=3.5,yscale=-1.5]
    \node (A0_0) at (0, 0) {$\G(r,n)$};
    \node (A1_0) at (1, 1) {$\G(r,n-r)$};
    \node (A1_1) at (1, 0) {$W\subseteq\mathbb{P}^{N_s}$};
    \path (A0_0) edge [->,swap, dashed] node [auto] {$\scriptstyle{\Pi_{I_j}}$} (A1_0);
    \path (A1_1) edge [->, dashed] node [auto] {$\scriptstyle{\tau_j}$} (A1_0);
    \path (A0_0) edge [->, dashed] node [auto] {$\scriptstyle{\Pi_{T_{e_I}^{r-1}}}$} (A1_1);
  \end{tikzpicture}
\]
where $W=\overline{\Pi_{T_{e_I}^{r-1}}(\G(r,n))}$. Now, let $x\in W$ be a general point, and $F\subset\G(r,n)$ be the fiber of $\Pi_{T_{e_I}^{r-1}}$ over $x$. Set $x_j= \tau_j(x) \in \G(r,n-r)$, and denote by $F_j \subset \G(r,n)$ the fiber of $\Pi_{I_j}$ over $x_j$. Therefore 
\begin{equation}\label{intfib1}
F\subseteq \bigcap_{j=0}^{r} F_j.
\end{equation}
Now, note that if $y\in F$ is a general point corresponding to an $r$-plane $V_y\subset \P^n$ we have
$$F_j=\G(r,\left\langle V_y,e_{i_0},\dots,\widehat{e_{i_j}},\dots,e_{i_r}\right\rangle)$$ 
and hence
$$\bigcap_{j=0}^{r} F_j=\bigcap_{j=0}^{r}\G(r,\left\langle V_y,e_{i_0},\dots,\widehat{e_{i_j}},\dots,e_{i_r}\right\rangle)=\G(r,V_y)=\{y\}.$$
The last equality and (\ref{intfib1}) force $F=\{y\}$, and since we are working in characteristic zero $\Pi_{T_{e_I}^{r-1}}$ is birational.
\end{proof}

\begin{Remark}
We remark that Proposition \ref{oscprojbirational} was inspired by a problem of birational geometry, see Remark \ref{connection}.
\end{Remark}

Our next aim is to study linear projections from the span of several osculating spaces. In particular, we want to understand when such a projection is birational as we did in Proposition \ref{oscprojbirational} for the projection from a single osculating space.

Clearly, there are some natural numerical constraints regarding how many coordinate points of $\G(r,n)$ we may take into account, and the order of the osculating spaces we want to project from.

First of all, by Proposition \ref{oscprojbirational} the order of the osculating spaces cannot exceed $r-1$. Furthermore, since in order to carry out the computations, we need to consider just coordinate points of $\G(r,n)$ corresponding to linearly independent linear subspaces of dimension $r+1$ in $\mathbb{C}^{n+1}$ we can use at most 
$$\alpha:=\left\lfloor \dfrac{n+1}{r+1} \right\rfloor$$ 
of them.

Now, let us consider the points $e_{I_1},\dots,e_{I_\alpha}\in\G(r,n)$ where 
\begin{equation}\label{I1Ialpha}
I_1=\{0,\dots,r\},\dots,I_\alpha=\{(r+1)(\alpha-1),\dots,(r+1)\alpha-1\}\in \Lambda.
\end{equation}

Again by Proposition \ref{oscgrass} the projection from the span of the osculating spaces of $\G(r,n)$ 
of orders $s_1,\dots,s_l$ at the points $e_{I_1},\dots,e_{I_l}$ is given by
\begin{align*}
\Pi_{T_{e_{I_1},\dots,e_{I_l}}^{s_1,\dots,s_l}}:\G(r,n)&\dasharrow \P^{N_{s_1,\dots, s_l}}\\
(p_I)_{I\in \Lambda}&\mapsto (p_J)_{J\in \Lambda\: | \: d(I_1,J)>s_1,\dots,d(I_l,J)>s_l}
\end{align*}
whenever $\{J\in \Lambda\: | \: d(I_1,J)\leq s_1 \mbox{ or } \dots \mbox{ or } d(I_l,J)\leq s_l\}\neq \Lambda$, and $l\leq \alpha$.

Furthermore, for any 
$I_1'=\{i_0^1,\dots,i_{s_1}^1\}\subset I_1,\dots,I_{l}'=\{i_0^l,\dots,i_{s_l}^l\}\subset I_l$ we consider the projection
\begin{align*}
\pi_{I_1',\dots,I_l'}:\P^n&\dasharrow \P^{n-l-\sum_1^l s_i}\\
(x_i)_{i=0,\dots,n}
&\mapsto (x_i)_{i\in\{0,\dots,n\}\setminus (I_1'\cup\dots\cup I_l')}
\end{align*}
where $l\leq \alpha$ and $n-l-\sum_1^l s_i\geq r+1$. The map $\pi_{I_1',\dots,I_l'}$ in turn induces the projection
\begin{align*}
\Pi_{I_1',\dots,I_l'}:\G(r,n)&\dasharrow \G\left(r,n-l-\textstyle\sum_1^l s_i\right)\\
[V]&\mapsto [\pi_{I_1',\dots,I_l'}(V)]\\
(p_I)_{I\in \Lambda}
&\mapsto(p_J)_{J\in \Lambda\: |\: J\cap \left(I_1'\cup\dots\cup I_l' \right)=\emptyset}
\end{align*}

\begin{Lemma}\label{oscfactorsII}
Let $I_1,\dots,I_\alpha$ be as in (\ref{I1Ialpha}), $l,s_1,\dots,s_l$ be integers such that $0\leq s_j\leq r-1$, and $0<l\leq \min\{\alpha,n-r-1-\sum_i s_i\}$. Then for any $I_1'\!=\!\{i_0^1,\dots,i_{s_1}^1\}\!\subset\! I_1,\dots, I_l'\!=\!\{i_0^l,\dots,i_{s_l}^l\}\!\subset\! I_l$ with $|I'_j|=s_j+1$ the rational maps $\Pi_{T_{e_{I_1},\dots,e_{I_l}}^{s_1,\dots,s_l}}$ and $\Pi_{I_1',\dots,I_l'}$ are well-defined and the latter factors through the former.
\end{Lemma}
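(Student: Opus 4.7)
This is essentially a containment-of-centers computation. By Proposition~\ref{oscgrass}, the center of $\Pi_{T_{e_{I_1},\dots,e_{I_l}}^{s_1,\dots,s_l}}$ is, in Pl\"ucker coordinates, the linear span
$$
L_1 \;=\; \left\langle e_J \;:\; \exists\, j \text{ with } d(I_j,J)\leq s_j \right\rangle\subset\mathbb{P}^N,
$$
while $\Pi_{I_1',\dots,I_l'}$ is itself a coordinate projection in Pl\"ucker coordinates with center
$$
L_2 \;=\; \left\langle e_J \;:\; J\cap (I_1'\cup\cdots\cup I_l')\neq\emptyset \right\rangle\subset\mathbb{P}^N.
$$
Once I show $L_1\subseteq L_2$, the factorization is immediate from the standard fact that a linear projection with larger center factors through a linear projection with smaller center. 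So the proof reduces to two bookkeeping steps: checking well-definedness, and checking the implication $d(I_j,J)\leq s_j\Rightarrow J\cap I_j'\neq\emptyset$.

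\textbf{Step 1 (well-definedness).} Since the $I_j$ are pairwise disjoint (they are blocks of $r+1$ consecutive integers, see \eqref{I1Ialpha}), so are the $I_j'\subset I_j$, and $|I_1'\cup\cdots\cup I_l'|=l+\sum_j s_j$. Hence $\pi_{I_1',\dots,I_l'}:\mathbb{P}^n\dasharrow\mathbb{P}^{n-l-\sum s_j}$ is a bona fide linear projection as soon as $n-l-\sum s_j\geq r+1$, which is the stated hypothesis; then $\Pi_{I_1',\dots,I_l'}$ is well defined as a rational map to $\G(r,n-l-\sum s_j)$. For $\Pi_{T^{s_1,\dots,s_l}_{e_{I_1},\dots,e_{I_l}}}$ to be defined as a rational map, I need $L_1\neq\mathbb{P}^N$, i.e.\ some multi-index $J\in\Lambda$ with $d(I_j,J)>s_j$ for every $j$. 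Any $(r+1)$-subset $J$ of $\{0,\dots,n\}\setminus(I_1'\cup\cdots\cup I_l')$ works (see Step~2 for the inclusion), and such a $J$ exists precisely because the complement has $n+1-l-\sum s_j\geq r+2>r+1$ elements by hypothesis.

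\textbf{Step 2 (containment $L_1\subseteq L_2$).} Suppose $d(I_j,J)\leq s_j$ for some $j$, i.e.\ $|I_j\cap J|\geq r+1-s_j$. Because $|I_j\setminus I_j'|=(r+1)-(s_j+1)=r-s_j$, an easy count gives
$$
|J\cap I_j'| \;\geq\; |J\cap I_j|-|I_j\setminus I_j'| \;\geq\; (r+1-s_j)-(r-s_j)\;=\;1,
$$
so $J$ meets $I_1'\cup\cdots\cup I_l'$, proving $e_J\in L_2$.

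\textbf{Step 3 (factorization).} Given $L_1\subseteq L_2$, partitioning Pl\"ucker coordinates into those killed by $L_2$, those killed by $L_1$ but not $L_2$, and those retained by both, exhibits $\Pi_{I_1',\dots,I_l'}$ as a further coordinate projection of $\Pi_{T^{s_1,\dots,s_l}_{e_{I_1},\dots,e_{I_l}}}$. Concretely, one defines the rational map $\tau:\mathbb{P}^{N_{s_1,\dots,s_l}}\dasharrow\G(r,n-l-\sum s_j)$ by forgetting the coordinates $p_J$ with $\forall j,\,d(I_j,J)>s_j$ and $J\cap(I_1'\cup\cdots\cup I_l')\neq\emptyset$; the identity $\Pi_{I_1',\dots,I_l'}=\tau\circ\Pi_{T^{s_1,\dots,s_l}_{e_{I_1},\dots,e_{I_l}}}$ is then tautological.

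\textbf{Expected obstacle.} There is no serious obstacle — this is a bookkeeping lemma whose content is the combinatorial implication in Step~2. The only places demanding care are verifying the numerical hypothesis $l\leq n-r-1-\sum s_j$ is exactly what is needed to guarantee both maps are defined, and noting that the disjointness of the $I_j'$ is inherited from the specific choice \eqref{I1Ialpha} of the $I_j$.
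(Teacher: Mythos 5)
Your proof is correct and follows essentially the same route as the paper: the key point is the implication $J\cap(I_1'\cup\dots\cup I_l')=\emptyset\Rightarrow d(I_j,J)>s_j$ for all $j$ (which the paper asserts and you prove by the counting argument $|J\cap I_j'|\geq|J\cap I_j|-|I_j\setminus I_j'|$), combined with the observation that the hypothesis $l+\sum s_j\leq n-r-1$ leaves at least $r+2$ indices outside $I_1'\cup\dots\cup I_l'$, so that both projections are well-defined. The only difference is that you spell out the containment-of-centers bookkeeping that the paper leaves implicit.
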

\begin{proof}
Note that $J\cap \left(I_1'\cup\dots\cup I_l'\right)=\emptyset$ yields $d(I_1,J)>s_1,\dots,d(I_l,J)>s_l$.
Note also that the $I_j$'s are disjoint since the $I_j'$'s are.
Furthermore, since $\sum(s_i+1)=l+\sum s_i\leq n-r-1$ and $n\geq 2r+1,$ there are at least $r+2$ elements in $\{0,\dots,n\}\backslash \left(I_1'\cup\dots\cup I_l' \right)$. If $k_1,\dots,k_{r+2}$ are such elements, then 
$$K_j:=\{k_1,\dots,\widehat{k_j},\dots,k_{r+2}\}\in \{J\in \Lambda\: | \: d(I_j,J)> s_j,\: j=1,\dots,l\}$$
for any $j=1,\dots,r+2$ forces 
$\{J\in \Lambda \: | \: d(I_1,J)\leq s_1 \mbox{ or } \dots \mbox{ or } d(I_l,J)\leq s_l\}\neq \Lambda$.
\end{proof}

Now, we are ready to prove the main result of this section.

\begin{Proposition}\label{oscprojbir}
Let $I_1,\dots,I_\alpha$ be as in (\ref{I1Ialpha}), $l,s_1,\dots,s_l$ be integers such that $0\leq s_j\leq r-1$, and $0<l\leq \min\{\alpha,n-r-1-\sum_i s_i\}$. Then the projection $\Pi_{T_{e_{I_1},\dots,e_{I_l}}^{s_1,\dots,s_l}}$ is birational.
\end{Proposition}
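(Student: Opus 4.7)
The plan is to mimic the proof of Proposition \ref{oscprojbirational} but to factor the osculating projection simultaneously through all the linear projections $\Pi_{I_1',\dots,I_l'}$ provided by Lemma \ref{oscfactorsII}. Concretely, for every tuple $(I_1',\dots,I_l')$ with $I_j'\subset I_j$ and $|I_j'|=s_j+1$, Lemma \ref{oscfactorsII} gives a rational map $\tau_{I_1',\dots,I_l'}$ such that $\Pi_{I_1',\dots,I_l'}=\tau_{I_1',\dots,I_l'}\circ \Pi_{T^{s_1,\dots,s_l}_{e_{I_1},\dots,e_{I_l}}}$. Pick a general point $y$ in the image of $\Pi_{T^{s_1,\dots,s_l}_{e_{I_1},\dots,e_{I_l}}}$, let $F$ be its fiber, and let $y'\in F$ be a general point; denote by $V_y,V_{y'}\subset \mathbb{P}^n$ the corresponding $r$-planes. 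Since $y$ and $y'$ have the same image under every $\Pi_{I_1',\dots,I_l'}$, the explicit description of the fibers of the coordinate projections yields
$$
V_{y'}\subset V_y+E_{I_1'}+\dots+E_{I_l'},\qquad E_{I_j'}:=\langle e_i \mid i\in I_j'\rangle,
$$
for every admissible tuple.

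The key step is to intersect these inclusions over all choices of $(I_1',\dots,I_l')$ and show that
$$
\bigcap_{(I_1',\dots,I_l')}\bigl(V_y+E_{I_1'}+\dots+E_{I_l'}\bigr)=V_y.
$$
Under the hypothesis $l+\sum s_j\leq n-r-1$ and for a general $V_y$, the sum $V_y+E_{I_1'}+\dots+E_{I_l'}$ is direct, so any vector $v$ in the intersection has a unique decomposition $v=u+w_1+\dots+w_l$ with $u\in V_y$ and $w_j\in E_{I_j'}$ for each tuple. Comparing two decompositions obtained from tuples differing only in their $j$-th entry and using that the $I_j$'s are pairwise disjoint, one sees that both $u$ and each $w_i$ ($i\ne j$) must be independent of the choice of $I_j'$, while the corresponding $w_j$ lies in $E_{I_j'}\cap E_{I_j''}=E_{I_j'\cap I_j''}$. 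Varying $I_j'$ over all size $s_j+1$ subsets of $I_j$ and using $s_j\leq r-1$ gives $\bigcap_{I_j'} E_{I_j'}=E_\varnothing=0$, hence $w_j=0$. Repeating for each $j$ forces $v=u\in V_y$.

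Combining the two steps, $V_{y'}\subset V_y$; since both have dimension $r+1$, we get $V_{y'}=V_y$, so $y'=y$. Hence the general fiber of $\Pi_{T^{s_1,\dots,s_l}_{e_{I_1},\dots,e_{I_l}}}$ is a single point, and since we work in characteristic zero, the map is birational.

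The main obstacle is the combinatorial intersection argument in the second paragraph; the hypothesis $s_j\leq r-1$ is precisely what guarantees $\bigcap_{I_j'} E_{I_j'}=0$, and it is essential—for $s_j=r$ the only admissible $I_j'$ is $I_j$ itself, the intersection degenerates, and one recovers the fibration behaviour of $\Pi_{T^r_{e_I}}$ seen in Proposition \ref{oscprojbirational}. The bound $l\leq \alpha$ together with $l+\sum s_j\leq n-r-1$ ensures both that $V_y$ can be taken in general position with respect to $\langle e_i \mid i\in I_1\cup\dots\cup I_l\rangle$ and that the coordinate projections $\Pi_{I_1',\dots,I_l'}$ are well-defined, which is what makes the directness of the sum, and hence the uniqueness of the decompositions, available.
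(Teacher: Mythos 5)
Your proof follows essentially the same route as the paper's: factor the osculating projection through the coordinate projections $\Pi_{I_1',\dots,I_l'}$ of Lemma \ref{oscfactorsII}, identify their fibers as $\G(r,\langle V_y,e_j \mid j\in I_1'\cup\dots\cup I_l'\rangle)$, and intersect over all admissible tuples to recover $V_y$; the paper simply asserts that this intersection equals $\G(r,V_y)$, whereas you supply the underlying linear algebra. One small caution: the sum $V_y+E_{I_1'}+\dots+(E_{I_j'}+E_{I_j''})+\dots+E_{I_l'}$ implicit in your comparison of two decompositions need not be direct for arbitrary $I_j',I_j''\subset I_j$ (its dimension can exceed $n+1$ when $l+\sum s_i\le n-r-1$ is tight and $I_j',I_j''$ are disjoint), so you should compare only tuples whose $j$-th entries differ in a single element — there the dimension is at most $(r+1)+l+\sum s_i+1\le n+1$, so directness holds for general $V_y$ — and then reach $w_j\in\bigcap_{I_j'}E_{I_j'}=0$ by a chain of such single-element swaps.
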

\begin{proof}
For any collection of subsets $I_i'\subset I_i$ with $|I_i'|= s_i+1$ set $I'=\bigcup_i I_i'$. By Lemma \ref{oscfactorsII} there exists a rational map $\tau_{I_1',\dots,I_l'}$ fitting in the following commutative diagram 
\[
  \begin{tikzpicture}[xscale=3.5,yscale=-1.5]
    \node (A0_0) at (0, 0) {$\G(r,n)$};
    \node (A1_0) at (1, 1) {$\G(r,n-l-\sum_1^l s_i)$};
    \node (A1_1) at (1, 0) {$W\subseteq\mathbb{P}^{N_{s_1,\dots s_l}}$};
    \path (A0_0) edge [->,swap, dashed] node [auto] {$\scriptstyle{\Pi_{I_1',\dots,I_l'}}$} (A1_0);
    \path (A1_1) edge [->, dashed] node [auto] {$\scriptstyle{\tau_{I_1',\dots,I_l'}}$} (A1_0);
    \path (A0_0) edge [->, dashed] node [auto] {$\scriptstyle{\Pi_{T_{e_{I_1},\dots,e_{I_l}}^{s_1,\dots,s_l}}}$} (A1_1);
  \end{tikzpicture}
\]
where $W= \overline{\Pi_{T_{e_{I_1},\dots,e_{I_l}}^{s_1,\dots,s_l}}(\G(r,n))}$. Now, let $x\in W$ be a general point, and $F\subset \G(r,n)$ be the fiber of $\Pi_{T_{e_{I_1},\dots,e_{I_l}}^{s_1,\dots,s_l}}$ over $x$. Set $x'= \tau_{I_1',\dots,I_l'}(x) \in \G(r,n-l-\sum_1^l s_i)$, and denote by
$$F_{I_1',\dots,I_l'} \subset \G(r,n)$$
the fiber of $\Pi_{I_1',\dots,I_l'}$ over $x'$. Therefore
\begin{equation}\label{inc1}
F\subseteq \bigcap_{I_1',\dots,I_l'} F_{I_1',\dots,I_l'}
\end{equation}
where the intersection runs over all the collections of subsets $I_i'\subset I_i$ with $|I_i'|= s_i+1$. Now, if $y\in F$ is a general point corresponding to an $r$-plane $V_y\subset \P^n$ we have
$$F_{I_1',\dots,I_l'}=\G\left(r,\left\langle V_y,e_j \: | \: j\in I'\right\rangle\right)$$ 
and hence
\begin{equation}\label{inc2}
\bigcap_{I_1',\dots,I_l'} F_{I_1',\dots,I_l'}=
\bigcap_{I_1',\dots,I_l'}\G\left(r,\left\langle V_y,e_j \: | \: j\in I'\right\rangle\right)=\G(r,V_y)=\{y\}
\end{equation}
where again the first intersection is taken over all the subsets $I_i'\subset I_i$ with $|I_i'|= s_i+1$. 

Finally, to conclude it is enough to observe that (\ref{inc1}) and (\ref{inc2}) yield $F=\{y\}$, and since we are working in characteristic zero $\Pi_{T_{e_{I_1},\dots,e_{I_l}}^{s_1,\dots,s_l}}$ is birational.
\end{proof}

In what follows we just make Proposition \ref{oscprojbir} more explicit. 

\begin{Corollary}\label{oscprojbirationalII}
Set $\alpha:=\left\lfloor \dfrac{n+1}{r+1} \right\rfloor$
and let $I_1,\dots,I_\alpha$ be as in (\ref{I1Ialpha}).
Then $\Pi_{T_{e_{I_1},\dots,e_{I_{\alpha-1}}}^{r-1,\dots,r-1}}$ is birational. Furthermore, if $n\geq r^2+3r+1$ then $\Pi_{T_{e_{I_1},\dots,e_{I_\alpha}}^{r-1,\dots,r-1}}$ is birational.

Now, set $r':=n-2-\alpha r$ and $r'':=\min\{n-3-\alpha (r-1),r-2\}$.
If $2r+1<n<r^2+3r+1$ then 
\begin{itemize}
\item[-] $r-1\geq r'\geq 0$ and $\Pi_{T_{e_{I_1},\dots,e_{I_{\alpha-1}},e_{I_\alpha}}^{r-1,\dots,r-1,r'}}$ is birational;
\item[-] $r''\geq 0$ and $\Pi_{T_{e_{I_1},\dots,e_{I_{\alpha-1}},e_{I_\alpha}}^{r-2,\dots,r-2,r''}}$ is birational.
\end{itemize}
\end{Corollary}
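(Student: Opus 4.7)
The proof is a case-by-case application of Proposition \ref{oscprojbir} with different choices of parameters $(l,s_1,\dots,s_l)$, where $l$ is the number of osculating spaces and $s_j$ are their orders. In each case one must verify the two numerical hypotheses of that proposition, namely $0\le s_j\le r-1$ for every $j$, and $0 < l \le \min\{\alpha,\,n-r-1-\sum_j s_j\}$. The plan is to choose the parameters as large as possible subject to these constraints, exploiting the defining inequalities $\alpha(r+1)\le n+1 < (\alpha+1)(r+1)$.

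For the first claim, I would take $l=\alpha-1$ and $s_1=\dots=s_{\alpha-1}=r-1$. The numerical constraint $\alpha-1 \le n-r-1-(\alpha-1)(r-1)$ simplifies to $\alpha r \le n-1$, which follows from $\alpha(r+1)\le n+1$ together with $\alpha\ge 2$ (the latter being forced by the standing assumption $n\ge 2r+1$). For the second claim, I would take $l=\alpha$ and all $s_j=r-1$; the constraint becomes $\alpha r \le n-r-1$, and using $\alpha\le(n+1)/(r+1)$ this rearranges exactly to $n\ge r^2+3r+1$, which is the hypothesis.

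For the third claim, the hypothesis $n<r^2+3r+1$ means the constraint of the second claim fails, so one must lower the osculating order at the last point. In the first bullet, choose $l=\alpha$, $s_1=\dots=s_{\alpha-1}=r-1$ and $s_\alpha=r'$; the constraint $\alpha\le n-r-1-(\alpha-1)(r-1)-r'$ simplifies precisely to $r'\le n-2-\alpha r$, so $r'$ has been defined as the largest value that fits. In the second bullet, setting $s_j=r-2$ for $j<\alpha$ and $s_\alpha=r''$, the constraint becomes $r''\le n-3-\alpha(r-1)$, again met by the definition (with the $\min$ ensuring the order-bound $s_\alpha\le r-1$ automatically). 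The nonnegativity $r'\ge 0$ follows from the lower bound $\alpha(r+1)\le n+1$ together with $n>2r+1$, and similarly for $r''$.

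The main obstacle I anticipate is the tight arithmetic for the inequality $r'\le r-1$ in the first bullet: this amounts to $n\le \alpha r+r+1$, which must be deduced by combining the strict upper bound $n+1<(\alpha+1)(r+1)$ coming from the floor definition of $\alpha$ with the hypothesis $n<r^2+3r+1$. Handling the transition between different values of $\alpha$ (in particular separating $\alpha=2$ from larger $\alpha$) and checking the edge cases where the two bounds on $n$ almost coincide will require a small case analysis. Once these numerical verifications are in place, birationality follows immediately from Proposition \ref{oscprojbir}.
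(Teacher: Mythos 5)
Your proposal follows exactly the same route as the paper: every part of the corollary is reduced to the numerical hypotheses of Proposition \ref{oscprojbir} with the same choices of $(l,s_1,\dots,s_l)$, and your verifications of the first two claims (reduction to $\alpha r\le n-1$, which holds since $\alpha(r+1)\le n+1$ and $\alpha\ge 2$; and reduction to $\alpha\le\frac{n-r-1}{r}$, which via $\alpha\le\frac{n+1}{r+1}$ is equivalent to $n\ge r^2+3r+1$) are the paper's computations essentially verbatim. Likewise your parameter choices and the resulting constraints $r'\le n-2-\alpha r$ and $r''\le n-3-\alpha(r-1)$ for the third claim match the paper.

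The gap is in the third claim. Proposition \ref{oscprojbir} also requires $s_\alpha=r'\le r-1$ and $r',r''\ge 0$, and you only announce that these follow from ``a small case analysis'' combining $n+1<(\alpha+1)(r+1)$ with $n<r^2+3r+1$. That combination is not routine: the floor bound alone gives $n\le \alpha r+\alpha+r-1$, which is strictly weaker than the needed $n\le \alpha r+r+1$ as soon as $\alpha\ge 3$, and adding $n\le r^2+3r$ together with $\alpha\ge\frac{n-r+1}{r+1}$ only yields $r'\le 2r-2$. So the decisive inequality is left unproved, and it is genuinely the delicate point: the paper disposes of $r',r''\ge 0$ by splitting into $2r+1<n<3r+2$ (where $\alpha=2$) and $n\ge 3r+2$ (where $\alpha\le\frac{n+1}{r+1}\le\frac{n-2}{r}$, resp.\ $\le\frac{n-3}{r-1}$), and justifies $r'\le r-1$ by the chain $\alpha\ge\frac{n-r}{r+1}\ge\frac{n-r-1}{r}$ --- whose second inequality in fact holds only for $n\le 2r+1$, so even the paper's own one-line argument for this step does not go through as written. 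Your instinct that this is the tight spot is correct, but the proposal as it stands does not close it.
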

\begin{proof}
First we apply Proposition \ref{oscprojbir} with $l=\alpha-1$ and $s_1=\dots=s_{\alpha-1}=r-1$. In this case the constraint is $\alpha-1\leq n-r-1-(\alpha-1)(r-1)$, that is $\alpha\leq \dfrac{n-r-1}{r}+1$. Note that this is always the case since
$$\alpha\leq\dfrac{n+1}{r+1}\leq \dfrac{n-1}{r}=\dfrac{n-r-1}{r}+1.$$

If $l=\alpha$ and $s_1=\dots=s_{\alpha}=r-1$
the constraint in Proposition \ref{oscprojbir} is $\alpha\leq n-r-1-\alpha(r-1)$, which is equivalent to $\alpha\leq \dfrac{n-r-1}{r}$. Now, it is enough to observe that 
$$\dfrac{n+1}{r+1}\leq\dfrac{n-r-1}{r} \Longleftrightarrow n\geq r^2+3r+1.$$

If $n\geq r^2+3r+1,$ then the claim follows from the inequalities $\alpha\leq \dfrac{n+1}{r+1}\leq \dfrac{n-r-1}{r}$.

Now assume that $n< r^2+3r+1$. First we check that $r'=n-2-\alpha r\leq r-1,$ that is 
$\alpha \geq \dfrac{n-1-r}{r}.$ That follows from
$$\alpha\geq \dfrac{n+1}{r+1}-1= \dfrac{n-r}{r+1}
\geq\dfrac{n-r-1}{r}$$
whenever $n\geq 2r+1.$
Next we check that $r',r''\geq 0.$
If $2r+1<n< 3r+2$ then $\alpha=2$, and $r'=n-2-2r\geq 0$.
If $n\geq 3r+2$ we have
$$\alpha=\left\lfloor \dfrac{n+1}{r+1} \right\rfloor\leq \dfrac{n+1}{r+1}\leq \dfrac{n-2}{r}$$
and then $r'=n-2-\alpha r\geq 0$. Furthermore, note that 
$$\alpha=\left\lfloor \dfrac{n+1}{r+1} \right\rfloor\leq \dfrac{n+1}{r+1}\leq \dfrac{n-3}{r-1}$$
and then $r''=n-3-\alpha (r-1)\geq 0$.

Now, we apply Proposition \ref{oscprojbir} with $l=\alpha, s_1=\dots = s_{\alpha-1}=r-1$ and $s_\alpha=r'$.
In this case the constraint in Proposition \ref{oscprojbir} is 
$\alpha\leq  n-r-1-(\alpha-1)(r-1)-r'$
that is $r'\leq n-2-\alpha r$.

Finally, if $l=\alpha, s_1=\dots = s_{\alpha-1}=r-2$ and $s_\alpha=r''$, then the constraint in Proposition \ref{oscprojbir} is
$\alpha\leq  n-r-1-(\alpha-1)(r-2)-r''$, that is $r''\leq  n-3-\alpha(r-1)$.
\end{proof}

\section{Degenerating osculating spaces}\label{degtanosc}

We begin by studying how the span of two osculating spaces degenerates in a flat family of linear spaces parametrized by $\mathbb{P}^1$.

We recall that the Grassmannian $\G(r,n)$ is rationally connected by rational normal curves of degree $r+1$. Indeed, if $p,q\in\G(r,n)$ are general points, corresponding to the $r$-planes $V_p,V_q\subseteq\mathbb{P}^n$, we may consider a rational normal scroll $X\subseteq\mathbb{P}^n$ of dimension $r+1$ containing $V_p$ and $V_q$. Then the $r$-planes of $X$ correspond to the points of a degree $r+1$ rational normal curve in $\G(r,n)$ joining $p$ and $q$.

The first step consists in studying how the span of two osculating spaces at two general points $p,q\in\G(r,n)$ behaves when $q$ approaches $p$ along a degree $r+1$ rational normal curve connecting $p$ and $q$.

\begin{Proposition}\label{limitosculatingspacesgrass}
The Grassmannian $\G(r,n)$ has strong $2$-osculating regularity.
\end{Proposition}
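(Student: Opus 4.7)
The plan is a direct Plücker-coordinate computation, mirroring Example \ref{ratcurveII}. Using the transitive $GL(n+1)$-action on ordered pairs of general points of $\G(r,n)$, reduce to $p=e_{I_1}$ and $q=e_{I_2}$ with $I_1=\{0,\ldots,r\}$ and $I_2=\{r+1,\ldots,2r+1\}$. Join them by the degree-$(r+1)$ rational normal curve $\gamma:\mathbb{P}^1\to\G(r,n)$ whose points are the $r$-planes $V(t)=\left\langle e_0+te_{r+1},\ldots,e_r+te_{2r+1}\right\rangle$ of the natural rational normal scroll through $V_p$ and $V_q$, so that $\gamma(t_0)=p$ and $\gamma(t_\infty)=q$ for $t_0=0$ and $t_\infty=\infty$.

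Introduce the one-parameter family $A_t\in GL(n+1)$ defined by $A_t(e_i)=e_i+te_{i+r+1}$ for $i\leq r$ and $A_t(e_j)=e_j$ for $j>r$; by construction $A_t\cdot p=\gamma(t)$, and therefore $T^{k_2}_{\gamma(t)}=\bigl(\bigwedge^{r+1}A_t\bigr)(T^{k_2}_p)$. Combined with Proposition \ref{oscgrass}, a spanning set for $T^{k_2}_{\gamma(t)}$ is obtained from the expansion
$$
\bigl(\textstyle\bigwedge^{r+1}A_t\bigr)(e_J)=\sum_{S\subseteq J\cap I_1}\pm\,t^{|S|}\,e_{J_S},\qquad d(I_1,J)\leq k_2,
$$
where $J_S:=(J\setminus S)\cup\{s+r+1:s\in S\}$ satisfies $d(I_1,J_S)=d(I_1,J)+|S|$.

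The flat limit $T_0$ of $T_t=\left\langle T^{k_1}_p,T^{k_2}_{\gamma(t)}\right\rangle$ is then computed by reducing these generators modulo $T^{k_1}_p=\left\langle e_K:d(I_1,K)\leq k_1\right\rangle$ and running a $\C[t]$-echelon reduction to extract leading terms. Generator by generator, the lowest surviving order is $t^{k_1-d(I_1,J)+1}$ with leading coefficient supported on $\{e_K:d(I_1,K)=k_1+1\}$ when $d(I_1,J)\leq k_1$, and $t^0$ with coefficient $e_J\in T^{k_2}_p$ when $d(I_1,J)>k_1$. The main obstacle is controlling the possible cancellations between different generators: a $\C[t]$-combination $\sum_J\alpha_J(t)(\bigwedge^{r+1}A_t)(e_J)$ can push the leading order up to some $t^m$ whose coefficient is a priori supported on $e_K$ with $d(I_1,K)$ as large as $k_2+m$, which would exceed $k_1+k_2+1$ for $m>k_1+1$. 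Ruling this out amounts to a combinatorial argument on the index patterns $J_S$ coupled with a dimension count for $\mathrm{pr}(T^{k_2}_{\gamma(t)})\subset\C^N/T^{k_1}_p$, which shows that every echelon leading direction that actually appears lies in $T^{k_1+k_2+1}_p$, and hence $T_0\subseteq T^{k_1+k_2+1}_p$.
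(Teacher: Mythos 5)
Your setup coincides with the paper's: the same degree-$(r+1)$ rational normal curve $\gamma$, and your expansion $(\bigwedge^{r+1}A_t)(e_J)=\sum_{S\subseteq J\cap I_1}\pm t^{|S|}e_{J_S}$ is exactly the paper's identity $e_J^t=\sum_{K\in\Delta(J)^+}t^{d(J,K)}e_K$ (your $J_S$ runs over $\Delta(J,|S|)$). The problem is that you stop at precisely the point where the proof actually begins. You correctly identify the obstacle --- $\C[t]$-combinations of the generators can produce leading terms supported on $e_K$ with $d(I_1,K)$ far beyond $k_1+k_2+1$ --- but then dispose of it with ``a combinatorial argument on the index patterns $J_S$ coupled with a dimension count.'' That sentence is a restatement of the claim, not an argument. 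A dimension count cannot help here: flat limits automatically preserve dimension, so $\dim T_0=\dim T_t$ carries no information about whether $T_0$ lands inside $T^{k_1+k_2+1}_p$; the entire content of the proposition is the containment, and nothing in your proposal produces it.

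The paper avoids the echelon/saturation computation on generators altogether by dualizing: for each Plücker index $I$ with $d(I,I_1)>k_1+k_2+1$ it constructs a family of hyperplanes $F_I=\sum_{J\in\Delta(I)^-}t^{d(I,J)}c_Jp_J=0$ containing $T_t$ for all $t\neq 0$ and with $c_I\neq 0$, so that the limit at $t=0$ is $\{p_I=0\}\supseteq T_0$. Requiring $T_t\subset\{F_I=0\}$ translates (after exploiting the symmetry $c_J=c_{d(I,J)}$) into the linear system (\ref{eqns5})--(\ref{eqns6}) in the unknowns $c_0,\dots,c_s$, and the existence of a solution with $c_0\neq 0$ rests on the nonvanishing of a determinant of binomial coefficients via the Gessel--Viennot lemma \cite[Corollary 2]{GV85}. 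This binomial-determinant computation is the irreducible core of the proof, and your proposal contains no analogue of it. To repair the proposal you would either have to carry out the saturation argument you allude to (which, once made precise, amounts to the same linear algebra) or import the paper's construction of the hyperplanes $F_I$ wholesale.
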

\begin{proof}
Let $p,q\in\G(r,n)\subseteq\mathbb{P}^N$ be general points, and $k_1,k_2\geq 0$ integers.
We may assume that $k_1+k_2\leq r-1,$
otherwise $T^{k_1+k_2+1}_p=\P^N$ by Proposition \ref{oscgrass}.

Consider $\gamma,$ the degree $r+1$ rational normal curve connecting $p$ and $q$ as above.
We may assume that $p=e_{I_1},q=e_{I_2}$, 
see  (\ref{I1Ialpha}),
and that $\gamma:\P^1\to\G(r,n)$ is the rational normal curve given by 
$$\gamma([t:s])= (se_0+te_{r+1})\wedge\dots\wedge (se_r+te_{2r+1}).$$
We can work on the affine chart $s=1$ and set $t=(t:1)$. Consider the points $$e_0,\dots,e_n,
e_0^{t}=e_0+te_{r+1},\dots,e_r^{t}=e_r+te_{2r+1},e_{r+1}^{t}=e_{r+1},\dots,e_n^{t}=e_n\in\P^n$$
and the corresponding points of $\P^N$ 
$$e_I=e_{i_0}\wedge\dots\wedge e_{i_r},e_I^{t}=e_{i_0}^{t}\wedge\dots\wedge e_{i_r}^{t},\: I\in \Lambda.$$
By Proposition \ref{oscgrass} we have 
$$T_t:=\left\langle
T_{p}^{k_1},T_{\gamma(t)}^{k_2}
\right\rangle
=\left\langle e_I\: | \: d(I,I_1)\leq k_1; \: e_I^{t}\: |\: d(I,I_1)\leq k_2\right\rangle
,\: t\neq 0,$$
and
$$T^{k_1+k_2+1}_p=\left\langle e_I\: | \: d(I,I_1)\leq k_1+k_2+1\right\rangle
=\{p_I=0\: | \: d(I,I_1)> k_1+k_2+1\}.$$
It is enough to prove that 
$T_0\subset T^{k_1+k_2+1}_p.$
In order to prove this it is enough to exhibit, for any index $I\in \Lambda$ with $d(I,I_1)> k_1+k_2+1$, a hyperplane $H_I\subset\mathbb{P}^N$ of type
$$p_I+t\left( \sum_{J\in \Lambda, \ J\neq I}f(t)_{I,J} p_J\right)=0$$
such that $T_t\subset H_I$ for $t\neq 0$, where $f(t)_{I,J}\in \C[t]$ are polynomials. Clearly, taking the limit for $t\mapsto 0$, this will imply that $T_0\subseteq \{p_I = 0\}$.

In order to construct such a hyperplane we need to introduce some other definitions. We define
$$\Delta(I,l):= \left\{(I\setminus J)\cup (J+r+1)|\: J\subset I\cap I_1,\: |J|=l\right\}
\subset \Lambda$$
for any $I\in \Lambda,\: l\geq 0$, where $L+\lambda:=\{i+\lambda;\: i\in L\}$ is the translation of the set $L$ by the integer $\lambda$. Note that $\Delta(I,0)=\{I\}$ and $\Delta(I,l)=\emptyset$ for $l$ big enough. For any $l>0$ set
$$\Delta(I,-l):=\left\{ J|\: I\in\Delta(J,l) \right\} \subset \Lambda;$$
$$s_I^+:=\max_{l\geq 0}\{\Delta(I,l)\neq \emptyset\}\in\{0,\dots,r+1\};$$
$$s_I^-:=\max_{l\geq 0}\{\Delta(I,-l)\neq \emptyset\}\in\{0,\dots,r+1\};$$
$$\Delta(I)^+:=\bigcup_{0\leq l} \Delta(I,l)=
\!\!\!\!\bigcup_{0\leq l \leq s_I^+} \!\!\!\! \Delta(I,l);$$
$$\Delta(I)^-:=\bigcup_{0\leq l} \Delta(I,-l)=
\!\!\!\!\bigcup_{0\leq l \leq s_I^-} \!\!\!\!\Delta(I,-l).$$
Note that $0\leq s_I^-\leq d(I,I_1),0\leq s_I^+\leq r+1-d(I,I_1)$, and for any $l$ we have
$$J\in\Delta(I,l)\Rightarrow d(J,I)=|l|, d(J,I_1)=d(I,I_1)+l,d(J,I_2)=d(I,I_2)-l.$$
In order to get acquainted with the rest of the proof the reader may keep reading the proof taking a look to Example \ref{exampledeg} where we follow the same lines of the proof in the case $(r,n) = (2,5)$.

Now, we write the $e_I^t$'s with $d(I,I_1)< k_2,$ in the basis $e_J,J\in\Lambda$. For any $I\in\Lambda$ we have
\begin{align*}
e_{I}^{t}
&=e_I+t\!\!\!\!\sum_{J\in \Delta(I,1)}\!\!\!\!\left(\sign(J){e_J}\right)+\dots
+t^{l}\!\!\!\!\!\sum_{J\in \Delta(I,l)}\!\!\!\!\left(\sign(J){e_J}\right)+\dots
+t^{s_I^+}\!\!\!\!\!\!\!\!\sum_{J\in \Delta(I,s_I^+)}\!\!\!\!\!\left(\sign(J){e_J}\right)\\
&=\sum_{l=0}^{s_I^+}\left(t^l\!\!\!\!\sum_{J\in \Delta(I,l)}\sign(J) e_J\right)
=\!\!\!\!\sum_{J\in \Delta(I)^+}\!\!\!\!\left( t^{d(I,J)}\sign(J)e_J\right)
\end{align*}
where $\sign(J)=\pm 1$. Note that $\sign(J)$ depends on $J$ but not on $I$, hence we may replace $e_J$ by $\sign(J)e_J$, and write 
\begin{align*}
e_{I}^{t}=\sum_{J\in \Delta(I)^+}\!\!\!\!t^{d(I,J)}e_J.
\end{align*}
Therefore, we have
\begin{align*}
T_t=&\Big< e_I \: | \: d(I,I_1)\leq k_1;\:
\sum_{J\in \Delta(I)^+}\!\!\!\!\left( t^{d(I,J)}e_J\right) \: | \: d(I,I_1)\leq k_2
\Big >.
\end{align*}
Next, we define $$ \Delta:=\left\{ I \: | \:  d(I,I_1)\leq k_1\right\}\bigcup
\left(\bigcup_{d(I,I_1)\leq k_2} \!\!\!\!\Delta(I)^+\right)\subset \Lambda.$$
Let $I\in \Lambda$ be an index with $d(I,I_1)> k_1+k_2+1$. If $I\notin \Delta$ then $T_t\subset \{p_I=0\}$ for any $t\neq 0$ and we are done.

Now, assume that $I\in \Delta$. For any $e_K^t$ with non-zero Pl\"ucker coordinate $p_I$ we have $I\in \Delta(K)^+$, that is $K\in \Delta(I)^-$. Now, we want to find a hyperplane $H_I$ of type
\begin{align}\label{hyperplane}
F_I=\sum_{J\in \Delta(I)^-}t^{d(I,J)}c_J p_J =0
\end{align}
where $c_J\in\C$ with $c_I\neq 0$, and such that $T_t\subset H_I$ for $t\neq 0$. Note that then we can divide the equation by $c_I$, and get a hyperplane $H_I$ of the required type:
$$p_I+\frac{t}{c_I}\left(\sum_{J\in \Delta(I)^-, \ J\neq I}t^{d(J,I)-1} c_J p_J\right)=0$$
In the following we will write $s_I^-=s$ for short. Since 
$$\left|\Delta(I)^-\right|
=\sum_{l=0}^{s}\left|\Delta(I,-l)\right|
=1+s+\binom{s}{2}+\dots+\binom{s}{s -1}+1=2^{s}$$
in equation (\ref{hyperplane}) there are $2^s$ variables $c_J$. Now, we want to understand what conditions we get by requiring $T_t\subseteq \{F_I=0\}$ for $t\neq 0$.

Given $K\in \Delta(I)^-$ we have $s_K^+\geq d(I,K)$ and
\begin{align*}
F_I(e_K^t)&=
F\left(\sum_{L\in \Delta(K)^+}\!\!\!\!\left( t^{d(K,L)}e_L\right)\right)
=F\left(\sum_{l=0}^{s_K^+}\left(t^l\!\!\!\!\sum_{L\in \Delta(K,l)} e_L\right)\right)
=F\left(\sum_{l=0}^{d(I,K)}\left(t^l\!\!\!\!\sum_{L\in \Delta(K,l)} e_L\right)\right)\\
&\stackrel{(\ref{hyperplane})}{=}\sum_{J\in \Delta(I)^-\cap \Delta(K)^+}t^{d(I,K)-d(J,K)}c_J 
\left(  t^{d(J,K)}\right)
=t^{d(I,K)}\left[\sum_{J\in \Delta(I)^-\cap \Delta(K)^+}c_J \right]
\end{align*}
that is 
\begin{align*}
F_I(e_K^t)=0\ \forall t\neq 0 \Leftrightarrow \sum_{J\in \Delta(I)^-\cap \Delta(K)^+}\!\!\!\!\!\!c_J =0.
\end{align*}
Note that this is a linear condition on the coefficients $c_J$, with $J\in \Delta(I)^-$. Therefore,
\begin{align}\label{eqns3}
T_t\subset \{F_I=0\} \mbox{ for } t\neq 0 &\Leftrightarrow 
\begin{cases}
F_I(e_L)=0\   &\forall L\in \Delta(I)^-\cap B[I_1,k_1]  \\ 
F_I(e_K^t)=0 \ \forall t\neq 0 \  &\forall K\in \Delta(I)^-\cap B[I_1,k_2] 
\end{cases}\\&\nonumber
\Leftrightarrow 
\begin{cases}
c_L=0  &\forall L\in \Delta(I)^-\cap B[I_1,k_1]\\
\displaystyle\sum_{J\in \Delta(I)^-\cap \Delta(K)^+}\!\!\!\!\!\!\!\!\!\!\!\!c_J \quad
=0 \ &\forall K\in \Delta(I)^-\cap B[I_1,k_2]
\end{cases}
\end{align}
where $B[J,u]:=\{K\in \Lambda |\: d(J,K)\leq u\}$. The number of conditions on the $c_J$'s, $J\in \Delta(I)^-$ is then
$$c:=\left|\Delta(I)^-\cap B[I_1,k_1]\right|+\left|\Delta(I)^-\cap B[I_1,k_2]\right|.$$

The problem is now reduced to find a solution of the linear system given by the $c$ equations (\ref{eqns3}) in the $2^s$ variables $c_J$'s, $J\in \Delta(I)^-$ such that $c_I\neq 0$. Therefore, it is enough to find $s+1$ complex numbers $c_I=c_0\neq 0,c_1,\dots, c_{s}$ satisfying the following conditions 
\begin{align}\label{eqns4}
\begin{cases}
c_{j}=0  &\forall j=s,\dots,d-k_1\\
\displaystyle\sum_{l=0}^{d(I,K)}\left|\Delta(I)^-\cap \Delta(K,l)\right|c_{d(I,K)-l} =0 
\ &\forall K\in \Delta(I)^-\cap B[I_1,k_2]
\end{cases}
\end{align}
where $d=d(I,I_1)>k_1+k_2+1$. Note that (\ref{eqns4}) can be written as
\begin{align*}
\begin{cases}
c_{j}=0  &\forall j=s,\dots,d-k_1\\
\displaystyle\sum_{k=0}^{j}\binom{j}{j-k}c_{k} =0 
\ &\forall j=s,\dots,d-k_2
\end{cases}
\end{align*}
that is
\begin{align}\label{eqns5}
\begin{cases}
c_{s}=0\\
\vdots\\
c_{d-k_1}=0\\
\end{cases}
\begin{cases}
\binom{s}{0}c_{s}+\binom{s}{1}c_{s-1}+\cdots+\binom{s}{s-1}c_{1}+\binom{s}{s}c_{0}=0\\
\vdots\\
\binom{d-k_2}{0}c_{d-k_2}+\binom{d-k_2}{1}c_{d-k_2-1}+\cdots+\binom{d-k_2}{d-k_2-1}c_{1}+\binom{d-k_2}{d-k_2}c_{0}=0
\end{cases}
\end{align}
Now, it is enough to show that the linear system (\ref{eqns5}) admits a solution with $c_0\neq 0$. If $s<d-k_2,$ the system (\ref{eqns5}) reduces to $c_s=\dots=c_{d-k_1}=0$. In this case we may take $c_0=1, c_1=\dots,c_s=0$. Note that $d-k_1>k_2+1\geq 1$ and we can use the hyperplane $p_I=0$.

From now on assume that $s\geq d-k_2$. Since $c_s=\dots=c_{d-k_1}=0$ we may consider the second set of conditions in (\ref{eqns5}) and translate the problem into checking that the system (\ref{eqns6}) admits a solution involving the variables $c_0,c_1,\dots,c_{d-k_1+1}$ with $c_0\neq 0$. Note that (\ref{eqns5}) takes the following form:
\begin{align}\label{eqns6}
\begin{cases}
\binom{s}{s-(d-k_1+1)}c_{d-k_1+1}+\binom{s}{s-(d-k_1)}c_{d-k_1}+
\cdots+\binom{s}{s-1}c_{1}+\binom{s}{s}c_{0}=0\\
\vdots\\
\binom{d-k_2}{k_1-1-k_2}c_{d-k_1+1}+\binom{d-k_2}{k_1-k_2}c_{d-k_1}+
\cdots+\binom{d-k_2}{d-k_2-1}c_{1}+\binom{d-k_2}{d-k_2}c_{0}=0
\end{cases}
\end{align}
Therefore, it is enough to check that the $(s-d+k_2+1)\times (d-k_1+1)$ matrix
\begin{align}\label{eqns7}
M=
\begin{pmatrix}
\binom{s}{s-(d-k_1+1)}&\binom{s}{s-(d-k_1)}&
\cdots&\binom{s}{s-1}\\
\vdots&\vdots& &\vdots\\
\binom{d-k_2}{k_1-1-k_2}&\binom{d-k_2}{k_1-k_2}&
\cdots&\binom{d-k_2}{d-k_2-1}
\end{pmatrix}
\end{align}
has maximal rank. Note that $s\leq d$ and $d>k_1+k_2+1$ yield $s-d+k_2+1< d-k_1+1$. Then it is enough to show that the $(s-d+k_2+1)\times (s-d+k_2+1)$ submatrix
\begin{align*}
M'=&
\begin{pmatrix}
\binom{s}{s-(s-d+k_2+1)}&\binom{s}{s-(s-d+k_2)}&
\cdots&\binom{s}{s-1}\\
\vdots&\vdots& &\vdots\\
\binom{d-k_2}{d-k_2-(s-d+k_2+1)}&\binom{d-k_2}{d-k_2-(s-d+k_2)}&
\cdots&\binom{d-k_2}{d-k_2-1}
\end{pmatrix}\\
=&
\begin{pmatrix}
\binom{s}{d-k_2-1}&\binom{s}{d-k_2}&
\cdots&\binom{s}{s-1}\\
\vdots&\vdots& &\vdots\\
\binom{d-k_2}{2d-2k_2-s-1}&\binom{d-k_2}{2d-2k_2-s}&
\cdots&\binom{d-k_2}{d-k_2-1}
\end{pmatrix}=
\begin{pmatrix}
\binom{s}{s+1-d+k_2}&\binom{s}{s-d+k_2}&
\cdots&\binom{s}{1}\\
\vdots&\vdots& &\vdots\\
\binom{d-k_2}{s+1-d+k_2}&\binom{d-k_2}{s-d+k_2}&
\cdots&\binom{d-k_2}{1}
\end{pmatrix}
\end{align*}
has non-zero determinant. Since the determinant of $M'$ is equal to the determinant of the matrix of binomial coefficients 
$$M'':=\left(\binom{i}{j}\right)_{\substack{ \hspace{-0.7cm}d-k_2\leq i\leq s\\ 1\leq j\leq s+1-d+k_2}}.$$
it is enough to observe that since $d-k_2>k_1+1\geq 1$ by \cite[Corollary 2]{GV85} we have $\det(M')=\det(M'')\neq 0$.
\end{proof}
In the following example we work out explicitly the proof of Proposition \ref{limitosculatingspacesgrass}.

\begin{Example}\label{exampledeg}
Consider the case $(r,n)=(2,5).$ Then $I_1=\{0,1,2\}, I_2=\{3,4,5\}$. Let us take 
$$I^1=\{0,1,2\},I^2=\{0,1,3\},I^3=\{0,4,5\}.$$
Then we have 
$$
\begin{array}{ll}
\Delta(I^1,1)=\{\{1,2,3\},\{0,2,4\},\{0,1,5\}\} & \Delta(I^3,1)=\{\{3,4,5\}\}\\
\Delta(I^1,2)=\{\{0,4,5\},\{1,3,5\},\{2,3,4\}\} & \Delta(I^3,-1)=\{\{0,1,5\},\{0,2,4\}\}\\
\Delta(I^1,3)=\{\{3,4,5\}\} &\Delta(I^3,-2)=\{\{0,1,2\}\} \\
\Delta(I^2,1)=\{\{0,3,4\}\} &
\end{array}
$$
and $\Delta(I^j,l)=\emptyset$ for any other pair $(j,l)$ with $1\leq j\leq 3$ and $l\neq 0$. Therefore 
$$s^+_{I^1}=3,s^-_{I^1}=0,s^+_{I^2}=1,s^-_{I^2}=0,s^+_{I^3}=1,s^-_{I^3}=2$$
while
$$d(I^1,I_1)=0,d(I^2,I_1)=1,d(I^3,I_1)=2.$$
Let us work out the case $k_1=0,k_2=1$. Here $T_p^{k_1}$ is just the point $e_{012}$ and the generators of $T^{k_2}_{\gamma(t)}$ are
$$e_{012}^t,e_{123}^t,e_{024}^t,e_{015}^t,
e_{124}^t,e_{125}^t,
e_{023}^t,e_{025}^t,
e_{013}^t,e_{014}^t
$$
We can write them on the basis $(e_I)_{I\in \Lambda}$ as
\begin{align}\label{etonthebasisexampleI}
\begin{cases}
e_{012}^t&=e_{012}+t(e_{123}+e_{024}+e_{015})+t^2(e_{045}+e_{135}+e_{234})+t^3e_{345}\\
e_{123}^t&=e_{123}+t(e_{135}+e_{234})+t^2e_{345}\\
e_{024}^t&=e_{024}+t(e_{045}+e_{234})+t^2e_{345}\\
e_{015}^t&=e_{015}+t(e_{045}+e_{135})+t^2e_{345}\\
\end{cases}
\end{align}
and
\begin{align}\label{etonthebasisexampleII}
\begin{cases}
e_{124}^t&=e_{124}+te_{145}\\
e_{125}^t&=e_{125}+te_{245}\\
e_{023}^t&=e_{023}+te_{035}\\ 
e_{025}^t&=e_{025}+te_{235}\\
e_{013}^t&=e_{013}+te_{034}\\ 
e_{014}^t&=e_{014}+te_{134}
\end{cases}
\end{align}
Now, given $I\in \Lambda$ with $d(I,I_1)>2=k_1+k_2+1$ we have to find a hyperplane $H_I$ of type
$$c_Ip_I+t \!\!\!\!\sum_{J\in \Delta(I,-1)}\!\!\!\! c_J p_J+
t^2 \!\!\!\!\sum_{J\in \Delta(I,-2)}\!\!\!\! c_J p_J+
t^3 \!\!\!\!\sum_{J\in \Delta(I,-3)}\!\!\!\! c_J p_J=0$$
such that $c_I\neq 0$, and $T_t\subseteq H_I$ for every $t\neq 0.$

In this case it is enough to consider $I=\{3,4,5\}$. Note that $e_{345}$ appears in (\ref{etonthebasisexampleI}) but does not in (\ref{etonthebasisexampleII}). In the notation of the proof of Proposition \ref{limitosculatingspacesgrass} we have $d=s=3, d-k_1=3,d-k_2=2$, and we are looking for
$$c_0=c_{345}\neq 0, c_1=c_{045}=c_{135}=c_{234}, c_2=c_{123}=c_{024}=c_{015},c_3=c_{012}$$
satisfying the following system:
\begin{align}\label{eqnsexample}
\begin{cases}
c_{3}=0\\
\binom{3}{0}c_{3}+\binom{3}{1}c_{2}+\binom{3}{2}c_{1}+\binom{3}{3}c_{0}=0\\
\binom{2}{0}c_{2}+\binom{2}{1}c_{1}+\binom{2}{2}c_{0}=0
\end{cases}
\end{align}
Note that the matrix
\begin{equation*}
M=
\begin{pmatrix}
\binom{3}{1}&\binom{3}{2}\\
\binom{2}{0}&\binom{2}{1}
\end{pmatrix}=
\begin{pmatrix}
3&3\\
1&2
\end{pmatrix}
\end{equation*}
has maximal rank. Therefore, there exist complex numbers $c_I=c_0\neq 0,c_1,c_2,c_3$ satisfying system (\ref{eqnsexample}). For instance, we may take $c_0=3,c_1=-2,c_2=1,c_3=0$ corresponding to the hyperplane  
$$3p_{345}-2t(p_{045}+p_{135}+p_{234})+t^2(p_{123}+p_{024}+p_{015})=0$$
and taking the limit for $t\mapsto 0$ we get the equation $p_{345}=0$.
\end{Example}
Next, we see how the span of several osculating spaces on Grassmannian degenerate.

\begin{Proposition}\label{limitosculatingspacesgrassII}
The Grassmannian $\G(r,n)$ has $\lfloor \frac{n+1}{r+1}\rfloor$-osculating regularity.
\end{Proposition}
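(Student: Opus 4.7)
The proof extends Proposition \ref{limitosculatingspacesgrass} from $m=2$ to $m = \alpha$. We may assume $2k+1 \leq r$, since otherwise $T^{2k+1}_{p_1} = \P^N$ by Proposition \ref{oscgrass} and there is nothing to prove. By the general position of $p_1, \dots, p_\alpha$, we may take $p_j = e_{I_j}$, where
$$I_j = \{(r+1)(j-1), \dots, (r+1)j - 1\}, \quad j = 1, \dots, \alpha,$$
are pairwise disjoint $(r+1)$-subsets of $\{0, \dots, n\}$; this is possible precisely because $\alpha(r+1) \leq n+1$. For each $j = 2, \dots, \alpha$, I take $\gamma_j\colon \P^1 \to \G(r,n)$ to be the degree $r+1$ rational normal curve
$$\gamma_j([s:t]) = (se_0 + te_{(r+1)(j-1)}) \wedge \dots \wedge (se_r + te_{(r+1)j-1}),$$
which satisfies $\gamma_j(1{:}0) = p_1$ and $\gamma_j(0{:}1) = p_j$, and work in the affine chart $s = 1$.

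By Proposition \ref{oscgrass}, applied in the basis adapted to $\gamma_j(t)$, for $t \neq 0$ one has
$$T^k_{\gamma_j(t)} = \langle e^{(j),t}_J : d(J, I_1) \leq k \rangle,$$
where $e^{(j),t}_J$ denotes the corresponding Plücker vector. Expanding in the canonical basis,
$$e^{(j),t}_J = \sum_{L \subseteq J \cap I_1} t^{|L|} \, \sigma^{(j)}_L \, e_{(J \setminus L) \cup (L + (r+1)(j-1))}, \qquad \sigma^{(j)}_L \in \{\pm 1\}.$$
To show that the flat limit $T_0$ of $T_t = \langle T^k_{p_1}, T^k_{\gamma_2(t)}, \dots, T^k_{\gamma_\alpha(t)}\rangle$ is contained in
$$T^{2k+1}_{p_1} = \{p_I = 0 : d(I, I_1) > 2k + 1\},$$
for each such $I$ I will produce a hyperplane $H_I$ of the form $p_I + t \cdot (\text{polynomial in the other } p_J) = 0$ containing $T_t$ for every $t \neq 0$; intersecting over all such $I$ then yields $T_0 \subseteq T^{2k+1}_{p_1}$.

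The construction of $H_I$ mirrors the one used in the $m = 2$ case. The decisive feature is the disjointness of $I_1, I_2, \dots, I_\alpha$: every coordinate point $e_K$ arising in the expansion of a generator $e^{(j),t}_J$ differs from $J$ only in positions within $I_1 \cup I_j$. Consequently, fixing $I$ with $d(I, I_1) > 2k+1$, the coefficients of a candidate $F_I = \sum c_K p_K$ that get constrained nontrivially partition by $j \in \{2, \dots, \alpha\}$ into disjoint groups, linked only through the single anchor coefficient $c_I$. The linear system governing each block has exactly the form (\ref{eqns5}) from the proof of Proposition \ref{limitosculatingspacesgrass}, whose solvability with $c_I \neq 0$ follows from the non-vanishing of a matrix of binomial coefficients via \cite[Corollary 2]{GV85}. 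Gluing compatible per-block solutions produces the desired hyperplane $H_I$.

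The main technical obstacle is verifying this decoupling rigorously: one must confirm that the relations imposed by $H_I(e^{(j),t}_J) = 0$ on the coefficients of $F_I$ never mix variables across different blocks indexed by $j$, apart from $c_I$ itself. This follows from the observation that if an expansion of $e^{(j),t}_J$ contributes $e_K$ with $K \cap I_{j'} \neq \emptyset$ for some $j' \neq 1, j$, then necessarily $K$ agrees with $J$ in those slots, and the coefficient involved is controlled by the $j$-th block alone. Once this combinatorial structure is in place, the argument reduces to $\alpha - 1$ independent instances of the two-point analysis from Proposition \ref{limitosculatingspacesgrass}, completing the proof.
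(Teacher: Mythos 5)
Your setup (the coordinate points $e_{I_j}$, the degree $r+1$ rational normal curves $\gamma_j$, the hyperplane Ansatz $p_I + t(\cdots)=0$ for each $I$ with $d(I,I_1)>2k+1$, and the reduction to a binomial linear system solved via \cite[Corollary 2]{GV85}) coincides with the paper's. The gap is exactly at the point you flag as "the main technical obstacle": the asserted decoupling of the constraints into $\alpha-1$ blocks indexed by $j$ and "linked only through the single anchor coefficient $c_I$" is false if $F_I$ is supported on the full sets $\Delta(I)^-_j$, and your one-sentence justification does not repair it. The problem is not which $e_K$ occur in the expansion of a single generator $e^{(j),t}_J$, but which coefficients $c_M$ of $F_I$ are hit by conditions coming from \emph{different} curves. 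If $I$ meets several of the blocks $I_2,\dots,I_\alpha$ (which happens for most bad indices $I$), then an index $M\in\Delta(I,-l)_j$ with $l$ large enough that $d(M,I_1)-|I\cap I_i|\le k$ lies in $\Delta(K)^+_i$ for some $K\in B[I_1,k]$ and some $i\neq j$; the condition $F_I(e^{i,t}_K)=0$ then couples $c_M$ to the coefficients $c_{M'}$ with $M'\in\Delta(K)^+_i$, which belong to the $i$-block and not to the $j$-block. So the systems genuinely mix, and "gluing compatible per-block solutions" is not available. Moreover your final picture of "$\alpha-1$ independent instances of the two-point analysis" is not what occurs: the paper shows that for each bad $I$ there is a \emph{unique} $j$ with $I\in\bigcup_{d(J,I_1)\le k}\Delta(J)^+_j$ (this uses $d(I,I_1)>2k+1$ and a count of the type $d(I,I_1)\le d(K_1,I_1)+d(K_2,I_1)\le 2k$), so only one block ever carries a nontrivial system.

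The missing idea is the \emph{truncation of the support}: the paper takes $F_I$ supported not on $\Delta(I)^-_j$ but on $\Gamma=\bigcup_{0\le l\le k+1-D+s(I)^-_j}\Delta(I,-l)_j$, where $D=d(I,I_1)$, and then proves (claim (\ref{eq2})) that no element of $\Gamma$ lies in $\Delta(K)^+_i$ for any $i\neq j$ and $K\in B[I_1,k]$ — because every $J\in\Gamma$ satisfies $|J\cap I_j|\ge D-(k+1)>k$, while anything in $\Delta(K)^+_i$ with $i\neq j$ has $|\,\cdot\,\cap I_j|=|K\cap I_j|\le k$. Consequently all conditions coming from the curves $\gamma_i$, $i\neq j$, are vacuously satisfied, and one is left with a single two-point-type system (\ref{linsys}), which is a truncated variant of (\ref{eqns5}) rather than "exactly" that system. (Example \ref{exstrongdeg} in the Segre--Veronese chapter illustrates precisely why the untruncated support fails.) Without this truncation and the accompanying disjointness claim, your argument does not go through as written.
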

\begin{proof}
Let $p_1,\dots,p_{\alpha}\in \G(r,n)\subseteq\mathbb{P}^{N}$ be general points with $\alpha = \lfloor\frac{n+1}{r+1}\rfloor$,
$k\leq (r-1)/2$ a non-negative integer, and $\gamma_j:\P^1\to \G(r,n)$
a degree $r+1$ rational normal curve with $\gamma_j(0)=p_1$ and $\gamma_j(\infty)=p_j,$ for every $j=2,\dots,\alpha.$
Let us consider the family of linear spaces 
$$T_t=\left\langle T^{k}_{p_1},T^{k}_{\gamma_2(t)},\dots,T^{k}_{\gamma_{\alpha}(t)}
\right\rangle,\: t\in \P^1\backslash \{0\}$$
parametrized by $\P^1\backslash\{0\}$, and let $T_0$ be the flat limit of $\{T_t\}_{t\in \P^1\backslash \{0\}}$ in
$\G(\dim(T_t),N)$.
We have to show that $T_0\subset T^{2k+1}_p.$

If $\alpha=2$ it follows from the Proposition \ref{limitosculatingspacesgrass}. Therefore, we may assume that $\alpha\geq 3$, $p_j=e_{I_j}$ (\ref{I1Ialpha}) and that $\gamma_j:\P^1\to\mathbb{P}^{N}$ is the rational curve given by 
$$\gamma_j([t:s])=\left(se_0+te_{(r+1)(j-1)}\right)\wedge\dots \wedge \left(se_{r}+te_{(r+1)j-1}\right).$$
We can work on the affine chart $s=1$ and set $t=(t:1)$. Consider the points 
$$e_0,\dots,e_{n},
e_0^{j,t}=e_0+te_{(r+1)(j-1)},
\dots,e_{r}^{j,t}=e_{r}+te_{(r+1)j-1},e_{r+1}^{j,t}=e_{r+1},\dots,e_{n}^{j,t}=e_{n}\in \P^{n}$$
and the corresponding points in $\mathbb{P}^{N}$ 
$$e_I=e_{i_0}\wedge\dots\wedge e_{i_{r}},e_I^{j,t}=e_{i_0}^{j,t}\wedge\dots\wedge e_{i_{r}}^{j,t},\:
I=\{i_0,\dots,i_r\}\in \Lambda,$$
for $j=2,\dots,\alpha.$
By Proposition \ref{oscgrass} we have 
$$T_t=\left\langle e_I\: | \: d(I,I_1)\leq k; \: e_I^{j,t}\: |\: d(I,I_1)\leq k, j=2,\dots,\alpha\right\rangle
,\: t\neq 0$$
and
$$T^{2k+1}_{p_0}=\left\langle e_I\: | \: d(I,I_1)\leq 2k+1\right\rangle
=\{p_I=0\: | \: d(I,I_1)> 2k+1 \}.$$
Therefore, as in Proposition \ref{limitosculatingspacesgrass}, in order to prove that $T_0\subset T^{2k+1}_p$ it is enough to exhibit, for any index $I\in \Lambda$ with $d(I,I_1)> 2k+1$, a hyperplane $H_I\subset\mathbb{P}^{N}$ of type
$$p_I+t\left( \sum_{J\in \Lambda, \ J\neq I}f(t)_{I,J} p_J\right)=0$$
such that $T_t\subset H_I$ for $t\neq 0$, where $f(t)_{I,J}\in \C[t]$ are polynomials. The first part of the proof goes as in the proof of Proposition \ref{limitosculatingspacesgrass}. Given $I\in \Lambda$ we define 
$$\Delta(I,l)_j:=\left\{(I\setminus J)\cup(J+(j-1)(r+1))| J\subset I\cap I_1, |J|=l\right\}\subset \Lambda$$
for any $I\in \Lambda,l\geq 0, j=2,\dots, \alpha,$ where $L+\lambda:=\{i+\lambda; i\in L\}$ is the 
translation of the set $L$ by the integer $\lambda.$ Note that $\Delta(I,0)_j=\{I\}$ and $\Delta(I,l)_j=\emptyset$ for $l$ big enough. For any $l>0$ set
$$\Delta(I,-l)_j:=\left\{ J|\: I\in\Delta(J,l)_j \right\} \subset \Lambda;$$
$$s(I)^+_j:=\max_{l\geq 0}\{\Delta(I,l)_j\neq \emptyset\}\in\{0,\dots,r+1\};$$
$$s(I)^-_j:=\max_{l\geq 0}\{\Delta(I,-l)_j\neq \emptyset\}\in\{0,\dots,r+1\};$$
$$\Delta(I)^+_j:=\bigcup_{0\leq l} \Delta(I,l)_j=
\!\!\!\!\bigcup_{0\leq l \leq s(I)^+_j} \!\!\!\! \Delta(I,l)_j;$$
$$\Delta(I)^-_j:=\bigcup_{0\leq l} \Delta(I,-l)_j=
\!\!\!\!\bigcup_{0\leq l \leq s(I)^-_j} \!\!\!\!\Delta(I,-l)_j.$$
Note that $0\leq s(I)^-_j\leq d(I,I_1),0\leq s(I)^+_j\leq r+1-d(I,I_1)$, and for any $l$ we have
$$J\in\Delta(I,l)_j\Rightarrow d(J,I)=|l|, d(J,I_1)=d(I,I_1)+l,d(J,I_j)=d(I,I_j)-l.$$
 Now, we write $e_I^{j,t}$, $d(I,I_1)< k$, in the basis $e_J,J\in\Lambda$. For any $I\in\Lambda$ we have
\begin{align*}
e_{I}^{j,t}=\!\!\!\!\sum_{J\in \Delta(I)^+_j}\!\!\!\!\left( t^{d(I,J)}\sign(J)e_J\right)
\end{align*}
where $\sign(J)=\pm 1$. Since $\sign(J)$ does depend on $J$ but not on $I$ we can replace $e_J$ by $\sign(J)e_J$. Then, we may write 
\begin{align*}
e_{I}^{t}=\sum_{J\in \Delta(I)^+_j}\!\!\!\!\left( t^{d(I,J)}e_J\right).
\end{align*}
and
\begin{align*}
T_t=&\left\langle e_I \: | \: d(I,I_1)\leq k;\:
\sum_{J\in \Delta(I)^+_j}\!\!\!\!\left( t^{d(I,J)}e_J\right) \: \big| \: d(I,I_1)\leq k,\: 2\leq j\leq \alpha\right \rangle.
\end{align*}
Next, we define 
$$ \Delta:=\left\{ I \: | \:  d(I,I_1)\leq k\right\}\bigcup
\left(\bigcup_{2\leq j\leq \alpha}\bigcup_{d(I,I_1)\leq k} \!\!\!\!\Delta(I)^+_j\right)\subset \Lambda.$$
Let $I\in \Lambda$ be an index with $d(I,I_1)=:D> 2k+1$. If $I\notin \Delta$ then $T_t\subset \{p_I=0\}$ for any $t\neq 0$ and we are done. Now, assume that $I\in \Delta$, and $I\in \Delta(K_1,l_1)_2 \bigcap \Delta(K_2,l_2)_3$ with 
$$d(K_1,I_1),d(K_2,I_1)\leq k.$$
Consider the following sets
\begin{align*}
I^0:&=I\cap I_1\\
I^1:&=I\cap(K_1+(r+1))\subset I_2\\
I^2:&=I\cap(K_2+2(r+1))\subset I_3\\
I^3:&=I\setminus (I^0\cup I^1 \cup I^2)
\end{align*}
Then $|I^1|=l_1,|I^2|=l_2$. Set $u:=|I^3|$, then
$$d(I,I_1)=l_1+l_2+u\leq l_1+l_2+2u= d(K_1,I_1)+d(K_2,I_1)\leq 2k$$
contradicting $d(I,I_1)>2k+1$. Therefore, there is a unique $j$ such that 
$$I\in \bigcup_{d(J,I_1)\leq k} \!\!\!\!\Delta(J)^+_j.$$
Note that $\Delta(I,-s(I)^-_j)$ has only one element, say $I'$. Then 
$$k+1-D+s(I)_j^-=k+1-d(I,I_1)+d(I,I')=k+1-d(I_1,I')>0.$$
Now, consider the set of indexes
$$\Gamma:=\left\{ I \right\} \cup \Delta(I,-1)_j \cup \dots \cup \Delta(I,-(k+1-D+s(I)^-_j))_j
=\!\!\!\!\!\!\!\!\bigcup_{0\leq l \leq k+1-D+s(I)^-_j} \!\!\!\!\!\!\!\!\Delta(I,-l)_j \subset \Lambda$$
Our aim now is to find a hyperplane of the form
\begin{equation}\label{eq1}
H_I= \left \{\sum_{J\in \Gamma } t^{d(I,J)}c_J p_J=0 \right\}
\end{equation}
such that $T_t\subset H_I$ and $c_I\neq 0$.

First, we claim that 
\begin{equation}\label{eq2}
J\in \Gamma\Rightarrow J \notin  \bigcup_{\substack{2\leq i\leq \alpha \\ i\neq j} }
\bigcup_{d(I,I_1)\leq k} \!\!\!\!\Delta(I)^+_i.
\end{equation}
Indeed, assume that $J\in \Delta(I,-l)_j\cap\Delta(K,m)_i$ for some $K\in \Lambda$ with 
$$d(K,I_1)\leq k \mbox{ and } i\neq j, 0\leq l \leq k+1-D+s(I)^-_j, m\geq 0.$$
Since $J\in \Delta(I,-l)_j$ then
\begin{equation*}
|J\cap I_j|=|I\cap I_j|-l\geq s(I)_j^- -l\geq D-(k+1)>k
\end{equation*}
On other hand, since $J\in \Delta(K,m)_i$ with $j\neq i$ we have
\begin{equation*}
|J\cap I_j|=|K\cap I_j|\leq d(K,I_1)\leq k.
\end{equation*}
A contradiction. Now, (\ref{eq2}) yields that the hyperplane $H_I$ given by (\ref{eq1}) is such that 
$$\left\langle e_I\: | \: d(I,I_1)\leq k; \: \sum_{J\in \Delta(I)^+_i}\!\!\!\!
t^{d(I,J)} c_{(I,J)}e_J\: | \: d(I,I_1)\leq k,\ i=2,\dots,\alpha, i\neq j \right\rangle \subset H_I , t\neq 0.$$
Therefore
$$
T_t\subset H_I, t \neq 0 \Longleftrightarrow \left\langle \sum_{J\in \Delta(I)^+_j}\!\!\!\!
t^{d(I,J)} e_J\: | \: d(I,I_1)\leq k \right\rangle \subset H_I , t\neq 0
.$$
Now, arguing as in the proof of Proposition \ref{limitosculatingspacesgrass} we obtain
\begin{equation}\label{eq4II}
T_t\subset H_I, t \neq 0 \Longleftrightarrow \sum_{J\in \Delta(K)^+_j \cap \Gamma}\!\!\!\! c_{J}=0
\ \ \forall  K\in \Delta(I)^-_j\cap B[I_1,k]
\end{equation}
and the problem is now reduced to find a solution of the linear system given by the 
$|\Delta(I)^-_j\cap B[I_1,k]|$ equations (\ref{eq4II}) in the $|\Delta(K)^+_j \cap \Gamma|$ variables $c_J$, $J\in\Delta(K)^+_j \cap \Gamma$, such that $c_I\neq 0$. We set $c_J=c_{d(I,J)}$ and, as in the proof of Proposition \ref{limitosculatingspacesgrass}, we consider the linear system 
\begin{equation}\label{linsys}
\displaystyle\sum_{l=0}^{k+1-D+s(I)^-_j}\binom{D-i}{D-l-i}c_{l} =0 
\quad \forall i=D-s(I)^-_j,\dots,k
\end{equation}
with $k+2-D+s(I)^-_j$ variables $c_0,\dots,c_{k+1-D+s(I)^-_j}$ and $k+1-D+s(I)^-_j$ equations, where $D=d(I,I_1)$. Finally, arguing exactly as in the last part of the proof of Proposition \ref{limitosculatingspacesgrass} we have that (\ref{linsys}) admits a solution with $c_0\neq 0$.
\end{proof}

\section{Non-secant defectivity of Grassmannian varieties}\label{grassnodef}
In this section we put together the results of sections \ref{mainsection}, \ref{projoscgrass} and \ref{degtanosc} to study the dimension of secant varieties of Grassmannians.
First we state our main result on non-defectivity of Grassmannians, next we discuss the asymptotic behavior of the bounds, and finally we give some examples and a corollary. We then conclude showing that our bounds improve those in \cite{AOP09b} for $r\geq 2$, although we note that Abo, Ottaviani and Peterson have given in \cite{AOP09b} much better bounds for $r=2.$

\begin{Theorem}\label{maingrass}
Assume that $r\geq 2$, set 
$$\alpha:=\left\lfloor \dfrac{n+1}{r+1} \right\rfloor$$
and let $h_\alpha$ be as in Definition \ref{defhowmanytangent}. If either
\begin{itemize}
	\item[-] $n\geq r^2+3r+1$ and $h\leq\alpha h_{\alpha}(r-1)$ or
	\item[-] $n< r^2+3r+1$, $r$ is even, and 
	$h\leq (\alpha-1) h_{\alpha}(r-1)+
	h_\alpha(n-2-\alpha r)$ 	or
	\item[-] $n< r^2+3r+1$, $r$ is odd, and 
	$h\leq (\alpha-1) h_{\alpha}(r-2)+h_\alpha(\min\{n-3-\alpha(r-1),r-2\})$
\end{itemize}
then $\G(r,n)$ is not $(h+1)$-defective.
\end{Theorem}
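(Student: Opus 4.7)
The plan is to apply Theorem \ref{lemmadefectsviaosculating} directly, with $m = \alpha$, using the results established in the preceding sections of the chapter as black boxes. First I would note that the two regularity hypotheses required by Theorem \ref{lemmadefectsviaosculating} are already in place for $\G(r,n)$: strong $2$-osculating regularity is Proposition \ref{limitosculatingspacesgrass}, and $\alpha$-osculating regularity is Proposition \ref{limitosculatingspacesgrassII}. So it only remains to pick, in each of the three cases, an $l$-tuple $(k_1,\dots,k_l)$ for which the general $(k_1,\dots,k_l)$-osculating projection of $\G(r,n)$ is generically finite, and then read off the corresponding bound $\sum_j h_\alpha(k_j)$.

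For the regime $n \geq r^2+3r+1$, I would take $l = \alpha$ and $k_1 = \cdots = k_\alpha = r-1$. By the second part of Corollary \ref{oscprojbirationalII} the projection $\Pi_{T_{e_{I_1},\dots,e_{I_\alpha}}^{r-1,\dots,r-1}}$ is birational, hence generically finite, so Theorem \ref{lemmadefectsviaosculating} immediately yields non-$(h+1)$-defectivity for $h+1 \leq \alpha\, h_\alpha(r-1) + 1$, which is exactly the stated bound. For the regime $n < r^2+3r+1$ with $r$ even, I would take $l=\alpha$, $k_1 = \cdots = k_{\alpha-1} = r-1$ and $k_\alpha = r' = n-2-\alpha r$; the birationality of the corresponding osculating projection is again part of Corollary \ref{oscprojbirationalII}, and Theorem \ref{lemmadefectsviaosculating} gives the bound $(\alpha-1)h_\alpha(r-1) + h_\alpha(n-2-\alpha r)$. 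For $n<r^2+3r+1$ with $r$ odd I would instead take $k_1 = \cdots = k_{\alpha-1} = r-2$ and $k_\alpha = r'' = \min\{n-3-\alpha(r-1),\,r-2\}$; again Corollary \ref{oscprojbirationalII} gives birationality of this osculating projection, and Theorem \ref{lemmadefectsviaosculating} yields the last bound.

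The only mildly delicate point to address is the choice between the ``$r-1$'' and the ``$r-2$'' alternatives offered by Corollary \ref{oscprojbirationalII} in the small-$n$ regime. Here I would observe that when $r$ is odd the identity $h_\alpha(2k)=h_\alpha(2k-1)$ in Definition \ref{defhowmanytangent} gives $h_\alpha(r-1)=h_\alpha(r-2)$, so the first $\alpha-1$ summands are unaffected by that choice; on the other hand, the last summand is typically larger when using $r''$ rather than $r'$, since $r'' = \min\{r'+\alpha-1,\,r-2\} \geq r'$. When $r$ is even, by contrast, $h_\alpha(r-1)$ is strictly larger than $h_\alpha(r-2)$ in general, so the first alternative is preferable. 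Finally, I would note that the edge case $k_\alpha=0$ causes no trouble even though Theorem \ref{lemmadefectsviaosculating} is stated with $k_j\geq 1$: since $h_\alpha(0)=0$ one may simply drop the corresponding point and apply the theorem with $l=\alpha-1$, recovering the same bound.

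There is no real obstacle in this proof, as the chapter has been organized so that every ingredient is ready: the two regularity statements, the birationality of the specific osculating projections from Corollary \ref{oscprojbirationalII}, and the machinery of Theorem \ref{lemmadefectsviaosculating}. The proof is essentially a bookkeeping exercise combining these three inputs with the numerical function $h_\alpha$ in each of the three numerical regimes.
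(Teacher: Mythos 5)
Your proposal is correct and follows exactly the paper's own argument: the proof there is precisely the combination of Propositions \ref{limitosculatingspacesgrass} and \ref{limitosculatingspacesgrassII} (for the two regularity hypotheses), Corollary \ref{oscprojbirationalII} (for the generic finiteness of the chosen osculating projections in each regime), and Theorem \ref{lemmadefectsviaosculating}. Your additional remarks on the $k_\alpha=0$ edge case and on why the $r-1$ versus $r-2$ alternative is chosen according to the parity of $r$ are correct refinements of the same bookkeeping.
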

\begin{proof}
Since by Propositions \ref{limitosculatingspacesgrass} and \ref{limitosculatingspacesgrassII}
the Grassmannian $\G(r,n)$ has strong $2$-osculating regularity and $\alpha-$osculating regularity,
it is enough to apply Corollary \ref{oscprojbirationalII} together with Theorem \ref{lemmadefectsviaosculating}.
\end{proof}
Note that if we write 
\begin{equation}\label{222}
r = 2^{\lambda_1}+2^{\lambda_2}+\dots + 2^{\lambda_s}+\varepsilon
\end{equation}
with $\lambda_1 >\lambda_2>\dots >\lambda_s\geq 1$, $\varepsilon\in\{0,1\}$, then 
$$h_{\alpha}(r-1) = \alpha^{\lambda_1-1}+\dots + \alpha^{\lambda_s-1}.$$ 
Therefore, the first bound in Theorem \ref{maingrass} gives 
$$h\leq \alpha^{\lambda_1}+\dots + \alpha^{\lambda_s}.$$
Furthermore, just considering the first summand in the second and third bound in Theorem \ref{maingrass} we get that $\G(r,n)$ is not $(h+1)$-defective for
$$h\leq (\alpha-1)(\alpha^{\lambda_1-1}+\dots + \alpha^{\lambda_s-1}).$$ 
Finally, note that (\ref{222}) yields $\lambda_1 = \lfloor\log_2(r)\rfloor$. Hence, asymptotically we have $h_{\alpha}(r-1)\sim \alpha^{\lfloor\log_2(r)\rfloor-1}$, and by Theorem \ref{maingrass} $\G(r,n)$ is not $(h+1)$-defective for 
$$h\leq \alpha^{\lfloor\log_2(r)\rfloor} = \left(\frac{n+1}{r+1}\right)^{\lfloor\log_2(r)\rfloor}.$$

\begin{Example}


In order to help the reader in getting a concrete idea of the order of growth of the bound in Theorem \ref{maingrass} for $n\geq r^2+3r+1$ we work out some cases in the following table:
\begin{center}
\begin{tabular}{|c|c|l|}
\hline 
$r$ & $r^2+3r+1$ & $h$\\ 
\hline 
$4$ & $29$ & $\left(\frac{n+1}{5}\right)^2+1$\\ 
\hline 
$6$ & $55$ & $\left(\frac{n+1}{7}\right)^2+\left(\frac{n+1}{7}\right)+1$\\ 
\hline 
$8$ & $89$ & $\left(\frac{n+1}{9}\right)^3+1$\\ 
\hline 
$10$ & $131$ & $\left(\frac{n+1}{11}\right)^3+\left(\frac{n+1}{11}\right)+1$\\ 
\hline 
$12$ & $181$ & $\left(\frac{n+1}{13}\right)^3+\left(\frac{n+1}{13}\right)^2+1$\\ 
\hline 
$14$ & $239$ & $\left(\frac{n+1}{15}\right)^3+\left(\frac{n+1}{15}\right)^2+\left(\frac{n+1}{15}\right)+1$\\ 
\hline
$16$ & $305$ & $\left(\frac{n+1}{17}\right)^4+1$\\
\hline 
\end{tabular} 
\end{center}
\end{Example}

Thanks to Theorem \ref{maingrass} it is straightforward to get a linear bound going with $\frac{n}{2}$.

\begin{Corollary}\label{maincor}
Assume that $r\geq 2$, and set 
$$\alpha:=\left\lfloor \dfrac{n+1}{r+1} \right\rfloor$$
If either
\begin{itemize}
	\item[-] $n\geq r^2+3r+1$ and $h\leq\left\lfloor \dfrac{r}{2} \right\rfloor\alpha+1$ or
	\item[-] $n< r^2+3r+1$, $r$ is even, and 
	$h\leq	\left\lfloor \dfrac{n+1}{2} \right\rfloor-\dfrac{r}{2}$ or
	\item[-] $n< r^2+3r+1$, $r$ is odd, and $h\leq \min
	\left\{ \dfrac{r-1}{2} \alpha+1,\left\lfloor \dfrac{n}{2} \right\rfloor-\dfrac{r-1}{2}\right\}$
\end{itemize}
then $\G(r,n)$ is not $h$-defective.
\end{Corollary}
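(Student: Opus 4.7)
The plan is to apply Theorem~\ref{maingrass} case by case, using the elementary lower bound
$$h_{\alpha}(k) \;\geq\; h_{2}(k) \;=\; \left\lfloor \tfrac{k+1}{2} \right\rfloor,$$
valid for every $\alpha \geq 2$ and every $k \geq 0$ because each summand $\alpha^{\lambda_i - 1}$ in Definition~\ref{defhowmanytangent} dominates the corresponding $2^{\lambda_i - 1}$. I should also flag at the outset the index shift between the two statements: Theorem~\ref{maingrass} gives non $(h+1)$-defectivity whenever $h \leq B$, whereas Corollary~\ref{maincor} asks for non $h$-defectivity whenever $h \leq B+1$. So in each case the Corollary's linear bound is to be derived as $B+1$, with $B$ extracted from the corresponding case of Theorem~\ref{maingrass}.

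The first case is immediate: if $n \geq r^{2} + 3r + 1$, then $B = \alpha \, h_{\alpha}(r-1) \geq \alpha \lfloor r/2 \rfloor$, so $B+1 \geq \lfloor r/2\rfloor \alpha + 1$, which is the Corollary's bound. For the second case ($r$ even, $n < r^{2}+3r+1$), the Theorem provides
$$B \;=\; (\alpha-1)\,h_{\alpha}(r-1) + h_{\alpha}(n-2-\alpha r) \;\geq\; (\alpha-1)\tfrac{r}{2} + \left\lfloor \tfrac{n-1-\alpha r}{2}\right\rfloor.$$
Because $\alpha r$ is an even integer (as $r$ is even), the floor can be split, and the two pieces telescope to $\lfloor (n-1)/2 \rfloor - r/2 = \lfloor (n+1)/2 \rfloor - 1 - r/2$, independently of the parity of $n$; shifting by one yields the Corollary's bound $\lfloor (n+1)/2 \rfloor - r/2$. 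The auxiliary point that $h_{\alpha}$ is being applied to a non-negative integer is built into Corollary~\ref{oscprojbirationalII}, which already establishes $0 \leq n - 2 - \alpha r \leq r - 1$.

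The third case ($r$ odd, $n < r^{2}+3r+1$) splits according to which of the two terms realises the minimum $r'' = \min\{n-3-\alpha(r-1),\, r-2\}$ appearing in Theorem~\ref{maingrass}. If $r'' = r-2$, that is, $n \geq \alpha(r-1)+r+1$, then $B = \alpha\,h_{\alpha}(r-2) \geq \alpha (r-1)/2$, giving $h \leq \alpha(r-1)/2 + 1$; in this same regime the inequality on $n$ forces $\lfloor n/2\rfloor - (r-1)/2 \geq \alpha(r-1)/2 + 1$, so the Corollary's minimum is realised by $\alpha(r-1)/2 + 1$. If instead $r'' = n-3-\alpha(r-1)$, the same telescoping trick (now using that $\alpha(r-1)$ is even, since $r-1$ is) yields $B \geq \lfloor n/2\rfloor - 1 - (r-1)/2$, hence $h \leq \lfloor n/2\rfloor - (r-1)/2$; and the complementary inequality $n \leq \alpha(r-1)+r$ forces this to be the smaller of the two quantities in the minimum. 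The main obstacle throughout is nothing conceptual but careful bookkeeping: keeping the floor functions, the parities of $\alpha r$ and $\alpha(r-1)$, and the $B \leftrightarrow B+1$ shift coherent so that in the third case the two sides of the minimum match their respective sub-cases.
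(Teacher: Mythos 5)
Your proposal is correct and follows essentially the same route as the paper's proof: bound $h_\alpha(k)$ from below by $h_2(k)=\left\lfloor (k+1)/2\right\rfloor$, substitute into Theorem~\ref{maingrass}, and carry out the floor arithmetic (the paper records exactly your telescoping identities, e.g. $\tfrac{r}{2}(\alpha-1)+\lfloor\tfrac{n-1-\alpha r}{2}\rfloor+1=\lfloor\tfrac{n+1}{2}\rfloor-\tfrac{r}{2}$). Your explicit treatment of the $(h+1)$-versus-$h$ index shift and of the two sub-cases of the minimum for odd $r$ is just a more careful spelling-out of what the paper compresses into two displayed identities.
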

\begin{proof}
Since $\alpha \geq 2$ we have $h_\alpha(k)\geq h_2(k)=\left\lfloor \dfrac{k+1}{2} \right\rfloor$. In particular, $h_\alpha(r-1)\geq \left\lfloor \dfrac{r}{2} \right\rfloor$ and $h_\alpha(r-2)\geq \left\lfloor \dfrac{r-1}{2} \right\rfloor$.

Now, it is enough to observe that
\begin{align*}
\dfrac{r}{2} (\alpha-1)+\left\lfloor \dfrac{n-2-\alpha r+1}{2} \right\rfloor+1=
\left\lfloor \dfrac{n+1}{2} \right\rfloor-\dfrac{r}{2}
\end{align*}
for $r$ even, and 
\begin{align*}
\dfrac{r-1}{2} (\alpha-1)+\left\lfloor \dfrac{n-3-\alpha (r-1)+1}{2} \right\rfloor+1=
\left\lfloor \dfrac{n}{2} \right\rfloor-\dfrac{r-1}{2}
\end{align*}
for $r$ odd, and to apply Theorem \ref{maingrass}.
\end{proof}

\subsection{Comparison with Abo-Ottaviani-Peterson bound}\label{uglymath}
Finally, we show that Corollary \ref{maincor} strictly improves \cite[Theorem 3.3]{AOP09b} for $r\geq 4$, whenever $(r,n)\notin\left\{(4,10),(5,11)\right\}$.

For $r\geq 4,n\geq 2r+1$ we define the following functions of $r$ and $n$:
\begin{align*}
a:=\left\lfloor \dfrac{r}{2} \right\rfloor\left\lfloor \dfrac{n+1}{r+1} \right\rfloor,\ 
a':=\left\lfloor \dfrac{n-1}{2} \right\rfloor-\dfrac{r}{2},\ 
a'':=\left\lfloor \dfrac{n}{2} \right\rfloor-\dfrac{r+1}{2},\ 
b:=\left\lfloor \dfrac{n-r}{3} \right\rfloor
\end{align*}
First we show that $a>b$. Indeed, if $r>2$ is even then
$$a=\dfrac{r}{2}\left\lfloor \dfrac{n+1}{r+1} \right\rfloor>\dfrac{r}{2}\cdot\dfrac{n-r}{r+1}>
\dfrac{n-r}{3}\geq\left\lfloor \dfrac{n-r}{3} \right\rfloor=b$$
and if $r>5$ is odd then
$$a=\dfrac{r-1}{2}\left\lfloor \dfrac{n+1}{r+1} \right\rfloor>\dfrac{r-1}{2}\cdot\dfrac{n-r}{r+1}>
\dfrac{n-r}{3}\geq\left\lfloor \dfrac{n-r}{3} \right\rfloor=b.$$
Furthermore, if $r=5$ we write $n=6\lambda+\varepsilon$ with $\varepsilon\in\{-1,0,1,2,3,4\}$. Then we have
$$a=2\left\lfloor \dfrac{6\lambda+\varepsilon+1}{6} \right\rfloor=2\lambda>
2\lambda+\left\lfloor \dfrac{\varepsilon-5}{3} \right\rfloor=\left\lfloor \dfrac{6\lambda+\varepsilon-5}{3} \right\rfloor=b.$$

Now, we assume that $n<r^2+3r+1$ and we show that $a'>b$ if $r$ is even and $(r,n)\neq (4,10)$, and that $a''>b$ if $r$ is odd and $(n,r)\neq (5,11)$. Note that $a'(4,10)=a''(5,11)=b(4,10)=b(5,11)=2$. If $r$ is even
$$a'=\left\lfloor \dfrac{n-1}{2} \right\rfloor-\dfrac{r}{2}>
\dfrac{n-1}{2}-1-\dfrac{r}{2}=\dfrac{n-r-3}{2}>\dfrac{n-r}{3}=b$$
whenever $n>r+9.$ Similarly, if $r$ is odd and $n>r+9$ we have 
$$a''=\left\lfloor \dfrac{n}{2} \right\rfloor-\dfrac{r+1}{2}>
\dfrac{n}{2}-1-\dfrac{r+1}{2}=\dfrac{n-r-3}{2}>\dfrac{n-r}{3}=b.$$
Now, if $r>8$ then $n\geq 2r+1\Rightarrow n>r+9$. A finite number of cases are left, namely
$$(r,n)\in\left\{(r,n);\: r=4,5,6,7,8 \mbox{ and } r^2+3r+1>r+9\geq n \geq 2r+1 \right\}$$
These cases can be easily checked one by one.

\chapter{Secant defectivity of Segre-Veronese varieties}\label{cap4}
In this chapter we apply the technique developed in Chapter \ref{cap2} to study defectivity of Segre-Veronese varieties, obtaining the following theorem:

\begin{Theorem}
The Segre-Veronese variety $SV^{\pmb n}_{\pmb d}$ is not $h$-defective where
$$h\leq n_1h_{n_1+1}(d-2)+1$$
and $h_{n_1+1}(\cdot)$ is as in Definition \ref{defhowmanytangent}.
\end{Theorem}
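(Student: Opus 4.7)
The plan is to verify the hypotheses of Theorem~\ref{lemmadefectsviaosculating} for $X = SV^{\pmb n}_{\pmb d}$ with $m = n_1+1$, $l = n_1$, and $k_1 = \cdots = k_{n_1} = d-2$; then the conclusion $h \leq n_1 \, h_{n_1+1}(d-2) + 1$ follows directly from that theorem. Concretely, I need to establish three ingredients, each of which is the Segre-Veronese analog of results proved for Grassmannians in Chapter~\ref{cap3}.

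First, I would describe osculating spaces of $SV^{\pmb n}_{\pmb d}$ at a coordinate point (Section~\ref{oscspaces}). Using the standard parametrization in affine charts, monomial partial derivatives are indexed by multi-indices of multi-degree at most $(d_1,\dots,d_r)$, and the $k$-osculating space $T_p^k$ at a fundamental point is naturally spanned by the monomials of total "distance" at most $k$ from $p$ in an appropriate combinatorial sense. This gives an explicit coordinate description of $T_p^k$, in particular showing $T_p^{d-1} = \mathbb{P}^N$ and giving a clean inclusion pattern for lower orders.

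Next, in Section~\ref{oscproj}, I would prove that the osculating projection $\Pi_{T^{d-2,\dots,d-2}_{p_1,\dots,p_{n_1}}}$ is birational for $n_1$ general points. Mimicking the Grassmannian argument (Lemma~\ref{oscfactorsII} and Proposition~\ref{oscprojbir}), I would factor this projection through simpler projections coming from elimination of certain coordinates on the first factor $\mathbb{P}^{n_1}$, and then intersect the generic fibers of these auxiliary projections. Because $n_1$ general points in $\mathbb{P}^{n_1}$ span a hyperplane but the $n_1+1$ coordinates of $\mathbb{P}^{n_1}$ can be recovered from combinations of such projections, the intersection of the generic fibers should reduce to a single point, forcing generic finiteness in characteristic zero.

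Then, in Section~\ref{degtanoscsv}, I would establish that $SV^{\pmb n}_{\pmb d}$ has both strong $2$-osculating regularity and $(n_1+1)$-osculating regularity. The approach parallels Propositions~\ref{limitosculatingspacesgrass} and~\ref{limitosculatingspacesgrassII}: connect two (respectively $n_1+1$) general points by an explicit rational curve obtained by varying the first factor linearly, write the moving osculating spaces $T^k_{\gamma_j(t)}$ in the fixed monomial basis, and for each monomial $e_I$ outside $T^{2k+1}_p$ (resp.\ outside $T^{2k+1}_p$) exhibit a hyperplane of the form $p_I + t(\dots) = 0$ containing $T_t$ for all $t\neq 0$. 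As in the Grassmannian case this reduces to solving a linear system whose coefficient matrix is a rectangular array of binomial coefficients, and non-vanishing of the relevant minor follows from the Lindström--Gessel--Viennot determinant identity (\cite[Corollary~2]{GV85}). The main obstacle I expect is precisely this combinatorial core: keeping track of the multi-index degeneration across several $\mathbb{P}^{n_i}$ factors with independent degrees $d_i$ is more delicate than the single "distance" function on $\Lambda$ used for Grassmannians, and one must verify that the interactions between different point pairs in the $(n_1+1)$-point degeneration do not create unwanted contributions — the analog of property~\eqref{eq2} in Proposition~\ref{limitosculatingspacesgrassII}. Once these three ingredients are in place, Theorem~\ref{lemmadefectsviaosculating} yields the bound immediately.
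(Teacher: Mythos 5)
Your proposal is correct and follows essentially the same route as the paper: the paper proves the coordinate description of osculating spaces (Proposition \ref{oscsegver}), the birationality of $\Pi_{T_{e_{I_0},\dots,e_{I_{n_1-1}}}^{d-2,\dots,d-2}}$ by factoring through auxiliary projections $\Sigma_l$ and intersecting generic fibers (Proposition \ref{projoscseveralpoints}), and the strong $2$-osculating and $(n_1+1)$-osculating regularity via the binomial-determinant argument and \cite[Corollary 2]{GV85} (Propositions \ref{limitosculatingspacessegver} and \ref{limitosculatingspacessegverII}), then applies Theorem \ref{lemmadefectsviaosculating} with exactly your choice of parameters. The combinatorial subtlety you flag (the analog of property (\ref{eq2}) for multi-indices across several factors) is indeed the delicate point, and the paper handles it with the sets $\Gamma(I)$ and the uniqueness-of-$j$ claim in Proposition \ref{limitosculatingspacessegverII}.
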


In the first section of this chapter we recall the definition of Segre-Veronese varieties.
In Section \ref{oscspaces} we describe its osculating spaces. In Section \ref{oscproj} we give sufficient conditions to osculating projections from Segre-Veronese varieties to be birational.
In Section \ref{degtanoscsv} we show that the
Segre-Veronese variety $SV_{\pmb{d}}^{\pmb{n}}\subseteq\mathbb{P}^{N(\pmb{n},\pmb{d})}$ 
has strong $2$-osculating regularity and
$(n_1+1)$-osculating regularity.
In Section \ref{mainsec} we use the results in the previous sections together with Theorem \ref{lemmadefectsviaosculating} to prove our main result concerning Segre-Veronese secant defects.

\section{Segre-Veronese varieties}\label{Segre-Veronese varieties}
In this section we recall the definition of Segre-Veronese varieties and fix some notation to be used throughout this chapter. 
And then we provide some examples of defective Segre-Veronese varieties.

Let $\pmb{n}=(n_1,\dots,n_r)$ and $\pmb{d} = (d_1,\dots,d_r)$ be two $r$-uples of positive integers, with $n_1\leq \dots \leq n_r$.
Set $d=d_1+\dots+d_r$,  $n=n_1+\dots+n_r$, and $N(\pmb{n},\pmb{d})=\prod_{i=1}^r\binom{n_i+d_i}{n_i}-1$. 

Let $V_1,\dots, V_r$ be vector spaces of dimensions $n_1+1\leq n_2+1\leq \dots \leq n_r+1$, and consider the product
$$
\mathbb{P}^{\pmb{n}} = \mathbb{P}(V_1^{*})\times \dots \times \mathbb{P}(V_r^{*}).
$$
The line bundle 
$$
\mathcal{O}_{\mathbb{P}^{\pmb{n}} }(d_1,\dots, d_r)=\mathcal{O}_{\mathbb{P}(V_1^{*})}(d_1)\boxtimes\dots\boxtimes \mathcal{O}_{\mathbb{P}(V_1^{*})}(d_r)
$$
induces an embedding
$$
\begin{array}{cccc}
\sigma\nu_{\pmb{d}}^{\pmb{n}}:
&\mathbb{P}(V_1^{*})\times \dots \times \mathbb{P}(V_r^{*})& \longrightarrow &
\mathbb{P}(\Sym^{d_1}V_1^{*}\otimes\dots\otimes \Sym^{d_r}V_r^{*})
=\mathbb{P}^{N(\pmb{n},\pmb{d})},\\
      & (\left[v_1\right],\dots,\left[v_r\right]) & \longmapsto & [v_1^{d_1}\otimes\dots\otimes v_r^{d_r}]
\end{array}
$$ 
where $v_i\in V_i$.
We call the image 
$$
SV_{\pmb{d}}^{\pmb{n}}= \sigma\nu_{\pmb{d}}^{\pmb{n}}(\mathbb{P}^{\pmb{n}} ) \subset \mathbb{P}^{N(\pmb{n},\pmb{d})}
$$ 
a \textit{Segre-Veronese variety}. It is a smooth  variety of dimension $n$ and degree 
$$\frac{(n_1+\dots +n_r)!}{n_1!\dots n_r!}d_1^{n_1}\dots d_r^{n_r}$$
in $\mathbb{P}^{N(\pmb{n},\pmb{d})}.$ 

When $r = 1$, $SV_{d}^{n}$ is a Veronese variety. In this case we write $V_d^n$ for $SV_{d}^{n}$, and $v_{d}^{n}$ for the Veronese embedding.
When $d_1 = \dots = d_r = 1$, $SV_{1,\dots,1}^{\pmb{n}}$ is a Segre variety. 
In this case we write $S^{\pmb{n}}$ for $SV_{1,\dots,1}^{\pmb{n}}$, and  $\sigma^{\pmb{n}}$ for the Segre embedding.
Note that 
$$
\sigma\nu_{\pmb{d}}^{\pmb{n}}=\sigma^{\pmb{n}'}\circ \left( \nu_{d_1}^{n_1}\times \dots \times \nu_{d_r}^{n_r}\right),
$$
where $\pmb{n}'=(N(n_1,d_1),\dots,N(n_r,d_r))$.

\subsection{Some examples of secant defective Segre-Veronese varieties}
In the notation of Section \ref{secant} for  any $h$ consider the $h$-secant map $\pi_h:\Sec_{h}(X)\to\P^N$ and define
$$VSP_G^X(p,h):=\pi_h^{-1}(p)$$
where $p\in \mathbb{P}^N$ is a general point. Note that when $X = V_{d}^{n}$, and $F\in\mathbb{P}^{N(n,d)}$ is a general polynomial we recover the classical variety of sums of powers 
$$VSP_G^{V_d^{n}}(F,h) = VSP(F,h)$$
parametrizing additive decompositions of $F\in k[x_0,...,x_n]_{d}$ as sum of $d$-powers of linear forms \cite{Do04}. 

For instance, for homogeneous polynomials in two variables by \cite[Theorem 3.1]{MM13} we have that if $h > 1$ is a fixed integer, and $d$ in an integes such that $h\leq d\leq 2h-1$ then $VPS(F,h)\cong\mathbb{P}^{2h-d-1}$ where $F\in k[x_0,x_1]_d$ is general. Furthermore, a very simple description of $VSP(F,4)$, where $F$ is a general cubic polynomial in three variables, is at hand. 

\begin{Proposition}\label{vspF4}
Let $F\in k[x_0,x_1,x_2]_3$ be a general homogeneous polynomial. Then the variety of sums of powers $VSP(F,4)$ is isomorphic to the projective plane $\mathbb{P}^{2}$.
\end{Proposition}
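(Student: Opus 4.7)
The approach is via the Apolarity Lemma. The plan is to identify $VSP(F,4)$ with the Grassmannian of pencils of conics inside the degree-$2$ piece of the apolar ideal $F^{\perp}\subset k[\partial_0,\partial_1,\partial_2]$, which for generic $F$ will turn out to be a $\P^2$.

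First I would invoke the Apolarity Lemma: for distinct points $[\ell_1],\ldots,[\ell_h]\in\P^2$ one has $F=\ell_1^3+\cdots+\ell_h^3$ if and only if the ideal of $\{[\ell_1],\ldots,[\ell_h]\}\subset \P^2$ is contained in $F^{\perp}$. Thus $VSP(F,4)$ is the closure in $\Hilb^4(\P^2)$ of the locus of reduced length-$4$ apolar subschemes. Next, I would show that for generic $F$ the Artinian Gorenstein algebra $k[\partial]/F^{\perp}$ has Hilbert function $(1,3,3,1)$: the only nontrivial point is that the catalecticant $k[\partial]_2\to k[x]_1$, $\alpha\mapsto \alpha\cdot F$, is surjective for generic $F$, whence $\dim F^{\perp}_2=3$. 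In particular $F^{\perp}_d=k[\partial]_d$ for $d\geq 4$.

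Then I would construct mutually inverse morphisms
\begin{align*}
\phi &: VSP(F,4)\longrightarrow \Grass(2,F^{\perp}_2)\cong \P^2,\quad \Gamma\longmapsto I_{\Gamma}\cap k[\partial]_2,\\
\psi &: \Grass(2,F^{\perp}_2)\longrightarrow VSP(F,4),\quad W\longmapsto \operatorname{Bs}(W).
\end{align*}
For a reduced $\Gamma$ consisting of $4$ points in general position, $I_{\Gamma}\cap k[\partial]_2$ is the $2$-dimensional pencil of conics through $\Gamma$, contained in $F^{\perp}_2$ by apolarity. Conversely, given a pencil $W\subset F^{\perp}_2$ whose two generators have no common component, $\operatorname{Bs}(W)$ is a length-$4$ subscheme; verifying $I_{\operatorname{Bs}(W)}\subset F^{\perp}$ reduces to degrees $2$ and $3$. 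In degree $2$ we have $I_{\operatorname{Bs}(W)}\cap k[\partial]_2=W\subset F^{\perp}_2$ (for $W$ generic), and in degree $3$ the inclusion $I_{\operatorname{Bs}(W),3}=W\cdot k[\partial]_1\subset F^{\perp}_3$ holds because $F^{\perp}$ is an ideal; higher degrees are automatic. A direct check shows $\phi$ and $\psi$ are mutually inverse on the reduced locus.

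The main obstacle is handling the degenerate strata — non-reduced subschemes $\Gamma$ on one side, pencils whose generators share a component on the other. For generic $F$ these bad loci can be excluded: the three apolar conics can be shown to be in sufficiently general position (for instance by an explicit computation on a single cubic followed by semicontinuity on the space of cubics) so that every pencil in $F^{\perp}_2$ has finite base locus of length exactly $4$, and every flat limit of reduced apolar subschemes remains apolar. With these genericity statements in hand, $\phi$ and $\psi$ extend to everywhere-defined, mutually inverse morphisms, yielding the isomorphism $VSP(F,4)\cong \P^2$.
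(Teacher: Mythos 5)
Your construction is, up to projective duality, the same parametrization the paper uses: the annihilator of $H_{\partial}=\P\langle \partial_0F,\partial_1F,\partial_2F\rangle$ under the apolarity pairing is exactly $F^{\perp}_2$, so the $\P^2$ of $3$-planes in $\P^5=\P(k[x]_2)$ containing $H_{\partial}$ is canonically the $\P^2=\Grass(2,F^{\perp}_2)$ of pencils inside the apolar net, and your $\phi$ is the paper's map read in the dual space (your Bezout count of the base locus is the dual of the paper's appeal to $\deg(V_2^2)=4$). The genuine divergence is in how the bijection is promoted to an isomorphism. The paper proves only that its map is injective and then quotes Dolgachev's result that $VSP(F,4)$ is a smooth irreducible surface, so that an injective morphism onto $\P^2$ is an isomorphism by Zariski's main theorem; this sidesteps all boundary strata. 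You instead build an explicit inverse $\psi$, which is more self-contained but obliges you to control every degenerate pencil and every non-reduced limit.

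That boundary analysis is where your sketch has a real gap. The claim that every pencil in $F^{\perp}_2$ has finite base locus is a nontrivial genericity condition on the net, not a consequence of $F$ having Hilbert function $(1,3,3,1)$: it fails for the Fermat cubic, whose apolar net $\langle \partial_0\partial_1,\partial_0\partial_2,\partial_1\partial_2\rangle$ contains the pencil $\partial_0\cdot\langle\partial_1,\partial_2\rangle$, all of whose members share the line $\partial_0=0$. So your proposed "explicit computation on a single cubic followed by semicontinuity" must be run on a cubic whose net has irreducible discriminant cubic (so that no linear subspace of the net lies in a plane $\Pi_{\ell}$ of conics divisible by a fixed line $\ell$), and the most tempting witness does not work; you need to exhibit one. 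The other boundary points of your plan are fine once stated: apolarity is a closed condition in flat families, a collinear length-$4$ apolar scheme would force a linear form into $F^{\perp}_1=0$, and the degree-$2$ and degree-$3$ checks for $I_{\operatorname{Bs}(W)}\subset F^{\perp}$ go through. Alternatively, if you are willing to cite smoothness and irreducibility of $VSP(F,4)$ as the paper does, you can discard $\psi$ and the entire degenerate-stratum discussion.
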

\begin{proof}
Let $F\in k[x_0,x_1,x_2]$ be a general cubic polynomial. The partial derivatives of $F$ are three quadric polynomials $\frac{\partial F}{\partial x_0}$, $\frac{\partial F}{\partial x_1}$, $\frac{\partial F}{\partial x_2}$ that generate a projective plane $H_{\partial}$ in the $\mathbb{P}^{5}$ parametrizing plane conics. Since $F\in k[x_0,x_1,x_2]_3$ is general we have $H_{\partial}\cap V_2^{2}= \varnothing$.

Since any additive decomposition $\{l_1,l_2,l_3,l_4\}$ of $F$ induces an additive decomposition of $\frac{\partial F}{\partial x_0}$, $\frac{\partial F}{\partial x_1}$, $\frac{\partial F}{\partial x_2}$ the linear space $\left\langle l_1^2,l_2^2,l_3^2,l_4^2\right\rangle$ contains $H_{\partial}$. 

Note that if $\dim(\left\langle l_1^2,l_2^2,l_3^2,l_4^2\right\rangle) = 2$ then $\left\langle l_1^2,l_2^2,l_3^2,l_4^2\right\rangle = H_{\partial}$, and $H_{\partial}\cap V_2^{2}\neq \varnothing$. A contradiction. 
Therefore, since the $3$-planes of $\mathbb{P}^5  = \mathbb{P}(k[x_0,x_1,x_2]_2)$ containing $H_{\partial}\cong\mathbb{P}^2$ are parametrized by $\mathbb{P}^2$, we get a morphism 
$$
\begin{array}{cccc}
\phi: & VSP(F,4) & \longrightarrow & \mathbb{P}^2\\
 & \{l_{1},\dots,l_4\} & \longmapsto & \left\langle l_1^2,\dots,l_4^2\right\rangle
\end{array}
$$
Now, since $\deg(V_2^2) = 4$ any $3$-plane containing $H_{\partial}$ intersects $V_2^2$ in a $0$-dimensional subscheme of length four. Therefore, $\phi$ is injective. To conclude that $\phi$ is in isomorphism it is enough to observe that by \cite[Proposition 3.2]{Do04} $VSP(F,4)$ is a smooth, irreducible surface.  
\end{proof}

Finally, we give some examples of secant defective Segre-Veronese varieties.

\begin{Proposition}\label{def}
The Segre-Veronese varieties $SV^{(1,1)}_{(2,2)}$, $SV^{(1,1,1)}_{(1,1,2)}$ and $SV^{(1,1,1,1)}_{(1,1,1,1)}$ are $3$-defective. Furthermore, $SV^{(2,2,2)}_{(1,1,1)}$ is $4$-defective.
\end{Proposition}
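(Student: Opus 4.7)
The plan is to apply Terracini's Lemma (Theorem \ref{terracini}): since hyperplanes in $\mathbb{P}^{N(\pmb n, \pmb d)}$ containing the affine tangent cones $T_{p_i} SV^{\pmb n}_{\pmb d}$ correspond bijectively to multihomogeneous forms of multidegree $\pmb d$ vanishing to order two at each $p_i$, the variety $SV^{\pmb n}_{\pmb d}$ is $h$-defective precisely when the linear system $|\mathcal{O}_{\mathbb{P}^{\pmb n}}(\pmb d) \otimes \mathcal{I}_{\{p_1,\ldots,p_h\}}^{2}|$ has projective dimension strictly larger than the expected one. In each of the first three cases it suffices to exhibit extra sections by an explicit reducible construction.

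For $SV^{(1,1)}_{(2,2)}$ at $h=3$: three general points of $\mathbb{P}^1 \times \mathbb{P}^1$ lie on a unique curve $C$ of bidegree $(1,1)$, and $2C \in |\mathcal{O}(2,2)|$ has a double point at each of them, while the expected linear system is empty. For $SV^{(1,1,1)}_{(1,1,2)}$ at $h=3$: let $L$ be the unique $(1,1,0)$-form vanishing on the three points (reducing to the unique $(1,1)$-curve through their projections to $\mathbb{P}^1 \times \mathbb{P}^1$), and $M$ the unique $(0,0,2)$-form vanishing on them (the unique degree-two divisor on $\mathbb{P}^1$ through the projections to the third factor); at each $p_i$ the two factors vanish in complementary local coordinates, so $LM \in |\mathcal{O}(1,1,2)|$ has a double point at each $p_i$ while the expected system is again empty. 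For $SV^{(1,1,1,1)}_{(1,1,1,1)}$ at $h=3$: for each partition $\{1,2,3,4\} = \{i,j\} \sqcup \{k,l\}$, let $L_{ij}$ denote the unique $(1,1)$-form on the $i,j$-factors through the projected points; then $L_{12}L_{34}, L_{13}L_{24}, L_{14}L_{23} \in |\mathcal{O}(1,1,1,1)|$ all have double points at each $p_i$, and any two of them (having distinct reducible zero loci) are linearly independent, giving projective dimension at least one while the expected value is zero.

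For $SV^{(2,2,2)}_{(1,1,1)}$ at $h=4$ no such elementary construction applies; this is the classical Strassen defect, which I would prove directly via Terracini by a rank computation. Writing four general points as $p_i = ([u_i], [v_i], [w_i])$ and normalizing so that $(u_1,u_2,u_3)$, $(v_1,v_2,v_3)$, $(w_1,w_2,w_3)$ are the standard bases of $\mathbb{C}^3$, the double-point conditions at $p_1, p_2, p_3$ force $t_{ijk} = 0$ whenever at least two of $i, j, k$ coincide, leaving a six-dimensional residual space spanned by the six permutations $t_{\sigma(1)\sigma(2)\sigma(3)}$, $\sigma \in S_3$. Writing $p_4 = ([\sum a_i e_i], [\sum b_j f_j], [\sum c_k g_k])$ with generic nonzero coefficients, the double-point conditions at $p_4$ translate into nine linear equations in these six unknowns; one solves sequentially for $t_{132}, t_{213}, t_{231}, t_{312}, t_{321}$ in terms of $t_{123}$ using five of the equations, and a direct check shows the remaining four cancel identically, so the system has rank exactly five and admits a one-parameter family of nonzero trilinear forms doubly vanishing at all four $p_i$. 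This yields $\dim\sec_4(SV^{(2,2,2)}_{(1,1,1)}) \leq 25 < 26$. The main obstacle is precisely this rank-drop verification, which reflects the deeper fact that $\sec_4(\mathbb{P}^2 \times \mathbb{P}^2 \times \mathbb{P}^2)$ is the degree-nine Strassen hypersurface rather than the full ambient $\mathbb{P}^{26}$.
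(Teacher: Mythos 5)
Your overall strategy (Terracini plus explicit double-point divisors, and the Strassen-type rank computation for the last case) is sound and genuinely different from the paper, which instead specializes the points onto the small diagonal and uses the positive dimensionality of $VSP(F,3)$ and $VSP(F,4)$ to produce positive-dimensional fibers of the secant map. Three of your four cases check out: for $SV^{(1,1)}_{(2,2)}$ the divisor $2C$ does lie in the (expectedly empty) system of $(2,2)$-forms with three double points; for $SV^{(1,1,1,1)}_{(1,1,1,1)}$ the three products $L_{12}L_{34},L_{13}L_{24},L_{14}L_{23}$ give projective dimension at least $1$ against an expected $0$; and for $SV^{(2,2,2)}_{(1,1,1)}$ the residual $6\times 9$ system really does have rank $5$ (the solution is $t_{\sigma(1)\sigma(2)\sigma(3)}=\mathrm{sgn}(\sigma)/(a_{\sigma(1)}b_{\sigma(2)}c_{\sigma(3)})$, and one checks all nine equations vanish on it), so that conclusion is correct even though you only assert the cancellation.

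The genuine gap is in the case $SV^{(1,1,1)}_{(1,1,2)}$: the factor $M$ you use does not exist. A form of multidegree $(0,0,2)$ is a binary quadratic in the coordinates of the third $\mathbb{P}^1$ and therefore vanishes on at most two points of that factor, whereas three general points of $\mathbb{P}^1\times\mathbb{P}^1\times\mathbb{P}^1$ project to three distinct general points of the third $\mathbb{P}^1$; so there is no $(0,0,2)$-form through them, and the product $LM$ cannot be formed. The construction is easily repaired by choosing a different splitting of the multidegree: take $L$ of multidegree $(1,0,1)$ and $M$ of multidegree $(0,1,1)$. Each of these linear systems has $h^0=4$, so each contains a unique divisor through the three general points, and $LM\in|\mathcal{O}(1,1,2)|$ then has a double point at each $p_i$, while the expected system ($12$ sections versus $3\cdot 4=12$ conditions) is empty. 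With this correction your argument for that case goes through.
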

\begin{proof}
First, let us consider products of $r$ copies of $\mathbb{P}^1$ with $r\in\{2,3,4\}$ and multi-degree $(d_1,\dots,d_r)$ such that $d_1+\dots+d_r = 4$. 

Let $p_1,p_2,p_3\in SV_{\pmb{d}}^{\pmb{n}}$ be three general points, and let $p\in \left\langle p_1,p_2,p_3\right\rangle$ be a general point. Therefore, $p\in \mathbb{P}^{N(\pmb{n},\pmb{d})}$ is a general points of $\sec_{h}(SV_{\pmb{d}}^{\pmb{n}})$.

Up to an automorphism of $SV_{\pmb{d}}^{\pmb{n}}$ we may assume that $p_1,p_2,p_3$ lie on the image of the small diagonal that is a degree four rational nomal curve $C$.

Let $H$ be the $4$-plane spanned by $C$. Then we may interpret $p\in H$ as a general degree four polynomial $F_p\in k[x_0,x_1]_4$. 

The variety parametrizing, $3$-secant planes to $C$ passing through $F_p$ is nothing but the variety of sums of powers $VSP(F_p,3)$. Now, \cite[Theorem 3.1]{MM13} yields that 
$$\dim(VSP(F_p,3)) = 1$$
Therefore, the general fiber of the map 
$$\pi_3:Sec_3(SV_{\pmb{d}}^{\pmb{n}})\rightarrow\sec_3(SV_{\pmb{d}}^{\pmb{n}})$$
is at least $1$-dimensional, and for the first three Segre-Veronese varieties in the statement this is in enough to conclude that they are $3$-defective.

A similar argument applies to $SV^{(2,2,2)}_{(1,1,1)}\subset\mathbb{P}^{26}$. In this case we may move four general points $p_1,\dots,p_4\in SV^{(2,2,2)}_{(1,1,1)}$ on the image of the diagonal, that is a Veronese surface $V_3^2$. Now, we may interpret $p\in \left\langle p_1,\dots,p_4\right\rangle$ as a general polynomial $F_p\in k[x_0,x_1,x_2]_3$. By Proposition \ref{vspF4} we have
$$\dim(VSP(F_p,4)) = 2$$
Hence, the general fiber of the map 
$$\pi_4:Sec_4(SV^{(2,2,2)}_{(1,1,1)})\rightarrow\sec_4(SV^{(2,2,2)}_{(1,1,1)})$$
is at least $2$-dimensional, and therefore $\sec_4(SV^{(2,2,2)}_{(1,1,1)})\subset\mathbb{P}^{26}$ is at most $25$-dimensional.
\end{proof}
 
\section{Osculating spaces}\label{oscspaces}

Given $v_1, \dots, v_{d_j}\in V_j$, we denote by $v_1\cdot \dots \cdot v_{d_j} \in \Sym^{d_j}V_j$  the symmetrization of
$v_1\otimes \dots \otimes v_{d_j}$.

Hoping that no confusion arise, we write $(e_0,\dots,e_{n_j})$ for a fixed a basis of each $V_j$. 
Given a $d_j$-uple $I=(i_1,\dots,i_{d_j})$, with $0\leq i_1 \leq \dots \leq i_{d_j} \leq n_j$, we denote by 
$e_{I}\in \Sym^{d_j}V_j$  the symmetric product $e_{i_{1}}\cdot\dots\cdot e_{i_{d_j}}$.

For each $j\in\{1, \dots, r\}$, consider a $d_j$-uple $I^j=(i^j_1,\dots,i^j_{d_j})$, with $0\leq i^j_1 \leq \dots \leq i^j_{d_j} \leq n_j$, and
set 
\begin{equation*}\label{vector}
I = (I^1,\dots,I^r)=((i_{1}^{1},\dots,i_{d_1}^{1}),
(i_{1}^{2},\dots,i_{d_2}^{2}),\dots,(i_{1}^{r},\dots,i_{d_r}^{r})).
\end{equation*}
We denote by $e_I$ the vector
$$
e_I = e_{I^1}\otimes e_{I^2} \otimes \cdots \otimes e_{I^r}\in \Sym^{d_1}V_1\otimes\dots\otimes \Sym^{d_r}V_r,
$$
as well as the corresponding point in 
$\mathbb{P}(\Sym^{d_1}V_1^{*}\otimes\dots\otimes \Sym^{d_r}V_r^{*})=\mathbb{P}^{N(\pmb{n},\pmb{d})}$.
When $I^j=(i_j,\dots,i_j)$ for every $j\in\{1, \dots, r\}$, for some $0\leq i_j \leq n_j$, we have
$$
e_I=\sigma\nu_{\pmb{d}}^{\pmb{n}}(\left[e_{i_1}\right],\dots,\left[e_{i_r}\right])
\in SV_{\pmb{d}}^{\pmb{n}}\subset \mathbb{P}^{N(\pmb{n},\pmb{d})}.
$$
In this case we say that $e_I$ is a \emph{coordinate point} of $SV_{\pmb{d}}^{\pmb{n}}$.

\begin{Definition}\label{distance}
Let $n$ and $d$ be positive integers, and set 
$$
\Lambda_{n,d}=\{I=(i_1,\dots,i_{d}),0\leq i_1 \leq \dots \leq i_{d} \leq n\}.
$$
For $I,J\in \Lambda_{n,d}$, we define their distance $d(I,J)$ as the number of different coordinates.
More precisely, write $I=(i_1,\dots,i_{d})$ and $J=(j_1,\dots,j_{d})$. 
There are $r\geq 0$, distinct indices $\lambda_1, \dots, \lambda_r\subset \{1, \dots, d\}$, and 
distinct indices $\tau_1, \dots, \tau_r\subset \{1, \dots, d\}$ such that $i_{\lambda_k}=j_{\tau_k}$ for every $1\leq k\leq r$, and 
$\{i_\lambda | \lambda\neq \lambda_1, \dots, \lambda_r\}\cap \{j_\tau | \tau\neq \tau_1, \dots, \tau_r\}=\emptyset$.
Then $d(I,J)=d-r$.
Note that $\Lambda_{n,d}$ has diameter $d$ and size $\binom{n+d}{n}=N(n,d)+1$.

Let $\pmb{n}=(n_1,\dots,n_r)$ and $\pmb{d} = (d_1,\dots,d_r)$ be two $r$-uples of positive integers, and set 
$$
\Lambda=\Lambda_{\pmb{n},\pmb{d}}=\Lambda_{n_1,d_1}\times \dots \times \Lambda_{n_r,d_r}.
$$
For $I=(I^1,\dots,I^r),J=(J^1,\dots,J^r)\in \Lambda$, 
we define their distance as 
$$
d(I,J)=d(I^1,J^1)+\dots+d(I^r,J^r).
$$
Note that $\Lambda$ has diameter $d$ and size $\prod_{i=1}^r\binom{n_i+d_i}{n_i}=N(\pmb{n},\pmb{d})+1$.
\end{Definition}

\begin{Example}
Let us consider the case $r=2,n_1=1,n_2=3,d_1=2,d_2=3$. Then $n=4,d=5,\pmb{n}=(1,3),\pmb{d}=(2,3)$ and
$$\Lambda=\{I=\left((a,b),(c,d,e)\right); 0\leq a\leq b\leq 1, 0\leq c\leq d\leq e\leq 3\}.$$
We have, for instance, 
\begin{align*}
&d\left(((0,0),(0,1,2)),((0,0),(1,2,3))\right)=0+1=1,\\
&d\left(((1,1),(0,0,1)),((0,0),(2,3,3))\right)=2+3=5,\\
&d\left(((0,1),(1,1,1)),((1,1),(0,0,1))\right)=1+2=3.
\end{align*}
\end{Example}

We can now state the main result of this section.

\begin{Proposition}\label{oscsegver}
Let the notation and assumptions be as above.
Set $I^1=(i_1,\dots,i_1),$ $\dots,$ $I^r=(i_r,\dots,i_r)$, with $0\leq i_j \leq n_j$, and $I = (I^1,\dots,I^r)$. 
Consider the point 
$$
e_I=\sigma\nu_{\pmb{d}}^{\pmb{n}}(\left[e_{i_1}\right],\dots,\left[e_{i_r}\right]) \in SV_{\pmb{d}}^{\pmb{n}}.
$$
For any $s\geq 0$, we have 
$$
T^s_{e_I}(SV_{\pmb{d}}^{\pmb{n}})= \left\langle e_J \: | \: d(I,J)\leq s\right\rangle. 
$$
In particular, $T^s_{e_I}(SV_{\pmb{d}}^{\pmb{n}})=\P^{N(\pmb{n},\pmb{d})}$ for any $s\geq d.$
\end{Proposition}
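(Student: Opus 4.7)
My plan is to imitate the argument used for the Grassmannian in Proposition \ref{oscgrass}: write down an explicit local parametrization of $SV_{\pmb{d}}^{\pmb{n}}$ around the point $e_I$, compute the partial derivatives of this parametrization at the origin, and observe that each such derivative is (up to a scalar) a coordinate point $e_J$ whose ``distance'' from $I$ is exactly the order of the derivative.

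First, since the group $\Aut(\mathbb{P}^{n_1})\times\cdots\times\Aut(\mathbb{P}^{n_r})$ acts on each factor and lifts via $\sigma\nu_{\pmb d}^{\pmb n}$ to a subgroup of $PGL(N(\pmb{n},\pmb{d})+1)$ preserving $SV_{\pmb{d}}^{\pmb{n}}$, such automorphisms send osculating spaces to osculating spaces linearly. Hence, after reordering the basis of each $V_j$, I may assume $i_1=\cdots=i_r=0$, so that $I$ consists of zero-tuples. On the open affine chart around $e_I$ where the first coordinate of each $\mathbb{P}^{n_j}$ is nonzero, use affine coordinates $\pmb t^j=(t^j_1,\dots,t^j_{n_j})$ on the $j$-th factor, and consider the local parametrization
$$
\phi(\pmb t^1,\dots,\pmb t^r)=(1,t^1_1,\dots,t^1_{n_1})^{\cdot d_1}\otimes\cdots\otimes(1,t^r_1,\dots,t^r_{n_r})^{\cdot d_r},
$$
where the symbol $\cdot d_j$ denotes the $d_j$-fold symmetric product. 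Expanding the symmetric products in the basis $\{e_J\}_{J\in\Lambda}$, the $e_J$-coordinate of $\phi$, with $J=(J^1,\dots,J^r)$ and $J^j=(j^j_1,\dots,j^j_{d_j})$, is, up to a nonzero multinomial constant, the monomial $\prod_{j=1}^r\prod_{k=1}^{d_j} t^j_{j^j_k}$, where we adopt the convention $t^j_0=1$.

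Next I would compute the partials. For a multi-index $M=(M^1,\dots,M^r)$ with $M^j=(k^j_1,\dots,k^j_{s_j})$, $k^j_\ell\in\{1,\dots,n_j\}$, set $|M|=s_1+\cdots+s_r$, and consider $\phi_M$ as in (\ref{osceq}). A direct computation shows that $\phi_M(0)$ is a nonzero scalar multiple of $e_{J(M)}$, where $J(M)^j$ is the sorted tuple obtained by padding the multiset $\{k^j_1,\dots,k^j_{s_j}\}$ with $d_j-s_j$ zeros (and is defined only when $s_j\le d_j$, otherwise the partial vanishes, as each $t^j_\ell$ appears to degree at most $d_j$ in every coordinate of $\phi$). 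Moreover, since all entries of $I$ are zero, Definition \ref{distance} gives $d(I,J(M))=\sum_j s_j=|M|$. Conversely, for every $J\in\Lambda$ with $d(I,J)=s\leq d$, the indices in $J^j$ that differ from $0$ form a multiset of size $s_j$ with $s_j\leq d_j$ and $\sum s_j=s$, so one can choose $M$ with $|M|=s$ and $J(M)=J$. Therefore
$$
O^s_{e_I}SV_{\pmb d}^{\pmb n}=\mathrm{span}\{\phi_M(0):|M|\le s\}=\mathrm{span}\{e_J: d(I,J)\le s\},
$$
and taking projective closure yields the asserted description of $T^s_{e_I}(SV_{\pmb d}^{\pmb n})$.

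The final assertion is immediate: the diameter of $\Lambda$ is $d$, so as soon as $s\ge d$, the condition $d(I,J)\le s$ is satisfied by all $J\in\Lambda$, and the $e_J$'s span $\mathbb{P}^{N(\pmb n,\pmb d)}$. The only subtle point is tracking the combinatorics cleanly, essentially the same bookkeeping as in the Grassmannian proof; the hardest step is verifying that $\phi_M(0)$ really is a single scalar multiple of $e_{J(M)}$ rather than a genuine linear combination, which follows because each variable $t^j_\ell$ appears with multiplicity at most one in each summand of the symmetrization, so mixed contributions cannot arise.
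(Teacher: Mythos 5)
Your proposal is correct and follows essentially the same route as the paper's proof: reduce to $i_1=\dots=i_r=0$ by homogeneity, parametrize $SV_{\pmb d}^{\pmb n}$ affinely near $e_I$ so that each coordinate is a single monomial in the affine variables, and observe that each partial derivative at the origin is a nonzero multiple of exactly one $e_J$ with $d(I,J)$ equal to the order of differentiation, with every such $J$ arising. The only blemish is the closing justification (``each variable $t^j_\ell$ appears with multiplicity at most one in each summand''), which is false as stated since $t^j_\ell$ can occur to degree up to $d_j$; the correct and sufficient reason, already implicit in your expansion, is that each $e_J$-coordinate is a \emph{single} monomial and the assignment $J\mapsto$ monomial is injective, so the $M$-th derivative at $0$ is supported on the unique $J(M)$.
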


\begin{proof}
We may assume that $I^1=(0,\dots,0),\dots,I^r=(0,\dots,0)$. 
Write $\big(z_K\big)_{K\in \Lambda}$, for coordinates in $\P^{N(\pmb{n},\pmb{d})}$, and 
consider the rational parametrization
$$
\phi:\mathbb{A}^{\prod n_i}\rightarrow SV_{\pmb{d}}^{\pmb{n}}\cap \big(z_I\neq 0\big)\subset \mathbb{A}^{N(\pmb{n},\pmb{d})}
$$
given by 
$$
A=(a_{j,i})_{j=1,\dots, r,i=1,\dots,n_j}
\mapsto \left( \prod_{j=1}^r \prod_{k=1}^{d_j} a_{j,i_{k}^{j}}\right)_{K=(K^1,\dots,K^r)\in \Lambda \backslash \{I\}},
$$
where $K^{j}=(i_{1}^j,\dots,i_{d_j}^j)\in \Lambda_{n_j,d_j}$ for each $j=1,...,r$.

For integers $l$ and $m$, we write $\deg_{l,m}K$ for the degree of the polynomial
$$
\phi(A)_{K}:=\prod_{j=1}^r \prod_{k=1}^{d_j} a_{j,i_{k}^{j}}
$$
with respect to $a_{l,m}$. Then $0\leq \deg_{l,m}K\leq d_l$, and the degree of $\phi(A)_{K}$
with respect to all the variables $a_{j,i}$ is at most $d$. 
One computes:
$$
\left(\dfrac{\partial^{\lambda_1+\dots+\lambda_t} \phi(A)}
{\partial^{\lambda_1}a_{l_1,m_1}\dots\partial^{\lambda_t}a_{l_t,m_t}}\right)_{K}\!\!\!=\!
\begin{cases}
0 &\mbox{if } \deg_{l_j,m_j} K<\lambda_j \mbox{ for some } j.\\
\dfrac{\prod_{j=1}^t (\deg_{l_j,m_j}K) !\phi(A)_{K}}
{\prod_{j=1}^t (\deg_{l_j,m_j}K-\lambda_j) ! a_{l_j,m_j}^{\lambda_j}} &\mbox{otherwise }
\end{cases}
$$
For $A = 0$ we get
$$
\left(\dfrac{\partial^{\lambda_1+\dots+\lambda_t} \phi(0)}
{\partial^{\lambda_1}a_{l_1,m_1}\dots\partial^{\lambda_t}a_{l_t,m_t}}\right)_{K}\!\!=
\begin{cases}
0 &\mbox{if } \deg_{l_j,m_j} K\neq\lambda_j \mbox{ for some } j.\\
\prod_{j=1}^t (\deg_{l_j,m_j}K) ! &\mbox{otherwise }
\end{cases}$$
Therefore 
$$
\dfrac{\partial^{\lambda_1+\dots+\lambda_t} \phi(0)}
{\partial^{\lambda_1}a_{l_1,m_1}\dots\partial^{\lambda_t}a_{l_t,m_t}}=
\left( \lambda_1!\right)\cdots\left( \lambda_t!\right)e_J,  
$$
where $J\in \Lambda$ is characterized by
$$
\deg_{l,m}J =
\begin{cases}
\lambda_j &\mbox{if } (l,m)=(l_j,m_j) \mbox{ for some } j.\\
0 &\mbox{otherwise }
\end{cases}
$$
Note that $d(J,I)\!=\!\lambda_1+\dots+\lambda_t$. Conversely every $J\in \Lambda$
with $d(J,I)\!=\!\lambda_1+\dots+\lambda_t$ can be obtained in this way. Therefore, for every $0\leq s \leq d$, we have
$$
\left\langle \dfrac{\partial^s \phi(0)}{\partial^{\lambda_1}a_{l_1,m_1}\dots\partial^{\lambda_t}a_{l_t,m_t}}
\: \big| \: 1\leq l_1,\dots,l_t\leq r, 1\leq m_j \leq n_j, j=1,\dots,t
\right\rangle=
\left\langle e_J | d(J,I)=s\right\rangle,
$$
and hence $T^s_{e_I}(SV_{\pmb{d}}^{\pmb{n}})= \left\langle e_J \: | \: d(I,J)\leq s\right\rangle$.
\end{proof}

Now, it is easy to compute the dimension of the osculating spaces of $SV_{\pmb{d}}^{\pmb{n}}$.

\begin{Corollary}
For any point $p\in SV^{\pmb n}_{\pmb d}$ we have
$$
\dim T^s_p SV^{\pmb n}_{\pmb d}=\sum_{l=1}^s 
\sum_{\substack{0\leq l_1\leq d_1,\dots, 0\leq l_r \leq d_r \\ l_1+\dots +l_r=l}}
\!\!\!\!\binom{n_1+l_1-1}{l_1}\cdots\binom{n_r+l_r-1}{l_r}
$$
for any $0\leq s\leq d$, while $T^s_{p}(SV_{\pmb{d}}^{\pmb{n}})=\P^{N(\pmb{n},\pmb{d})}$ for any $s\geq d$.\\
In particular, 
$$
\dim T^s_p V^{n}_{d}=n+\binom{n+1}{2}+\dots+\binom{n+s-1}{s}
$$
for any $0\leq s\leq d$.
\end{Corollary}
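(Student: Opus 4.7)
The plan is to reduce the computation to the case of a coordinate point, invoke Proposition \ref{oscsegver} to get an explicit basis for the osculating space, and then count the size of that basis by a multiset enumeration.

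First I would argue by homogeneity: the natural action of $G=PGL_{n_1+1}\times\cdots\times PGL_{n_r+1}$ on $\mathbb{P}^{\pmb n}$ extends, via the representation on $\Sym^{d_1}V_1^{*}\otimes\cdots\otimes\Sym^{d_r}V_r^{*}$, to a linear action on $\mathbb{P}^{N(\pmb n,\pmb d)}$ preserving $SV^{\pmb n}_{\pmb d}$ and transitive on it. Since any element of $G$ induces a projective linear automorphism of the ambient space, it carries $T^s_p SV^{\pmb n}_{\pmb d}$ isomorphically onto $T^s_{g(p)} SV^{\pmb n}_{\pmb d}$, so $\dim T^s_p$ is independent of $p$. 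Hence we may assume $p = e_I$ with $I=(I^1,\dots,I^r)$ and each $I^j=(i_j,\dots,i_j)$.

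Next, I would apply Proposition \ref{oscsegver} to write
\[
T^s_{e_I}(SV^{\pmb n}_{\pmb d})=\langle e_J\mid d(I,J)\leq s\rangle.
\]
Since the $e_J$ are (the projectivizations of) distinct standard basis vectors of $\Sym^{d_1}V_1^{*}\otimes\cdots\otimes\Sym^{d_r}V_r^{*}$, they are linearly independent, so the projective dimension equals $\#\{J\in\Lambda\mid d(I,J)\leq s\}-1=\sum_{l=1}^{s}\#\{J\in\Lambda\mid d(I,J)=l\}$.

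The main technical step is the combinatorial count of $\#\{J\in\Lambda\mid d(I,J)=l\}$. Since $d(I,J)=\sum_{j=1}^{r}d(I^j,J^j)$, grouping by the partition $l=l_1+\cdots+l_r$ with $0\leq l_j\leq d_j$, it suffices to count, for each $j$, the number of multisets $J^j\in\Lambda_{n_j,d_j}$ with $d(I^j,J^j)=l_j$. Because $I^j$ is the constant multiset $(i_j,\dots,i_j)$, such a $J^j$ has exactly $d_j-l_j$ copies of $i_j$ and $l_j$ further entries drawn (with repetition) from the remaining $n_j$ symbols; the number of such choices is the multiset coefficient $\binom{n_j+l_j-1}{l_j}$. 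Multiplying over $j$ and summing over the compositions of $l$ yields the inner sum in the statement, and summing over $1\leq l\leq s$ gives the formula. The second assertion follows immediately since $\Lambda$ has diameter $d$, so for $s\geq d$ the osculating space contains every $e_J$ and hence equals $\mathbb{P}^{N(\pmb n,\pmb d)}$. Finally, setting $r=1$ collapses the inner sum to a single term $\binom{n+l-1}{l}$, giving the Veronese formula. I do not foresee a genuine obstacle here; the only care needed is the reduction by homogeneity and the bookkeeping for the multiset count.
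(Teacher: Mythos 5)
Your proposal is correct and follows the same route as the paper: reduce to a coordinate point by homogeneity of $SV^{\pmb n}_{\pmb d}$ under the projective linear automorphisms stabilizing it, then apply Proposition \ref{oscsegver} and count the indices $J$ with $d(I,J)\leq s$. The only difference is that you spell out the multiset count $\binom{n_j+l_j-1}{l_j}$ explicitly, which the paper leaves as ``standard combinatorial computations''; your bookkeeping is accurate.
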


\begin{proof}
Since $SV^{\pmb n}_{\pmb d}\subset\P^{N(\pmb{n},\pmb{d})}$ is homogeneous under the action the algebraic subgroup 
$$\Stab(SV^{\pmb n}_{\pmb d})\subset PGL(N(\pmb{n},\pmb{d})+1)$$
stabilizing it, there exists an automorphism $\alpha\in PGL(N(\pmb{n},\pmb{d})+1)$ inducing an automorphism of $SV^{\pmb n}_{\pmb d}$ such that $\alpha(p) = e_I$. Furthermore, since $\alpha\in PGL(N(\pmb{n},\pmb{d})+1)$ we have that it induces an isomorphism between $T^s_p SV^{\pmb n}_{\pmb d}$ and $T^s_{e_I} SV^{\pmb n}_{\pmb d}$. Now, the computation of $\dim T^s_p SV^{\pmb n}_{\pmb d}$ follows, by standard combinatorial computations, from Proposition \ref{oscsegver}.
\end{proof}

\section{Osculating projections}\label{oscproj}
In this section we study linear projections of Segre-Veronese varieties from their osculating spaces.
We follow the notation introduced in the previous section.

We start by analyzing projections of Veronese varieties from osculating spaces at coordinate points.
We consider a Veronese variety $V_{d}^{n}\subset \mathbb{P}^{N(n,d)}$, $d\geq 2$, and a coordinate point $e_{\underline{i}}=e_{(i,\dots,i)}\in V_{d}^{n}$
for some $i\in\{0,1,\dots,n\}$.
We write $(z_I)_{I\in \Lambda_{n,d}}$ for the coordinates in $\mathbb{P}^{N(n,d)}$. 
The linear projection
\begin{align*}
\pi_{i}:\P^{n}&\dasharrow \P^{n-1}\\
(x_j)
&\mapsto (x_j)_{j\neq i}
\end{align*}
induces the linear projection
\begin{align}\label{projonepoint}
\Pi_{i}:V_{d}^{n}&\dasharrow V_{d}^{n-1}\\
(z_I)_{I\in \Lambda_{n,d}}
&\mapsto (z_I)_{I\in \Lambda_{n,d} \: | \: i\notin I}\nonumber
\end{align}
making the following diagram commute
\[
\begin{tikzpicture}[xscale=6.5,yscale=-1.7]
    \node (A0_0) at (0, 0) {$\mathbb{P}^{n}$};
    \node (A0_1) at (1, 0) {$V_{d}^{n}\subseteq \mathbb{P}^{N(n,d)}$};
    \node (A1_0) at (0, 1) {$\mathbb{P}^{n-1}$};
    \node (A1_1) at (1, 1) {$V_{d}^{n-1}\subseteq \mathbb{P}^{N(n-1,d)}$};
    \path (A0_0) edge [->]node [auto] {$\scriptstyle{\nu_{d}^{n}}$} (A0_1);
    \path (A1_0) edge [->]node [auto] {$\scriptstyle{\nu_{d}^{n-1}}$} (A1_1);
    \path (A0_1) edge [->,dashed]node [auto] {$\scriptstyle{\Pi_{i}}$} (A1_1);
    \path (A0_0) edge [->,dashed]node [auto,swap] {$\scriptstyle{\pi_{i}}$} (A1_0);
  \end{tikzpicture} 
\]

\begin{Lemma}\label{projoscveronese}
Consider the projection of the Veronese variety $V_{d}^{n}\subset \mathbb{P}^{N(n,d)}$, $d\geq 2$, from 
the osculating space ${T_{e_{\underline{i}}}^s}$ of order $s$ at the point $e_{\underline{i}}=e_{(i,\dots,i)}\in V_{d}^{n}$, $0\leq s\leq d-1$:
$$
\Gamma_i^s:V_{d}^{n}\dasharrow \P^{N(n,d,s)}.
$$
Then $\Gamma_i^s$ is birational for any $s\leq d-2$, while $\Gamma_i^{d-1}=\Pi_{i}$.
\end{Lemma}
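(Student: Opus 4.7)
The plan is to translate the problem into one about the linear system on $\mathbb{P}^n$ which induces $\Gamma_i^s \circ \nu_d^n$. By Proposition \ref{oscsegver} applied with $r=1$ we have
$$T^s_{e_{\underline{i}}}V_d^n = \left\langle e_J \mid d(\underline{i},J)\leq s \right\rangle.$$
Writing $\mathrm{mult}_i(J)$ for the number of entries of $J\in\Lambda_{n,d}$ equal to $i$, we have $d(\underline{i},J) = d-\mathrm{mult}_i(J)$, so $T^s_{e_{\underline i}}$ is spanned by those coordinate points $e_J$ with $\mathrm{mult}_i(J)\geq d-s$. Hence, pulled back via $\nu_d^n:\mathbb{P}^n\to V_d^n$, the projection $\Gamma_i^s$ corresponds to the sublinear system
$$\mathcal{H}_s \;\subseteq\; |\mathcal{O}_{\mathbb{P}^n}(d)|$$
spanned by those degree-$d$ monomials $x^I$ whose exponent in $x_i$ is at most $d-s-1$.

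For the case $s=d-1$, one has $d-s-1=0$, so $\mathcal{H}_{d-1}$ is exactly the set of degree-$d$ monomials not involving $x_i$, i.e.\ the pullback of $|\mathcal{O}_{\mathbb{P}^{n-1}}(d)|$ under the linear projection $\pi_i:\mathbb{P}^n\dasharrow\mathbb{P}^{n-1}$. Thus $\Gamma_i^{d-1}\circ\nu_d^n = \nu_d^{n-1}\circ\pi_i$, which by the commutative diagram before (\ref{projonepoint}) means $\Gamma_i^{d-1}=\Pi_i$.

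For the case $s\leq d-2$, the plan is to show $\Gamma_i^s$ is birational by exhibiting an explicit birational inverse on the level of $\mathbb{P}^n$. Since $d-s-1\geq 1$, the following monomials all belong to $\mathcal{H}_s$: for each $j\neq i$ the monomial $x_j^d$ (exponent $0$ in $x_i$), for each $k\neq i$ and each $j\neq i$ the monomial $x_k x_j^{d-1}$ (exponent $0$ in $x_i$), and for each $j\neq i$ the monomial $x_i x_j^{d-1}$ (exponent $1$ in $x_i$). Taking ratios of these coordinates on the image recovers
$$\frac{x_k x_j^{d-1}}{x_j^d}=\frac{x_k}{x_j}\quad\text{and}\quad\frac{x_i x_j^{d-1}}{x_j^d}=\frac{x_i}{x_j}$$
for all $k,j\neq i$. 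These ratios determine $[x_0:\cdots:x_n]\in\mathbb{P}^n$ up to scalar, hence the rational map $\mathbb{P}^n\dasharrow\mathbb{P}^{N(n,d,s)}$ induced by $\mathcal H_s$ is generically injective. Since we are in characteristic zero, generic injectivity implies birationality, and since $\nu_d^n$ is an isomorphism this implies $\Gamma_i^s$ is birational.

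The only real point to be careful about is the first paragraph: making sure the inclusion is taken in the correct direction (the \emph{center} of $\Gamma_i^s$ corresponds to monomials killed by the projection, so $\mathcal{H}_s$ is their complement in the monomial basis), and that the indexing convention $\mathrm{mult}_i(J)\geq d-s$ matches Definition \ref{distance}. Beyond that, the argument is essentially routine, the key being the inequality $d-s-1\geq 1$ which guarantees that monomials with exactly one factor of $x_i$ survive the projection.
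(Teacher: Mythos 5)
Your proof is correct and follows essentially the same route as the paper: identify the coordinates surviving the projection as the degree-$d$ monomials whose $x_i$-exponent is at most $d-s-1$, then recover the point of $\mathbb{P}^n$ from enough of them. The paper only packages this slightly differently, reducing first to the case $s=d-2$ and composing with a single linear projection onto the coordinates corresponding to $x_0^{d-1}x_j$ (after assuming $i\neq 0$), which realizes the identity of $\mathbb{P}^n$ directly instead of taking ratios.
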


\begin{proof}
The case $s=d-1$ follows from Proposition~\ref{oscsegver} and the expression in \eqref{projonepoint} above, observing that, for any $J\in\Lambda_{n,d}$, 
$$
d(J,(i,\dots,i))=d \Leftrightarrow i\notin J.
$$ 

Since $\Gamma_i^{d-2}$ factors through $\Gamma_i^{j}$ for every $0\leq j\leq d-3$, it is enough to prove  birationality of $\Gamma_i^{d-2}$.
We may assume that $i\neq 0$, and consider the collection of indices
$$
J_0=(0,\dots,0,0),J_1=(0,\dots,0,1),\dots,J_{n}=(0,\dots,0,n)\in \Lambda_{(n,d)}.
$$
Note that $d(J_j,(i,\dots,i))\geq d-1$ for any $j\in\{1, \dots, n\}$. 
So we can define the linear projection
\begin{align*}
\gamma:\P^{N(n,d,s)} & \dasharrow \P^{n} .\\
(z_J)_{J \: |\: d(I,J)>d-2} &\mapsto (z_{J_0},\dots,z_{J_{n}})
\end{align*}
The composition 
\begin{align*}
\gamma\circ \Gamma_i^{d-2}\circ \nu_{d}^{n}:\mathbb{P}^n&\dasharrow\mathbb{P}^n\\
(x_0:\cdots:x_{n})&\mapsto (x_0^{d-1}x_0:\cdots x_0^{d-1}x_{n})=(x_0:\cdots:x_{n}) 
\end{align*}
is the identity, and thus $\Gamma_i^{d-2}$ is  birational.
\end{proof}

Now we turn to Segre-Veronese varieties. 
Let $SV_{\pmb{d}}^{\pmb{n}}\subset \P^{N(\pmb{n},\pmb{d})}$ be a Segre-Veronese variety, and consider a coordinate point
$$
e_I = e_{i_1}^{d_1}\otimes e_{i_2}^{d_2}\otimes\dots\otimes e_{i_r}^{d_r}\in SV_{\pmb{d}}^{\pmb{n}},
$$
with $0\leq i_j\leq n_j$, $I=\big((i_1,\dots,i_1),\dots, (i_r,\dots,i_r)\big)$. 
We write $(z_I)_{I\in \Lambda}$ for the coordinates in $\mathbb{P}^{N(\pmb{n},\pmb{d})}$.
Recall from Proposition~\ref{oscsegver} that the linear projection of $SV_{\pmb{d}}^{\pmb{n}}$ 
from the osculating space $T_{e_I}^s$ of order $s$ at $e_I$ is given by 
\begin{align}\label{eq:osc_proj}
\Pi_{T_{e_I}^s}:SV_{\pmb{d}}^{\pmb{n}}&\dasharrow \P^{N(\pmb{n},\pmb{d},s)}\\
(z_J)
&\mapsto (z_J)_{J\in\Lambda \: | \: d(I,J)>s}\nonumber
\end{align}
for every $s\leq d -1$.


In order to study the fibers of $\Pi_{T_{e_I}^s}$, we define auxiliary rational maps 
$$
\Sigma_l:SV_{\pmb{d}}^{\pmb{n}}\dasharrow \P^{N_l}
$$
for each $l\in \{1,\dots, r\}$ as follows. The map $\Sigma_1$ is the composition of the product map 
$$
\Gamma_{i_1}^{d_1-2} \times \prod_{j=2}^r \Pi_{i_j}:
V_{d_1}^{n_1}\times \dots \times V_{d_r}^{n_r}
\dasharrow  \mathbb P^{N(n_1,d_1,d_1-2)} \times  \prod_{j=2}^r \P^{N(n_j-1,d_j)}
$$ 
with the Segre embedding 
$$
\mathbb P^{N(n_1,d_1,d_1-2)} \times  \prod_{j=2}^r \P^{N(n_j-1,d_j)}\hookrightarrow \P^{N_1}.
$$
The other maps $\Sigma_l$, $2\leq l\leq r$, are defined analogously. 
In coordinates we have:
\begin{align}\label{eq_sigmal}
\Sigma_l:SV_{\pmb{d}}^{\pmb{n}}&\dasharrow \P^{N_l} \ , \\
(z_J)
&\mapsto (z_J)_{J\in\Lambda_l}\nonumber
\end{align}
where $\Lambda_l=\big\{J=(J^1,\dots,J^r)\in \Lambda \ \big| \ d(J^l,(i_l,\dots,i_l))\geq d_l-1 \text{ and }
i_j\not\in J^j \text{ for } j\neq l\}.$


\begin{Proposition}\label{projosconept}
Consider the projection of the Segre-Veronese variety $SV_{\pmb{d}}^{\pmb{n}}\subset \P^{N(\pmb{n},\pmb{d})}$ from 
the osculating space $\Pi_{T_{e_I}^s}$ of order $s$ at the point 
$e_I = e_{i_1}^{d_1}\otimes e_{i_2}^{d_2}\otimes\dots\otimes e_{i_r}^{d_r}\in SV_{\pmb{d}}^{\pmb{n}}$, $0\leq s\leq d-1$:
$$
\Pi_{T_{e_I}^s}:SV_{\pmb{d}}^{\pmb{n}} \dasharrow \P^{N(\pmb{n},\pmb{d},s)}.
$$
Then $\Pi_{T_{e_I}^s}$ is birational for any $s\leq d-2$, while $\Pi_{T_{e_I}^{d-1}}$ fits in the following commutative diagram:
\[
\begin{tikzpicture}[xscale=6.5,yscale=-1.7]
    \node (A0_0) at (0, 0) {$\mathbb{P}^{n_1}\times\dots\times\mathbb{P}^{n_r}$};
    \node (A0_1) at (1, 0) {$SV_{\pmb{d}}^{\pmb{n}}\subseteq \mathbb{P}^{N(\pmb{d},\pmb{n})}$};
    \node (A1_0) at (0, 1) {$\mathbb{P}^{n_1-1}\times\dots\times\mathbb{P}^{n_r-1}$};
    \node (A1_1) at (1, 1) {$SV_{\pmb{d}}^{\pmb{n}-\pmb{1}}\subseteq \mathbb{P}^{N(\pmb{d},\pmb{n}-\pmb{1})}$};
    \path (A0_0) edge [->]node [auto] {$\scriptstyle{\sigma\nu_{\pmb{d}}^{\pmb{n}}}$} (A0_1);
    \path (A1_0) edge [->]node [auto] {$\scriptstyle{\sigma\nu_{\pmb{d}}^{\pmb{n}-\pmb{1}}}$} (A1_1);
    \path (A0_1) edge [->,dashed]node [auto] {$\scriptstyle{\Pi_{T_{e_I}^{d -1}}}$} (A1_1);
    \path (A0_0) edge [->,dashed]node [auto,swap] {$\scriptstyle{\pi_{i_1}\times\dots\times \pi_{i_r}}$} (A1_0);
  \end{tikzpicture} 
\]
where $\pmb{n}-\pmb{1}=(n_1-1,\dots,n_r-1).$
Furthermore, the closure of the fiber of $\Pi_{T_{e_I}^{d -1}}$ is the Segre-Veronese variety 
$SV_{\pmb{d}}^{1,\dots,1} $.
\end{Proposition}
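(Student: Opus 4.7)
The plan is to handle the two cases separately, treating $s=d-1$ first as it is more direct and also identifies the fiber structure, and then establishing birationality for $s\leq d-2$ by reducing to $s=d-2$ and using the auxiliary maps $\Sigma_l$ to exhibit the fibers as intersections we can control.

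For the case $s=d-1$, I would apply Proposition~\ref{oscsegver} to identify the center of projection as $\langle e_J\mid d(I,J)\leq d-1\rangle$, so that the surviving Pl\"ucker coordinates are exactly the $z_J$ with $d(I,J)=d$. By Definition~\ref{distance}, $d(I,J)=d$ forces $d(J^j,(i_j,\dots,i_j))=d_j$ for every $j$, which means $i_j\notin J^j$. These indices are exactly the Pl\"ucker coordinates of $SV_{\pmb d}^{\pmb n -\pmb 1}$ embedded via $\sigma\nu_{\pmb d}^{\pmb n-\pmb 1}$ after using the projections $\pi_{i_j}$ on each factor of $\P^{n_1}\times\cdots\times\P^{n_r}$. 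This gives the commutative diagram. The closure of a fiber over a general point $\sigma\nu_{\pmb d}^{\pmb n-\pmb 1}(y_1,\dots,y_r)$ is the image under $\sigma\nu_{\pmb d}^{\pmb n}$ of $\prod_{j=1}^r L_j$, where $L_j\subset\P^{n_j}$ is the line through $e_{i_j}$ and a lift of $y_j$. Since $\sigma\nu_{\pmb d}^{\pmb n}$ restricted to $\prod L_j\cong(\P^1)^r$ is precisely the Segre-Veronese embedding $\sigma\nu_{\pmb d}^{1,\dots,1}$, the fiber is $SV_{\pmb d}^{1,\dots,1}$.

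For the birationality statement when $s\leq d-2$, I would reduce to $s=d-2$ by observing that $\Pi_{T_{e_I}^{s'}}$ factors through $\Pi_{T_{e_I}^{s}}$ whenever $s'\leq s$ (since $T_{e_I}^{s'}\subseteq T_{e_I}^{s}$). Then I would show that each auxiliary map $\Sigma_l$ of \eqref{eq_sigmal} factors through $\Pi_{T_{e_I}^{d-2}}$: indeed, for any index $J\in\Lambda_l$ one has $d(J^l,(i_l,\dots,i_l))\geq d_l-1$ and $d(J^j,(i_j,\dots,i_j))=d_j$ for $j\neq l$, so $d(I,J)\geq (d_l-1)+\sum_{j\neq l}d_j=d-1>d-2$, meaning the coordinates used by $\Sigma_l$ are among those used by $\Pi_{T_{e_I}^{d-2}}$.

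Finally, I would compute the fiber of $\Pi_{T_{e_I}^{d-2}}$ through a general point coming from $y=(y_1,\dots,y_r)\in\prod\P^{n_j}$ by intersecting the fibers of the $\Sigma_l$ through that point. Using birationality of $\Gamma_{i_l}^{d_l-2}$ from Lemma~\ref{projoscveronese}, the fiber of $\Sigma_l$ consists of $(x_1,\dots,x_r)$ with $x_l=y_l$ and $x_j$ on the line through $e_{i_j}$ and $y_j$ for $j\neq l$. Intersecting over $l=1,\dots,r$ forces $x_l=y_l$ for every $l$, so the fiber is $\{y\}$; since we are working in characteristic zero, Zariski's main theorem yields that $\Pi_{T_{e_I}^{d-2}}$ is birational, and hence so is $\Pi_{T_{e_I}^{s}}$ for every $s\leq d-2$. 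The only real subtlety is bookkeeping the factorization through $\Sigma_l$; everything else is straightforward from the description of osculating spaces in Proposition~\ref{oscsegver} and the Veronese case in Lemma~\ref{projoscveronese}.
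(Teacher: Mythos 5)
Your proposal is correct and follows essentially the same route as the paper: the $s=d-1$ case via Proposition~\ref{oscsegver} and Lemma~\ref{projoscveronese}, reduction of $s\leq d-2$ to $s=d-2$, factoring the auxiliary maps $\Sigma_l$ through $\Pi_{T_{e_I}^{d-2}}$ by the same distance count $d(I,J)\geq d-1$, and cutting the fiber down to a point by intersecting the fibers of the $\Sigma_l$. The only quibble is that your factorization statement is worded backwards — since $T_{e_I}^{s}\subseteq T_{e_I}^{d-2}$ it is $\Pi_{T_{e_I}^{d-2}}$ that factors through $\Pi_{T_{e_I}^{s}}$, which is what actually yields birationality for the smaller $s$ — but the argument you run is the intended one.
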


\begin{proof}
The case $s=d-1$ follows from the expressions in \eqref{projonepoint} and \eqref{eq:osc_proj}, and Lemma~\ref{projoscveronese}.

Since $\Pi_{T_{e_I}^{d-2}}$ factors through $\Pi_{T_{e_I}^{j}}$ for every $0\leq j\leq d-3$, it is enough to prove  birationality of $\Pi_{T_{e_I}^{d-2}}$.

First note that $\Pi_{T_{e_I}^{d-2}}$ factors the map $\Sigma_l$ for any $l=1,\dots,r$. 
This follows from the expressions in \eqref{eq:osc_proj} and \eqref{eq_sigmal}, observing that 
$$
J=(J^1,\dots,J^r) \in \Lambda_l\Rightarrow 
d(J,I)\geq d_l-1+\sum_{j\neq l} d_j =d-1>d-2.
$$
We write $\tau_l:\mathbb P^{N(\pmb{n},\pmb{d},d-2)}\dasharrow \mathbb{P}^{N_l}$ for the projection making the 
following diagram  commute: 
\[
  \begin{tikzpicture}[xscale=3.5,yscale=-1.5]
    \node (A0_0) at (0, 0) {$SV_{\pmb{d}}^{\pmb{n}}$};
    \node (A1_0) at (1, 1) {$\mathbb{P}^{N_l}$};
    \node (A1_1) at (1, 0) {$\mathbb P^{N(\pmb{n},\pmb{d},d-2)}$};
    \path (A0_0) edge [->,swap, dashed] node [auto] {$\scriptstyle{\Sigma_l}$} (A1_0);
    \path (A1_1) edge [->, dashed] node [auto] {$\scriptstyle{\tau_l}$} (A1_0);
    \path (A0_0) edge [->, dashed] node [auto] {$\scriptstyle{\Pi_{T_{e_I}^{d-2}}}$} (A1_1);
  \end{tikzpicture}
\]

Take a general point
$$
x\in \overline{\Pi_{T_{e_I}^{d-2}}\left( SV_{\pmb{d}}^{\pmb{n}} \right)}\subseteq\mathbb P^{N(\pmb{n},\pmb{d},d-2)},
$$ 
and set $x_l=\tau_l(x)$, $l=1,\dots,r$. 
Denote by $F\subset \mathbb P^{\pmb{n}} $ the closure of the  fiber of $\Pi_{T_{e_I}^{d-2}}$
over $x$, and by $F_l$ the closure of the  fiber of $\Sigma_l$ over $x_l$. 
Let $y\in F\subset F_l$ be a general point, and write
$y=\sigma\nu^{\pmb{n}}_{\pmb{d}}(y_1,\dots,y_r)$, with $y_j\in \mathbb{P}^{n_j}, j=1,\dots,r$. 
By Lemma \ref{projoscveronese}, $F_l$ is the image under $\sigma\nu^{\pmb{n}}_{\pmb{d}}$ of 
$$
\langle y_1,e_{i_1}\rangle \times \dots \times \langle y_{l-1},e_{i_{l-1}}\rangle \times y_l \times
\langle y_{l+1},e_{i_{l+1}}\rangle \times \dots \times\langle y_r,e_{i_r}\rangle
\subset \mathbb{P}^{\pmb{n}},
$$
$l=1,\dots,r$.  It follows that $F=\{y\}$, and so
$\Pi_{T_{e_I}^{d-2}}$ is birational.
\end{proof}

Next study linear projections from the span of several osculating spaces at coordinate points, and investigate 
when they are birational.

\medskip

We start with the case of a Veronese variety $V_{d}^{n}\subset \mathbb{P}^{N(n,d)}$, with coordinate points 
$e_{\underline{i}}=e_{(i,\dots,i)}\in V_{d}^{n}$, $i\in\{0,1,\dots,n\}$.
For $m\in \{1, \dots, n\}$, let $\pmb{s}=(s_0,\dots,s_m)$ be an $(m+1)$-uple of positive integers, and set $s=s_0+\dots+s_m$.
Let $e_{\underline{i}_0}, \dots, e_{\underline{i}_{m}}\in V_{d}^{n}$ be distinct coordinate points, and denote by 
$T_{e_{\underline{i}_{0}}, \dots, e_{\underline{i}_{m}}}^{s_0,\dots, s_m}\subset \mathbb{P}^{N(n,d)}$ the linear span 
$\left\langle T_{e_{\underline{i}_{0}}}^{s_0},\dots,T_{e_{\underline{i}_{m}}}^{s_m}\right\rangle$.
By Proposition~\ref{oscsegver}, the projection of $V_{d}^{n}$ from 
$T_{e_{\underline{i}_{0}}, \dots, e_{\underline{i}_{m}}}^{s_0,\dots, s_m}\subset \mathbb{P}^{N(n,d)}$ is given by:
\begin{align}
\Gamma^{s_0,\dots, s_m}_{e_{\underline{i}_{0}}, \dots, e_{\underline{i}_{m}}}:V_{d}^{n}&\dasharrow \P^{N(n,d,\pmb{s})},\\
(z_I)_{I\in \Lambda_{n,d}}
&\mapsto (z_J)_{J\in \Lambda_{n,d}^{\pmb{s}}}\nonumber
\end{align}
whenever $\Lambda_{n,d}^{\pmb{s}}=\{J\in \Lambda_{n,d} \ |\  d\big(J,(j,\dots,j)\big)>s_j \text{ for } j=0,\dots,m\}$ is not empty.

\begin{Lemma}\label{projoscveroneseseveralpoints}
Let the notation be as above, and assume that  $d\geq 2$ and $0\leq s_j\leq d-2$ for $j=0,\dots,m$.
\begin{itemize}
\item[(\textit{a})] If $n\leq d$ and $s \leq n(d-1)-2$, then $\Gamma^{s_0,\dots, s_m}_{e_{\underline{i}_{0}}, \dots, e_{\underline{i}_{m}}}$ is birational onto its image.
\item[(\textit{b})] If $n\leq d$ and $s = n(d-1)-1$, then $\Gamma^{s_0,\dots, s_m}_{e_{\underline{i}_{0}}, \dots, e_{\underline{i}_{m}}}$ is a constant map.
\item[(\textit{c})] If $n> d$, then  $\Gamma^{d-2,\dots, d-2}_{e_{\underline{i}_{0}}, \dots, e_{\underline{i}_{n}}}$ is birational onto its image.
\end{itemize}
\end{Lemma}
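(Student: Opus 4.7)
My plan is to prove each of the three cases by the same strategy used throughout the chapter: reduce to the single-point case (Lemma~\ref{projoscveronese}) by an ``intersection of fibers'' argument, analogous to the one carried out for Segre-Veronese varieties in Proposition~\ref{projosconept}.

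For part (a), I would first introduce, for each index $j\in\{0,\dots,m\}$, an auxiliary projection
$$
\Sigma_j : V_d^n \dasharrow \mathbb{P}^{N_j}
$$
obtained by keeping those Pl\"ucker-type coordinates $z_J$ with $d(J,(i_j,\dots,i_j))>s_j$ and $i_\ell\notin J$ for every $\ell\neq j$. Equivalently, $\Sigma_j$ factors through the linear projection $\pi_{\{i_\ell\}_{\ell\neq j}}:\mathbb{P}^n\dasharrow \mathbb{P}^{n-m}$ followed by the single-point osculating projection $\Gamma_{i_j}^{s_j}$ on $V_d^{n-m}$. Two elementary checks are then needed: $\Sigma_j$ factors through $\Gamma^{s_0,\dots,s_m}_{e_{\underline{i}_0},\dots,e_{\underline{i}_m}}$ (because the center is larger), and by Lemma~\ref{projoscveronese} the second factor is birational as soon as $s_j\leq d-2$. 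As in the proof of Proposition~\ref{projosconept}, given a general point $x$ in the image and a preimage $y=\nu_d^n(y_0:\dots:y_n)$, the fiber of $\Sigma_j$ over $\Sigma_j(x)$ is the Veronese image of the $m$-plane $\langle y, e_{i_\ell}\mid \ell\neq j\rangle$, and intersecting these fibers over $j=0,\dots,m$ pins down the point $y$ uniquely. The numerical hypothesis $s\leq n(d-1)-2$ (together with $n\leq d$) ensures that $\Lambda_{n,d}^{\pmb s}$ is large enough for the required intersection to be zero-dimensional.

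For part (b) I would argue directly by a combinatorial/dimension count. When $s=n(d-1)-1$ with $s_j\leq d-2$, the complement of the forbidden balls $\{J: d(J,(i_j,\dots,i_j))\leq s_j\}$ in $\Lambda_{n,d}$ turns out to have a single surviving element (or several but all giving proportional monomials on $\nu_d^n(\mathbb{P}^n)$), so the image lies in a $\mathbb{P}^0$. For part (c), $n>d$ and all $s_j=d-2$, the strategy of part (a) applies once more, with the auxiliary maps $\Sigma_j$ now being full-strength osculating projections at $e_{\underline{i}_j}$; the inequality $n>d$ provides the extra ``room'' in the index set $\Lambda_{n,d}^{(d-2,\dots,d-2)}$ needed to reconstruct all the homogeneous coordinates via suitable ratios $z_J/z_{J'}$ with $J,J'$ differing in a single entry.

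The main obstacle, in all three cases, is the combinatorial analysis of $\Lambda_{n,d}^{\pmb s}$: one must certify that the monomials $\{y^J\}_{J\in \Lambda_{n,d}^{\pmb s}}$ separate points of $\mathbb{P}^n$ birationally, by exhibiting enough ``adjacent'' pairs $J,J'\in\Lambda_{n,d}^{\pmb s}$ differing by a single swap $a\leftrightarrow b$ so that $y_a/y_b$ can be read off from $z_J/z_{J'}$. The most delicate regime is when $m=n$, where the complementary set of indices $\{0,\dots,n\}\setminus\{i_0,\dots,i_m\}$ is empty and one cannot directly use the naive intersection argument; here the precise numerical hypotheses distinguishing (a), (b), (c) are essential, and handling this boundary case is where the combinatorial bookkeeping becomes non-trivial.
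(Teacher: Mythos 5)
Your fiber-intersection strategy (modelled on Proposition \ref{projosconept}) does work for part (\textit{a}) when $m<n$: the auxiliary maps $\Sigma_j$, factoring through the projection from the points $\{e_{i_\ell}\}_{\ell\neq j}$ followed by $\Gamma^{s_j}_{i_j}$ on $V_d^{n-m}$, have fibers equal to the Veronese images of the $m$-planes $\langle y,e_{i_\ell}\mid \ell\neq j\rangle$, and these intersect in $\{y\}$. But in that regime the hypothesis $s\leq n(d-1)-2$ is automatic from $s_j\leq d-2$ (for $n\geq 2$ and $m\leq n-1$ one has $s\leq n(d-2)\leq n(d-1)-2$), so your argument never reaches the case the lemma is actually about. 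The binding case is $m=n$, which you explicitly set aside as the place ``where the combinatorial bookkeeping becomes non-trivial'': there the complement $\{0,\dots,n\}\setminus\{i_0,\dots,i_m\}$ is empty, your $\Sigma_j$ lands in $V_d^{0}$, its fiber is all of $\mathbb{P}^n$, and the intersection argument yields nothing. That case is the entire content of parts (\textit{b}) and (\textit{c}) and the only situation in which the bound $s\leq n(d-1)-2$ bites in part (\textit{a}). Your fallback for (\textit{c}) --- taking the $\Sigma_j$ to be single-point osculating projections $\Gamma^{d-2}_{i_j}$ --- cannot work either: a projection factors through another only if its center \emph{contains} the other's, so $\Gamma^{d-2}_{i_j}$ does not factor through the multi-point projection and gives no control on its fibers. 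This is a genuine gap, not deferred bookkeeping.

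The paper closes it by explicit coordinate selection rather than fiber intersection. For (\textit{a}) one picks $J_0,\dots,J_n\in\Lambda_{n,d}^{\pmb s}$ of the form $J_j=(0,\dots,\widehat{j},\dots,n)$ completed by $d-n$ \emph{common} entries; the capacity count $\sum_i(d-s_i-2)=(n+1)(d-2)-s\geq d-n$, which is exactly $s\leq n(d-1)-2$, guarantees the common part can be chosen so that each $J_j$ has at most $d-s_i-1$ entries equal to $i$. Projecting onto these $n+1$ coordinates and precomposing with $\nu_d^n$ gives the standard Cremona transformation of $\mathbb{P}^n$, hence birationality, uniformly in $m$. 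For (\textit{b}) the count $\sum_{j}(d-s_j-1)=(n+1)(d-1)-s=d$ forces each symbol $i$ to appear exactly $d-s_i-1$ times, so $\Lambda_{n,d}^{\pmb s}$ is a single index and the map is constant (your hedge about ``several but proportional monomials'' is unnecessary). For (\textit{c}) one projects onto the coordinates $(j,n-d+1,\dots,n)$ for $j\in K=\{0,\dots,n-d\}$, recovering the linear projection $(x_i)\mapsto(x_i)_{i\in K}$, and lets $K$ range over all subsets of size $n-d+1$ to recover every coordinate. If you wish to keep your plan, you must replace the fiber-intersection step by an explicit argument of this combinatorial type whenever $m=n$.
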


\begin{proof}
Assume that $n\leq d$ and $s \leq n(d-1)-2$.
In order to prove that $\Gamma^{s_0,\dots, s_m}_{e_{\underline{i}_{0}}, \dots, e_{\underline{i}_{m}}}$ is birational, 
we will exhibit $J_0,\dots,J_{n}\in \Lambda_{n,d}^{\pmb{s}}$ , and linear projection 
\begin{align}\label{proj_cremona}
\gamma: \mathbb{P}^{N(n,d)}&\dasharrow \P^{n}\\
(z_I)_{I\in \Lambda_{n,d}^{\pmb{s}}}
&\mapsto (z_{J_j})_{j=0,\dots,n}\nonumber
\end{align}
such that the composition $\gamma\circ\Gamma^{s_0,\dots, s_m}_{e_{\underline{i}_{0}}, \dots, e_{\underline{i}_{m}}}\circ \nu_{d}^{n}:\P^{n}\dasharrow \P^{n}$
is the standard Cremona transformation of $\mathbb{P}^n$.
The $d$-uples $J_j\in \Lambda_{n,d}^{\pmb{s}}$ are constructed as follows. 
Since $n\leq d$ we can take $n$ of the coordinates of $J_j$ to be $0,1,\dots, \widehat{j},\dots, n$. 
The condition  $s \leq n(d-1)-2$ assures that we can complete the $J_j$'s by choosing $d-n$ common coordinates 
in such a way that, for every $i,j\in \{0,\dots, n\}$, we have $d\big(J_j,(i,\dots,i)\big)>s_i$
(i.e., $J_j$ has at most $(d-s_i-1)$ coordinates equal to to $i$). 
This gives $J_j\in \Lambda_{n,d}^{\pmb{s}}$ for every $j\in \{0,\dots, n\}$.
For the linear projection \eqref{proj_cremona} given by these $J_j$'s, we have that 
$\gamma\circ\Gamma^{s_0,\dots, s_m}_{e_{\underline{i}_{0}}, \dots, e_{\underline{i}_{m}}}\circ \nu_{d}^{n}:\P^{n}\dasharrow \P^{n}$
is the standard Cremona transformation of $\mathbb{P}^n$.

Now assume that $n\leq d$ and $s = n(d-1)-1$. 
If $J\in \Lambda_{n,d}^{\pmb{s}}$,
then $J$ has at most $d-s_i-1$ coordinates equal to $i$ for any $i\in \{0,\dots,n\}$. Since 
$$
\sum_{j=0}^{n} \left( d-s_j-1\right)=(n+1)(d-1)-s=d,
$$
there is only one possibility for $J$, i.e., $\Lambda_{n,d}^{\pmb{s}}$ has only one element, and so
$\Gamma^{s_0,\dots, s_m}_{e_{\underline{i}_{0}}, \dots, e_{\underline{i}_{m}}}$ is a constant map.

Finally, assume that $n>d.$ 
Set $K_0=\{0,\dots,{n-d}\}$. For any $j\in K_0$, set
$$
(J_{K_0})_j:=(j, {n-d+1},\dots, {n}),
$$
and note that $d\big((J_{K_0})_j,(i,\dots,i)\big)>d-2$ for every $i\in \{0,\dots, n\}$.
Thus $(J_{K_0})_j\in \Lambda_{n,d}^{\pmb{d-2}}$ for every $j\in {K_0}$. 
So we can define the linear projection
\begin{align*}
\gamma_{K_0}: \P^{N(n,d,\pmb{d-2})}&\dasharrow \P^{n-d}.\\
(z_I)_{I\in \Lambda_{n,d}^{\pmb{d-2}}}
&\mapsto (z_{(J_{K_0})_j})_{j\in {K_0}}
\end{align*}
The composition $\gamma_{K_0}\circ\Gamma^{d-2,\dots, d-2}_{e_{\underline{i}_{0}}, \dots, e_{\underline{i}_{n}}}\circ \nu_{d}^{n}:\P^{n}\dasharrow \P^{n-d}$
is the linear projection given by 
\begin{align*}
\gamma_{K_0}\circ\Gamma^{d-2,\dots, d-2}_{e_{\underline{i}_{0}}, \dots, e_{\underline{i}_{n}}}\circ \nu_{d}^{n}:\P^{n}&\dasharrow \P^{n-d}.\\
(x_i)_{i\in\{0,\dots,n\}}&\mapsto (x_i)_{i\in {K_0}}
\end{align*}
Analogously, for each subset $K\subset \{0,\dots,n\}$ with $n-d+1$ distinct elements,
we define a linear projection $\gamma_{K}: \P^{N(n,d,\pmb{d-2})}\dasharrow \P^{n-d}$ such that the composition 
$\gamma_{K}\circ\Gamma^{d-2,\dots, d-2}_{e_{\underline{i}_{0}}, \dots, e_{\underline{i}_{n}}}\circ \nu_{d}^{n}:\P^{n}\dasharrow \P^{n-d}$
is the linear projection given by
\begin{align*}
\gamma_{K}\circ\Gamma^{d-2,\dots, d-2}_{e_{\underline{i}_{0}}, \dots, e_{\underline{i}_{n}}}\circ \nu_{d}^{n}:\P^{n}&\dasharrow \P^{n-d}.\\
(x_i)_{i\in\{0,\dots,n\}}&\mapsto (x_i)_{i\in {K}}
\end{align*}
This shows that $\Gamma^{d-2,\dots, d-2}_{e_{\underline{i}_{0}}, \dots, e_{\underline{i}_{n}}}$ is birational. 
\end{proof}

The following is an immediate consequence of Lemma \ref{projoscveroneseseveralpoints}.

\begin{Corollary}\label{projoscveroneseseveralpointscor}
Let the notation be as above, and assume that  $d\geq 2$. Then
\begin{itemize}
\item[(\textit{a})] $\Gamma^{d-2,\dots,d-2}_{e_{\underline{i}_{0}}, \dots, e_{\underline{i}_{n-1}}}$ is birational.
\item[(\textit{b})] If $n\geq 2$ then
$\Gamma^{d-2,\dots,d-2,\min\{n,d\}-2}_{e_{\underline{i}_{0}}, \dots, e_{\underline{i}_{n}}}$ is birational, while
$\Gamma^{d-2,\dots,d-2,\min\{n,d\}-1}_{e_{\underline{i}_{0}}, \dots, e_{\underline{i}_{n}}}$ is not.
\item[(\textit{c})] If $d\geq 3$ then
$\Gamma^{d-3,\dots,d-3,\min\{2n,d\}-2}_{e_{\underline{i}_{0}}, \dots, e_{\underline{i}_{n}}}$ is birational, while
$\Gamma^{d-3,\dots,d-3,\min\{2n,d\}-1}_{e_{\underline{i}_{0}}, \dots, e_{\underline{i}_{n}}}$ is not.
\end{itemize}
\end{Corollary}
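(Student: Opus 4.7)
The plan is to deduce all three parts from Lemma~\ref{projoscveroneseseveralpoints}, combined with an elementary monotonicity principle. Namely, if $\Lambda_1 \subseteq \Lambda_2 \subset \mathbb{P}^{N(n,d)}$ are two linear subspaces whose projections restrict to rational maps on $V_d^n$, then every fiber of $\pi_{\Lambda_1}|_{V_d^n}$ is contained in a fiber of $\pi_{\Lambda_2}|_{V_d^n}$. As a consequence, birationality of the projection with the larger center forces birationality of the one with the smaller center, while positive-dimensional general fibers for the projection with the smaller center force positive-dimensional general fibers for the one with the larger center. This will let me move freely between the various osculating projections appearing in the statement.

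For part (a) I would handle $n=1$ directly via Lemma~\ref{projoscveronese}. For $2\leq n\leq d$, the total osculating order is $s=n(d-2)$, which satisfies $s\leq n(d-1)-2$ precisely because $n\geq 2$, so Lemma~\ref{projoscveroneseseveralpoints}(a) applies. For $n>d$, Lemma~\ref{projoscveroneseseveralpoints}(c) gives birationality of the projection from all $n+1$ coordinate osculating spaces, and the monotonicity principle transfers this to the projection from the first $n$.

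For part (b) I would split according to $n\leq d$ versus $n>d$. When $n\leq d$, $\min\{n,d\}=n$ and $s=n(d-2)+(n-k)$; for $k=2$ this is $n(d-1)-2$ so Lemma~\ref{projoscveroneseseveralpoints}(a) gives birationality, while for $k=1$ it is $n(d-1)-1$ so Lemma~\ref{projoscveroneseseveralpoints}(b) produces a constant map, certainly not birational. When $n>d$, $\min\{n,d\}=d$; for $k=2$ all exponents are $d-2$ and Lemma~\ref{projoscveroneseseveralpoints}(c) applies directly, whereas for $k=1$ the center of projection contains $T^{d-1}_{e_{\underline{i}_n}}$, so the map factors through $\Gamma^{d-1}_{e_{\underline{i}_n}}=\Pi_{i_n}$, which by Lemma~\ref{projoscveronese} has positive-dimensional fibers; monotonicity then forces the larger projection to have positive-dimensional fibers too, hence to fail to be birational. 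Part (c) runs on the same template with $s_0=\dots=s_{n-1}=d-3$ and $s_n=\min\{2n,d\}-k$: I would split into $2n\leq d$, $d<2n$ with $n\leq d$, and $n>d$, and in each sub-case either compute $s$ and cite the relevant part of Lemma~\ref{projoscveroneseseveralpoints}, or, whenever $s_n=d-1$ appears, factor through $\Gamma^{d-1}_{e_{\underline{i}_n}}$ and invoke monotonicity.

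The main obstacle I anticipate is purely bookkeeping: keeping track of the casework through the possible orderings of $n$, $d$, and $2n$, and verifying at the boundary cases $n=d$ and $2n=d$ that the arithmetic lands in the correct clause of the lemma. There is no new geometric content beyond what is already provided by Lemmas~\ref{projoscveronese} and~\ref{projoscveroneseseveralpoints}, which is precisely what one expects from a corollary.
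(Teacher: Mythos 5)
Your argument is correct and follows the route the paper intends: the paper states this corollary without proof as an immediate consequence of Lemma \ref{projoscveroneseseveralpoints}, and your combination of that lemma with the monotonicity of projection fibers and the factorization through $\Pi_{i_n}=\Gamma^{d-1}_{e_{\underline{i}_{n}}}$ supplies exactly the missing bookkeeping. One point to make explicit: at the boundary cases $n=d$ in (b) and $2n=d$ in (c), the last exponent for $k=1$ equals $d-1$, which falls outside the stated hypothesis $s_j\leq d-2$ of Lemma \ref{projoscveroneseseveralpoints}, so there you must argue via your factorization through $\Pi_{i_n}$ (or redo the one-element count for $\Lambda_{n,d}^{\pmb{s}}$ directly) rather than citing clause (\textit{b}) of that lemma.
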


Now we turn to Segre-Veronese varieties. 
Let $SV_{\pmb{d}}^{\pmb{n}}\subset \P^{N(\pmb{n},\pmb{d})}$ be a Segre-Veronese variety, 
and write $(z_I)_{I\in \Lambda_{n,d}}$ for coordinates in $\mathbb{P}^{N(n,d)}$. 
Consider the coordinate points $e_{I_0},e_{I_1},\dots,e_{I_{n_1}}\in SV_{\pmb{d}}^{\pmb{n}}$, where
$$
I_j=((j,\dots,j),\dots,(j,\dots,j))\in \Lambda.
$$
(Recall that $n_1\leq\dots \leq n_r$.)
Let $\pmb{s}=(s_0,\dots,s_m)$ be an $(m+1)$-uple of positive integers, and set $s=s_0+\dots+s_m$.
Denote by $T_{e_{I_0},\dots,e_{I_m}}^{s_0,\dots,s_m}\subset \P^{N(\pmb{n},\pmb{d})}$ the linear span of the osculating spaces 
$T_{e_{I_0}}^{s_0},\dots,T_{e_{I_m}}^{s_m}$.
By Proposition~\ref{oscsegver}, the projection of $SV_{\pmb{d}}^{\pmb{n}}$ from 
$T_{e_{I_0},\dots,e_{I_m}}^{s_0,\dots,s_m}$ is given by:
\begin{align}\label{eq:multi_osc_proj}
\Pi_{T_{e_{I_0},\dots,e_{I_m}}^{s_0,\dots,s_m}}:
SV_{\pmb{d}}^{\pmb{n}}&\dasharrow \P^{N(\pmb{d},\pmb{n},\pmb{s})}\\
(z_J)_{J \in \Lambda}
&\mapsto (z_J)_{J \in \Lambda^{\pmb{s}}} \nonumber
\end{align}
whenever $\Lambda^{\pmb{s}}\!=\!\{J \in \Lambda\: |\: d(I_j,J)>s_j \ \forall j\}$ is not empty. 

\begin{Proposition}\label{projoscseveralpoints} 
Let the notation be as above, and assume that $r,d\geq 2$. 
Then the projection  
$\Pi_{T_{e_{I_0},\dots,e_{I_{n_1-1}}}^{d-2,\dots,d-2}}:SV_{\pmb{d}}^{\pmb{n}} \dasharrow \P^{N(\pmb{d},\pmb{n},{\pmb{d-2}})}$ 
is birational.
\end{Proposition}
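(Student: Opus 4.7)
The plan is to generalize the strategy of Proposition \ref{projosconept} to the setting of $n_1$ coordinate points. For each $l \in \{1, \dots, r\}$, I will introduce an auxiliary rational map $\Sigma_l : SV_{\pmb{d}}^{\pmb{n}} \dasharrow \P^{N_l}$ that factors through $\Pi_{T_{e_{I_0},\dots,e_{I_{n_1-1}}}^{d-2,\dots,d-2}}$. Each $\Sigma_l$ will be built from a birational projection on the $l$-th Veronese factor and from ``maximal'' projections on the remaining factors; intersecting the closures of the fibers of all the $\Sigma_l$ should collapse to a single point, giving birationality in characteristic zero.

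Concretely, set
$$\Sigma_l := \sigma \circ \big( \Pi^{(1)} \times \cdots \times \Gamma^{(l)} \times \cdots \times \Pi^{(r)} \big),$$
where $\Gamma^{(l)} = \Gamma^{d_l-2,\dots,d_l-2}_{e_{\underline{0}},\dots,e_{\underline{n_1-1}}}$ is the projection of $V^{n_l}_{d_l}$ from the linear span of the $(d_l-2)$-osculating spaces at the $n_1$ coordinate points $e_{\underline{0}},\dots,e_{\underline{n_1-1}}$, and for $m \neq l$ the map $\Pi^{(m)}: V^{n_m}_{d_m} \dasharrow V^{n_m-n_1}_{d_m}$ is induced by the linear projection $\P^{n_m} \dasharrow \P^{n_m-n_1}$ that forgets the coordinates $x_0,\dots,x_{n_1-1}$ (equivalently, by the projection from $\langle T^{d_m-1}_{e_{\underline{j}}} : j=0,\dots,n_1-1\rangle$). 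Writing $n_j(J^m)$ for the number of occurrences of $j$ in the tuple $J^m$, one checks via Proposition \ref{oscsegver} that $\Sigma_l$ keeps only the coordinates $z_J$ with $n_j(J^l) \leq 1$ and $n_j(J^m)=0$ for $m \neq l$, for every $j \in \{0,\dots,n_1-1\}$; in particular $\sum_m n_j(J^m) \leq 1$, i.e. $d(I_j,J) \geq d-1 > d-2$, so $\Sigma_l$ factors through $\Pi_{T_{e_{I_0},\dots,e_{I_{n_1-1}}}^{d-2,\dots,d-2}}$.

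The key point is that $\Gamma^{(l)}$ is birational. I will deduce this by a ``smaller center'' argument: since $n_1 \leq n_l$, the center of $\Gamma^{(l)}$ is contained in the center of the projection from the $(d_l-2)$-osculating spaces at all $n_l$ (or $n_l+1$) coordinate points of $V^{n_l}_{d_l}$, which is birational by Corollary \ref{projoscveroneseseveralpointscor}(a) when $n_l \leq d_l$, and by Lemma \ref{projoscveroneseseveralpoints}(c) when $n_l > d_l$. A birational projection remains birational after shrinking its center (its fiber only gets smaller), so $\Gamma^{(l)}$ is birational (the degenerate case $n_1=1$ reduces directly to Lemma \ref{projoscveronese} and Proposition \ref{projosconept}). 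Consequently, the general fiber $F_l$ of $\Sigma_l$ over the image of a general point $y=\sigma\nu_{\pmb{d}}^{\pmb{n}}(y_1,\dots,y_r)$ is the image under $\sigma\nu_{\pmb{d}}^{\pmb{n}}$ of
$$\langle y_1,e_0,\dots,e_{n_1-1}\rangle \times \cdots \times \{y_l\} \times \cdots \times \langle y_r,e_0,\dots,e_{n_1-1}\rangle,$$
because each $\Pi^{(m)}$ has fiber the Veronese of $\langle y_m, e_0,\dots,e_{n_1-1}\rangle$, while the $l$-th factor is pinned by the birationality of $\Gamma^{(l)}$.

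To finish, let $F$ be the closure of the fiber of $\Pi_{T_{e_{I_0},\dots,e_{I_{n_1-1}}}^{d-2,\dots,d-2}}$ through $y$; from the factoring one obtains $F \subseteq \bigcap_{l=1}^r F_l$. Since each $F_l$ fixes the $l$-th factor at $y_l$ and lets the other factors move inside the $n_1$-dimensional linear spaces above, the total intersection forces every factor to be fixed, giving $\bigcap_l F_l = \{y\}$ and hence $F=\{y\}$; birationality then follows in characteristic zero. I expect the main obstacle to be the case analysis needed to verify that $\Gamma^{(l)}$ is birational for all admissible $(n_1, n_l, d_l)$, in particular carefully separating $n_l \leq d_l$ from $n_l > d_l$ and handling $n_1=1$, together with the combinatorial verification at the coordinate level that each $\Sigma_l$ really factors through the osculating projection.
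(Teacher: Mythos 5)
Your proposal is correct and follows essentially the same route as the paper: the auxiliary maps $\Sigma_l$ you build (birational osculating projection $\Gamma^{d_l-2,\dots,d_l-2}$ on the $l$-th Veronese factor, projection from $\left\langle e_0,\dots,e_{n_1-1}\right\rangle$ on the others, composed with the Segre) are exactly the maps $\Sigma_l$ of the paper's proof, defined there via the index sets $\Lambda_l$, and the fiber-intersection argument is the same. Your explicit ``smaller center'' justification for the birationality of $\Gamma^{(l)}$ when $n_1<n_l$ is a welcome elaboration of the paper's bare citation of Corollary \ref{projoscveroneseseveralpointscor}.
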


\begin{proof}
For each $l\in\{1,\dots, r\}$, set 
$$
\Lambda_l=\left\{J\!=\!(J^1,\dots,J^r)\in \Lambda \: \big| \:
\begin{cases} 
0,\dots, n_1-1\not \in J^j \text{ if } j\neq l \\
d\big(J^l,(i,\dots, i)\big)\geq d_l-1 \ \forall i\in \{0,\dots, n_1-1\}
\end{cases}
\right\},
$$
and consider the linear projection 
\begin{align}\label{Sigma_l}
\Sigma_l:SV_{\pmb{d}}^{\pmb{n}}
&\dasharrow \mathbb P^{N_l}.\\
{\left(z_J\right)}_{J\in\Lambda}&\mapsto
{\left( z_J\right)}_{J\in\Lambda_l}\nonumber
\end{align}
Note that $\Lambda_l\subset \Lambda^{\pmb{d-2}}$, and so there is a linear projection
$\tau_l:\P^{N(\pmb{d},\pmb{n},\pmb{d-2})}\dasharrow \mathbb P^{N_l}$
such that $\Sigma_l=\tau_l\circ \Pi_{T^{d-2,\dots,d-2}_{p_0,\dots,p_{n_1-1}}}$.

The restriction of $\Sigma_l\circ \sigma\nu^{\pmb{n}}_{\pmb{d}}$ to 
$$
\{pt\} \times \dots \{pt\} \times \P^{n_l} \times \{pt\} \times \dots \{pt\}
$$
is isomorphic to the osculating projection 
$$
\Gamma^{d_l-2,\dots,d_l-2}_{e_{\underline{i}_{0}}, \dots, e_{\underline{i}_{n_1-1}}}:V_{d_l}^{n_l}\dasharrow \P^{N(n_l,d_l,\pmb{d_l-2})}.
$$
This is birational by Corollary~\ref{projoscveroneseseveralpointscor}. 
For $j\neq l$, the restriction of $\Sigma_l\circ \sigma\nu^{\pmb{n}}_{\pmb{d}}$ to 
$$
\{pt\} \times \dots \{pt\} \times \P^{n_j} \times \{pt\} \times \dots \{pt\}
$$
is isomorphic to the projection with center $\left\langle e_0,\dots,e_{n_1-1} \right\rangle$.
Arguing as in the last part of the proof of Proposition \ref{projosconept}, we conclude that 
$\Pi_{T_{e_{I_0},\dots,e_{I_{n_1-1}}}^{d-2,\dots,d-2}}$ is birational.
\end{proof}

\section{Degenerating osculating spaces}\label{degtanoscsv}

In this section we show that the Segre-Veronese variety $SV_{\pmb d}^{\pmb n}\subseteq\mathbb{P}^{N(\pmb{n},\pmb{d})}$ has 
strong $2$-osculating regularity, and $(n_1+1)$-osculating regularity.
We follow the notation introduced in the previous sections.

\begin{Proposition}\label{limitosculatingspacessegver}
The Segre-Veronese variety $SV_{\pmb{d}}^{\pmb{n}}\subseteq\mathbb{P}^{N(\pmb{n},\pmb{d})}$ has 
strong $2$-osculating regularity.
\end{Proposition}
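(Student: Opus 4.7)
The plan is to adapt the strategy of Proposition~\ref{limitosculatingspacesgrass} to the Segre-Veronese setting. By Proposition~\ref{oscsegver}, $T^{k_1+k_2+1}_p = \P^{N(\pmb{n},\pmb{d})}$ whenever $k_1+k_2 \geq d-1$, so we may assume $k_1+k_2 \leq d-2$. Using the homogeneity of $SV^{\pmb{n}}_{\pmb{d}}$ under its automorphism group, we reduce to the case of two coordinate points: $p = \sigma\nu^{\pmb{n}}_{\pmb{d}}(e_0,\dots,e_0)$ corresponding to $I_1 = ((0,\dots,0),\dots,(0,\dots,0))$ and $q = \sigma\nu^{\pmb{n}}_{\pmb{d}}(e_1,\dots,e_1)$ corresponding to $I_2 = ((1,\dots,1),\dots,(1,\dots,1))$. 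A natural smooth curve connecting them is
$$
\gamma(t) = \sigma\nu^{\pmb{n}}_{\pmb{d}}(e_0+te_1,\dots,e_0+te_1),
$$
the image of a line through the small diagonal.

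The first step is to expand the generators of $T^{k_2}_{\gamma(t)}$ in the standard basis $(e_K)_{K\in\Lambda}$. Setting $e_i^{j,t} = e_i$ for $i\neq 0$ and $e_0^{j,t} = e_0+te_1$ in the $j$-th factor, one checks exactly as in Proposition~\ref{limitosculatingspacesgrass} that
$$
e^{t}_K = \sum_{L\in\Delta(K)^+} t^{d(K,L)} e_L,
$$
where $\Delta(K)^+$ is the set of multi-indices obtained from $K$ by replacing some occurrences of $0$ by $1$ in any factor. Here the distance function of Definition~\ref{distance} behaves additively across factors, exactly as the Grassmannian distance behaved with respect to the partition of coordinates. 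Then
$$
T_t = \big\langle\, e_K \mid d(I_1,K)\leq k_1;\ \textstyle\sum_{L\in\Delta(K)^+} t^{d(K,L)}e_L \mid d(I_1,K)\leq k_2\,\big\rangle,
$$
while $T^{k_1+k_2+1}_p = \{z_I = 0 \mid d(I_1,I) > k_1+k_2+1\}$.

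To conclude that $T_0 \subseteq T^{k_1+k_2+1}_p$, for each $I \in \Lambda$ with $d(I_1,I) > k_1+k_2+1$ I would construct a hyperplane $H_I$ of the form
$$
z_I + t\!\!\sum_{J\in\Delta(I)^-,\,J\neq I}\!\! t^{d(I,J)-1} c_J\, z_J = 0
$$
containing $T_t$ for every $t\neq 0$, where $\Delta(I)^- = \{J : I \in \Delta(J)^+\}$. Imposing $T_t \subseteq H_I$ translates, as in the Grassmannian case, into a linear system on the unknowns $c_J$ grouped by distance $c_J = c_{d(I,J)}$: the vanishing conditions coming from $e_K$ with $d(I_1,K)\leq k_1$ kill the top coefficients $c_s,\dots,c_{d-k_1}$, while the conditions from $e^t_K$ with $d(I_1,K)\leq k_2$ yield a system whose coefficient matrix is a submatrix of the binomial matrix $\big(\binom{D-i}{D-l-i}\big)$ for appropriate ranges of $i$ and $l$.

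The main obstacle, as in Proposition~\ref{limitosculatingspacesgrass}, is to show that this binomial-coefficient system admits a solution with $c_0 \neq 0$. The reduction works because, although $\Lambda$ here is a product of multiset multi-indices rather than subsets, the cardinalities $|\Delta(I,-l)|$ are still binomial-type counts that assemble into the same shape of matrix. Hence the nonvanishing of the relevant determinant reduces to the same binomial-identity statement proved via \cite[Corollary 2]{GV85}. Once $H_I$ is produced for each relevant $I$, letting $t \to 0$ gives $z_I = 0$ on $T_0$, showing $T_0 \subseteq T^{k_1+k_2+1}_p$ and establishing strong $2$-osculating regularity.
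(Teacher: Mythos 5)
Your overall strategy is the right one and matches the paper's: reduce to coordinate points, take the degree-$d$ rational normal curve through the small diagonal, expand the generators of $T^{k_2}_{\gamma(t)}$ in the basis $(e_J)_{J\in\Lambda}$, and produce for each bad index $I$ a hyperplane of the prescribed shape by solving a binomial linear system via \cite[Corollary 2]{GV85}. But there is a genuine gap in the central computation. Your expansion
$$e^t_K=\sum_{L\in\Delta(K)^+}t^{d(K,L)}e_L$$
is not correct for Segre--Veronese varieties. Since $e^t_0=e_0+te_1$ occurs with multiplicity equal to the number of $0$'s in each symmetric factor, the correct expansion is
$$e^t_K=\sum_{L\in\Delta(K)^+}t^{d(K,L)}\,c_{(K,L)}\,e_L,\qquad c_{(K,L)}=\binom{s^+_{K^1}}{d(K^1,L^1)}\cdots\binom{s^+_{K^r}}{d(K^r,L^r)},$$
and, unlike the Grassmannian case where the analogous coefficients are signs depending only on $L$ and can be absorbed into the basis, these binomial coefficients depend on both $K$ and $L$ and cannot be normalized away. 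Consequently the condition $F_I(e^t_K)=0$ reads $\sum_{J\in \Delta(I)^-\cap \Delta(K)^+}c_{(K,J)}c_J=0$ rather than $\sum_J c_J=0$, so the linear system you must solve is not the one you wrote down.

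Two further facts, both absent from your proposal, are needed to complete the argument. First, a Vandermonde convolution collapses the per-factor binomials:
$$\sum_{J\in \Delta(I)^-\cap\Delta(K,l)}c_{(K,J)}=\sum_{l_1+\dots+l_r=l}\binom{s^+_{K^1}}{l_1}\cdots\binom{s^+_{K^r}}{l_r}=\binom{s^+_K}{l}=\binom{\overline{s}+d(I,K)}{l},$$
where $\overline{s}=s^+_I$; this identity is what legitimizes grouping the unknowns as $c_J=c_{d(I,J)}$. Second, the resulting coefficient matrix has entries $\binom{\overline{s}+j}{j-k}$, i.e.\ it is the Grassmannian matrix shifted by $\overline{s}$; it is \emph{not} the matrix $\bigl(\binom{D-i}{D-l-i}\bigr)$ you state. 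One must then verify that \cite[Corollary 2]{GV85} still applies to the shifted determinant $\bigl(\binom{i}{j}\bigr)$ with $\overline{s}+D-k_2\le i\le\overline{s}+s$ and $\overline{s}+1\le j\le\overline{s}+s+1-D+k_2$ --- it does, but this is precisely where the nonvanishing has to be checked. Finally, your attribution of the binomial structure to the cardinalities $|\Delta(I,-l)|$ points at the wrong source: in the Segre--Veronese case the binomials come from the expansion coefficients $c_{(K,J)}$, not from counting elements of the sets $\Delta(I,l)$.
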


\begin{proof}
Let $p,q\in SV_{\pmb{d}}^{\pmb{n}}\subseteq\mathbb{P}^{N(\pmb{n},\pmb{d})}$ be general points.
There is a projective automorphism of $SV_{\pmb{d}}^{\pmb{n}}\subseteq\mathbb{P}^{N(\pmb{n},\pmb{d})}$ mapping 
$p$ and $q$ to the coordinate points $e_{I_0}$ and $e_{I_1}$. 
These points are connected by the degree $d$ rational normal curve defined by 
$$
\gamma([t:s])=(se_{I_0}+te_{I_1})^{d_1}\otimes\dots\otimes (se_{I_0}+te_{I_1})^{d_r}.
$$
We work in the affine chart $(s=1)$, and set $t=(t:1)$. 
Given integers $k_1,k_2\geq 0$, consider the family of linear spaces 
$$
T_t=\left\langle T^{k_1}_{e_{I_0}},T^{k_2}_{\gamma(t)}\right\rangle,\: t\in \C\backslash \{0\}.
$$
We will show that the flat limit $T_0$  of $\{T_t\}_{t\in \C\backslash \{0\}}$ in
$\G(\dim(T_t),N(\pmb{n},\pmb{d}))$ is contained in $T^{k_1+k_2+1}_{e_{I_0}}$.

We start by writing the linear spaces $T_t$ explicitly. 
For $j=1,\dots, r$, we define the vectors 
$$
e_0^{t}=e_0+te_1,e_1^t=e_1,e_2^t=e_2,\dots,e_{n_j}^{t}=e_{n_j}\in V_{j}.
$$
Given $I^j=(i_1,\dots,i_{d_j})\in \Lambda_{n_j,d_j}$, we denote by 
$e_{I^j}^{t}\in \Sym^{d_j}V_j$  the symmetric product $e_{i_1}^{t}\cdot\ldots\cdot e_{i_{d_j}}^{t}$.
Given $I=(I^1,\dots,I^r)\in \Lambda=\Lambda_{\pmb{n},\pmb{d}}$, we denote by 
$e_I^{t}\in \mathbb{P}^{N(\pmb{n},\pmb{d})}$
the point corresponding to 
$$
e_{I^1}^{t}\otimes\dots\otimes e_{I^r}^{t}\in \Sym^{d_1}V_1\otimes\dots\otimes \Sym^{d_r}V_r.
$$
By Proposition \ref{oscsegver} we have 
$$
T_t=\left\langle e_I\: | \: d(I,I_0)\leq k_1; \: e_I^{t}\: |\: d(I,I_0)\leq k_2\right\rangle,\: t\neq 0.
$$
We shall write $T_t$ in terms of the basis $\{e_J|J\in\Lambda\}$. 
Before we do so, it is convenient to introduce some additional notation. 

\begin{Notation}\label{defdelta}
Let $I\in \Lambda_{n,d}$, and write:
\begin{equation}\label{defI}
I=(\underbrace{0,\dots,0}_{a\text{ times}},\underbrace{1,\dots,1}_{b \text{ times}},i_{a+b+1},\dots,i_d),
\end{equation}
with $a,b\geq 0$ and $1<i_{a+b+1}\leq\dots\leq i_d$.
Given $l\in \Z$, define $\delta^l(I)\in \Lambda_{n,d}$ as 
$$
\delta^l(I)=
(\underbrace{0,\dots,0}_{a-l\text{ times}},\underbrace{1,\dots,1}_{b+l \text{ times}},i_{a+b+1},\dots,i_d),
$$
provided that $-b\leq l\leq a$. 

Given $I=(I^1,\dots,I^r)\in \Lambda$ and $\pmb{l}=(l_1,\dots,l_r)\in \Z^r$, define
$$
\delta^{\pmb{l}}(I)=(\delta^{l_1}(I^1),\dots,\delta^{l_r}(I^r))\in \Lambda,
$$
provided that each $\delta^{l_j}(I_j)$ is defined. 
Let $l\in \Z$. If $l\geq 0$, set 
$$
\Delta(I,l)= \left\{ \delta^{\pmb{l}}(I) | \pmb{l}=(l_1,\dots,l_r), l_1,\dots,l_r\geq 0, \: l_1+\dots +l_r=l\:\right\}\subset \Lambda.
$$
If $l<0$, set 
$$
\Delta(I,l)=\left\{ J|\: I\in\Delta(J,-l) \right\} \subset \Lambda.
$$
Define also:
$$
s_I^+=\max_{l\geq 0}\{\Delta(I,l)\neq \emptyset\}\in\{0,\dots,d\}= d-d(I,I_0) ,
$$
$$
s_I^-=\max_{l\geq 0}\{\Delta(I,-l)\neq \emptyset\}\in\{0,\dots,d\}= d-d(I,I_1),
$$
$$
\Delta(I)^+=\bigcup_{0\leq l} \Delta(I,l)=\!\!\!\!\bigcup_{0\leq l \leq s_I^+} \!\!\!\! \Delta(I,l), \text{ and }
$$
$$
\Delta(I)^-=\bigcup_{0\leq l} \Delta(I,-l)=\!\!\!\!\bigcup_{0\leq l \leq s_I^-} \!\!\!\!\Delta(I,-l).
$$
Note that if $J\in\Delta(I,l)$, then $d(J,I)=|l|$, $d(J,I_0)=d(I,I_0)+l$, and $d(J,I_1)=d(I,I_1)-l$.
Note also that, if $J\in \Delta(I)^-\cap \Delta(K)^+$, then $d(I,K)=d(I,J)+d(J,K)$.
\end{Notation}

Now we write each vector $e_I^t$ with $d(I,I_0)< k_2$ in terms of the basis $\{e_J|J\in\Lambda\}$. 

First, we consider the Veronese case. Let $I=(i_1,\dots,i_d)\in \Lambda_{n,d}$ be as in \eqref{defI}, so that $s_I^+=a$.
We have:
\begin{align*}
e_I^t&=(e_0^t)^a (e_1^t)^b e_{i_{a+b+1}}^t\cdots e_{i_d}^t
=(e_0+te_1)^a e_1^b e_{i_{a+b+1}}\cdots e_{i_d}=\\
&=e_0^a e_1^b e_{i_{a+b+1}}\cdots e_{i_d}
+t\binom{a}{1}e_0^{a-1}e_1^{b+1} e_{i_{a+b+1}}\cdots e_{i_d}+\dots
+t^a  e_1^{b+a} e_{i_{a+b+1}}\cdots e_{i_d}=\\
&=\sum_{l=0}^a t^l\binom{a}{l}e_0^{a-l}e_1^{b+l} e_{i_{a+b+1}}\cdots e_{i_d}
=\sum_{l=0}^a t^l\binom{a}{l}e_{\delta^l (I)}.
\end{align*}

In the Segre-Veronese case, for any $I=(I^1,\dots,I^r)\in\Lambda$, we have
\begin{align}\label{etonthebasis}
e_{I}^{t}
&=\!\!\!\!\sum_{J=(J^1,\dots,J^r)\in \Delta(I)^+}\!\!\!\!
t^{d(I,J)} c_{(I,J)}e_J,
\end{align}
where $ c_{(I,J)}=\binom{s_{I^1}^+}{d(I^1,J^1)}\cdots \binom{s_{I^r}^+}{d(I^r,J^r)}$. 
So we can rewrite the linear subspace $T_t$ as 
\begin{align}\label{Ttonthebasis}
T_t=&\Big< e_I \: | \: d(I,I_0)\leq k_1;\:
\!\!\!\!\sum_{J\in \Delta(I)^+}\!\!\!\!
t^{d(I,J)} c_{(I,J)}e_J\: | \: d(I,I_0)\leq k_2
\Big >.
\end{align}
For future use, we define the set indexing coordinates $z_I$ that do not vanish on some generator of $T_t$:
$$ 
\Delta=\left\{ I \: | \:  d(I,I_0)\leq k_1\right\}\bigcup \left(\bigcup_{d(I,I_0)\leq k_2} \!\!\!\!\Delta(I)^+\right)\subset \Lambda.
$$

On the other hand, by Proposition \ref{oscsegver}, we have 
$$
T^{k_1+k_2+1}_{e_{I_0}}=\left\langle e_I\: | \: d(I,I_0)\leq k_1+k_2+1\right\rangle
=\{z_I=0\: | \: d(I,I_0)> k_1+k_2+1 \}.
$$
In order to prove that $T_0\subset T^{k_1+k_2+1}_{e_{I_0}}$, we will define a family of linear subspaces $L_t$ 
whose flat limit at $t=0$ is $T^{k_1+k_2+1}_{e_{I_0}}$, and such that $T_t\subset L_t$ for every $t\neq 0$. 
(Note that we may assume that  $k_1+k_2\leq d-2$, for otherwise $T^{k_1+k_2+1}_{e_{I_0}}=\mathbb{P}^{N(\pmb{n},\pmb{d})}$.) 
For that, it is enough to 
exhibit, for each pair $(I,J)\in \Lambda^2$ with $d(I,I_0)> k_1+k_2+1$, 
a polynomial $f(t)_{(I,J)}\in \C[t]$ so that the hyperplane $(H_I)_t\subset\mathbb{P}^{N(\pmb{n},\pmb{d})}$ defined by
$$
z_I+t\left( \sum_{J\in \Lambda, \ J\neq I}f(t)_{(I,J)} z_J\right)=0
$$
satisfies $T_t\subset (H_I)_t$ for every $t\neq 0$.
If $I\notin \Delta$, then we can take $f(t)_{(I,J)}\equiv 0$ $\forall J\in \Lambda$. 
So from now on we assume that  $I\in \Delta$. 
We claim that it is enough to  find a hyperplane of type
\begin{align}\label{hyperplanesv}
F_I=\sum_{J\in \Delta(I)^-}t^{d(I,J)}c_J z_J =0,
\end{align}
with $c_J\in\C$ for $ J\in \Delta(I)^-$, $c_I\neq 0$, and such that $T_t\subset (F_I=0)$ for  $t\neq 0$. 
Indeed, once we find such $F_I$'s, we can take $(H_I)_t$ to be 
$$
z_I+\frac{t}{c_I}\left(\sum_{J\in \Delta(I)^-, \ J\neq I}t^{d(J,I)-1} c_J z_J\right)=0.
$$

In  \eqref{hyperplanesv}, there are $|\Delta(I)^-|$ indeterminates $c_J$. 
Let us analyze what conditions we get by requiring that $T_t\subseteq (F_I=0)$ for  $t\neq 0$. 
For any $e_K^t$ with non-zero coordinate $z_I$, we have $I\in \Delta(K)^+$, and so $K\in \Delta(I)^-$.
Given $K\in \Delta(I)^-$ we have
\begin{align*}
F_I(e_K^t)&\stackrel{(\ref{etonthebasis})}{=}\
F_I\left(\sum_{J\in \Delta(K)^+}\!\!\!\!
t^{d(K,J)} c_{(K,J)}e_J\right) = \\
&\stackrel{(\ref{hyperplanesv})}{=}\!\!\!\!\!\!\!\!\!\!\!\!\sum_{J\in \Delta(I)^-\cap \Delta(K)^+}
\!\!\!\!\!\!\!\!\!\!\!\! t^{d(I,K)-d(K,J)}c_J \left(  t^{d(K,J)} c_{(K,J)}\right)
=t^{d(I,K)}\left[\sum_{J\in \Delta(I)^-\cap \Delta(K)^+}\!\!\!\!\!\!\!\! \!\!\!\! c_{(K,J)}c_J \right].
\end{align*}
Thus:
$$
F_I(e_K^t)=0\: \forall\: t\neq 0 \Leftrightarrow\: \sum_{J\in \Delta(I)^-\cap \Delta(K)^+}\!\!\!\!\!\!c_{(K,J)}c_J =0.
$$
This is a linear condition on the coefficients $c_J$, with $J\in \Delta(I)^-$. Therefore
\begin{align}\label{eqns3sv}
T_t\subset (F_I=0) \mbox{ for } t\neq 0 &\Leftrightarrow 
\begin{cases}
F_I(e_L)=0\   &\forall L\in \Delta(I)^-\cap B[I_0,k_1]  \\ 
F_I(e_K^t)=0 \ \forall t\neq 0 \  &\forall K\in \Delta(I)^-\cap B[I_0,k_2] 
\end{cases}\\&\nonumber
\Leftrightarrow 
\begin{cases}
c_L=0  &\forall L\in \Delta(I)^-\cap B[I_0,k_1]\\
\displaystyle\sum_{J\in \Delta(I)^-\cap \Delta(K)^+}\!\!\!\!\!\!\!\!\!\!\!\!c_{(K,J)}c_J \quad
=0 \ &\forall K\in \Delta(I)^-\cap B[I_0,k_2],
\end{cases}
\end{align}
where $B[J,u]=\{K\in \Lambda |\: d(J,K)\leq u\}$. Set
$$
c=\left|\Delta(I)^-\cap B[I_0,k_1]\right|+\left|\Delta(I)^-\cap B[I_0,k_2]\right|.
$$
The problem is now reduced to finding a solution $(c_J)_{J\in \Delta(I)^-}$ of the linear system given by the $c$ equations (\ref{eqns3sv}) with $c_I\neq 0$.

In the following we write for short $s=s_I^-$, $\overline{s}=s_I^+$ and $D=d(I,I_0)>k_1+k_2+1$. 
We want to find $s+1$ complex numbers $c_I=c_0,c_1,\dots, c_{s}$ satisfying the following conditions 
\begin{align}\label{eqns4sv}
\begin{cases}
c_{j}=0  &\forall j=s,\dots,D-k_1\\
\displaystyle\sum_{l=0}^{d(I,K)}\left(c_{d(I,K)-l}\!\!\!\!\displaystyle\sum_{J\in \Delta(I)^-\cap\Delta(K,l)}
\!\!\!\!\!\!\!\!\!\!\!\!c_{(K,J)} \right)=0 
\ &\forall K\in \Delta(I)^-\cap B[I_0,k_2].
\end{cases}
\end{align}
For $0\leq l\leq d(I,K)$, we have
\begin{align*}
\displaystyle\sum_{J\in \Delta(I)^-\cap\Delta(K,l)}
\!\!\!\!\!\!\!\!\!\!\!\!c_{(K,J)}
&=\!\!\!\!\!\!\!\!\displaystyle\sum_{J\in \Delta(I)^-\cap\Delta(K,l)}
\!\!\!\!
\binom{s_{K^1}^+}{d(K^1,J^1)}\cdots \binom{s_{K^r}^+}{d(K^r,J^r)}
=\sum_{\substack{\pmb{l}=(l_1,\dots,l_r) \\ 0\leq l_1,\dots,l_r \\ l_1+\dots+l_r=l}}
\!\!\!\!\binom{s_{K^1}^+}{l_1}\cdots \binom{s_{K^r}^+}{l_r} = \\
&=\binom{s_{K^1}^+ +\dots+s_{K^r}^+}{l}=\binom{s_{K}^+}{l}=\binom{s_{I}^+ + d(I,K)}{l}.
\end{align*}
Thus the system (\ref{eqns4sv}) can be written as
\begin{align*}
\begin{cases}
c_{j}=0  &\forall j=s,\dots,D-k_1 \\
\displaystyle\sum_{k=0}^{j}\binom{\overline{s} +j}{j-k}c_{k} =0 
\ &\forall j=s,\dots,D-k_2,
\end{cases}
\end{align*}
that is
\begin{align}\label{eqns5sv}
\begin{cases}
c_{s}=0\\
\vdots\\
c_{D-k_1}=0\\
\end{cases}
\begin{cases}
\binom{\overline{s} +s}{0}c_{s}+\binom{\overline{s} +s}{1}c_{s-1}+\cdots+\binom{\overline{s} +s}{s}c_{0}=0\\
\vdots\\
\binom{\overline{s} +D-k_2}{0}c_{D-k_2}+\binom{\overline{s} +D-k_2}{1}c_{D-k_2-1}+\cdots
+\binom{\overline{s} +D-k_2}{D-k_2}c_{0}=0.
\end{cases}
\end{align}

We will show that the linear system (\ref{eqns5sv}) admits a solution with $c_0\neq 0$. 
If $s<D-k_2$, then the system (\ref{eqns5sv}) reduces to $c_s=\dots=c_{D-k_1}=0$. In this case we can take $c_0=1, c_1=\dots,c_s=0$. 
From now on assume that $s\geq D-k_2$. 
Since $c_s=\dots=c_{D-k_1}=0$  in (\ref{eqns5sv}), we are reduced to checking that the following system admits a solution 
$(c_i)_{0\leq i\leq D-k_1+1}$ with $c_0\neq 0$:
\begin{align*}
\begin{cases}
\binom{\overline{s} +s}{s-(D-k_1+1)}c_{D-k_1+1}+\binom{\overline{s} +s}{s-(D-k_1)}c_{D-k_1}+
\cdots+\binom{\overline{s} +s}{s}c_{0}=0\\
\vdots\\
\binom{\overline{s} +D-k_2}{k_1-1-k_2}c_{D-k_1+1}+\binom{\overline{s} +D-k_2}{k_1-k_2}c_{D-k_1}+
\cdots+\binom{\overline{s} +D-k_2}{D-k_2}c_{0}=0.
\end{cases}
\end{align*}
Therefore, it is enough to check that the $(s-D+k_2+1)\times (D-k_1+1)$ matrix
\begin{align*}
M=
\begin{pmatrix}
\binom{\overline{s} +s}{s-(D-k_1+1)}&\binom{\overline{s} +s}{s-(D-k_1)}&
\cdots&\binom{\overline{s}+s}{s-1}\\
\vdots&\vdots&\ddots &\vdots\\
\binom{\overline{s} +D-k_2}{k_1-1-k_2}&\binom{\overline{s} +D-k_2}{k_1-k_2}&
\cdots&\binom{\overline{s} +D-k_2}{D-k_2-1}
\end{pmatrix}
\end{align*}
has maximal rank. Since $s\leq D$ and $D>k_1+k_2+1$, we have $s-D+k_2+1< D-k_1+1$.
So it is enough to show that the $(s-D+k_2+1)\times (s-D+k_2+1)$ submatrix of $M$
\begin{align*}
M'=&
\begin{pmatrix}
\binom{\overline{s} +s}{s-(s-D+k_2+1)}&\binom{\overline{s} +s}{s-(s-D+k_2)}&
\cdots&\binom{\overline{s} +s}{s-1}\\
\vdots&\vdots& \ddots &\vdots\\
\binom{\overline{s} +D-k_2}{D-k_2-(s-D+k_2+1)}&\binom{\overline{s} +D-k_2}{D-k_2-(s-D+k_2)}&
\cdots&\binom{\overline{s} +D-k_2}{D-k_2-1}
\end{pmatrix} =\\
=&
\begin{pmatrix}
\binom{\overline{s} +s}{\overline{s} +s+1-D+k_2}&\binom{\overline{s} +s}{\overline{s} +s-D+k_2}&
\cdots&\binom{\overline{s} +s}{\overline{s} +1}\\
\vdots&\vdots& \ddots &\vdots\\
\binom{\overline{s} +D-k_2}{\overline{s} +s+1-D+k_2}&\binom{\overline{s} +D-k_2}{\overline{s} +s-D+k_2}&
\cdots&\binom{\overline{s} +D-k_2}{\overline{s}+1}
\end{pmatrix}
\end{align*}
has non-zero determinant. To conclude,  observe that the determinant of $M'$ is equal to the determinant of the matrix of binomial coefficients 
$$
M'':=\left(\binom{i}{j}\right)_{\substack{ \hspace{-0.7cm}
\overline{s} +D-k_2\leq i\leq \overline{s} + s\\ \overline{s} +1\leq j\leq \overline{s} +s+1-D+k_2}}.
$$
Since $D-k_2>k_1+1\geq 1$,  $\det(M')=\det(M'')\neq 0$ by \cite[Corollary 2]{GV85}.
\end{proof}

In the following example we work out explicitly the proof of Proposition \ref{limitosculatingspacessegver}.

\begin{Example}\label{exampledegsv}
Consider the case $SV_{(3,2)}^{(1,2)}\subset \P^{23}.$
Then $I_0=(000,00), I_1=(111,11)$ and we have 
$$d=d_1+d_2=3+2=5,s^+_{(000,00)}=5,s^+_{(000)}=3,s^+_{(00)}=2,$$
and
$$
\begin{array}{rl}
\Delta(I_0,1)&=\{\delta^{(1,0)}(I_0)=(001,00),\delta^{(0,1)}(I_0)=(000,01)\}\\
\Delta(I_0,2)&=\{\delta^{(2,0)}(I_0)=(011,00),\delta^{(1,1)}(I_0)=(001,01),\delta^{(0,2)}(I_0)=(000,11)\}\\
\Delta(I_0,3)&=\{\delta^{(3,0)}(I_0)=(111,00),\delta^{(2,1)}(I_0)=(011,01),\delta^{(1,2)}(I_0)=(001,11)\}\\
\Delta(I_0,4)&=\{\delta^{(3,1)}(I_0)=(111,01),\delta^{(2,2)}(I_0)=(011,11)\}\\
\Delta(I_0,5)&=\{\delta^{(3,2)}(I_0)=(111,11)\}\\
B[I_0,1]&=\{(000,00),(001,00),(000,01),(000,02)\}\\
B[I_0,2]&=B[I_0,1]\bigcup \{(011,00),(001,01),(001,02),(000,11),(000,12),(000,22)\}.
\end{array}
$$
Let us work out the case $k_1=2,k_2=1$. By Proposition \ref{oscsegver} we have 
$$T_t=\left\langle e_I\: | \: I\in B[I_0,2]; \: e_I^{t}\: |\:  I\in B[I_0,1]\right\rangle
,\: t\neq 0$$
and
$$T^{k_1+k_2+1}_p=\left\langle e_I\: | \:  I\in B[I_0,4]\right\rangle
=\{p_I=0\: | \:  d(I,I_0)=5 \}.$$
Now we have to write the generators of $T^{k_2}_{\gamma(t)}$ on the basis $(e_I)_{I\in \Lambda}:$
\begin{align}\label{etonthebasisexampleIsv}
\begin{cases}
e_{(000,00)}^t=&\!\!\!\! e_{(000,00)}+t\left(\binomm{3}{1}e_{(001,00)}+\binomm{2}{1}e_{(000,01)}\right)\\
&+t^2\left(\binomm{3}{2}e_{(011,00)}+\binomm{3}{1}\binomm{2}{1}e_{(001,01)}+\binomm{2}{2}e_{(000,11)}\right)\\
&+t^3\left(\binomm{3}{3}e_{(111,00)}+\binomm{3}{2}\binomm{2}{1}e_{(011,01)}+\binomm{3}{1}\binomm{2}{2}e_{(001,11)}\right)\\
&+t^4\left(\binomm{3}{3}\binomm{2}{1}e_{(111,01)}+\binomm{3}{2}\binomm{2}{2}e_{(011,11)}\right)
+t^5 e_{(111,11)}\\
e_{(001,00)}^t=&\!\!\!\! e_{(001,00)}+t\left(\binomm{2}{1}e_{(011,00)}+\binomm{2}{1}e_{(001,01)}\right)\\
&+t^2\left(\binomm{2}{2}e_{(111,00)}+\binomm{2}{1}\binomm{2}{1}e_{(011,01)}+\binomm{2}{2}e_{(001,11)}\right)\\
&+t^3\left(\binomm{2}{2}\binomm{2}{1}e_{(111,01)}+\binomm{2}{1}\binomm{2}{2}e_{(011,11)}\right)+t^4 e_{(111,11)}\\
e_{(000,01)}^t=&\!\!\!\! e_{(000,01)}+t\left(\binomm{3}{1}e_{(001,01)}+\binomm{1}{1}e_{(000,11)}\right)
+t^2\left(\binomm{3}{2}e_{(011,01)}+\binomm{3}{1}\binomm{1}{1}e_{(001,11)}\right)\\
&+t^3\left(\binomm{3}{3}e_{(111,01)}+\binomm{3}{2}\binomm{1}{1}e_{(011,11)}\right)+t^4 e_{(111,11)}\\
e_{(000,02)}^t=&\!\!\!\! e_{(000,02)}+t\left(\binomm{3}{1}e_{(001,02)}+\binomm{1}{1}e_{(000,12)}\right)
+t^2\left(\binomm{3}{2}e_{(011,02)}+\binomm{3}{1}\binomm{1}{1}e_{(001,12)}\right)\\
&+t^3\left(\binomm{3}{3}e_{(111,02)}+\binomm{3}{2}\binomm{1}{1}e_{(011,12)}\right)+t^4 e_{(111,12)}
\end{cases}
\end{align}
Now, given $I\in \Lambda$ with $d(I,I_0)>4=k_1+k_2+1$ we have to find a hyperplane $H_I$ of type
$$c_Ip_I+t \!\!\!\!\sum_{J\in \Delta(I,-1)}\!\!\!\! c_J p_J+
t^2 \!\!\!\!\sum_{J\in \Delta(I,-2)}\!\!\!\! c_J p_J+
t^3 \!\!\!\!\sum_{J\in \Delta(I,-3)}\!\!\!\! c_J p_J=0$$
such that $c_I\neq 0$, and $T_t\subseteq H_I$ for every $t\neq 0.$

In our case there is three such $I'$s, namely: 
$$A_1=I_1=(111,11),A_2=(111,12),A_3=I_1(111,22).$$

Set
$$\Delta:=B[I_0,2]\bigcup \left( \cup_{J\in B[I_0,1]} \Delta(J)^+\right).$$
Since $A_3\notin \Delta$ then $p_{A_3}=0$ works for it.
We have $A_1,A_2\in \Delta.$ Note that $A_2$ appears on (\ref{etonthebasisexampleIsv})
only on the the writing of $e_{(000,02)}^t.$ Also note that 
$$e_{(000,02)}^t\in \left\{ p_{(111,12)}-tp_{(111,02)}=0 \right\}, t\neq 0$$
and since $d((111,02),I_0)=4$ this hyperplane works for $A_2.$

It remains find the hyperplane $H_{A_1},$ we will illustrate the proof of Proposition \ref{limitosculatingspacessegver}
on this case. Observe that $s^+_{A_1}=\overline{s}=0,\ s^-_{A_1}=s=5$ and $D=d(A_1,I_0)=5.$

We are looking for numbers
\begin{align*}
c_0&=c_{(111,11)}\neq 0,c_1=c_{(111,01)}=c_{(011,11)},
c_2=c_{(111,00)}=c_{(011,01)}=c_{(001,11)},\\
c_3&=c_{(011,00)}=c_{(001,01)}=c_{(000,11)},
c_4=c_{(001,00)}=c_{(000,01)},
c_5=c_{(000,00)}
\end{align*}
satisfying the following system:
\begin{align}\label{eqnssvexample}
\begin{cases}
c_{5}=0\\
c_{4}=0\\
c_{3}=0\\
c_{5}+\binom{5}{1}c_{4}+\binom{5}{2}c_{3}+\binom{5}{3}c_{2}+\binom{5}{4}c_{1}+c_{0}=0\\
c_{4}+\binom{4}{1}c_{3}+\binom{4}{2}c_{2}+\binom{4}{3}c_{1}+c_{0}=0
\end{cases}
\end{align}
where the first three conditions came from the condition $T^2_p\subset H_{A_1}$ and the last two came from 
the condition $T_t\subset H_{A_1}, t\neq 0.$ Note that the matrix
\begin{equation*}
M=
\begin{pmatrix}
\binom{5}{3}&\binom{5}{4}\\
\binom{4}{2}&\binom{4}{3}
\end{pmatrix}=
\begin{pmatrix}
10&5\\
6&4
\end{pmatrix}
\end{equation*}
has maximal rank. Therefore, there exist numbers $c_0\neq 0,c_1,c_2,c_3,c_4,c_5$ satisfying the system (\ref{eqnssvexample}).
For instance, we may take $c_0=10,c_1=-4,c_2=1,c_3=c_4=c_5=0$ corresponding to the hyperplane  
$$10p_{(111,11)}-4t(p_{(111,01)}+p_{(011,11)})+t^2(p_{(111,00)}+p_{(011,01)}+p_{(001,11)})=0.$$
\end{Example}

In Proposition\ref{limitosculatingspacessegverII} below we prove that $SV_{\pmb d}^{\pmb n}$ has $(n_1+1)$-osculating regularity. But before we give a simple example illustrating the strategy of its proof.

\begin{Example}\label{exstrongdeg}
Set $d=d_1=4,n=n_1=2$ and $k=1.$ We want that 
$$T_t:=\left\langle T^1_{e_{I_0}},T^1_{\gamma_1(t)},T^1_{\gamma_2(t)}\right\rangle$$
be such that $T_0\subset T^3_{e_{I_0}}.$

Let $I=(1112)\in \Lambda$ and we want to find a family of hyperplanes 
$$H_I=\left\{p_I+t\sum_{I\neq J\in A} c_Jp_J=0\right\}$$
such that $T_t \subset H_I\ \forall t\neq 0$ for some set $A\subset \Lambda$ well chosen.

Note that if we choose 
$$A=\Delta(I)_1^-=\{(1112),(0112),(0012),(0002)\}$$
then we will have to impose
$$e_{(0000)}^{2,t},e_{(0001)}^{2,t},e_{(0002)}^{1,t} \in H_I.$$
This is bad because we have to deal with points on different rational normal curves $\gamma_1$ and $\gamma_2.$
We rather choose
$$A=\Gamma=\{(1112),(0112)\}$$
and only have to impose
$$e_{(0002)}^{1,t} \in H_I.$$

Doing this we use much less variables and conditions, and much better, we are in the same situation
of the proof of Proposition \ref{limitosculatingspacessegver}.
\end{Example}

\begin{Proposition}\label{limitosculatingspacessegverII}
The Segre-Veronese variety $SV_{\pmb{d}}^{\pmb{n}}\subseteq\mathbb{P}^{N(\pmb{n},\pmb{d})}$ has 
$(n_1+1)$-osculating regularity.
\end{Proposition}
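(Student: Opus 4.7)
The plan is to adapt the proof of Proposition \ref{limitosculatingspacesgrassII}, using Proposition \ref{limitosculatingspacessegver} as the hyperplane-constructing engine for the single-curve situation. By the transitivity of the automorphism group of $SV_{\pmb d}^{\pmb n}$ on tuples of general points, I may assume $p_m = e_{I_{m-1}}$ for $m = 1, \ldots, n_1+1$, where $I_m = ((m,\ldots,m),\ldots,(m,\ldots,m))$. For each $j = 2, \ldots, n_1+1$, I connect $e_{I_0}$ to $e_{I_{j-1}}$ by the degree-$d$ rational normal curve $\gamma_j([t:s]) = (se_0+te_{j-1})^{d_1}\otimes\cdots\otimes(se_0+te_{j-1})^{d_r}$ and work in the affine chart $s=1$. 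Setting $e_0^{j,t}=e_0+te_{j-1}$ and $e_i^{j,t}=e_i$ for $i\neq 0$, and passing to symmetric products exactly as in Proposition \ref{limitosculatingspacessegver}, I obtain an expansion
$$e_I^{j,t} = \sum_{J\in\Delta(I)^+_j} t^{d(I,J)}\,c_{(I,J),j}\,e_J,$$
where $\Delta(I,l)_j\subset\Lambda$ is defined as in Notation \ref{defdelta} but with the digit $1$ replaced throughout by the digit $j-1$, and $\Delta(I)^{\pm}_j=\bigcup_{l\geq 0}\Delta(I,\pm l)_j$. Proposition \ref{oscsegver} then yields
$$T_t = \left\langle e_I \,:\, d(I,I_0)\leq k \right\rangle + \sum_{j=2}^{n_1+1}\left\langle e_I^{j,t} \,:\, d(I,I_0)\leq k \right\rangle,$$
while $T^{2k+1}_{e_{I_0}}=\{z_I=0\mid d(I,I_0)>2k+1\}$. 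As in the two proofs of the previous section, it suffices to exhibit, for each $I$ with $d(I,I_0)>2k+1$, a hyperplane $H_I$ of the form $z_I + t\sum_{J\neq I}f_{I,J}(t)\,z_J=0$ containing $T_t$ for every $t\neq 0$.

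If $I$ lies outside $\Delta := \{J : d(J,I_0)\leq k\}\cup\bigcup_j\bigcup_{d(K,I_0)\leq k}\Delta(K)^+_j$, the coordinate hyperplane $z_I=0$ works. The main obstacle, mirroring the corresponding step in Proposition \ref{limitosculatingspacesgrassII}, is the \emph{separation claim} for $I\in\Delta$: there is a unique $j=j(I)\in\{2,\ldots,n_1+1\}$ such that $I$ lies in some $\Delta(K)^+_j$ with $d(K,I_0)\leq k$. I would prove this by a direct count. Suppose $I\in\Delta(K_1)^+_{j_1}\cap\Delta(K_2)^+_{j_2}$ with $j_1\neq j_2$ and $d(K_s,I_0)\leq k$; for each factor $i$, let $b_i$, $c_i$, $o_i$ denote the number of positions of $I^i$ equal to $j_1-1$, to $j_2-1$, and to some other non-zero digit, respectively. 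Since $K_1^i$ coincides with $I^i$ at every position whose value is not $j_1-1$, it has at least $c_i+o_i$ non-zero entries, and symmetrically $K_2^i$ has at least $b_i+o_i$ non-zero entries. Summing and using $d(I,I_0)=\sum_i(b_i+c_i+o_i)$ yields $d(I,I_0)\leq d(K_1,I_0)+d(K_2,I_0)\leq 2k$, contradicting $d(I,I_0)>2k+1$.

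Once $j=j(I)$ is unique, I mimic the one-curve construction of Proposition \ref{limitosculatingspacessegver}. Setting $D=d(I,I_0)$ and $s(I)^-_j$ equal to the total number of digits of $I$ equal to $j-1$ (so $s(I)^-_j\geq D-k$, since $I\in\Delta(K)^+_j$ with $d(K,I_0)\leq k$ forces $d(I,K)\geq D-k$), I take
$$\Gamma = \bigcup_{0\leq l \leq k+1-D+s(I)^-_j}\Delta(I,-l)_j.$$
A second application of the same counting argument shows that every $J\in\Gamma$ has at least $D-k-1>k$ digits equal to $j-1$, so $J$ cannot belong to $\Delta(K)^+_{j'}$ with $j'\neq j$ and $d(K,I_0)\leq k$ (such a $K$ has at most $k$ non-zero digits, and promoting $0$'s to $(j'-1)$'s preserves the $(j-1)$-count). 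Therefore, imposing $T_t\subset H_I$ for a candidate hyperplane $H_I = \{\sum_{J\in\Gamma}t^{d(I,J)}c_J z_J = 0\}$ reduces exactly to the linear system \eqref{eqns5sv} of Proposition \ref{limitosculatingspacessegver}, whose solvability with $c_I\neq 0$ follows from \cite[Corollary 2]{GV85}. Assembling the hyperplanes $H_I$ gives a family $L_t\supset T_t$ ($t\neq 0$) with $\lim_{t\to 0} L_t = T^{2k+1}_{e_{I_0}}$, proving $T_0\subset T^{2k+1}_{e_{I_0}}$.
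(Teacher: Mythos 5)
Your proposal is correct and follows essentially the same route as the paper's proof: the same rational normal curves $\gamma_j$, the same $j$-indexed sets $\Delta(I,l)_j$, the same uniqueness-of-$j$ claim and separation claim for $\Gamma$, and the same reduction to the truncated binomial linear system solved via \cite[Corollary 2]{GV85}. The only differences are cosmetic (an index shift in labeling the points and a slightly more explicit digit-counting phrasing of the uniqueness argument).
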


\begin{proof}
We follow the same argument and computations as in the proof of Proposition~\ref{limitosculatingspacessegver}.

Given general points $p_0,\dots,p_{n_1}\in SV_{\pmb d}^{\pmb n}\subseteq\mathbb{P}^{N(\pmb n,\pmb d)}$, we may 
apply a projective automorphism of $SV_{\pmb{d}}^{\pmb{n}}\subseteq\mathbb{P}^{N(\pmb{n},\pmb{d})}$ and assume that 
$p_j=e_{I_j}$ for every $j$.
Each $p_j$, $j\geq 1$, is connected to $p_0$ by the degree $d$ rational normal curve defined by 
$$
\gamma_j([t:s])=(se_0+te_j)^{d_1}\otimes\dots\otimes (se_0+te_j)^{d_r}.
$$
We work in the affine chart $(s=1)$, and set $t=(t:1)$. 
Given $k\geq 0$, consider the family of linear spaces 
$$
T_t=\left\langle T^{k}_{p_0},T^{k}_{\gamma_1(t)},\dots,T^{k}_{\gamma_{n_1}(t)}\right\rangle,\: t\in \C\backslash \{0\}.
$$
We will show that the flat limit $T_0$  of $\{T_t\}_{t\in \C\backslash \{0\}}$ in
$\G(\dim(T_t),N(\pmb{n},\pmb{d}))$ is contained in $T^{2k+1}_{p_0}$.

We start by writing the linear spaces $T_t$ explicitly  in terms of the basis $\{e_J|J\in\Lambda\}$. 
As in the proof of Proposition~\ref{limitosculatingspacessegver}, it is convenient to introduce some additional notation. 

Given $I\in \Lambda_{n,d}$, we define $\delta^l_j(I),l\geq 0,$ as in Notation~\ref{defdelta}, 
with the only difference that this time we substitute $0$'s with $j$'s instead of $1$'s.
Similarly, for $I=(I^1,\dots,I^r)\in \Lambda$, $\pmb{l}=(l_1,\dots,l_r)\in \Z^r$, and $l\in \Z$, we define the sets 
$\Delta(I,l)_j, \Delta(I)^+_j, \Delta(I)^-_j \subset \Lambda$, and the integers
$s(I)^+_j,s(I)^-_j\in\{0,\dots,d\}$.

For $j=1,\dots, r$, we define the vectors 
$$
e_0^{j,t}=e_0+te_j,e_1^{j,t}=e_1,e_2^{j,t}=e_2,\dots,e_{n_j}^{j,t}=e_{n_j}\in V_j.
$$
Given $I^l=(i_1,\dots,i_{d_j})\in \Lambda_{n_j,d_j}$, we denote by 
$e_{I^j}^{j,t}\in \Sym^{d_j}V_j$  the symmetric product $e_{i_1}^{j,t}\cdot\ldots\cdot e_{i_{d_j}}^{j,t}$.
Given $I=(I^1,\dots,I^r)\in \Lambda=\Lambda_{\pmb{n},\pmb{d}}$, we denote by 
$e_I^{j,t}\in \mathbb{P}^{N(\pmb{n},\pmb{d})}$
the point corresponding to 
$$
e_{I^1}^{j,t}\otimes\dots\otimes e_{I^r}^{j,t}\in \Sym^{d_1}V_1\otimes\dots\otimes \Sym^{d_r}V_r.
$$
By Proposition \ref{oscsegver} we have 
$$
T_t=\left\langle e_I\: | \: d(I,I_0)\leq k; \: e_I^{j,t}\: |\: d(I,I_0)\leq k, j=1,\dots,n_1\right\rangle,\: t\neq 0.
$$
Now we write each vector $e_I^{j,t}$, with $I=(I^1,\dots,I^r)\in\Lambda$ such that $d(I,I_0)\leq k$, in terms of the basis $\{e_J|J\in\Lambda\}$:
\begin{align*}
e_{I}^{j,t}
&=\!\!\!\!\sum_{J=(J^1,\dots,J^r)\in \Delta(I)^+_j}\!\!\!\!
t^{d(I,J)} c_{(I,J)}e_J
\end{align*}
where $ c_{(I,J)}=\binom{s(I^1)^+_j}{d(I^1,J^1)}\cdots \binom{s(I^r)^+_j}{d(I^r,J^r)}$. 
So we can rewrite the linear subspace $T_t$ as 
\begin{align*}
T_t=&\left\langle e_I \: | \: d(I,I_0)\leq k;\:
\!\!\!\!\sum_{J\in \Delta(I)^+_j}\!\!\!\!
t^{d(I,J)} c_{(I,J)}e_J\: | \: d(I,I_0)\leq k,\ j=1,\dots,n_1
\right \rangle,
\end{align*}
and define the set
$$ 
\Delta=
\bigcup_{1\leq j\leq n_1} 
\bigcup_{d(J,I_0)\leq k} \!\!\!\!\Delta(J)^+_j\subset \Lambda.
$$

On the other hand, by Proposition \ref{oscsegver}, we have 
$$
T^{2k+1}_{p_0}=\left\langle e_I\: | \: d(I,I_0)\leq 2k+1\right\rangle=\{z_I=0\: | \: d(I,I_0)> 2k+1 \}.
$$

As in the proof of Proposition \ref{limitosculatingspacessegver},
in order to prove that $T_0\subset T^{2k+1}_{p_0}$,  it is enough to 
exhibit, for each  $I\in \Delta$ with $d(I,I_0)> 2k+1$, a family of hyperplanes of the form
\begin{equation}\label{eq1sv}
\left(F_I= \sum_{J\in \Gamma(I)} t^{d(I,J)}c_J z_J=0 \right)
\end{equation}
such that $T_t\subset (F_I=0)$ for $t\neq 0$, and $c_I\neq 0$. Here $\Gamma(I)\subset \Lambda$ is a suitable subset to be defined later. Let $I\in \Delta$ be such that $d(I,I_0)> 2k+1$.
We claim that there is a unique $j$ such that 
\begin{equation}\label{I->j}
I\in \bigcup_{d(J,I_0)\leq k} \!\!\!\!\Delta(J)^+_j.
\end{equation}
Indeed, assume that $I\in \Delta(J,l)_i$ and $I\in\Delta(K,m)_j$, with $d(J,I_0),d(K,I_0)\leq k$.
If $i\neq j$, then we must have 
$$
 d(J,I_0) \geq m \ \ \text{ and } \ \  d(K,I_0) \geq l.
$$
But then $d(I,I_0) = d(J,I_0) + l \leq d(J,I_0) + d(K,I_0)  \leq 2k$, contradicting the assumption that $d(I,I_0)> 2k+1$.
Let $J$ and $j$ be such that $d(J,I_0)\leq k$ and $I\in \Delta(J)^+_j$.
Note that $d(I,I_0)-s(I)^-_j=d(J,I_0) - s(J)^-_j\leq k$, and hence $k+1-d(I,I_0)+s(I)^-_j>0$.
We set $D=d(I,I_0)$ and define 
\begin{equation}\label{Gamma}
\Gamma(I)=\!\!\!\!\!\!\!\!\bigcup_{0\leq l \leq k+1-D+s(I)^-_j} \!\!\!\!\!\!\!\!\Delta(I,-l)_j \subset \Lambda.
\end{equation}
This is the set to be used in \eqref{eq1sv}. First we claim that 
\begin{equation}\label{eq2sv}
J\in \Gamma(I)\Rightarrow J \notin  \bigcup_{\substack{1\leq i\leq n_1 \\ i\neq j} }
\bigcup_{d(I,I_0)\leq k} \!\!\!\!\Delta(I)^+_i.
\end{equation}
Indeed, assume that $J\in \Delta(I,-l)_j$ with $0\leq l \leq k+1-D+s(I)^-_j$, and
$J\in \Delta(K)^+_i$  for some $K$ with $d(K,I_0)\leq k$.
If $i\neq j$, then 
$$
s(K)^-_j=s(J)^-_j=s(I)_j^- - l\geq D-(k+1)>k,
$$
contradicting the assumption that $d(K,I_0)\leq k$.
Therefore, if $F_I$ is as in \eqref{eq1sv} with $\Gamma(I)$ as in \eqref{Gamma}, then we have 
$$
\left\langle e_I\: | \: d(I,I_0)\leq k; \: \sum_{J\in \Delta(I)^+_i}\!\!\!\!
t^{d(I,J)} c_{(I,J)}e_J\: | \: d(I,I_0)\leq k,\ i=1,\dots,n_1, i\neq j \right\rangle \subset (F_I=0) , t\neq 0,$$
and thus
$$
T_t\subset (F_I=0), t \neq 0 \Longleftrightarrow \left\langle \sum_{J\in \Delta(I)^+_j}\!\!\!\!
t^{d(I,J)} c_{(I,J)}e_J\: | \: d(I,I_0)\leq k \right\rangle \subset (F_I=0) , t\neq 0.
$$

The same computations as in the  proof of Proposition \ref{limitosculatingspacessegver} yield
\begin{equation}
\label{eq3}T_t\subset (F_I=0), t \neq 0 \Longleftrightarrow \sum_{J\in \Delta(K)^+_j \cap \Gamma(I)}\!\!\!\! c_{J}c_{(K,J)}=0
\ \ \forall  K\in \Delta(I)^-_j\cap B[I_0,k].
\end{equation}
So the problem is reduced to finding a solution $(c_J)_{J\in\Gamma(I)}$ for the linear system \eqref{eq3} such that $c_I\neq 0$. 
We set $c_J=c_{d(I,J)}$ and reduce, as in the proof of Proposition \ref{limitosculatingspacessegver}, to the linear system
\begin{equation}\label{linsyssv}
\displaystyle\sum_{l=0}^{k+1-D+s(I)^-_j}\binom{d-i}{D-l-i}c_{l} =0 ,
\ \ \ D-s(I)^-_j\leq i \leq k
\end{equation}
in the variables $c_0,\dots,c_{k+1-D+s(I)^-_j}$. 
The argument used in the end of Proposition \ref{limitosculatingspacessegver} shows that the linear system (\ref{linsyssv}) admits a solution with $c_0\neq 0$.
\end{proof}

Proposition \ref{limitosculatingspacessegverII} says that $SV_{\pmb d}^{\pmb n}$ has $(n_1+1)$-osculating regularity. In particular, the Veronese variety $V_d^n$ has $(n+1)$-osculating regularity. Note that in principle $SV_{\pmb d}^{\pmb n}$ may have bigger osculating regularity. On other hand, there are surfaces not having $3$-osculating regularity as the example below shows.

\begin{Example}
Let us consider the rational normal scroll $X_{(1,7)}\subset\mathbb{P}^9$. A parametrization of $X_{(1,7)}$ is given by 
$$
\begin{array}{cccc}
\phi: & \mathbb{A}^2 & \longrightarrow & \mathbb{A}^9\\ 
 & (u,\alpha) & \mapsto & (\alpha u^7,\alpha u^6,\dots, \alpha u, \alpha, u)
\end{array} 
$$ 
Note that $\frac{\partial^2\phi}{\partial\alpha^2} = \frac{\partial^3\phi}{\partial\alpha^3} = \frac{\partial^3\phi}{\partial\alpha^2u} = 0$, while there are not other relations between the partial derivatives, up to order three, of $\phi$ at the general point of $X_{(1,7)}$. Therefore
$$\dim (T^3X_{(1,7)}) = 10-3-1 = 6.$$
On the other hand by \cite[Lemma 4.10]{DP96} we have that $\dim(\sec_3(X_{(1,7)})) = 7$. Hence by Terracini's lemma \cite[Theorem 1.3.1]{Ru03} the span of three general tangent spaces of $X_{(1,7)}$ has dimension seven. Therefore $X_{(1,7)}$ has not $3$-osculating regularity.
\end{Example}

\section{Non-secant defectivity of Segre-Veronese varieties}\label{mainsec}

In this section we study the dimension of secant varieties of Segre-Veronese varieties. 
First we state our main result, Theorem \ref{mainsv},
then we give some examples.

\begin{Theorem}\label{mainsv}
The Segre-Veronese variety $SV^{\pmb n}_{\pmb d}$ is not $h$-defective where
$$h\leq n_1h_{n_1+1}(d-2)+1$$
and $h_{n_1+1}(\cdot)$ is as in Definition \ref{defhowmanytangent}.
\end{Theorem}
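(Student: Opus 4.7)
The plan is to reduce the statement to a direct application of the master Theorem \ref{lemmadefectsviaosculating}, with the parameters $m = n_1+1$, $l = n_1$, and $k_1 = \cdots = k_{n_1} = d-2$. With these choices, the conclusion of Theorem \ref{lemmadefectsviaosculating} reads
\[
  h \leq \sum_{j=1}^{n_1} h_{n_1+1}(d-2) + 1 = n_1\, h_{n_1+1}(d-2) + 1,
\]
which is exactly the bound asserted by Theorem \ref{mainsv}. Since $d \geq 3$, we have $d-2\geq 1$, so the hypothesis $k_j \geq 1$ required in Theorem \ref{lemmadefectsviaosculating} is fulfilled.

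It remains to verify the three inputs required by Theorem \ref{lemmadefectsviaosculating}. First, the strong $2$-osculating regularity of $SV_{\pmb d}^{\pmb n}$ is provided by Proposition \ref{limitosculatingspacessegver}: two general points of $SV_{\pmb d}^{\pmb n}$ may be joined by the degree $d$ rational normal curve obtained by symmetrizing the line through them, and the flat limit of $\langle T^{k_1}_p, T^{k_2}_{\gamma(t)}\rangle$ along that curve lies inside $T^{k_1+k_2+1}_p$. Second, the $(n_1+1)$-osculating regularity is provided by Proposition \ref{limitosculatingspacessegverII}, which generalizes the previous degeneration to $n_1$ simultaneous moving points using the $n_1$ rational normal curves $\gamma_j$.

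Third, I need the osculating projection $\Pi_{T^{d-2,\dots,d-2}_{p_1,\dots,p_{n_1}}}$ to be generically finite for a general choice of $n_1$ points on $SV_{\pmb d}^{\pmb n}$. Since the automorphism group of $SV_{\pmb d}^{\pmb n}$ acts transitively enough to bring any $n_1$ general points into the coordinate points $e_{I_0},\dots, e_{I_{n_1-1}}$, it suffices to check generic finiteness at these coordinate points. This is exactly the content of Proposition \ref{projoscseveralpoints}, which in fact establishes birationality of $\Pi_{T_{e_{I_0},\dots,e_{I_{n_1-1}}}^{d-2,\dots,d-2}}$ under the standing assumption $r, d \geq 2$ (the case $r=1$, i.e.\ pure Veronese, can be handled by Lemma \ref{projoscveroneseseveralpoints} and Corollary \ref{projoscveroneseseveralpointscor}).

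With the three inputs in hand, the proof is essentially a bookkeeping application of Theorem \ref{lemmadefectsviaosculating}. There is no real obstacle left to overcome at this stage — the substantive work was carried out in Sections \ref{oscproj} and \ref{degtanoscsv}; the mildest subtlety is simply to check that the special coordinate configuration used in Proposition \ref{projoscseveralpoints} may be taken as the general configuration required by Theorem \ref{lemmadefectsviaosculating}, which follows from the transitivity of $\Stab(SV_{\pmb d}^{\pmb n}) \subset \mathrm{PGL}(N(\pmb n, \pmb d)+1)$ on generic $n_1$-tuples of points.
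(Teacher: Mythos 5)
Your proposal is correct and follows essentially the same route as the paper: the paper's proof is exactly the combination of Propositions \ref{limitosculatingspacessegver} and \ref{limitosculatingspacessegverII} (the two regularities), Proposition \ref{projoscseveralpoints} (birationality of the $(d-2,\dots,d-2)$-osculating projection from $n_1$ coordinate points), and Theorem \ref{lemmadefectsviaosculating} with the parameters $l=n_1$, $k_j=d-2$ that you identify. Your extra remarks — that $d\geq 3$ guarantees $k_j\geq 1$, that the coordinate configuration suffices by homogeneity, and that the $r=1$ case falls back on Corollary \ref{projoscveroneseseveralpointscor} — are sound bookkeeping that the paper leaves implicit.
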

\begin{proof}
We have shown in Propositions~\ref{limitosculatingspacessegver} and \ref{limitosculatingspacessegverII} that 
the Segre-Veronese variety $SV_{\pmb d}^{\pmb n}$ has 
strong $2$-osculating regularity, and $(n_1+1)$-osculating regularity.
The result then follows immediately from Proposition \ref{projoscseveralpoints} and Theorem~\ref{lemmadefectsviaosculating}.
\end{proof}

\begin{Remark}
Write 
$$
d-1 = 2^{\lambda_1}+2^{\lambda_2}+\dots + 2^{\lambda_s}+\epsilon
$$
with $\lambda_1 >\lambda_2>\dots >\lambda_s\geq 1$, $\epsilon\in\{0,1\}$, so that $\lambda_1 = \lfloor \log_2(d-1)\rfloor$.
By Theorem \ref{mainsv} $SV^{\pmb n}_{\pmb d}$ is not $h$-defective for
$$
h\leq n_1((n_1+1)^{\lambda_1-1}+\dots + (n_1+1)^{\lambda_s-1})+1.
$$
So we have that asymptotically  $SV^{\pmb n}_{\pmb d}$ is not $h$-defective for
$$
h\leq n_1^{\lfloor \log_2(d-1)\rfloor}.
$$
\end{Remark}


Recall  \cite[Proposition 3.2]{CGG03}:
except for the Segre product $\mathbb{P}^{1}\times\mathbb{P}^{1}\subset\mathbb{P}^3$, 
the Segre-Veronese variety $SV^{\pmb n}_{\pmb d}$ is not $h$-defective for $h\leq \min\{n_i\}+1$,
independently of $\pmb d$.
In the following table, for a few values of $d$, we compute the highest value of $h$ for which Theorem~\ref{mainsv} 
gives non $h$-defectivity of $SV^{\pmb n}_{\pmb d}$.

\medskip

\begin{center}
\begin{tabular}{|c|l|}
\hline 
$d = d_1+\dots +d_r$ & $h$\\ 
\hline 
$3$ & $n_1+1$\\ 
\hline 
$5$ & $n_1(n_1+1)+1$\\ 
\hline 
$7$ & $n_1((n_1+1)+1)+1$\\ 
\hline 
$9$ & $n_1(n_1+1)^2+1$\\ 
\hline 
$11$ & $n_1((n_1+1)^2+1)+1$\\ 
\hline 
$13$ & $n_1((n_1+1)^2+n_1+1)+1$\\ 
\hline
$15$ & $n_1((n_1+1)^2+(n_1+1)+1)+1$\\
\hline 
$17$ & $n_1(n_1+1)^3+1$\\
\hline
\end{tabular} 
\end{center}

\begin{Remark}
Note that the bound of Theorem \ref{mainsv} is sharp in some cases. For instance, by Proposition \ref{def} we know that $SV^{(1,1)}_{(2,2)}$, $SV^{(1,1,1)}_{(1,1,2)}$, $SV^{(1,1,1,1)}_{(1,1,1,1)}$ are $3$-defective, and $SV^{(2,2,2)}_{(1,1,1)}$ is $4$-defective. On the other hand $SV^{(1,1)}_{(2,2)}$, $SV^{(1,1,1)}_{(1,1,2)}$, $SV^{(1,1,1,1)}_{(1,1,1,1)}$ are not $2$-defective, and $SV^{(2,2,2)}_{(1,1,1)}$ is not $3$-defective
by Theorem \ref{mainsv}.
\end{Remark}

Now we show that in the Theorem \ref{mainsv} one can not in general change $n_1$ for $n_2.$

\begin{Example}
Consider $X=SV^{1,n}_{4,2}$ with $n\geq 2.$
Then $X$ is not $3$-defective.
If Theorem \ref{mainsv} could be improved from $n_1h_{n_1+1}(d-2)+1$
to $n_2h_{n_2+1}(d-2)+1$ then it would imply that $X$ is not $(n(n+1)+1)$-defective,
but $X$ is $(2n+3)$-defective by \cite[Theorem 3.4]{Ab08}.
\end{Example}

\part{Grassmannians' blow-ups and Mori dream spaces}\label{part2}
\chapter{Overview}\label{intro2}
Mori dream spaces play an important role in birational algebraic geometry. They were first introduced by Hu and Keel in \cite{HK}, and have been studied since then, see for instance
\cite{CT06,BCHM,Oka,CT,AM16}. 

\begin{Definition}
A normal projective variety $X$ is a \textit{Mori dream space}, MDS for short, if
\begin{itemize}
\item[(a)] $X$ is $\mathbb{Q}$-factorial and $\Pic(X)$ is finitely generated;
\item[(b)] $\Nef(X)$ is generated by finitely many semiample divisors;
\item[(c)] there exist finitely many small $\mathbb{Q}$-factorial modifications $f_i:X\dasharrow X_i$ such that each $X_i$ satisfies $\rm(b)$, and 
$$\Mov(X)=\bigcup_i \Nef(X_i).$$
\end{itemize}
\end{Definition}

The birational geometry of a Mori dream space $X$ can be encoded in some finite data, namely its cones of effective and movable divisors $\Eff(X)$ and $\Mov(X)$ together with a chamber decomposition on them, called the \textit{Mori chamber decomposition} of $X$.
Some classes of varieties are known to be Mori dream spaces:

\begin{itemize}
\item[-] A normal $\mathbb{Q}$-factorial projective variety of Picard number one is a Mori dream space if and only if $\Pic(X)$ is finitely generated.
\item[-] The image of a MDS is a MDS. More precisely, let $f:X\to Y$ be a contraction, see definition in Section \ref{sec51}, and $X,Y$ normal $\Q$-factorial projective varieties. If $X$ is a Mori dream space, then $Y$ is as well \cite{Oka}.
\item[-] Any projective $\mathbb{Q}$-factorial toric variety is a Mori dream space \cite{Re83,Cox}.
\item[-] Any projective $\mathbb{Q}$-factorial spherical  variety is a Mori dream space \cite{Brion93}, see Chapter \ref{cap6} for the definition and a discussion about some spherical varieties.
\item[-] Any smooth Fano or weak Fano (see Chapter \ref{cap7}), or more generally, any smooth log Fano variety is a Mori dream space \cite{BCHM}.
\end{itemize}

Now we recall a different characterization of Mori dream spaces.

\begin{Definition}
Let $X$ be a normal $\mathbb{Q}$-factorial projective variety with finitely generated and free Picard group and Picard number $\rho_X$. Let $D_1,...,D_{\rho_X}$ be a basis of Cartier divisors of $\Pic(X)$. We define the Cox ring of $X$ as follows 
$$\Cox(X) =\!\!\!\! \bigoplus_{m_1,...,m_{\rho_{X}}\in\mathbb{Z}}H^0\left(X,\sum_{i=1}^{\rho_{X}}m_{i}D_i\right).$$
Different choices of divisors $D_1,...,D_{\rho_X}$ yield isomorphic algebras.
\end{Definition}

For a comprehensive survey on Cox rings we refer to  \cite{coxrings}, and  for the proof of the following theorem we refer to \cite[Proposition 2.9]{HK}. 

\begin{Theorem}
A $\mathbb{Q}$-factorial projective variety $X$ with $\Pic(X)_{\mathbb{R}}\cong N^1(X)$ is a Mori dream space if and only if $\Cox(X)$ is finitely generated. In this case $X$ is a $GIT$ quotient of the affine variety $Y = \Spec(\Cox(X))$ by the action of a torus of dimension $\rho_X$. 
\end{Theorem}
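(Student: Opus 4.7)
The plan is to prove the two implications separately, and along the way realize $X$ as a GIT quotient of $Y=\Spec(\Cox(X))$ by the Picard torus $T=\Hom(\Pic(X),\mathbb{G}_m)$, whose dimension is exactly $\rho_X$ since $\Pic(X)$ is free of that rank.

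For the direction $(\Leftarrow)$, assume $\Cox(X)$ is finitely generated as a $\Pic(X)$-graded $\mathbb{C}$-algebra. Then $Y:=\Spec(\Cox(X))$ is an affine variety on which $T$ acts with weight decomposition equal to the $\Pic(X)$-grading. I would first fix an ample class $\alpha\in\Pic(X)$ and identify $X$ with the GIT quotient $Y/\!/_{\!\alpha}T$ by matching the semi-invariant sections on $Y$ of weight $m\alpha$ with $H^0(X,m\alpha)$. Next I would vary the linearization $\alpha$ inside the cone of $T$-ample classes and invoke the variation-of-GIT machinery (Dolgachev--Hu, Thaddeus) to obtain a finite wall-and-chamber decomposition. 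Each chamber produces a projective variety $X_i$ whose nef cone is the closure of that chamber and is generated by finitely many semiample divisors (by construction via a projective GIT quotient), and adjacent chambers are linked by flips coming from crossing walls, i.e.\ small $\mathbb{Q}$-factorial modifications. Identifying $\Pic(X)_{\mathbb{R}}$ with $N^1(X)$, the union of the $\Nef(X_i)$ covers exactly $\Mov(X)$, verifying properties (a), (b), (c) of the definition.

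For the converse $(\Rightarrow)$, assume $X$ is a MDS with small $\mathbb{Q}$-factorial modifications $f_i:X\dasharrow X_i$ whose nef cones tile $\Mov(X)$. Since each $f_i$ is small, pullback $f_i^{\ast}$ identifies $\Cox(X)$ with $\Cox(X_i)$ canonically as multigraded rings. On each $X_i$, the nef cone is rational polyhedral and generated by finitely many semiample classes $D_{i,1},\dots,D_{i,k_i}$; for each such class the section ring $\bigoplus_m H^0(X_i, mD_{i,j})$ is finitely generated, because $D_{i,j}$ semiample induces a morphism to a projective variety and the section ring is the affine cone over the image. I would then take the finite collection $\{D_{i,j}\}$ across all chambers, together with the finitely many effective rigid divisor classes that span $\Eff(X)\setminus\Mov(X)$ (the divisors contracted by some $f_i$, which form finitely many rays because there are finitely many $X_i$), and argue that the corresponding sections generate $\Cox(X)$. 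The key point is that any effective class $D$ lies either in some $\Nef(X_i)$, where its sections are polynomial combinations of those of the $D_{i,j}$, or differs from such a class by an effective combination of rigid divisors.

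The main obstacle will be the converse direction: organizing the combinatorial argument that a single finite set of sections, extracted chamber by chamber, really generates the whole multigraded $\Cox(X)$, and properly handling the boundary between $\Mov(X)$ and $\Eff(X)$. A secondary subtle point in the GIT direction is checking that the chamber decomposition produced by VGIT on $Y$ agrees with the Mori chamber decomposition on $\Mov(X)$, and that the semistable = stable locus is nonempty and has the right quotient; here I would follow \cite[Proposition~2.9]{HK} closely. Once both directions are in place, the GIT description of $X$ as $Y/\!/_{\!\alpha}T$ for ample $\alpha$, together with $\dim T=\rho_X$, gives the final assertion of the statement.
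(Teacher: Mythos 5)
The paper does not actually prove this statement: it is quoted from Hu and Keel, and the text explicitly defers to \cite[Proposition 2.9]{HK} for the proof. So there is no in-paper argument to compare yours against; what you have written is, in outline, exactly the Hu--Keel argument (GIT quotient of $\Spec(\Cox(X))$ by the Picard torus in one direction, chamber-by-chamber finite generation in the other), which is the standard and essentially the only known route.

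On the merits of the sketch itself: the plan is sound, but two points you gloss over carry most of the technical weight. In the converse direction, finite generation of $\bigoplus_m H^0(X_i,mD_{i,j})$ for each single semiample $D_{i,j}$ is not enough; you need finite generation of the \emph{multigraded} ring $\bigoplus_{m_1,\dots,m_{k_i}}H^0\bigl(X_i,\sum_j m_jD_{i,j}\bigr)$ over the whole cone $\Nef(X_i)$, which is \cite[Lemma 2.8]{HK} (one reduces to pullbacks of ample classes under the morphisms defined by the semiample generators), and the passage from $\Mov(X)$ to all of $\Eff(X)$ by splitting off the fixed divisorial part rests on the chamber structure of $\Eff(X)$ in \cite[Proposition 1.11]{HK}, which itself has to be established. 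In the GIT direction, the assertion that the VGIT walls and chambers for the $T$-action on $Y$ coincide with the Mori chambers, and that the chamber quotients are $\mathbb{Q}$-factorial with semistable $=$ stable, is precisely the content of Hu--Keel's proof rather than a routine check; you correctly flag it, but it cannot be dispatched by simply ``invoking'' Dolgachev--Hu and Thaddeus. Neither point is a wrong turn --- they are the places where your outline would have to be filled in by reproducing the arguments of \cite{HK}.
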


We also observe that Cox introduced the Cox ring of a toric variety in \cite{Cox} refering to it as  the \textit{homogeneous total ring}, and proving that it is a polynomial ring. Now it is known that the Cox ring of a Mori dream space $X$ is a polynomial ring if and only if $X$ is a toric variety, \cite[Proposition 2.10]{HK}. 
In \cite{HKW} Hausen, Keicher and Wolf use $\Cox(X)$ to study the group of automorphisms of a Mori dream space $X.$
These results show how the geometry of $X$ is reflected in algebraic properties of $\Cox(X).$

A general problem is to determine whether a given variety $X$ is a MDS. Once this is the case, one may want to describe the ring $\Cox(X)$ and the Mori chamber decomposition of $\Eff(X).$ 

In practice, to determine exactly the Cox ring of a Mori dream space may be quite complicated but some work has been done, mainly in the surface case \cite{AHL,AL11,DLHHK15,AGL16}.
To determine the Mori chamber decomposition of a Mori dream space is also hard in general, although some successful attempts have been made  \cite{Hu15,BKR16}.

Next, we discuss two special classes of Mori dream spaces.

In \cite[Question 3.2]{HK} Hu and Keel asked if $\overline{M}_{0,n}$ is a Mori dream space. If $n = 4,5$ this is well known because $\overline{M}_{0,4}\cong\mathbb{P}^1$ and $\overline{M}_{0,5}$ is a del Pezzo surface of degree five. By \cite{HK} $\overline{M}_{0,n}$ is log Fano if and only if $n\leq 6$. In particular $\overline{M}_{0,6}$ is a Mori dream space.
In addition, Castravet in \cite{Ca09} gave an explicit proof of the finite generation of $\Cox(\overline{M}_{0,6}).$

Later, Castravet and Tevelev in \cite{CT} proved that $\overline{M}_{0,n}$ is not a Mori dream space for $n > 133.$ This result has been improved using the same techniques of Castravet and Tevelev.
Gonz\`alez and Karu in \cite{GK} showed that $\overline{M}_{0,n}$ is not a Mori dream space for $n > 12,$ and recently  Hausen,  Keicher and Laface observed in \cite{HKL} that the same is true for $n>10$ as well. Therefore, it remains open
whether $\overline{M}_{0,n}$ is a Mori dream space for $n = 7, 8, 9.$

The following result was proved in one direction by Mukai \cite{Mu04} and on the other by Castravet and Tevelev \cite{CT06}:
\begin{Theorem}
Let $X^n_k$ be the blow-up of $\P^n$ at $k$ points in general position, with $n\geq 2$ and $k\geq 0$.
Then  $X^n_k$ is a Mori dream space if and only if one of the following holds:
\begin{itemize}
	\item[-] $n=2$ and $k\leq 8$,
	\item[-] $n=3$ and $k\leq 7$,
	\item[-] $n=4$ and $k\leq 8$,
	\item[-] $n>4$ and $k\leq n+3$.
\end{itemize}
\end{Theorem}

In \cite{Mu05} Mukai gives explicitly the Mori chamber decomposition of  $X^n_k$ on the conditions above. In \cite{AM16} Araujo and Massarenti give a explicit log Fano structure of $X^n_k$ in the cases when it is a MDS.

The results in this dissertation arise from the interest in describing under which conditions the blow-up $\G(r,n)_k$ at $k$ general points of the Grassmannian $\G(r,n)$ is a Mori dream space. This was inspired by the analogous problem for $X^n_k.$
More precisely, one can consider two problems.

\begin{Problem}\label{q1}
For which triples $(r,n,k)$ is $\G(r,n)_k,$ the blow up of the Grassmannian $\G(r,n)$ at $k$ general points, a Mori dream space?
\end{Problem}

\begin{Problem}\label{q2}
If $\G(r,n)_k$ is a Mori dream space, describe the Mori chamber decomposition of its effective cone.
\end{Problem}

These two are our guideline problems for Part \ref{part2}. At the best of our knowledge these two problems are open in general.

In Chapter \ref{cap5} our aim is to the study Problem \ref{q2} for $r=k=1,$ and we prove the following.

\begin{Theorem}\label{teointro}
Let $n\geq 4$ and consider a basis of $\Pic(\G(1,n)_1)$ given by the pullback $H$ of an ample divisor on $\G(1,n),$
and by the class $E$ of the exceptional divisor.
Then $\G(1,n)_1$ is a Mori dream space  with 
effective cone \begin{center}
$\Eff(\G(1,n)_1)=\cone(E,H-2E).$
\end{center} 

Its Mori chamber decomposition is given by the walls $E,H,H-E,$ and $H-2E.$

There exists an isomorphism in codimension two $\eta:\G(1,n)_1\dasharrow \G(1,n)_1^+$ to another Mori dream space $\G(1,n)_1^+$ with 
$\Nef(\G(1,n)_1^+)=\cone(H-E,H-2E).$

The cone of movable divisors is 
$\Mov(\G(1,n)_1)\!=\!\cone(H,H-2E).$

The variety $\G(1,n)_1^+$ is a fibration over $\G(1,n-2)$ with fibers isomorphic to $\P^4,$ and if $n\geq 5$, then $\G(1,n)_1^+$ is a Fano variety.
\end{Theorem}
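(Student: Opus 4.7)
\emph{Picard group and effective cone.} I would begin with the standard blow-up formulas $\Pic(\G(1,n)_1) = \Z H \oplus \Z E$ and $K_{\G(1,n)_1} = -(n+1)H + (2n-3)E$. For the claim $\Eff(\G(1,n)_1) = \cone(E, H - 2E)$, the ray $E$ is obvious, while the effectiveness of $H-2E$ follows from Lemma \ref{keylemma}(3) applied with $r=1$, $i=0$: the Schubert divisor $\Sigma_1 \subset \G(1,n)$ has multiplicity $2$ at $p = R'_0$, so its strict transform has class $H-2E$. For extremality, one works in the affine Pl\"ucker chart $\{p_{01}\neq 0\}$ around $p$, in which the linear Pl\"ucker coordinates $p_{0j}, p_{1j}$ ($j\ge 2$) vanish to order $1$ in the tangent coordinates while the quadratic ones $p_{ij}$ ($i,j\ge 2$) vanish to order $2$. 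Any section of $\mathcal{O}_{\G(1,n)}(a)$ therefore has $\mult_p\le 2a$, which forces $b\le 2a$ on every effective class $aH-bE$.

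\emph{Cone of curves, $\Nef$, and $\Mov$.} Let $h$ be the class of a line in $\G(1,n)$ missing $p$, $e$ the class of a line in $E\cong\P^{2n-3}$, and $\tilde\ell=h-e$ the strict transform of a line through $p$ in $\G(1,n)$. Since $\mult_p C\le \deg_H C$ for any irreducible curve in $\G(1,n)$, one gets $\NE(\G(1,n)_1) = \cone(e, \tilde\ell)$ and, by duality, $\Nef(\G(1,n)_1) = \cone(H, H-E)$. For the movable cone, $|H|$ is base-point-free and $\Bsl|H-2E| = \tilde R$, the strict transform of $R = \G(1,n)\cap T_p\G(1,n)$ from Lemma \ref{Rslemma}: every hyperplane of $\P^N$ containing $T_p\G(1,n)$ cuts $\G(1,n)$ in a divisor containing $R$ set-theoretically. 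Since $\codim \tilde R = n-2\ge 2$, both boundary rays of $\cone(H, H-2E)$ give movable classes, while past $H$ the class $E$ becomes a fixed component; hence $\Mov(\G(1,n)_1) = \cone(H, H-2E)$.

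\emph{Construction of $\G(1,n)_1^+$.} By Proposition \ref{oscprojbirational} with $r=s=1$, the rational map $\Pi_{T_p}:\G(1,n)\dasharrow \G(1,n-2)$ induced by $|H-2E|$ is a fibration whose generic fibre is $\G(1,3)\cong Q_4 \subset \P^5$, and every such fibre contains $p$ as a smooth point. The key idea is that projection from $p$ realises the birational map $Q_4\dasharrow \P^4$ on each fibre, resolved by $\Bl_p Q_4\to \P^4$ contracting the strict transform of the tangent hyperplane section $T_pQ_4\cap Q_4$. Globally, I would set $\G(1,n)_1^+ := \mathbb{P}(\mathcal{E})\to \G(1,n-2)$ with $\mathcal{E}$ the rank-$5$ quotient bundle
$$
\mathcal{E} \;=\; \wedge^2(V_p\oplus\widetilde{\mathcal{U}}_{n-2})\,\big/\,\wedge^2 V_p \;\cong\; (V_p\otimes\widetilde{\mathcal{U}}_{n-2})\oplus\det\widetilde{\mathcal{U}}_{n-2},
$$
where $\widetilde{\mathcal{U}}_{n-2}$ is the universal rank-$2$ subbundle on $\G(1,n-2)$ lifted as a subbundle of $\C^{n+1}/V_p$. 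The fibrewise projections from $[V_p]$ assemble into a birational map $\eta:\G(1,n)_1\dasharrow \G(1,n)_1^+$ that is an isomorphism outside $\tilde R$; identifying pullbacks shows $\mathcal{O}_{\mathbb{P}(\mathcal{E})}(1)$ corresponds to $H-E$ and the Pl\"ucker class on $\G(1,n-2)$ corresponds to $H-2E$, yielding $\Nef(\G(1,n)_1^+) = \cone(H-E, H-2E)$.

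\emph{MDS and chambers.} Because $\eta$ is a small $\mathbb{Q}$-factorial modification, $K_{\G(1,n)_1^+}$ has the same class as $K_{\G(1,n)_1}$; hence $-K_{\G(1,n)_1^+} = (n+1)H - (2n-3)E = 5(H-E) + (n-4)(H-2E)$. This lies in the interior of $\Nef(\G(1,n)_1^+)$ for $n\ge 5$ (so $\G(1,n)_1^+$ is Fano) and on its boundary while remaining big for $n=4$ (weak Fano). By \cite{BCHM}, $\G(1,n)_1^+$ is a Mori dream space, and so is $\G(1,n)_1$ through the SQM $\eta$. The Mori chamber decomposition of $\Eff(\G(1,n)_1)$ is then read off as the three chambers $\cone(E,H)$ (the divisorial contraction $\pi:\G(1,n)_1\to \G(1,n)$, with $E$ as fixed component), $\cone(H, H-E) = \Nef(\G(1,n)_1)$, and $\cone(H-E, H-2E) = \Nef(\G(1,n)_1^+)$, separated by the walls $E, H, H-E, H-2E$. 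The main obstacle will be the construction step: the numerics of $-K$ and the fibrewise projection from $p$ on $Q_4$ force the answer, but rigorously identifying the bundle $\mathcal{E}$, verifying that $\eta$ is a bona fide SQM, and checking that $\G(1,n)_1^+$ really is the $\mathbb{P}^4$-bundle (rather than the naive $Q_4$-bundle coming from the incidence $\{(L,L'): L\subset \pi_I^{-1}(L')\}\subset \G(1,n)\times\G(1,n-2)$) requires careful analysis of the global incidence geometry.
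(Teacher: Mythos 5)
Your outline reaches all the right conclusions, and the supporting cast is the same as in the paper: the projections from $p$ and from $T_p\G(1,n)$, the subvariety $R=T_p\G(1,n)\cap\G(1,n)$ with $\codim\widetilde R=n-2$, the $\mathbb{P}^4$-fibration over $\G(1,n-2)$, and the chamber count via Corollary \ref{nef cone border3}. Where you genuinely diverge is in how $\G(1,n)_1^+$ is produced. You propose to write it down directly as $\mathbb{P}(\mathcal{E})$ for an explicit rank-$5$ bundle on $\G(1,n-2)$ and then assemble $\eta$ from the fibrewise projections $Q^4\dasharrow\mathbb{P}^4$. The paper instead blows up $\widetilde R$ (smooth, since $R$ is a cone with vertex $p$ by Lemma \ref{Rcharacterizations}) to get $\G(1,n)_R$, shows via Lemma \ref{blowuporder} that this resolves $\pi_{T_p}$ into a fibration $\xi$ with fibres $\G(1,3)_1$, identifies the class $c=h-e+f$ (the fibrewise $h_0-e_0$ of Example \ref{exampleC}) as a $K$-negative extremal ray, and invokes the Contraction Lemma to produce $\Psi=\mathrm{cont}_c:\G(1,n)_R\to\G(1,n)_1^+$; since $\Exc(\Psi)=F=\Exc(\alpha_1)$, the composite is automatically a small modification, and projectivity, normality and the identification of $\Nef(\G(1,n)_1^+)$ come for free from the MMP machinery. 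Your route is more explicit but buys you nothing that the contraction-theorem route does not, at the cost of having to verify by hand everything the Contraction Lemma hands you.

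That verification is precisely what is missing, and you say so yourself: you do not prove that the fibrewise projections glue to a morphism off $\widetilde R$, that $\eta$ is an isomorphism in codimension one (equivalently that the image $S$ of $\widetilde R$ in $\mathbb{P}(\mathcal{E})$ has codimension two --- the paper checks this fibrewise, since $\psi(R_0)\cong Q^2\subset\mathbb{P}^4$), or that $H-E$ becomes the relative $\mathcal{O}(1)$ and $H-2E$ the pullback of the Pl\"ucker polarization, which is what actually pins down $\Nef(\G(1,n)_1^+)=\cone(H-E,H-2E)$. Without these the chamber structure and the Fano claim are not yet established. A second, smaller gap: your extremality argument for $\Eff$ is not valid as stated. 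From the fact that every Pl\"ucker monomial of degree $a$ has multiplicity at most $2a$ at $p$ you cannot conclude the same for an arbitrary section of $\mathcal{O}(a)$, since multiplicity can jump under cancellation in a linear combination. The clean argument is dual: the strict transforms of conics through $p$ and a general point (degree-$2$ rational normal curves in $\G(1,n)$, class $2h-e$) cover $\G(1,n)_1$, so $2h-e$ is a moving curve class and $(aH-bE)\cdot(2h-e)=2a-b\geq 0$ for every pseudo-effective $aH-bE$; alternatively, $\Eff=\cone(E,H-2E)$ falls out of Corollary \ref{nef cone border3} once the fibration $\delta$ with $\delta^*(\text{ample})=H-2E$ is in place.
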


\begin{figure}[htb!]
\centering			
\resizebox{0.6\textwidth}{0.4\textwidth}{%
\begin{tikzpicture}[>=Stealth,scale=1.4]
\draw[->][thick] (0,-2) -- (0,2)   node[left,very near end]{$E=\Exc(\alpha)$};
\draw[->][thick] (-1,0) -- (4,0)   node[above,very near end]{$H=\alpha^*(\Nef (\G(1,n)))$};
\draw[->][thick] (0,0) -- (1.5,-1.5);  
\draw[->][thick] (0,0) -- (1.3,-2.6);

\draw (2.0,-1.3) node[very thick]{$H-E$};
\draw (1.9,-2.5) node[very thick]{$H-2E$};

\draw (3.0,1.3) node[very thick]{$\Mov(\G(1,n)_1)=\mathcal C_0 \bigcup \mathcal C_1$};

\fill[blue] (0,0) -- (1.2,0)
arc [start angle=0, end angle=-45, radius=1.2];
\fill[red] (0,0) -- (1.3,-1.3)
arc [start angle=-45, end angle=-63.5, radius=1.83];

\draw (2.5,-0.5) node[very thick,blue]{$\Nef(\G(1,n)_1)=\mathcal C_0$};
\draw (2.8,-1.8) node[very thick,red]{$\eta^*(\Nef(\G(1,n)_1^+))=\mathcal C_1$};
\end{tikzpicture}}
\end{figure}

Moreover, we show that the Mori chamber decomposition of $\G(1,n)_1$ is completely determined by the projections of $\G(1,n)$ from $p$, and from the tangent space $T_p\G(1,n)$.
We also explicitly construct the flip $\eta$ described in Theorem \ref{teointro}.

In Chapter \ref{cap6}  and Chapter \ref{cap7} we find instances where $\G(r,n)_k$ is a Mori dream space, giving in this way a partial answer to Problem \ref{q1}. The two chapters together yield the following.

\begin{Theorem}
$\G(r,n)_k$ is a Mori dream space if either
$$
\begin{cases}
k\!\!&\!=1;\ \mbox{ or }\\
k\!\!&\!=2 \mbox{ and } r=1 \mbox{ or } n=2r+1,n=2r+2; \mbox{ or }\\
k\!\!&\!=3 \mbox{ and } (r,n)\in \{(1,4),(1,5)\};\mbox{ or }\\
k\!\!&\!=4 \mbox{ and } (r,n)=(1,4)
\end{cases}
$$
\end{Theorem}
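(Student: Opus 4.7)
The plan is to combine two sufficient conditions for a normal projective $\mathbb{Q}$-factorial variety to be a Mori dream space: by Brion's theorem, every projective $\mathbb{Q}$-factorial spherical variety is a MDS; and by Birkar--Cascini--Hacon--McKernan, every smooth weak Fano (in fact log Fano) variety is a MDS. The strategy is to verify that each triple $(r,n,k)$ in the statement falls under at least one of these two umbrellas, thereby invoking Theorem~\ref{G(r,n)kspherical} and Proposition~\ref{G(r,n)_k weak Fano} respectively; the conclusion will then follow from Theorem~\ref{G(r,n)k MDS weak}.

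First I would settle the spherical cases. Since $\G(r,n)$ is $PGL_{n+1}$-homogeneous, the stabilizer $G_k\subset PGL_{n+1}$ of $k$ general points of $\G(r,n)$ lifts to a reductive subgroup acting on $\G(r,n)_k$. For $k=1$, $G_1$ is a parabolic subgroup of $PGL_{n+1}$ and a Borel subgroup $B\subset G_1$ has a dense orbit on $\G(r,n)_1$ by the Bruhat stratification, giving sphericality for every $(r,n)$. For $k=2$, $G_2$ is (up to finite index) the intersection of two opposite parabolics; a dimension count of a generic $B$-orbit shows that $\dim(B\cdot x) = \dim \G(r,n)_2$ exactly when $r=1$ or $n\in\{2r+1,2r+2\}$. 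For $k=3$ with $(r,n)\in\{(1,4),(1,5)\}$ and $k=4$ with $(r,n)=(1,4)$ one exploits the small-rank geometry of Grassmannians of lines (e.g.\ $\G(1,4)$ has Picard number one and many explicit flag-like incidences) to produce the required dense Borel orbit.

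Next I would cover the remaining cases via weak Fanoness. Writing $\pi:\G(r,n)_k\to \G(r,n)$ with exceptional divisors $E_1,\dots,E_k$ and letting $H$ denote the pullback of the ample generator of $\Pic\G(r,n)$, the blow-up formula for a smooth center of codimension equal to $\dim \G(r,n)=(r+1)(n-r)$ gives
$$
-K_{\G(r,n)_k} \;=\; (n+1)\,H \;-\; \bigl((r+1)(n-r)-1\bigr)\sum_{i=1}^{k}E_i.
$$
Nefness and bigness are then tested against the (finitely many) extremal curve classes on $\G(r,n)_k$: the exceptional line in each $E_i$, the strict transform of a line of $\G(r,n)$ through a single $p_i$, and (when they exist, i.e.\ essentially when $r=1$ and the $p_i$'s lie on an incident pencil) strict transforms of curves through several $p_i$'s. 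The resulting inequalities in $(r,n,k)$ single out precisely the list in Proposition~\ref{G(r,n)_k weak Fano}, which in particular handles the cases $k=2$ with $r=1$ for every $n$, completing coverage of the statement.

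The main obstacle will be Step~1: verifying sphericality for $k\geq 2$, because one must build a Borel subgroup of the non-homogeneous reductive stabilizer $G_k$ and carry out an explicit orbit-dimension computation on the blow-up, which requires understanding how the Schubert stratification on $\G(r,n)$ intersects the exceptional divisors. The numerical boundary between the spherical and non-spherical regimes is sharp, and this is what forces the peculiar list $n\in\{2r+1,2r+2\}$ for $k=2$ and the small sporadic list for $k=3,4$. The weak Fano verification in Step~2 is more mechanical but also restrictive: the coefficients $(r+1)(n-r)-1$ grow fast in $r$ and $n$, so only the low-$k$, low-$r$ corner of the parameter space survives, matching the cases in the statement that are not already handled by sphericality.
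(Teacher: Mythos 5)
Your overall architecture is exactly the paper's: every case in the list is either spherical (hence a MDS by Brion's theorem) or weak Fano (hence a MDS by BCHM), and the theorem follows by taking the union of Theorem \ref{G(r,n)kspherical} and Proposition \ref{G(r,n)_k weak Fano}. However, you have assigned several cases to the wrong prong, and the corresponding sub-arguments would fail if you tried to execute them. First, $\G(1,4)_3$ and $\G(1,4)_4$ are \emph{not} spherical: $\G(1,4)=\G(r,2r+2)$ with $r=1$, and already $\G(r,2r+2)_3$ fails to be spherical because imposing the third flag strictly cuts down the Borel $B_2$ of the two-point stabilizer, whose dimension equals $\dim\G(r,2r+2)=(r+1)(r+2)$; hence $\dim B_3<\dim\G(r,2r+2)$ and no dense Borel orbit can exist, so the ``small-rank geometry'' you invoke cannot produce one. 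These two cases must instead be carried by the weak Fano computation: $-K_{\G(1,4)_k}\cdot(h-e_i)=5-5=0$ and $(-K)^6=5^6(5-k)>0$ for $k\le 4$. Second, and symmetrically, the weak Fano step does \emph{not} handle ``$k=2$, $r=1$ for every $n$'': from your own anticanonical formula, $-K_{\G(1,n)_2}\cdot(h-e_i)=(n+1)-(2n-3)=4-n<0$ for $n\ge 5$, so the anticanonical class is not even nef there. Those cases are covered by the sphericality of $\G(1,n)_2$ for $n\ge 5$, which requires an explicit dense-orbit computation for the Borel of the three-block Levi.

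A further caution on $k=1$: the stabilizer $G_1$ of a point is parabolic, hence not reductive, so ``a Borel subgroup of $G_1$ has a dense orbit by Bruhat'' does not by itself give sphericality — the definition requires a Borel of a \emph{reductive} group acting on the blow-up. One must pass to the Levi quotient $G_1^{red}$ and verify that its strictly smaller Borel $B_1$ still has a dense orbit; this is a genuine computation in which the complement of the orbit is identified as the union of the $r+2$ colors $D_j=\{[\Sigma]:\Sigma\cap\Gamma_j\ne\varnothing\}$ together with the exceptional divisor. With these reassignments and that repair, your plan coincides with the paper's proof.
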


In Chapter \ref{cap6} We give a complete classification of the spherical varieties that can be obtain blowing-up general points in $\G(r,n)$.

\begin{Theorem}
$\G(r,n)_k$ is spherical if and only if one of the following holds
$$
\begin{cases}
r\!\!&\!=0 \mbox{ and } k\leq n+1;\ \mbox{ or }\\
k\!\!&\!=1;\ \mbox{ or }\\
k\!\!&\!=2 \mbox{ and } r=1 \mbox{ or } n=2r+1,n=2r+2; \mbox{ or }\\
k\!\!&\!=3 \mbox{ and } (r,n)=(1,5)
\end{cases}
$$
\end{Theorem}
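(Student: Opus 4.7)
The strategy is to reduce the problem to deciding when the pair $(\G(r,n), L_k)$ is spherical, where $L_k$ is the reductive part of the stabilizer of the $k$ blown-up points, and then run a case analysis. First I would identify $\Aut^{\circ}(\G(r,n)_k)$. Since $\Aut^{\circ}(\G(r,n))=PGL(n+1)$ whenever $\G(r,n)\not\cong\mathbb{P}^n$ (the dual case being absorbed by the isomorphism $\G(r,n)\cong\G(n-r-1,n)$, so I may assume $n\geq 2r+1$), every automorphism of the blow-up descends to an automorphism of the base fixing the $k$ points. Writing $H_k\subset PGL(n+1)$ for this pointwise stabilizer and $L_k$ for a Levi subgroup, any connected reductive group acting on $\G(r,n)_k$ factors through $L_k$. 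So $\G(r,n)_k$ is spherical iff a Borel $B_{L_k}$ has an open orbit; and because $B_{L_k}\subset H_k$ fixes each $p_i$ pointwise, its orbits on the exceptional divisors have dimension at most $\dim\G(r,n)-1$. Hence an open orbit must meet the complement of the exceptional locus, and the question becomes: when does $B_{L_k}$ have an open orbit on $\G(r,n)$?

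For the ``if'' direction I would verify each item in the list by exhibiting $L_k$ explicitly and writing down an open Bruhat-type cell. The case $r=0$ is the classical fact that the blow-up of $\mathbb{P}^n$ at $\leq n+1$ general points is toric (and hence spherical). For $k=1$ the group $H_1$ is the maximal parabolic fixing $p_1$, so $L_1\cong S(GL(r+1)\times GL(n-r))$ contains a Borel of $PGL(n+1)$, which already has a dense Bruhat cell. For $k=2$ with $r=1$ the two lines span a $\mathbb{P}^3$; I would pick coordinates so that $V_1=\langle e_0,e_1\rangle$, $V_2=\langle e_2,e_3\rangle$, realize $L_2\cong (GL(2)^2\times GL(n-3))/\mathbb{G}_m$, check the dimensional equality $\dim B_{L_2}=\dim\G(1,n)$ and exhibit a dense orbit. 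The cases $k=2$, $n=2r+1$ and $n=2r+2$ are analogous, the two $(r+1)$-planes being either complementary or spanning a hyperplane, giving $L_2$ with Borel of the right dimension; sphericity is verified by writing down an explicit transversal orbit. The exceptional case $k=3$, $(r,n)=(1,5)$ requires three pairwise skew lines spanning $\mathbb{C}^6$, yielding $L_3\cong GL(2)^3/\mathbb{G}_m$ with Borel of dimension $8=\dim\G(1,5)$; I would check directly, by computing the differential of the action at a general point, that a Borel orbit is open.

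For the ``only if'' direction I would use the necessary inequality $\dim B_{L_k}\geq\dim\G(r,n)=(r+1)(n-r)$. Since $L_k$ sits inside $PGL(n+1)$ and fixes $k$ general points, a dimension count gives $\dim L_k\leq n^2+2n-k(r+1)(n-r)$ once $k$ is big enough that the $H_k$-orbit through a generic point is open, and combined with $\dim B_{L_k}=(\dim L_k+\rk L_k)/2$ this rules out most triples outside the classification. For instance $(r,n,k)=(1,5,4)$ fails because $\dim L_4\leq 3\ll 8=\dim\G(1,5)$, and $(r,n,k)=(1,4,3)$ fails by the explicit stabilizer computation giving $\dim B_{L_3}=4<6=\dim\G(1,4)$. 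The genuinely delicate borderline cases, where $\dim B_{L_k}=\dim\G(r,n)$ but sphericity nonetheless fails, would be handled by producing a nondiscrete invariant for the $L_k$-action: one identifies a continuous modulus for $B_{L_k}$-orbits on a suitable $L_k$-stable subvariety (typically a locus of $(r+1)$-planes meeting the $p_i$ in a prescribed incidence pattern), contradicting Brion's theorem that spherical varieties carry only finitely many $B$-orbits.

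The main obstacle will be this last step: producing a usable continuous invariant in the borderline cases. Plain dimension bounds do not separate them from the spherical ones, so the argument must be tailored to the Schubert stratification relative to the configuration of blown-up points, and a uniform statement appears out of reach — a case-by-case analysis, guided by the explicit Levi description and an invariant-theoretic inspection of the $B_{L_k}$-action on the appropriate incidence varieties, seems unavoidable.
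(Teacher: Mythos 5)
Your architecture coincides with the paper's: the reduction to asking whether a Borel subgroup of the reductive part of the stabilizer of the $k$ points has a dense orbit on $\G(r,n)$ itself is exactly Lemma \ref{Bkisenough} (via Blanchard--Brion and Chow's theorem, as in Proposition \ref{automorphism prop}), and the positive cases are settled, as you propose, by exhibiting explicit dense Borel orbits (which in the paper simultaneously yield the effective cones). The one place where your plan misjudges the work is the negative direction. You screen triples by the inequality $\dim B_{L_k}\geq\dim\G(r,n)$ and reserve the delicate analysis for ``borderline'' cases where equality holds; but for $k=2$ and $r\geq 2$ the dimension count (Lemma \ref{dimcount}) only eliminates the window $2r+2<n<4r+1$, while for every $n\geq 4r+1$ one has $\dim B_2-\dim\G(r,n)=\tfrac{(n-2r-2)(n-4r-1)}{2}\geq 0$, a quantity that is unbounded in $n$. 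So the bulk of the ``only if'' direction is an infinite family that passes your dimension screen with arbitrarily large margin, not a finite list of equality cases, and the same happens for $\G(1,n)_3$ once $n\geq 8$. The paper handles this (Lemma \ref{G(r,n)2 spherical}) by writing out the linear system cutting the $B_2$-stabilizer of a general point and showing the generic orbit has codimension exactly $r(r-1)/2$; your proposed route via a continuous modulus of $B$-orbits contradicting Brion--Vinberg finiteness is logically equivalent (a non-dense generic orbit automatically produces a positive-dimensional family of orbits through the general point), but it requires precisely the same generic-stabilizer computation, and retreating to special incidence loci buys nothing over doing it at the generic point. With that correction the plan is sound and is essentially the paper's proof.
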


In Chapter \ref{cap7} we give a complete classification of the   weak Fano varieties that can be obtain blowing-up general points in $\G(r,n)$. 

\begin{Proposition}
Let $\G(r,n)_k$ be the blow-up of the Grassmannian $\G(r,n)$ at $k\geq 1$ general points. Then
\begin{enumerate}
\item[(a)] $\G(r,n)_k$ is Fano if and only if $(r,n)=(1,3)$ and $k\leq 2.$
\item[(b)] $\G(r,n)_k$ is weak Fano if and only if one of the following holds.
\itemize
\item $(r,n)=(1,3)$ and $k\leq 2;$ or
\item $(r,n)=(1,4)$ and $k\leq 4.$
\end{enumerate}
\end{Proposition}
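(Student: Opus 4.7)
The strategy is to dispatch nearly every case by a single intersection calculation, then treat the two remaining families by hand. Using the Fano index $n+1$ of the Grassmannian in its Pl\"ucker embedding together with $K_{\tilde X}=\pi^*K_X+(\dim X-1)E$ for a blow-up at a smooth point,
\[
-K_{\G(r,n)_k} \;=\; (n+1)H \;-\; \bigl((r+1)(n-r)-1\bigr)\sum_{i=1}^{k}E_i.
\]
Since $\G(r,n)$ is covered by Schubert lines, through each $p_i$ runs a line $\ell$, and its strict transform satisfies $-K\cdot\tilde\ell = (n+1)-\bigl((r+1)(n-r)-1\bigr) = n+2-(r+1)(n-r)$. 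Non-negativity of this quantity is necessary for nefness, and combined with $r\geq 1$ and $n\geq 2r+1$ it isolates only $(r,n)=(1,3)$ (value $1$) and $(r,n)=(1,4)$ (value $0$). In every other case $-K$ fails to be nef, so $\G(r,n)_k$ is neither Fano nor weak Fano for any $k\geq 1$.

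For $(r,n)=(1,3)$ we identify $\G(1,3)\cong Q^4\subset\P^5$ and $-K=4H-3\sum E_i$. For $k\geq 3$, three general points of $Q^4$ span a plane $\Pi\subset\P^5$, and $\Pi\cap Q^4$ is a conic through the three; its strict transform of class $2\ell-e_1-e_2-e_3$ gives $-K\cdot\tilde C=8-9<0$, so $-K$ is not nef and $Q^4_k$ is not weak Fano. For $k\leq 2$, I would use Theorem~\ref{G(r,n)kspherical} to assert that $Q^4_k$ is spherical and hence a Mori dream space with finitely generated $\NEbar(Q^4_k)$, and then identify its extremal rays as the classes $e_i$ and $\ell-e_j$: no line of $Q^4$ joins two general points of $\G(1,3)$ (such a line would be a pencil in $\P^3$ through a common point in a common plane, impossible for two general skew lines), and any other candidate extremal class $d\ell-m_1e_1-m_2e_2$ with $\sum m_i>d$ would be realised by an irreducible Pl\"ucker-degree-$d$ curve in $Q^4$ whose arithmetic genus is at least $\sum\binom{m_i}{2}$, violating the Castelnuovo genus bound for curves of degree $d$ in $\P^N$. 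Since $-K\cdot e_i=3$ and $-K\cdot(\ell-e_j)=1$, $-K$ is ample and $Q^4_k$ is Fano.

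For $(r,n)=(1,4)$, $-K=5H-5\sum E_i=5(H-\sum E_i)$; because $-K\cdot\tilde\ell=0$ for any Schubert line through a $p_i$, the class is never ample, so $\G(1,4)_k$ is never Fano. Using $H^6=\deg\G(1,4)=5$, $E_i^6=-1$, and $H\cdot E_i=0$,
\[
\bigl(-K_{\G(1,4)_k}\bigr)^6 \;=\; 5^6\bigl(H-\textstyle\sum E_i\bigr)^6 \;=\; 5^6\bigl(H^6+\sum_{i=1}^{k}E_i^6\bigr) \;=\; 5^6(5-k).
\]
For $k\geq 5$ this is non-positive, so $-K$ is not big and $\G(1,4)_k$ is not weak Fano. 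For $k\leq 4$ the top self-intersection is strictly positive, and then bigness of $-K$ will follow from its nefness, since a nef divisor with positive top self-intersection is big.

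The main obstacle is proving nefness of $-K$ on $\G(1,4)_k$ for $k\leq 4$. This reduces to showing that every irreducible curve $C\subset\G(1,4)$ of Pl\"ucker degree $d$ through the $k$ general points with multiplicities $m_1,\dots,m_k$ satisfies $\sum m_i\leq d$, since then $-K\cdot\tilde C=5(d-\sum m_i)\geq 0$ and $-K\cdot e_i=5>0$ handles the exceptional fibres. I would prove this via a Kontsevich-moduli dimension count on the convex homogeneous variety $\G(1,4)$: the space $\overline{M}_{0,\sum m_i}(\G(1,4),d)$ has dimension $5d+3+\sum m_i$ using the Fano index $5$, and constraining $m_i$ marked points to map to $p_i$ for each $i$ imposes $6\sum m_i$ independent conditions (surjectivity of the evaluation over general points is ensured by the transitive $PGL_5$-action), so non-emptiness forces $5d+3-5\sum m_i\geq 0$, i.e.\ $\sum m_i\leq d$. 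The same convexity/homogeneity input ratifies the parallel Mori-cone verification for $Q^4_k$ in the preceding step.
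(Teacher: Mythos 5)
Your overall architecture matches the paper's: the formula $-K_{\G(r,n)_k}=(n+1)H-((r+1)(n-r)-1)\sum E_i$, intersection with the strict transform of a line through a blown-up point to reduce to $(r,n)\in\{(1,3),(1,4)\}$, the conic through three points to exclude $k\geq 3$ on $Q^4\cong\G(1,3)$, and the computation $(-K)^6=5^6(5-k)$ on $\G(1,4)_k$. The gap lies in the key remaining step, nefness of $-K$, i.e.\ the bound $\sum m_i\leq d$ for every irreducible curve of Pl\"ucker degree $d$ with multiplicities $m_i$ at the $p_i$. Your Kontsevich-moduli dimension count has two problems. First, it only constrains \emph{rational} curves. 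Nefness requires the inequality for all irreducible curves, the cone theorem gives no control over the $K$-positive part of $\overline{NE}$, and for $g>0$ the space $\overline{M}_{g,m}(X,\beta)$ can have components of dimension strictly larger than the expected one, so the count yields nothing. Second, even in genus $0$, placing $m_i$ marked points over $p_i$ presupposes that the normalization has $m_i$ distinct branches there; a unibranch singularity of multiplicity $m_i$ contributes $m_i$ to $\tilde{C}\cdot E_i$ but only one marked point, so your count bounds the number of branches, not $\sum\mult_{p_i}C$. The paper's Lemma \ref{conecurvesgeneral} avoids both issues with an elementary argument valid for arbitrary irreducible curves: if $\sum m_i>d$, a general hyperplane containing $\Pi=\langle p_1,\dots,p_k\rangle$ meets $C$ in more than $d$ points, forcing $C\subset\Pi$; but for $k\leq\codim(X)+1$ the general position of the points makes $\Pi\cap X$ finite. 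This covers exactly $k\leq 2$ on $Q^4\subset\P^5$ and $k\leq 4$ on $\G(1,4)\subset\P^9$, which is all that is needed.

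Your treatment of the Fano case on $Q^4_k$, $k\leq 2$, is also shakier than necessary: the claim that $\binom{m_1}{2}+\binom{m_2}{2}$ exceeds the Castelnuovo bound whenever $m_1+m_2>d$ already fails for balanced multiplicities on plane curves, e.g.\ $d=5$, $m_1=m_2=3$ gives equality $6=6$, and $d=7$, $m_1=m_2=4$ gives $12<15$; one would have to add the geometric fact that no plane of $Q^4$ contains two general points and run a separate analysis for each possible linear span. The linear-span argument above replaces all of this. (A further small imprecision: $(-K)^6\leq 0$ does not by itself imply $-K$ is not big, since a big non-nef divisor can have non-positive top self-intersection; the conclusion ``not weak Fano'' still follows, because weak Fano would force $-K$ nef and big, hence $(-K)^6>0$.)
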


In the same chapter we also describe which blow ups of smooth quadrics in general points are weak Fano varieties.

\begin{Proposition}
Let $Q^n_k$ be the blow-up of a smooth quadric $Q^n\subset \P^{n+1}$ at $k\geq 1$ general points. Then
\begin{enumerate}
 \item[(a)]$Q_k^n$ is Fano if and only if either $k\leq 2$ or $n=2$ and $k\leq 7.$
 \item[(b)]$Q_k^n$ is weak Fano if and only if one of the following holds.
\itemize
\item $n=2$ and $k\leq 7;$ or
\item $n=3$ and $k\leq 6;$ or
\item $n\geq 4$ and $k\leq 2$.
\end{enumerate}
\end{Proposition}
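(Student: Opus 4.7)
The plan is to reduce both parts to the analysis of $-K_{Q^n_k}$ and apply Kleiman's criterion. Since $-K_{Q^n} = nH$ where $H$ is the hyperplane class, blowing up $k$ general points yields
$$-K_{Q^n_k} = nH - (n-1)\sum_{i=1}^k E_i,$$
(where I abuse notation writing $H$ for its pullback). Fano means this is ample, weak Fano means nef and big. For the bigness side, I would use the computation
$$(-K_{Q^n_k})^n = n^n H^n + (-1)^{n-1}(n-1)^n \sum_i E_i^n = 2n^n - k(n-1)^n,$$
using $H^n = 2$ and the standard formula $E_i^n = (-1)^{n-1}$; this will be the source of the sharpest obstructions to being weak Fano in some regimes. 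Otherwise the key will be to exhibit the right test curves — strict transforms of lines, conics, and higher degree rational curves through general points, and lines in the exceptional divisors — and to confirm that these are the only candidates for extremal rays.

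For $n=2$ I would simply use the classical isomorphism $\Bl_1(\mathbb{P}^1\times\mathbb{P}^1)\cong \Bl_2\mathbb{P}^2$, so $Q^2_k\cong\Bl_{k+1}\mathbb{P}^2$ at general points. The classical theory of (weak) del Pezzo surfaces then gives Fano iff $k+1\leq 8$, and for $k=8$ the variety is $\Bl_9\mathbb{P}^2$, where $(-K)^2=0$ rules out weak Fano as well. This yields both parts of the statement in the surface case in a single step.

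For $n\geq 3$ and $k\leq 2$ I would check Kleiman directly: a line in $E_i$ gives intersection $n-1\geq 2$; the strict transform of a line through one blown-up point gives $n-(n-1)=1$; and for $k=2$, the strict transform of a smooth conic through both points (cut out by a plane through the two points) gives $2n-2(n-1)=2$. Because $Q^n$ has Picard rank one and curves on $Q^n$ are classified by degree, these three types suffice to generate $\NE(Q^n_k)$, so $-K$ is ample. The obstruction at $k=3$ is universal: any three general points on $Q^n$ span a plane, whose intersection with $Q^n$ is a conic $C$ through the three points, and $(-K)\cdot\widetilde C=2n-3(n-1)=3-n$. This is zero for $n=3$ (so Fano fails) and negative for $n\geq 4$ (so weak Fano fails). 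Combined with the trivial monotonicity in $k$, this handles (a) for $n\geq 3$ and handles (b) for $n\geq 4$.

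The hardest step, and the main obstacle, is the middle range $n=3$, $3\leq k\leq 7$ for part (b). The self-intersection calculation gives $(-K)^3=54-8k$, which is positive for $k\leq 6$ and negative for $k\geq 7$; the latter case is immediate because a nef divisor on a threefold must have nonnegative cube (one can moreover exhibit an explicit rational quartic in $Q^3$ through $7$ general points giving $(-K)\cdot\widetilde C=12-14<0$, once its existence is established via a parameter count for rational quartics on $Q^3$). For $k\leq 6$, bigness follows from positivity of $(-K)^3$ and nefness, so the whole game reduces to establishing nefness. I would enumerate all irreducible curves $C\subset Q^3$ of degree $d$ passing through the $k$ general points with total multiplicity $m$, and verify $3d\geq 2m$: lines give $(1,1)$; conics, which through three coplanar points have $3\cdot 2=2\cdot 3$, cannot meet four general points because four points fail to be coplanar; twisted cubics live in hyperplanes and so pass through at most four general points, giving $9\geq 8$; rational quartics span $\P^4$ and parameter-count shows that through $6$ points on $Q^3$ one still has $\sum m_i\leq 6<12/ (2/3)^{-1}$; and so on. The delicate part is ruling out curves with singularities at the blown-up points — here one uses the generality of the points to bound total multiplicities, essentially a Cremona-type monotonicity estimate. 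Once every class is checked, nefness and hence the weak Fano property follow.
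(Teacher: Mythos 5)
Your overall strategy matches the paper's: compute $-K_{Q^n_k}=nH-(n-1)\sum E_i$, reduce the surface case to del Pezzo theory via $Q^2_k\cong\Bl_{k+1}\P^2$, use the conic through three general points as the universal obstruction ($-K\cdot\widetilde C=3-n$), and use $(-K)^3=54-8k$ both to get bigness for $k\leq 6$ and to kill nefness for $k\geq 7$ when $n=3$. The cases $k\leq 2$ and $n\geq 4$, $k\geq 3$ are handled exactly as in the paper.

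The genuine gap is in the one case you yourself flag as hardest: nefness of $-K_{Q^3_k}$ for $3\leq k\leq 6$. Your plan is to enumerate irreducible curves degree by degree (lines, conics, twisted cubics, quartics, ``and so on'') and check $3d\geq 2\sum m_i$ for each, with the multiplicity bounds to come from an unspecified ``Cremona-type monotonicity estimate.'' This is not an argument: the enumeration is infinite, the parameter counts for rational curves of each degree through points with assigned multiplicities are not carried out, and nothing controls curves of high degree with high multiplicities at the blown-up points. The paper closes exactly this gap with a uniform statement (its Lemma on the Mori cone of $Q^n_k$ for $k\leq(3n+3)/2$, $n$ odd): \emph{every} irreducible curve class $dh-\sum m_ie_i$ is a nonnegative combination of the $e_i$, the $l_i=h-e_i$, the conic classes $c_{ijl}=2h-e_i-e_j-e_l$, and $h$. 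The proof groups the blown-up points into triples, peels off conics and lines with the appropriate coefficients, and shows that if the leftover coefficient of $h$ were negative the curve would be forced into the linear span of a proper subset of the points, which by generality misses the remaining point and contradicts $m_1>0$. Since $-K$ pairs nonnegatively with each of these generators, nefness follows for all curves at once. To complete your proof you would need to either prove such a Mori-cone statement or find a substitute uniform bound; the degree-by-degree check as written does not terminate.
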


In Chapter \ref{cap8} we address Problem \ref{q1} and Problem \ref{q2}. First we give a conjectural description of the Mori chamber decomposition of $\Eff(\G(r,n)_1).$

\begin{Conjecture}
Let $\G(r,n)_1$ be the blow-up of the Grassmannian at one point. Then 
$$\Mov(\G(r,n))=\begin{cases}
\cone(H,H-rE) &\mbox{ if } n=2r+1\\
\cone(H,H-(r+1)E) &\mbox{ if } n>2r+1
\end{cases}$$
Moreover, $E,H,H-E,\dots,H-(r+1)E$ are the walls of the Mori chamber decomposition of $\Eff(\G(r,n)_1).$
\end{Conjecture}

And we also describe a possible strategy for the proof of such result based on an generalization of the proof of Theorem \ref{teointro}. 

Second, we discuss possible approaches to Problem \ref{q2}. We discuss some difficulties that could appear as well.

\chapter{Blow-up of Grassmannians of lines at one point}\label{cap5}
In this chapter we describe the birational geometry of $\G(1,n)_1:$

\begin{Theorem}\label{MCD G(1,n)}
Let $n\geq 4$ and consider a basis of $\Pic(\G(1,n)_1)$ given by the pullback $H$ of an ample divisor on $\G(1,n),$
and by the class $E$ of the exceptional divisor.
Then $\G(1,n)_1$ is a Mori dream space  with 
effective cone \begin{center}
$\Eff(\G(1,n)_1)=\cone(E,H-2E).$
\end{center} 

Its Mori chamber decomposition is given by the walls $E,H,H-E,$ and $H-2E.$

There exists an isomorphism in codimension two $\eta:\G(1,n)_1\dasharrow \G(1,n)_1^+$ to another Mori dream space $\G(1,n)_1^+$ with 
$\Nef(\G(1,n)_1^+)=\cone(H-E,H-2E).$

The cone of movable divisors is 
$\Mov(\G(1,n)_1)\!=\!\cone(H,H-2E).$

The variety $\G(1,n)_1^+$ is a fibration over $\G(1,n-2)$ with fibers isomorphic to $\P^4,$ and if $n\geq 5$, then $\G(1,n)_1^+$ is a Fano variety.
\end{Theorem}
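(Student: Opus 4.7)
The plan is to determine the effective, movable, and nef cones of $\G(1,n)_1$ via multiplicity and projection arguments, then construct the flipped model $\G(1,n)_1^+$ as an explicit projective bundle over $\G(1,n-2)$, and finally assemble the Mori chamber decomposition and the Mori dream space property.

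First, $E$ is effective, and $H-2E$ is the strict transform of any tangent hyperplane section of $\G(1,n)$ at $p$: a hyperplane of $\P^N$ containing $T_p\G(1,n)$ cuts $\G(1,n)$ in a divisor of multiplicity exactly two at $p$. To see that $H-cE$ is not effective for $c>2$, apply Proposition~\ref{oscgrass} with $r=1$ and $s=2\geq r+1$: the osculating space $T^2_p\G(1,n)$ equals $\P^N$, so no hyperplane of $\P^N$ contains $T^2_p\G(1,n)$, and no hyperplane section of $\G(1,n)$ has multiplicity $\geq 3$ at $p$. Hence $\Eff(\G(1,n)_1) = \cone(E,H-2E)$. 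For the nef cone, $H$ is the pullback of the Pl\"ucker polarization, and $|H-E|$ is base-point free on $\G(1,n)_1$: it defines the morphism $\widetilde\pi_p:\G(1,n)_1\to\P^{N-1}$ resolving the linear projection of $\G(1,n)$ from $p$. The strict transform $\widetilde\ell$ of any line $\ell\subset\G(1,n)$ through $p$ satisfies $H\cdot\widetilde\ell = E\cdot\widetilde\ell = 1$, so $(H-E)\cdot\widetilde\ell = 0$ places $H-E$ on the boundary of $\Nef$, while $H-(1+\varepsilon)E$ pairs negatively with $\widetilde\ell$; thus $\Nef(\G(1,n)_1) = \cone(H,H-E)$. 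Classes $H-cE$ with $0\leq c\leq 2$ have linear systems with base loci of codimension $\geq 2$ (equal to $\widetilde R_1$ when $c=2$), hence are movable, while classes with positive $E$-coefficient have $E$ as a fixed component, yielding $\Mov(\G(1,n)_1) = \cone(H,H-2E)$.

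Next, let $L\subset\C^{n+1}$ be the $2$-plane corresponding to $\ell_p$, and over $\G(1,n-2) = \G(1,\P(\C^{n+1}/L))$ let $\mathcal S$ be the rank-$4$ sub-bundle of $\C^{n+1}\otimes\mathcal O$ whose fiber over $\bar\ell$ is the preimage $\Pi_{\bar\ell}\subset\C^{n+1}$ of $\bar\ell$ under $\C^{n+1}\to\C^{n+1}/L$. The trivial line sub-bundle $\wedge^2 L\subset\wedge^2\mathcal S$ gives the rank-$5$ quotient $\mathcal E := \wedge^2\mathcal S/\wedge^2 L$, and we set $\G(1,n)_1^+ := \P_{\G(1,n-2)}(\mathcal E)$, a $\P^4$-bundle. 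The rational map $\eta_0:\G(1,n)\dasharrow\G(1,n)_1^+$ sends a line $\ell$ with $\ell\cap\ell_p=\varnothing$ to $(\pi_{\ell_p}(\ell),[\wedge^2\ell])$, where $[\wedge^2\ell]\in\P(\wedge^2\Pi_{\bar\ell}/\wedge^2 L)$. Fiberwise over $\bar\ell$, this is the classical projection of the quadric $\G(1,\Pi_{\bar\ell})\cong\G(1,3)\subset\P^5$ from its point $p=[\wedge^2 L]$, which is birational onto $\P^4$ and resolved by the blow-up at $p$. Lifting along $\alpha:\G(1,n)_1\to\G(1,n)$ yields $\eta:\G(1,n)_1\dasharrow\G(1,n)_1^+$, an isomorphism away from the strict transform $\widetilde R_1$ of the locus $R_1$ of lines meeting $\ell_p$. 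Since $\dim R_1 = n$ by Lemma~\ref{keylemma}, $\widetilde R_1$ has codimension $n-2\geq 2$ in $\G(1,n)_1$, so $\eta$ is an isomorphism in codimension two.

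Finally, identify the divisor pullbacks $\eta^*(\pi^*H') = H-2E$ (the composition $\pi\circ\eta$ resolves the projection $\G(1,n)\dasharrow\G(1,n-2)$ from $\ell_p$, whose defining linear system on $\G(1,n)_1$ is $|H-2E|$) and $\eta^*(\xi) = H-E$ (on a general fiber, $\xi$ restricts to $\mathcal O_{\P^4}(1)$, and the projection $\Bl_p\G(1,3)\to\P^4$ pulls this back to $H-E$ restricted to the fiber). Since the nef cone of $\G(1,n)_1^+$ must fill the remaining chamber of $\Mov(\G(1,n)_1)$, we obtain $\Nef(\G(1,n)_1^+) = \cone(\xi,\pi^*H')$, pulling back to $\cone(H-E,H-2E)$; the Mori chambers are thus $\cone(H,H-E)\cup\cone(H-E,H-2E)$. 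For the canonical class, the relative Euler sequence (with $\P(\mathcal E)$ the projectivization of lines in $\mathcal E$) yields $-K_{\G(1,n)_1^+} = 5\xi + \pi^*(c_1(\mathcal E)-K_{\G(1,n-2)})$; using $c_1(\mathcal E) = -3H'$ (from $c_1(\bar{\mathcal S}) = -H'$) and $K_{\G(1,n-2)} = -(n-1)H'$, one obtains $-K_{\G(1,n)_1^+} = 5\xi + (n-4)\pi^*H'$. For $n\geq 5$ this lies in the interior of $\cone(\xi,\pi^*H')$, hence is ample, so $\G(1,n)_1^+$ is Fano and therefore a Mori dream space; since $\eta$ is a small birational modification, $\G(1,n)_1$ has the same Cox ring and is also a Mori dream space. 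For $n=4$, $\G(1,4)_1$ is weak Fano by Proposition~\ref{G(r,n)_k weak Fano}, hence a Mori dream space directly. The main obstacle is the global construction of $\eta$: fiberwise the map is the classical projection of a quadric $\G(1,3)\subset\P^5$ from a point, but assembling this coherently as a global rational map and verifying its indeterminacy locus is exactly $\widetilde R_1$ requires careful bookkeeping of the bundle structure.
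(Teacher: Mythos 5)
Your route is genuinely different from the paper's: you construct the flipped model explicitly as the $\P^4$-bundle $\P(\wedge^2\mathcal S/\wedge^2 L)$ over $\G(1,n-2)$ and define $\eta$ fiberwise as stereographic projection of $\G(1,3)\subset\P^5$ from a point, whereas the paper never writes the target down --- it blows up the strict transform of $R=T_p\G(1,n)\cap\G(1,n)$ to get $\G(1,n)_R$, exhibits the $K$-negative extremal class $c=h-e+f$, and obtains $\G(1,n)_1^+$ and all its contractions ($\gamma$ to $W$, $\delta$ to $\G(1,n-2)$) from the Contraction Lemma and factorization through $\mathrm{cont}_c$. Your numerology ($c_1(\mathcal E)=-3H'$, $-K=5\xi+(n-4)\pi^*H'=(n+1)H-(2n-3)E$) agrees with the paper's. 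But your computation of the effective cone has a real gap: ruling out hyperplane sections of multiplicity $\geq 3$ at $p$ via $T^2_p\G(1,n)=\P^N$ only controls the classes $H-mE$, while $\Eff(\G(1,n)_1)=\cone(E,H-2E)$ requires $m\leq 2d$ for every irreducible divisor of class $dH-mE$. The missing ingredient is that $\G(1,n)$ is covered by conics through $p$ (this is $R_2=\G(1,n)$, Lemma \ref{Rslemma}), so a general such conic $C$ is not contained in a prime divisor $D_0$ of degree $d$ and $2d=D_0\cdot C\geq\mult_pD_0$; equivalently $2h-e$ is a moving class and $\Eff=\mov(\,\cdot\,)^{\vee}$. (The paper gets the effective cone as output of Corollary \ref{nef cone border3}, and independently from the spherical structure in Lemma \ref{G(r,n)1 spherical}.)

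The second gap is the nef cone of $\G(1,n)_1^+$. Your justification --- ``the nef cone must fill the remaining chamber of $\Mov$'' --- presupposes the Mori chamber decomposition you are in the middle of establishing. What actually has to be shown is that the class $H-E$ is nef, indeed semiample, on the $+$ side: that the projection from $p$ induces a genuine morphism $\gamma:\G(1,n)_1^+\to W$, equivalently that your $\mathcal O_{\P(\mathcal E)}(1)$ is nef. The paper proves this by checking that $\tau=\beta\circ\alpha_1:\G(1,n)_R\to W$ contracts the flipping ray and hence factors through $\Psi$ by \cite[Proposition 1.14]{De01}; in your setup the corresponding step is to pass through the common resolution $\G(1,n)_R$ dominating both models. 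That same resolution is also the clean way to discharge the ``careful bookkeeping'' you defer --- that $\eta$ is an isomorphism exactly off $\widetilde R$ and that its image omits only the codimension-two locus $\bigcup_{\bar\ell}S_0(\bar\ell)$ --- since the fibers of $\xi$ are copies of $\G(1,3)_1$ on which $\alpha_1$ is an isomorphism (Lemma \ref{Projectionslemma}). With these two points supplied, your argument closes and yields the same chamber structure and the same Fano conclusion for $n\geq 5$.
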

\vspace{-0.5cm}
\begin{figure}[htb!]
\centering			
\resizebox{0.54\textwidth}{0.36\textwidth}{%
\begin{tikzpicture}[>=Stealth,scale=1.4]
\draw[->][thick] (0,-2) -- (0,1.8)   node[left,very near end]{$E=\Exc(\alpha)$};
\draw[->][thick] (-1,0) -- (4,0)   node[above,very near end]{$H=\alpha^*(\Nef (\G(1,n)))$};
\draw[->][thick] (0,0) -- (1.5,-1.5);  
\draw[->][thick] (0,0) -- (1.3,-2.6);

\draw (2.0,-1.3) node[very thick]{$H-E$};
\draw (1.9,-2.5) node[very thick]{$H-2E$};

\draw (3.0,1.3) node[very thick]{$\Mov(\G(1,n)_1)=\mathcal C_0 \bigcup \mathcal C_1$};

\fill[blue] (0,0) -- (1.2,0)
arc [start angle=0, end angle=-45, radius=1.2];
\fill[red] (0,0) -- (1.3,-1.3)
arc [start angle=-45, end angle=-63.5, radius=1.83];

\draw (2.5,-0.5) node[very thick,blue]{$\Nef(\G(1,n)_1)=\mathcal C_0$};
\draw (2.8,-1.8) node[very thick,red]{$\eta^*(\Nef(\G(1,n)_1^+))=\mathcal C_1$};
\end{tikzpicture}}
\end{figure}

In the first section we recall basic definitions and known results, and give some examples. In Section \ref{secvarietiespho=2} we explain the particularities of Mori dream spaces whose Picard number is two. In Section \ref{Flipconstruction} we construct the flip and prove Theorem \ref{MCD G(1,n)}.

\section{Preliminary definitions}\label{sec51}

For a more complete survey on the basic definitions we refer to Lazarsfeld's book  \cite{Lazarsfeldvol1}. For a line bundle $L$ on a scheme $X$ the section ring of $L$ is the graded ring
$$R(X, L) :=\bigoplus_{n\in \N} H^0(X,L^{\otimes n}).$$

When there is no danger of confusion we will mix the notation of divisors and line bundles, e.g. writing $H^0(X,D)$
for $H^0(X,\mathcal O(D))$ for a divisor $D$. 
If $R(X,D)$ is finitely generated, and $D$ is effective, then there is an induced rational
map $$\varphi_D : X \dasharrow Proj(R(X,D))$$
which is regular outside the stable base locus 
$\bigcap_{n\in \N} Bs(|nD|)$ of $D.$

Let $X$ be a normal projective variety. We denote by $N^1(X)$ the real vector space of Cartier divisors modulo numerical equivalence and by $\rho_X = \dim(N^1(X))$ the Picard number of $X$. 
\begin{itemize}
\item[-] The \textit{effective cone} $\Eff(X)$ is the convex cone in $N^1(X)$ generated by classes of effective divisors. In general it is not a closed cone.
\item[-] The \textit{nef cone} $\Nef(X)$ is the convex cone in $N^1(X)$ generated by classes of divisors $D$ such that $D\cdot C\geq 0$ for any curve $C\subset X$. It is closed, but in general it is neither polyhedral nor rational. 
\item[-] A divisor $D\subset X$ is called \textit{movable} if its stable base locus has codimension at least two. The \textit{movable cone} $\Mov(X)$ is the convex cone in $N^1(X)$ generated by classes of movable divisors. In general, it is not closed.
\item[-] A line bundle $L$ is \textit{semiample} if   $L^{\otimes m}$ is globally generated for some $m>0.$ A divisor $D$ is semiample if the corresponding line bundle is so.
\end{itemize}
Remember also that the cone $\Amp(X)$ of ample divisors on $X$ is the interior of the cone of nef divisors. We have then
$$\overline{\Amp(X)}=\Nef(X)\subset \Mov(X)\subset\Eff(X)\subset N^1(X).$$

From now on we assume that $X$ is $\Q$-factorial.
A \textit{small $\mathbb{Q}$-factorial modification} of $X$ is a birational map $f:X\dasharrow Y$ to another normal $\mathbb{Q}$-factorial projective variety $Y$, such that $f$ is an isomorphism in codimension one.
A morphism $f:X\to Y$ between normal projective varieties is called a \textit{contraction} when it is surjective and has connected fibers. If $f:X\to Y$ is a contraction we define  the exceptional locus $\Exc(f)$ of $f$ as $X$ if $\dim(X)>\dim(Y)$ and as the smallest closed subset of $X$ such that 
$$f:X\setminus \Exc(f)\to Y\setminus f(\Exc(f))$$
is an isomorphism otherwise.

A contraction $f$ is called a \textit{small contraction} if $\Exc(f)$ has codimension at least two. Therefore, if $f$ is not small then $\Exc(f)$ is either a divisor (and $f$ is called a divisorial contraction) or $X$ itself (and $f$ is a fibration). A contraction $f:X\to Y$ is called a \textit{elementary contraction} when $\rho_X-\rho_Y=1,$ this is equivalent to saying that the contracted curves, 
$\{c: f(c)=\mbox{pt}\},$ generate an extremal ray of the Mori cone.

Assume there is a commutative diagram
\begin{center}
\begin{tikzcd}
X\arrow[rr,dashed,"\eta"]\arrow[rd,"f"']&&X^+\arrow[dl,"f^+"]\\
&Y&
\end{tikzcd}
\end{center}
such that $f,f^+$ are small elementary contractions, $\eta$ is a small $\mathbb{Q}$-factorial modification and $X,X^+$ are not isomorphic. If $D$ is an effective divisor in $X$ such that $D\cdot c<0$ for each curve $c$ such that $f(c)=\mbox{pt},$ then we call $\eta$ a $D$-negative \textit{flip}, or just a flip.

\begin{Definition} Let $D_1$ and $D_2$ be two movable $\Q$-Cartier divisors on $X$ with finitely generated section rings. Then we say that $D_1$ and $D_2$ are \textit{Mori equivalent} if the rational maps $\varphi_{D_i}$ have the same Stein factorization, i.e., there is an
isomorphism between their images which makes the obvious triangular diagram commutative.

Suppose now that $X$ is a projective variety such that $R(X, L)$ is finitely generated for all line bundles $L.$ By a \textit{Mori chamber} of $\Mov(X)$ we mean the closure of the cone spanned by a Mori equivalence class of divisors whose interior is open in $\Mov(X)$.
\end{Definition}

A Mori dream space is essentially a variety for which the movable cone is the union of finitely many Mori chambers. In the definition below we use the abuse of notation $f_i^*(\Nef(X_i))=\Nef(X_i)$
and we will keep using this from now on.

\begin{Definition}\label{Mori dream space}
A normal projective variety $X$ is a \textit{Mori dream space} if
\begin{itemize}
\item[(a)] $X$ is $\mathbb{Q}$-factorial and $\Pic(X)$ is finitely generated;
\item[(b)] $\Nef(X)$ is generated by finitely many semiample divisors;
\item[(c)] there exists finitely many small $\mathbb{Q}$-factorial modifications $f_i:X\dasharrow X_i$ such that each $X_i$ satisfies $\rm(a),(b)$, and 
$$\Mov(X)=\bigcup_i \Nef(X_i).$$
\end{itemize}
\end{Definition}

Observe that if $X$ is a Mori dream space then the $X_i$ are also Mori dream spaces, and $\Mov(X)$ has a finite number of Mori chambers.

It is shown in \cite[Proposition 1.11]{HK} that when $X$ is a Mori dream space we can in fact decompose $\Eff(X)$ in a finite number of Mori chambers as well. Moreover, we can describe a fan structure on the effective cone $\Eff(X)$, called the \emph{Mori chamber decomposition}. 
We refer to \cite[Proposition 1.11(2)]{HK} and \cite[Section 2.2]{Oka} for details.
There are finitely many birational contractions from $X$ to Mori dream spaces, denoted by $g_i:X\map Y_i$.
The set $\Exc^1(g_i)$ of exceptional prime divisors of $g_i$ has cardinality $\rho(X/Y_i)=\rho(X)-\rho(Y_i)$. If $g_i$ is a divisorial contraction then $\Exc(g_i)=\Exc^1(g_i).$
The maximal cones $\mathcal C_i$ of the Mori chamber decomposition of $\Eff(X)$ are of the form:
$$
\mathcal C_i \ = \cone \ \Big( \ g_i^*\big(\Nef(Y_i)\big)\ , \  \Exc^1(g_i) \ \Big).
$$
We call $\mathcal C_i$ or its interior $\mathcal C_i^{^\circ}$ a \emph{chamber} of $\Eff(X)$ or
a \emph{Mori chamber} of $\Eff(X)$.

Let $X$ be a Mori dream space and $\mathcal{C},\mathcal{C}'$ be two adjacent chambers, that is $\dim(\overline{\mathcal{C}}\bigcap\overline{\mathcal{C}'})=\rho_X -1,$ then we call $W=\overline{\mathcal{C}}\bigcap\overline{\mathcal{C}'}$ a wall of the Mori chamber decomposition of $\Eff(X).$ If $\rho_X=2$ each wall is determined by a single divisor $D,$ and in this case we sometimes say that $D$ is the wall. When $\rho_X=2$ and $\Eff(X)=\cone(D_1,D_2)$ we will also say that $D_1$ and $D_2$ are walls.

Let us conclude this introductory section with a concrete example of a Mori chamber decomposition. We denote by $\cone(v_1,\dots,v_n)$ the (closed) cone generated by the vectors $v_1,\dots,v_n$ in some vector space.

\begin{Example}\label{ExP^n_2}
See Example 3.7 of the Notes \cite{Ca12} for details.
Let $X$ be the blow-up of $\P^n, n\geq 2,$ at two points $p_1,p_2.$
Then $X$ is a Mori dream space because it is a toric variety.
Furthermore, $\Pic(X)$ is freely generated by the pullback  $H$ of a hyperplane in $\P^n$ and the exceptional divisors $E_1,E_2.$

The nef cone of $X$  is generated by $H,H-E_1$ and $H-E_2.$
There is a small $\mathbb{Q}$-factorial modification $f:X\dasharrow X'$ with $\Nef(X')$ being the cone generated by $H-E_1,H-E_2$ and $H-E_1-E_2.$ The movable cone of $X$ is $\Mov(X)=\Nef(X)\cup\Nef(X'),$
see Figure \ref{P^n_2}, and $\Nef(X),\Nef(X')$ are the two Mori   chambers of it.
The effective cone of $X$ is
$$Eff(X)=\cone(H-E_1-E_2,E_1,E_2)$$
and it has five Mori chambers, two of them are in the movable cone and were already explained. We now describe the other three.

If $\varphi:X\to Bl_{p_2}\P^n$ is the blow-up at $p_1$ then by
\cite[Proposition 1.11(2)]{HK} 
$$ \mathcal C_1=
\cone(\varphi^*(\Nef(Bl_{p_1}\P^n)),\Exc \varphi)=
\cone(H-E_2,H,E_1)$$
is a Mori chamber of $\Eff(X).$
Analogously 
$$ \mathcal C_2=
\cone(H-E_1,H,E_2)$$
is another Mori chamber.
The last Mori chamber is obtained considering the composition of the two blow-ups
$\psi:X\to \P^n:$
$$ \mathcal C_0=
\cone(\psi^*(\Nef(\P^n)),\Exc \psi)=
\cone(H,E_1,E_2).$$
The anticanonical divisor of $X$ is 
$-K_X=(n+1)H-(n-1)(E_1+E_2),$
and it is inside the interior of $\Nef(X)$ if $n=2,$
on the wall between the Mori chambers $\Nef(X)$ and $\Nef(X')$ if $n=3,$ and inside $\Nef(X')$ otherwise.
\end{Example}

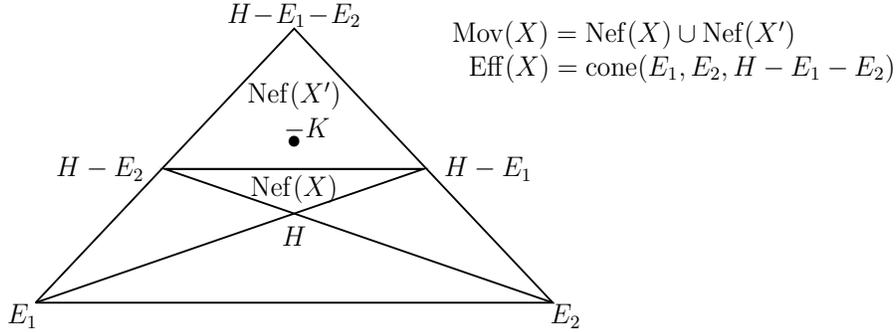
\begin{figure}[htb!]
\centering
\resizebox{0.8\textwidth}{0.3\textwidth}{%
\begin{tikzpicture}

\draw[thick] (0,0) -- (8,0) --(4,4.1)--(0,0) ;
\draw[thick] (2,2) -- (6,2) --(0,0) ;
\draw[thick] (6,2) -- (2,2) --(8,0) ;

\draw (-0.2,-0.2) 	 node[thick]{$E_1$};
\draw (8.2,-0.2) 	 node[thick]{$E_2$};
\draw (4,4.3) 	 node[thick]{$H\!-\!E_1\!-\!E_2$};
\draw (1,2) 	 node[thick]{$H-E_2$};
\draw (7,2) 	 node[thick]{$H-E_1$};
\draw (4,1) 	 node[thick]{$H$};

\draw (4,1.7) 	 node[thick]{$\Nef(X)$};
\draw (4,3.1) 	 node[thick]{$\Nef(X')$};
\draw (4,2.4) 	 node[thick]{$\bullet$};
\draw (4.2,2.6) 	 node[thick]{$-K$};

\draw (9.1,4) 	 node[thick]{$\Mov(X)=\Nef(X)\cup\Nef(X')$};
\draw (10,3.5) node[thick]{$\Eff(X)=\cone(E_1,E_2,H-E_1-E_2)$};
\end{tikzpicture}
}
\caption{Mori chamber decomposition of $Bl_{p_1,p_2}\P^n, n\geq 4$} 
\label{P^n_2}
\end{figure}

\section{Mori dream spaces with Picard Number Two}\label{secvarietiespho=2}

When $X$ is a variety of Picard number one
there is nothing interesting to say about the cones of divisors.
In the next case, when the Picard number is two,
the Mori chamber decomposition of the effective cone is simple because all cones are simplicial.
We will focus on this case.
When the Picard number is greater than two, the cones
may not be simplicial and many difficulties appear.

Now we state some simple lemmas about varieties whose Picard number is two. 

\begin{Lemma}\label{nef cone border}
Let $X,Y$ be normal projective varieties such that $X$ is $\Q$-factorial,
and $\varphi:X\to Y$ be a contraction.
Set $D:=\varphi^*(H)\in Pic(X),$ where $H\in Pic(Y)$ is an ample divisor.
Assume that the exceptional locus $E=\Exc(\varphi)$ of $\varphi$ is not empty. Then
\begin{enumerate}[(1)]
	\item $D \in\partial \Nef(X).$
	\item If $E=X$, then $D \in \partial \Mov(X)$ and $D\in\partial \Eff(X)$ as well.
 	\item If $E$ is a divisor, then $D\in\partial \Mov(X)$ and  $E\in \partial \Eff(X).$ Moreover, if $\rho(X)=2$ and $X$ is a Mori dream space, then $\cone(E,D)$ is a Mori chamber of $\Eff(X).$
\end{enumerate}
\end{Lemma}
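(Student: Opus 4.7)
The plan is to prove the three parts in order, exploiting that $D=\varphi^*(H)$ vanishes on every curve contracted by $\varphi$. First I would observe that the assumption $\Exc(\varphi)\neq\varnothing$ produces an irreducible curve $C\subset X$ with $\varphi_*[C]=0$: namely a curve contracted inside $E$ when $\varphi$ is birational, or a curve in a positive-dimensional fiber when $E=X$. Since the pullback of an ample divisor is nef, $D\in\Nef(X)$, and $D\cdot C=H\cdot\varphi_*[C]=0$ shows that $D$ is not ample by Kleiman's criterion, giving part $(1)$.

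For part $(2)$, when $E=X$ the morphism $\varphi$ is a fibration. Choosing $m$ so that $mH$ is very ample, $mD=\varphi^*(mH)$ is globally generated, hence base-point-free, so the stable base locus of $D$ is empty and $mD$ is effective; thus $D\in\Mov(X)\cap\Eff(X)$. To put $D$ on the boundary of both cones, I would fix a curve $C$ in a general fiber (so that $C$ moves in a family covering $X$ and $D\cdot C=0$) and any ample divisor $A$ on $X$; then $(D-\epsilon A)\cdot C<0$ for every $\epsilon>0$, and a class with negative intersection against a covering curve cannot be pseudo-effective, so $D-\epsilon A\notin\overline{\Eff(X)}\supseteq\Mov(X)$.

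For part $(3)$, the same base-point-free argument puts $D$ in $\Mov(X)$. I would then choose a contracted curve $C\subset E$ moving in a family that sweeps out $E$; then $D\cdot C=0$ and, by the standard negativity of an exceptional divisor along curves covering it, $E\cdot C<0$. Hence $(D+\epsilon E)\cdot C<0$ for every $\epsilon>0$, forcing $E$ to appear as a codimension-one component of the stable base locus of $D+\epsilon E$ (because each such curve must be contained in every effective representative, and the curves fill $E$), so $D+\epsilon E\notin\Mov(X)$ and $D\in\partial\Mov(X)$. To obtain $E\in\partial\Eff(X)$, the plan is to show that $E-\epsilon D$ is not pseudo-effective: if some positive multiple were effective, I would decompose $m(E-\epsilon D)\equiv aE+D''$ with $D''$ effective not containing $E$, and intersect with a general member of the covering family of $C$; using $D\cdot C=0$, $E\cdot C<0$, and $D''\cdot C\geq 0$ produces an effective representative of a negative multiple of the nonzero nef class $D$, which is impossible. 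Finally, when $\rho(X)=2$ and $X$ is a Mori dream space, the description of the Mori chamber decomposition recalled in Section~\ref{sec51} attaches to the birational contraction $\varphi$ the chamber $\cone\bigl(\varphi^*\Nef(Y),\,\Exc^1(\varphi)\bigr)$; since $\rho(Y)=1$ we have $\varphi^*\Nef(Y)=\mathbb{R}_{\geq 0}\cdot D$ and $\Exc^1(\varphi)=\{E\}$, yielding exactly $\cone(E,D)$.

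The delicate step is the pseudo-effectivity argument ruling out $E-\epsilon D$ in part $(3)$: all the other pieces reduce either to Kleiman's criterion, to base-point-freeness of pullbacks of very ample divisors, or to the general structural description of Mori chambers, while extremality of $E$ in $\Eff(X)$ requires careful bookkeeping of multiplicities along $E$ combined with the fact that $-D$ cannot be represented by a pseudo-effective class when $D$ is a nonzero nef class.
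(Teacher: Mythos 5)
Your proof is correct, and for part $(1)$, for the $\Mov$-boundary statement in part $(3)$, and for the Mori-chamber claim it follows the paper's argument essentially verbatim: intersect $D$ with a contracted curve to see it is nef but not ample, use negativity of $E$ along the contracted curves sweeping it out to force $E$ into the stable base locus of $D+\epsilon E$, and quote the general description $\mathcal C=\cone\big(\varphi^*\Nef(Y),\Exc^1(\varphi)\big)$ of the chambers. You diverge in two places. For part $(2)$ the paper simply notes that $\varphi=\varphi_D$ is not birational, so $D$ is not big and therefore lies on $\partial\Eff(X)$ because the big cone is the interior of the effective cone; your computation $(D-\epsilon A)\cdot C<0$ against a covering family of curves in the fibers reaches the same conclusion with a little more work but no loss. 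For the statement $E\in\partial\Eff(X)$ in part $(3)$ the paper exhibits a movable curve class $c$ with $E\cdot c=0$ and invokes the duality between the cone of movable curves and the pseudo-effective cone of divisors from \cite{BDPP}; your argument instead rules out $E-\epsilon D$ being effective by splitting off the multiplicity of $E$ and pairing with the contracted covering family, which is more elementary (it avoids \cite{BDPP}) at the cost of the multiplicity bookkeeping you flag, and it closes correctly because a negative multiple of the nonzero nef class $D$ cannot be effective. The one step you cite as ``standard negativity of an exceptional divisor along curves covering it'' is exactly the point the paper also treats tersely (its claim that $E\cdot c\geq 0$ would yield a numerically equivalent curve off $E$ is itself only a sketch), so neither text gains rigor there; both arguments are sound.
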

\begin{proof}
For any irreducible curve $c$ in $X$ we have that $D\cdot c=\varphi^*(H)\cdot c=H\cdot  {\varphi}_* (c)$
is zero if $\varphi(c)$ is a point, and is positive otherwise.
Therefore $D$ is nef.
Since $E$ is not empty, there is at least one irreducible curve $c$ contracted by $\varphi$,
and then $D\cdot c=0.$ This shows that $D$ is $\Nef$ but not ample.
Now, the $\Nef$ cone is the closure of the ample cone, see \cite[Theorem 1.4.23]{Lazarsfeldvol1}, therefore $D\in \partial \Nef(X).$

If $E=X,$ then $\varphi=\varphi_{D}$ is not birational, this means that $D$ is not big.
Since the cone of big divisors is the interior of the effective cone,
see \cite[Theorem 2.2.26]{Lazarsfeldvol1}, then $D\in \partial \Eff(X).$ Now, since 
$\Nef(X)\!\subset\!\Mov(X)\!\subset\! \Eff(X)$ then
$D\in \partial \Mov(X).$

For the remaining of the proof assume that $E$ is a divisor. 

Let $c$ be a curve contracted by $\varphi.$
Therefore $\Supp(c)\subset E$ and thus
$c\cdot E<0.$
Indeed, if $c\cdot E\geq 0$ then there is a curve $c'$ numerically equivalent to $c$ such that $\Supp(c')\not\subset E,$ but this contradicts $E=\Exc(f).$

Next, $(\alpha D+\beta E)\cdot c=\beta E\cdot c<0$
for each $\beta>0.$ In particular, $|n(D+\beta E)|$ has $E$ in its base locus for every $\beta>0$ and $n\in \Z_{\geq 1},$ and  therefore   $D+\beta E$ is not movable. Since 
$\varphi=\varphi_{D}$ is birational, we have that $D$ is movable.
Noting that $D$ is movable, but 
$D+\beta E,\beta>0$ is not, we conclude that $D\in \partial \Mov(X).$

Pick $c$ as the class of a curve that covers a open dense of $X,$ that is, a movable curve.
Then $E\cdot c=0,$ and since the cone of movable curves is the dual of the effective cone of divisors, see \cite[Theorem 2.2]{BDPP}, then 
$E\in \partial\Eff(X).$

Finally, if $\rho(X)=2$ and $X$ is a Mori dream space then $\rho(Y)=1$ and 
$$\mathcal C=\cone(\varphi^*(\Nef(Y)),\Exc(\varphi))=\cone(D,E)$$
is a Mori chamber of $\Eff(X).$
\end{proof}

The following is a direct consequence of Lemma \ref{nef cone border}.
 
\begin{Corollary}\label{nef cone border2}
Let $\varphi_1:X\to Y_1,\varphi_2:X\to Y_2$ be two contractions as in the first part of Lemma \ref{nef cone border},
and set $E_1,E_2,D_1,D_2$ accordingly.
Assume that $\rho(X)=2, \ \Pic(X)\cong \Z^2,$ and that $D_1\cdot \R_{\geq 0}\neq D_2\cdot  \R_{\geq 0}.$ Then $\Nef(X)\!=\!\cone(D_1,D_2).$

Assume, in addition, that $\varphi_1,\varphi_2$ are not small contractions. Then $X$ is a Mori dream space and
\begin{enumerate}[(1)]
	\item $\Nef(X)=\Mov(X)=\cone(D_1,D_2)$ is one Mori chamber of $\Eff(X).$ 
	\item If $E_1\!=\!E_2\!=\!X,$ then $\Eff(X)\!=\!\Nef(X).$
	\item If $E_1$ is a divisor and $E_2=X,$ then $\Eff(X)$ have exactly two Mori chambers:
		$$\mathcal C_0=\cone(E_1,D_1),\ \mathcal C_1=\cone(D_1,D_2).$$		
	\item If $E_1,E_2$ are divisors, then $\Eff(X)$ has exactly three Mori chambers:
		$$\mathcal C_0=\cone(E_1,D_1),\ \mathcal C_1=\cone(D_1,D_2),\ \mathcal C_2=\cone(D_2,E_2).$$
\end{enumerate}
\end{Corollary}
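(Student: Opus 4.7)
The strategy is to exploit the fact that $N^1(X)_{\mathbb{R}} \cong \mathbb{R}^2$ is two-dimensional, so every closed convex cone with non-empty interior in $N^1(X)_{\mathbb{R}}$ is simplicial and is completely determined by its two boundary rays. Every assertion will then reduce to locating a few extremal rays by quoting Lemma \ref{nef cone border}.

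First I would establish the unconditional claim that $\Nef(X)=\cone(D_1,D_2)$. By Lemma \ref{nef cone border}(1) both $D_1$ and $D_2$ lie on $\partial\Nef(X)$, and by hypothesis they span distinct rays. Since $\Nef(X)$ is a closed convex cone with non-empty interior (it contains an ample class), it must coincide with the cone bounded by these two rays.

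Now I would add the hypothesis that $\varphi_1,\varphi_2$ are not small; each $E_i$ is then either all of $X$ (when $\varphi_i$ is a fibration) or a prime divisor (when $\varphi_i$ is divisorial). For (1), parts (2) and (3) of Lemma \ref{nef cone border} both place $D_i\in\partial\Mov(X)$, so the same planar argument gives $\Mov(X)=\cone(D_1,D_2)=\Nef(X)$; this common cone is the Mori chamber attached to the identity birational contraction $\mathrm{id}:X\to X$. To conclude that $X$ is a Mori dream space in the sense of Definition \ref{Mori dream space}, property (a) is given, property (b) holds because $D_1,D_2$ are pullbacks of ample divisors and hence semiample, and property (c) is trivial since $\Mov(X)=\Nef(X)$ forces the only small $\mathbb{Q}$-factorial modification to consider to be the identity.

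For (2), (3), (4) I would determine $\Eff(X)$ and its chamber decomposition one case at a time, using again that $\Eff(X)$ is a closed convex cone with non-empty interior in the plane and so is determined by its two extremal rays. In case (2) Lemma \ref{nef cone border}(2) puts both $D_i$ on $\partial\Eff(X)$, giving $\Eff(X)=\cone(D_1,D_2)=\Nef(X)$ and a single chamber. In case (3) Lemma \ref{nef cone border}(2) yields $D_2\in\partial\Eff(X)$ while Lemma \ref{nef cone border}(3) yields $E_1\in\partial\Eff(X)$, so $\Eff(X)=\cone(E_1,D_2)$; the additional chamber $\cone(E_1,D_1)$ is produced directly by Lemma \ref{nef cone border}(3) applied to the birational contraction $\varphi_1$. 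In case (4) both $E_1,E_2$ lie on $\partial\Eff(X)$ by Lemma \ref{nef cone border}(3), whence $\Eff(X)=\cone(E_1,E_2)$ with the two flanking chambers $\cone(E_i,D_i)$ attached to the contractions $\varphi_i$. The only real bookkeeping — and the closest thing here to an obstacle — is to make sure no chamber is missed: this is transparent, since in each case the chambers I list cover all of $\Eff(X)$, and any further chamber would have to introduce a third boundary ray in the plane, which is impossible.
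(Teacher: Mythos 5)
Your argument is correct and is precisely the ``direct consequence of Lemma \ref{nef cone border}'' that the paper invokes without writing out: the paper gives no proof of this corollary, and your expansion (locating the extremal rays of $\Nef$, $\Mov$ and $\Eff$ via parts (1)--(3) of the lemma and using that salient planar cones are determined by their two boundary rays) is exactly the intended reasoning. The only point worth making explicit is that the cones involved are salient (they sit inside $\overline{\Eff}(X)$, which contains no line), so that ``two boundary rays determine the cone'' is legitimate; this is standard and does not affect the validity of your proof.
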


The three cases appearing in Corollary \ref{nef cone border2} are, in a certain sense,
the simplest examples of Mori dream space with Picard number two.
We give below explicit examples of these three cases.

\begin{figure}[htb!]
\centering
\begin{tikzpicture}[>=Stealth,scale=1.4]
\fill[red!60!white] (0,0) -- (0.5,0)
arc [start angle=0, end angle=0-45, radius=0.5];
\draw[red!60!white] (1.2,-0.3) node[very thick]{$\Eff(X)$};
\draw[->][thick] (0,0) -- (1,0)
node[above,very near end]{$D_1$};
\draw[->][thick] (0,0) -- (0.8,-0.8)
node[left,very near end]{$D_2$};
\draw (0,1.5) node[very thick]{item $(2)$};

\fill[blue!60!white] (3,0) -- (3.7,0)
arc [start angle=0, end angle=-45, radius=0.7];
\draw[blue!60!white] (4.2,-0.3) node[very thick]{$\Nef(X)$};
\fill[red!60!white] (3,0) -- (3,0.5)
arc [start angle=90, end angle=-45, radius=0.5];
\draw[red!60!white] (3.6,0.7) node[very thick]{$\Eff(X)$};
\draw[->][thick] (3,0) -- (4,0)
node[above,very near end]{$D_1$};
\draw[->][thick] (3,0) -- (3,1)
node[left,very near end]{$E_1$};
\draw[->][thick] (3,0) -- (3.8,-0.8)
node[right,very near end]{$D_2$};
\draw (3.3,1.5) node[very thick]{item $(3)$};

\fill[blue!60!white] (6,0) -- (6.7,0)
arc [start angle=0, end angle=-45, radius=0.7];
\draw[blue!60!white] (7.2,-0.3) node[very thick]{$\Nef(X)$};
\fill[red!60!white] (6,0) -- (6,0.5)
arc [start angle=90, end angle=-57, radius=0.5];
\draw[red!60!white] (6.6,0.7) node[very thick]{$\Eff(X)$};
\draw[->][thick] (6,0) -- (7,0)
node[above,very near end]{$D_1$};
\draw[->][thick] (6,0) -- (6,1)
node[left,very near end]{$E_1$};
\draw[->][thick] (6,0) -- (6.8,-0.8)
node[right,very near end]{$D_2$};
\draw[->][thick] (6,0) -- (6.8,-1.2)
node[right,very near end]{$E_2$};
\draw (6.3,1.5) node[very thick]{item $(4)$};
\end{tikzpicture}
\end{figure}

\begin{Example}
Let $Y_1$ be a Mori dream space with Picard number one, $Y_2=\P^n$ and $X=Y_1\times Y_2,$
and consider the projections $\pi_i:X\to Y_i.$ Then $\rho(X)=2$ by \cite[Exercise II.6.1]{Har}.
Therefore by Corollary \ref{nef cone border2} we know that
$X$ is a Mori dream space with $$\Eff(X)=\Mov(X)=\Nef(X)=\cone(D_1,D_2),$$ where 
$H_i\in Pic(Y_i)$ are ample divisors and $D_i=\pi_i^*(H_i)$.
\end{Example}

\begin{Example}
Let $\varphi:X\to Y_1=\P^n$ be the blow-up of the projective space in one point $p$.
Consider also the linear projection from this point $\pi:Y_1\dasharrow Y_2=\P^{n-1}.$
The blow-up map resolves the projection, that is, there is a morphism $\psi:X\to Y_2$ making the diagram below commutative.
\begin{center}
\begin{tikzcd}
&X\arrow[dl,"\varphi"']\arrow[rd,"\psi"]&\\
\P^n\arrow[rr,dashed,"\pi"]&&\P^{n-1}
\end{tikzcd}
\end{center}
Denote by $H\in \Pic(X)$ the pullback of a general hyperplane section on $\P^n$
and by $E\in \Pic(X)$ the exceptional divisor of $\varphi.$ Therefore, by Corollary \ref{nef cone border2} we have 
that $X$ is a Mori dream space and $\Eff(X)$ has exactly two Mori chambers:
		$$\mathcal C_0=\cone(E,H),\ \mathcal C_1=\cone(H,H-E)=\Nef(X)=\Mov(X).$$	
\end{Example}

Before the next example we introduce some notation. We denote the real vector space of $1$-cycles modulo numerical equivalence of a given projective variety $X$ by $N_1(X).$
We denote the \textit{cone of curves} of $X,$ also known as the \textit{Mori cone} of $X,$ as
$$NE(X)=\left\{ \sum_{i=1}^k 
\alpha_i [C_i]\in N_1(X); 0\leq \alpha_i\in \R
, k\in \Z_{\geq 0} \right\}$$
where $C_i\subset X, i=1,\dots, k$ are irreducible curves. Given a contraction $f:X\to Y$ the classes of contracted curves constitute a subcone $\sigma$
of $NE(X),$ we use the notation $f=cont_{\sigma}.$
If $\sigma=R\cdot \R_{\geq 0}$ we write 
$f=cont_R$ instead.

\begin{Example}\label{exampleC}
This example applies in particular to $\G(1,3)_1,$ the blow-up of the Grassmannian $\G(1,3)\cong Q^4$
at one point.
Let $\varphi\!:\!X\to Y_1\!=\!Q^n\!\subset\! \P^{n+1}$ be the blow-up of the 
$n$-dimensional smooth quadric hypersurface in one point $p$,
and set $\pi:Y_1\dasharrow  Y_2=\P^n,\psi:X\to Y_2$ as in the previous example. Also, let
$E_0$  be the exceptional divisor of $\varphi,$ and $H_0\in Pic(X)$ the pullback of a general hyperplane section of $Q^n$.
\begin{center}
\begin{tikzcd}
&X\arrow[dl,"\varphi"']\arrow[rd,"\psi"]&\\
Q^n\arrow[rr,dashed,"\pi"]&&\P^{n}
\end{tikzcd}
\end{center}
Then $\psi$ is a divisorial contraction, and the exceptional divisor is $R_0,$ the strict transform
of $Q^n\cap T_p(Q^n)=\cone_p(Q^{n-2})$ by $\varphi.$ The class of $R_0$ in $\Pic(X)$ is $H_0-2E_0.$
The pullback of an ample divisor in $\P^n$ by $\psi$ is the pullback by $\varphi$
of a general hyperplane section in $Q^n$ passing through $p,$  and therefore has class $H_0-E_0.$ 

By Corollary \ref{nef cone border2}, $X$ is a Mori dream space and
$\Eff(X)$ has exactly three Mori chambers:
$$\mathcal C_0\!=\!\cone(E_0,H_0),
\mathcal C_1\!=\!\cone(H_0,H_0\!-\!E_0)
\!=\!\Nef(X)\!=\!\Mov(X),$$
$$\mbox{ and }
\mathcal C_2\!=\!\cone(H_0\!-\!E_0,H_0\!-\!2E_0).$$
We can also give additional information: $S_0:=\psi(R_0)$ is an $(n-2)$-dimensional smooth 
quadric hypersurface of $\psi(E_0)=\P^{n-1},$ which paremetrizes the lines in $Q^n$ through $p.$
By duality, the cone of curves of $X$ is given by
$$NE(X)=\Nef(X)^\vee=\cone(e_0,h_0-e_0),$$
where $h_0$ is the class of the strict transform by $\varphi$ of a general line in $Q^n$
and $e_0$ is the class of a general line in $E_0.$
We have $h_0\cdot H_0=-e_0\cdot E_0=1$ and $h_0\cdot E_0=e_0\cdot H_0=0.$
Moreover, $\varphi=cont_{e_0}$ and $\psi=cont_{h_0-e_0}.$
\end{Example}

Next we give another corollary of Lemma \ref{nef cone border} which allows us to prove that a variety $X$ of Picard number two is a Mori dream space if we have enough morphisms and birational maps from it.

\begin{Corollary}\label{nef cone border3}
Let $X$ be a normal $\Q$-factorial projective variety with Picard number two and $\Pic(X)\cong \Z^2$. Suppose there is a commutative diagram
\begin{center}
\begin{tikzcd}
X\arrow[d,"f_0"]
\arrow[dr,"g_0"]\arrow[r,dashed,"\eta_1"]&
X_1\arrow[d,"f_1"]
\arrow[dr,"g_1"]\arrow[r,dashed,"\eta_2"]&
\cdots\arrow[r,dashed,"\eta_r"]
\arrow[rd,"g_{r-1}"]&
X_r\arrow[d,"f_r"]
\arrow[dr,"g_r"]&\\
X_0&X_{-1}&\cdots &X_{-r}&X_{-r-1}
\end{tikzcd}
\end{center}
such that $X_{-r-1},\dots,X_r$ are normal projective varieties, $X_1,\dots,X_{r}$ are $\Q$-factorial, $f_0,g_0,\dots, f_r,g_r$ are elementary contractions,  $\eta_0,\dots,\eta_r$ are flips, and $f_0,g_r$ are not small contractions. Moreover, set
$$D_0=f_0^*(H_0),
D_1=\eta_1^*(f_1^*(H_1))
\dots,D_r=\eta_1^*\dots \eta_r^*f_r^*(H_r),D_{r+1}=g_r^*(H_{r+1}),$$
where $H_i\in \Pic(X_{-i})$ are ample,
$E=\Exc(f_0)$ and $E'=\Exc(g_r)$,
and assume that $D_0,\dots,D_{r+1}$ generate different rays of $\Pic(X).$

Then $X$ is a Mori dream space,
$\Nef(X)=\cone(D_0,D_1),\Mov(X)=cone(D_0,D_r),$ and $D_0,\dots,D_r$ are walls of the Mori chamber decomposition of $\Mov(X).$ Moreover, 
\begin{enumerate}[(1)]
\item If $E=E'=X$ then 
$\Eff(X)=cone(D_0,D_r).$
\item If $E=X$ and $E'$ is a divisor, then 
$\Eff(X)=cone(D_0,E')$ and $\cone(D_r,E')$ is a chamber of $\Eff(X).$
\item If both $E$ and $E$ are divisors, then 
$\Eff(X)=cone(E,E')$ and both $\cone(E,D_0)$ and
$\cone(D_r,E')$ are chambers of $\Eff(X).$
\end{enumerate}
\end{Corollary}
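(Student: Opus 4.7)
The plan is to apply Corollary \ref{nef cone border2} to each intermediate variety $X_i$ in the chain and then glue the resulting nef cones across the flips $\eta_i$ to assemble the Mori chamber decomposition of $\Mov(X)$; the rigidity afforded by $\rho(X)=2$, which forces every cone we consider to be simplicial and linearly ordered, is what makes this gluing succeed.

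First, since each flip $\eta_i\colon X_{i-1}\dashrightarrow X_i$ is a small $\mathbb{Q}$-factorial modification, the proper-transform pullback induces an isomorphism $\eta_i^*\colon N^1(X_i)\to N^1(X_{i-1})$. I would use these to identify every $N^1(X_i)$ with $N^1(X)$ via the composition $\eta_1^*\circ\cdots\circ\eta_i^*$, so that the classes $D_0,\dots,D_{r+1}$ live in a common rank-two lattice and are pairwise non-proportional by hypothesis. Then I would apply Corollary \ref{nef cone border2} to each $X_i$ using its two elementary contractions $f_i$ and $g_i$: for $1\le i\le r-1$ both $f_i$ and $g_i$ are small, being respectively the target and source of a flip, while $f_0$ and $g_r$ are non-small by hypothesis and $g_0$, $f_r$ are small. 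In every case the corollary yields $\Nef(X_i)=\cone(f_i^*H_i,g_i^*H_{i+1})$, which under the identification becomes $\cone(D_i,D_{i+1})$; in particular $\Nef(X)=\cone(D_0,D_1)$.

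Second, I would check that these chambers glue along common walls. Across each flip $\eta_i$, the small contractions $g_{i-1}\colon X_{i-1}\to X_{-i}$ and $f_i\colon X_i\to X_{-i}$ share a common target and agree as rational maps from $X_{i-1}$ to $X_{-i}$, so $g_{i-1}^*H_i=\eta_i^*(f_i^*H_i)$ in $N^1(X_{i-1})$; after the identification both sides are exactly $D_i$. Thus the pulled-back nef cones form a fan tiling $\cone(D_0,D_{r+1})$ with consecutive interior walls $D_1,\dots,D_r$. To identify $\Mov(X)$, note that the inclusion $\cone(D_0,D_{r+1})\subseteq\Mov(X)$ is automatic because the nef cone of any SQM lies in the movable cone, and the reverse inclusion follows because $\rho(X)=2$ forces $\Mov(X)$ to have only two extremal rays, which must be semiample on some SQM of $X$ and therefore coincide with $D_0$ and $D_{r+1}$.

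Third, the description of $\Eff(X)$ in the three cases is a direct application of Lemma \ref{nef cone border} to the outer contractions $f_0$ and $g_r$: if $f_0$ is a fibration ($E=X$) then part $(2)$ of the lemma places $D_0$ on $\partial\Eff(X)$, whereas if $f_0$ is divisorial then part $(3)$ gives $E\in\partial\Eff(X)$ and exhibits $\cone(E,D_0)$ as an additional Mori chamber; a symmetric analysis applies at the other end with $g_r$ and $E'$. Combining yields precisely the three descriptions of $\Eff(X)$. Since $X$ is $\mathbb{Q}$-factorial with finitely generated $\Pic(X)$ of rank two, since $\Nef(X)$ is generated by the semiample divisors $D_0$ and $D_1$, and since $\Mov(X)$ is the union of the nef cones of the finitely many SQMs $X,X_1,\dots,X_r$, the variety $X$ satisfies Definition \ref{Mori dream space} and is a Mori dream space. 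The main anticipated obstacle is the flip-compatibility identity $g_{i-1}^*H_i=\eta_i^*(f_i^*H_i)$; since $\eta_i$ is only a rational map it cannot be pulled back as a morphism, and the identity must be established either via a common resolution of $g_{i-1}$ and $f_i\circ\eta_i$, or by invoking the general principle that for a small $\mathbb{Q}$-factorial modification $\eta=f^{-1}\circ g$ defined away from a codimension-two locus, the pullbacks by $g$ and by $f$ agree on Néron--Severi classes. The remaining verifications are essentially indexing and follow from the rigidity of Picard rank two.
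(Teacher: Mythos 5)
Your proposal is essentially the derivation the paper intends: Corollary \ref{nef cone border3} is stated without proof, as a formal consequence of Lemma \ref{nef cone border} and Corollary \ref{nef cone border2}, and your strategy --- identify all the $N^1(X_i)$ through the small modifications, apply Corollary \ref{nef cone border2} to each $X_i$ to get $\Nef(X_i)=\cone(D_i,D_{i+1})$, glue along the common walls $g_{i-1}^*H_i=\eta_i^*f_i^*H_i$, and read off $\Eff(X)$ from the outer contractions via Lemma \ref{nef cone border} --- is exactly that. (You also silently correct two typos in the statement: the movable cone and the chamber at the far end should involve $D_{r+1}$, not $D_r$, as your proof shows and as the application to $\G(1,n)_1$ with $r=1$ confirms.)

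One step as written is circular: to get $\Mov(X)\subseteq\cone(D_0,D_{r+1})$ you appeal to the fact that the extremal rays of $\Mov(X)$ are semiample on some small $\mathbb{Q}$-factorial modification, but that is a Hu--Keel property of Mori dream spaces and cannot be invoked before $X$ is known to be one. The repair is already in your toolkit: Lemma \ref{nef cone border} applied to $f_0$ on $X$ puts $D_0$ on $\partial\Mov(X)$ (part $(3)$ if $E$ is a divisor, part $(2)$ if $E=X$, since then $D_0\in\partial\Eff(X)$ and $D_0$ is nef, hence movable), and the same lemma applied to $g_r$ on $X_r$ puts $D_{r+1}$ on $\partial\Mov(X_r)=\partial\Mov(X)$, the identification being legitimate because a small modification preserves stable base loci up to codimension two. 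Since $\rho(X)=2$, a two-dimensional convex cone containing $\cone(D_0,D_{r+1})$ with both generators on its boundary equals $\cone(D_0,D_{r+1})$. With that substitution the argument is complete.
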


\section{Flip's construction and main result}\label{Flipconstruction}

In Example \ref{exampleC} we showed that the Mori chamber decomposition of $\Eff(\G(1,3)_1)$ has exactly three chambers and that $\Mov(\G(1,3)_1)=\Nef(\G(1,3)_1)$ has only one Mori chamber.

In this section we assume that $n\geq 4.$ We will see in the proof of Theorem \ref{MCD G(1,n)} that the Mori chamber decomposition of $\G(1,n)_1$ has also three chambers, one is the nef cone, another corresponds to the blow-up map itself
and a third one corresponds to a flip 
$\G(1,n)_1\dasharrow \G(1,n)_1^+$. We will also see that $\G(1,n)_1^+$ admits a structure of a $\P^4$-bundle over $\G(1,n-2).$
The purpose of the present section is to explain how to concretely construct this flip.

When we blow-up a point we resolve the linear projection from this point. 
The construction of the flip is based on a similar idea, we project linearly from the tangent space
and resolve this rational map using two successive blow-ups.

Consider a point $p\in\G(1,n)\subset \P^N,$ corresponding to a line $l_p\subset \P^n,$
and its embedded tangent space $T=T_p\G(1,n)\subset \P^N.$ Let $R$ be intersection $R:=T\cap\G(1,n).$
The following characterizations of $R$ are well known, see \cite[Example 3]{Hw06} 
and \cite[Exercise 6.9]{Harris92}.

\begin{Lemma}\label{Rcharacterizations}
$$R:=T\cap\G(1,n)=\!\!\!\!\!\!
\bigcup_{\substack{L \mbox{ \tiny{line} }\\ p\in L\subset \G(1,n) }}\!\!\!\!\!\!\!\!\!L \ =
\{[l]; l\subset \P^n \mbox{ line meeting } l_p\}  \cong \cone_p \left(\P^1\times \P^{n-2}\right).$$
\end{Lemma}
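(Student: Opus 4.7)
The lemma bundles together four separate assertions about $R = T \cap \G(1,n)$ where $T = T_p\G(1,n)$: (i) it equals the union of all lines in the Grassmannian through $p$, (ii) it equals the locus of lines in $\P^n$ meeting $l_p$, and (iii) it is a cone over a Segre $\P^1 \times \P^{n-2}$ with vertex $p$. I will handle them as three equalities in that order, working throughout in coordinates adapted to $l_p = \langle e_0, e_1\rangle$ (so that $p$ is the Pl\"ucker point with $p_{01}=1$ and all other $p_{ij}=0$).

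For the first equality, I would recall the classical description of lines in $\G(1,n)$: every such line is a \emph{pencil}, i.e.\ the set of $\P^1$'s in a fixed plane $\Pi \subset \P^n$ passing through a fixed point $q \in \Pi$. A pencil contains $p$ exactly when $l_p$ belongs to it, which forces $q \in l_p$ and $\Pi \supset l_p$. Every line of such a pencil meets $l_p$ at $q$, proving one inclusion. Conversely, given any line $l \subset \P^n$ that meets $l_p$ at some point $q$ (including the case $l = l_p$), the pencil in $\Pi := \langle l, l_p\rangle$ centered at $q$ is a line in $\G(1,n)$ through $p$ that contains $[l]$. This gives the second equality stated, between the union of lines and the locus of lines meeting $l_p$.

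For the identification with $T \cap \G(1,n)$, the containment $\bigcup_{p \in L} L \subseteq T$ is automatic since a line through $p$ lies in $T_p\G(1,n)$. For the reverse, I would make the simplified computation of the tangent space directly (the analog of Proposition~\ref{oscgrass} for the case $r=1$), which gives $T = \{p_{ij} = 0 \mid i,j \geq 2\}$. A line $l \subset \P^n$ spanned by vectors $v,w$ lies in $T$ iff the $(n-1) \times 2$ submatrix of coordinates with row indices $\geq 2$ has rank $\leq 1$; an elementary row-reduction argument shows this is equivalent to the existence of a nonzero linear combination of $v,w$ lying in $l_p$, i.e.\ $l \cap l_p \neq \varnothing$.

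For the cone structure, I would project from $p$. Parametrize a line $l$ meeting $l_p$ as $l = \langle q, v\rangle$ with $q = (a{:}b{:}0{:}\cdots{:}0) \in l_p$ and $v = (c_0{:}\cdots{:}c_n)$; the nonzero Pl\"ucker coordinates away from $p_{01}$ are exactly $p_{0j} = ac_j$ and $p_{1j} = bc_j$ for $j \geq 2$. Projecting from $p$ thus sends $[l] \mapsto (ac_j, bc_j)_{j \geq 2}$, which is precisely the Segre embedding of $[(a{:}b)] \times [(c_2{:}\cdots{:}c_n)] \in \P^1 \times \P^{n-2}$. Since this image does not depend on how we chose $(a,b)$ or $(c_2,\ldots,c_n)$ along the ruling from $p$, the variety $R$ is swept out by the lines from $p$ to this Segre variety, exhibiting it as $\cone_p(\P^1 \times \P^{n-2})$. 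The main (very mild) obstacle is just bookkeeping in this last step, namely checking that every point of $R \setminus \{p\}$ really arises from some admissible $(a,b,c_2,\ldots,c_n)$ and that the image is a closed Segre variety rather than something larger — both follow from the parametrization above.
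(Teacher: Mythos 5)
Your proposal is correct, but it does not follow the paper's route, because the paper gives no proof of this lemma at all: it is stated as ``well known'' with pointers to the literature (Hwang's survey and Harris's book), and the only argument in the thesis that actually establishes these identifications is the general-$r$ statement of Lemma~\ref{Rslemma}, which is proved in the chapter on osculating spaces and deliberately \emph{not} reused in Chapter~\ref{cap5}. There the equality between the Schubert-type locus $\{[l] : l\cap l_p\neq\varnothing\}$ and the union of low-degree rational curves through $p$ is obtained by sweeping out rational normal scrolls and invoking the classification of varieties of minimal degree, while the intersection with the osculating/tangent space is identified via Proposition~\ref{oscgrass}; the cone description $\cone_p(\P^1\times\P^{n-2})$ is never verified in the text. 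Your argument is a self-contained, more elementary substitute specialized to $r=1$: the pencil description of lines in $\G(1,n)$ replaces the scroll construction, the rank-$\leq 1$ condition on the last $n-1$ coordinates replaces the appeal to varieties of minimal degree, and your explicit Pl\"ucker parametrization $(p_{0j},p_{1j})_{j\geq 2}=(ac_j,bc_j)_{j\geq 2}$ actually proves the Segre-cone isomorphism that the paper leaves to the references. What you lose relative to the thesis's Lemma~\ref{Rslemma} is generality (the scroll argument handles all $R_i\subset\G(r,n)$ uniformly); what you gain is a short direct proof of exactly the statement needed in Chapter~\ref{cap5}, including the cone structure. All steps check out: the tangent space is indeed $\{p_{ij}=0 \mid i,j\geq 2\}$ by the $r=1$, $s=1$ case of Proposition~\ref{oscgrass}, and the rank condition on the $2\times(n-1)$ matrix is equivalent to a nonzero vector of $\langle v,w\rangle$ lying in $l_p$, as you say.
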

Next we consider some rational maps.
\begin{itemize}
	\item $\pi_p:\P^N\dasharrow \P^{N-1}$ the projection with center $p;$
	\item $\pi_T:\P^N\dasharrow \P^{N'},$ the projection with center $T,$ where $N'\!=\!N\!-\!2(n-1)\!-\!1;$ 
	\item $\pi_{\pi_p(T)}:\P^{N-1}\dasharrow \P^{N'}$ the projection with center ${\pi_p(T)};$
	\item The restrictions of these three projections:\\
	$\pi_0=\pi_{p|\G(1,n)}:\G(1,n)\dasharrow W:=\pi_p(\G(1,n)),$\\
	$\pi_R=\pi_{T|\G(1,n)}:\G(1,n)\dasharrow W_1:=\pi_T(\G(1,n)),$\\
	$\pi_1=\pi_{\pi_p(T)|W}:W\dasharrow W_1;$
	\item $\pi_{l_p}:\P^n\dasharrow \P^{n-2},$  the projection with center $l_p;$
	\item $\widetilde{\pi_{l_p}}:\G(1,n)\dasharrow \G(1,n-2),$  the rational map induced by $\pi_{l_p}.$
\end{itemize}

Consider also the blow-up $\alpha:\G(1,n)_1\to \G(1,n)$ at $p$ and the strict transform 
$\widetilde R\subset \G(1,n)_1$ of $R.$ By Lemma \ref{Rcharacterizations} we know that $R$ is a cone with vertex $p,$
thus $\widetilde R$ is smooth and the blow-up $\alpha_1:\G(1,n)_R\to\G(1,n)_1$
with center $\widetilde R$ is a smooth variety.

\begin{Lemma}\label{Projectionslemma} In the above notation
\begin{enumerate}
	\item $\pi_0$ is birational;
	\item $W_1\cong \G(1,n-2)$ and $\widetilde{\pi_{l_p}}=\pi_R;$ 
	\item $\alpha$ resolves $\pi_0,$ this is,  there exists a morphism $\beta:\G(1,n)_1\to W$ such that $\beta=\pi_0\circ \alpha;$
	\item $\alpha\circ \alpha_1$ resolves $\pi_R,$ this is,  there exists a morphism $\xi:\G(1,n)_R\to \G(1,n-2)$ such that $\xi=\pi_R\circ \alpha\circ \alpha_1;$
	\item $\xi$ is a fibration with fibers isomorphic to $\G(1,3)_1.$
\end{enumerate}
\end{Lemma}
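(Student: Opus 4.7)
The plan is to prove parts (1)--(5) in turn, relying on the explicit Pl\"ucker description of tangent spaces from Proposition \ref{oscgrass} together with the birationality and fibration statements of Proposition \ref{oscprojbirational}. Part (1) is immediate: the case $r=1$, $s=0$ of Proposition \ref{oscprojbirational} says that the projection of $\G(1,n)$ from a single point is birational. For part (2), I would choose coordinates so that $l_p=\langle e_0,e_1\rangle$; then Proposition \ref{oscgrass} for $s=1$ gives $T=T_p\G(1,n)=\{p_{ij}=0 : 2\leq i<j\leq n\}$, so $\pi_T$ retains precisely the coordinates $p_{ij}$ with $i,j\geq 2$, which are the Pl\"ucker coordinates on $\G(1,\langle e_2,\dots,e_n\rangle)\cong\G(1,n-2)$. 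Since these coordinates encode the image of a line $\ell$ under $\pi_{l_p}$, one obtains $W_1\cong\G(1,n-2)$ and $\pi_R=\widetilde{\pi_{l_p}}$.

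Parts (3) and (4) are resolutions of rational maps by blow-ups of their base loci. For (3), $\pi_0$ is given by the linear system of hyperplane sections of $\G(1,n)$ through $p$; since $p$ is a smooth point, blowing it up produces the base-point-free system $|H-E|$ on $\G(1,n)_1$, which yields the morphism $\beta$. For (4), the base locus of $\pi_R$ is $R=T\cap\G(1,n)$, which by Lemma \ref{Rcharacterizations} is a cone with vertex $p$ over $\P^1\times\P^{n-2}$ and hence smooth away from $p$. The first blow-up $\alpha$ desingularizes $R$ along its vertex, so the strict transform $\widetilde R\subset\G(1,n)_1$ is smooth; blowing up $\widetilde R$ next principalizes the pulled-back ideal sheaf of $R$, producing a base-point-free linear system that extends $\pi_R$ to the morphism $\xi:\G(1,n)_R\to\G(1,n-2)$.

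For part (5), the generic fiber of the rational map $\pi_R$ is transparent: for $[\ell']\in\G(1,n-2)$ general, let $\Pi_{\ell'}=\pi_{l_p}^{-1}(\ell')$, a 3-plane containing $l_p$; then $\pi_R^{-1}([\ell'])$ is the open set of lines in $\Pi_{\ell'}$ not meeting $l_p$, whose closure in $\G(1,n)$ is $\G(1,\Pi_{\ell'})\cong\G(1,3)$. This matches Proposition \ref{oscprojbirational} in the case $r=s=1$, where the projection is a fibration with fibers $\G(r,2r+1)=\G(1,3)$. I would then track the fiber through the two blow-ups: $\alpha$ blows up the point $p\in\G(1,\Pi_{\ell'})$, transforming the naive fiber into its blow-up $\G(1,3)_1$; the subsequent $\alpha_1$ meets the transformed fiber in the strict transform of the divisor $R\cap\G(1,\Pi_{\ell'})\subset\G(1,3)$, which is a smooth Cartier divisor in $\G(1,3)_1$, so blowing up along it acts trivially on the fiber. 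Hence the general fiber of $\xi$ is $\G(1,3)_1$, and by the dimension count $\dim\G(1,n)_R-\dim\G(1,n-2)=4=\dim\G(1,3)_1$ the morphism $\xi$ is a fibration.

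The main obstacle lies in the scheme-theoretic details of parts (4) and (5): verifying that exactly two successive blow-ups (at $p$ and then along $\widetilde R$) principalize the ideal defining $\pi_R$, and confirming that the general fiber of $\xi$ is \emph{exactly} $\G(1,3)_1$, reduced and irreducible, with no extra components arising from the exceptional divisors $E$ and $\Exc(\alpha_1)$. A clean way to settle this is to compute in local coordinates near a generic point of $\widetilde R$ away from $p$, where the Pl\"ucker monomials defining $\pi_R$ can be tracked explicitly through both blow-ups.
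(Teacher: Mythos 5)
Your proposal is correct and follows essentially the same route as the paper: (1) from the birationality of the projection from a point, (2) by the explicit Pl\"ucker computation with $l_p=\langle e_0,e_1\rangle$, (3) by blowing up $p$, (4) by two successive blow-ups, and (5) by exactly the paper's fiber-tracking argument (general fiber $\G(1,\Pi_{\ell'})\cong\G(1,3)$ through $p$, blown up at $p$ by $\alpha$, with $R\cap\G(1,3)=\cone_p(\P^1\times\P^1)$ already a divisor there so that $\alpha_1$ acts trivially on the fiber). The only difference is that for (4) the paper resolves the ambient linear projection $\pi_T$ on $\P^N$ via Lemma \ref{blowuporder} applied to $\{p\}\subset T$ (a reduced linear base scheme) and then restricts $f$ and $g$ to the Grassmannian, which is precisely what disposes of the scheme-theoretic ``principalization'' worry you flag at the end.
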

\begin{figure}[htb!]
\begin{center}
\begin{tikzcd}
\G(1,n)_R \arrow[ddrr,"\xi"]\arrow[d,"\alpha_1"']&&\\
\G(1,n)_1 \arrow[d,"\alpha"']
\arrow[dr,"\beta"]&&\\
\G(1,n) \arrow[rr,dashed,"\pi_R",bend right=20]\arrow[r,dashed,"\pi_0"]&
W \arrow[r,dashed,"\pi_1"]& \G(1,n-2)\\
\end{tikzcd}.
\end{center}
\label{ProjectionsII}
\end{figure}
\begin{proof}
Items $(1)$ and $(3)$ are clear. To show $(2)$ we use Pl\"ucker coordinates $p_{ij},0\leq i<j\leq n$.
Since $\G(1,n)$ is homogeneous we may suppose that $p=(1:0:\dots:0)$ corresponds to 
$l_p=\begin{pmatrix}
1 & 0 & 0 & \dots &0 \\
0 & 1 & 0 & \dots &0 
\end{pmatrix}\subset \P^n,$
the line passing through $$x=(1:0\dots:0) 
\mbox{ and } y=(0:1:0:\dots:0)\in \P^n.$$
Then, by Lemma \ref{Rcharacterizations} we have
\begin{align*}
R=&\left\{\![l]; l\subset \P^n \mbox{ line meeting } l_p\right\}\!\!=\!
\left\{\!\left[
\begin{pmatrix}
a_0 & a_1 & 0   & \dots &0   \\
b_0 & b_1 & b_2 & \dots &b_n 
\end{pmatrix}\right]
\right\}\\
=&\left\{\!(a_0b_1-a_1b_0:a_0b_2:\dots:a_0b_n:a_1b_2:\dots:a_1b_n:0:\dots:0)\right\}\\
=&\G(1,n)\bigcap \left(p_{ij}\!=\!0;\forall i\neq 0,1\right)\subset \P^N.
\end{align*}
Therefore $\pi_R$ is given by
\begin{align*}
\pi_R: \ &\G(1,n)\dasharrow \P^{N'}\\
&(p_{ij})\mapsto (p_{ij}; i\neq 0,1)
\end{align*}
and $\pi_{l_p},\widetilde{\pi_{l_p}}$ are given by

\begin{minipage}[b]{0.45\linewidth}
\begin{align*}
\pi_{l_p}:  \P^n &\dasharrow \P^{n-2}\\
(x_0:\dots :x_n)&\mapsto (x_2:\dots :x_n)\\
\phantom{some text}
\end{align*}
\end{minipage}%
\begin{minipage}[b]{0.5\linewidth}
\begin{align*}
\widetilde{\pi_{l_p}}:  \G(1,n) &  \dasharrow \G(1,n-2) \\
\left[
\begin{pmatrix}
a_0 &  \dots &a_n   \\
b_0 &  \dots &b_n 
\end{pmatrix}\right]
&\mapsto 
\left[
\begin{pmatrix}
a_2 & \dots &a_n   \\
b_2 & \dots &b_n 
\end{pmatrix}\right].
\end{align*}
\end{minipage}\\
This shows $(2).$
Note that by Lemma \ref{Rcharacterizations} 
$\pi_R=\widetilde{\pi_{l_p}}$ has $R$ as indeterminacy loci. 

The linear projection $\pi_T$ has $T$ as base locus.
Consider the blow-up $f:Bl_p\P^N\to\P^N$ at $p$ and the blow-up 
$g:Bl_{\widetilde T}(Bl_p\P^N)\to Bl_p\P^N$ with center $\widetilde T,$ the strict transform of $T.$
By Lemma \ref{blowuporder} $f\circ g$ resolves the map $\pi_T.$
Now since $\alpha$ and $\alpha_1$ are restrictions of $f$ and $g,$ item $(4)$ follows.

Now, we prove item $(5).$ The fibers of $\pi_R$ are Grassmannians $\G(1,3)\subset \G(1,n)$ which contain $p.$
Therefore the fibers of $\alpha\circ \pi_R$ are isomorphic to $\G(1,3)_1.$
Note also that the intersection of $R$ with a fiber $\G(1,3)$ of $\pi_R,$
$$R'=R\cap \G(1,3)=T_p(\G(1,n))\cap \G(1,n)\cap \G(1,3)=T_p(\G(1,3))\cap\G(1,3),$$
is isomorphic to the $\cone_p(\P^1\times\P^1),$ and thus a divisor in $\G(1,3).$
Therefore the strict transform of $R'$ is a smooth divisor on the fiber $\G(1,3)_1$ of $\alpha\circ \pi_R,$
and then $\alpha_1$ is an isomorphism on such a fiber. 
\end{proof}

\begin{Lemma}\label{blowuporder}
Let $X$ be a noetherian scheme, $\pi:X\dasharrow Y$ a rational map with base scheme $Z\subset X$ reduced and defined by a coherent sheaf of ideals $\mathcal{I}_Z$, and $W\subset Z$ a subscheme with ideal sheaf $\mathcal{I}_W$. Let $b_W:X_{W}\rightarrow X$ be the blow-up of $X$ with respect to $\mathcal{I}_W$, and $b_{\widetilde{Z}}:X_{W,Z}\rightarrow X_W$ be the blow-up of $X_{W}$ along the strict transform of $Z$. Then there exists a morphism $\widetilde{\pi}:X_{W,Z}\rightarrow Y$ yielding a resolution of $\pi$.
\end{Lemma}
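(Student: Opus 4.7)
The plan is to use the universal property of blow-ups twice and reduce the assertion to the classical fact that blowing up the base scheme $Z$ resolves $\pi$. First I would observe that since $W\subseteq Z$ one has $\mathcal{I}_Z\subseteq\mathcal{I}_W$, so after pulling back via $b_W:X_W\to X$ the ideal $\mathcal{I}_W\cdot\mathcal{O}_{X_W}$ becomes the invertible sheaf $\mathcal{O}_{X_W}(-E_W)$, where $E_W$ is the exceptional Cartier divisor of $b_W$, and hence $\mathcal{I}_Z\cdot\mathcal{O}_{X_W}$ is contained in $\mathcal{O}_{X_W}(-E_W)$. Using the assumption that $Z$ is reduced, the ideal $\mathcal{I}_Z\cdot\mathcal{O}_{X_W}$ factors as a product $\mathcal{I}_{E_W}^{k}\cdot \mathcal{I}_{\widetilde Z}$, where $\widetilde Z$ is the strict transform of $Z$ and $k\geq 1$ is a suitable integer; this is the standard decomposition of the total transform into exceptional and strict transform parts, which is clean precisely because $Z$ has no embedded components.

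Next I would apply $b_{\widetilde Z}$: on $X_{W,Z}$ the ideal $\mathcal{I}_{\widetilde Z}\cdot\mathcal{O}_{X_{W,Z}}$ becomes invertible by the very definition of blow-up, while the pullback of $\mathcal{I}_{E_W}$ remains invertible (its generators are still locally principal). Therefore $\mathcal{I}_Z\cdot\mathcal{O}_{X_{W,Z}}$, being a product of invertible sheaves, is itself invertible.

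By the universal property of the blow-up applied to $\mathcal{I}_Z$ on $X$, the invertibility of $\mathcal{I}_Z\cdot\mathcal{O}_{X_{W,Z}}$ produces a unique factorization $X_{W,Z}\to \Bl_Z X\to X$ of $b_W\circ b_{\widetilde Z}$. Since $Z$ is the base scheme of $\pi$, the rational map $\pi$ extends to a morphism $\Bl_Z X\to Y$: this is the classical fact that blowing up the base scheme of a rational map to a projective scheme resolves the indeterminacy, obtained via graph closure and the universal property. Composing $X_{W,Z}\to\Bl_Z X\to Y$ produces the desired morphism $\widetilde{\pi}:X_{W,Z}\to Y$ restricting to $\pi$ on the open dense locus where $\pi$ is already defined.

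The main obstacle I anticipate is the bookkeeping around the decomposition $\mathcal{I}_Z\cdot\mathcal{O}_{X_W}=\mathcal{I}_{E_W}^{k}\cdot \mathcal{I}_{\widetilde Z}$: showing cleanly that after dividing out the appropriate power of the exceptional divisor one recovers precisely the ideal of the strict transform. The reducedness of $Z$ is exactly what prevents pathologies, such as embedded components along $E_W$ that would spoil the factorization, and is the hypothesis that needs to be exploited carefully at this step.
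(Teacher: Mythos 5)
Your strategy coincides with the paper's: show that $\mathcal{I}_Z\cdot\mathcal{O}_{X_{W,Z}}$ is invertible, invoke the universal property of the blow-up (\cite[Proposition 7.14]{Har}) to factor $b_W\circ b_{\widetilde{Z}}$ through $b_Z:\Bl_Z X\to X$, and compose with the morphism $\Bl_Z X\to Y$ that resolves $\pi$. The only divergence is at the crux: the paper asserts the invertibility in one line, whereas you try to derive it from a factorization $\mathcal{I}_Z\cdot\mathcal{O}_{X_W}=\mathcal{I}_{E_W}^{k}\cdot\mathcal{I}_{\widetilde{Z}}$, and you correctly identify that factorization as the delicate point.

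That step is a genuine gap, and reducedness of $Z$ is not the hypothesis that closes it. Take $X=\mathbb{A}^3$, $W$ the origin, and $Z=V(x,y)\cup V(x,z)$, so that $\mathcal{I}_Z=(x,yz)$ is radical; this is the base scheme of $\pi=[x:yz]:\mathbb{A}^3\dasharrow\mathbb{P}^1$. In the chart of $X_W$ where $x=zx''$, $y=zy''$, one computes $\mathcal{I}_Z\cdot\mathcal{O}_{X_W}=(zx'',z^2y'')$, while $\mathcal{I}_{E_W}\cdot\mathcal{I}_{\widetilde{Z}}=(zx'',zy'')$ and $\mathcal{I}_{E_W}^2\cdot\mathcal{I}_{\widetilde{Z}}=(z^2x'',z^2y'')$: no power of $\mathcal{I}_{E_W}$ gives your factorization. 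Worse, after blowing up $\widetilde{Z}$, in the chart $x''=y''s$ the pullback of $\mathcal{I}_Z$ equals $zy''\cdot(s,z)$, which is not locally principal along $V(s,z)$; correspondingly $[x:yz]$ pulls back to $[s:z]$ and is still undefined there. So under the stated hypotheses the conclusion of the lemma itself can fail (the paper's own justification of the invertibility is no more robust, so this is a defect of the lemma as stated, not only of your write-up). What actually saves the argument where the lemma is applied is the special geometry there: $Z=T$ is a linear subspace of $\mathbb{P}^N$ and $W=\{p\}$ a point of it, so in suitable coordinates $\mathcal{I}_Z=(x_1,\dots,x_c)$ with $W$ the origin, and in every chart of $b_W$ the total transform of $\mathcal{I}_Z$ is visibly $\mathcal{I}_{E_W}\cdot\mathcal{I}_{\widetilde{Z}}$; with that, the rest of your argument is correct. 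The fix is therefore not to lean on reducedness but to add a hypothesis guaranteeing the factorization (for instance, $Z$ a cone with vertex containing $W$, as in the application) or to verify the invertibility of $\mathcal{I}_Z\cdot\mathcal{O}_{X_{W,Z}}$ directly in the cases needed.
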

\begin{proof}
Let $b_Z:X_Z\rightarrow X$ be the blow-up of $X$ with respect to $\mathcal{I}_Z$. Then there exists a morphism $\overline{\pi}:X_Z\rightarrow Y$ yielding a resolution of $\pi$.
Let $g = b_{W}\circ b_{\widetilde{Z}}$. Since $b_W$ and $b_{\widetilde{Z}}$ are both blow-ups of coherent sheaves of ideals we have that $g^{-1}\mathcal{I}_Z\cdot\mathcal{O}_{X_{W,Z}}$ is an invertible sheaf. Therefore, the universal property of the blow-up \cite[Proposition 7.14]{Har} yields a morphism $h:X_{W,Z}\rightarrow X_Z$ such that $b_Z\circ h = g$. To conclude it is enough to set $\widetilde{\pi}=\overline{\pi}\circ h$.
\end{proof}

Next we fix a basis of classes of divisors and curves on $\G(1,n)_1$ and $\G(1,n)_R.$
Denote by 
\begin{itemize}
\item
$H$ a general hyperplane section in $\G(1,n),$ and its pullbacks in $\G(1,n)_1$ and $\G(1,n)_R;$
\item
$E$ the exceptional divisor of $\alpha$ in $\G(1,n)_1$ and its pullback in $\G(1,n)_R;$
\item
$F$ the exceptional divisor of $\alpha_1$ in $\G(1,n)_R;$
\item
$h$ a general line in $\G(1,n)$ and its strict transforms in $\G(1,n)_1$ and $\G(1,n)_R;$
\item
$e$ a general line in $E\subset \G(1,n)_1,$
and its strict transform in $\G(1,n)_R;$
\item
$f$ a general line inside a fiber of
$\alpha_{1|F}:F=\P(N_{\widetilde R\backslash \G(1,n)_1})\to\widetilde R.$
\end{itemize}
Therefore $f\cdot H=f\cdot E=0,f\cdot F=-1,$ 
$$N_1(\G(1,n)_1)=h\R\oplus e\R,N_1(\G(1,n)_R)=h\R\oplus e\R\oplus f\R,$$
$$N^1(\G(1,n)_R)=H\R\oplus E\R,N^1(\G(1,n)_R)=H\R\oplus E\R\oplus F\R,$$
and $h=H^*,e=-E^*,f=-F^*,$ where $D^*$ is the dual with respect to the intersection pairing
$$N_1(\G(1,n)_R)\times N^1(\G(1,n)_R)\to\R.$$

The curves contracted by $\xi$ are exactly those on the fiber $\G(1,3)_1,$
then by Example \ref{exampleC} we have the relative cone of curves of $\xi:\ NE(\xi)=NE(\G(1,3)_1)=\cone(e_0,h_0-e_0)$
(see Example \ref{exampleC} for the notation).
The restrictions of $H,E,F$ to $\G(1,3)_1$ are given by:
\begin{align*}
H_{|\G(1,3)_1}&=H_0,\\
E_{|\G(1,3)_1}&=E_0,\\
F_{|\G(1,3)_1}&=R_0=H_0-2E_0.
\end{align*}
Now we describe $e_0$ and $h_0-e_0$ in terms of the basis $h,e,f:$
\begin{align*}
e_0\cdot H&=e_0\cdot H_0=0\quad &(h_0-e_0)\cdot H&=(h_0-e_0)\cdot H_0=1\\
e_0\cdot E&=e_0\cdot E_0=-1\quad &(h_0-e_0)\cdot E&=(h_0-e_0)\cdot E_0=1\\
e_0\cdot F&=e_0\cdot (H_0-2E_0)=2\quad &(h_0-e_0)\cdot F&=(h_0-e_0)\cdot (H_0-2E_0)=-1.
\end{align*}
Therefore $e_0=e-2f$ and $h_0-e_0=h-e+f.$
Since, by \cite[Proposition 1.14]{De01} $NE(\xi)$ is an extremal subcone of $NE(\G(1,n)_R),$ then $h-e+f$ is an extremal ray of $NE(\G(1,n)_R).$
But $c:=h-e+f$ is $K_{\G(1,n)_R}$-negative, because 
$$-K_{\G(1,n)_R}=(n+1)H-(2(n-1)-1)E-(2(n-1)-n-1)F$$
and thus 
$$c\cdot (-K_{\G(1,n)_R})=(n+1)-[2(n-1)-1]+[2(n-1)-n-1]=1>0.$$

Then by Contraction Lemma,
\cite[Theorem 7.39]{De01}, there is a normal projective variety $\G(1,n)_1^+$
and a morphism $\Psi=cont_c:\G(1,n)_R\to \G(1,n)_1^+.$
Note that $\Psi_*:N_1(\G(1,n)_R)\to N_1(\G(1,n)_1)$ is surjective with kernel generated by $c.$ 
Fix the notation $\Psi_*(h)=h',\Psi_*(e)=e',\Psi_*(f)=f',$ for curves on  $\G(1,n)_1^+.$
Since $h'-e'+f'=0, $ or $f'=e'-h',$ we have $N_1(\G(1,n)_1)=h'\R\oplus e'\R$,
with intersection pairing such that $h'=H^*,e'=-E^*.$

Using again \cite[Proposition 1.14]{De01}
we have that $\xi$ factors through $\Psi,$ this means that there is a morphism
$\delta=cont_{e'-2f'}=cont_{2h'-e'}:\G(1,n)_1^+\to \G(1,n-2).$
Given a general conic in $\G(1,n)$ passing through $p$, consider the strict transform by $\alpha\circ\alpha_1,$ and then its image by $\Psi,$
the resulting curve has class $2h'-e'.$

Note that $c$ covers $R_0$ in each fiber $\G(1,3)_1\subset \G(1,n)_R,$
and therefore $c$ covers $F,$ this means that the exceptional locus of $\Psi$ is $\Exc(\Psi)=F.$
Thus $\Psi$ is a divisorial contraction and has the same exceptional divisor of $\alpha_1,$
hence there is a rational map $\eta:\G(1,n)_1\dasharrow \G(1,n)_1^+$ that is an isomorphism in codimension $2.$
This rational map yields an isomorphism 
$$\eta:\G(1,n)_1\backslash \widetilde R\ \widetilde\to \ \G(1,n)_1^+\backslash S$$ 
where $\dim(R)=n$ and $S:=\Psi(F)$ has codimension $2$ because $S$ has codimension $2$
in each fiber of $\delta$:
$$\Psi(F\cap \G(1,3)_1)\!=\!\psi(R_0)\!=\!S_0\cong Q^2\cong\P^1\times\P^1\subset
\P^3\!=\!\psi(E_0)\subset\P^4\!=\!\psi(\G(1,3)_1),$$
where $\psi:\G(1,3)_1\to \P^4$ is as in Example \ref{exampleC}.
We also get an isomorphism at the level of divisors
$$\eta^*:N^1(\G(1,n)_1)\ \widetilde\to \ N^1(\G(1,n)_1^+).$$

Consider the morphism $\tau=\beta\circ \alpha_1:\G(1,n)_R\to W.$
Note that $\tau$ has connected fibers and if
$H_W$ is a general hyperplane section in $W$ then
$$NE(\Psi)\!=\!c\cdot\R\subset NE(G(1,n)_R)\cap (H-E)^\perp \!=\!NE(G(1,n)_R)\cap \tau^*(H_W)^\perp
\!=\!NE(\tau).$$

By \cite[Proposition 1.14]{De01} we conclude that $\tau$ also factors through $\Psi,$
this means that there is a morphism $\gamma=cont_{c_1}:\G(1,n)_1^+\to W$ such that $\tau=\gamma \circ \Psi,$
for some curve class $c_1.$ Note that 
$\Exc(\beta)=\widetilde R,$ therefore
$\Exc(\tau)=F$ and $\Exc(\gamma)=\Psi(F)=S.$ This class $c_1$ has to be ortogonal to $H-E$ and effective,
since $f'=e'-h'$ does this job, by uniqueness of (the ray generated by) $c_1$ we can take
$c_1=e'-h'.$

\begin{figure}[htb!]
\begin{center}
\begin{tikzcd}
\G(1,n)_R \arrow[ddrr,"\xi",bend left=30]\arrow[d,"\alpha_1"']\arrow[dr,"\Psi"]&&\\
\G(1,n)_1 \arrow[dr,"\beta"']\arrow[d,"\alpha"']\arrow[r,dashed,"\eta"]
&\G(1,n)_1^+ \arrow[dr,"\delta"']\arrow[d,"\gamma"']&\\
\G(1,n) \arrow[rr,dashed,"\pi_R",bend right=20]\arrow[r,dashed,"\pi_0"']&
W \arrow[r,dashed,"\pi_1"']& \G(1,n-2)\\
\end{tikzcd}
\end{center}
\label{Projections}
\end{figure}
 
Lemma \ref{morphismsprops}
summarizes the properties of these morphisms.

\begin{Lemma}\label{morphismsprops} 
In the above notations,
\begin{itemize}
 \item $\alpha=\varphi_{|H|}=cont_e,$ $\Exc(\alpha)=E$ is a divisor;
 \item $\beta=\varphi_{|H-E|}=cont_{h-e},$ $\Exc(\beta)=R$ has codimension $n-2;$
 \item $\gamma=\varphi_{|H-E|}=cont_{e'-h'},$ $\Exc(\gamma)=S$ has codimension $2;$
 \item $\delta=\varphi_{|H-2E|}=cont_{2h'-e'},$ $\delta$ is a fibration with fibers isomorphic to $\P^4.$
\end{itemize}
\end{Lemma}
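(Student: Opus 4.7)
The plan is to verify each of the four bullet points by combining the explicit constructions of the maps developed earlier in this section with the intersection-theoretic data collected in the bases $\{H,E,F\}$ and $\{h,e,f\}$. The assertions that each map is the contraction $\mathrm{cont}_{(\cdot)}$ of the indicated ray were essentially already obtained during the construction of $\Psi$, $\gamma$, and $\delta$ via the Contraction Lemma; what would remain is to tie up (i) the identification of the morphisms with the rational maps defined by the complete linear systems $|H|$, $|H-E|$ and $|H-2E|$, and (ii) the descriptions of the exceptional loci and fibers.

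For $\alpha$ this is immediate: $\alpha$ is the blow-up of the smooth point $p$, $E\cong\mathbb{P}^{2(n-1)-1}$ is its exceptional divisor, $H$ is by definition the pullback of an ample class, and the fibers of $\alpha$ contracted to $p$ have curve class $e$. For $\beta$ I would argue as follows. Elements of $|H-E|$ are strict transforms via $\alpha$ of hyperplane sections of $\G(1,n)$ passing through $p$; the rational map they define therefore coincides with $\pi_0\circ\alpha=\beta$ by Lemma~\ref{Projectionslemma}. The curves contracted by $\beta$ are precisely the strict transforms of lines in $\G(1,n)$ through $p$: any such line meets $E$ transversally in one point, so its strict transform has class $h-e$, and hence $\mathrm{NE}(\beta)=\R_{\geq 0}(h-e)$. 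The union of these strict transforms is $\widetilde R$, and by Lemma~\ref{Rcharacterizations} one has $\dim R=n$, whence $\mathrm{codim}\,\Exc(\beta)=2(n-1)-n=n-2$.

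The argument for $\delta$ proceeds in the same spirit. A divisor of class $H-2E$ on $\G(1,n)_1$ is the strict transform of a hyperplane section of $\G(1,n)$ with multiplicity at least two at $p$, that is, cut out by a hyperplane containing $T_p\G(1,n)$; hence $|H-2E|$ induces the projection $\pi_T=\pi_R$, which, transported via $\eta$ and resolved by $\alpha\circ\alpha_1$ and $\Psi$, is exactly $\delta$. To see that the fibers are $\mathbb{P}^4$, I would restrict $\Psi$ to a fiber $\G(1,3)_1$ of $\xi$: as computed in the paragraphs preceding the statement, $H|_{\G(1,3)_1}=H_0$, $E|_{\G(1,3)_1}=E_0$, $F|_{\G(1,3)_1}=H_0-2E_0$, and the contracted class $h-e+f$ restricts to $h_0-e_0$. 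Therefore $\Psi|_{\G(1,3)_1}$ is the contraction $\mathrm{cont}_{h_0-e_0}$, which by Example~\ref{exampleC} is precisely the morphism $\psi:\G(1,3)_1\to\mathbb{P}^4$. Since $\alpha_1$ is an isomorphism on every such fiber, every fiber of $\delta$ is isomorphic to $\mathbb{P}^4$.

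Finally, $\gamma$ is the map produced by \cite[Proposition 1.14]{De01} factoring $\tau=\beta\circ\alpha_1$ through $\Psi$; its contracted class is $e'-h'$ by the numerical argument already given (it must be effective and pair to zero with $H-E$, and the only such ray is $\R_{\geq 0}(e'-h')$). For the exceptional locus one has $\Exc(\gamma)=\Psi(F)=S$, and restricting to a fiber $\mathbb{P}^4$ of $\delta$ reduces to the computation of Example~\ref{exampleC}, which shows $S\cap\mathbb{P}^4=\psi(R_0)=S_0\cong Q^2\subset\mathbb{P}^3\subset\mathbb{P}^4$ of codimension $2$; since $\delta$ is a $\mathbb{P}^4$-fibration, $S$ has codimension $2$ in $\G(1,n)_1^+$. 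The step I expect to be most delicate is the clean identification of the complete linear systems $|H-E|$ and $|H-2E|$ with the projections $\pi_p$ and $\pi_T$, since it requires checking that no further base components appear beyond the loci controlling $\Exc(\beta)$ and $\Exc(\delta)$; once this is in hand, every other assertion follows from the constructions and numerical data already assembled.
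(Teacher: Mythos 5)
Your proposal is correct and follows essentially the same route as the paper: the lemma is stated there as a summary of the preceding construction (the blow-up, the resolution of $\pi_0$ and $\pi_R$, the Contraction Lemma applied to $c=h-e+f$, and the factorizations of $\xi$ and $\tau$ through $\Psi$), and your assembly of the contracted classes, the identifications $\varphi_{|H-E|}=\beta$, $\varphi_{|H-2E|}=\delta$, and the fiber/exceptional-locus computations via restriction to the fibers $\G(1,3)_1$ of $\xi$ matches that argument point for point. The codimension counts ($\dim R=n$ so $\codim\widetilde R=n-2$, and $S$ of codimension $2$ fiberwise over $\G(1,n-2)$) are exactly as in the text.
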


Now we are ready to prove 
Theorem \ref{MCD G(1,n)}.

\begin{proof} (of Theorem \ref{MCD G(1,n)})
We use the notation of this section.
Note that $\eta^*\delta^*(H_{\G(1,n-2)})=H-2E$ is  effective, where $H_{\G(1,n-2)}$ denotes a general hyperplane section on $\G(1,n-2).$ Note also that $\eta$ is a $(H-2E)$-flip since $(H-2E)\cdot (h-e)=-1<0.$ Now, using Corollary \ref{nef cone border3} and  Lemma \ref{morphismsprops} we get all the claims except $\G(1,n)_1^+$ being Fano. In order to prove this last fact it is enough to notice that
$$-K_{\G(1,n)_1^+}\!=\!(n+1)H-(2n-3)E\in \Amp(\G(1,n)_1^+)\!=\! int (\cone(H-E,H-2E)).$$
for any $n\geq 5$.
\end{proof}

\chapter{Spherical varieties}
\label{cap6}
In this chapter we classify which blow ups of Grassmannians at points in general position are spherical varieties.
\begin{Theorem}\label{G(r,n)kspherical}
$\G(r,n)_k$ is spherical if and only if one of the following holds
$$
\begin{cases}
r\!\!&\!=0 \mbox{ and } k\leq n+1;\ \mbox{ or }\\
k\!\!&\!=1;\ \mbox{ or }\\
k\!\!&\!=2 \mbox{ and } r=1 \mbox{ or } n=2r+1,n=2r+2; \mbox{ or }\\
k\!\!&\!=3 \mbox{ and } (r,n)=(1,5)
\end{cases}
$$
\end{Theorem}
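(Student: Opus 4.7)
The proof splits naturally into the sufficiency direction, where one exhibits a reductive group and a Borel with an open orbit for each listed triple, and the necessity direction, where the remaining triples are ruled out by a combination of dimension counts and the Brion finiteness characterization of sphericity.

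\textbf{Sufficiency.} The case $r=0$ is immediate: $\G(0,n)=\P^n$, and $k\leq n+1$ general points can be moved by $PGL_{n+1}$ to coordinate points, so $\G(0,n)_k$ inherits a $(\C^*)^n$-action and is toric, hence spherical. For $k=1$, the stabilizer of $p\in \G(r,n)$ in $PGL_{n+1}$ is a maximal parabolic $P$ whose Levi $L\cong S(GL_{r+1}\times GL_{n-r})$ acts on $\G(r,n)_1$, and a direct computation gives $\dim B_L\geq (r+1)(n-r)=\dim \G(r,n)_1$. I would then verify that a general point of the affine chart centered at $p$ (using the local parametrization of Proposition \ref{oscgrass}) has a $B_L$-orbit of full dimension, which combined with the $B_L$-invariance of the exceptional divisor produces the open orbit. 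For $k=2$ the three subcases correspond to distinguished configurations: two general lines in $\P^n$ lie in a common $\P^3$, producing an action of a Levi of the form $GL_2\times GL_2\times GL_{n-3}$; two general $r$-planes in $\P^{2r+1}$ are complementary, giving the Levi $S(GL_{r+1}\times GL_{r+1})$; two general $r$-planes in $\P^{2r+2}$ span a hyperplane, giving an analogous Levi times an extra scalar. In each subcase openness of the Borel orbit is checked by an explicit local computation centered on a blown-up point. Finally, the case $(r,n,k)=(1,5,3)$ is the most delicate: three general skew lines in $\P^5$ have joint stabilizer containing $(SL_2)^3$, whose Borel has the correct dimension, and one verifies directly that its action on the $8$-dimensional blow-up has an open orbit via a coordinate computation.

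\textbf{Necessity.} To eliminate the remaining triples I would combine two observations. First, any reductive group $G$ acting on $\G(r,n)_k$ preserves the $k$ exceptional divisors, hence its image in $PGL_{n+1}$ is contained in the connected stabilizer $\Stab(p_1,\dots,p_k)^\circ$. For $k$ general points one has
\[
\dim \Stab(p_1,\dots,p_k)^\circ \leq \max\{0,\;(n+1)^2-1-k(r+1)(n-r)\},
\]
with generic equality when the right-hand side is positive. A Borel of the maximal reductive quotient therefore has bounded dimension, and a case-by-case comparison with $(r+1)(n-r)$ already eliminates sphericity outside the listed triples for most values of $(r,n,k)$. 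Second, for the borderline triples where the dimension inequality nearly holds, I would show directly that there are infinitely many $B$-stable prime divisors on $\G(r,n)_k$: using the degenerations of osculating spaces from Section \ref{degtanosc} one produces families of $B$-invariant subvarieties with distinct numerical classes, contradicting the finiteness of $B$-orbits on a spherical variety.

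\textbf{Main obstacle.} The hard part will be the necessity direction in the borderline range. For triples such as $(r,n,k)=(2,5,2)$, $(1,4,3)$ or $(1,5,2)$, the pure dimension inequality is almost met, and moreover one must rule out the possibility of an \emph{ad hoc} reductive action that is not induced from $PGL_{n+1}$ --- which requires showing that every automorphism of $\G(r,n)_k$ lifts an automorphism of $\G(r,n)$, then using rigidity of general point configurations. The sufficiency argument for $(r,n,k)=(1,5,3)$ is also subtle: the Borel orbit structure is not visible from a classical Schubert-cell decomposition and must be exhibited through an explicit description of the three-line configuration together with the geometry of secants and tangents that encode the $(SL_2)^3$-invariants of the blow-up.
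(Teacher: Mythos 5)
Your sufficiency argument follows the paper's route: pass to the reductive quotient of the stabilizer of the blown-up points, take a Borel with block upper-triangular structure, and exhibit an open orbit by explicit coordinates (Lemmas \ref{G(r,n)1 spherical}--\ref{G(r,2r+2)2 spherical}); your reduction of necessity to the Borel $B_k$ of that quotient via lifting automorphisms is also the paper's Proposition \ref{automorphism prop} and Lemma \ref{Bkisenough}. Two local slips in the sufficiency part: for $(r,n,k)=(1,5,3)$ the Borel of $(SL_2)^3$ has dimension $6<8=\dim\G(1,5)$, so you need the full $S(GL_2\times GL_2\times GL_2)$ (in fact the paper observes that here $B_3=B_2$, so the $k=2$ open orbit does the work); and among the ``borderline'' triples you propose to \emph{rule out}, $(2,5,2)$ and $(1,5,2)$ are actually spherical by the statement ($k=2$ with $n=2r+1$, resp.\ $r=1$), so they belong to the other direction.

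The genuine gap is in necessity. The dimension comparison $\dim B_k$ versus $\dim\G(r,n)$ (your first observation, and note your inequality for $\dim\Stab$ should be $\geq$, not $\leq$) only disposes of $\G(r,n)_2$ in the range $2r+2<n<4r+1$, where $\dim B_2-\dim\G(r,n)=\tfrac{(n-2r-2)(n-4r-1)}{2}<0$ (Lemma \ref{dimcount}). For the \emph{infinite} families $\G(r,n)_2$ with $r\geq 2$, $n\geq 4r+1$, and $\G(1,n)_3$ with $n\geq 8$, the Borel is at least as large as the Grassmannian, yet the varieties are not spherical; these are not a handful of borderline cases but the bulk of the theorem. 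The paper settles them by writing down the stabilizer in $B_k$ of a general point of $\G(r,n)$ as an explicit linear system and showing the general orbit has codimension $(r-1)r/2$, resp.\ $1$ (Lemmas \ref{G(r,n)2 spherical} and \ref{G(1,n)3 spherical}), plus a nested-Borel comparison for $\G(r,2r+2)_3$. Your proposed substitute --- producing infinitely many $B$-stable prime divisors ``using the degenerations of osculating spaces from Section \ref{degtanosc}'' --- does not work as stated: that section concerns flat limits of osculating spaces for the secant-defectivity problem and carries no information about the Borel action, and no mechanism is given for producing even one $B$-invariant divisor beyond the colors. Without an explicit computation of the general $B_k$-orbit (or an equivalent), the non-sphericity of these families is unproved.
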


Since spherical varieties are MDS (Theorem \ref{sphericalisMDS}), we obtain in this way new examples of MDS.

\begin{Theorem}
Let $\G(r,n)_k$ be the blow-up of the Grassmannian $\G(r,n)$ at $k$ general points. 
Then the following are Mori dream spaces
\begin{itemize}
\item $\G(r,n)_1,r\geq 1,n\geq 2r+1$;
\item $\G(r,2r+1)_2,\G(r,2r+2)_2,r\geq 1,
\mbox{and } \G(1,n)_2,n\geq 5$;
\item $\G(1,5)_3$.
\end{itemize}
\end{Theorem}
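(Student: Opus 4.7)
The plan is to deduce this theorem as an immediate consequence of the sphericity classification in Theorem \ref{G(r,n)kspherical}, combined with Brion's fundamental result that every projective $\mathbb{Q}$-factorial spherical variety is a Mori dream space (cited in the overview as Theorem \ref{sphericalisMDS}). So the work is already done upstream; what remains is bookkeeping.

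First I would verify that each triple $(r,n,k)$ appearing in the statement is on the list produced by Theorem \ref{G(r,n)kspherical}. The case $k=1$ with arbitrary $r\geq 1$ and $n\geq 2r+1$ is the second bullet of that theorem. The cases $\G(r,2r+1)_2$ and $\G(r,2r+2)_2$ for $r\geq 1$, together with $\G(1,n)_2$ for $n\geq 5$, are exactly the $k=2$ bullet (noting that $\G(1,n)_2$ for $n=3,4$ is already covered by the $n=2r+1,\,n=2r+2$ subcase with $r=1$, so the restriction $n\geq 5$ is harmless). Finally $\G(1,5)_3$ is the $k=3$ bullet. Thus all three items in the statement are spherical.

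Next I would invoke $\mathbb{Q}$-factoriality. Since $\G(r,n)$ is smooth and we blow up a finite set of smooth points, each $\G(r,n)_k$ in the list is itself smooth, hence in particular $\mathbb{Q}$-factorial, projective, and normal. Applying Brion's theorem then yields that each such $\G(r,n)_k$ is a Mori dream space, which is exactly the claim.

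The only potential obstacle is to make sure no case is double-counted or missing: the $k=2$ bullet in Theorem \ref{G(r,n)kspherical} reads ``$k=2$ and $r=1$ or $n=2r+1$, $n=2r+2$'', and in the statement above the range for $\G(1,n)_2$ starts at $n=5$ precisely because smaller $n$ falls under the $n=2r+1,2r+2$ condition with $r=1$; this is a reformulation, not a genuine restriction. Once this is checked, the proof reduces to two sentences citing the sphericity classification and Brion's theorem.
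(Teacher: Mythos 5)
Your proposal is correct and is essentially the paper's own argument: the listed cases are exactly those shown to be spherical in Theorem \ref{G(r,n)kspherical} (via Lemmas \ref{G(r,n)1 spherical}, \ref{G(r,2r+1)2 spherical}, \ref{G(1,n)2 spherical}, \ref{G(r,2r+2)2 spherical}, and \ref{G(1,5)3 spherical}), and the conclusion follows from Theorem \ref{sphericalisMDS}. Your bookkeeping remark about $\G(1,n)_2$ for $n=3,4$ being absorbed into the $n=2r+1,2r+2$ subcases is also consistent with how the paper organizes the statement.
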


In the first section we recall the definition of spherical variety, give some examples and properties. In the two next sections we determine when $\G(r,n)_k,$ the blow-up of $\G(r,n)$ at $k$ general points, is spherical.
We follow \cite[Section 4.5]{coxrings} for spherical varieties and \cite[Chapter IV]{Borel} for algebraic groups. 

\newpage
\section{Preliminary definitions}

We start recalling the definition of reductive group.

\begin{Definition}
An algebraic group $G$ is solvable when it is solvable as an abstract group.
A Borel subgroup $B$ of an algebraic group $G$ is a subgroup which is maximal among the connected
solvable algebraic subgroups of $G.$
The radical $R(G)$ of an algebraic group is the identity component of the intersection
of all Borel subgroups of $G.$
We say that $G$ is semi-simple if $R(G)$ is trivial.
We say that $G$ is reductive if the unipotent part of $R(G),$
i.e., the subgroup of unipotent elements of $R(G)$, is trivial.
\end{Definition}

Given an algebraic group $G$ there is a single conjugacy class of Borel subgroups.
For instance, in the group $GL_n$ ($n\times n$ invertible matrices),
the subgroup of invertible upper triangular matrices is a Borel subgroup.
The radical of $GL_n$ is the subgroup of scalar matrices, 
therefore $GL_n$ is reductive but not semi-simple.
On other hand $SL_n$ is semi-simple.

\begin{Definition}
A spherical variety is a normal variety $X$ together with an action $G\times X \to X$
of a connected reductive affine algebraic group $G,$ a Borel subgroup $B\subset G,$
and a base point $x_0\in X$ such that the orbit map $B\to X,\ g\mapsto g\cdot x_0$
is an open embedding. 
\end{Definition}

\begin{Example}
A toric variety is a spherical variety with $B=G$ equal to the torus.
Consider $\P^n$ with the natural action of $SL_{n+1}$ together with the Borel subgroup $B$ consisting of all
upper triangular matrices of $SL_{n+1}$ and the base point $[0:\dots:0:1].$
It is easy to see that this is a spherical variety.
\end{Example}

\begin{Example}\label{G(r,n) spherical}
Consider $X:=\G(r,n), \ G:=SL_{n+1}.$
Choose a complete Flag $\{0\}=V_{-1}\subset V_0\subset V_1\subset \cdots \subset V_n=\C^{n+1}$
of linear spaces in $\C^{n+1}$, with $V_r$ corresponding to a point $p\in \G(r,n).$ 
Let $B$ be the only Borel subgroup of $G$ that stabilizes this Flag, and choose a basis 
$e_0,\dots, e_n$ of $\C^{n+1}$ such that $B$ is the subgroup of upper triangular matrices in this basis.
Consider the divisor $D=(p_{n-r,n-r+1,\dots,n}=0)$ and the point $p_0=[\Sigma]\in \G(r,n)\backslash D$ where
$$\Sigma=[0\ Id]=\begin{bmatrix}
 0 & \cdots & 0 &1& & \\
 \vdots & \ddots & \vdots  &  &\ddots&\\
0 & \cdots & 0 & &&1 
\end{bmatrix}.$$
We claim that $B\cdot p_0=\G(r,n)\backslash D.$
Indeed, a point $q\in \G(r,n)\backslash D,$
associated to $\Sigma_q\subset \P^n,$ is of the form
$$\Sigma_q=\begin{bmatrix}
\sigma_{0,0}&\dots&\sigma_{0,n-r}&      &\\
\vdots &     &\vdots   &\ddots&\\
\sigma_{r,0}&\dots&\sigma_{r,n-r}&\dots &\sigma_{r,n}
\end{bmatrix}=\sigma,$$
with $\sigma_{0,n-r},\dots,\sigma_{r,n}$ non-zero.
Then taking the element $b\in B$ of the form 
$$b=
\begin{bmatrix}
	Id & \multirow{2}{*}{$\widetilde \sigma$} \\ 0 &
\end{bmatrix}, \mbox{ where } 
\widetilde \sigma=\begin{bmatrix}
\sigma_{0,0}&\dots&\sigma_{r,0}\\
\vdots&&\vdots\\
\sigma_{0,n-r}&\dots& \sigma_{r,n-r}\\
&\ddots&\vdots\\
&&\sigma_{r,n}
\end{bmatrix}$$
we have $b\cdot p_0=q.$ Thus $(X,G,B,p_0)$ is a spherical variety.
\end{Example}

Our interest in Spherical varieties comes from Theorem \ref{sphericalisMDS} below that follows from the work of Michel Brion on \cite{Brion93}.
A more explicit proof of this theorem can be found on \cite[Section 4]{Pe14}.

\begin{Theorem}\label{sphericalisMDS}
Every Spherical variety is a Mori dream space.
\end{Theorem}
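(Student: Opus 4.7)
The plan is to verify, for a $\mathbb{Q}$-factorial projective spherical $G$-variety $X$ with Borel subgroup $B\subset G$ and open $B$-orbit $B\cdot x_0\subset X$, the hypotheses of the Cox-ring criterion stated earlier in the excerpt, so that finite generation of $\Cox(X)$ yields the MDS property. The proof falls naturally into three stages: finiteness of the divisor theory, finite generation of the Cox ring, and handling of $\mathbb{Q}$-factoriality.

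First I would use the fundamental fact that a spherical variety admits only finitely many $B$-orbits. In particular there are only finitely many $B$-stable prime divisors $D_1,\dots,D_N$ on $X$ (the $G$-stable boundary divisors together with the colors), and the complement $X\setminus\bigcup D_i$ is the open $B$-orbit, isomorphic to a homogeneous space $B/B_{x_0}$ with (at worst) finite Picard group. Consequently every Weil divisor class is represented by an integer combination of the $D_i$, so $\Cl(X)$ is finitely generated and, under $\mathbb{Q}$-factoriality, so is $\Pic(X)$ with $\Pic(X)_{\mathbb{R}}\cong N^1(X)$. In the same way the effective cone $\Eff(X)$ is polyhedral, generated by the classes of the $D_i$.

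The heart of the argument is finite generation of $\Cox(X)$. Following Brion's approach in \cite{Brion93}, I would first pass to the subring of $U$-invariants $\Cox(X)^U$, where $U$ is the unipotent radical of $B$. Since $B$ has a dense orbit, any $B$-semi-invariant regular section of a line bundle on $X$ is determined up to scalar by its $B$-weight and by its orders of vanishing along $D_1,\dots,D_N$. This identifies $\Cox(X)^U$ with the semigroup algebra over a finitely generated submonoid of $\Pic(X)\times \mathbb{Z}^N$ cut out by the colored fan of $X$, whence $\Cox(X)^U$ is a finitely generated $\mathbb{C}$-algebra. To pass back to $\Cox(X)$, I would invoke the Hadziev--Grosshans transfer principle: because $G$ is reductive and $U$ is a Grosshans subgroup, a finite system of generators of $\Cox(X)^U$ together with all their $G$-translates generates $\Cox(X)$ as a $\mathbb{C}$-algebra, and the resulting algebra is finitely generated because each $G$-translate lies in a finite-dimensional irreducible $G$-subrepresentation.

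Finally, $\mathbb{Q}$-factoriality is not automatic for an arbitrary spherical variety, but one may always replace $X$ by a small $\mathbb{Q}$-factorial modification obtained from a suitable refinement of the associated colored fan; the Cox ring is preserved under such small modifications, and the MDS property descends correctly. The principal obstacle is the second stage: while in the toric case the description of $\Cox(X)^U$ as a semigroup algebra is tautological from the fan, in the general spherical setting one must combine the combinatorics of the colored fan (controlling $G$-stable divisors) with Luna--Vust valuation theory (controlling the colors) in order to identify the exact monoid of $B$-weights of $\bigoplus_L H^0(X,L)^U$. Once this identification is made, the Hadziev--Grosshans step and the verification of the conditions in Definition \ref{Mori dream space} are standard.
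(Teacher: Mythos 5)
The paper does not prove this statement itself --- it is quoted from Brion \cite{Brion93}, with \cite[Section 4]{Pe14} cited for an explicit proof --- and your sketch follows exactly that standard argument: finitely many $B$-stable prime divisors from the finiteness of $B$-orbits, finite generation of $\Cox(X)^U$ via the monoid of $B$-weights and vanishing orders along those divisors, and the Hadziev--Grosshans transfer principle to recover finite generation of $\Cox(X)$. Your outline is correct, modulo two routine points you should make explicit: one must pass to a suitable cover of $G$ so that the $G$-action linearizes on all line bundles and hence on $\Cox(X)$ before invoking the transfer principle, and the theorem is implicitly asserted only for $\mathbb{Q}$-factorial projective spherical varieties (the only case the paper uses), so no small modification argument is actually needed.
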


Next, we see how the effective cone of a spherical variety can be described in terms of divisors which are invariant under the action of the Borel subgroup.

\begin{Definition}
Let $(X,G,B,p)$ be a spherical variety.
We distinguish two types of $B$-invariant prime divisors:
\begin{enumerate}
	\item A boundary divisor of $X$ is a $G$-invariant prime divisor on $X.$
	\item A color of $X$ is a $B$-invariant prime divisor that is not $G$-invariant.
\end{enumerate}
\end{Definition}

\begin{Example}
A toric variety is a spherical variety with $B=G$ equal to the torus. In this case there are no colors, and the boundary divisors are the usual toric invariant divisors.
\end{Example}

\begin{Example}
Consider $\P^n$ with the natural action of $SL_{n+1}$ together with the Borel subgroup $B$ consisting of all
upper triangular matrices of $SL_{n+1}$ and the base point $[0:\dots:0:1].$
This is a spherical variety without boundary divisors and precisely one color, namely $V(z_n)\subset \P^n.$
In Example \ref{G(r,n) spherical} there are no boundary divisors and the only color is $D.$
\end{Example}

For a toric variety the cone of effective divisors is generated by the classes of boundary divisors.
For a spherical variety we have to take into accont the colors as well.

\begin{Proposition}
\cite[Proposition 4.5.4.4]{coxrings}\label{effcone spherical}
Let $(X,G,B,p_0)$ be a spherical variety.
\begin{enumerate}
	\item There are only finitely many boundary divisors $E_1,\dots,E_r$ and only
	finitely many colors $D_1,\dots,D_s$ on $X$ and we have
	$$X\backslash \ B\cdot p_0=E_1\cup\dots\cup E_r \cup D_1\cup\dots\cup D_s.$$
	\item The classes of the $E_k$ and $D_i$ generate $\Eff(X)\subset \Pic (X)$ as a cone.
\end{enumerate}
\end{Proposition}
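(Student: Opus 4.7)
The plan is to adapt the standard proof of this proposition (due to Brion): part (1) is the stratification of the complement of the open $B$-orbit into $B$-invariant divisors, while part (2) is a direct application of Borel's fixed point theorem to complete linear systems.

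For part (1), set $U := B\cdot p_0$, which is open and dense in $X$ by hypothesis; its complement $Z := X\setminus U$ is proper and closed. Any $B$-invariant prime divisor $W\subseteq X$ must lie inside $Z$: if not, $W\cap U\neq\emptyset$ would force $W\supseteq U$ (since $U$ is a single $B$-orbit and $W$ is $B$-stable and closed), hence $W=X$, contradicting $\codim_XW=1$. Conversely, every irreducible component of $Z$ is $B$-invariant (as $U$ is), and is either $G$-invariant (a boundary divisor) or not (a color); noetherianity of $X$ then ensures finiteness of the codimension-one components, giving us the $E_i$ and $D_j$. The set-theoretic equality $Z=\bigcup_iE_i\cup\bigcup_jD_j$ amounts to $Z$ having pure codimension one, which follows from the affineness of $U\cong B/\Stab_B(p_0)$ (a quotient of a connected affine solvable group) together with the standard fact that on a normal noetherian scheme the complement of an affine open is pure of codimension one.

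For part (2), let $D$ be an effective divisor on $X$. After choosing a $B$-linearization of $\mathcal{O}_X(D)$ (replacing $D$ by a positive multiple if needed, which is harmless for the cone statement since $B$ is connected), the group $B$ acts linearly on $H^0(X,\mathcal{O}_X(D))$ and so algebraically on the complete linear system $|D|$, a projective space. Borel's fixed point theorem applied to the connected solvable group $B$ provides a fixed point $D'\in|D|$; this is an effective divisor linearly equivalent to $D$ whose support is $B$-invariant and hence contained in $Z$. By part (1), every prime component of $\Supp(D')$ is one of the $E_i$ or $D_j$, so one can write $D'=\sum_i a_i E_i+\sum_j b_j D_j$ with $a_i, b_j\in\mathbb{Z}_{\geq 0}$. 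Passing to classes yields $[D]=[D']\in\cone\bigl([E_1],\dots,[E_r],[D_1],\dots,[D_s]\bigr)\subseteq\Pic(X)$, which proves (2).

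The main technical obstacle lies in part (1), namely the pure-codimension-one statement; this depends on the nontrivial fact that $B$-orbits on a variety are affine (a consequence of Chevalley's theorem on quotients by connected solvable groups, or of the local structure theorem for spherical varieties). A secondary subtlety is the existence of a $B$-linearization of $\mathcal{O}_X(D)$ in part (2); this is standard since $B$ is connected, and any ambiguity is eliminated by passing to a suitable multiple of $D$, which does not affect the cone statement.
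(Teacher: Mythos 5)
Your argument is correct, and it is essentially the standard proof of this statement: the paper itself gives no proof, quoting the result directly from \cite[Proposition 4.5.4.4]{coxrings}, and the argument there runs exactly along your lines (affineness of the open $B$-orbit forcing the complement to be pure of codimension one, then Borel's fixed point theorem applied to $|D|$ after $B$-linearizing a multiple of the line bundle). The two technical points you flag — that orbits of a connected solvable group are affine, and that the complement of an affine open in a normal variety has pure codimension one — are precisely the inputs the reference relies on, so there is no gap.
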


\section{Spherical blow-ups of Grassmannians at points}

Now we show that any Grassmannian blow-up at one point is a spherial variety and compute its effective cone of divisors. 
Before we do so, we recall a well known way to produce divisors on the Grassmannian.

\begin{Lemma}\label{divinG(r,n)}
Let $H\in \Pic(\G(r,n))$ be the class of a hyperplane section.  If $\Gamma \subset \P^n$ is a $(n-r-1)-$dimensional linear subspace, then
$$D:=\{[\Sigma]\in \G(r,n): \Sigma \cap \Gamma \neq \varnothing \}$$
is a divisor with class $H.$ Moreover, if $\Pi\subset \P^n$ is the $r-$dimensional subspace corresponding to $p\in \G(r,n)$
then
$$\mult_p(D)=\dim(\Pi \cap \Gamma)+1$$
where $\dim (\varnothing )=-1.$
\end{Lemma}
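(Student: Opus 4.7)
The plan is to treat the two assertions separately: first I would show that $D$ is a hyperplane section of $\G(r,n)$ in its Pl\"ucker embedding, and second I would perform a local computation in a standard affine chart around $p$ to read off the multiplicity.

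For the class of $D$, I would identify $D$ with the Schubert divisor $\Sigma_1(\mathcal F)$ associated to any complete flag $\mathcal F$ with $V_{n-r}$ corresponding to $\Gamma$ (a linear subspace of projective dimension $n-r-1$ corresponds to an $(n-r)$-dimensional vector subspace). The codimension formula for Schubert varieties, recalled in Section~\ref{Schubert varieties}, yields $\codim D=|\lambda|=1$, so $D$ is a divisor. To produce the class, I would use the classical description of $D$ in Pl\"ucker coordinates: if the vector subspace of $\mathbb C^{n+1}$ corresponding to $\Gamma$ has unique (up to sign) nonzero Pl\"ucker coordinate $q_K$ (with $|K|=n-r$), then $D$ is cut out by the single linear equation $\sum_{|J|=r+1}\epsilon(J)\,q_{J^{c}}\,p_J=0$. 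Since $D$ is the restriction of a hyperplane of $\mathbb P^N$ to $\G(r,n)$, its class in $\Pic(\G(r,n))$ is precisely $H$.

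For the multiplicity, set $k=\dim(\Pi\cap \Gamma)$. The case $k=-1$ is trivial since then $p\notin D$, so $\mult_p D=0=k+1$. When $k\geq 0$, I would use the transitive action of $\GL_{n+1}$ on pairs of linear subspaces of fixed dimensions meeting in a subspace of fixed dimension to choose a basis $e_0,\dots,e_n$ of $\mathbb C^{n+1}$ with $\Pi=\langle e_0,\dots,e_r\rangle$ and $\Gamma=\langle e_0,\dots,e_k,e_{r+1},\dots,e_{n-k-1}\rangle$. I would then work in the affine chart $(p_{\{0,\dots,r\}}\neq 0)$, parametrized by $(n-r)\times(r+1)$ matrices $A=(a_{j,i})$ via the basis matrix $\binom{I_{r+1}}{A}$; the point $p$ corresponds to $A=0$. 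With the above normalization, the unique nonzero Pl\"ucker coordinate of $\Gamma$ is indexed by $K=\{0,\dots,k\}\cup\{r+1,\dots,n-k-1\}$, so the defining equation of $D$ becomes (up to sign) $p_{J_0}=0$ where $J_0=K^{c}=\{k+1,\dots,r\}\cup\{n-k,\dots,n\}$.

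The last step, which is the main computation but straightforward, is to expand the $(r+1)\times(r+1)$ minor $p_{J_0}$ of $\binom{I_{r+1}}{A}$ along the $r-k$ rows coming from the identity block (indices $k+1,\dots,r$); these contribute a sign and select the columns $k+1,\dots,r$, leaving the $(k+1)\times(k+1)$ minor
$$
p_{J_0}=\pm \det\bigl(a_{j,i}\bigr)_{n-k\leq j\leq n,\ 0\leq i\leq k}.
$$
This is a homogeneous polynomial of degree exactly $k+1$ in the local coordinates $a_{j,i}$, so the local equation of $D$ at $p$ has order of vanishing precisely $k+1$. Hence $\mult_p D=k+1=\dim(\Pi\cap\Gamma)+1$, as claimed. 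The only mild obstacle is bookkeeping of indices in the minor expansion; no deeper idea is needed.
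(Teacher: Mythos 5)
Your proposal is correct. The first half (that $D$ is a hyperplane section, hence of class $H$) is essentially the paper's argument: the paper normalizes $\Gamma=\langle e_{r+1},\dots,e_n\rangle$ and observes directly that $D=(p_{01\dots r}=0)$, which is the same content as your identification of $D$ with the Schubert divisor $\Sigma_1$ cut out by the single linear Pl\"ucker equation $\sum_J\pm q_{J^c}p_J$.

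Where you genuinely diverge is the multiplicity statement. The paper does not compute $\mult_p(D)$ locally; it cites Lemma~\ref{keylemma}(4), which is deduced from the formula $\mult_{R_i'}D=r+1-i$ for multiplicities of the Schubert divisor along the Schubert subvarieties $R_i'$, itself proved via the Billey--Lakshmibai determinantal formula (Theorem~\ref{schubertmult}) and a Vandermonde evaluation, together with the remark that $p$ is a general point of the stratum $R_{r+1-\dim(W\cap V_{r+1})}'$. Your route instead normalizes $\Pi$ and $\Gamma$ to coordinate subspaces meeting along $\langle e_0,\dots,e_k\rangle$, identifies the local equation of $D$ in the standard affine chart with the generic $(k+1)\times(k+1)$ minor $\det(a_{j,i})_{n-k\le j\le n,\,0\le i\le k}$, and reads off the order of vanishing $k+1$ of this nonzero homogeneous polynomial at $A=0$. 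This is more elementary and self-contained, and it proves the formula directly in terms of $\dim(\Pi\cap\Gamma)$ for an arbitrary point $p\in\G(r,n)$, without any appeal to genericity within a stratum; the paper's heavier machinery buys the multiplicities along the entire subvarieties $R_i'$, which it needs elsewhere, but is not required for this lemma. Your index bookkeeping checks out (the sets $J_0$ and $K$ are complementary of the right sizes, and the Laplace expansion along the identity rows leaves exactly the stated minor), so the argument is complete.
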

\begin{proof}
We may assume that 
$$\Gamma=\left\langle e_{r+1},\dots,e_{n} \right\rangle.$$
Denote by $p_I$ the Pl\"ucker coordinates on $\P^N.$ We claim that $D=(p_{01\dots r}=0).$
Indeed, a point $q\in \G(r,n)$ corresponding to a linear space $\Sigma_q$ has first Pl\"ucker coordinate non-zero, $p_{01\dots r}(\Sigma_q)\neq 0$, if and only if it can be written on the form
$$\Sigma_q=\begin{bmatrix}
1& & & a_{0 r+1} & a_{0 r+2} & \cdots & a_{0 n} \\
 & \ddots && \vdots & \vdots & \ddots & \vdots \\
 & & 1 &a_{r r+1} & a_{r r+2} & \cdots & a_{r n}
\end{bmatrix}$$
if and only if has no intersection with $\Gamma.$
This means that
$$p_{01\dots r}(\Sigma_q)\neq 0 \Leftrightarrow
\Sigma_q \notin D.$$
For the claim regarding the multiplicity of $p$ in 
$D$ we refer to Lemma \ref{keylemma}$(4).$
\end{proof}

We would like to mention that John Kopper independently found the effective cone of $\G(r,n)_1$ in \cite{Ko16}. Moreover, Kopper managed to find the cones of effective cycles of all dimensions for $\G(r,n)_1$ and $\G(r,n)_2.$

\begin{Lemma}\label{G(r,n)1 spherical}
$\G(r,n)_1$ is a spherical variety
and $$\Eff(\G(r,n)_1)=\cone(E,H-(r+1)E)$$
where $E$ is the exceptional divisor and $H$ is the pullback of a general hyperplane section.
\end{Lemma}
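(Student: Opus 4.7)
The plan is to realize $\G(r,n)_1$ as a spherical variety by lifting the spherical structure on $\G(r,n)$ described in Example \ref{G(r,n) spherical}, and then to read off its effective cone from Proposition \ref{effcone spherical}. Let $G=SL_{n+1}$, fix the complete flag and Borel subgroup $B\subset G$ of Example~\ref{G(r,n) spherical} so that $V_r=\langle e_0,\dots,e_r\rangle$ corresponds to the blown-up point $p\in\G(r,n)$, and let $p_0=[\,0\ \mathrm{Id}\,]$ be the base point of the open $B$-orbit on $\G(r,n)$. Because $B$ stabilises $V_r$, it fixes $p$; the $B$-action therefore lifts to the blow-up $\alpha:\G(r,n)_1\to\G(r,n)$. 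Let $L\subset G$ denote a Levi subgroup of the stabiliser of $p$ (so $L\cong S(GL_{r+1}\times GL_{n-r})$): it is connected reductive, contains $B$ as a Borel, and also acts on $\G(r,n)_1$.

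Since $p_0\neq p$, the open $B$-orbit $B\cdot p_0\subset\G(r,n)$ is disjoint from the center of $\alpha$, and $\alpha^{-1}(B\cdot p_0)\cong B\cdot p_0$ is an open $B$-orbit on $\G(r,n)_1$. This will show that $(\G(r,n)_1,L,B,\tilde p_0)$, with $\tilde p_0=\alpha^{-1}(p_0)$, is a spherical variety.

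To compute the effective cone, I will identify the divisorial components of the complement of this open $B$-orbit, which by Proposition~\ref{effcone spherical} generate $\Eff(\G(r,n)_1)$ as a cone. From Example~\ref{G(r,n) spherical}, $\G(r,n)\setminus B\cdot p_0$ is the single color $D=\{[\Sigma]\,:\,\Sigma\cap\Gamma\neq\varnothing\}$, where $\Gamma=\langle e_0,\dots,e_{n-r-1}\rangle$. Since $p\in D$ (as $V_r\cap\Gamma\neq\varnothing$ because $n\geq 2r+1$), pulling back by $\alpha$ we obtain
\[
\G(r,n)_1\setminus B\cdot\tilde p_0 \;=\; E\cup \widetilde D,
\]
where $\widetilde D$ is the strict transform of $D$ and $E$ the exceptional divisor. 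Hence the boundary divisors together with the colors of $\G(r,n)_1$ are precisely $E$ and $\widetilde D$.

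It remains to compute the class of $\widetilde D$. By Lemma~\ref{divinG(r,n)} applied to the linear subspace $\Gamma=\langle e_0,\dots,e_{n-r-1}\rangle$, which has dimension $n-r-1$, the divisor $D$ has class $H$ and multiplicity at $p$ equal to $\dim(V_r\cap\Gamma)+1$. As $n\geq 2r+1$ forces $V_r\subset\Gamma$, one gets $\mult_p(D)=r+1$, hence $\widetilde D\sim H-(r+1)E$. Proposition~\ref{effcone spherical}(2) then yields $\Eff(\G(r,n)_1)=\cone(E,H-(r+1)E)$; the two classes are linearly independent in the rank-two group $\Pic(\G(r,n)_1)$, so this is automatically the Mori-cone description of the effective cone. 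The main point to verify carefully is that $B$ really does fix $p$ and that its open orbit on $\G(r,n)$ lifts unchanged through $\alpha$; once this is in place, everything else is a direct application of the machinery already developed.
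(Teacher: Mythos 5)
Your overall plan---exhibit a dense Borel orbit avoiding $p$, lift it to the blow-up, and read off $\Eff$ from the $B$-invariant divisors via Proposition \ref{effcone spherical} and Lemma \ref{divinG(r,n)}---is the same as the paper's, and your computation of the class $H-(r+1)E$ of the strict transform of $D$ is correct. However, there is a genuine gap at the heart of the argument: you assert that the Levi $L\cong S(GL_{r+1}\times GL_{n-r})$ of the stabiliser of $p$ \emph{contains $B$ as a Borel subgroup}. This is false. The Borel $B$ of $SL_{n+1}$ from Example \ref{G(r,n) spherical} consists of \emph{all} upper triangular matrices; it is contained in the parabolic $\Stab(p)$ but not in its block-diagonal Levi $L$. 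The actual Borel subgroup of $L$ is $B_1=B\cap L$, the block-diagonal matrices with upper-triangular blocks, which is strictly smaller than $B$. There is no way around this: any reductive subgroup of $SL_{n+1}$ containing $B$ is $SL_{n+1}$ itself (proper parabolics are never reductive), and $SL_{n+1}$ does not act on the blow-up; nor does the action of $\Stab(p)$ descend to its reductive quotient, since the unipotent radical acts nontrivially. So your argument only shows that the solvable group $B$ has a dense orbit on $\G(r,n)_1$, which does not certify sphericity.

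The substantive content of the paper's proof is precisely the step you skip: showing that the much smaller group $B_1$ still has a dense orbit. This is done by an explicit matrix computation, after choosing a base point $p_0=[\mathrm{Id}\ 0\ \tilde{\mathrm{Id}}]$ adapted to $B_1$ (note that the base point $[0\ \mathrm{Id}]$ of Example \ref{G(r,n) spherical} does not even lie in the open $B_1$-orbit). A consequence is that your identification of the colors is also off: the complement of the open $B_1$-orbit consists of $r+2$ colors $D_0,\dots,D_{r+1}$ with classes $H,H-E,\dots,H-(r+1)E$ (plus the boundary divisor $E$), not the single divisor $\widetilde D$. Since $H-jE=(H-(r+1)E)+(r+1-j)E$, the cone they generate is still $\cone(E,H-(r+1)E)$, so your final answer is correct, but only by coincidence with respect to your (incorrect) list of $B$-invariant divisors.
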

\begin{proof}
In the same notation of Example 
\ref{G(r,n) spherical} consider
$$G_1:=\{g\in G;\ g\cdot p = p\}.$$
$G_1$ is the set of matrices with the $(n-r)\times (r+1)$ left down block equal to zero.
This algebraic group $G_1$ is not reductive because its unipotent radical is the normal subgroup $U_1$ of matrices with the two diagonal blocks equal to the identity.
The quotient $G_1^{red}=G_1/U_1$ can be identified with the set of matrices in 
$SL_{n+1}$ with non-zero entries only on the two diagonal blocks, and the two non diagonal blocks zero.
We have an isomorphism
$$G_1^{red}\cong\{M=(M',M'')\in GL_{r+1}\times GL_{n-r};\ \det(M')\det(M'')=1\}.$$
Therefore, $G_1^{red}$ is reductive and acts on $\G(r,n)_1$.
Taking on $G_1^{red}$ the subgroup $B_1$ of matrices with upper triangular blocks
we get a Borel subgroup. These groups are illustrated below
$$
G_1\!=\!\left\{
\begin{pmatrix}
	A & B \\ 0 & C
\end{pmatrix}\right\},
U_1\!=\!\left\{
\begin{pmatrix}
	1 & B \\ 0 & 1
\end{pmatrix}\right\},
G_1^{red}\!=\!\left\{
\begin{pmatrix}
	A & 0 \\ 0 & C
\end{pmatrix}\right\},
B_1\!=\!\left\{
\begin{pmatrix}
	D & 0 \\ 0 & E
\end{pmatrix}\right\}
$$
where $A,D\!\in\! GL_{r+1},B\!\in\! M_{(r+1)\times(n-r)},C,E\!\in\! GL_{n-r},$
$D,E$ are upper triangular, and $\det(A)\cdot \det(C)=\det(D)\cdot \det(E)=1$.

Let $\alpha:\G(r,n)_1\to \G(r,n)$ be the blow-up map and $E$ the exceptional divisor.
Consider the point 
$$p_0\!=\!\begin{bmatrix}
	1 & 0 & \dots & 0 & 0 &\dots & 0 &     0 &\dots& 0   & 1 \\
	0 & 1 & \dots & 0 & 0 &\dots & 0 &     0 &\dots& 1   & 0 \\
	0 & 0 & \ddots& 0 & 0 &\vdots& 0 &\vdots&\rddots&0   & 0 \\
	0 &\dots& 0   & 1 & 0 &\dots & 0 & 1     &0    &\dots& 0 
\end{bmatrix}_{(n+1)\times(r+1)}\hspace{-40pt}=[Id\ 0\ \tilde{Id}]\in \G(r,n)$$
and $x_1\in \G(r,n)_1$ such that $\alpha(x_1)=p_0.$
By \cite[Corollary II.7.15]{Har}
the action of $G_1$ on $\G(r,n)$
induces an action of $G_1$ on $\G(r,n)_1.$
We claim that $B_1 \cdot x_1$ is dense on $X_1.$ 

In order to prove this we consider the following linear supspaces of $\P^n:$
$$\begin{cases}
\Gamma_0=\left\langle e_{r+1},\dots, e_n\right\rangle \\
\Gamma_1=\left\langle e_0,e_{r+1},\dots, e_{n-1}\right\rangle \\
\vdots\\
\Gamma_{r+1}=\left\langle e_0,\dots,e_r,e_{r+1},\dots, e_{n-r-1}\right\rangle 
\end{cases}
$$
and 
$$\begin{cases}
\Gamma_0'=\left\langle e_0,e_{r+1},\dots, e_n\right\rangle \\
\Gamma_1'=\left\langle e_0,e_1,e_{r+1},\dots, e_{n-1}\right\rangle \\
\vdots\\
\Gamma_{r}'=\left\langle e_0,\dots,e_r,e_{r+1},\dots, e_{n-r}\right\rangle 
\end{cases}
$$
For every $j$ we have
$\Gamma_j\cong \P^{n-r-1}$ and
$\Gamma_j'=\left\langle\Gamma_j,e_j\right\rangle\cong \P^{n-r},$
thus using Lemma \ref{divinG(r,n)} we can define divisors
$$D_j:=\{[\Sigma]\in \G(r,n): \Sigma \cap \Gamma_j \neq \varnothing \}$$
with class $H-jE.$ Note that the $D_j$ are colors.
Note that 
$$p_0\notin D_0\cup D_1 \cup \dots \cup \ D_{r+1}.$$
Therefore it is enough to prove that
\begin{equation}\label{orbitp_0inG(r,n)1}
B_1\cdot  p_0=\G(r,n)\backslash\left\{
D_0\cup D_1 \cup \dots \cup \ D_{r+1}\right\}
\end{equation}
because from this it will follows that 
\begin{equation*}\label{orbitp_0inG(r,n)1bis}
B_1\cdot p_0'=\G(r,n)_1\backslash\left\{
D_0\cup D_1 \cup \dots \cup \ D_{r+1}\cup E\right\}
\end{equation*}
and that $B_1\cdot p_0'$ is an open dense subset on $\G(r,n)_1,$
where $\alpha(p_0')=p_0.$
Now let 
$$q\in \G(r,n)\backslash
\left\{
D_0\cup D_1 \cup \dots \cup \ D_{r+1}\right\}$$
and consider
$$w_j\in \Gamma_j'\cap \Sigma_q\subset \P^n,
j=0,\dots, r,$$
where $\Sigma_q$ corresponds to $q.$
Then we may write
\begin{equation*}\label{G(r,n)1 Borbit}
\begin{bmatrix}
w_0\\ w_1\\ \vdots \\w_r
\end{bmatrix}=[d\  e]=
\begin{bmatrix}
d_{0,0} &             &           &   & e_{0,r+1} &   &\dots & &e_{0,n-1} & e_{0,n}\\
d_{1,0} &  d_{1,1}     &           &   & e_{1,r+1} &  &\dots & &e_{1,n-1} & \\
\vdots   &  &    \ddots       &    &\vdots  &   & & \rddots &&\\
d_{r,0} & \ldots & \dots& d_{r,r} & e_{r,r+1} &  \dots &e_{r,n-r} & &   &  
\end{bmatrix}
\end{equation*}
for some complex numbers $d_{ij},e_{ij}.$
Note that 
$$d_{jj}=0\Rightarrow w_j\in \Gamma_j\Rightarrow q\in D_j$$
and therefore $d_{jj}\neq 0$ for $j=0,\dots,r,$ 
and thus 
$\Sigma_q=\left\langle w_0,w_1,\dots,w_r \right\rangle.$
Similarly the $e_{ij}'s$ on the diagonal are also non-zero because
$$e_{j, n-j}=0\Rightarrow w_j\in \Gamma_{j+1}\Rightarrow q\in D_{j+1}.$$

Now to conclude that equation (\ref{orbitp_0inG(r,n)1}) is true it is enough to note that $q=b\cdot p_0$ where $b\in B_1$ is given by 
$$b\!=\!
\left[
\begin{tabular}{ccc}
$d^{T}$ & 0 & 0\\
0 & $Id$ & \multirow{2}{*}{$\widetilde e$}\\
0 & 0 & 
\end{tabular}
\right]
\mbox{ and }
\widetilde e=
\begin{bmatrix}
e_{r,r+1} & \dots & e_{0,r+1}\\
\vdots & & \\
e_{r,n-r}&&\vdots\\
&\ddots &\\
&& e_{0,n}
\end{bmatrix}.
$$
\end{proof}

\begin{Corollary}\label{G(r,n)Mori dream space}
$\G(r,n)_1$ is a Mori dream space.
\end{Corollary}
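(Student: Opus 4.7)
The plan is entirely straightforward, since this corollary is an immediate consequence of two results already established in the excerpt. First, Lemma \ref{G(r,n)1 spherical} exhibits $\G(r,n)_1$ as a spherical variety by producing an explicit reductive group $G_1^{red}$, a Borel subgroup $B_1 \subset G_1^{red}$, and a base point $p_0' \in \G(r,n)_1$ such that the $B_1$-orbit of $p_0'$ is open and dense. Second, Theorem \ref{sphericalisMDS}, which goes back to Brion's work in \cite{Brion93}, asserts that every spherical variety is a Mori dream space. So I would simply invoke Lemma \ref{G(r,n)1 spherical} to obtain sphericity and then apply Theorem \ref{sphericalisMDS} to conclude the Mori dream property.

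There is no obstacle here and no hidden step to grind through, because all of the substantive work has already been done in the proof of Lemma \ref{G(r,n)1 spherical}: the difficult part was exhibiting the dense $B_1$-orbit on $\G(r,n)_1$, which required the careful choice of the parabolic subgroup stabilizing the blown-up point, the passage to its reductive quotient, and the explicit linear-algebra computation showing that the complement of the colors $D_0,\dots,D_{r+1}$ together with the exceptional divisor $E$ coincides with $B_1 \cdot p_0'$. Once sphericity is in hand, Brion's theorem does all remaining work, so the proof of the corollary reduces to a one-line deduction.
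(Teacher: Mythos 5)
Your proposal is correct and is exactly the paper's intended argument: the corollary follows by combining Lemma \ref{G(r,n)1 spherical} (sphericity of $\G(r,n)_1$) with Theorem \ref{sphericalisMDS} (spherical varieties are Mori dream spaces). No further comment is needed.
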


\begin{Lemma}\label{G(r,2r+1)2 spherical}
$\G(r,2r+1)_2$ is a spherical variety for $r\geq 1$ 
and
$$\Eff(\G(r,2r+1)_2)=
\cone(E_1,E_2,H-(r+1)E_1,H-(r+1)E_2)$$
where $E_1,E_2$ are the exceptional divisors and $H$ is the pullback of a general hyperplane section. 
\end{Lemma}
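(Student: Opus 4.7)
The plan is to adapt to two general points the argument used for $\G(r,n)_1$ in Lemma \ref{G(r,n)1 spherical}. Two general points $p_1,p_2 \in \G(r,2r+1)$ correspond to two complementary $r$-planes in $\mathbb{P}^{2r+1}$, and after acting by $SL_{2r+2}$ I may assume these are $\Pi_1 = \langle e_0,\dots,e_r\rangle$ and $\Pi_2 = \langle e_{r+1},\dots,e_{2r+1}\rangle$. The simultaneous stabilizer of the ordered pair $(p_1,p_2)$ in $SL_{2r+2}$ is the Levi-type subgroup $G_2 = S(GL_{r+1}\times GL_{r+1})$ embedded as block-diagonal matrices, which is already reductive, so no passage to a unipotent-radical quotient is needed. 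I take as Borel $B_2 \subset G_2$ the subgroup of pairs of upper triangular matrices (subject to the $SL$-condition on determinants), and lift the $G_2$-action to $\G(r,2r+1)_2$ via \cite[Corollary II.7.15]{Har}.

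Next I would enumerate the $B_2$-invariant prime divisors on $\G(r,2r+1)_2$. For each $0\leq i\leq r+1$, the $r$-dimensional linear subspace
\[
\Gamma_i \;=\; \langle e_0,\dots,e_{i-1},\, e_{r+1},\dots,e_{2r+1-i}\rangle \;\subset\; \mathbb{P}^{2r+1}
\]
is $B_2$-invariant because $B_2$ preserves the obvious flags in $\Pi_1$ and $\Pi_2$, so by Lemma \ref{divinG(r,n)} it defines a $B_2$-invariant hyperplane section $D_i\subset\G(r,2r+1)$ of multiplicity $i$ at $p_1$ and $r+1-i$ at $p_2$; its strict transform in $\G(r,2r+1)_2$ has class $H - iE_1 - (r+1-i)E_2$. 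Only $D_0 = \{[\Sigma]\,:\,\Sigma\cap\Pi_2\neq\varnothing\}$ and $D_{r+1} = \{[\Sigma]\,:\,\Sigma\cap\Pi_1\neq\varnothing\}$ are $G_2$-invariant (since the only proper nonzero $G_2$-invariant subspaces of $\mathbb{C}^{2r+2}$ are $\Pi_1$ and $\Pi_2$); together with the exceptional divisors $E_1,E_2$ these four form the boundary divisors, while $D_1,\dots,D_r$ are colors.

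To exhibit a dense $B_2$-orbit I would choose a base point $p_0 \in \G(r,2r+1)\setminus(D_0\cup D_{r+1})$ represented by the matrix $[I_{r+1}\mid J_{r+1}]$, where $J_{r+1}$ is the antidiagonal identity, and let $p_0'\in \G(r,2r+1)_2$ be its strict transform. Every point of $\G(r,2r+1)\setminus(D_0\cup D_{r+1})$ is the row-span of some $[I_{r+1}\mid M]$ with $M\in GL_{r+1}$, and the action of $(b_1,b_2)\in B_2$ sends $M$ to $b_1 M b_2^{-1}$; a computation using Lemma \ref{divinG(r,n)} shows that such a point lies outside $D_i$ if and only if the lower-left $i\times i$ block of $M$ is nonsingular. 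In particular $M_0 = J_{r+1}$ has each lower-left block equal to an antidiagonal identity, and so satisfies the condition. The main step is to show by an explicit inductive upper-triangular reduction on the rows and columns of $M$ (essentially a Bruhat-style $LU$-decomposition twisted by the longest permutation) that every $M \in GL_{r+1}$ whose lower-left blocks are all nonsingular admits a factorization $M = b_1^{-1} J_{r+1} b_2$ with $b_1, b_2$ upper triangular; this would give $B_2\cdot p_0' = \G(r,2r+1)_2\setminus(E_1\cup E_2\cup D_0\cup\dots\cup D_{r+1})$, proving sphericality.

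Finally, by Proposition \ref{effcone spherical} the effective cone is generated by $E_1,E_2$ together with the classes $H - iE_1 - (r+1-i)E_2$ for $0\le i\le r+1$. The identity
\[
H - iE_1 - (r+1-i)E_2 \;=\; \tfrac{i}{r+1}\bigl(H-(r+1)E_1\bigr) \;+\; \tfrac{r+1-i}{r+1}\bigl(H-(r+1)E_2\bigr)
\]
shows that all these generators lie in $\cone\bigl(E_1, E_2, H-(r+1)E_1, H-(r+1)E_2\bigr)$, and since these four rays are linearly independent this cone is exactly $\Eff(\G(r,2r+1)_2)$. The only substantial obstacle is the twisted triangular reduction in the third step; the remaining bookkeeping is formal and parallel to Lemma \ref{G(r,n)1 spherical}.
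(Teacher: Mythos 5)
Your proposal is correct and follows essentially the same route as the paper, whose proof simply observes that the argument of Lemma \ref{G(r,n)1 spherical} applies verbatim — for $n=2r+1$ the two-point stabilizer is already the reductive block-diagonal group $G_1^{red}$ with the same Borel — except that the divisors $D_j$ now have class $H-jE_1-(r+1-j)E_2$, and your Bruhat-cell factorization $M=b_1^{-1}J_{r+1}b_2$ is just a repackaging of the paper's explicit construction of $b\in B$ from the points $w_j\in\Gamma_j'\cap\Sigma_q$. One trivial quibble: four rays in the rank-three Picard group cannot be linearly independent, but the identification of $\Eff$ follows anyway from your convexity identity combined with Proposition \ref{effcone spherical}.
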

\begin{proof}
The proof is the same as Lemma 
\ref{G(r,n)1 spherical} but now the class of $D_j$ is $H-jE_1-(r+1-j)E_2.$
\end{proof}

The proofs of the remaining results in this section are similar to the proof of Lemma \ref{G(r,n)1 spherical}. Therefore, we omit some details.

\begin{Lemma}\label{G(1,n)2 spherical}
$\G(1,n)_2$ is a spherical variety for $n\geq 5$ 
and $$\Eff(\G(1,n)_2)=\cone(E_1,E_2,H-2(E_1+E_2))$$
where $E_1,E_2$ are the exceptional divisors and $H$ is the pullback of a general hyperplane section. 
\end{Lemma}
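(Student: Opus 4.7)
The plan is to mimic the argument of Lemma \ref{G(r,n)1 spherical}, handling two blown up points simultaneously. First I would choose a basis $e_0,\dots,e_n$ of $\C^{n+1}$ so that $p_1,p_2\in\G(1,n)$ correspond to the two skew lines $l_1=\langle e_0,e_1\rangle$ and $l_2=\langle e_2,e_3\rangle$ (two general points of $\G(1,n)$ can be moved to this position under $SL_{n+1}$). Let $G_2\subset SL_{n+1}$ be the stabilizer of the set $\{p_1,p_2\}$; its unipotent radical $U_2$ is the normal subgroup of elements that are the identity on $\langle e_0,e_1\rangle$, on $\langle e_2,e_3\rangle$ and on $\langle e_4,\dots,e_n\rangle$, and the quotient
$$G_2^{red}=G_2/U_2\cong\big\{(A_1,A_2,A_3)\in GL_2\times GL_2\times GL_{n-3}\:|\: \det(A_1)\det(A_2)\det(A_3)=1\big\}$$
is reductive; take the Borel $B_2\subset G_2^{red}$ given by triples of upper triangular matrices.

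Next I would list the $B_2$-invariant prime divisors of $\G(1,n)_2$. The two exceptional divisors $E_1,E_2$ are $G_2$-invariant. The colors are produced by Lemma \ref{divinG(r,n)}: for every coordinate $(n-2)$-plane
$$\Gamma_J=\langle e_i\::\: i\notin J\rangle\subset \P^n, \quad J\subset\{0,\dots,n\},\ |J|=2,$$
the divisor $D_J=\{[\Sigma]:\Sigma\cap\Gamma_J\neq\varnothing\}$ is $B_2$-invariant, and its pullback to $\G(1,n)_2$ has class $H-a_1 E_1-a_2 E_2$ with $a_i=\dim(l_i\cap\Gamma_J)+1\in\{0,1,2\}$. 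Going through the possibilities for $J$ (and using $n\geq 5$, so that $\dim\Gamma_J=n-2\geq 3$ allows $\Gamma_J$ to contain both $l_1$ and $l_2$, achieving $a_1=a_2=2$), I would get divisors of classes
$$H-2E_1-2E_2,\ H-2E_1-E_2,\ H-E_1-2E_2,\ H-E_1-E_2,\ H-2E_1,\ H-2E_2,$$
all of which lie in $\cone(E_1,E_2,H-2E_1-2E_2)$.

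The key step is to show that some $x_0\in\G(1,n)_2$ has dense $B_2$-orbit. Picking a general line $l_0\subset\P^n$ skew to $l_1,l_2$ with a lift $x_0\in\G(1,n)_2$ above $p_0=[l_0]$, I would argue, exactly as in (\ref{orbitp_0inG(r,n)1}), that
$$B_2\cdot p_0=\G(1,n)\setminus\bigcup_{J}D_J,$$
by taking $[l_q]$ in the right-hand side, writing a basis $w_0,w_1$ of $l_q$ using the incidences $w_i\in \Gamma'_i\cap l_q$ for carefully chosen $(n-1)$-planes $\Gamma'_i$, checking that the relevant diagonal entries are nonzero (otherwise $q\in D_J$ for some $J$), and exhibiting an explicit block upper triangular $b\in B_2$ with $b\cdot p_0=[l_q]$. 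Lifting to the blow-up, $B_2\cdot x_0$ is open in $\G(1,n)_2$, proving sphericality.

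Finally, by Proposition \ref{effcone spherical} the effective cone is generated by the classes of $E_1,E_2$ and the $D_J$, and since all these classes lie in $\cone(E_1,E_2,H-2(E_1+E_2))$ while the three generators $E_1,E_2,H-2(E_1+E_2)$ are themselves classes of $B_2$-invariant divisors, I obtain
$$\Eff(\G(1,n)_2)=\cone(E_1,E_2,H-2(E_1+E_2)).$$
The main obstacle I expect is the density check: unlike the one point case, the Borel here has three blocks and the choice of auxiliary $B_2$-invariant subspaces $\Gamma'_i$ needed to reduce $l_q$ to the standard form must be made with some care to ensure that only the divisors $D_J$ (and not additional loci) appear as obstructions.
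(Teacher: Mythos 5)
Your plan follows the paper's proof essentially step for step: the same normalization $p_1=[\langle e_0,e_1\rangle]$, $p_2=[\langle e_2,e_3\rangle]$, the same three-block Borel $B_2$, the same six color classes produced via Lemma \ref{divinG(r,n)}, the same density argument via auxiliary codimension-one subspaces $\Gamma_i'$ and an explicit block upper triangular element, and the same appeal to Proposition \ref{effcone spherical} to read off the effective cone. One correction is needed in the way you generate the colors: it is \emph{not} true that every coordinate $(n-2)$-plane $\Gamma_J=\langle e_i : i\notin J\rangle$ with $|J|=2$ gives a $B_2$-invariant divisor. A coordinate subspace is $B_2$-invariant only when, inside each of the three blocks $\{e_0,e_1\}$, $\{e_2,e_3\}$, $\{e_4,\dots,e_n\}$, it is spanned by an initial segment of the flag preserved by the corresponding upper triangular block; for instance $J=\{1,4\}$ fails, since $b\cdot e_5=b_{45}e_4+b_{55}e_5$ leaves $\Gamma_J$. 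There are exactly six admissible choices, one realizing each class in your list, and these are precisely the $\Gamma_{ij}$ used in the paper; the orbit identity must be stated as $B_2\cdot p_0=\G(1,n)\setminus\bigcup_{i,j}D_{ij}$ with the union over these six only (with the full union over all $J$ the right-hand side is not even $B_2$-stable, so the claimed equality cannot hold). Once restricted in this way, your argument for sphericality and for $\Eff(\G(1,n)_2)=\cone(E_1,E_2,H-2(E_1+E_2))$ goes through exactly as in the paper.
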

\begin{proof}
We may suppose that 
$$p_1=[\left\langle e_0,e_1 \right\rangle],
p_2=[\left\langle e_2,e_3 \right\rangle]$$
and the groups involved are
$$
G_2\!=\!\left\{
\begin{pmatrix}
	A & 0 & B\\
	0 & C & D\\
  0 & 0 & E\\	
\end{pmatrix}\right\},
G_2^{red}\!=\!\left\{
\begin{pmatrix}
	A & 0 & 0\\
	0 & C & 0\\
  0 & 0 & E\\	
\end{pmatrix}\right\},$$
$$
B_2\!=\!\left\{
\begin{pmatrix}
	b_{00}&b_{01}&  0 	&  0   &  0   &  0   &  0   \\
	  0		&b_{11}&  0		&  0   &  0   &  0   &  0   \\
	  0 	&  0 	 &b_{22}&b_{23}&  0   &  0   &  0   \\
	  0		&  0	 &  0		&b_{33}&  0   &  0   &  0   \\
    0 	&  0 	 &	0 	&  0 	 &b_{44}&\dots &b_{4n}\\
    0 	&  0 	 &	0		&  0	 &  0		&\ddots&\vdots\\
		0 	&  0 	 &	0		&  0	 &  0		&  0   &b_{nn}\\
	 
\end{pmatrix}\right\}.
$$
Consider the following linear supspaces of $\P^n:$
$$\begin{cases}
\Gamma_{02}=\left\langle e_2,e_3,\dots, e_n\right\rangle \\
\Gamma_{11}=\left\langle e_0,e_2,e_4,e_5,\dots,
e_n\right\rangle \\
\Gamma_{20}=\left\langle e_0,e_1,e_4,e_5,\dots,
e_n\right\rangle \\
\Gamma_{21}=\left\langle e_0,e_1,e_2,e_4,e_5,\dots,
e_{n-1}\right\rangle \\
\Gamma_{12}=\left\langle e_0,e_2,e_3,e_4,e_5,\dots,
e_{n-1}\right\rangle \\
\Gamma_{22}=\left\langle e_0,e_1,\dots, e_{n-2}\right\rangle 
\end{cases}
$$
and 
$$\begin{cases}
\Gamma_0'=\left\langle e_0,e_2,e_3,\dots,e_n\right\rangle \\
\Gamma_1'=\left\langle e_0,e_1,e_2,e_4,e_5,\dots,e_n\right\rangle 
\end{cases}.
$$
Thus
$\Gamma_{ij}\cong \P^{n-2}$ and
$\Gamma_j'\cong \P^{n-1},$
and using Lemma \ref{divinG(r,n)} we can define divisors
$$D_{ij}:=\{[\Sigma]\in \G(1,n): \Sigma \cap \Gamma_{ij} \neq \varnothing \}$$
with class $H-iE_1-jE_2$ which are colors.
Consider
$$p_0=\begin{bmatrix}
1&0&0&1&0&\dots&0&0&1\\
0&1&1&0&0&\dots&0&1&1\\
\end{bmatrix}
$$
and note that $p_0\in \G(1,n)\backslash \bigcup_{i,j} D_{ij}.$
It is enough to show that 
\begin{equation*}
B_2\cdot  p_0=\G(1,n)\backslash \bigcup_{i,j} D_{ij}.
\end{equation*}
Let $q\in \G(1,n)\backslash \bigcup_{i,j} D_{ij}$
and choose $w_j\in \Sigma_q \cap\Gamma_j'$
where $\Sigma_q\subset \P^n$ is the line
corresponding to $q\in \G(1,n).$ Then we have
$$\begin{bmatrix}
w_0 \\ w_1
\end{bmatrix}=\begin{bmatrix}
x_0&0&x_2&x_3&x_4&\dots&x_{n-2}&x_{n-1}&x_n\\
y_0&y_1&y_2&0&y_4&\dots&y_{n-2}&y_{n-1}&y_n\\
\end{bmatrix}.
$$
Observe that 
\begin{equation}\label{eqdivesf}
\begin{cases}
x_0\neq 0 \mbox{ because } q\notin D_{02}\\ 
y_1\neq 0\neq x_3 \mbox{ because } q\notin D_{11}\\ 
y_2\neq 0 \mbox{ because } q\notin D_{20}\\ 
x_n\neq 0 \mbox{ because } q\notin D_{12}\\ 
y_n\neq 0 \mbox{ because } q\notin D_{21}\\ 
x_{n-1}y_n-x_ny_{n-1}\neq 0 
\mbox{ because } q\notin D_{22}
\end{cases}.
\end{equation}
This yields that
$\Sigma_q=\left\langle w_0,w_1 \right\rangle$
and that scaling $w_0$ or $w_1$ we may suppose that $x_n=y_n\neq 0.$
Then using the last condition on (\ref{eqdivesf}) we have that $y_{n-1}\neq x_{n-1}$ and thus
$q=b\cdot p_0$ where

$$b\!=\!
\left[
\begin{tabular}{cccc}
$b_1$&0 & 0 & 0\\
0&$b_2$ & 0 & 0\\
0&0 & $Id$ & \multirow{2}{*}{$b_3$}\\
0&0 & 0 & 
\end{tabular}
\right]\in B_2
\mbox{ and }$$
$$
b_1=
\begin{bmatrix}
x_0&y_0\\ 0 &y_1	 
\end{bmatrix},
b_2=
\begin{bmatrix}
y_2&x_2\\ 0 &x_3
\end{bmatrix},
b_3=
\begin{bmatrix}
y_4-x_4&x_4\\
y_5-x_5&x_5\\
\vdots&\vdots\\
y_{n-1}-x_{n-1}&x_{n-1}\\
0&x_n\\
\end{bmatrix}.$$
\end{proof}

\begin{Lemma}\label{G(1,5)3 spherical}
$\G(1,5)_3$ is a spherical variety and
$$\Eff(\G(1,5)_3)=
\cone(E_1,E_2,E_3,H-2(E_1+E_2),H-2(E_1+E_3)
,H-2(E_2+E_3))$$
where $E_1,E_2,E_3$ are the exceptional divisors and $H$ is the pullback of a general hyperplane section. 
\end{Lemma}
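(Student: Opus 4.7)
The plan is to follow the pattern of Lemmas \ref{G(r,n)1 spherical}, \ref{G(r,2r+1)2 spherical}, and \ref{G(1,n)2 spherical}. I would take $G=SL_6$ acting on $\G(1,5)$ and fix three general points $p_1=[\langle e_0,e_1\rangle]$, $p_2=[\langle e_2,e_3\rangle]$, $p_3=[\langle e_4,e_5\rangle]$. Since these three $2$-planes give a direct-sum decomposition of $\C^6$, the simultaneous stabilizer $G_3$ consists of matrices that are block-diagonal with three $2\times 2$ blocks, and so $G_3$ is already reductive: $G_3=G_3^{red}\cong (GL_2\times GL_2\times GL_2)\cap SL_6$. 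A Borel subgroup $B_3\subset G_3^{red}$ is obtained by taking upper-triangular matrices in each $2\times 2$ block; note that $\dim B_3=8=\dim\G(1,5)$, which is consistent with requiring a $B_3$-orbit to be open.

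First I would enumerate the $B_3$-invariant $3$-planes of $\P^5$, i.e.\ the $B_3$-stable $4$-dimensional vector subspaces of $\C^6$. Since the diagonal torus in $B_3$ already has $1$-dimensional weight spaces on the standard basis, any $B_3$-invariant subspace decomposes as $S_1\oplus S_2\oplus S_3$ with $S_i\subset\langle e_{2i-2},e_{2i-1}\rangle$ invariant, forcing $S_i\in\{0,\langle e_{2i-2}\rangle,\langle e_{2i-2},e_{2i-1}\rangle\}$. In dimension $4$ this gives exactly six possibilities, producing by Lemma \ref{divinG(r,n)} six colors on $\G(1,5)$: three of ``$(2,2,0)$'' type, say $D_{ij}$ for $\{i,j\}\subset\{1,2,3\}$, with class $H-2E_i-2E_j$; and three of ``$(2,1,1)$'' type, say $D_i$ for $i\in\{1,2,3\}$, with class $H-2E_i-E_j-E_k$ (where $\{i,j,k\}=\{1,2,3\}$). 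The exceptional divisors $E_1,E_2,E_3$ are the three boundary divisors.

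Next I would pick the base point
$$p_0=\left[\begin{pmatrix}1&0&1&0&1&0\\ 0&1&0&1&0&1\end{pmatrix}\right]\in\G(1,5)\setminus\bigl(\textstyle\bigcup_{i<j}D_{ij}\cup\bigcup_i D_i\bigr),$$
and prove that the $B_3$-orbit of $p_0$ is exactly the complement of these six colors. Given an arbitrary $q\in\G(1,5)$ avoiding them, one chooses a $2\times 6$ matrix representative $[w_0\,;w_1]$ for the associated line such that the six non-vanishing conditions coming from $q\notin D_{ij}$ and $q\notin D_i$ translate into the non-vanishing of certain $1\times 1$ and $2\times 2$ sub-minors. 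Exploiting the block-diagonal structure of $B_3$, one then constructs an explicit element $b=\operatorname{diag}(b_1,b_2,b_3)\in B_3$ (each $b_i$ upper triangular) sending $p_0$ to $q$ by solving three independent $2\times 2$ upper-triangular systems; this is analogous to the construction of $b$ in the proof of Lemma \ref{G(1,n)2 spherical} but more symmetric, and is the main (though routine) obstacle.

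Lifting to $\G(1,5)_3$, the three exceptional divisors are $G_3$-invariant since each $p_i$ is fixed by $G_3$, so they are the boundary divisors; the strict transforms of the six $D_{ij}$ and $D_i$ are colors, and the complement of the $B_3$-orbit of the lift of $p_0$ equals their union with $E_1\cup E_2\cup E_3$. By Proposition \ref{effcone spherical} the nine classes $E_1,E_2,E_3$, $H-2E_i-2E_j$, $H-2E_i-E_j-E_k$ generate $\Eff(\G(1,5)_3)$. Finally a direct cone computation gives
$$H-2E_i-E_j-E_k=\tfrac{1}{2}(H-2E_i-2E_j)+\tfrac{1}{2}(H-2E_i-2E_k),$$
so the three ``$(2,1,1)$'' colors lie in the cone spanned by the remaining six divisors, yielding
$$\Eff(\G(1,5)_3)=\cone\bigl(E_1,E_2,E_3,\,H-2(E_1+E_2),\,H-2(E_1+E_3),\,H-2(E_2+E_3)\bigr).$$
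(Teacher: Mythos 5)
Your strategy is the same as the paper's: the paper proves this lemma by noting that for $n=5$ the Borel subgroup $B_3$ of the reductive stabilizer of three general points coincides with the group $B_2$ used in Lemma \ref{G(1,n)2 spherical}, so the dense orbit is the one already exhibited there, and the six $B_3$-invariant divisors are the $D_{ij}$ of that proof, now with classes $H-iE_1-jE_2-(4-i-j)E_3$. Your enumeration of the $B_3$-stable $4$-dimensional subspaces of $\C^6$ reproduces exactly those six divisors, and your closing observation that $H-2E_i-E_j-E_k=\tfrac12(H-2E_i-2E_j)+\tfrac12(H-2E_i-2E_k)$ correctly eliminates the redundant generators.

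There is, however, a genuine error in the orbit computation: your base point does not lie in the open orbit. The vector $w_0=e_0+e_2+e_4$ belongs to each of the three subspaces $\langle e_0,e_1,e_2,e_4\rangle$, $\langle e_0,e_2,e_3,e_4\rangle$ and $\langle e_0,e_2,e_4,e_5\rangle$, so your $p_0$ lies on all three $(2,1,1)$-type divisors rather than in their complement. Worse, every $b=\operatorname{diag}(b_1,b_2,b_3)\in B_3$ maps $w_0$ into $\langle e_0,e_2,e_4\rangle$, so every line in $B_3\cdot p_0$ meets the plane $\P(\langle e_0,e_2,e_4\rangle)\subset\P^5$; this is a codimension-two Schubert condition on $\G(1,5)$, so the orbit of your $p_0$ has dimension at most $6<8$ and the three upper-triangular $2\times2$ systems you propose have no solution for general $q$. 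The repair is to stagger the spanning vectors of the base line, e.g. $w_0=e_0+e_3+e_5$ and $w_1=e_1+e_2+e_4+e_5$ as in the proof of Lemma \ref{G(1,n)2 spherical}: then for a general $q$ one recovers representatives $w_0\in\Sigma_q\cap(x_1=0)$ and $w_1\in\Sigma_q\cap(x_3=0)$, checks the relevant nonvanishing conditions coming from $q\notin\bigcup D_{ij}$, and solves for $b$ exactly as in that proof. With that substitution the rest of your argument, including the application of Proposition \ref{effcone spherical}, goes through.
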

\begin{proof}
The proof is the same as Lemma 
\ref{G(1,n)2 spherical} but now the class of $D_{i,j}$ is $H-iE_1-jE_2-(4-i-j)E_3.$
\end{proof}

\begin{Lemma}\label{G(r,2r+2)2 spherical}
$\G(r,2r+2)_2$ is spherical for every $r\geq 1$
and 
$$\Eff(\G(r,2r+2)_2)=
\cone(E_1,E_2,H-(r+1)E_1-E_2,H-E_1-(r+1)E_2)$$
where $E_1,E_2$ are the exceptional divisors and $H$ is the pullback of a general hyperplane section. 
\end{Lemma}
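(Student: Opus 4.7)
The proof follows the pattern established in Lemmas \ref{G(r,n)1 spherical}--\ref{G(1,5)3 spherical}. I would place the two general points as $p_1=[\langle e_0,\dots,e_r\rangle]$ and $p_2=[\langle e_{r+1},\dots,e_{2r+1}\rangle]$, so that $\C^{2r+3}=V_1\oplus V_2\oplus V_3$ with $V_1,V_2$ as above and $V_3=\langle e_{2r+2}\rangle$. The subgroup $G_2\subset SL_{2r+3}$ stabilizing $(p_1,p_2)$ consists of block upper-triangular matrices with respect to this decomposition; its unipotent radical $U_2$ is formed by the elements whose three diagonal blocks are identity, and the reductive quotient is
$$G_2^{red}\;\cong\;\big\{(A,B,d)\in GL(V_1)\times GL(V_2)\times GL(V_3)\,:\,\det(A)\det(B)\,d=1\big\}.$$
As a Borel subgroup $B_2\subset G_2^{red}$ I would take the block-diagonal elements which are upper-triangular on each of $V_1,V_2,V_3$. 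By \cite[Corollary II.7.15]{Har} the action of $G_2$ on $\G(r,2r+2)$ lifts to an action on $\G(r,2r+2)_2$. A dimension count gives $\dim B_2=2\binom{r+2}{2}=(r+1)(r+2)=\dim\G(r,2r+2)$, which is precisely the condition that a $B_2$-orbit can be open.

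The next step is to identify the colors. Every $B_2$-invariant linear subspace of $\P^{2r+2}$ is a sum of initial flag segments in each block, so every $B_2$-invariant $(n-r-1)=(r+1)$-dimensional subspace has the form
$$\Gamma_{i,j,\varepsilon}\;=\;\langle e_0,\dots,e_{i-1}\rangle+\langle e_{r+1},\dots,e_{r+j}\rangle+\varepsilon\langle e_{2r+2}\rangle,$$
with $0\leq i,j\leq r+1$, $\varepsilon\in\{0,1\}$ and $i+j+\varepsilon=r+2$. By Lemma \ref{divinG(r,n)} each $\Gamma_{i,j,\varepsilon}$ yields a divisor $D_{i,j,\varepsilon}\subset\G(r,2r+2)$ of class $H$ with multiplicities $i$ at $p_1$ and $j$ at $p_2$; its strict transform on $\G(r,2r+2)_2$ is a color of class $H-iE_1-jE_2$. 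Then, following the explicit strategy of Lemma \ref{G(1,n)2 spherical}, I would pick a base point $p_0\in\G(r,2r+2)\setminus\bigcup_{i,j,\varepsilon}D_{i,j,\varepsilon}$ given by an $(r+1)\times(2r+3)$ matrix whose entries are non-zero in every position forced by the non-membership conditions $p_0\notin D_{i,j,\varepsilon}$. An arbitrary $q=[\Sigma_q]$ in the same complement can then be written as $b\cdot p_0$ for a unique $b\in B_2$ by directly solving column by column, exactly as in \eqref{orbitp_0inG(r,n)1} and its refinement in Lemma \ref{G(1,n)2 spherical}; this proves that $\G(r,2r+2)_2$ is spherical.

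Finally, I would compute the effective cone. By Proposition \ref{effcone spherical}, $\Eff(\G(r,2r+2)_2)$ is generated by $E_1,E_2$ and all color classes $H-iE_1-jE_2$. The identities
$$H-iE_1-jE_2=\tfrac{i-1}{r}\bigl(H-(r+1)E_1-E_2\bigr)+\tfrac{r+1-i}{r}\bigl(H-E_1-(r+1)E_2\bigr),\qquad i+j=r+2,$$
$$H-iE_1-jE_2=E_1+\bigl(H-(i+1)E_1-jE_2\bigr),\qquad i+j=r+1,$$
reduce every color class into $\cone(E_1,E_2,H-(r+1)E_1-E_2,H-E_1-(r+1)E_2)$, and the $3\times 3$ determinants formed by any three of these four vectors in the basis $(H,E_1,E_2)$ all equal $\pm 1$ or $\pm r$, so the four generators are extremal in $\Pic(\G(r,2r+2)_2)\otimes\R\cong\R^3$. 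This gives the stated description of $\Eff$. The main obstacle is the orbit computation in the middle step: the explicit matrix identity $q=b\cdot p_0$ requires careful bookkeeping for the three blocks $V_1,V_2,V_3$ and the additional non-vanishing condition coming from the unique color of type $\Gamma_{i,j,0}$ (which has no analogue in $\G(r,2r+1)_2$), but the argument is entirely parallel to Lemma \ref{G(1,n)2 spherical} and should cause no new difficulty.
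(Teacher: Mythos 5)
Your proposal is correct and follows essentially the same route as the paper: same choice of $p_1,p_2$ and of the block Borel $B_2$, the same list of $B_2$-invariant $(r+1)$-planes $\Gamma_{i,j,\varepsilon}$ with $i+j+\varepsilon=r+2$ giving colors of class $H-iE_1-jE_2$, the same base point and orbit argument, and the same convex-combination identity reducing the interior colors to $H-(r+1)E_1-E_2$ and $H-E_1-(r+1)E_2$. The one step you defer — exhibiting $b\in B_2$ with $q=b\cdot p_0$ — is carried out in the paper exactly as you predict, by intersecting $\Sigma_q$ with the codimension-$r$ invariant subspaces $\Gamma_j'$ and reading off the three blocks of $b$ from the resulting matrix $[d\ e\ f]$, so no new difficulty arises there.
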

\begin{proof}
We may suppose that 
$$p_1=[\left\langle e_0,e_1,\dots,e_r \right\rangle],
p_2=[\left\langle e_{r+1},e_{r+2},\dots,e_{2r+1} \right\rangle]\mbox{ and }$$
$$
B_2\!=\!\left\{
\begin{pmatrix}
	A_1 & 0  & 0  \\
	0  & A_2 & 0  \\
  0  & 0   & A_3\\	
\end{pmatrix}\right\},
\mbox{ where } A_3\in \C^* ,A_i=\begin{bmatrix}
a_{0,0}^i & \ldots & a_{0,r}^i	\\
          & \ddots & \vdots   		\\
          &        & a_{r,r}^i
\end{bmatrix}
\mbox{ for } i=1,2.$$
Consider the following linear supspaces of 
$\P^{2r+2}:$
$$\begin{cases}
\Gamma_{0, r+1}=\left\langle e_{r+1},\dots, e_{2r+2}\right\rangle \\
\Gamma_{1,r}=\left\langle e_0,e_{r+1},\dots, e_{2r},e_{2r+2}\right\rangle \\
\vdots\\
\Gamma_{r+1, 0}=\left\langle e_0,\dots,e_r,e_{2r+2}\right\rangle 
\end{cases}
$$
$$\begin{cases}
\Gamma_{1, r+1}=\left\langle e_0,e_{r+1},\dots,e_{2r+1}\right\rangle \\
\Gamma_{2,r}=\left\langle e_0,e_1,e_{r+1},\dots,e_{2r}\right\rangle \\
\vdots\\
\Gamma_{r+1, 1}=\left\langle e_0,\dots,e_{r+1}\right\rangle 
\end{cases}
$$
and 
$$\begin{cases}
\Gamma_0'=\left\langle e_0,e_{r+1},\dots, e_{2r+2}\right\rangle \\
\Gamma_1'=\left\langle e_0,e_1,e_{r+1},\dots, 
e_{2r},e_{2r+2}\right\rangle \\
\vdots\\
\Gamma_{r}'=\left\langle e_0,\dots,e_r,e_{r+1},e_{2r+2}\right\rangle 
\end{cases}
$$
For every $i,j$ we have
$\Gamma_{i,j}\cong \P^{r+1}$ and
$\Gamma_j'\cong \P^{r+2},$
thus using Lemma \ref{divinG(r,n)} we can define divisors
$$D_{i,j}:=\{[\Sigma]\in \G(r,2r+2): \Sigma \cap \Gamma_{i,j} \neq \varnothing \}$$
with class $H-iE_1-jE_2$ which are colors.
Consider the point 
$$p_0\!=\!\begin{pmatrix}
1  &  &  &&      && 1   & 1 \\
  & \ddots&  &&&\rddots&   & \vdots \\
 &  & 1 && 1     &    && 1 
\end{pmatrix}=[Id\ \ \tilde{Id} \ 1]\in \G(r,2r+2)$$
and note that 
$$p_0\notin \bigcup_{i,j}D_{i,j}.$$
Observing that for each $1\leq k\leq r$ we have
$$H-(r+2-k)E_1-kE_2=
\dfrac{(D_{r+1,1})(r+1-k)+(D_{1,r+1})(k-1)}{r}$$
to conclude the proof it is enough to prove that
\begin{equation}\label{orbitp_0inG(r,2r+2)2}
B_2\cdot  p_0=\G(r,n)\backslash\bigcup_{i,j}D_{i,j}.
\end{equation}
Now let 
$q\in \G(r,n)\backslash\bigcup_{i,j}D_{i,j}$
and consider
$$w_j\in \Gamma_j'\cap \Sigma_q\subset \P^{2r+2},
j=0,\dots, r,$$
where $\Sigma_q$ corresponds to $q.$
Then we may write
\begin{equation*}\label{G(r,2r+2)2 Borbit}
\begin{bmatrix}
w_0\\ w_1\\ \vdots \\w_r
\end{bmatrix}=[d\  e\ f]=
\begin{bmatrix}
d_{0,0} &             &           &   & e_{0,r+1} & &e_{0,2r} & e_{0,2r+1}&f_0\\
d_{1,0} &  d_{1,1}     &           &   & e_{1,r+1}    & &e_{1,2r} & &f_1\\
\vdots   &  &    \ddots       &    &\vdots      & \rddots &&&\vdots\\
d_{r,0} & \ldots & \dots& d_{r,r} & e_{r,r+1}   & &   &  &f_{2r+2}
\end{bmatrix}
\end{equation*}
for some complex numbers $d_{ij},e_{ij},f_j.$
Note that 
$$d_{jj}=0\Rightarrow w_j\in \Gamma_{j,r+1-j}\Rightarrow q\in D_{j,r+1-j}$$
and therefore $d_{jj}\neq 0$ for $j=0,\dots,r,$ 
and thus 
$\Sigma_q=\left\langle w_0,w_1,\dots,w_r \right\rangle.$
Similarly the $e_{ij}'s$ on the diagonal are also non-zero because
$$e_{j, 2r+1-j}=0\Rightarrow w_j\in \Gamma_{j+1,r-j}\Rightarrow q\in D_{j+1,r-j},$$
and the $f_j$ are also non-zero because
$$f_{j}=0\Rightarrow w_j\in \Gamma_{j+1,r+1-j}\Rightarrow q\in D_{j+1,r+1-j}.$$
Therefore we may suppose that $f_0=\dots=f_r=f\neq 0$
and then $q=b\cdot p_0$ where $b\in B_2$ is as follows
$$b\!=\!
\left[
\begin{tabular}{ccc}
$d^{T}$ & 0 & 0\\
0 & $\widetilde e$ & 0\\
0 & 0 & $f$
\end{tabular}
\right]
\mbox{ and }
\widetilde e=
\begin{bmatrix}
e_{r,r+1} & \dots & e_{0,r+1}\\
&\ddots &\\
&& e_{0,2r+1}
\end{bmatrix}.
$$
\end{proof}

\section{Non-spherical blow-ups of Grassmannians}

In this section we study how many points of $\G(r,n)$ we can blow-up and still have a spherical variety.
For the convenience of the reader we recall the following well-known consequence of
Zariski's main theorem.

\begin{Lemma}\label{push}
Let $f:X\rightarrow Y$ be a  proper birational morphism of noetherian integral schemes, and assume that $Y$ is normal. Then $f_{*}\mathcal{O}_X \cong\mathcal{O}_Y$ and $f$ is proper with connected fibers.
\end{Lemma}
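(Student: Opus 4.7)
The plan is to establish the two conclusions in sequence, with the isomorphism $f_{*}\mathcal{O}_X \cong\mathcal{O}_Y$ doing all the real work, after which connectedness of fibers is a formal consequence of Stein factorization.

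First I would show that $f_{*}\mathcal{O}_X$ is a coherent $\mathcal{O}_Y$-algebra. Coherence follows because $f$ is proper and $Y$ is noetherian, so by the finiteness theorem for proper morphisms, $f_*\mathcal O_X$ is a coherent $\mathcal O_Y$-module (and it is automatically a sheaf of $\mathcal O_Y$-algebras). The key observation is then that, since $f$ is birational and $X,Y$ are integral, the induced map of function fields $K(Y)\hookrightarrow K(X)$ is an isomorphism, and the natural inclusion $\mathcal{O}_Y\hookrightarrow f_*\mathcal{O}_X$ becomes an equality at the generic point. Hence $f_*\mathcal O_X$ sits inside the constant sheaf $K(Y)$ on $Y$ as a coherent $\mathcal O_Y$-submodule.

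Next I would exploit normality. Working affine-locally on $Y$, say on $\Spec A\subseteq Y$ with $A$ a normal noetherian domain, the sections of $f_*\mathcal O_X$ over $\Spec A$ form a finitely generated $A$-module $B$ with $A\subseteq B\subseteq \Frac(A)$. Since $B$ is finite over $A$, every element of $B$ is integral over $A$; since $A$ is integrally closed in $\Frac(A)$, we conclude $B=A$. Sheafifying gives $f_*\mathcal O_X\cong \mathcal O_Y$.

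Finally, for the connected fibers statement, I would invoke the Stein factorization $f\colon X\xrightarrow{h} Y'\xrightarrow{g} Y$, where $Y'=\underline{\Spec}_Y(f_*\mathcal O_X)$, $h$ has connected fibers, and $g$ is finite. The isomorphism $f_*\mathcal O_X\cong \mathcal O_Y$ forces $g$ to be an isomorphism, so $f=h$ has connected fibers. Properness of $f$ is given. The only delicate point — and arguably the main obstacle if one wishes to be entirely self-contained — is the coherence of $f_*\mathcal O_X$, i.e., Grothendieck's finiteness theorem for proper morphisms; everything else is elementary commutative algebra using normality plus the Stein factorization form of Zariski's main theorem, both of which we may freely cite.
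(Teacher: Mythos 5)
Your proof is correct. It is in essence the same argument as the paper's, but packaged in the reverse order: the paper first forms the Stein factorization $X\to Z\to Y$, checks that $Z$ is integral and that the finite part $Z\to Y$ is birational, and then quotes Zariski's main theorem (finite birational onto normal implies isomorphism) to conclude $Z\cong Y$, which simultaneously gives $f_*\mathcal{O}_X\cong\mathcal{O}_Y$ and connected fibers. You instead compute $f_*\mathcal{O}_X$ directly — coherence from properness, the embedding into the constant sheaf $K(Y)$ from birationality, and then the affine-local argument that a finite $A$-subalgebra of $\operatorname{Frac}(A)$ equals $A$ when $A$ is normal — and only afterwards invoke Stein factorization to read off connectedness of fibers. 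Your integral-closure computation is exactly the content of the ZMT step in the paper, made explicit; what your version buys is self-containedness (no appeal to ZMT beyond the Stein factorization itself), at the cost of a slightly longer write-up. One point worth making explicit if you write this up: to see that $f_*\mathcal{O}_X$ lands in the constant sheaf $K(Y)$ you should note that $f$ is dominant with $K(X)\cong K(Y)$ and that every nonempty open preimage $f^{-1}(V)$ is connected because $X$ is irreducible; this is routine but is the place where integrality of $X$ is actually used.
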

\begin{proof}
Let us consider the Stein factorization 
  \[
  \begin{tikzpicture}[xscale=1.5,yscale=-1.2]
    \node (A0_0) at (0, 0) {$X$};
    \node (A0_2) at (2, 0) {$Z$};
    \node (A1_1) at (1, 1) {$Y$};
    \path (A0_0) edge [->]node [auto] {$\scriptstyle{g}$} (A0_2);
    \path (A0_2) edge [->]node [auto] {$\scriptstyle{h}$} (A1_1);
    \path (A0_0) edge [->,swap]node [auto] {$\scriptstyle{f}$} (A1_1);
  \end{tikzpicture}
  \]
of $f$, where $\mathcal{O}_Z\cong g_{*}\mathcal{O}_X$, $g$ is proper with connected fibers, and $h$ is finite. Since $X$ is reduced and $Z = \Spec_Y(f_{*}\mathcal{O}_X)$ we have that $Z$ is reduced as well. Furthermore $X$ irreducible implies that $Z = g(X)$ is irreducible. Therefore, $Z$ is integral. Now, since $f$ is birational $h$ is birational as well. We get that $h:Z\rightarrow Y$ is a a birational finite morphisms between Noetherian
integral schemes. Now, since $Y$ is normal by Zariski's main theorem we conclude that $h$ is an isomorphism. 
\end{proof}

Thanks to a result due to M. Brion \cite{Br11} in the algebraic setting, and to A. Blanchard \cite{Bl} in the analytic setting we get the following result on the connected component of the identity of the the automorphism group of a blow-up.

\begin{Theorem}\label{brion}\cite[Proposition 2.1]{Br11}
Let $G$ be a connected group scheme, $X$ a scheme with an action of $G$, and $f:X\rightarrow Y$ a proper morphism such that $f_{*}\mathcal{O}_X \cong\mathcal{O}_Y$. Then there is a unique action of $G$ on $Y$ such that $f$ is equivariant.
\end{Theorem}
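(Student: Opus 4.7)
I plan to construct the $G$-action on $Y$ as the unique factorization $\tau:G\times Y\to Y$ of $f\circ \sigma_X:G\times X\to Y$ through $\operatorname{id}_G\times f:G\times X\to G\times Y$, where $\sigma_X:G\times X\to X$ is the given action. The morphism $f$ is surjective: if some non-empty open $U\subseteq Y$ were disjoint from $f(X)$, then $(f_*\mathcal{O}_X)(U)=0$ while $\mathcal{O}_Y(U)\neq 0$, contradicting $f_*\mathcal{O}_X\cong\mathcal{O}_Y$; consequently $\operatorname{id}_G\times f$ is surjective as well. Combined with the descent statement below, this will take care of uniqueness of $\tau$ and allow us to transport the associativity and unit identities from $\sigma_X$ to $\tau$ via the analogous base change $\operatorname{id}_{G\times G}\times f$.

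The main ingredient is that $\operatorname{id}_G\times f$ inherits the key hypothesis: by flatness of $G$ over the base and flat base change along $\operatorname{pr}_2:G\times Y\to Y$, we get
\[
(\operatorname{id}_G\times f)_*\mathcal{O}_{G\times X}\;\cong\; \mathcal{O}_{G\times Y}.
\]
The categorical consequence, in the spirit of Stein factorization, is that a morphism $\phi:G\times X\to Y$ (with $Y$ separated) descends along $\operatorname{id}_G\times f$ if and only if $\phi$ is scheme-theoretically constant on every fibre $\{g\}\times f^{-1}(y)$. So the whole task reduces to verifying this fibre-constancy for $\phi=f\circ\sigma_X$.

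For this I would fix $y\in Y$, consider
\[
\Phi_y:\; G\times f^{-1}(y)\xrightarrow{\operatorname{id}_G\times \iota} G\times X\xrightarrow{\sigma_X} X\xrightarrow{f} Y,
\]
and study the locus $S_y=\{g\in G:\Phi_y|_{\{g\}\times f^{-1}(y)}\text{ is constant}\}$. It contains the identity $e$ because $\sigma_X(e,-)=\operatorname{id}_X$ and so $\Phi_y(e,-)$ is the constant value $y$. It is closed in $G$: it is cut out by the equality of the two natural morphisms from $G\times f^{-1}(y)\times_Y f^{-1}(y)$, which is a closed condition because $Y$ is separated. And it is open by the rigidity lemma for proper families with trivial $H^0$ of the structure sheaf on fibres, applied to $\operatorname{pr}_G:G\times f^{-1}(y)\to G$ together with $\Phi_y$; here the needed input is precisely that $f^{-1}(y)$ is proper and satisfies $H^0(f^{-1}(y),\mathcal{O})=\kappa(y)$, which is exactly what $f_*\mathcal{O}_X\cong\mathcal{O}_Y$ gives. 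Connectedness of $G$ then forces $S_y=G$.

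The main obstacle will be turning this pointwise conclusion into the scheme-theoretic factorization over $G\times Y$: one really wants the rigidity argument to be run relatively over $Y$ — i.e.\ applied to $\operatorname{id}_G\times f$ itself rather than fibre by fibre — which requires some cohomology-and-base-change control on $f$ so that the "constant on fibres" locus behaves well in families. Once this is done, $\tau$ exists and $G$-equivariance of $f$ is built into the construction; the associativity $\tau\circ(m_G\times \operatorname{id}_Y)=\tau\circ(\operatorname{id}_G\times \tau)$ and unit identity $\tau\circ(e\times\operatorname{id}_Y)=\operatorname{id}_Y$ are inherited from the corresponding identities for $\sigma_X$ by pulling them back along the surjections $\operatorname{id}_G\times f$ and $\operatorname{id}_{G\times G}\times f$.
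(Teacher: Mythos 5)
The paper offers no proof of this statement: it is imported verbatim from Brion's article \cite{Br11} and used as a black box, so there is nothing in the text to compare your argument against. What you have written is, in outline, Brion's own proof: descend $f\circ\sigma_X$ along $\mathrm{id}_G\times f$ (whose pushforward of $\mathcal{O}$ is again $\mathcal{O}$ by flat base change) via the factorization criterion for proper morphisms with $g_*\mathcal{O}\cong\mathcal{O}$, and establish the required fibrewise constancy by an open--closed--nonempty argument on the connected group $G$, with openness supplied by rigidity. The architecture is correct.

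Two of your justifications need repair, and your final worry can be dismissed. First, $f_*\mathcal{O}_X\cong\mathcal{O}_Y$ does \emph{not} give $H^0(f^{-1}(y),\mathcal{O})=\kappa(y)$; by the theorem on formal functions it gives that every fibre is nonempty and connected, and $H^0$ of a (possibly non-reduced) fibre can be a strictly larger finite local $\kappa(y)$-algebra. Connectedness is nevertheless enough for your openness step: after trapping $\Phi_y$ into an affine chart over an open set of $g$'s, its restriction to $\{g\}\times f^{-1}(y)$ factors through $\Spec$ of the $H^0$ of that fibre, which is a one-point scheme because a connected proper scheme over a field has Artinian local $H^0$. (Also, closedness of $S_y$ does not follow merely from the equalizer in $G\times f^{-1}(y)\times f^{-1}(y)$ being closed; you additionally need the projection to $G$ to be an open map, which holds since it is flat and of finite type.) Second, the ``main obstacle'' is not one: the descent criterion along $v=\mathrm{id}_G\times f$ requires only \emph{set-theoretic} constancy on fibres. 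Take the scheme-theoretic image $\Gamma$ of $(v,\phi)$ in $(G\times Y)\times Y$; the projection $\pi:\Gamma\to G\times Y$ is separated and its composition with the surjection $G\times X\to\Gamma$ is the proper morphism $v$, so $\pi$ is proper; it is bijective by fibrewise constancy and surjectivity of $v$; and $v_*\mathcal{O}\cong\mathcal{O}$ forces $\pi_*\mathcal{O}_\Gamma\cong\mathcal{O}_{G\times Y}$, so this finite bijection is an isomorphism and $\tau$ is the second projection composed with $\pi^{-1}$. No relative rigidity over $Y$ and no cohomology-and-base-change are needed.
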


\begin{Proposition}\label{automorphism prop}
Let $X$ be a noetherian integral normal scheme, $Z\subset X$ a closed subscheme of codimension greater or equal than two, and $X_{Z}:= Bl_{Z}X$ the blow-up of $X$ along $Z$. Then the connected component of the identity of the automorphism group of $X_Z$ is isomorphic to the connected component of the identity of the subgroup $\Aut(X,Z)\subseteq \Aut(X)$ of automorphisms of $X$ stabilizing $Z$, that is
$$\Aut(X_Z)^{o}\cong \Aut(X,Z)^{o}$$ 
\end{Proposition}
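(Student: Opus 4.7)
The plan is to construct two mutually inverse homomorphisms of algebraic groups between $\Aut(X,Z)^o$ and $\Aut(X_Z)^o$, using the universal property of the blow-up in one direction and the Blanchard--Brion descent result (Theorem \ref{brion}) in the other.

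First I would define $\phi\colon \Aut(X,Z)\to \Aut(X_Z)$. Any $\sigma\in \Aut(X,Z)$ satisfies $\sigma^{-1}(\mathcal{I}_Z)\cdot\mathcal{O}_X=\mathcal{I}_Z$ because it preserves $Z$ scheme-theoretically, so the universal property of $\pi\colon X_Z\to X$ as the blow-up along $\mathcal{I}_Z$ produces a unique morphism $\tilde\sigma\colon X_Z\to X_Z$ lifting $\sigma$, i.e.\ with $\pi\circ\tilde\sigma=\sigma\circ\pi$. The uniqueness of the lift makes $\sigma\mapsto \tilde\sigma$ a morphism of group schemes, which restricts to $\phi\colon \Aut(X,Z)^o\to \Aut(X_Z)^o$.

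In the other direction, set $G:=\Aut(X_Z)^o$. Since $X$ is noetherian, integral, normal, and $\pi$ is proper birational, Lemma \ref{push} yields $\pi_*\mathcal{O}_{X_Z}\cong \mathcal{O}_X$. Theorem \ref{brion} then produces a unique action of the connected group scheme $G$ on $X$ making $\pi$ equivariant; call the resulting homomorphism $\psi\colon G\to \Aut(X)$. To see that $\psi$ lands in $\Aut(X,Z)^o$, I would argue that the exceptional divisor $E=\pi^{-1}(Z)$ is $G$-stable: $E$ has finitely many irreducible components, and the map $G\to \mathrm{Hilb}(X_Z)$, $g\mapsto g(E)$, is a morphism from a connected scheme to a locally closed subset of the Hilbert scheme whose connected components are indexed by Hilbert polynomial, so it must be constant, giving $g(E)=E$. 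Pushing forward by $\pi$ gives $\psi(g)(Z)=Z$ set-theoretically, and since the ideal sheaf $\mathcal{I}_Z$ is recovered from $\pi$ (either via $\pi_*\mathcal{O}_{X_Z}(-E)=\mathcal{I}_Z$ or via the Rees algebra presentation of $X_Z$), the equivariance of $\pi$ forces $\psi(g)^*\mathcal{I}_Z=\mathcal{I}_Z$, so $\psi(g)\in\Aut(X,Z)$. Connectedness of $G$ lands the image inside $\Aut(X,Z)^o$.

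Finally, I would check that $\phi$ and $\psi$ are mutually inverse. If $\sigma\in \Aut(X,Z)^o$ and $\tilde\sigma=\phi(\sigma)$, then $\pi\circ\tilde\sigma=\sigma\circ\pi$, so by the uniqueness clause in Theorem \ref{brion} the descended automorphism $\psi(\tilde\sigma)$ equals $\sigma$. Conversely, given $g\in G$ we have $\pi\circ g=\psi(g)\circ\pi$, so $g$ is a lift of $\psi(g)$ to $X_Z$; the universal property of the blow-up makes such a lift unique, hence $\phi(\psi(g))=g$. Both $\phi$ and $\psi$ are morphisms of algebraic groups by construction, so they are mutually inverse isomorphisms.

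The main obstacle I anticipate is Step 3, namely verifying that the identity component of $\Aut(X_Z)$ descends to automorphisms that actually preserve $Z$ as a subscheme (not just set-theoretically). The Hilbert scheme argument handles the reduced case cleanly; for non-reduced $Z$ one must instead exploit the intrinsic description $\mathcal{I}_Z=\pi_*\mathcal{O}_{X_Z}(-E)$, which requires the $G$-stability of $E$ as a Cartier divisor (with multiplicity), and this is precisely what the connectedness-plus-Hilbert-scheme argument provides. Once this is in place the rest is formal manipulation of universal properties.
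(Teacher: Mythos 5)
Your overall architecture matches the paper's: both directions come from Lemma \ref{push} plus Theorem \ref{brion} (descent of the $\Aut(X_Z)^{o}$-action along $\pi$) on one side and the universal property of the blow-up (lifting an automorphism that stabilizes $Z$) on the other, with injectivity/bijectivity extracted from the uniqueness clauses. The one step where you genuinely diverge is the verification that the descended automorphism $\overline{\phi}=\psi(g)$ stabilizes $Z$. The paper does this with a short fiber-dimension argument: if $x\in Z$ but $\overline{\phi}(x)\notin Z$, equivariance makes $\phi$ restrict to an isomorphism from the fiber $F_x$ (positive-dimensional, since $\operatorname{codim}Z\geq 2$ forces the fibers of $E\to Z$ to have dimension at least one) onto the fiber $F_{\overline{\phi}(x)}$, which is a single point --- contradiction. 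Your route instead shows the exceptional divisor $E$ is $G$-stable via a map $G\to\mathrm{Hilb}(X_Z)$ and then pushes forward. That works, but it is heavier than needed and quietly imports a projectivity hypothesis (the Hilbert scheme does not exist for a general noetherian integral normal scheme, which is the stated setting); a cheaper observation is that $E=\Exc(\pi)$ is intrinsic to $\pi$ and $\pi\circ g=\psi(g)\circ\pi$ already forces $g(E)=E$, with no moduli space required. On the other hand, your remark that one should distinguish set-theoretic from scheme-theoretic stabilization of $Z$ (via $\mathcal{I}_Z=\pi_*\mathcal{O}_{X_Z}(-E)$) is a genuine subtlety that the paper's proof passes over; it is harmless in the paper's application, where $Z$ is a reduced finite set of points, but your version is the more careful one for general $Z$.
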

\begin{proof}
Let $\pi:X_{Z}\to X$ be the blow-up of $X$ along $Z$. By Lemma \ref{push} we have $\pi_{*}\mathcal{O}_{X_{Z}}\cong \mathcal{O}_X$. Therefore, we may apply Theorem \ref{brion} with $f = \pi$ and $G = \Aut(X_Z)^{o}$.\\ 
By Theorem \ref{brion} any automorphism $\phi\in \Aut(X_Z)^{o}$ induces an automorphism $\overline{\phi}\in \Aut(X)$ such the diagram 
\[
  \begin{tikzpicture}[xscale=2.0,yscale=-1.2]
    \node (A0_0) at (0, 0) {$X_Z$};
    \node (A0_1) at (1, 0) {$X_Z$};
    \node (A1_0) at (0, 1) {$X$};
    \node (A1_1) at (1, 1) {$X$};
    \path (A0_0) edge [->]node [auto] {$\scriptstyle{\phi}$} (A0_1);
    \path (A1_0) edge [->]node [auto] {$\scriptstyle{\overline{\phi}}$} (A1_1);
    \path (A0_1) edge [->]node [auto] {$\scriptstyle{\pi}$} (A1_1);
    \path (A0_0) edge [->]node [auto,swap] {$\scriptstyle{\pi}$} (A1_0);
  \end{tikzpicture}
\]
commutes. Let $x\in Z$ be a point such that $\overline{\phi}(x)\notin Z$, and let $F_x, F_{\overline{\phi}(x)}$ be the fibers of $\pi$ over $x$ and $\phi(x)$ respectively. Then $\phi_{|F_x}:F_x\rightarrow F_{\overline{\phi}(x)}$ induces an isomorphism between $F_x$ and $F_{\overline{\phi}(x)}$. On the other hand $F_x$ has positive dimension while $F_{\overline{\phi}(x)}$ is a point. A contradiction. Therefore $\overline{\phi}\in \Aut(X,Z)$.  Furthermore, since $\phi\in \Aut(X)^{o},$ the automorphism $\overline{\phi}$ must lie in $\Aut(X,Z)^{o}$. This yields a morphism of groups 
$$
\begin{array}{cccc}
\chi: &\Aut(X_Z)^{o}& \longrightarrow & \Aut(X,Z)^{o}\\
      & \phi & \longmapsto & \overline{\phi}
\end{array}
$$
If $\overline{\phi} = Id_{X}$ then $\phi$ coincides with the identity on a dense open subset of $X_Z$, hence $\phi = Id_{X_Z}$. Therefore, the morphism $\chi$ is injective. 
Finally, by \cite[Corollary 7.15]{Har} any automorphism of $X$ stabilizing $Z$ lifts to an automorphism of $X_Z$, that is $\chi$ is surjective as well. 
\end{proof}

Given $k$ general points in $\G(r,n)$ 
corresponding to linear subspaces $P_1,\dots,P_k\in \P^n,$ denote by $G_k$ the group 
$$\{g\in GL_{n+1}: gP_1=P_1,\dots,gP_k=P_k\},$$ 
by $U_k$ its unipotent radical, by $G_k^{red}$ the quotient $G_k/U_k,$
and by $B_k$ a Borel subgroup of $G_k^{red}.$
Using Lemma \ref{Bkisenough} below our aim reduces to understand whether the action of $B_k$ on $\G(r,n)$ has a dense orbit or not.

\begin{Lemma}\label{Bkisenough}
$\G(r,n)_k$ is spherical if and only if $B_k$ has a dense orbit. Moreover, $\G(r,n)_k$ spherical implies $\G(r,n)_{k-1}$ spherical.
\end{Lemma}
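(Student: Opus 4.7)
The plan splits into two parts. For the equivalence ``$\G(r,n)_k$ is spherical iff $B_k$ has a dense orbit'', both implications rest on the fact that the connected automorphism group of $\G(r,n)_k$ is, up to a central kernel, the stabilizer group $G_k$, as guaranteed by Proposition \ref{automorphism prop}. The easy direction: if $B_k$ has a dense orbit on $\G(r,n)$, then the blow-down $\alpha\colon \G(r,n)_k\to \G(r,n)$ is a $G_k$-equivariant isomorphism outside the exceptional divisors, and hence pulls this orbit back to a dense $B_k$-orbit on $\G(r,n)_k$. Since $B_k$ is a Borel of the reductive quotient $G_k^{red}$ acting on $\G(r,n)_k$, this exhibits $\G(r,n)_k$ as spherical.

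For the converse, assume $\G(r,n)_k$ is spherical under some connected reductive group $G$ with Borel $B$. By Proposition \ref{automorphism prop} the action of $G$ descends to an action on $\G(r,n)$ stabilizing the set $\{p_1,\dots,p_k\}$; connectedness of $G$ forces it to fix each $p_i$ individually, so (modulo a harmless central kernel) $G$ embeds in $G_k$. Since $G$ is reductive, it is conjugate into a Levi subgroup $L_k\cong G_k^{red}$ of $G_k$, and a Borel of $G$ is conjugate to a subgroup of $B_k$. Thus the open $B$-orbit is contained in a single $B_k$-orbit, which is therefore itself open; pushing forward by $\alpha$ gives a dense $B_k$-orbit on $\G(r,n)$.

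For the ``moreover'' part, suppose $\G(r,n)_k$ is spherical. By the equivalence just proved, $B_k$ has a dense orbit on $\G(r,n)$. The inclusion $G_k\subset G_{k-1}$ (any automorphism fixing $p_1,\dots,p_k$ fixes $p_1,\dots,p_{k-1}$) shows that the Levi $L_k$ is a reductive subgroup of $G_{k-1}$ and hence, up to conjugation in $G_{k-1}$, is contained in a Levi $L_{k-1}\cong G_{k-1}^{red}$. The connected solvable group $B_k$ then lies in some Borel of $L_{k-1}$, which after a further conjugation is $B_{k-1}$. Any $B_{k-1}$-orbit containing the dense $B_k$-orbit is itself dense, and since algebraic group orbits are locally closed, it is in fact open. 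Applying the first part in the reverse direction gives that $\G(r,n)_{k-1}$ is spherical.

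The main delicate point is the converse of the first equivalence: one must verify that an arbitrary reductive group acting on $\G(r,n)_k$ can be conjugated into $G_k^{red}$ with its Borel landing inside $B_k$. This is where one invokes the structural facts that every reductive subgroup of a linear algebraic group sits in a Levi subgroup, and that Borel subgroups of reductive subgroups extend to Borels of the ambient reductive group up to conjugation; the rest of the argument, including the descent of actions via Blanchard--Brion (Theorem \ref{brion}) and the openness of dense locally closed orbits, is formal.
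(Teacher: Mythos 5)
Your proposal is correct and follows essentially the same route as the paper: the forward direction is immediate from the definition, and the converse uses Proposition \ref{automorphism prop} (via Blanchard--Brion) to descend the reductive group to $\Aut^{\circ}(\G(r,n),p_1,\dots,p_k)$ and then conjugates it (and its Borel) into $G_k^{red}$ and $B_k$; the only ingredient you leave implicit is Chow's theorem that $\Aut^{\circ}(\G(r,n))\cong\P(GL_{n+1})$, which the paper invokes to identify that stabilizer with $G_k$. Your fleshed-out argument for the ``moreover'' part (using $G_k\subset G_{k-1}$ to put $B_k$ inside a conjugate of $B_{k-1}$) is exactly the intended content of the paper's terse final sentence.
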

\begin{proof}
If $B_k$ has a dense orbit then $\G(r,n)_k$ is spherical by definition.
Assume now that $\G(r,n)_k$ is spherical.
Then there is a reductive group 
$$G\subset \Aut^\circ(\G(r,n)_k)$$
with a Borel subgroup $B$ having a dense orbit.
By Proposition \ref{automorphism prop}
$$\Aut^\circ(\G(r,n)_k)\cong
\Aut^\circ(\G(r,n),p_1,\dots,p_k),$$
where $p_1,\dots,p_k\in \G(r,n)$ are general points.
Now, Chow proved in \cite{Ch49} that 
$$\Aut^\circ(\G(r,n))\cong \P(GL_{n+1}).$$
Therefore
$$\Aut^\circ(\G(r,n)_k)\cong
(\P(GL_{n+1}),p_1,\dots,p_k).$$
Since $G$ is a reductive (affine) algebraic group, we may assume that 
$$G\subset (GL_{n+1},p_1,\dots,p_k)=G_k.$$
Thus, after a conjugation if necessary, we have $B\subset B_k.$ Since by hypothesis $B$ has a dense orbit in 
$\G(r,n)_k$ then $B_k$ has also a dense orbit in $\G(r,n).$
The last statement follows from Proposition \ref{automorphism prop}. 
\end{proof}

When $k\leq \left\lfloor \frac{n+1}{r+1} \right\rfloor$
we may choose the points as
$$p_1=[\left\langle e_0,\dots,e_r \right\rangle],\dots,p_k=[\left\langle e_{k(r+1)},\dots,e_{(k+1)(r+1)-1}\right\rangle]$$
and the corresponding Borel subgroup $B_k$ with upper triangular blocks.

\begin{Lemma}\label{dimcount}
Define $f(r,n):=\dim(B_2)-\dim(\G(r,n)).$ Then
$$f(r,n)=\dfrac{(n-(2r+2))(n-(4r+1))}{2}.$$
In particular, if $2r+2<n<4r+1$ then $\G(r,n)_2$ is not spherical.
\end{Lemma}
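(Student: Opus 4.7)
The plan is to make the dimension count completely explicit by choosing a basis of $\C^{n+1}$ adapted to the two general points, and then verify the stated polynomial identity.

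First I would fix two general points $p_1,p_2\in\G(r,n)$ corresponding to $(r+1)$-dimensional subspaces $V_1,V_2\subset\C^{n+1}$. Since $n\geq 2r+1$ and the points are general, $V_1\cap V_2=0$, so I can pick a complementary subspace $W$ of dimension $n-2r-1$, and a basis $e_0,\dots,e_n$ adapted to $V_1\oplus V_2\oplus W$. In this basis the stabilizer $G_2\subset SL_{n+1}$ consists of block matrices of the shape
$$\begin{pmatrix} A_1 & 0 & B_1 \\ 0 & A_2 & B_2 \\ 0 & 0 & C \end{pmatrix},$$
with $A_i\in GL_{r+1}$, $C\in GL_{n-2r-1}$, the $B_i$ arbitrary, and $\det(A_1)\det(A_2)\det(C)=1$. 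The unipotent radical $U_2$ is cut out by $A_1=A_2=I$ and $C=I$, and $G_2^{\mathrm{red}}$ is the block-diagonal part.

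Next I would take the Borel subgroup $B_2\subset G_2^{\mathrm{red}}$ obtained by requiring $A_1, A_2, C$ to be upper triangular in each block (mimicking the choices already made in the proofs of Lemmas \ref{G(r,n)1 spherical} and \ref{G(r,2r+2)2 spherical}). Counting upper-triangular entries block by block and subtracting the single determinant-one relation gives
$$\dim(B_2)=2\binom{r+2}{2}+\binom{n-2r}{2}-1=(r+1)(r+2)+\frac{(n-2r-1)(n-2r)}{2}-1.$$
Combining this with $\dim\G(r,n)=(r+1)(n-r)$ and expanding yields
$$f(r,n)=\frac{n^2-(6r+3)n+8r^2+10r+2}{2},$$
which factors as $\tfrac{1}{2}(n-2r-2)(n-4r-1)$, giving the claimed formula.

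For the last assertion, note that when $2r+2<n<4r+1$ both factors have opposite signs, so $f(r,n)<0$, i.e. $\dim(B_2)<\dim\G(r,n)$. Since every $B_2$-orbit has dimension at most $\dim(B_2)$, no orbit can be dense in $\G(r,n)$, and Lemma \ref{Bkisenough} then implies that $\G(r,n)_2$ is not spherical. I do not expect any real obstacle in this argument: the only delicate point is keeping track of the single determinant-one relation coming from $SL_{n+1}$ when computing $\dim(B_2)$, so that the factorization of the quadratic in $n$ comes out cleanly.
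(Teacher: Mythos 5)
Your proposal is correct and follows essentially the same route as the paper: the paper's proof is exactly the algebraic identity $\bigl[(r+1)(r+2)+\tfrac{(n-2r-1)(n-2r)}{2}-1\bigr]-(r+1)(n-r)=\tfrac{(n-2r-2)(n-4r-1)}{2}$, combined with Lemma \ref{Bkisenough}. Your block-matrix description of $G_2$, $U_2$ and $B_2$ just makes explicit the dimension count that the paper takes as given from the setup of the earlier lemmas.
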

\begin{proof}
This is just a dimension count. 
\begin{align*}
&\dim(B_2)-\dim(\G(r,n))=\\
&=\left[(r+1)(r+2)+\dfrac{(n-2r-1)(n-2r)}{2}-1\right]-(r+1)(n-r)\\
&=(r+1)(-n+2r+2)+\dfrac{(n-2r-2)(n-2r)}{2}+\dfrac{n-2r}{2}-1\\
&=\dfrac{(n-2r-2)(-2(r+1)+n-2r+1}{2}\\
&=\dfrac{(n-2r-2)(n-4r-1)}{2}.
\end{align*}
The last claim follows from Lemma \ref{Bkisenough}.
\end{proof}

If we take $r=1$ in Lemma \ref{dimcount} we get 
$$f(1,n)=\dfrac{(n-4)(n-5)}{2}\geq 0 \quad \forall \ n\geq 3$$
and we have seen in Lemma \ref{G(1,n)2 spherical} that in this case $\G(1,n)_2$ is spherical.

In order to determine, for some fixed value of 
$r,$ if $\G(r,n)_2$ is spherical we begin with the smallest possible $n = 2r+1$ and keep increasing $n$.
When $n = 2r+1$ the dimension of the Borel subgruop $B_2$ is greater than the dimension of $\G(r,2r+1)$, and therefore we may have a dense orbit. Indeed, by Lemma \ref{G(r,2r+1)2 spherical} this is the case.

When we consider the next $n,$ i.e. $n=2r+2,$ then $\dim(B_2)=\dim(\G(r,n))$ and $\G(r,n)_2$
may be spherical. Again this is the case as we saw in Lemma \ref{G(r,2r+2)2 spherical}.
And then, there is the gap $2r+2<n<4r+1$ where $\G(r,n)_2$ can not be spherical.
Note that in the case $r=1$ this gap does not exist.
To conclude in the case $k = 2$ it is enough to apply Lemma \ref{G(r,n)2 spherical} below.

\begin{Lemma}\label{G(r,n)2 spherical}
$\G(r,n)_2$ is not spherical for $r\geq 2,n\geq 4r+1.$
\end{Lemma}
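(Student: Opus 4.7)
By Lemma~\ref{Bkisenough}, it suffices to show that the Borel $B_2\subset G_2^{red}$ has no dense orbit on $\G(r,n)$. Decompose $\mathbb{C}^{n+1}=V_1\oplus V_2\oplus W$ with $V_i$ corresponding to $p_i$ and $\dim V_i=R:=r+1$, $\dim W=N:=n-2r-1$; the hypothesis $n\geq 4r+1$ gives $N\geq R$. Accordingly $G_2^{red}=S(GL(V_1)\times GL(V_2)\times GL(W))$ and $B_2=B_1\times B_2'\times B_3$ modulo the determinant relation. A generic $V\in\G(r,n)$ projects isomorphically onto both $V_1$ and $V_2$, hence is the graph of a pair $(\phi,\psi)$ with $\phi\in\Hom(V_1,V_2)$ generically invertible and $\psi\in\Hom(V_1,W)$, and the action reads $(g_1,g_2,g_3)\cdot(\phi,\psi)=(g_2\phi g_1^{-1},g_3\psi g_1^{-1})$.

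Using Bruhat decomposition, I would first normalize $\phi=w_0$ (the longest Weyl representative in $GL(V_1,V_2)$), whose stabilizer in $B_1\times B_2'$ is a maximal torus $T$. The residual action on $\psi$ is then that of $T\times B_3$ by $\psi\mapsto g_3\psi s^{-1}$ where $s=w_0 t w_0^{-1}$.

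The core step will be to compute the stabilizer of a generic $\psi$ under this residual action. The equation $g_3\psi=\psi s$ demands that each column of $\psi$ be an eigenvector of $g_3$ with eigenvalue $s_j$. Reading it row by row from the bottom, the last row of $\psi$ is generically nonzero, forcing $s_1=\cdots=s_R=\lambda:=(g_3)_{NN}$, a single scalar. Proceeding upward, row $i$ yields $R$ linear equations in the $N-i+1$ unknowns $(a_i-\lambda,g_{i,i+1},\ldots,g_{i,N})$, whose coefficient matrix is a generic $R\times(N-i+1)$ submatrix of $\psi$. For $i\geq N-R+1$ the system has full column rank and forces all these unknowns to vanish; for $i\leq N-R$ its kernel has dimension $N-R+1-i$. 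Summing, and taking the determinant constraint into account, the generic $B_2$-stabilizer has dimension $\binom{N-R+1}{2}=\binom{n-3r-1}{2}$.

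A direct binomial identity gives
$$
\binom{n-3r-1}{2}-\frac{(n-2r-2)(n-4r-1)}{2}=\binom{r}{2},
$$
so a generic $B_2$-orbit has codimension $\binom{r}{2}$ in $\G(r,n)$, which is $\geq 1$ whenever $r\geq 2$. Hence $B_2$ has no dense orbit and $\G(r,n)_2$ is not spherical. The main technical obstacle is the row-by-row rank analysis, which reduces to the elementary fact that a generic $R\times m$ matrix has rank $\min(R,m)$; the only subtlety is to check that after the normalization $\phi=w_0$ the remaining coordinates on $\psi$ really are generic, which follows because the $T$-action on the parameter space $\Hom(V_1,W)$ does not distinguish any proper subvariety.
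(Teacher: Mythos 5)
Your proof is correct, and while the underlying strategy is the same as the paper's --- compute the dimension of the stabilizer in $B_2$ of a general point and check that the general orbit has positive codimension --- your route to that computation is genuinely cleaner. The paper works in explicit coordinates: it chooses auxiliary codimension-$r$ linear spaces $\Gamma_0',\dots,\Gamma_r'$, normalizes a basis $w_0,\dots,w_r$ of the general $(r+1)$-plane by intersecting with them, and then grinds through the resulting linear systems to force the first two blocks of $b\in B_2$ to be the identity before arriving at the residual systems in the third block. You replace this normalization by the graph description $V=\mathrm{graph}(\phi,\psi)$ together with the Bruhat decomposition: putting $\phi=w_0$ collapses the first two Borel factors to a single torus in one stroke, and your remaining row-by-row rank analysis of $g_3\psi=\psi s$ is exactly the paper's residual computation (your $N-R=n-3r-2$ is the paper's $l-r$, both stabilizer dimensions equal $\binom{n-3r-1}{2}$, and the resulting orbit codimension $\binom{r}{2}$ agrees with the paper's $(r-1)\frac{r}{2}$). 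The one point you should tighten is the closing genericity remark: after writing $\phi=b_2w_0b_1^{-1}$, the normalization replaces $\psi$ by $\psi b_1$, so what is needed is that right translation by the invertible Bruhat factor $b_1$ (which depends only on $\phi$) carries a dense open set of $\psi$'s to a dense open set --- which is immediate --- rather than an appeal to the $T$-action; as phrased your justification points at the wrong group, though the conclusion is of course true and the repair is one line.
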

\begin{proof}
We to proceed as in the proof of Lemma \ref{G(r,2r+2)2 spherical} with the following variations. Here we have again $B_2$ with three blocks,
but the last one is bigger:
$$A_3=\begin{bmatrix}
a_{0,0}^3 & a_{0,1}^3 & \ldots & a_{0,l}^3	\\
          & a_{1,1}^3 & \ldots & a_{1,l}^3  \\
          &           & \ddots & \vdots   		\\
          &           &a_{l-1,l-1}^3& a_{l-1,l}^3\\
  0       &           &        & a_{l,l}^3
\end{bmatrix}, \mbox{where } l=n-(2r+2)>r.
$$
We consider linear spaces
$$\begin{cases}
\Gamma_0'=\left\langle e_0,e_{r+1},\dots, e_{n}\right\rangle \\
\Gamma_1'=\left\langle e_0,e_1,e_{r+1},\dots, 
e_{2r},e_{2r+2},\dots,e_n\right\rangle \\
\vdots\\
\Gamma_{r}'=\left\langle e_0,\dots,e_r,e_{r+1},e_{2r+2},\dots,e_n\right\rangle 
\end{cases}
$$
of codimension $r$ in $\P^n.$
Now given a general $q\in \G(r,n)$ corresponding to
$\Sigma_q$ our goal is to compute the dimension of the stabilizer of $q$ by the action of $B_2.$
There is only one point $w_j$ in each $\Gamma_j'\cap \Sigma_q$ and
$\Sigma_q=\left\langle w_0,\dots,w_r\right\rangle.$
Write 
$$w_i=(x^i_0:\dots: x^i_r:y^i_0:\dots: y^i_r:z^i_0:\dots: z^i_l)$$ for every $i.$ And then
$$\Sigma_q=
\begin{bmatrix}
x_0^0 &  0      & 0  & y_0^0 & \dots  &y_r^0		 & z^0_0 & \dots & z^0_l\\
\vdots&    \ddots     &   0 & \vdots&    \rddots    &    	0	&\vdots &       &     \vdots \\
x_0^r &  \ldots &x_r^r&y_0^r&  0    & 0       & z^r_0 & \dots & z^r_l    
\end{bmatrix}.
$$

Notice that $\Gamma_0',\dots,\Gamma_r'$ are stabilized by $B_2,$
therefore $b\in B_2$ stabilizes $\Sigma_q$ if and only if $b$ fixes $w_0,\dots,w_r.$
Therefore, setting $a_{l,l}^3=1,$ $b\in B_2$ stabilizes $\Sigma_q$ if only if:
$$
\begin{cases}
a_{0,0}^1x_0^0															&\!=\!\lambda_0 x_0^0\\
a_{0,0}^1x_0^1+a_{0,1}^1x_1^1								&\!=\!\lambda_1 x_0^1\\
a_{0,0}^1x_0^2+a_{0,1}^1x_1^2+a_{0,2}^1x_2^2&\!=\!\lambda_2 x_0^2\\
\vdots   																		&\\
a_{0,0}^1x_0^r+\cdots +a_{0,r}^1x_r^r   		&\!=\!\lambda_r x_0^r\\
\end{cases}\
\begin{cases}
0																						&\!=\!0							 \\
a_{1,1}^1x_1^1															&\!=\!\lambda_1 x_1^1\\
a_{1,1}^1x_1^2+a_{1,2}^1x_2^2								&\!=\!\lambda_2 x_1^2\\
\vdots   																		&\\
a_{1,1}^1x_1^r+\cdots +a_{1,r}^1x_r^r   		&\!=\!\lambda_r x_1^r\\
\end{cases}
$$
$$
\dots
\begin{cases}
0																						&\!=\!0							 \\
0																						&\!=\!0							 \\
\vdots   																		&\\
0																						&\!=\!0							 \\
a_{r,r}^1x_r^r  													  &\!=\!\lambda_r x_r^r
\end{cases}\
\begin{cases}
a_{0,0}^2y_0^0+\cdots +a_{0,r-1}^2y_{r-1}^0 +a_{0,r}^2y_r^0&\!=\!\lambda_0 y_0^0\\
a_{0,0}^2y_0^1+\cdots +a_{0,r-1}^2y_{r-1}^1	&\!=\!\lambda_1 y_0^1\\
\vdots   																		&\\
a_{0,0}^2y_0^{r-1}+a_{0,1}^2y_1^{r-1}					&\!=\!\lambda_{r-1} y_0^{r-1}\\
a_{0,0}^2y_0^r 															&\!=\!\lambda_r y_0^r
\end{cases}
$$
$$
\begin{cases}
a_{r-1,r-1}^2y_{r-1}^0+a_{r-1,r}^2y_r^0			&\!=\!\lambda_0 y_{r-1}^0\\
a_{r-1,r-1}^2y_{r-1}^1									  	&\!=\!\lambda_1 y_{r-1}^1\\
0																						&\!=\!0\\							
\vdots   																		&\\
0																						&\!=\!0							
\end{cases}
\dots
\begin{cases}
a_{r,r}^2y_r^0  													  &\!=\!\lambda_0 y_r^0\\
0																						&\!=\!0							 \\
\vdots   																		&\\
0																						&\!=\!0							 \\
0																						&\!=\!0							 
\end{cases}
$$
$$
\begin{cases}
a_{0,0}^3z^0_0+\dots +a_{0,l}^3z^0_l&=z_0^0\\
a_{0,0}^3z^1_0+\dots +a_{0,l}^3z^1_l&=z_0^1\\
\vdots &\\
a_{0,0}^3z^r_0+\dots +a_{0,l}^3z^r_l&=z_0^r
\end{cases}
\begin{cases}
a_{1,1}^3z^0_1+\dots +a_{1,l}^3z^0_l&=z_1^0\\
a_{1,1}^3z^1_1+\dots +a_{1,l}^3z^1_l&=z_1^1\\
\vdots &\\
a_{1,1}^3z^r_1+\dots +a_{1,l}^3z^r_l&=z_1^r
\end{cases}
$$
$$
\dots
\begin{cases}
a_{l-1,l-1}^3z^0_{l-1}+a_{l-1,l}^3z^0_l&=z_{l-1}^0\\
a_{l-1,l-1}^3z^1_{l-1}+a_{l-1,l}^3z^1_l&=z_{l-1}^1\\
\vdots &\\
a_{l-1,l-1}^3z^r_{l-1}+a_{l-1,l}^3z^r_l&=z_{l-1}^r
\end{cases}
\begin{cases} 
z^0&=\lambda_0 z^0 \\
z^1&=\lambda_1 z^1 \\
\vdots&\\
z^r&=\lambda_r z^r
\end{cases}
$$
for some $\lambda_0,\dots\lambda_r\in \C^*.$

Therefore we 
get from the last system of equations $\lambda_0=\dots=\lambda_r=1.$
Using this on the first systems we easily get $a_{ij}^1=a_{ij}^2=\delta_{ij}$
for every $i,j=0,\dots, r.$ 
We are left with the following $l$ systems on the variables $a_{i,j}^3,$
with $r+1$ equations each:

$$
\begin{cases}
a_{0,0}^3z^0_0+\dots +a_{0,l}^3z^0_l&=z_0^0\\
a_{0,0}^3z^1_0+\dots +a_{0,l}^3z^1_l&=z_0^1\\
\vdots &\\
a_{0,0}^3z^r_0+\dots +a_{0,l}^3z^r_l&=z_0^r
\end{cases}
\begin{cases}
a_{1,1}^3z^0_1+\dots +a_{1,l}^3z^0_l&=z_1^0\\
a_{1,1}^3z^1_1+\dots +a_{1,l}^3z^1_l&=z_1^1\\
\vdots &\\
a_{1,1}^3z^r_1+\dots +a_{1,l}^3z^r_l&=z_1^r
\end{cases}
$$
$$
\dots
\begin{cases}
a_{l-1,l-1}^3z^0_{l-1}+a_{l-1,l}^3z^0_l&=z_{l-1}^0\\
a_{l-1,l-1}^3z^1_{l-1}+a_{l-1,l}^3z^1_l&=z_{l-1}^1\\
\vdots &\\
a_{l-1,l-1}^3z^r_{l-1}+a_{l-1,l}^3z^r_l&=z_{l-1}^r
\end{cases}
$$
The last $r$ ones yield $a_{ij}^3=\delta_{ij}$ for $i,j\geq l-r.$
We get then $l-r$  independent systems, all of them with more variables than equations:
$$
\begin{cases}
a_{0,0}^3z^0_0+\dots +a_{0,l}^3z^0_l&\!=\!z_0^0\\
a_{0,0}^3z^1_0+\dots +a_{0,l}^3z^1_l&\!=\!z_0^1\\
\vdots &\\
a_{0,0}^3z^r_0+\dots +a_{0,l}^3z^r_l&\!=\!z_0^r
\end{cases}\dots
\begin{cases}
a_{l-r-1,l-r-1}^3z^0_{l-r-1}+\dots +a_{l-r-1,l}^3z^0_l&\!=\!z_{l-r-1}^0\\
a_{l-r-1,l-r-1}^3z^1_{l-r-1}+\dots +a_{l-r-1,l}^3z^1_l&\!=\!z_{l-r-1}^1\\
\vdots &\\
a_{l-r-1,l-r-1}^3z^r_{l-r-1}+\dots +a_{l-r-1,l}^3z^r_l&\!=\!z_{l-r-1}^r
\end{cases}.
$$
Since the $z_i^j$ are general each system has linearly independent equations.
The first system has $l+1$ variables and $r+1$ conditions,
the second has $l$ variables and $r+1$ conditions, and so on,
up to the last system having $r+2$ variables and $r+1$ conditions.
Now we can calculate the dimension of the stabilizer 
$$\mbox{\# of variables} - \mbox{\# of conditions}\!=\!
(l-r)\!+\!(l-r-1)\!+\!\dots+1\!=
\!\dfrac{(l-r)(l-r+1)}{2}.$$
Then the dimension of the orbit is
\begin{align*}
&\dim(B_2)-\dim(\mbox{ stabilizer })\\
&=(r+1)(r+2)+\dfrac{(l+1)(l+2)}{2}-1-\dfrac{(l-r)(l-r+1)}{2}\\
&=(r+1)(r+2)\!+\!\dfrac{(n-(2r\!+\!1))(n-2r)}{2}\!-\!\dfrac{(n-(3r\!+\!2))(n-(3r+1))}{2}-\!1\!\\
&=(r+1)(r+2)+\dfrac{n(2r+2)-5r^2-7r-2}{2}-1\\
&=(r+1)(n+r+2)-\dfrac{5r^2+7}{2}-2
=(r+1)(n-r)-\dfrac{r^2}{2}+\dfrac{r}{2}\\
&=\dim(\G(r,n))-(r-1)\dfrac{r}{2}.
\end{align*}

We conclude that the codimension of a general orbit of $B_2$ is $(r-1)\dfrac{r}{2}>0.$
Therefore, $\G(r,n)_2$ is not spherical for $r\geq 2,n\geq 4r+1.$
\end{proof}

\begin{Lemma}\label{G(1,n)3 spherical}
$\G(1,n)_3$ is not spherical for $n\geq 7.$
\end{Lemma}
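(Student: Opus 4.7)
The plan is to imitate the proofs of Lemmas \ref{G(1,n)2 spherical} and \ref{G(r,n)2 spherical}, pushing the analysis to $k=3$ points. By Lemma \ref{Bkisenough} it is enough to show that the Borel subgroup $B_3$ of $G_3^{red}$ has no dense orbit on $\G(1,n)$. I would place $p_i = [\langle e_{2i-2}, e_{2i-1}\rangle]$ for $i = 1, 2, 3$; then $G_3^{red}$ is the subgroup of block-diagonal matrices in $SL_{n+1}$ with blocks $A_1, A_2, A_3, A_4$ of sizes $2, 2, 2, n-5$, and $B_3$ is its blockwise upper-triangular part, so
$$\dim B_3 \;=\; 3 + 3 + 3 + \binom{n-4}{2} - 1 \;=\; 8 + \binom{n-4}{2}.$$

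Next I would introduce two $B_3$-stable hyperplanes, for instance $\Gamma_0' = \langle e_0, e_2, e_3, \ldots, e_n\rangle$ (missing $e_1$) and $\Gamma_1' = \langle e_0, e_1, e_2, e_4, \ldots, e_n\rangle$ (missing $e_3$). For a general line $\ell\in\G(1,n)$, set $w_j := \ell\cap \Gamma_j'$, so $\ell = \langle w_0, w_1\rangle$. An element $b\in B_3$ preserves $\ell$ if and only if $b\cdot w_j = \lambda_j w_j$ for some $\lambda_j\in\C^*$. In parallel with Lemma \ref{G(1,n)2 spherical}, I would also exhibit $B_3$-stable $\P^{n-2}$'s via Lemma \ref{divinG(r,n)}, producing colors $D_{ijk}$ on $\G(1,n)$ with classes $H - iE_1 - jE_2 - kE_3$; choosing $\ell$ outside the union of these colors ensures that enough coordinates of $w_0$ and $w_1$ are nonzero for the analysis below.

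Expanding $b\cdot w_j = \lambda_j w_j$ coordinate-by-coordinate, the equations coming from the basis vectors $e_0, \ldots, e_5$ (i.e., from the three $2\times 2$ blocks) should force $\lambda_0 = \lambda_1 =: \lambda$ and $A_1 = A_2 = A_3 = \lambda\cdot\mathrm{Id}$. This step is the main technical obstacle: it is a direct extension of the bookkeeping in the $k=2$ case of Lemma \ref{G(1,n)2 spherical}, but the details are tedious. What remains is the residual system $A_4 z^j = \lambda z^j$ for $j = 0, 1$, where $z^0, z^1 \in \C^{n-5}$ are the tail coordinates of $w_0, w_1$, together with the determinant constraint $\lambda^6 \det A_4 = 1$.

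To finish, parametrize $A_4 = D + N$ with $D$ diagonal and $N$ strictly upper triangular. The equation $A_4 z^0 = \lambda z^0$ determines $D$ from $\lambda$ and $N$; the equation $A_4 z^1 = \lambda z^1$ then imposes $n-6$ independent linear conditions on the $\binom{n-5}{2}$ entries of $N$; and the determinant constraint cuts one further parameter. Thus the generic stabilizer in $B_3$ of a line has dimension
$$1 + \binom{n-5}{2} - (n-6) - 1 \;=\; \binom{n-6}{2},$$
where the leading $1$ accounts for the free parameter $\lambda$. Hence the generic $B_3$-orbit has dimension
$$\dim B_3 - \binom{n-6}{2} \;=\; 8 + \binom{n-4}{2} - \binom{n-6}{2} \;=\; 2n - 3 \;<\; 2n - 2 \;=\; \dim \G(1,n),$$
so $B_3$ has no dense orbit and $\G(1,n)_3$ is not spherical for every $n\geq 7$.
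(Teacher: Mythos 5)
Your proof is correct and follows essentially the same route as the paper: both compute $\dim B_3 = 8+\binom{n-4}{2}$, determine the generic stabilizer of a line by solving the blockwise eigenvector equations (the three $2\times 2$ blocks forcing $\lambda_0=\lambda_1$ and $A_1=A_2=A_3=\lambda\,\mathrm{Id}$, then counting the conditions imposed on the last block), and conclude that the generic $B_3$-orbit has dimension $2n-3$, i.e.\ codimension one in $\G(1,n)$. Your stabilizer dimension $\binom{n-6}{2}$ agrees exactly with the paper's count $(n-5)(n-4)/2-3-2(n-7)$.
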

\begin{proof}
$$
G_3\!=\!\left\{
\begin{pmatrix}
	A & 0 & B\\
	0 & C & D\\
  0 & 0 & E\\	
\end{pmatrix}\right\},
G_3^{red}\!=\!\left\{
\begin{pmatrix}
	A & 0 & 0\\
	0 & C & 0\\
  0 & 0 & E\\	
\end{pmatrix}\right\},$$

$$
B_3\!=\!
\left\{
\begin{pmatrix}
	b_{00}&b_{01}&  0 	&  0   &  0   &  0   &  0   &  0   &  0\\
	  0		&b_{11}&  0		&  0   &  0   &  0   &  0   &  0   &  0\\
	  0 	&  0 	 &b_{22}&b_{23}&  0   &  0   &  0   &  0   &  0\\
	  0		&  0	 &  0		&b_{33}&  0   &  0   &  0   &  0   &  0\\
		0 	&  0 	 &	0 	&  0 	 &b_{44}&b_{45}&  0   &  0   &  0\\
    0 	&  0 	 &	0		&  0	 &  0		&b_{55}&  0   &  0   &  0\\
	  0   &  0   &  0   &  0 	 &	0 	&  0 	 &b_{66}&\dots &b_{6n}\\
    0   &  0   &  0 	&  0 	 &	0		&  0	 &  0		&\ddots&\vdots\\
		0   &  0   &  0 	&  0 	 &	0		&  0	 &  0		&  0   &b_{nn}\\
	 
\end{pmatrix}\right\}
$$
Note that 
$\dim(B_3)=3+3+3+(n-5)(n-4)/2-1=8+(n-5)(n-4)/2.$
Proceeding as in the proof of Lemma \ref{G(r,n)2 spherical} we get a similar system
with $2(n-7)$ equations in $(n-5)(n-4)/2-3$ variables:
$$
\begin{cases}
b_{66}x_6+\dots+b_{6n}x_n     &= x_6\\
b_{66}x_6+\dots+b_{6,n-1}y_{n-1}     &= y_6
\end{cases}
\quad\dots$$
$$
\begin{cases}
b_{n-2,n-2}x_{n-2}+b_{n-2,n-1}x_{n-1}+b_{n-2,n}x_n&= x_{n-2}\\
b_{n-2,n-2}y_{n-2}+b_{n-2,n-1}y_{n-1}&= y_{n-2}
\end{cases}
$$
In the case $n=7$ there are no equations, or conditions.
In any case the dimension of the stabilizer is $(n-5)(n-4)/2-3-2(n-7),$ note this number is zero when $n=7.$
Then the dimension of the general orbit is
$$\left[8+\dfrac{(n-5)(n-4)}{2}\right]-\left[\dfrac{(n-5)(n-4)}{2}-3-2(n-7)\right]\!=\!2(n-1)-1$$
and thus the general orbit has codimension one on $\G(1,n).$
\end{proof}

\begin{Lemma}\label{G(r,2r+2)3 spherical}
$\G(r,2r+2)_3$ is not spherical for $r\geq 1.$
\end{Lemma}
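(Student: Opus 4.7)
The plan is to mirror the strategy of Lemma \ref{G(r,n)2 spherical} by identifying the reductive quotient $G_3^{\mathrm{red}}$ and its Borel subgroup $B_3$ explicitly, and then showing that $\dim B_3 < \dim\G(r,2r+2)=(r+1)(r+2)$; Lemma \ref{Bkisenough} will then yield the conclusion.

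First I would fix coordinates with the three general points taken to be $p_1=[\langle e_0,\dots,e_r\rangle]$, $p_2=[\langle e_{r+1},\dots,e_{2r+1}\rangle]$, and $p_3=[\langle f_0,\dots,f_r\rangle]$ where $f_i=e_i+e_{r+1+i}+e_{2r+2}$. A matrix computation in the style of Lemma \ref{G(r,2r+2)2 spherical} shows that every $g\in G_3\subset SL_{2r+3}$ has the block form
$$g=\begin{pmatrix}A&0&u\\0&B&v\\0&0&\lambda\end{pmatrix}$$
with $A,B\in GL_{r+1}$, $u,v\in\C^{r+1}$, $\lambda\in\C^*$, subject to: the columns of $A$ share a common sum $s$; $B=A+(u-v)\mathbf{1}^T$; $\lambda=s+\sum_k u_k$; and $\det A\cdot\det B\cdot\lambda=1$. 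These constraints come respectively from preserving $p_3$ and from belonging to $SL_{2r+3}$.

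The structural heart of the argument is the observation that the rank-one correction $B-A=(u-v)\mathbf{1}^T$ vanishes on the hyperplane $H=\ker(\mathbf{1}^T)\subset\C^{r+1}$, forcing $A|_H=B|_H$. Consequently $G_3$ preserves the $2r$-dimensional subspace $V:=H_{p_1}\oplus H_{p_2}\subset\C^{2r+3}$ and acts on $V$ through a \emph{single} diagonal copy of $GL(H)\cong GL_r$, while on the $3$-dimensional quotient $\C^{2r+3}/V$ it acts as an upper-triangular matrix that is simultaneously diagonalizable in the basis $\{\bar e_0,\bar e_{r+1},\bar e_0+\bar e_{r+1}+\bar e_{2r+2}\}$ with eigenvalues $(s,s_B,\lambda)$, where $s_B=s+\sum_k(u_k-v_k)$. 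Together with the single constraint $\det g=1$ this identifies
$$G_3^{\mathrm{red}}\cong\bigl\{(X,s_1,s_2,s_3)\in GL_r\times(\C^*)^3:(\det X)^2 s_1 s_2 s_3=1\bigr\}\cong GL_r\times(\C^*)^2,$$
whose Borel subgroup has dimension $\binom{r+1}{2}+2=\tfrac{r(r+1)}{2}+2$.

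Comparing with $\dim\G(r,2r+2)=(r+1)(r+2)=r^2+3r+2$ yields $\dim B_3-\dim\G(r,2r+2)=-\tfrac{r(r+5)}{2}<0$ for every $r\geq 1$, so $B_3$ cannot act with a dense orbit and Lemma \ref{Bkisenough} gives that $\G(r,2r+2)_3$ is not spherical. The main obstacle will be the careful verification, analogous to the computations in Lemmas \ref{G(r,2r+2)2 spherical} and \ref{G(r,n)2 spherical}, that invariance of $p_3$ forces the displayed block form and in particular the identity $A|_H=B|_H$; once this is in place the remainder is a clean dimension count of the type appearing in Lemma \ref{dimcount}.
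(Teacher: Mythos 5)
Your argument is correct in substance and arrives at the same conclusion by the same overall mechanism (a dimension count fed into Lemma \ref{Bkisenough}), but the execution is genuinely different from the paper's. The paper never computes $G_3$ or $G_3^{\mathrm{red}}$ at all: it sandwiches $B_3$ inside the stabilizer $B_3'$ of the two full flags together with only the partial third flag $\langle e_{2r+2}\rangle\subset\langle e_{2r+2},v_1\rangle$, observes that $B_3'$ is a \emph{proper} subgroup of $B_2$, and concludes from $\dim B_3\le\dim B_3'<\dim B_2=(r+1)(r+2)=\dim\G(r,2r+2)$, the last equality being the $n=2r+2$ case already used in Lemma \ref{G(r,2r+2)2 spherical}. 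That is a three-line argument. Your route instead identifies $G_3$ explicitly as the matrices $\bigl(\begin{smallmatrix}A&0&u\\0&B&v\\0&0&\lambda\end{smallmatrix}\bigr)$ with constant column sums for $A$, $B=A+(u-v)\mathbf{1}^T$ and $\lambda=s+\mathbf{1}^Tu$ (I checked these relations: they are exactly what invariance of $V_3=\{(c,c,\mathbf{1}^Tc)\}$ forces), and then reads off $G_3^{\mathrm{red}}\cong GL_r\times(\C^*)^2$ from the semisimplification $H^{\oplus 2}\oplus\C_s\oplus\C_{s_B}\oplus\C_\lambda$ of $\C^{2r+3}$. This is more work but buys more: you get $\dim G_3=(r+1)(r+2)$ (so $G_3$ itself is exactly critical-dimensional), the precise reductive quotient, and the exact deficit $\dim B_3-\dim\G(r,2r+2)=-r(r+5)/2$, none of which the paper's argument sees.

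One point needs more care than your closing remark suggests. Besides verifying the block form, you must justify that your explicit third point $p_3=[\langle f_0,\dots,f_r\rangle]$ with $f_i=e_i+e_{r+1+i}+e_{2r+2}$ represents the \emph{general} configuration, i.e.\ lies in the dense $G_2$-orbit on $\G(r,2r+2)$ (which exists by Lemma \ref{G(r,2r+2)2 spherical}); specializing the points can only enlarge stabilizers, and the passage from $\dim G_3$ to $\dim B_3$ of the reductive quotient is not monotone under such degenerations, so a computation at a non-generic configuration would not automatically bound the generic $B_3$. This is why the paper keeps $v_2,\dots,v_r$ genuinely general in its third flag. Your configuration is in fact generic (the intersection $V_3\cap(V_1\oplus V_2)$ is the graph of an isomorphism between generic hyperplanes of $V_1$ and $V_2$, and $GL(V_1)\times GL(V_2)$ together with the unipotent radical of $G_2$ acts transitively on such data plus the residual line), but this should be stated and checked explicitly.
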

\begin{proof}
This  is a dimension count, we will compare the dimensions of $\G(r,2r+2)$ and $B_3.$
We know that $B_3$ is the subgroup of $SL_{2r+2}$ which stabilizes the three flags:
\begin{align*}
&\mathcal{F}_1: \left\langle e_0 \right\rangle \subset 
\left\langle e_0,e_1 \right\rangle \subset \dots \subset
\left\langle e_0,\dots,e_r \right\rangle\\
&\mathcal{F}_2: \left\langle e_{r+1} \right\rangle \subset 
\left\langle e_{r+1},e_{r+2} \right\rangle \subset \dots \subset
\left\langle e_{r+1},\dots,e_{2r+1} \right\rangle\\
&\mathcal{F}_3: \left\langle e_{2r+2} \right\rangle \subset 
\left\langle e_{2r+2},v_1 \right\rangle \subset \dots \subset
\left\langle e_{2r+2},v_1,\dots,v_r \right\rangle
\end{align*}
where $v_2,\dots,v_r\in \C^{2r+2}$ are general and
$v_1:=e_0+\dots+e_{2r+2}.$
Consider the subgroup $B_3'$ of $SL_{2r+2}$ which stabilizes the three flags 
$\mathcal{F}_1,\mathcal{F}_2,\mathcal{F}_3'$ where
$$\mathcal{F}_3':\left\langle e_{2r+2} \right\rangle \subset 
\left\langle e_{2r+2},v_1 \right\rangle.$$
Then it is clear that $B_2\supset B_3'\supset B_3.$
Furthermore, $\dim(B_2)<\dim(B_3')$ because
$$B_3=\{g\in B_2; g\cdot \left\langle e_{2r+2}\right\rangle=\left\langle e_{2r+2}\right\rangle,
g\cdot \left\langle e_{2r+2},v_1 \right\rangle=
\left\langle e_{2r+2},v_1 \right\rangle\}.$$
Then 
$$\dim(B_3)\leq \dim(B_3')<\dim(B_2)=(r+1)(r+2)=\dim(\G(r,2r+1)).$$
Thus $B_3'$ can not have a dense orbit in $\G(r,n),$
and consequently neither  can $B_3$.
\end{proof}

Now using the previous reults we prove the main theorem.

\begin{proof}
(\textit{of Theorem \ref{G(r,n)kspherical}})

The case $r\!=\!0$ follows from the fact that 
$\Bl_{p_1,\dots,p_k}(\P^n)$ is toric if $k\leq n+1$,
and that $\Aut(\P^n,p_1,\dots,p_{n+2})=\{Id\}.$
Suppose now $r\!\geq \!1.$ 

The case $k=1$ in Lemma \ref{G(r,n)1 spherical}.

The case $k=2$ follows from Lemmas \ref{G(1,n)2 spherical}, \ref{G(r,2r+1)2 spherical},
\ref{G(r,2r+2)2 spherical}, \ref{dimcount}, and \ref{G(r,n)2 spherical}.

Now, for the case $k=3,$ having in mind Lemma \ref{Bkisenough} and the case $k=2$ we already know that $\G(r,n)_3$ is not spherical for $r\geq 2,n>2r+2.$
Lemmas \ref{G(1,5)3 spherical},
\ref{G(1,n)3 spherical}, and
\ref{G(r,2r+2)3 spherical}
yield us the cases $r\geq 2,n=2r+2$ and $r=1,n\neq 3,6.$
On the remaining cases, i.e. 
$\G(1,6)$ and $\G(r,2r+1),$
can be easily checked that $\dim(B_3)<\dim \G(r,n).$

Finally, for $k\geq 4$ using Lemma \ref{Bkisenough} we just need to verify that $\G(1,5)_4$ is not spherical.
Again, it is easy to check that $\dim(B_4)<\dim \G(r,n).$
\end{proof}

\chapter{Fano and  weak Fano varieties}
\label{cap7}
In this chapter we classify which blowups of Grassmannians at points in general position are weak Fano varieties. We also do the same for smooth quadrics.

\begin{Proposition}
Let $\G(r,n)_k$ be the blow-up of the Grassmannian $\G(r,n)$ at $k\geq 1$ general points. Then
\begin{enumerate}
\item[(a)] $\G(r,n)_k$ is Fano if and only if $(r,n)=(1,3)$ and $k\leq 2.$
\item[(b)] $\G(r,n)_k$ is weak Fano if and only if one of the following holds.
\item[$\bullet$] $(r,n)=(1,3)$ and $k\leq 2;$ or
\item[$\bullet$] $(r,n)=(1,4)$ and $k\leq 4.$
\end{enumerate}
\end{Proposition}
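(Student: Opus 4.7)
The plan is to compute
\begin{equation*}
-K_{\G(r,n)_k} \;=\; (n+1)H \;-\; \bigl((r+1)(n-r)-1\bigr)\sum_{i=1}^{k}E_i
\end{equation*}
from the standard blow-up formula, and then test its positivity via intersections against strict transforms of curves through the $p_i$. If $\widetilde C\subset\G(r,n)_k$ is the strict transform of an irreducible curve $C\subset\G(r,n)$ of Pl\"ucker degree $d$ with $\mathrm{mult}_{p_i}C = m_i$, then
\begin{equation*}
-K_{\G(r,n)_k}\cdot\widetilde C \;=\; (n+1)\,d \;-\; \bigl((r+1)(n-r)-1\bigr)\sum_{i}m_i .
\end{equation*}

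First I would rule out all pairs $(r,n)\notin\{(1,3),(1,4)\}$ by a single-point line test: taking $C$ to be a line in $\G(r,n)$ through $p_1$ gives $-K\cdot\widetilde C = n+2-(r+1)(n-r)$, which is strictly negative whenever $r\ge 2$ or ($r=1$ and $n\ge 5$). So $\G(r,n)_k$ is not even weak Fano in that range, for any $k\ge 1$.

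For $(r,n)=(1,3)\cong Q^4$ one has $\dim=4$ and $-K=4H-3\sum E_i$. Using $\Nef(\G(1,3)_1)=\cone(H,H-E)$ from Example \ref{exampleC}, the identity $-K=H+3(H-E)$ places $-K$ in the interior of the ample cone, so $\G(1,3)_1$ is Fano. For $k=2$ the variety is spherical (Lemma \ref{G(r,2r+1)2 spherical}) hence a MDS; I would read off the nef cone by combining the two birational projections from $p_1,p_2$ (each contributing a class $H-E_i$ that resolves to an ample class) and check that $-K=4H-3(E_1+E_2)$ sits in the interior of the resulting ample chamber. For $k\ge 3$ the rational normal conic joining three general points (produced by Lemma \ref{Rslemma}) has strict transform $2h-e_1-e_2-e_3$ with $-K$-degree $8-9=-1<0$, killing weak Fanoness.

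For $(r,n)=(1,4)$ we have $\dim=6$ and $-K=5H-5\sum E_i$. Using $H^6=\deg\G(1,4)=5$, $E_i^6=(-1)^{5}=-1$, the vanishing of all mixed $H\cdot E_i$ intersections on a point blow-up, and $E_i\cdot E_j=0$ for $i\ne j$, one obtains
\begin{equation*}
(-K)^{6} \;=\; 5^{6}(5-k).
\end{equation*}
For $k\ge 5$ this is $\le 0$, so $-K$ fails to be big and $\G(1,4)_k$ is not weak Fano. For $1\le k\le 4$, bigness of $-K$ will follow from $(-K)^6>0$ once nefness is established. For $k=1$, Corollary \ref{nef cone border2} (equivalently Chapter \ref{cap5}) gives $\Nef(\G(1,4)_1)=\cone(H,H-E)$ with $H-E$ defining a birational morphism, so $-K=5(H-E)$ is nef and big. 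For $k=2,3,4$, nefness is equivalent via the curve formula to the Seshadri-type inequality
\begin{equation*}
\deg(C)\;\ge\;\sum_{i=1}^{k}\mathrm{mult}_{p_i}(C)
\end{equation*}
for every irreducible $C\subset\G(1,4)$ through general $p_1,\dots,p_k$. I would verify this by translating curves on $\G(1,4)$ into rational normal scrolls in $\P^4$ containing the corresponding general lines, and then dimension-counting to show that no such scroll of degree strictly less than the number of general lines it must contain exists.

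The hard part will be this last Seshadri-type inequality for $\G(1,4)_k$ with $k=3,4$, since these blow-ups are not spherical (Theorem \ref{G(r,n)kspherical}) and one cannot directly read off the Mori cone from a homogeneous-space structure. The argument must dispose not only of smooth rational curves, but also of reducible or highly singular curves with large multiplicity at several of the $p_i$; this requires careful control of degenerations of rational normal scrolls through general lines in $\P^4$, and is where the bulk of the technical work will lie.
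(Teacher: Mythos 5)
Your overall strategy matches the paper's: the single-point line test $-K\cdot(h-e)=n+2-(r+1)(n-r)$ to reduce to $(1,3)$ and $(1,4)$, the conic $2h-e_i-e_j-e_l$ to kill $k\geq 3$ on the quadric, and $(-K)^6=5^6(5-k)$ to bound $k$ on $\G(1,4)$ are all exactly the computations used there. The one place where your proposal is not actually a proof is the step you yourself flag as "the hard part": nefness of $-K=5(H-\sum E_i)$ on $\G(1,4)_k$ for $k=2,3,4$, which you reduce to the inequality $\deg(C)\geq\sum_i\mathrm{mult}_{p_i}(C)$ and propose to attack by classifying degenerations of rational normal scrolls through general lines in $\P^4$. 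That route is both unnecessary and genuinely delicate (you would indeed have to control reducible and non-reduced limits), and as written the argument is incomplete.

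The paper closes this gap with a two-line observation (Lemma \ref{conecurvesgeneral}): if $C\subset\G(1,4)\subset\P^9$ is an irreducible curve of degree $d$ with $\sum_i m_i>d$, then $C$ meets the linear span $\Pi=\langle p_1,\dots,p_k\rangle\cong\P^{k-1}$ in more than $d=\deg(C)\cdot\deg(\Pi)$ points counted with multiplicity, so $C\subset\Pi$; but for $k\leq\codim\G(1,4)+1=4$ one has $\dim\Pi+\dim\G(1,4)=(k-1)+6\leq 9$, so by generality of the points $\Pi\cap\G(1,4)$ is finite, a contradiction. Hence every irreducible curve class decomposes as a nonnegative combination of $h$, the $l_i=h-e_i$ and the $e_i$, the Mori cone is $\cone(e_i,l_i)$, and $-K\cdot l_i=0$, $-K\cdot e_i=5$ give nefness at once (and also show $\G(1,4)_k$ is never Fano). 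The same lemma with $\codim Q^4=1$ handles $\G(1,3)_2$ directly, which is simpler than your detour through sphericity. I would also note that your citation of Lemma \ref{Rslemma} for the conic through three general points of $Q^4$ is off target: that conic is just the plane section $\langle p_1,p_2,p_3\rangle\cap Q^4$, no machinery needed.
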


\begin{Proposition}
Let $Q^n_k$ be the blow-up of a smooth quadric $Q^n\subset \P^{n+1}$ at $k\geq 1$ general points. Then
\begin{enumerate}
 \item[(a)]$Q_k^n$ is Fano if and only if either $k\leq 2$ or $n=2$ and $k\leq 7.$
 \item[(b)]$Q_k^n$ is weak Fano if and only if one of the following holds.
\item[$\bullet$] $n=2$ and $k\leq 7;$ or
\item[$\bullet$] $n=3$ and $k\leq 6;$ or
\item[$\bullet$] $n\geq 4$ and $k\leq 2$.
\end{enumerate}
\end{Proposition}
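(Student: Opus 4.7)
The plan is to reduce both statements to intersection computations involving $-K_{Q^n_k}$, its top self-intersection, and a short list of distinguished curve classes. Writing $\alpha:Q^n_k\to Q^n$ for the blow-up, $H$ for the pullback of the hyperplane class, and $E_1,\dots,E_k$ for the exceptional divisors, we have $-K_{Q^n_k}=nH-(n-1)\sum_{i=1}^k E_i$; using $H^n=2$, $E_i^n=(-1)^{n-1}$, and $H\cdot E_i=0$, a direct expansion gives $(-K_{Q^n_k})^n=2n^n-k(n-1)^n$. Since a nef and big divisor has positive top self-intersection, weak Fanoness fails whenever $k(n-1)^n\geq 2n^n$; this already excludes $k\geq 8$ for $n=2$ and $k\geq 7$ for $n=3$. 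For the nef obstruction, note that for $n\geq 3$ any three general points of $Q^n$ span a plane which cuts $Q^n$ in a smooth conic $C$ of class $2h-e_1-e_2-e_3$ on $Q^n_k$, with $-K_{Q^n_k}\cdot C=2n-3(n-1)=3-n$. This is strictly negative for $n\geq 4$ and $k\geq 3$, ruling out nefness, and zero for $n=3$, $k\geq 3$, ruling out ampleness. Together with the classical del Pezzo theory for $n=2$, these two ingredients give the necessity half of (a) and (b).

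For sufficiency I begin with the Fano range $k\leq 2$ for any $n\geq 2$. The key geometric input for $n\geq 3$ is that two general points of $Q^n$ do not lie on a common line in $Q^n$, because the line they span in $\P^{n+1}$ is a secant meeting $Q^n$ in exactly those two points. Combined with a degree-and-multiplicity analysis of irreducible curves in $Q^n_k$, this shows that for $k\leq 2$ the intersection $-K_{Q^n_k}\cdot C$ is strictly positive on every irreducible curve $C$, so $-K_{Q^n_k}$ is ample by Kleiman's criterion. The surface case $n=2$ reduces to the classical del Pezzo classification: $Q^2_k$ is a del Pezzo surface of degree $8-k$, which is Fano (hence weak Fano) iff $k\leq 7$.

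The main remaining step, and the principal technical difficulty, is to show that $Q^3_k$ is weak Fano for $k=3,4,5,6$. Bigness follows from $(-K)^3=54-8k>0$ once nefness is known, so the crux is nefness: one must prove that every irreducible curve $C\subset Q^3$ of degree $d$ through the $p_i$ with multiplicities $m_i$ satisfies the Nagata-type bound $3d\geq 2\sum m_i$. I plan to prove this by a degree-based case analysis: for $d=1$ lines on $Q^3$ meet at most one general point; for $d=2$ a plane section of $Q^3$ contains at most three general points; and for $d\geq 3$ the Kontsevich moduli space $\overline{M}_{0,0}(Q^3,d)$ has dimension $3d$, with each general point imposing codimension two, so that smooth rational curves through $s$ general points exist only when $s\leq 3d/2$. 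The subtle point, which I expect to be the principal obstacle, is excluding irreducible curves of positive geometric genus or with imposed multiplicities $m_i\geq 2$ that might still violate $3d\geq 2\sum m_i$; for this I intend to use further dimension counts and specialization arguments on $Q^3\subset\P^4$, exploiting that with at most six general points there is insufficient room to force such pathological curves to appear in general position.
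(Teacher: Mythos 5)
Your necessity arguments are sound and coincide with the paper's: the top self-intersection $(-K_{Q^n_k})^n=2n^n-k(n-1)^n$ rules out $k\geq 8$ for $n=2$ and $k\geq 7$ for $n=3$, the conic class $c_{ijl}=2h-e_i-e_j-e_l$ with $-K\cdot c_{ijl}=3-n$ kills nefness for $n\geq 4$, $k\geq 3$ and ampleness for $n=3$, $k\geq 3$, and the Fano statement for $k\leq 2$ follows from the observation that the span of two general points meets $Q^n$ in finitely many points, so $m_1+m_2\leq d$ for every irreducible curve and the Mori cone is $\cone(e_i,\,h-e_i)$ (this is exactly Lemma \ref{conecurvesgeneral} with $\codim=1$). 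The surface case via del Pezzo theory is also as in the paper.

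The genuine gap is the nefness of $-K_{Q^3_k}=3H-2\sum E_i$ for $3\leq k\leq 6$, which is the heart of part (b) for $n=3$, and which you yourself flag as unresolved. Your proposed dimension count on $\overline{M}_{0,0}(Q^3,d)$ only controls irreducible \emph{rational} curves passing through the $p_i$ with multiplicity one; it says nothing about irreducible curves of positive geometric genus, nor about curves with $m_i\geq 2$ at some $p_i$, and "further dimension counts and specialization arguments" is not a proof. The paper closes this step by a purely combinatorial decomposition (Lemma \ref{conecurvesquadrics}): ordering $m_1\leq\dots\leq m_k$ and grouping the points into triples (plus one group of four when $k\equiv 1\pmod 3$), one writes $c=dh-\sum m_ie_i$ explicitly as a nonnegative combination of the classes $c_{ijl}$, $l_i=h-e_i$, $e_i$ and $h$; the only issue is the sign of the final coefficient of $h$, and if it were negative then $\mathrm{length}(C\cap\Lambda)>d$ for a suitable linear span $\Lambda$ of a proper subset of the points, forcing the irreducible curve $C$ into $\Lambda$ and hence forcing some $m_i=0$, a contradiction. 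Since $-K\cdot e_i,\ -K\cdot l_i>0$ and $-K\cdot c_{ijl}=0$ on $Q^3$, nefness follows on the nose, and bigness from $(-K)^3=54-8k>0$. If you want to keep your framework, you should replace the moduli-space count by this Bezout/linear-span argument (or prove the Nagata-type bound $3d\geq 2\sum m_i$ for \emph{all} irreducible curves by the same linear-span trick); as it stands the weak Fano claim for $Q^3_k$, $3\leq k\leq 6$, is not established.
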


Since weak Fano varieties are MDS (\cite[Corollary 1.3.2]{BCHM}), we obtain in this way new examples of MDS.

\begin{Theorem}
Let $\G(r,n)_k$ be the blow-up of the Grassmannian $\G(r,n)$ at $k$ general points. 
Then
$\G(1,3)_1,\G(1,3)_2,\G(1,4)_1,\G(1,4)_2,\G(1,4)_3,$ and $\G(1,4)_4$ are Mori dream spaces.
Let $Q^n_k$ be the blow-up of a smooth quadric $Q^n\subset \P^{n+1}$ at $k$ general points.
Then $Q^3_1,\dots,Q^3_6,Q^n_1,Q^n_2,n\geq 4$
and Mori dream spaces as well.
\end{Theorem}

In the first section we recall the basic definitions and some well known properties of Fano and weak Fano varieties. In the second section of this chapter we determine the Mori cones of some varieties, and in the third we describe which blow-ups at points in general position of quadrics and Grassmannians are Fano or weak Fano.

\section{Basic definitions and known results}

We start defining Fano and weak Fano varieties.

\begin{Definition}
Let $X$ be a smooth projective variety and $-K_X$ its anticanonical divisor. We say that $X$ is:
\begin{itemize}
\item[-] \textit{Fano} if $-K_X$ is ample;
\item[-] \textit{weak Fano} if $-K_X$ is nef and big;
\end{itemize}
\end{Definition}

In particular, Fano implies weak Fano. A smooth hypersurface $X\subset\mathbb{P}^{n}$ of degree $d$ is Fano if and only if $d\leq n.$

Koll\'ar, Miyaoka, and Mori in \cite[Theorem 0.2]{KMM92} proved that the $n$-dimensional smooth Fano varieties form a bounded family, that is, there are finitely many deformation classes of Fano varieties of a fixed dimension. In dimension one the only one is $\P^1.$ In dimension two they are called del Pezzo surfaces and are isomorphic either to $\P^1 \times \P^1$ or to the blow-up of the projective plane in at most $8$ points in general position.
In dimension three there are 105 classes. An overview of this classification is given by Iskovskikh and Prokhorov in \cite{IP99}.

In \cite[Corollary 1.3.2]{BCHM} Birkar, Cascini, Hacon, and  McKernan proved that weak Fano varieties are Mori dream spaces. In fact, there is a larger class of varieties called log Fano varieties, which includes weak Fano varieties and which are Mori dream spaces as well.

To check if a given variety $X$ is Fano or weak Fano  is quite simple, one just need to know its anticanonical divisor and the cone of curves.
But to check if $X$ is log Fano can be difficult, one needs to find an effective divisor $D$ such that $-(K_X+D)$ is ample and then check that the pair $(X,D)$ is indeed Kawamata log terminal.
In order to deal with this second issue one needs to find a log resolution of the pair $(X,D).$
This was done for the blow-up of $\P^n$ at $k$ points in general position by Araujo and  Massarenti in \cite{AM16}.

\section{Mori Cones}\label{moricones}

In this section we describe Mori cones of some varieties and in the end we also describe the cone of movable curves of the Grassmannians' blow-up at one point.

Now we fix some notations to be used in this section and in the next one. Given a variety $X$ we denote by $X_k$ its blow-up at $k$ general points,
by $E_1,\dots,E_k$ the exceptional divisors, and by $e_i$ the class of a general line contained in $E_i,$ for $i=1,\dots,k.$
Therefore, 
$$\Pic(X_k)=\pi^*(\Pic(X))\oplus E_1\Z\oplus \cdots
\oplus E_k\Z$$
and similarly $N_1(X_k)$ is generated by the strict transform of curves in $X$ and by $e_1,\dots,e_k.$
When $k=1$ we use $E$  and $e$ intead of $E_1$ and $e_1.$

We denote by $H$ the class of a general hyperplane section in $X,$ and also denote by $H$ the class in $\Pic(X_k)$ of the strict transform of a general hyperplane section.
Similarly, we denote by $h$ the class of a general line in $X$ and by the same letter the class in $N_1(X_k)$ of the strict transform of a general line.

For convenience of the reader we state the following result which was used in \cite{AM16} to describe whether the blow-up of the projective space at general points is a weak Fano or a log Fano variety. It also works as warm up for our results.

\begin{Lemma}\cite[Proposition 1.4]{AM16}\label{conecurvesprojectivespace}
Let $\P^n_k$ be the blow-up of projective space $\P^n$ in $k$ general points.
If $2\leq k\leq 2n$, then the Mori cone of $\P^n_k$ is generated by $e_i, i=1,\dots,k,$ and the
strict transform  $l_{ij}$ of the lines through two blown-up points.
\end{Lemma}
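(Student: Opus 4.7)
Each class $e_i$ and $l_{ij} = h - e_i - e_j$ is represented by an irreducible effective curve (a line in $E_i$, respectively the strict transform of $\overline{p_ip_j}$), so the cone $\sigma := \cone(e_i,\,l_{ij})$ is contained in $\NEbar(\P^n_k)$. I would establish the reverse inclusion by duality: writing $D = aH - \sum b_i E_i$, membership in $\sigma^\vee$ amounts to $b_i \geq 0$ and $a \geq b_i + b_j$ for all $i\neq j$, and since $\Nef(\P^n_k) = \NEbar(\P^n_k)^\vee$ the lemma reduces to proving that every such $D$ is nef.

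To verify nefness, take an irreducible curve $C\subset\P^n_k$. If $C\subseteq E_i$ then $[C]$ is a non-negative multiple of $e_i$ and $D\cdot C\geq 0$ is automatic. Otherwise $C$ is the strict transform of an irreducible curve $\bar C\subset \P^n$ of degree $d$ with $m_i := \mult_{p_i}(\bar C)$, and the inequality to check becomes $d\,a \geq \sum b_i m_i$. Viewing this as a linear program on the polytope $P:=\{b\in\R^k_{\geq 0}:\ b_i+b_j\leq a\ \forall\,i\neq j\}$, its maximum is attained at a vertex; a short combinatorial check (after normalising $a=1$) shows that the vertices of $P$ are the origin, the basis vectors $\mathbf{e}_i$, and the vectors $\tfrac12\mathbf{1}_S$ for subsets $S\subseteq\{1,\dots,k\}$ with $|S|\geq 3$. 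The problem therefore reduces to the two geometric bounds
\begin{equation*}
m_i \leq d \quad\text{for every } i, \qquad \sum_{i\in S} m_i \leq 2d \quad\text{for every } S\subseteq\{1,\dots,k\}.
\end{equation*}

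The first bound is elementary (the multiplicity of an irreducible curve at any point is at most its degree). The second is the main obstacle, and it is exactly here that the hypothesis $k\leq 2n$ is used. For $n=2$ (where $k\leq 4$) it follows from the classical arithmetic-genus inequality $\sum\binom{m_i}{2}\leq\binom{d-1}{2}$ combined with Cauchy--Schwarz applied to $|S|\leq 4$. For $n\geq 3$ I would proceed inductively by projection: projecting $\bar C$ from a point $p_{i_0}\in S$ yields an irreducible curve in $\P^{n-1}$ of degree $d - m_{i_0}$ whose multiplicities at the images of the remaining $p_j$ are at least $m_j$ (generality of the $p_i$ being essential to guarantee this), while projection from a secant line $\overline{p_{i_0}p_{j_0}}$ lands in $\P^{n-2}$ with degree $d - m_{i_0} - m_{j_0}$. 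The delicate point is combining point- and line-projections so that the drop in $|S|$ exactly matches the drop in ambient dimension across the full range $k\leq 2n$: a naive induction by point-projection alone covers only $k\leq 2n-2$, so a refinement is needed in the extremal cases $k\in\{2n-1,2n\}$.
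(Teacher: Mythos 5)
Your reduction is sound, and it is in fact just the dual of the argument this chapter uses for the analogous statements (the lemma itself is quoted from \cite{AM16}, but Lemmas \ref{conecurvesgeneral} and \ref{conecurvesquadrics} are proved here by direct decomposition): checking $D\cdot C\geq 0$ for every $D$ in the dual cone is equivalent to writing $[C]=dh-\sum m_ie_i$ as a non-negative combination of the $e_i$, the $l_{ij}$ and $h$ (harmless to add, since $h=l_{ij}+e_i+e_j$), and both formulations come down to the two inequalities $m_i\leq d$ and $\sum_i m_i\leq 2d$; your vertex analysis is correct, and the constraint for $S$ the full index set is the binding one because $m_i\geq 0$.

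The genuine gap is in the proof of $\sum_i m_i\leq 2d$ for $n\geq 3$, which is the whole content of the lemma. Your projection induction recovers one unit of $k$ per unit of ambient dimension (or two per two), while the hypothesis $k\leq 2n$ grows by two per unit of $n$; as you concede, this leaves $k\in\{2n-1,2n\}$ uncovered, and these are precisely the cases that matter: for $k=2n+1$ the statement already fails (for $n=2$ the conic through five points has class $2h-\sum_{i=1}^5 e_i$ with $\sum m_i=5>2d$), so an argument that silently degrades the range proves a strictly weaker lemma. There are also unexamined points in the projection step itself (the image curve has degree $(d-m_{i_0})/\delta$, where $\delta$ is the degree of $C$ onto its image, and the multiplicities scale accordingly), though these happen to work in your favour. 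The clean route is the linear-span/Bezout trick used in the proofs of Lemmas \ref{conecurvesgeneral} and \ref{conecurvesquadrics}: discard the indices with $m_i=0$ and split the rest into two nonempty sets $S_1,S_2$, each of cardinality at most $n$ --- possible precisely because $k\leq 2n$. If $\sum_{i\in S_a}m_i>d$, then every hyperplane containing $\Pi_a=\langle p_i \mid i\in S_a\rangle$ meets $C$ with multiplicity greater than $d=\deg C$, hence contains $C$, so $C\subset\Pi_a$; by generality $p_j\notin\Pi_a$ for $j$ in the other set, forcing $m_j=0$, a contradiction. Thus $\sum_{S_1}m_i\leq d$ and $\sum_{S_2}m_i\leq d$, giving $\sum_i m_i\leq 2d$ with no induction at all.
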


In the following we prove two results similar to Lemma \ref{conecurvesprojectivespace}.

We would like to remark that  Kopper independently proved Lemma \ref{conecurvesgeneral} in \cite{Ko16}, he also extended it to $\G(r,n)_k$ for $k\leq \codim(X)+2$ and determined the effective cone of $2$-dimensional cycles for $\G(r,n)_k,k\leq \codim(X)+1$.

\begin{Lemma}\label{conecurvesgeneral}
Let $X\subset \P^N$ be a projective non degenerate variety of dimension at least $2$,
covered by lines and such that $N_1(X)=\R [h]$.
If $k\leq \codim(X)+1$, then the Mori cone of $X_k$ is generated by $e_i,l_i, i=1,\dots,k,$  where $l_i=h-e_i$ is the class of the strict transform of a general line through the $i-$th blown-up point $p_i$.
\end{Lemma}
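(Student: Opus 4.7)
The plan is to proceed by duality: to show $\NE(X_k) \subseteq \cone(e_i, l_i)$, it suffices to exhibit enough nef divisors on $X_k$ generating the dual cone $\cone(e_i, l_i)^\vee$. The easy inclusion $\cone(e_i, l_i) \subseteq \NE(X_k)$ is immediate, since each $e_i$ is the class of a line in $E_i \cong \P^{n-1}$ while each $l_i$ is the strict transform of a line in $X$ through the general point $p_i$ (such a line exists because $X$ is covered by lines).

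Since $\rho(X_k) = \rho(X) + k = k+1$, using $H \cdot e_i = 0$ and $E_j \cdot e_i = -\delta_{ij}$ one checks that a divisor $D = \alpha H - \sum_j \beta_j E_j$ pairs non-negatively with every $e_i$ and every $l_i$ if and only if $\beta_i \geq 0$ and $\alpha \geq \beta_i$ for every $i$. The extreme rays of $\cone(e_i, l_i)^\vee$ are therefore $H$ together with the divisors $D_S := H - \sum_{i \in S} E_i$ for each non-empty $S \subseteq \{1, \dots, k\}$. Because $H$ is the pullback of an ample divisor it is automatically nef, so the main step is to show that each $D_S$ is nef. Let $C \subset X_k$ be an irreducible curve with image $\bar C$ in $X$: if $C$ is contained in some exceptional divisor $E_j$ then $[C]$ is a positive multiple of $e_j$ and $D_S \cdot e_j$ is $0$ or $1$; otherwise, writing $[C] = dh - \sum_j m_j e_j$ with $d = \deg \bar C$ and $m_j = \mult_{p_j} \bar C$, the claim $D_S \cdot C \geq 0$ becomes $\sum_{i \in S} m_i \leq d$.

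The heart of the proof is this last inequality. Let $\Lambda_S := \langle p_i : i \in S \rangle \subseteq \P^N$, of dimension $|S|-1$. If $\bar C$ is not contained in $\Lambda_S$, then there is a hyperplane $H' \supseteq \Lambda_S$ that does not contain $\bar C$, and the elementary bound $d = \bar C \cdot H' \geq \sum_{i \in S} (\bar C \cdot H')_{p_i} \geq \sum_{i \in S} m_i$ gives the required inequality. The main obstacle is therefore to rule out $\bar C \subseteq \Lambda_S$, and this is precisely where the hypothesis $k \leq \codim(X) + 1$ enters: since $|S|-1 \leq k-1 \leq \codim(X)$, by the general position of $p_1, \dots, p_k$ the span $\Lambda_S$ meets $X$ in its expected dimension $\max(|S|-1-\codim(X),0) \leq 0$. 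Thus $\Lambda_S \cap X$ is finite and contains no curve, contradicting $\bar C \subseteq \Lambda_S \cap X$ and completing the proof.
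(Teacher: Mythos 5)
Your proof is correct, and its mathematical content coincides with the paper's: both arguments come down to the inequality $\sum_{i\in S} m_i \le \deg \bar C$, proved by intersecting $\bar C$ with a hyperplane containing the span of the relevant blown-up points and using that, for $k \le \codim(X)+1$ general points, this span meets $X$ in a finite set. The only difference is packaging: the paper writes the curve class directly as a non-negative combination $c = \sum_i m_i l_i + \left(d - \sum_i m_i\right)h$ (after discarding indices with $m_i=0$), whereas you dualize and verify nefness of the extremal divisors $H - \sum_{i\in S} E_i$; the two are equivalent, though your version has the small bonus of exhibiting the nef cone of $X_k$ explicitly.
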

\begin{proof}
Let $c=dh-m_1e_1-\cdots-m_ke_k, d,m_1,\dots,m_k\in \Z,$
be the class of an irreducible curve $\tilde{C}\subset X_k$,
and $C\subset X$ the image of $\tilde C.$
If $C$ is a point, then $C$ is contracted by some of the blow-ups.
Hence $\tilde C\subset E_i$ for some $i$,
and therefore $c=ne_i$ for some $n>0$.
If $C$ is a curve, then $d\geq m_i=mult_{P_i}C\geq 0$ for everey $i$.
Assume that this is the case from now on.

If $k=1$, write $c=ml+(d-m)h$ with $d-m,m\geq 0$ and we are done.

Suppose now that $k\geq 2$. 
If some $m_i=0$ we reduce to the case $k-1$, assume then that $m_1>0$. 
It is sufficient to prove that $m_1+\cdots+m_k\leq d$. In fact, with this we may write
$$c=m_1l_1+\cdots + m_kl_k+(d-m_1-\cdots -m_k)h$$ which works because $h=l_i+e_i$ for any $i$.
To prove the statement $m_1+\cdots+m_k\leq d$ assume by contradiction that $m_1+\cdots+m_k>d$.

If $k$ satisfies $2\leq k\leq \codim(X)+1$, then set  $\Pi=\overline{p_1\dots p_k}=\P^{k-1}\subset \P^N.$
Therefore $C$ and $\Pi$ intersect with multiplicity at least 
$$m_1+\cdots+m_k>d=\deg (C)\cdot\deg (\Pi)$$
and then $C\subset \Pi$, but $k\leq $ codim$(X)+1$ is equivalent to dim $\Pi$+dim$(X)\leq N$,
and because the points $p_1,\dots, p_k$ are in general position, $\Pi\bigcap X$ has dimension zero,
that is, is finite. A contradiction with $C\subset\Pi\bigcap X$.
\end{proof}

Next, we prove a result for quadrics. In order to do this we introduce more notation. Denote by  $l_i$ the class $h-e_i$ of the strict transform of a general line through the $i-$th blown-up point $p_i,$ and
by $c_{ijl}$ the class $2h-e_i-e_j-e_l$ of the strict transform  of the conic through $p_i,p_j,p_l.$

\begin{Lemma}\label{conecurvesquadrics}
Let $Q^n_k$ be the blow-up of a smooth quadric $Q^n\subset \P^{n+1}$ at $k\geq 3$ general points.
Suppose that $k\leq (3n+2)/2$ if $n$ is even, and 
that $k\leq (3n+3)/2$ if $n$ is odd.
Then the Mori cone of $Q_k^n$ is generated by $e_i,l_i$ for $i=1,\dots,k,$ and by  
$c_{i,j,l}$ for $1\leq i<j<l\leq k.$
\end{Lemma}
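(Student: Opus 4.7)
The approach parallels that of Lemma \ref{conecurvesgeneral}, with the added feature that the conic classes $c_{ijl}$ enter naturally because any three general points of $Q^n$ lie on a unique conic, namely the intersection of their spanning plane with $Q^n$. Let $\tilde C \subset Q^n_k$ be an irreducible curve with class $c = dh - \sum_i m_i e_i$. If $\pi(\tilde C)$ is a point, then $\tilde C \subset E_i$ and $c \in \mathbb{R}_{\geq 0}\,e_i$; so assume $C := \pi(\tilde C)$ is a curve of degree $d \geq 1$ with $0 \leq m_i \leq d$. Writing the candidate decomposition $c = \sum_i \alpha_i e_i + \sum_i \beta_i l_i + \sum_{i<j<l} \gamma_{ijl} c_{ijl}$ with non-negative coefficients and comparing $h$- and $e_i$-coefficients translates the problem into producing $\beta_i,\gamma_J \geq 0$ (indexing triples $J=\{i,j,l\}$) satisfying $\sum_i \beta_i + 2\sum_J \gamma_J = d$ and $\beta_i + \sum_{J\ni i}\gamma_J \geq m_i$ for every $i$. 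Summing the second family over $i$ and combining with the first equality yields the necessary numerical inequality
$$\sum_i m_i \leq d + \sum_J \gamma_J \leq \tfrac{3d}{2}.$$

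The low-degree cases can be settled directly. For $d=1$ the curve $C$ is a line in $Q^n$, which meets at most one general point, so $c = l_i$ or $c = h = l_j + e_j$. For $d=2$ the conic $C$ spans a plane in $\P^{n+1}$ containing at most three of the general points, hence $\sum m_i \leq 3$; when exactly three points lie on $C$, the plane $\langle p_i,p_j,p_l\rangle$ intersects $Q^n$ in the unique conic through them and $c = c_{ijl}$, while the remaining sub-cases are decomposed using $l_i$ and $h$. For $d \geq 3$ with $\sum m_i \leq d$ one argues as in Lemma \ref{conecurvesgeneral}, writing $c = \sum m_i l_i + (d-\sum m_i)(l_1 + e_1)$. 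When $d < \sum m_i \leq 3d/2$ one sets $B = \sum m_i - d$ and must exhibit a non-negative family $\{\gamma_J\}$ with $\sum_J \gamma_J = B$ and $\sum_{J\ni i}\gamma_J \leq m_i$ for each $i$ with $m_i > 0$; this is a fractional $3$-matching problem, solvable over $\mathbb{R}_{\geq 0}$ whenever the master inequality holds and no single $m_i$ is too dominant, which can be verified for the curves arising geometrically since $m_i \leq d$ for each $i$.

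The substantive step is establishing the geometric inequality $\sum_i m_i \leq 3d/2$ for every irreducible $C \subset Q^n$ of degree $d$ passing through the $k$ general points, under the hypothesis $k \leq \lfloor 3(n+1)/2\rfloor$. The main tool is B\'ezout's theorem: for any effective divisor $D$ on $Q^n$ not containing $C$ and vanishing at all the $p_i$, one has $\sum_i m_i \leq C \cdot D$. Partitioning the points into at most two subsets of size $\leq n+1$ and fitting each into a hyperplane section yields the coarser bound $\sum m_i \leq 2d$, which already handles the range $k \leq 2(n+1)$. Sharpening this to $3d/2$ under the tighter hypothesis requires a more delicate choice of divisor, combining a hyperplane through $n+1$ of the points with a pencil of quadrics through the remaining ones, coupled with a separate reduction for curves contained in a small linear span $\langle p_{i_1},\dots,p_{i_r}\rangle$: in that case $C$ sits inside a lower-dimensional quadric $Q^{r-2}$, and one reduces by induction on $n$ to the Mori cone of a lower-dimensional blown-up quadric, anchored by low-dimensional base cases ($Q^2$ and $Q^3$) checked by hand. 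The main obstacle is precisely this inductive refinement of the B\'ezout bound: the naive hyperplane partition delivers only $\sum m_i \leq 2d$, and extracting the factor $3/2$ requires exploiting the quadric structure of $Q^n$ together with a careful induction whose base cases and interaction with the combinatorial matching problem above must be handled simultaneously, which is what forces the sharp numerical hypothesis $k \leq \lfloor 3(n+1)/2\rfloor$.
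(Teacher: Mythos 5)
Your reduction of the problem to (a) a geometric inequality $\sum_i m_i \leq 3d/2$ and (b) a fractional $3$-matching feasibility question is a reasonable framing, but neither (a) nor (b) is actually established, and (a) is exactly where you stop: you call it ``the main obstacle'' and offer only a sketch involving a hyperplane through $n+1$ points combined with ``a pencil of quadrics through the remaining ones'' and an unexecuted induction on $n$. That is not a proof. Moreover, (b) as stated is false: the conditions $\sum_i m_i \leq 3d/2$ and $m_i\leq d$ do \emph{not} guarantee a non-negative solution of $\sum_i\beta_i+2\sum_J\gamma_J\leq d$, $\beta_i+\sum_{J\ni i}\gamma_J\geq m_i$ (take $k=3$, $m_1=m_2=3d/4$, $m_3=0$: summing the first two constraints gives $\beta_1+\beta_2+2\gamma_{123}\geq 3d/2>d$). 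One needs further inequalities such as $m_i+m_j\leq d$ for pairs, and you give no argument that the geometrically arising multiplicity vectors satisfy whatever extra conditions make the matching feasible. So the proposal has two genuine gaps, one of which is the heart of the lemma.

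The paper avoids both difficulties by a more economical decomposition. After sorting $m_1\leq\cdots\leq m_k$ and discarding indices with $m_i=0$, partition the points into triples (plus one quadruple when $n$ is even); in each triple $\{a,b,c\}$ with $m_a\leq m_b\leq m_c$ write $m_a c_{abc}+(m_b-m_a)l_b+(m_c-m_a)l_c$, and similarly for the quadruple using two overlapping conics. All coefficients are manifestly non-negative except that of $h$, which equals $d-\sum_{i\in S}m_i$ for an explicit subset $S$ of exactly $n+1$ indices (the two larger indices of each triple). Since $\langle p_i : i\in S\rangle$ is a hyperplane of $\mathbb{P}^{n+1}$ not containing $p_1$, the failure $\sum_{i\in S}m_i>d$ would force $C$ into that hyperplane and hence $m_1=0$, a contradiction. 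Thus the only inequality ever needed is a single hyperplane B\'ezout bound on a well-chosen subset of $n+1$ points --- no $3d/2$ bound, no pencils of quadrics, no induction on $n$. If you want to salvage your approach, the lesson is to choose \emph{which} points receive conics and which receive lines according to the ordering of the multiplicities, so that the decomposition's positivity reduces to a subset-sum bound realizable by one hyperplane.
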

\begin{proof}
Observe that given three general points $p_i,p_j,p_l$ the plane generated by them
intersects the quadric $Q^n$ in a conic whose strict transform under 
the blow-up has class $c_{ijl}=2h-e_i-e_j-e_l$. It is enough to prove the result
for $k= (3n+2)/2$ if $n$ is even, and for $k= (3n+3)/2$ if $n$ is odd.
We use the same notation and strategy of Lemma \ref{conecurvesgeneral}:
assume from now on $d\geq m_k\geq \cdots \geq m_1>0.$ 
It is enough to write 
$$c=dh-m_1e_1-\cdots-m_ke_k$$
as a non negative integer combination of $c_{ijl},l_i,e_i,$ and $h$.

%

If $n=2n'-1$ is odd, then $k=3n'$, and we divide the points into $n'$ groups of three.
In each group we pick a conic through the three points with the multiplicity required for the first,
then we pick a line through the second and another line through the third.

If $n=2n'$ is even, then $k=3n'+1$, and we divide the points into a group
of four points and $n'-1$ groups of three points.
For the latter, we do as we did for the odd case.
For the former, we pick a conic through three points, say $A,B,C,$ and then another conic through the another triple, say $B,C,D,$ and then one line through $C$ and  one line through $D.$

Now, more precisely we proceed as follows.

For $k=3$ write $$c=m_1c_{123}+(m_2-m_1)l_2+(m_3-m_1)l_3+(d-m_2-m_3)h.$$
If $m_2+m_3>d$ then the line through $p_2,p_3$ is contained in $c$.
Since $c$ is irreducible, then $c$ is this line, and hence $m_1=0$ giving a contradiction.
Thus $m_2+m_3\leq d,$ that is $d-m_2-m_3\geq 0,$ and we are done. 

For $k=4$ write $$c=m_1c_{123}+(m_2-m_1)c_{234}+(m_3-m_2)l_3+(m_4-m_2+m_1)l_4+(d-(m_1+m_3+m_4))h.$$ 
If $m_1+m_3+m_4>d$ then $c$ is contained in the plane $\Pi=\overline{p_1p_3p_4}=\P^2$ and $m_2=0$,
contradiction. Then $m_1+m_3+m_4\leq d$ and we are done. 

Now assume $k\geq 5$.

First the case $n=2n'-1$ odd.
Write
\begin{align*}
c&=\sum _{i=1}^{n'}\Big[
m_{3i-2} c_{3i-2,3i-1,3i} +
(m_{3i-1}-m_{3i-2}) l_{3i-1}+(m_{3i}-m_{3i-2}) l_{3i} \Big]+\\
&+\left(d-\sum_{i=1}^{n'}(m_{3i-1}+m_{3i})\right)h
\end{align*}

If we have $d-(\sum_{i=1}^{n'}(m_{3i-1}+m_{3i}))\geq 0$, then we are done.
Assume by contradiction that $\sum_{i=1}^{n'}(m_{3i-1}+m_{3i})>d$.
Then $c$ is contained in 
$$\overline{p_2p_3\cdots p_{3n'-1}p_{3n'}}=\P^{2n'-1}=\P^n=\Pi.$$
Since the points are in general position,  $p_1\notin \Pi$ and $m_1=0$, contradiction again.

Now the case $n=2n'$ even.
Write
\begin{align*}
c&=\sum _{i=1}^{n'-1}\Big[m_{3i-2}
 c_{3i-2,3i-1,3i} +
(m_{3i-1}-m_{3i-2}) l_{3i-1}+(m_{3i}-m_{3i-2}) l_{3i} \Big]\\
&+m_{3n'-2}c_{3n'-2,3n'-1,3n'}+(m_{3n'-1}-m_{3n'-2})c_{3n'-1,3n',3n'+1}+\\
&+(m_{3n'}-m_{3n'-1})l_{3n'}+(m_{3n'+1}-m_{3n'-1}+m_{3n'-2})l_{3n'+1}+\\
&+\left(d-\sum_{i=0}^{n'-1}(m_{3i-1}+m_{3i})-(m_{3n'-2}+m_{3n'}+m_{3n'+1})\right)h
\end{align*}

If we have $d-\sum_{i=1}^{n'-1}(m_{3i-1}+m_{3i})-(m_{3n'-2}+m_{3n'}+m_{3n'+1})\geq 0$,
then we are done.
Assume by contradiction that
$$\sum_{i=1}^{n'-1}(m_{3i-1}+m_{3i})+(m_{3n'-2}+m_{3n'}+m_{3n'+1}))>d.$$
Then $c$ is contained in 
$$\overline{p_2p_3\dots p_{3n'-4}p_{3n'-3}p_{3n'-2}p_{3n'}p_{3n'+1}} =\P^{2n'}=\P^n=\Pi.$$
Since the points are in general position,  $p_1\notin \Pi$ and $m_1=0.$ A Contradiction.
\end{proof}

Finally, we describe the cone of moving curves of the blow-up at one point $\G(r,n)_1$ of the Grassmannian.
We denote by $\mov(X)$
the cone of moving curves of $X,$ namely the convex cone generated by classes
of curves moving in a family of curves covering
a dense subset of $X.$
We denote by $NE(X)$ the Mori cone of $X.$

\begin{Remark} Recall the dualities of cones of curves and divisors:
$$NE(X) = \Nef(X)^\vee  \mbox{ and } \mov(X) = \Eff(X)^\vee.$$
The first one follows directly from the definitions and the second one was proved in 
\cite[Theorem 2.2]{BDPP}.
\end{Remark}
\begin{Proposition}We have
\begin{itemize}
\item $NE(\G(r,n)_1)=\cone(e,h-e);$
\item $\Nef(\G(r,n)_1)=\cone( H, H-E);$
\item $\mov(\G(r,n)_1)=\cone( h,(r+1)h-e);$
\item $\Eff(\G(r,n)_1)=\cone( E, H-(r+1)E).$
\end{itemize}
\end{Proposition}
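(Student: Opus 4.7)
The plan is to establish the four equalities by a short chain of duality arguments, relying on results already proved in the excerpt. First I would fix the intersection numbers on $\G(r,n)_1$: with respect to the basis $(H,E)$ of $\Pic(\G(r,n)_1)$ and the basis $(h,e)$ of $N_1(\G(r,n)_1)$, one has $H\cdot h=1$, $H\cdot e=0$, $E\cdot h=0$, $E\cdot e=-1$, so these pairings are dual up to sign.

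For the Mori cone, I would apply Lemma~\ref{conecurvesgeneral} with $k=1$: the Grassmannian $\G(r,n)$ is a non-degenerate projective variety of Picard number one, covered by lines, and embedded in $\P^N$ via Pl\"ucker in codimension at least $1$, so $k=1\leq \codim(\G(r,n))+1$. The lemma then gives $NE(\G(r,n)_1)=\cone(e,\,h-e)$, proving $(1)$. The nef cone is the dual of the Mori cone: a divisor $aH+bE$ pairs non-negatively with $e$ iff $b\leq 0$, and with $h-e$ iff $a+b\geq 0$, so $\Nef(\G(r,n)_1)=\cone(H,\,H-E)$, proving $(2)$.

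For $(4)$, I would invoke Lemma~\ref{G(r,n)1 spherical}, which shows that $\G(r,n)_1$ is spherical and explicitly computes $\Eff(\G(r,n)_1)=\cone(E,\,H-(r+1)E)$. For $(3)$, I would use the theorem of Boucksom--Demailly--P\u aun--Peternell \cite[Theorem 2.2]{BDPP}, which asserts that $\mov(X)=\Eff(X)^\vee$ for any smooth projective variety $X$. A curve class $ah+be$ pairs non-negatively with $E$ iff $b\leq 0$, and with $H-(r+1)E$ iff $a+(r+1)b\geq 0$; the extremal rays of the dual are therefore spanned by $h$ (setting $b=0$) and by $(r+1)h-e$ (setting $a+(r+1)b=0$ with $b=-1$). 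Hence $\mov(\G(r,n)_1)=\cone(h,\,(r+1)h-e)$.

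The computations are all routine linear algebra once the key inputs are in hand; the only substantive steps are the two external facts (the Mori cone description via Lemma~\ref{conecurvesgeneral} and the effective cone via sphericality), both of which have already been established earlier in the chapter. No genuine obstacle is expected.
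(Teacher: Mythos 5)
Your proposal is correct, and for the first two items it coincides with the paper's proof (Lemma~\ref{conecurvesgeneral} for the Mori cone, then dualize to get the nef cone). For the last two items, however, you take a genuinely different route. You import the effective cone directly from Lemma~\ref{G(r,n)1 spherical} — where it was obtained from the sphericality of $\G(r,n)_1$ and the description of $\Eff$ of a spherical variety in terms of colors and boundary divisors — and then recover $\mov(\G(r,n)_1)$ by the duality $\mov(X)=\Eff(X)^\vee$ of Boucksom--Demailly--P\u{a}un--Peternell. The paper instead argues in the opposite direction and stays self-contained: it first shows that $h$ and $(r+1)h-e$ are classes of \emph{moving} curves — the latter by connecting the blown-up point $p$ to a general point $q$ through a degree $r+1$ rational normal curve inside $\G(r,n)$ obtained from an $(r+1)$-dimensional rational normal scroll containing $V_p$ and $V_q$ — which gives $\mov\supset\cone(h,(r+1)h-e)$, hence by duality $\Eff\subset\cone(E,H-(r+1)E)$; it then exhibits effective divisors in the classes $E$ and $H-(r+1)E$ (the latter via Lemma~\ref{divinG(r,n)}, a hyperplane section with multiplicity $r+1$ at $p$) to force equality, and dualizes back to pin down $\mov$. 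Your argument is shorter but leans on the heavier spherical-variety machinery of Chapter~\ref{cap6} (which does precede this Proposition, so there is no circularity); the paper's argument buys an elementary, purely geometric proof independent of sphericality, at the cost of having to produce the covering curves and the extremal effective divisors explicitly. Both are complete; your linear-algebra computations of the dual cones are also correct.
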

\begin{proof}
The first item follows from Lemma \ref{conecurvesgeneral} and the second by duality.

Note that $h$ is a moving curve because $\G(r,n)$ is covered by lines. The class $(r+1)h-e$ is also moving. Indeed, if $p\in \G(r,n)$ is the point blown-up and $q\in \G(r,n)$ is another point let $V_p,V_q\subset\P^n$ be the $r-$dimensional projective spaces corresponding to $p,q,$ and
consider $X\subset \P^n$ a rational normal scroll of dimension $r+1$ containing $V_p,V_q.$
Then the $r-$dimensional spaces contained in $X$ correspond to points of a rational normal curve
of degree $r+1$ inside $\G(r,n)$ connecting $p$ and $q,$ the strict transform of this curve has class $(r+1)h-e.$
This shows that $\G(r,n)_1\setminus E$ is covered by curves of class $(r+1)h-e.$ 

Therefore $\mov(X)\supset\cone(h,(r+1)h-e).$

By duality we have $\Eff(X)\subset \cone(E, H-(r+1)E).$
But we know that $E$ is effective, and the class $H-(r+1)E$ is effective as well.
Indeed, it is easy to explicit a divisor $D$ on $\G(r,n)$ that is a hyperplane section with
multiplicity $r+1$ at $p$ using Lemma \ref{divinG(r,n)}.

We conclude that $\Eff(X)= \cone(E, H-(r+1)E),$ and by duality $\mov(X)=\cone(h,(r+1)h-e).$
\end{proof}

\section{Weak Fano blow-ups of quadrics and Grassmannians at points}

In this section we determine when the blow-up of a smooth quadric and the blow-up of a Grassmannian at general points are Fano or weak Fano. We keep the notations introduced in the beginning of Section \ref{moricones}.

As motivation we recall the following result for the projective space.

\begin{Proposition}\label{P_k^n weak Fano}
Let $\P^n_k$ be the blow-up of the projective space $\P^n$ at $k\geq 1$ general points. Then
\begin{enumerate}
 \item[(a)]$\P_k^n$ is Fano if and only if either $k=1$ or $n=2$ and $k\leq 8.$
 \item[(b)]$\P_k^n$ is weak Fano if and only if one of the following holds.
\item[$\bullet$] $n=2$ and $k\leq 8;$ or
\item[$\bullet$] $n=3$ and $k\leq 7;$ or
\item[$\bullet$] $n\geq 4$ and $k=1.$
\end{enumerate}
\end{Proposition}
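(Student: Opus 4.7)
My plan is to argue by directly computing intersections of $-K_{\mathbb{P}^n_k}$ with generators of $\overline{\mathrm{NE}}(\mathbb{P}^n_k)$ (for Fano/nefness) and the top self-intersection number $(-K)^n$ (for bigness), splitting into ranges of $(n,k)$.

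First I would write down the canonical class: if $\pi:\mathbb{P}^n_k\to\mathbb{P}^n$ is the blow-up of $k$ general points with exceptional divisors $E_1,\dots,E_k$ and $H = \pi^*\mathcal{O}(1)$, then
\[
-K_{\mathbb{P}^n_k} \;=\; (n+1)H \;-\; (n-1)\sum_{i=1}^k E_i.
\]
For $k=1$ the Mori cone is $\cone(e,h-e)$, and the intersections $-K\cdot e = n-1$ and $-K\cdot(h-e)=2$ are strictly positive for $n\geq 2$, so $\mathbb{P}^n_1$ is Fano in all dimensions $\geq 2$ (the case $n=1$ is trivial). For $2\leq k\leq 2n$ I would invoke Lemma~\ref{conecurvesprojectivespace}, which gives $\overline{\mathrm{NE}}(\mathbb{P}^n_k)=\cone(e_i,\,l_{ij})$ with $l_{ij}=h-e_i-e_j$. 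Then the only intersection numbers to check are
\[
-K\cdot e_i = n-1, \qquad -K\cdot l_{ij} = (n+1)-2(n-1) = 3-n.
\]

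From here the Fano/weak-Fano dichotomy drops out mechanically. For $n\geq 4$ and $k\geq 2$ we have $-K\cdot l_{ij}=3-n<0$, so $-K$ fails to be nef and only $k=1$ survives. For $n=3$ and $k\geq 2$ we get $-K\cdot l_{ij}=0$, ruling out Fano but leaving weak Fano possible; bigness is controlled by the intersection formula $(-K_{\mathbb{P}^n_k})^n = (n+1)^n - k(n-1)^n$ (using $H^n=1$, $H^a E_i^b=0$ for $a,b>0$, $E_i^n=(-1)^{n-1}$), which in the $n=3$ case gives $64-8k$, positive precisely for $k\leq 7$ and vanishing for $k=8$. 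For $n=2$ we have $3-n=1>0$ and $(n+1)^n - k(n-1)^n = 9-k$, so Fano is possible exactly up to $k\leq 8$.

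The one substantive step still missing is that Lemma~\ref{conecurvesprojectivespace} is only stated for $k\leq 2n$, so for $n=2,\,k\in\{5,\dots,8\}$ and for $n=3,\,k=7$ we need to know $\overline{\mathrm{NE}}(\mathbb{P}^n_k)$ is generated by classes on which $-K$ is non-negative. For $n=2$ this is the classical del Pezzo surface theory: $\mathbb{P}^2_k$ with $k\leq 8$ general points is a smooth del Pezzo, the Mori cone is generated by the finitely many $(-1)$-curves, and $-K$ meets each of them in $1$, so $-K$ is ample. The genuinely delicate case is $n=3$, $k=7$: here I would argue that any irreducible curve class $C=dh-\sum m_i e_i$ with $m_i\geq 0$ satisfies $-K\cdot C = 4d - 2\sum m_i\geq 0$, using that an irreducible curve of degree $d$ in $\mathbb{P}^3$ passing through the general points $p_1,\dots,p_7$ with multiplicities $m_i$ must satisfy a projective inequality of the form $\sum m_i\leq 2d$ coming from the fact that $7$ general points impose independent conditions on quadrics (so $C$ cannot lie on too many quadrics with prescribed multiplicities unless $2d\geq\sum m_i$). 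This bound, together with bigness $(-K)^3=8>0$, yields weak Fano-ness. The main obstacle in the whole argument is precisely this nefness verification for $\mathbb{P}^3_7$, where Lemma~\ref{conecurvesprojectivespace} does not apply and one has to control all possible irreducible curve classes.
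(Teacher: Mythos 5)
Your proof is correct, and it follows the same template the paper uses for the analogous statements about quadrics and Grassmannians (Propositions~\ref{Q_k^n weak Fano} and \ref{G(r,n)_k weak Fano}): pair $-K$ against the generators of the Mori cone supplied by the relevant cone-of-curves lemma, and test bigness via the top self-intersection $(-K)^n=(n+1)^n-k(n-1)^n$. The paper itself states Proposition~\ref{P_k^n weak Fano} without proof, merely recalling it as motivation (it is taken from \cite{AM16}), so there is no in-paper argument to compare against line by line; but your computations match the paper's method exactly where they overlap. The genuine added content in your write-up is the treatment of the cases outside the range $2\leq k\leq 2n$ of Lemma~\ref{conecurvesprojectivespace}, namely $\mathbb{P}^2_k$ for $5\leq k\leq 8$ (correctly dispatched by classical del Pezzo theory) and $\mathbb{P}^3_7$. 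For the latter your Bezout argument is sound, though you should state the key point more sharply: the net of quadrics through $7$ general points of $\mathbb{P}^3$ has finite base locus (eight points), so some quadric $Q$ of the net does not contain the irreducible curve $C$, whence $2d=Q\cdot C\geq\sum_i m_i$ and $-K\cdot\widetilde{C}=4d-2\sum_i m_i\geq 0$; the phrase ``imposes independent conditions on quadrics'' by itself does not quite deliver this. With that clarification the argument is complete.
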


Now, we prove our result for quadrics.

\begin{Proposition}\label{Q_k^n weak Fano}
Let $Q^n_k$ be the blow-up of a smooth quadric $Q^n\subset \P^{n+1}$ at $k\geq 1$ general points. Then
\begin{enumerate}
 \item[(a)]$Q_k^n$ is Fano if and only if either $k\leq 2$ or $n=2$ and $k\leq 7.$
 \item[(b)]$Q_k^n$ is weak Fano if and only if one of the following holds.
\item[$\bullet$] $n=2$ and $k\leq 7;$ or
\item[$\bullet$] $n=3$ and $k\leq 6;$ or
\item[$\bullet$] $n\geq 4$ and $k\leq 2$.
\end{enumerate}
\end{Proposition}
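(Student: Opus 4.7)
The approach mirrors the $\P^n_k$ case (Proposition \ref{P_k^n weak Fano}): I will compute the anticanonical class and its top self-intersection, then test $-K$ against the generators of the Mori cone supplied by Lemmas \ref{conecurvesgeneral} and \ref{conecurvesquadrics}. By adjunction on $Q^n\subset\P^{n+1}$ and the blow-up formula,
$$
-K_{Q^n_k}=nH-(n-1)(E_1+\cdots+E_k).
$$
Using $H^n=2$, $E_i^n=(-1)^{n-1}$, and the vanishing of the mixed monomials $H^a\cdot E_i^b$ and $E_i^a\cdot E_j^b$ (for $i\neq j$, $a,b\geq 1$) obtained by choosing representatives in general position, this gives $(-K)^n=2n^n-k(n-1)^n$. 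Hence bigness forces $k<2\bigl(n/(n-1)\bigr)^n$, which sharpens to $k\leq 7$ for $n=2$ and to $k\leq 6$ for $n\geq 3$. Intersecting $-K$ against the generators $e_i$, $l_i=h-e_i$, and $c_{ijl}=2h-e_i-e_j-e_l$ from Section \ref{moricones} will yield
$$
-K\cdot e_i=n-1,\qquad -K\cdot l_i=1,\qquad -K\cdot c_{ijl}=3-n.
$$

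For $n\geq 3$ and $k\leq 2$, Lemma \ref{conecurvesgeneral} (applicable since $k\leq\codim Q^n+1=2$) gives $NE(Q^n_k)=\cone(e_i,l_i)$; both generators pair strictly positively with $-K$, so $-K$ is ample and $Q^n_k$ is Fano. For $n\geq 4$ and $k\geq 3$, the class $c_{ijl}$ remains effective for any triple of blown-up points (cut $Q^n$ with the plane they span) and $-K\cdot c_{ijl}=3-n<0$, obstructing nefness even outside the range covered by Lemma \ref{conecurvesquadrics}. For $n=3$ and $3\leq k\leq 6$, Lemma \ref{conecurvesquadrics} describes $NE(Q^3_k)=\cone(e_i,l_i,c_{ijl})$ completely; all three pairings with $-K$ are nonnegative (with $-K\cdot c_{ijl}=0$), so $-K$ is nef but not ample, and the criterion that a nef divisor is big iff its top self-intersection is positive, together with $(-K)^3=54-8k>0$ for $k\leq 6$, yields weak Fano. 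For $n=3$ and $k\geq 7$, $(-K)^3\leq -2<0$ rules out bigness.

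The case $n=2$ will require a separate argument since $Q^2=\P^1\times\P^1$ has $N_1$ of rank two and Lemmas \ref{conecurvesgeneral}, \ref{conecurvesquadrics} do not apply literally. The cleanest route is to invoke the classical classification of del Pezzo surfaces: $(\P^1\times\P^1)_k$ is a del Pezzo of degree $8-k$ for $1\leq k\leq 7$, hence Fano and a fortiori weak Fano; for $k\geq 8$ one has $(-K)^2=8-k\leq 0$, which kills bigness. Equivalently, I can imitate the higher-$n$ argument using the two ruling classes $h_1,h_2$, writing $-K=2h_1+2h_2-\sum E_i$ and checking that it pairs to $1$ with each of the effective generators $e_i$, $h_j-e_i$, and the $(1,1)$-conics $h_1+h_2-e_i-e_j-e_l$. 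The hardest part will be coordinating the bigness bound $k<2\bigl(n/(n-1)\bigr)^n$ with the nefness obstruction coming from $c_{ijl}$ across the range of $n$: nefness is the binding constraint for $n\geq 4$ (already at $k=3$), bigness becomes binding for $n=3$ at $k\geq 7$ and for $n=2$ at $k\geq 8$, and verifying these sharp transitions case by case is where the bookkeeping is most delicate.
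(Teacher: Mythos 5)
Your proposal is correct and follows essentially the same route as the paper: the same anticanonical class, the same pairings $-K\cdot e_i=n-1$, $-K\cdot l_i=1$, $-K\cdot c_{ijl}=3-n$ against the Mori cone generators from Lemmas \ref{conecurvesgeneral} and \ref{conecurvesquadrics}, and the same top self-intersection computation combined with the nef-and-big criterion. The only cosmetic difference is the $n=2$ case, where the paper reduces to Proposition \ref{P_k^n weak Fano} via the isomorphism $Q^2_1\cong\P^2_2$ rather than citing the del Pezzo classification directly; these amount to the same classical fact.
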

\begin{proof}
We use the same notation of Lemma \ref{conecurvesquadrics} for curves.
For $n=2$ the result follows from the identification $\P^2_2\cong Q^2_1$ and from Proposition \ref{P_k^n weak Fano}.
Assume $n\geq 3$.

Noting that $-K_{Q_k^n}=nH-(n-1)(E_1+\cdots+E_k)$, for every $k$ we have
$$-K_{Q_k^n}\cdot e_i= n-1>0 \mbox{ and }
-K_{Q_k^n}\cdot l_i=n-(n-1)=1>0.$$
Then $-K_{Q_k^n}$ is ample for $k\leq 2$
by Lemma \ref{conecurvesgeneral}. Therefore, $Q_1^n$ and $Q_2^n$ are Fano.

Now assume $k\geq 3$, and observe that
$$-K_{Q_k^n}\cdot c_{ijl}\!=\!(nH-(n-1)(E_1+\cdots+E_k))\cdot(2h-e_i-e_j-e_l)\!=\!2n-3(n-1)=3-n.$$
Hence $-K_{Q_k^n}$ is not nef and $Q_k^n$ 
not weak Fano for $n\geq 4,k\geq 3$.

It remains the case $n=3.$ Assuming $n=3,$ Lemma \ref{conecurvesquadrics} gives us the Mori cone of $Q_k^3$ for $k$ up to $6$,
and we have that $-K_{Q_k^3}\cdot c_{ijl}=0$. Therefore $-K_{Q_k^3}$ is nef but not ample for $3\leq k\leq 6$. Note that $H^3=\deg(Q^3)=2$ and
$$   (-K_{Q_k^3})^3=(3H-2(E_1+\dots+E_k))^3=3^3\cdot 2-2^3\cdot k=
54-8k>0 \Leftrightarrow k\leq 6.  $$
Therefore, $Q_k^3$ is weak Fano if and only if $k\leq 6$ by \cite[Section 2.2]{Lazarsfeldvol1}.
\end{proof}

Next, we prove a result for the Grassmannian.

\begin{Proposition}\label{G(r,n)_k weak Fano}
Let $\G(r,n)_k$ be the blow-up of the Grassmannian $\G(r,n)$ at $k\geq 1$ general points. Then
\begin{enumerate}
\item[(a)] $\G(r,n)_k$ is Fano if and only if $(r,n)=(1,3)$ and $k\leq 2.$
\item[(b)] $\G(r,n)_k$ is weak Fano if and only if one of the following holds.
\item[$\bullet$] $(r,n)=(1,3)$ and $k\leq 2;$ or
\item[$\bullet$] $(r,n)=(1,4)$ and $k\leq 4.$
\end{enumerate}
\end{Proposition}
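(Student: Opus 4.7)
\medskip
\noindent\emph{Proof plan.}

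The plan is to reduce the problem to three disjoint situations by first ruling out all but two pairs $(r,n)$, and then treating those two pairs separately. Throughout write $d=\dim\G(r,n)=(r+1)(n-r)$, so $-K_{\G(r,n)_k}=(n+1)H-(d-1)(E_1+\dots+E_k)$, and recall that on $\G(r,n)\subset\P^N$ through any point there passes a line, the strict transform of which (avoiding the other general blown-up points) has class $l=h-e_1$. Since $\G(r,n)_k$ dominates $\G(r,n)_1$ under further blow-ups and the same class $h-e_1$ makes sense on each $\G(r,n)_k$, intersection numbers with $-K$ can be computed directly and
\[
-K_{\G(r,n)_k}\cdot l \;=\; (n+1)-(d-1)\;=\;-r(n-r-1)+2.
\]
This is strictly negative exactly when $r(n-r-1)\geq 3$; an elementary check (using $n\geq 2r+1$) shows the only remaining pairs are $(r,n)\in\{(1,3),(1,4)\}$, and in those remaining cases the value is $1$ and $0$ respectively. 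Hence outside of these two pairs, $-K_{\G(r,n)_k}$ is not nef for any $k\geq 1$, which kills both Fano and weak Fano.

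Next I would dispatch the case $(r,n)=(1,3)$ using the isomorphism $\G(1,3)\cong Q^4\subset\P^5$ (the Klein quadric in its Plücker embedding) and invoke Proposition \ref{Q_k^n weak Fano} for $n=4$. This immediately gives that $\G(1,3)_k$ is Fano iff $k\leq 2$ and weak Fano iff $k\leq 2$.

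For the case $(r,n)=(1,4)$ one has $\dim\G(1,4)=6$, $\deg\G(1,4)=5$, and $-K_{\G(1,4)_k}=5H-5\sum_{i=1}^k E_i$. Since $\codim\G(1,4)=3$, Lemma \ref{conecurvesgeneral} applies for $k\leq 4$ and yields $\NE(\G(1,4)_k)=\cone(e_i,l_i:i=1,\dots,k)$; then $-K\cdot e_i=5>0$ and $-K\cdot l_i=0$, so $-K$ is nef but not ample, ruling out Fano already for $k=1$. For bigness, using $H\cdot E_i=E_i\cdot E_j=0$ ($i\neq j$), $H^6=5$, and $E_i^6=-1$, I compute
\[
(-K_{\G(1,4)_k})^6 \;=\; 5^6\bigl(H^6+\textstyle\sum_{i=1}^k(-E_i)^6\bigr)\;=\;5^6(5-k),
\]
which is positive precisely for $k\leq 4$. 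Combined with nefness this proves $\G(1,4)_k$ is weak Fano for $1\leq k\leq 4$.

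Finally, for $(r,n)=(1,4)$ with $k\geq 5$, the above formula gives $(-K_{\G(1,4)_k})^6\leq 0$. If $-K$ were nef then $(-K)^6\geq 0$, forcing $(-K)^6=0$, which contradicts bigness; if $-K$ is not nef, then $\G(1,4)_k$ fails to be weak Fano by definition. Either way $\G(1,4)_k$ is not weak Fano for $k\geq 5$, completing the classification. The main subtlety in the argument is the borderline nefness at $(1,4)$ — in particular the fact that $-K\cdot l_i=0$ — which both explains why Fano fails already at $k=1$ and forces the top-self-intersection computation to decide weak Fanoness; the remaining steps are intersection-theoretic calculations controlled by Lemma \ref{conecurvesgeneral} and Proposition \ref{Q_k^n weak Fano}.
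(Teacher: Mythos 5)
Your proposal is correct and follows essentially the same route as the paper: reduce to $(r,n)\in\{(1,3),(1,4)\}$ by intersecting $-K$ with the line class $l=h-e_1$, settle $(1,3)$ via the identification $\G(1,3)\cong Q^4$ and Proposition \ref{Q_k^n weak Fano}, and settle $(1,4)$ via Lemma \ref{conecurvesgeneral} together with the computation $(-K)^6=5^6(5-k)$. Your explicit treatment of $k\geq 5$ for $(1,4)$ (nef and big would force $(-K)^6>0$) is a small point the paper leaves implicit, but otherwise the arguments coincide.
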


\begin{proof}
First we analyze wheter the blow-up at one point has anticanonial divisor nef or not.
By Lemma \ref{conecurvesgeneral},
it is sufficient to intersect $-K_{\G(r,n)_1}$ with $l$ and $e$.
We have $-K_{\G(r,n)_1}=(n+1)H-((r+1)(n-r)-1)E$ and
$$\begin{cases}
-K_{\G(r,n)_1}\cdot e= (r+1)(n-r)-1 >0 \\
-K_{\G(r,n)_1}\cdot l= (n+1)- ((r+1)(n-r)-1)=-nr+r^2+r+2
\end{cases}$$

If $r=1$, then $-K_{\G(r,n)_1}\cdot l=-n+4$, and therefore $-K_{\G(r,n)_1}$ is nef (ample) if and only if $n\leq 4$ ($n=3$).
Thus $\G(r,n)_1$ is not weak Fano (Fano) for
$n\geq 5$ ($n\geq 4$).

If $r\geq 2$, we have 
$$-K_{\G(r,n)_1}\cdot l\leq -(2r+1)r+r^2+r+2=-r^2+2<0.$$
Thus all these $\G(r,n)_1$ do not have anticanonical divisor nef and are not weak Fano.
 
Now we only have to check when $\G(1,3)_k$ is weak Fano or Fano and when $\G(1,4)_k$ is weak Fano.

For $\G(1,3)_k$ use Lemma \ref{Q_k^n weak Fano} with $n=4$.

For $\G(1,4)_k$ recall that $\deg(\G(1,4))=5$ and
$\codim(\G(1,4))=9-6=3.$
Then $H^6=5$ and we have the Mori cone of 
$\G(1,4)_k$ for $k$ up to $4$ by
Lemma \ref{conecurvesgeneral}.
Note now that for every $k$ we have 
$$\begin{cases}
-K_{\G(1,4)_k}\cdot e_i=5>0 \\
-K_{\G(1,4)_k}\cdot l_i=0 \\
(-K_{\G(1,4)_k})^6=(5H-5(E_1+\cdots+E_k)))^6=5^6(5-k)
\end{cases}$$
Then, again by \cite[Section 2.2]{Lazarsfeldvol1},
we have that
$\G(1,4)_k$ is weak Fano if and only if $k\leq 4.$ 
\end{proof}

Since \cite[Corollary 1.3.2]{BCHM} implies that weak Fano varieties are Mori dream spaces we get  the following as a direct consequence of Propositions \ref{Q_k^n weak Fano} and\ref{G(r,n)_k weak Fano}.

\begin{Theorem}
Let $\G(r,n)_k$ be the blow-up of the Grassmannian $\G(r,n)$ at $k$ general points. 
Then
$\G(1,3)_1,\G(1,3)_2,\G(1,4)_1,\G(1,4)_2,\G(1,4)_3,$ and $\G(1,4)_4$ are Mori dream spaces.
Let $Q^n_k$ be the blow-up of a smooth quadric $Q^n\subset \P^{n+1}$ at $k$ general points.
Then $Q^3_1,\dots,Q^3_6,Q^n_1,Q^n_2,n\geq 4$
and Mori dream spaces as well.
\end{Theorem}

\chapter{Conjectures  and future work}
\label{cap8}
In this chapter we collect some results and conjectures concerning the birational geometry of $\G(r,n)_k.$ 
In Section \ref{MCD G(r,n)_1} we describe a conjectural Mori chamber Decomposition of $\G(r,n)_1$ and a possible strategy to prove it.
In Section \ref{MDS G(r,n)_k} we discuss possible approaches  to determine for which triples $(r,n,k)$ the blow-up $\G(r,n)_k$ is a Mori dream space.

\section{Mori chamber decomposition of \texorpdfstring{$\Eff(\G(r,n)_1)$}{TEXT}}\label{MCD G(r,n)_1}\label{sec91}

In Chapter \ref{cap5} we constructed the Mori chamber decomposition of $\G(1,n)_1,$ now we try to extend the strategy to $\G(r,n)_1.$ In order to do this we introduce some maps. Recall that in Section \ref{osculating spaces grass} we described the osculating spaces $T^i$ to Grassmannians, defined the subvarieties 
$$R_i=\G(r,n)\bigcap T^i, i=0,\dots,r,$$
and showed some of its properties.

\begin{Notation}
Denote by $\alpha_0:\G(r,n)_1\to G(r,n)$ the blow-up of a point, we also write $\G(r,n)_{R_0}=\G(r,n)_1.$
Recursively, for $i=1,\dots,r,$ denote by $\alpha_i:\G(r,n)_{R_i}\to \G(r,n)_{R_{i-1}}$ the blow-up of the strict transform $\widetilde{R_{i}}\subset \G(r,n)_{R_{i-1}}$ of $R_i\subset \G(r,n).$
\end{Notation}

\begin{Notation}
For any $0\leq i\leq r$ consider the restriction 
$$\Pi_{T^i_p}
=\tau_i:\G(r,n)\dasharrow W_i,$$ 
of the linear projection from the osculating space $T^i_p(\G(r,n))$ and set $W_{-1}:=\G(r,n).$
For any $0\leq i \leq r$ define $\pi_i:W_{i-1}\dasharrow W_i$ as the only rational map
that makes the diagram 
\begin{center}
\begin{tikzcd}
&W_{i-1}\arrow[rd,dashed,"\pi_i"]&\\
\G(r,n)\arrow[ru,dashed,"\tau_{i-1}"]\arrow[rr,dashed,"\tau_i"']&&W_i
\end{tikzcd}
\end{center}
commutative.
\end{Notation}

Now we have two sequences of maps
\begin{equation}\label{seqofblowups}
\G(r,n)_{R_r}
\stackrel{\alpha_{r}}{\longrightarrow}\G(r,n)_{R_{r-1}}
\stackrel{\alpha_{r-1}}{\longrightarrow}
\cdots 
\stackrel{\alpha_{2}}{\longrightarrow}
\G(r,n)_{R_1}
\stackrel{\alpha_{1}}{\longrightarrow} 
\G(r,n)_{R_0}
\stackrel{\alpha_{0}}{\longrightarrow}
\G(r,n),
\end{equation}
$$\G(r,n)
\stackrel{\pi_{0}}{\dasharrow}
W_0
\stackrel{\pi_{1}}{\dasharrow}
W_1
\stackrel{\pi_{2}}{\dasharrow}
\cdots 
\stackrel{\pi_{r-1}}{\dasharrow} 
W_{r-1}
\stackrel{\pi_{r}}{\dasharrow}
W_r.$$

\begin{Remark}\label{connection}
We remark here that the need to check that these maps $\pi_i$ are birational was the starting point to all material in Part \ref{part1}. In fact, we needed Proposition \ref{oscprojbirational} and tried to prove it using Proposition \ref{cc}.
However, there were not many results concerning defectivity of Grassmannians.

As soon as we proved 
Proposition \ref{oscprojbirational}, it became clear that we could proceed the other way around, that is, to use Proposition \ref{oscprojbirational} together with Proposition \ref{cc} to obtain new results on defectivity of Grassmannians.
Then we realized that this method was more general, and could be applied for the Segre-Veronese varieties for instance, and so we obtained the results in Chapter \ref{cap4}.
\end{Remark}

\begin{Conjecture}\label{keyconjecture}
For each $1\leq i\leq r$ we have that
\begin{enumerate}
\item The singularities of $R_i$ can be resolved by the sequence of blow-ups along smooth centers below:
	$$R_i^i\stackrel{\alpha_{i-1}}{\longrightarrow}\cdots 
	\stackrel{\alpha_{1}}{\longrightarrow} R_i^1\stackrel{\alpha_{0}}{\longrightarrow}R_i.$$
	\item The sequence of blow-ups $\alpha_{i-1}\circ\dots \alpha_1\circ \alpha_0$ resolves the rational map $\tau_i.$
\end{enumerate}
\end{Conjecture}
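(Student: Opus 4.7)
The plan is to prove both parts of Conjecture \ref{keyconjecture} simultaneously by induction on $i$, exploiting the nested structure $\{p\}=R_0\subset R_1\subset \cdots \subset R_r$, for which we already know $\Sing(R_i)=R_{i-1}$ and $\mult_{R_{i-1}}(R_i)$ can be computed from Theorem \ref{schubertmult} (as in Lemma \ref{keylemma} and Example \ref{exampleSchubertmult}). Since all $R_i$ are Schubert varieties for a fixed flag stabilized by a parabolic $P\subset GL_{n+1}$, the entire construction is $P$-equivariant, so it is enough to analyze the local picture at one point of each stratum $R_{j}\setminus R_{j-1}$.

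For part (1), my strategy is to show by induction on $j\leq i$ that after performing the first $j$ blow-ups $\alpha_0,\dots,\alpha_{j-1}$, the strict transform $R_i^{j}\subset \G(r,n)_{R_{j-1}}$ is smooth along (and transverse to) the exceptional locus of $\alpha_{j-1}$, and satisfies $\Sing(R_i^{j})=\widetilde{R_{j}}$. The base case is Lemma \ref{keylemma}. For the inductive step, at a general point of $R_j\setminus R_{j-1}$ one writes local coordinates adapted to the flag using the affine chart of Proposition \ref{oscgrass}, in which $R_i$ and $R_j$ become (locally) determinantal varieties whose defining minors differ only by the rank condition. Blowing up $\widetilde{R_{j-1}}$ then separates the "rank $j-1$" stratum from the "rank $j$" stratum, reducing the multiplicity of the strict transform of $R_i$ along $\widetilde{R_j}$ by exactly one. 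After $i$ such blow-ups, $R_i^i$ is smooth. The key technical lemma required here is the multiplicity computation $\mult_{\widetilde{R_{j-1}}} R_j^{j-1}=1$, i.e. that each blow-up precisely peels off one layer; this should follow from the Vandermonde-style determinant identity already used in Lemma \ref{keylemma}.

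For part (2), the strategy is to use the factorization $\tau_i=\pi_i\circ\pi_{i-1}\circ\cdots\circ\pi_0$. The indeterminacy locus of $\tau_i$ in $\G(r,n)$ is exactly $R_i$ by Lemma \ref{Rslemma}, since $\tau_i$ is the projection from $T^i_p$ and $R_i=\G(r,n)\cap T^i_p$. The map $\tau_0=\pi_0$ is the projection from the single point $p$, which is clearly resolved by $\alpha_0$. Inductively, assume that $\alpha_0,\dots,\alpha_{j-1}$ resolve $\tau_{j-1}$, giving a morphism $\G(r,n)_{R_{j-1}}\to W_{j-1}$. The map $\pi_j:W_{j-1}\dasharrow W_j$ is a linear projection whose center is the image of $T^j_p$ under $\tau_{j-1}$, and its indeterminacy locus pulled back to $\G(r,n)_{R_{j-1}}$ is precisely the strict transform $\widetilde{R_j}$ (by Lemma \ref{Rslemma} applied to the fact that points mapping into the projection center are exactly those whose corresponding $r$-planes meet the flag subspace in dimension $\geq r-j$). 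By part (1) already established, $\widetilde{R_j}$ is smooth in $\G(r,n)_{R_{j-1}}$, so its blow-up $\alpha_j$ resolves the indeterminacy of $\pi_j$, and hence of $\tau_j$.

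The main obstacle will be the explicit local coordinate analysis needed in part (1): verifying that each $R_j$ is locally defined, near a general point of $R_{j-1}\setminus R_{j-2}$, by the rank $\leq j$ locus of a generic matrix of the appropriate size extracted from Proposition \ref{oscgrass}, and that the successive blow-ups of these rank loci yield smooth strict transforms. This is essentially a local version of the resolution of determinantal varieties via flattening stratifications, and although the combinatorial skeleton (Schubert calculus and the multiplicity formula of Theorem \ref{schubertmult}) is in place, organizing the computation coherently across all the strata simultaneously, and in a way that remains compatible with the osculating projection factorization used for part (2), will require substantial bookkeeping. A possible shortcut would be to identify the sequence (\ref{seqofblowups}) with a known Bott--Samelson-type resolution of the closure of a Schubert cell associated to the point $p$, in which case both parts would follow from standard results on such resolutions.
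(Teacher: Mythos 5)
This statement is labeled a \emph{Conjecture} in the paper, and the paper gives no proof of it: only the case $r=1$ (where the chain reduces to blowing up the point and then $\widetilde{R}=\widetilde{R_1}$) is actually established, in Lemma \ref{Projectionslemma}, and Chapter \ref{cap8} explicitly presents the general statement as future work. So your write-up can only be judged on its own terms, and on those terms it is a strategy sketch rather than a proof. The core idea is the right one, and can in fact be sharpened: in the standard affine chart $[I_{r+1}\mid A]$ around $p$ one has $\dim(U\cap V_{r+1})=r+1-\rk(A)$, so $R_i$ is \emph{exactly} the generic determinantal variety $\{\rk(A)\le i\}$, the $R_j$ are its rank strata, and by Proposition \ref{oscgrass} the base ideal of $\tau_i$ is the ideal of $(i+1)\times(i+1)$ minors of $A$. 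Over this chart both parts of the conjecture are then precisely Vainsencher's theorem on blowing up determinantal ideals (the space of complete collineations): the successive centers are smooth and all the minor ideals are simultaneously principalized. What remains is the local analysis at points of $R_i$ outside this chart, via the action of $\Stab(V_{r+1})$ and the standard local product structure of a determinantal variety near a point of intermediate rank; this is exactly the ``bookkeeping'' you defer, and it is not optional.

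Beyond that deferral, several of your intermediate claims are wrong or unjustified. First, the inductive assertion $\Sing(R_i^{j})=\widetilde{R_{j}}$ fails for $i\ge j+2$: the blow-ups $\alpha_0,\dots,\alpha_{j-1}$ do nothing over $R_{i-1}\setminus R_{j-1}$, so $R_i^{j}$ is still singular along the whole strict transform of $R_{i-1}=\Sing(R_i)$, not merely along $\widetilde{R_j}$. Second, ``each blow-up reduces the multiplicity by exactly one'' is unsupported; the numbers $\mult_{R_i'}D=r+1-i$ of Lemma \ref{keylemma} concern the divisor $D$, not the multiplicities of $R_i$ along the $R_j$ or of their strict transforms, and multiplicities of determinantal loci do not drop by one per blow-up in general. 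Third, in part (2), blowing up the reduced indeterminacy locus --- even a smooth one --- does not by itself resolve a rational map: one must principalize the base \emph{scheme} of the linear system, so you need to show that the scheme $\G(r,n)\cap T^j_p$ is the reduced determinantal scheme $R_j$ and then prove an iterated analogue of Lemma \ref{blowuporder}; routing the argument through the singular images $W_{j-1}$ and the projections $\pi_j$ only obscures this. Finally, note that part (2) as literally stated is off by one (for $i=1$ it would assert that $\alpha_0$ alone resolves the projection from $T_p$, contradicting Lemma \ref{Projectionslemma}(4), which needs $\alpha_0\circ\alpha_1$ since $R_1$ has codimension at least two); what your argument aims at is the corrected statement that $\tau_i$ becomes a morphism on $\G(r,n)_{R_i}$.
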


Now we are ready to describe our strategy.
Conjecture \ref{keyconjecture} implies that the sequence of blow-ups (\ref{seqofblowups}) resolves the map $\tau_r.$
Therefore, there is a map $\xi:\G(r,n)_{R_r}\to W_r$
making the following diagram commutative.

\begin{center}
\begin{tikzcd}
\G(r,n)_{R_r}\arrow[dddrrrr,"\xi"]&&&&\\
\vdots&&&&\\
\G(r,n)_{R_0}\arrow[d,"\alpha_0"]
\arrow[dr]&&&&\\
\G(r,n)\arrow[r,dashed,"\pi_0"]&
W_0\arrow[r,dashed,"\pi_1"]&
\cdots \arrow[r,dashed,"\pi_{r-1}"]&
W_{r-1}\arrow[r,dashed,"\pi_{r}"]&W_r
\end{tikzcd}
\end{center}

The next step consists in factorizing the map $\xi$ as described in the following picture
\begin{center}
\begin{tikzcd}
\G(r,n)_{R_0}\arrow[d,"\alpha_0"]
\arrow[dr]\arrow[r,dashed,"\eta_1"]&
\G(r,n)_{R_0}^{(1)}\arrow[d]
\arrow[dr]\arrow[r,dashed,"\eta_2"]&
\cdots\arrow[r,dashed,"\eta_r"]&
\G(r,n)_{R_0}^{(r)}\arrow[d]
\arrow[dr]&\\
\G(r,n)\arrow[r,dashed,"\pi_0"]&
W_0\arrow[r,dashed,"\pi_1"]&
\cdots \arrow[r,dashed,"\pi_{r-1}"]&
W_{r-1}\arrow[r,dashed,"\pi_{r}"]&W_r
\end{tikzcd}
\end{center}
where the $\eta_i$ are flips.
Then we would like to conclude that $$\Nef(\G(r,n)_{R_0}^{(i)})\!=\!\cone(H-iE,H-(i+1)E).$$
Filling the gaps of this argument we expect to be able, using Corollary \ref{nef cone border3},  to prove the following.

\begin{Conjecture}
Let $\G(r,n)_1$ be the blow-up of the Grassmannian at one point. Then 
$$\Mov(\G(r,n))=\begin{cases}
\cone(H,H-rE) &\mbox{ if } n=2r+1\\
\cone(H,H-(r+1)E) &\mbox{ if } n>2r+1
\end{cases}$$
Moreover, $E,H,H-E,\dots,H-(r+1)E$ are the walls of the Mori chamber decomposition of $\Eff(\G(r,n)_1).$
\end{Conjecture}

\section{For which \texorpdfstring{$(r,n,k)$}{TEXT} is \texorpdfstring{$\G(r,n)_k$}{TEXT} a Mori dream space?}\label{MDS G(r,n)_k}

In the overview we discussed the following result due to Castravet, Mukai and Tevelev.

\begin{Theorem}
Let $X^n_k$ be the blow-up of $\P^n$ at $k$ points in general position, with $n\geq 2$ and $k\geq 0$.
Then  $X^n_k$ is a Mori dream space if and only if one of the following holds:
\begin{itemize}
	\item[-] $n=2$ and $k\leq 8$,
	\item[-] $n=3$ and $k\leq 7$,
	\item[-] $n=4$ and $k\leq 8$,
	\item[-] $n>4$ and $k\leq n+3$.
\end{itemize}
\end{Theorem}

This was the starting point of our interest in determining how many points in general position we can blow-up in the Grassmannian and still have a Mori dream space.

Given $r,n$ it can be shown that there is a maximum value of $k$ such that $\G(r,n)_k$ is a Mori dream space. Therefore, there exists a unique integer function $f(r,n)$ such that 
$\G(r,n)_k$ is a Mori dream space if and only if $k\leq f(r,n).$

Such a function $f(r,n)$ should recover the above result for the blow-up of the projective space, that is, $f(0,2)=8,f(0,3)=7,f(0,4)=8,f(0,n)=n+3,n>4.$ We would like to find such function $f.$ What we have so far using Theorem \ref{G(r,n)kspherical} and Proposition \ref{G(r,n)_k weak Fano} is the following.

\begin{Theorem}\label{G(r,n)k MDS weak}
$\G(r,n)_k$ is a Mori dream space if either
$$
\begin{cases}
k\!\!&\!=1;\ \mbox{ or }\\
k\!\!&\!=2 \mbox{ and } r=1 \mbox{ or } n=2r+1,n=2r+2; \mbox{ or }\\
k\!\!&\!=3 \mbox{ and } (r,n)\in \{(1,4),(1,5)\};\mbox{ or }\\
k\!\!&\!=4 \mbox{ and } (r,n)=(1,4)
\end{cases}
$$
\end{Theorem}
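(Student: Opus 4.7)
The plan is to obtain the statement as a direct combination of the two structural results already established in Chapters \ref{cap6} and \ref{cap7}, using the fact that both spherical varieties and weak Fano varieties are Mori dream spaces. First I would recall Theorem \ref{sphericalisMDS}, which tells us that every spherical variety is a MDS (via Brion's result), and the theorem of Birkar--Cascini--Hacon--McKernan \cite[Corollary 1.3.2]{BCHM}, which gives the analogous statement for weak Fano (in fact log Fano) varieties. These two facts reduce the problem to a mere union of lists.

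Next I would invoke Theorem \ref{G(r,n)kspherical} to read off all triples $(r,n,k)$ for which $\G(r,n)_k$ is spherical: namely $k=1$ for all $(r,n)$; $k=2$ whenever $r=1$ or $n\in\{2r+1,2r+2\}$; and the sporadic case $(r,n,k)=(1,5,3)$. Each such blow-up is therefore a Mori dream space. I would then invoke Proposition \ref{G(r,n)_k weak Fano} to read off the triples for which $\G(r,n)_k$ is weak Fano: namely $(r,n)=(1,3)$ with $k\leq 2$, and $(r,n)=(1,4)$ with $k\leq 4$. Again each such blow-up is a MDS.

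Finally I would just take the union of the two lists. The spherical list already covers all cases in the statement except $(1,4,3)$ and $(1,4,4)$, which are supplied by the weak Fano list, while the cases $(1,3,k)$ with $k\leq 2$ from the weak Fano list are already contained in the spherical list. Assembling everything yields precisely the four bullets in the theorem, completing the proof. There is no serious obstacle here: the entire content lies in the two classification theorems proved in the previous chapters, and the present statement is merely their corollary, so I would present it as a short two-paragraph argument with no further computation required.
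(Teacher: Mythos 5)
Your proposal is correct and is exactly the paper's own argument: the paper obtains Theorem \ref{G(r,n)k MDS weak} by taking the union of the spherical list from Theorem \ref{G(r,n)kspherical} (spherical $\Rightarrow$ MDS by Theorem \ref{sphericalisMDS}) and the weak Fano list from Proposition \ref{G(r,n)_k weak Fano} (weak Fano $\Rightarrow$ MDS by \cite[Corollary 1.3.2]{BCHM}), with the cases $(1,4,3)$ and $(1,4,4)$ supplied only by the weak Fano classification. No further verification is needed.
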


This Theorem translates as
$$
\begin{cases}
f(r,n)\geq 1, \mbox{ for } r\geq 1;\\
f(1,n), f(r,2r+1), f(r,2r+1)\geq 2, \mbox{ for } 
r\geq 1; \\
 f(1,5)\geq 3;\mbox{ and } f(1,4)\geq 4.
\end{cases}
$$

Now, a natural step to improve the lower bound on $f,$
that is find new MDS $\G(r,n)_k,$ is to analyze whether $\G(r,n)_k$ is log Fano. However, it could very well happen that for a given pair $(r,n)$ we have that $\G(r,n)_1,\dots,\G(r,n)_{k_1}$ are log Fano, and
$\G(r,n)_{k_1+1},\dots,G(r,n)_{f(r,n)}$ are Mori dream spaces but not log Fano. Therefore it is also important to find the unique integer function $g(r,n)$ such that $\G(r,n)_k$ is log Fano if and only if $k\leq g(r,n).$  
We would like to recall that the blow-up of the projective space in general points is a Mori dream space if only if it is log Fano, that is, $f(0,n)=g(0,n),$ see \cite{Mu04,AM16}.

In order to produce upper bounds for $f$ one needs a completely different strategy.
One way consists in adapting the work of \cite{Mu04} on $\Bl_{p_1,\dots,p_k}\P^n$ to prove that the effective cone of $\G(r,n)_k$ is not finitely generated and therefore $\G(r,n)_k$ is not a Mori dream space. To do that one needs to produce certain birational transformations in $\G(r,n)$ which will play the same role the Cremona transformations for $\P^n.$ To the best of our knowledge birational transformations in the Grassmannians have not been much studied yet and this could be an interesting subject of study by itself.

We mention here that in \cite{Ko16}  Kopper showed that a certain conjecture would imply that $\Eff(\G(1,4)_6)$ is not finitely generated, and therefore $\G(1,4)_6$ is not a Mori dream space.  Together with our Theorem \ref{G(r,n)k MDS weak} this means that either $f(1,4)=4$ or $f(1,4)=5.$

More generally, given a Mori dream space $X$ one could ask for an integer function $f_X$ such that the blow-up $X_k$ of $X$ at $k$ general points is a Mori dream space if and only if $k\leq f_X.$ At the best of our knowledge this is known only for $X = \P^n.$ 

\addcontentsline{toc}{chapter}{Bibliography}

\end{document}